\numberwithin{equation}{chapter}
\let\mathcal\mathscr
\begin{document}
\frontmatter
\title[Irregular Hodge theory]
{Irregular Hodge theory}

\author[C.~Sabbah]{Claude Sabbah}
\address[C.~Sabbah]{CMLS, École polytechnique, CNRS, Université Paris-Saclay\\
F--91128 Palaiseau cedex\\
France}
\email{Claude.Sabbah@polytechnique.edu}
\urladdr{http://www.math.polytechnique.fr/perso/sabbah}
\thanks{This research was supported by the grant ANR-13-IS01-0001-01 of the Agence nationale de la recherche. The collaboration with Jeng-Daw Yu was partly supported by the MoST/CNRS grants 106-2911-I-002-539/TWN PRC 1632}

\subjclass{14F40, 32S35, 32S40}
\keywords{Holonomic D-module, mixed Hodge module, mixed twistor D-module, irregular Hodge filtration, exponential-Hodge origin, Thom-Sebastiani formula}

\begin{abstract}
We introduce the category of \emph{irregular mixed Hodge modules} consisting of  possibly irregular holonomic $D$-modules which can be endowed in a canonical way with a filtration, called the \emph{irregular Hodge filtration}. Mixed Hodge modules with their Hodge filtration naturally belong to this category, as well as their twist by the exponential of any meromorphic function. This category is stable by various standard functors, which produce many more filtered objects. The irregular Hodge filtration satisfies the $E_1$\nobreakdash-degeneration property with respect to any projective morphism. This generalizes some results previously obtained by H.\,Esnault, J.-D.\,Yu and the author. We also show that, modulo a condition on eigenvalues of monodromies, any rigid irreducible holonomic $D$-module on the complex projective line underlies an irregular pure Hodge module. In a chapter written jointly with Jeng-Daw~Yu, we make explicit the case of irregular mixed Hodge structures, for which we prove in particular a Thom-Sebastiani formula.
\end{abstract}

\alttitle{Théorie de Hodge irrégulière}
\altkeywords{D-module holonome, module de Hodge mixte, D-module avec structure de twisteur mixte, filtration de Hodge irrégulière, origine Hodge-exponentielle, formule de Thom-Sebastiani}

\begin{altabstract}
Nous introduisons la catégorie des \emph{modules de Hodge mixtes irréguliers} formée de $D$\nobreakdash-modules holonomes à singularités éventuellement irrégulières qui peuvent être munis de manière canonique d'une filtration, dite \emph{filtration Hodge irrégulière}. Les modules de Hodge mixtes avec leur filtration de Hodge sont naturellement des objets dans cette catégorie, de même que leur produit tensoriel avec l'exponentielle de toute fonction méromorphe. Cette catégorie est stable par plusieurs foncteurs standard, ce qui permet d'obtenir de nombreux exemples. La filtration de Hodge irrégulière satisfait à une propriété de dégénérescence en $E_1$ par rapport à un morphisme projectif. Ceci généralise des résultats précédemment obtenus par H.\,Esnault, J.-D.\,Yu et l'auteur. Nous montrons aussi que, modulo une condition sur les valeurs propres des monodromies, les $D$-modules holonomes irréductibles rigides sur la droite projective complexe sous-tendent des modules de Hodge purs irréguliers. Dans un chapitre écrit en collaboration avec Jeng-Daw~Yu, nous considérons le cas des structures de Hodge mixtes irrégulières, pour lequel nous montrons en particulier une formule de Thom-Sebastiani.
\end{altabstract}

\maketitle
\chapterspace{-4}
\tableofcontents

\mainmatter

\chapterspace{-3}
\chapter*{Introduction}
Let $X$ be a complex manifold. The category \index{$MTMX$@$\MTM(X)$}$\MTM(X)$ of \index{mixed twistor $\cD$-module}mixed twistor $\cD$-modules on $X$, introduced by T.\,Mochizuki \cite{Mochizuki11}, represents a vast generalization of that of \index{mixed Hodge module}mixed Hodge modules on $X$, introduced by M.\,Saito \cite{MSaito87} and denoted by \index{$MHMX$@$\MHM(X)$}$\MHM(X)$.\footnote{Throughout this article, we consider complex mixed Hodge module and $\MHM(X)$ stands for $\MHM(X,\CC)$, \cf Section \ref{subsec:MHM}.} An intermediate step to compare these categories is the category $\MTM^\intt(X)$ of integrable mixed twistor $\cD$-modules on $X$ (see below). There are natural functors
\[
\MHM(X)\mto\MTM^\intt(X)\mto\MTM(X),
\]
where the second functor forgets the integrable structure, while the first one is a natural fully faithful functor compatible with the standard functors on each category (\cf\cite[\S13.5]{Mochizuki11}). On the other hand, the category $\MTM^\intt(X)$ contains much more objects, since the holonomic $\cD$-module underlying a mixed Hodge module has regular singularities, while that underlying an object of $\MTM^\intt(X)$ can have arbitrary complicated irregular singularities.

One drawback of the category $\MTM^\intt(X)$ (or $\MTM(X)$) is that objects do not come with many numerical invariants, like Hodge numbers. Our aim is to introduce an intermediate category between $\MHM(X)$ and $\MTM^\intt(X)$, which is not as large as $\MTM^\intt(X)$, but so that the $\cD$-module associated with each object is equipped with a good filtration, called the \emph{irregular Hodge filtration}. Many interesting irregular holonomic $\cD$-modules underlie objects in this category, in particular those generated by $\exp\mero$, where~$\mero$ is any meromorphic function on~$X$. Some of the standard functors (pushforward by a projective morphism, pullback by a smooth morphism) extend to this category, that we call the category of (possibly) \index{mixed Hodge module!irregular --}\emph{irregular mixed Hodge modules} and that we denote by $\IrrMHM(X)$, and the filtered holonomic $\cD$-module associated with any object has a good behaviour with respect to these functors. The original idea of finding such a category is due to Deligne in 1984 \cite{Deligne8406}, and has waited the development of the theory of wild twistor $\cD$-modules \cite{Mochizuki08,Bibi06b} to start a new life in \cite{Bibi08}, before being considered from various points of view in \cite{Yu12,E-S-Y13,S-Y14,K-K-P14,Mochizuki15a}.

Let us emphasize some discrepancies between the usual Hodge filtration and the irregular Hodge filtration.
\begin{enumerate}
\item
A mixed Hodge module can naturally be seen as an object of $\IrrMHM(X)$, and in such a way its Hodge filtration is the irregular Hodge filtration. However, the properties of the irregular Hodge filtration of the $\cD$-module underlying an object of $\IrrMHM(X)$ are not enough to characterize this object as such, \ie are not constitutive of the definition of an object of $\IrrMHM(X)$, while the Hodge filtration is one of the fundamental objects used to define a Hodge module. In other words, the irregular Hodge filtration is only a byproduct of the definition of the category $\IrrMHM(X)$, and the main properties of this category are obtained from those of $\MTM^\intt(X)$.
\item
Recall that the behaviour of the Hodge filtration with respect to nearby cycles is one of the fundamental properties used to define the category $\MHM(X)$. In contrast, we do not exhibit any such property for the irregular Hodge filtration. It is reasonable to expect that a similar behaviour occurs with respect to a divisor along which the mixed twistor $\cD$-module is \emph{tame}, but we do not even have a conjectural statement in general.
\end{enumerate}

The \emph{rescaling operation} will be used to define the category $\IrrMHM(X)$. It has been defined in the framework of TERP structures in \cite{H-S06}. This rescaling operation is the main ingredient in order to define the category $\iMTM^\resc(X)$. However, before doing so, we need to modify the presentation of objects in $\MTM^\intt(X)$: while it is easy to define the rescaling of an integrable $\cR_\cX$-module (with $\cX:=X\times\CC_\hb$) by adding a new parameter $\theta\in\CC^*$ and changing $\hb$ to $\hb/\theta$, the pairing is not rescalable, since it is defined only on $X\times\bS$ ($\bS=\{|\hb|=1\}$), and the rescaling operation does not preserve~$\bS$. We are therefore led to give another description of an object in $\MTM^\intt(X)$, where the pairing $\iC$ is now $\iota$-sesquilinear ($\iota:\hb\mto-\hb$) and is defined on $\cX^\circ:=X\times\CC^*_\hb$ and not only on $X\times\bS$. The purpose of Chapter \ref{part:1}, which is essential for simply giving the definition of the rescaling operation, is to define this category $\iMTM^\intt(X)$ and to show the equivalence $\iMTM^\intt(X)\simeq\MTM^\intt(X)$. (Note that the integrability property is important and we cannot argue without it.)

\begin{remarque}[on the notation]
We usually denote an object of $\MTM$ as $(\cT,W_\bbullet)$, but we sometimes shorten the notation as $\cT$, when the context is clear.
\end{remarque}

\subsubsection*{The main theorems}
The category $\iMTM^\intt(X)$ comes equipped with standard functors which are compatible with those defined for $\MTM^\intt(X)$ in \cite{Mochizuki11}, as shown in Chapter \ref{part:1}:
\begin{itemize}
\item
The duality functor $\bD_X$.
\item
For a projective morphism $\map:X\to Y$ between smooth complex manifolds, the pushforward functors $\map_\dag^k=\cH^k\map_\dag$.
\item
For a smooth morphism $\mapsm:X\to Y$, the pullback functor $\mapsm^+$.
\item
The external product $\hbboxtimes$ (a bi-functor).
\item
Let $H$ be an effective divisor in $X$. One can define a localization functor and a dual localization functor, denoted respectively by $[*H]$ and $[!H]$, and in a short way by $[\star H]$ ($\star=*,!$), which is an endofunctor of $\MTM(X)$ and of $\MTM^\intt(X)$ (\cf\cite[\S3.4]{Bibi01c} for the definition of $[*H]$ on strictly specializable $\cR$-modules and \cite[Chap.\,3 \&\,\S11.2]{Mochizuki11} for the definition on $\cR$-triples and on $\MTM$). There are corresponding functors on $\iMTM^\intt(X)$.
\end{itemize}

Such functors also exist on mixed Hodge modules, and the natural fully faithful functor $\Phi_X:\MHM(X,\RR)\mto\MTM^\intt(X)$ (\cf\cite[Prop.\,13.5.4, 13.5.5]{Mochizuki11}) is compatible with them (\cf\cite[Prop.\,13.5.6]{Mochizuki11}).

\begin{itemize}
\item
Let $\mero$ be a meromorphic function on $X$ with pole divisor $P$. According to \cite{Mochizuki11} (\cf also \cite[Prop.\,3.3]{S-Y14}), there exists a localizable $\cR_\cX$-module $\cE^{\mero/\hb}$ extending $(\cO_\cU,\hb\rd+\rd\mero)$, where $U=X\moins P$, such that $\cE^{\mero/\hb}=\cE^{\mero/\hb}[*P]$ (\cf\S\ref{subsec:cTvarphihb} below for details). When equipped with the constant metric on $U$, it gives rise to an object $\cT^{\mero/\hb}$ of $\MTM^\intt(X)$ which satisfies $\cT^{\mero/\hb}[*P]=\cT^{\mero/\hb}$. For each object~$\cT$ in $\MTM^\intt(X)$, there is (\cf\cite[Prop.\,11.3.3]{Mochizuki11} and Section \ref{subsec:cTvarphihb} below) an object $\cT^{\mero/\hb}\otimes\cT$ in $\MTM^\intt(X)$ satisfying the following property: if $(\ccM,\nabla)$ is the holonomic $\cD$-module $\cM''/(\hb-1)\cM''$ associated with $\cT$, then $(\ccM(*P),\nabla+\rd\mero)$ is that associated with $\cT^{\mero/\hb}\otimes\cT$.
\end{itemize}

\skpt
\begin{theoreme}\label{th:mtmgresc}
\begin{enumerate}
\item\label{th:mtmgresc2}
The category $\IrrMHM(X)$ is an abelian full subcategory of $\iMTM^\intt(X)$ which contains (a subcategory equivalent to) $\MHM(X)$ as a full subcategory. As such, it is stable by direct summand in $\iMTM^\intt(X)$. The functors $\map_\dag^k$ ($\map:X\to Y$ projective) and $\mapsm^+$ ($\mapsm:X\to Y$ smooth) preserve the category $\IrrMHM$.

\item\label{th:mtmgresc3}
The functor $\cT^{\mero/\hb}\otimes\cbbullet$ sends the category $\MHM(X)$ to $\IrrMHM(X)$.
\end{enumerate}
\end{theoreme}

\begin{theoreme}\label{th:mainRF}
For each object $(\icT,W_\bbullet)=\bigl((\cM',\cM'',\iC),W_\bbullet\bigr)$ of $\IrrMHM(X)$, the associated $\cD_X$-module $\ccM:=\cM''/(\hb-1)\cM''$ is naturally endowed with a weight filtration $W_\bbullet\ccM$ by $\cD_X$-submodules and with a good filtration $F^\irr_\bbullet\ccM$ (called the \emph{irregular Hodge filtration}) indexed by $\RR$. The following properties are satisfied for such objects.
\begin{enumerate}
\item\textup{(Naturality)}\label{th:mainRF1}
If $(\icT,W_\bbullet)$ belongs to the essential image of $\MHM(X)$, then
\[
F^\irr_\bbullet\ccM=F_\bbullet\ccM.
\]
\item\textup{(Strictness)}\label{th:mainRF2}
Any morphism in $\IrrMHM(X)$ induces a morphism in $\Mod(\cD_X)$ which is bi-strictly compatible with the irregular Hodge filtrations and the weight filtrations.
\item\textup{(Smooth pullback)}\label{th:mainRF4}
For a smooth morphism $\mapsm:X\to Y$ we have
\[
F^\irr_\bbullet \mapsm^+\ccM=\mapsm^*F^\irr_\bbullet\ccM.
\]

\item\textup{(Degeneration at $E_1$ for the projective pushforward)}\label{th:mainRF6}
For a projective morphism $\map:X\to Y$ and for each $\alpha\in[0,1)$, the complex $\map_\dag R_{F^\irr_{\alpha+\bbullet}}\ccM$ is strict and for each $k\in\ZZ$,
\[
R_{F^\irr_{\alpha+\bbullet}}\map_\dag^k\ccM=\map_\dag^k R_{F^\irr_{\alpha+\bbullet}}\ccM.
\]
\end{enumerate}
\end{theoreme}

\skpt
\begin{remarques}
\begin{enumerate}
\item\label{rem:mainRF2}
See \cite[Def.\,1.2]{S-Y14} for the definition of a good filtration indexed by $\RR$, as well as for the notation $F_{\alpha+\bbullet}$ and the associated Rees construction $R_{F_{\alpha+\bbullet}}$, which then means a filtration indexed by~$\ZZ$.

\item
We do not claim that $\bD_X$, $\hbboxtimes$ and $[\star H]$ preserve $\IrrMHM(X)$ (see also Remark \ref{rem:mainRF}\eqref{rem:mainRF3} below), although it could be expected, but they preserve $\MHM$. In particular, we do not give a formula for the behaviour of the irregular Hodge filtration with respect to these functors. Special cases however have already been obtained by J.-D.\,Yu: the behaviour of the irregular Hodge filtration under duality has been considered in \cite[\S2]{Yu12} in the special case corresponding to $\ccE^\mero:=(\cO_X(*P),\rd+\rd\mero)$, and its behaviour with respect to the external product in the case of $\ccE^\mero\boxtimes\ccE^\psi$ has been considered in \cite{C-Y16}. See also Sections \ref{sec:KunnethTS} and \ref{sec:alternating} of Chapter \ref{chap:irregmhm}, written jointly with Jeng-Daw Yu, for complementary results.

\end{enumerate}
\end{remarques}

\begin{corollaire}[Irregular mixed Hodge modules of exponential-Hodge origin]\label{cor:expMHM}\index{mixed Hodge module!irregular -- of exponential origin}Let $\cT$ (\resp $\lambda:\cT_1\to\cT_2$) be a mixed Hodge module (\resp a morphism of mixed Hodge modules). Any object (\resp morphism) of $\iMTM^\intt$ obtained by applying to~$\cT$ (\resp $\lambda$) any finite sequence of functors~$\map_\dag^k$ ($\map$~projective), $\mapsm^+$ ($\mapsm$ smooth), $\bD$, $\hbboxtimes$, $[\star H]$ ($\star=*,!$) and $\cT^{\mero/\hb}\otimes\cbbullet$ ($\mero$~meromorphic), taken in any order, is an object (\resp a morphism) of $\IrrMHM$, hence is equipped with a canonical irregular Hodge filtration (\resp induces a strictly bi-filtered morphism between the corresponding bi-filtered holonomic $\cD$-modules).
\end{corollaire}

\begin{proof}[Idea of the proof]
We notice that, according to the commutation relations between these functors (\cf Section \ref{sec:compatibilities}), it is possible to obtain the same result in $\iMTM^\intt$ by applying first a sequence of functors of the type $\bD$, $\hbboxtimes$, $[\star H]$ or $\mapsm^+$ ($g$~smooth), then a \emph{single} twist $\cT^{\mero/\hb}\otimes\cbbullet$, and then a sequence of functors of the type $\map_\dag^k$. Starting from objects (\resp morphisms) in $\MHM$, the first sequence only produces objects (\resp morphisms) in $\MHM$. Applying then a single functor $\cT^{\mero/\hb}\otimes\cbbullet$ gives an object (\resp a morphism) in $\IrrMHM$, according to Theorem \ref{th:mtmgresc}\eqref{th:mtmgresc3}. Then for projective pushforwards we apply \ref{th:mtmgresc}\eqref{th:mtmgresc2}.
\end{proof}

The proof of the previous results will be given in Section \ref{sec:MTrM}.

\skpt
\begin{remarques}\label{rem:mainRF}
\begin{enumerate}
\item\label{rem:mainRF0}
We also say that an integrable mixed twistor $\cD$-module obtained as in Corollary \ref{cor:expMHM} is an \emph{integrable mixed twistor $\cD$-modules of exponential-Hodge origin} (\cf Definition \ref{def:expMHM}). The associated holonomic $\cD_X$-module comes then equipped with a canonical irregular Hodge filtration.

\item\label{rem:mainRF1}
When $\ccM$ underlies an \index{mixed Hodge module!exponentially twisted --}exponentially twisted mixed Hodge module, the behaviour with respect to \emph{a single} projective pushforward has already been obtained in \cite{S-Y14} and the particular case of a projection has been obtained in \cite{E-S-Y13}. We will check that in such a case, the definition of the irregular Hodge filtration we give here coincides with that of \loccit The approach of \loccit\ did not however allow iterating various functors of the kind $\map_\dag^k$ ($\map$ projective) and~$\mapsm^+$ ($\mapsm$ smooth) on a given exponentially twisted mixed Hodge module, while the present one gives a canonical irregular Hodge filtration on the result of such an iteration, and moreover shows that two such iterations starting from the same exponentially twisted mixed Hodge module lead to the same holonomic $\cD_X$-module, with the same irregular Hodge filtration.

\item\label{rem:mainRF3}(Nearby/vanishing cycles)
Theorem \ref{th:mainRF} says nothing about the behaviour of the irregular Hodge filtration with respect to moderate nearby/vanishing cycles along a function (\cf Section \ref{subsec:openq}). Let $\fun:X\to\CC$ be a holomorphic function and set $H=\fun^{-1}(0)$. If $\cT$ is \emph{tame} along $H$, one can expect that, for each $\alpha\in[0,1)$, the $R_F\cD_X$-module $R_{F^\irr_{\alpha+\bbullet}}\ccM$ is strictly specializable along $H$, so the behaviour of the irregular Hodge filtration along~$H$ is similar to the case of an ordinary Hodge filtration coming from $\MHM$. (See \cite[Th.\,6.1]{S-Y14}.)\enlargethispage{.5\baselineskip}%

On the other hand, if $\cT$ is wild along $H$, such a behaviour should not be expected, due to the presence of ``slopes'' (\cf\eg\cite[Def.\,3.5.1 \& 3.5.2]{C-G04}) between the irregular Hodge filtration and the Kashiwara-Malgrange filtration along $\fun$.

\item\label{rem:mainRF4}(Uniqueness for irreducible holonomic $\cD$-modules)
Assume that $X$ is a \emph{smooth projective variety} and let $\ccM$ be an \emph{irreducible} holonomic $\cD_X$-module. According to \cite[Th.\,1.4.4]{Mochizuki08}, there exists a unique (up~to isomorphism) polarizable pure twistor $\cD$-module $\cT=(\cM',\cM'',C_{\bS})$ of weight $0$ and polarization given by the identity on $\cM'$ and $\cM''$ such that $\Xi_{\DR}(\cT):=\cM''/(\hb-1)\cM''=\ccM$. By Remark \ref{rem:intuniquepure} below, $\cT$ admits at most one integrable structure (up to an obvious equivalence), hence there exists at most one (up to equivalence) object $\icT$ of the subcategory $\IrrMHM(X)$ of $\iMTM^\intt(X)$ giving rise to $\ccM$. The irregular Hodge filtration of $\ccM$, if defined, is obtained from this unique object, hence is intrinsically attached to $\ccM$ up to a shift by an arbitrary real number.

\item(The Lefschetz morphism)
We will show that the cup product by the Chern class of an ample line bundle induces a morphism in $\IrrMHM$. As a consequence, since the Hard Lefschetz theorem holds for pure objects in $\IrrMHM$ according to \cite{Mochizuki08}, one can conclude that the Hard Lefschetz isomorphism is strictly filtered with respect to the irregular Hodge filtration (\cf Corollary \ref{cor:lefschetz}).

\item(Real and rational structure)
Let $\kk$ be a subfield of $\RR$. The category $\MTM_\good^\intt(X,\kk)$ is defined in \cite[\S13.4.4]{Mochizuki11}. It consists of integrable mixed twistor $\cD$-modules with a good $\kk$-structure. First, these objects are endowed with a real structure $\kappa$ which is compatible with the action of $\hb^2\partial_\hb$, and which induces a real structure on the holonomic $\cR_\cX$-module $\cM''(\hbm)$. Second, this real structure is refined as a $\kk$-structure in the sense of \cite{Mochizuki10}. The equivalence $\MTM^\intt(X)\simeq\iMTM^\intt(X)$ enables us to define the category $\iMTM^\intt_\good(X,\kk)$, and this naturally leads to the category $\IrrMHM(X,\kk)$. Note that, in order to be completely compatible with the notion of a $\kk$-mixed Hodge structure, it would be natural to impose the condition of the existence of a polarization defined over $\kk$ on each graded module with respect to the weight filtration. We do not add this condition in order to be compatible with the definition in \cite[\S13.4.4]{Mochizuki11}. However, with such a condition, if $\kk=\RR$ and $X$ is a point, this would completely agree with the notion of tr-TERP structure of Hertling \cite{Hertling01}, and if $\kk=\QQ$ with that of non-commutative Hodge structure of \cite{K-K-P08}.\vspace*{-3pt}
\end{enumerate}
\end{remarques}

\subsubsection*{Irregular mixed Hodge structures}
In Chapter \ref{chap:irregmhm}, written jointly with Jeng-Daw Yu, we consider the simple case of irregular mixed Hodge modules when $X$ is reduced to a point, that we call irregular mixed Hodge structures. The presentation via the rescaling operation can be a little simplified. The category $\IrrMHS$ of \emph{irregular mixed Hodge structures} is endowed with a functor in the category of bi-filtered vector spaces $(H,F_\bbullet^\irr,W_\bbullet)$ and, as an application of Theorem \ref{th:mainRF}\eqref{th:mainRF2}, any morphism in $\IrrMHS$ gives rise to a strictly bi-filtered morphism. The main result in this chapter is the Thom-Sebastiani formula for the irregular Hodge filtration (Theorem \ref{th:main}), for which we give two different proofs, one much relying on the theory of mixed twistor $\cD$\nobreakdash-modules \cite{Mochizuki11}, and the other one depending more on results of \cite{S-Y14}. We give an application to the alternating product of the irregular Hodge filtrations. We also show that the category of exponential mixed Hodge modules of Kontsevich-Soibelman \cite{K-S10} is equipped with a natural functor to $\IrrMHS$.\vspace*{-2pt}\vskip0pt

\subsubsection*{Application to irreducible rigid $\cD$-modules on $\PP^1$}
Let us now consider an irreducible holonomic $\cD$-module $\ccM$ on the complex projective line $\PP^1$. At any of its singular points $x_o\in\PP^1$, the formalized module $\ccM_{\wh{x_o}}$ has a decomposition as the sum of a regular $\cD_{\PP^1,\wh{x_o}}$-module and an irregular one (\cf\eg\cite{Malgrange91}). The irregular one is a free $\cO_{\PP^1,\wh{x_o}}(*x_o)$-module with connection, which can be decomposed further as the direct sum of elementary $\cO_{\PP^1,\wh{x_o}}(*x_o)$-modules with connection, according to the Levelt-Turrittin theorem. For each summand as well as for the regular part there is a notion of formal monodromy, and it is meaningful to require that the eigenvalues of the formal monodromy of $\ccM_{\wh{x_o}}$ have absolute value equal to one. We also say that~$\ccM$ is \emph{locally formally unitary} if this holds at each singular point. On the other hand, given an irreducible holonomic $\cD$-module $\ccM$, restricting (algebraically) to any proper Zariski open set $j:U\hto\PP^1$ not containing any singularity of $\ccM$ produces a free $\cO_U$\nobreakdash-module $(\ccV,\nabla)$ with connection which is irreducible as such, and this \hbox{restriction} functor is an equivalence, a quasi-inverse functor being given by the minimal (or~\hbox{intermediate}) extension $j_{!*}$. We say that $\ccM$ is rigid if it satisfies Katz' criterion that its index of rigidity is equal to $2$, that~is,\vspace*{-.25\baselineskip}\enlargethispage{.6\baselineskip}%
\[
h^1\bigl(\PP^1,\DR(j_{!*}\ccEnd(\ccV,\nabla))\bigr)=0.\vspace*{-.25\baselineskip}
\]
For example (\cf\cite[Th.\,3.7.3]{Katz90}), irreducible hypergeometric $\cD_{\PP^1}$-modules of type $(n,m)$ with $n\neq m$ have an irregular singularity and are rigid.

\begin{theoreme}\label{th:rigidP1}
Assume that $\ccM$ is irreducible and rigid. Then the unique pure twistor $\cD$-module it underlies is integrable if and only if $\ccM$ is locally formally unitary. In~such a~case, this integrable pure twistor $\cD$-module is graded-rescalable and comes from a~unique object (up to equivalence) of $\IrrMHM(\PP^1)$. In particular, $\ccM$ can be equipped with a canonical irregular Hodge filtration (up to a shift).
\end{theoreme}

The case when $\ccM$ has regular singularities is well-known, according to \cite[Cor.\,8.1]{Simpson90} and \cite[Prop.\,1.13]{Deligne87}. These results assert that the corresponding local system underlies a unique polarized variation of complex Hodge structure (up to a shift), \cf\cite[\S2.4]{D-S12}. The proof of Theorem \ref{th:rigidP1} given in Section \ref{sec:rigid} relies on the existence of a variant, due to Deligne \cite{Deligne06b} and Arinkin \cite{Arinkin08}, of the Katz algorithm to reduce to a rank-one $(\ccV,\nabla)$. We give in Section \ref{subsec:hypergeom} an example of computation for the case of some confluent \index{hypergeometric}hypergeometric differential equation, originally due to A.\,Castaño Domínguez and C.\,Sevenheck \cite{CD-S17}.

\begin{remarque}
We do not know how to make precise the behaviour of the irregular Hodge data (\eg degrees and ranks of irregular Hodge bundles) along the Arinkin-Deligne algorithm, in a way similar to what is done in \cite{D-S12} for the regular case and the Katz algorithm (see an explicit computation for irreducible hypergeometric differential equations of type $(n,n)$ in \cite{Fedorov15}). We find the same problem as in Remark \ref{rem:mainRF}\eqref{rem:mainRF3}. Note also that, compared to Corollary \ref{cor:expMHM}, a new argument is needed here in order to treat the intermediate extension functor, regarded as taking the image of $[!H]$ into~$[*H]$. On the other hand, the Laplace transformation entering in the Arinkin-Deligne algorithm can be regarded as a sequence of functors considered in Corollary~\ref{cor:expMHM}.
\end{remarque}

\subsubsection*{Acknowledgements}
This work is a continuation of \cite{E-S-Y13,S-Y14}, and I thank Hélène Esnault and Jeng-Daw Yu for many discussions on this subject. Various enlight\-ening discussions with Jeng-Daw Yu led me to state the question of the irregular Hodge filtration in the present general framework, although some aspects are still missing. His comments on a preliminary version have also been very useful. Our discussions resulted in Chapter \ref{chap:irregmhm}, written jointly. The idea behind Corollary \ref{cor:expMHM} comes from discussions with Maxim Kontsevich, in particular related to his article \cite{Kontsevich09}. It is clear that this work could not have existed without the impressive results of Takuro Mochizuki about pure and mixed twistor $\cD$-modules. It is my pleasure to thank him for many interesting discussions and comments. In particular, he suggested a useful simplification in a previous definition of $\IrrMHM$. The idea of exploiting the rescaling procedure in these questions goes back to the work of Claus Hertling and Christian Sevenheck and I thank them for sharing their ideas with me. My interest on rigid irreducible holonomic $\cD_{\PP^1}$-modules owes much to Michael Dettweiler, whom I thank for that.\enlargethispage{-\baselineskip}%

\pagebreak[2]
All over this text, we will use the following notation and definition.

\begin{notation}\label{nota:general}
Let \index{$Cz$@$\CC_\hb$}$\CC_\hb$ be the complex line with coordinate $\hb$. We set \index{$Sb$@$\bS$}\hbox{$\bS\!=\!\{\hb\mid|\hb|\!=\!1\}$}, and we denote by \index{$OS$@$\cO_{\bS}$}$\cO_{\bS}$ the sheaf of real analytic functions on $\bS$. It can be regarded as the sheaf-theoretic restriction $i_{\bS}^{-1}\cO_{\CC_\hb}$ of the sheaf of holomorphic functions on $\CC_\hb$.

We denote by $\PP^1$ the completion of the complex line $\CC_\hb$. We will consider the following involutions:
\begin{itemize}
\item
\index{$Sigma$@$\sigma$}$\sigma$ is the anti-linear involution induced by $\hb\mto-1/\ov\hb$,
\item
\index{$Gamma$@$\gamma$}$\gamma$ is that induced by $\hb\mto1/\ov\hb$,
\item
\index{$IA$@$\iota$}$\iota$ is the holomorphic involution induced by $\hb\mto-\hb$, so that $\gamma=\sigma\circ\iota=\iota\circ\sigma$.
\end{itemize}
Note that $\sigma$ and $\gamma$ (but not $\iota$) exchange $0$ and $\infty$ on $\PP^1$. We also have $\gamma_{|\bS}=\id_{\bS}$, so that $\sigma$ and $\iota$ coincide on $\bS$.

Given a complex manifold $X$ of dimension $d_X$, we set \index{$XXC$@$\cX$}$\cX\!=\!X\times\CC_\hb$ and \index{$XXCcirc$@$\cX^\circ$}$\cX^\circ\!=\!X\times\CC_\hb^*$, and \index{$PI$@$\pi$}$\pi$ \resp \index{$PIC$@$\pi^\circ$}$\pi^\circ$ denote the corresponding projections to $X$.
\end{notation}

\begin{definition}
An $\cO_{\CC_\hb}$-module (\resp a $\CC[\hb]$-module, \resp a $\CC[\hb,\hbm]$-module) is called \index{strict}\emph{strict} if it is flat as such.
\end{definition}

\begin{notation}[for maps]\label{nota:maps}
We usually denote a projective morphism by \index{$F$@$\map$}$\map:X\to Y$ and a smooth morphism by \index{$G$@$\mapsm$}$\mapsm$. A holomorphic function \index{$H$@$\fun$}$\fun:X\to\CC$ has divisor \index{$HA$@$H$}$H=(\fun)$. The graph inclusion is denoted by \index{$IH$@$i_\fun$}\index{$XH$@$X_\fun$}$i_\fun:X\hto X\times\CC_t=:X_\fun$, and $\fun$ is the composition of $i_\fun$ and the function $t:X_\fun\to\CC_t$, the latter having divisor \index{$HAh$@$H_\fun$}$H_\fun:=X\times\{0\}$. The corresponding pushforward for a $\cD_X$-module $\ccM$ or an $\cR_\cX$-module $\cM$ is denoted by \index{$Mh$@$\ccM_\fun$}$\ccM_\fun:=i_{\fun+}\ccM$ or \index{$Mh$@$\cM_\fun$}$\cM_\fun:=i_{\fun+}\cM$.
\end{notation}

\begin{notation}[for $\Xi_{\DR}$]\label{nota:XiDR}
\index{$XXIDR$@$\Xi_{\DR}$|finindexnotations}We use the same notation for various functors.
\begin{itemize}
\item
If $\cT=(\cM',\cM'',C_{\bS})$ is an object of \index{$RTriplesX$@$\RTriples(X)$}$\RTriples(X)$ or if $\icT=(\cM',\cM'',\iC)$ is an object of $\RdiTriples(X)$, then we set
\[
\Xi_{\DR}(\cT)=\cM''/(\hb-1)\cM''\in\Mod(\cD_X),
\]
that we also denote by $\ccM$.
\item
If $(\cT,W_\bbullet)$ is a $W$-filtered object of $\RTriples(X)$, that is, an object of the category $\WRTriples(X)$, or if $(\icT,W_\bbullet)$ is an object of $\WRdiTriples(X)$, we set $\Xi_{\DR}(\cT,W_\bbullet)=(\Xi_{\DR}\cT,W_\bbullet\Xi_{\DR}\cT)$ and similarly for $\Xi_{\DR}(\icT,W_\bbullet)$, where the latter filtration is the filtration naturally induced by $W_\bbullet\cM''$.
\item
If $(\icT,W_\bbullet)$ is an object of $\IrrMHM(X)$, then we regard $\Xi_{\DR}(\icT,W_\bbullet)$ as also endowed with its irregular Hodge filtration, that is,
\[
\Xi_{\DR}(\icT,W_\bbullet)=(\ccM,F_\bbullet^\irr\ccM,W_\bbullet\ccM),
\]
and we regard $\Xi_{\DR}$ as a functor from $\IrrMHM(X)$ to the category of bi-filtered $\cD_X$-modules.
\end{itemize}
\end{notation}

\chapter[Integrable mixed twistor \texorpdfstring{$\cD$}{D}-modules]{An equivalent presentation of integrable~mixed twistor \texorpdfstring{$\cD$}{D}-modules}\label{part:1}\label{PART:1}

\section{A review of twistor structures in dimension zero}\label{sec:twistorstructures}

In this section, we review the notion of a pure polarized, or mixed, twistor structure, together with supplementary structures on it called real or good real structure and, for a subfield $\kk$ of $\RR$, good $\kk$-structure. These are the simplest examples of objects considered in this chapter. Various aspects of such structures, introduced by Simpson in \cite{Simpson97}, have also been considered with other names, \eg tr-TERP structures in the sense of Hertling \cite{Hertling01} and non-commutative $\kk$-Hodge structures as defined in \cite{K-K-P08} (\cf also \cite[\S3.8]{Bibi11}). We~first present these structures as living on a vector bundle on $\PP^1$, and then we use the presentation by $\RTriples$, introduced in \cite{Bibi01c}, in order to extend them in any dimension and make the link with the theory of mixed twistor structures \cite{Mochizuki11}. This presentation will also prove useful for expressing the rescaling property in Chapter~\ref{part:2}.

\subsection{Twistor structures as vector bundles on $\PP^1$}\label{sec:twistorvectorbundles}

A \index{twistor structure}\emph{twistor structure} is a vector bundle $\cT$ on~$\PP^1$. We then denote (\cf Notation \ref{nota:general})
\begin{equation}\label{eq:defdualsigmarc}
\begin{cases}
\bbullet\ \cT^\vee&\text{the dual bundle},\\
\bbullet\ \cT^*:=\sigma^*\ov\cT^\vee&\text{the \emph{Hermitian dual} bundle},\\
\bbullet\ \cT^\rc:=\gamma^*\ov\cT&\text{the \emph{conjugate} bundle},
\end{cases}
\end{equation}
and we define a \index{twistor structure!real --}\emph{real structure} on $\cT$ to be an isomorphism \index{$Kappa$@$\kappa$}$\kappa:\cT\to\cT^\rc$ such that, through the canonical isomorphism $(\cT^\rc)^\rc=\cT$, we have $\kappa^\rc=\kappa^{-1}$. Note that $\kappa^{*-1}$ (\resp $\kappa^{\vee-1}$) is then a real structure on $\cT^*$ (\resp on $\cT^\vee$).

\begin{remarque}
If we describe $\cT$ as the result of the gluing of vector bundles $\cM^0$ and~$\cM^\infty$ in the charts of $\PP^1$ centered at $0$ and $\infty$ respectively, then a real structure $\kappa$ identifies~$\cM^\infty$ with $\gamma^*\ov\cM{}^0$, so that the vector bundle $\cT$ on $\PP^1$ can be also described as resulting of the gluing of $\cM^0$ with $\gamma^*\ov\cM{}^0$ in a way invariant with respect to $\gamma^*\ov{\phantom{X}}$.
\end{remarque}

An \index{twistor structure!integrable --}\emph{integrable twistor structure} is a pair $(\cT,\nabla)$ consisting of a twistor structure endowed with a meromorphic connection $\nabla$ having a pole \emph{of order at most two} at $\hb=0$ and $\hb=\infty$, and no other pole. Then $(\cT,\nabla)^\vee,(\cT,\nabla)^*,(\cT,\nabla)^\rc$ are defined similarly, and a real structure on $(\cT,\nabla)$ is an isomorphism $\kappa:(\cT,\nabla)\isom(\cT,\nabla)^\rc$ such that $\kappa^\rc=\kappa^{-1}$.

\begin{exemple}[The Tate object]\label{exem:Tate}
\index{Tate object}We will use the convention and the notation of \cite[\S2.1.8]{Mochizuki11}, that we recall. For $\ell\in\ZZ$, we denote by \index{$Tl$@$\bT(\ell)$}$\bT(\ell)$ the bundle $\cO_{\PP^1}(-2\ell)$, that we endow with the meromorphic connection $\nabla$ having a simple pole at $0$ and~$\infty$, both with residue~$\ell$, and no other pole. We have natural identifications, that we denote by~$\id$:
\[
\iota^*\bT(\ell)=\bT(\ell),\quad\bT(-\ell)=\bT(\ell)^\vee,\quad\bT(-\ell)=\bT(\ell)^*,\quad\bT(\ell)=\bT(\ell)^\rc.
\]
However, we use the following identifications, compatible with tensor product:
\begin{starequation}\label{eq:defdualsigmarcTate}
\begin{cases}
\bbullet\ \id:\iota^*\bT(\ell)=\bT(\ell),\\
\bbullet\ d_{-2\ell}=\id:\bT(-\ell)=\bT(\ell)^\vee,\\
\bbullet\ s_{-2\ell}=(-1)^\ell\id:\bT(-\ell)\isom\bT(\ell)^*,\\
\bbullet\ \kappa_{-2\ell}=(-1)^\ell\id:\bT(\ell)\isom\bT(\ell)^\rc.
\end{cases}
\end{starequation}%
The local system $\bT(\ell)^\nabla_{\bS}$ is the constant local system $\CC_{\bS}$ endowed with the real structure $\kappa_{-2\ell}^\nabla$ induced by the multiplication by $(-1)^\ell$. It is natural to endow it with the rational structure $\bT(\ell)^\nabla_{\bS,\QQ}:=(\twopii)^\ell\QQ_{\bS}$, in such a way that
\[
\ker(\kappa_{-2\ell}^\nabla-\id)=:\bT(\ell)^\nabla_{\bS,\RR}=\RR\otimes_\QQ\bT(\ell)^\nabla_{\bS,\QQ}.
\]
\end{exemple}

Given a twistor structure $\cT$ (\resp an integrable twistor structure $(\cT,\nabla)$), an \index{Hermitian duality}\emph{Hermitian duality of weight~$w$} on $\cT$ (\resp on $(\cT,\nabla)$) is an isomorphism \index{$Sc$@$\cS$}$\cS:\cT\to\cT^*\otimes\bT(-w)$ (\resp compatible with $\nabla$) such that the composed isomorphism
\[
\cT\otimes\bT(w)\To{\id\otimes s_{-2w}}\cT\otimes\bT(-w)^*\To{\cS^*}\cT^*
\]
tensored by $\bT(-w)$ gives back $(-1)^w\cS$. In other words, $\cS$ is a pairing
\[
\cS:\cT\otimes_{\cO_{\PP^1}}\sigma^*\ov\cT=\cT\otimes_{\cO_{\PP^1}}\iota^*\cT^\rc\to\bT(-w),
\]
which satisfies, through the isomorphism $\kappa_{2w}^{-1}:\sigma^*\ov{\bT(-w)}=\bT(-w)^\rc\isom\bT(-w)$,
\[
\sigma^*\ov\cS=(-1)^w\cS,\quad\text{equivalently}\quad\iota^*\cS^\rc=(-1)^w\cS.
\]

\skpt
\begin{exemple}\label{exem:TU}
\begin{enumerate}
\item\label{exem:TU1}
We can regard $s_{-2\ell}$ as a morphism $\bT(\ell)\otimes\bT(-2\ell)\to\bT(\ell)^*$, hence as a morphism $\bT(\ell)\to\bT(\ell)^*\otimes\bT(2\ell)$, and as such it is a Hermitian duality of weight $-2\ell$ on~$\bT(\ell)$.
\item\label{exem:TU2}
We denote by \index{$Upq$@$\bU(p,q)$}$\bU(p,q)$ the bundle $\cO_{\PP^1}(p-q)$ that we equip with the connection having a simple pole with residue $-p$ at $0$ and $q$ at $\infty$, and no other pole. Its adjoint $\bU(q,p)=\bU(p,q)^*$ is $\cO_{\PP^1}(q-p)$ with connection. We have $\bT(\ell)=\bU(-\ell,\ell)$. The morphism
\[
\bU(p,q)\To{(-1)^p}\bU(p,q)^*\otimes\bT(q-p)
\]
is a Hermitian duality of weight $p-q$ on $\bU(p,q)$. We have $\bU(p,q)^\vee=\bU(-p,-q)$ and $\bU(p,q)^\rc=\bU(-q,-p)$. Note that, forgetting the connection, what distinguishes the various $\bU(p,q)$'s with $p-q$ fixed is the Hermitian duality.
\end{enumerate}
\end{exemple}

Given a Hermitian duality of weight $w$ on $\cT$, we obtain a Hermitian duality $\cS_0$ of weight $0$ on $\cT\otimes\cO_{\PP^1}(-w)$ by tensoring with $\bU(0,w)$:
\[
\cS_0:(\cT\otimes\bU(0,w))\to(\cT\otimes\bU(0,w))^*.
\]
We note that it induces an ordinary Hermitian pairing
\[
H^0(\PP^1,\cT\otimes\bU(0,w))\otimes_{\CC}\ov{H^0(\PP^1,\cT\otimes\bU(0,w))}\to\CC.
\]

\begin{definition}
A twistor structure $\cT$ (\resp an integrable twistor structure $(\cT,\nabla)$) is \index{twistor structure!pure --}\emph{pure of weight $w\in\ZZ$} if $\cT\otimes\cO_{\PP^1}(-w)$ is the trivial bundle. A \index{polarization}\emph{polarization} of $\cT$ (\resp of $(\cT,\nabla)$) is a Hermitian duality of weight~$w$ on $\cT$ (\resp on $(\cT,\nabla)$) such that the Hermitian form induced by $\cS_0$ on $H^0(\PP^1,\cT\otimes\bU(0,w))$ is positive definite.
\end{definition}

If $\cS$ is a polarization of $\cT$ of weight $w$, then $\cS^\rc$ (\resp $\cS^{*-1}$, \resp $\cS^{\vee-1}$) induces a polarization of $\cT^\rc$ (\resp $\cT^*$, \resp $\cT^\vee$) of weight $w$ (\resp $-w$).

\begin{definition}\label{def:realTS}
We say that a pure twistor structure $\cT$ (\resp an integrable pure twistor structure $(\cT,\nabla)$) is \index{twistor structure!polarizable pure --}\emph{polarizable} if it admits a polarization $\cS$ (\resp \index{twistor structure!integrable polarizable pure --}compatible with~$\nabla$).
\end{definition}

\skpt
\begin{remarque}\label{rem:realTS}
\begin{enumerate}
\item\label{rem:realTS1}
The category of polarizable (integrable) pure twistor structures of a given weight is abelian. A pure (integrable) twistor structure which is a direct summand of a polarizable one of the same weight is also polarizable.

\item\label{rem:realTS2}
The notion of \index{twistor structure!mixed --}\emph{mixed twistor structure} and \index{twistor structure!integrable mixed --}\emph{integrable mixed twistor structure} is defined as usual, and leads to an abelian category (we always assume that the pure objects $\gr_\ell^W\cT$ are polarizable). Note that the functors \eqref{eq:defdualsigmarc} preserve these categories and grading with respect to the weight filtration $W$ behaves as usual. In particular, $\gr_\ell^W(\cT^\rc)=(\gr_\ell^W\!\!\cT)^\rc$.

A \emph{real structure} is a morphism $\kappa:(\cT,W)\to(\cT,W)^\rc$ in the category of (inte\-grable) mixed twistor structure such that $\kappa^\rc\circ\kappa=\id$. This leads to the abelian categories of \index{twistor structure!real mixed --}\emph{real mixed twistor structures} and \index{twistor structure!real integrable mixed --}\emph{real integrable mixed twistor structures}.
\end{enumerate}
\end{remarque}

\subsection{Good real and $\kk$-structures}\label{subsec:realkstructMTS}
For a real integrable pure twistor structure $(\cT,\nabla,\kappa)$, the meromorphic flat bundle $\cT(*\{0,\infty\})$ corresponds, through the Riemann-Hilbert correspondence, to a \index{Stokes-filtered local system}\emph{Stokes-filtered local system} $(\cL,\cL_\bbullet^0,\cL_\bbullet^\infty)$: $\cL$ is a local system on $\CC_\hb^*$, that we also regard as a local system on the closed annulus $\wt\PP^1_\hb$ obtained as the real oriented blow-up space of $\PP^1$ at $0$ and $\infty$; $\cL_\bbullet^0$ is the \index{Stokes filtration}\emph{Stokes filtration} of the restriction of $\cL$ to the boundary circle above $0$, and $\cL_\bbullet^\infty$ is similarly defined (\cf \cite{Deligne78} and \cite{Malgrange91}).

The real structure~$\kappa$ induces an isomorphism $(\cL,\cL_\bbullet^0)\simeq\gamma^{-1}(\ov \cL,\ov\cL_\bbullet^\infty)$. We note that $\cL$ is completely determined by its restriction to $\bS$, on which $\gamma=\id$. Therefore, $\kappa$ induces a real structure $\kappa^\nabla:\cL\simeq\ov\cL$, \ie defines an $\RR$-local system $\cL_\RR$ such that $\cL=\CC\otimes_\RR\cL_\RR$.

It is simpler to imagine that $\cL$ is a local system on $\bS$, and that $\bS$ is identified both with the boundary over $0$ and with the boundary over $\infty$ of the oriented real blow-up of $\PP^1$ at $0$ and $\infty$. In such a way, we can replace $\gamma$ with $\id$, so that $\cL_\bbullet^0$ and $\cL_\bbullet^\infty$ are Stokes filtration on the same local system $\cL$, which are conjugate with respect to the real structure on $\cL$ induced by $\kappa^\nabla$. This suggests an enrichment of the notion of an $\RR$-structure by imposing that
\begin{itemize}
\item
the Stokes filtration $\cL^0_\bbullet$ is defined over $\RR$, and thus coincides with its complex conjugate $\cL^\infty_\bbullet$,
\end{itemize}

\begin{definition}[$\kk$-good integrable pure twistor structure]\label{def:goodrealintegrable}
Let $\kk$ be a subfield of~$\RR$. A~\emph{good $\kk$-structure} on an integrable twistor structure $(\cT,\nabla)$ consists of a real structure~$\kappa$ together with the data of a Stokes-filtered $\kk$-local system $(\cL_\kk,\cL_{\kk,\bbullet}^0)$ such that
\begin{enumerate}
\item\label{def:goodrealintegrable1}
$\RR\otimes_\kk\cL_\kk=\cL_\RR$ (the latter defined by $\kappa$),
\item\label{def:goodrealintegrable2}
$\CC\otimes_\kk(\cL_\kk,\cL_{\kk,\bbullet}^0)=(\cL,\cL_\bbullet^0)$.\end{enumerate}
\end{definition}

\begin{definition}[$\kk$-good integrable mixed twistor structure]\label{def:kgoodintMTS}
A \index{twistor structure!$\kk$-good integrable mixed --}good $\kk$-structure on an integrable mixed twistor structure $(\cT,\nabla,W)$ consists of a real structure $\kappa$ compatible with $W$ (Remark \ref{rem:realTS}\eqref{rem:realTS2}), together with the data of a $W$-filtered Stokes-filtered $\kk$-local system $(\cL_\kk,\cL_{\kk,\bbullet}^0,W)$ such that $(\RR\otimes_\kk\cL_\kk,W)=(\cL_\RR,W)$ and $\CC\otimes_\kk(\cL_\kk,\cL_{\kk,\bbullet}^0,W)=(\cL,\cL_\bbullet^0,W)$.
\end{definition}

\begin{proposition}\label{prop:TSTannakienne}
The categories of mixed twistor structures, \resp integrable mixed twistor structures, \resp $\kk$-good integrable mixed twistor structures are neutral Tannakian categories with fibre functor given by taking the fiber at $\hb=1$ (of the attached local systems in the integrable cases).
\end{proposition}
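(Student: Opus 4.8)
The plan is to verify, for each of the three categories, the axioms of a neutral Tannakian category over the relevant base field ($\CC$ in the first two cases, $\kk$ in the last), namely: that it is an abelian, $\otimes$-category which is rigid (every object has a dual satisfying the usual adjunction), that $\End$ of the unit object is the base field, and that there exists an exact faithful $\otimes$-functor to the category of finite-dimensional vector spaces over that base field. The abelianness and the monoidal structure (with unit the trivial twistor structure $\bT(0)=\cO_{\PP^1}$, resp. its integrable or $\kk$-refined versions) have already been recorded in Remark \ref{rem:realTS}; what remains is rigidity, the computation of $\End$ of the unit, and the properties of the fibre functor.

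\textbf{Step 1: the fibre functor.}
First I would define $\omega(\cT)=\cT_{\hb=1}$ in the non-integrable case, and in the integrable (resp.\ $\kk$-good) case $\omega(\cT,\nabla)=(\cL^\nabla)_{\hb=1}$ (resp.\ $(\cL_\kk^\nabla)_{\hb=1}$), i.e.\ the stalk at $\hb=1\in\bS$ of the local system of flat sections of the attached Stokes-filtered local system. Since $\hb=1$ lies away from $0,\infty$, taking the fibre there is exact on vector bundles on $\PP^1$ in the first case; in the integrable cases the local system $\cL$ (resp.\ $\cL_\kk$) is an honest local system on $\CC_\hb^*$ (resp.\ its $\kk$-form), so its stalk is exact and faithful as well. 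Compatibility with tensor product is immediate from the fact that $\otimes$ of bundles (resp.\ of local systems, resp.\ with their $\kk$-structures and Stokes filtrations, the latter using the additivity of the Stokes structure under $\otimes$) commutes with restriction to a point. Faithfulness follows because a nonzero vector bundle on $\PP^1$ (resp.\ a nonzero local system) has nonzero fibre at every point, and an exact sequence of such objects that becomes exact after taking the fibre at $\hb=1$ was already exact, so a morphism that vanishes on fibres vanishes.

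\textbf{Step 2: rigidity and $\End$ of the unit.}
For rigidity I would take the internal Hom to be $\cHom(\cT_1,\cT_2)=\cT_1^\vee\otimes\cT_2$, using the dual $\cT^\vee$ of \eqref{eq:defdualsigmarc} (with the induced connection in the integrable case, and the dual $\kk$-structure/Stokes filtration — the latter obtained by the standard "opposite" Stokes filtration on $\cL_\kk^\vee$ — in the $\kk$-good case). The evaluation and coevaluation morphisms come from those of vector bundles on $\PP^1$; one checks they are morphisms in the category (the connection has a pole of order $\le 2$ on both $\cT$ and $\cT^\vee$, hence on the tensor product, and mixedness and the $\kk$-structure are preserved by \eqref{eq:defdualsigmarc} as noted in Remark \ref{rem:realTS}). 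That $\End$ of the unit object equals the base field reduces to computing $H^0(\PP^1,\cO_{\PP^1})=\CC$ in the non-integrable case; in the integrable case one adds that the flat sections of $(\cO_{\PP^1},\rd)$ over $\bS$ form the constant local system $\CC_\bS$, whose global flat sections are $\CC$; in the $\kk$-good case the unit carries the constant $\kk$-local system $\kk_\bS$ with trivial Stokes filtration, whose endomorphisms respecting that structure are exactly $\kk$. Finally I would note, via Deligne's recognition theorem, that once we have an exact faithful $\kk$-linear (resp.\ $\CC$-linear) $\otimes$-functor to vector spaces together with rigidity and $\End(\mathbf 1)=\kk$ (resp.\ $\CC$), neutrality is automatic, so we are done.

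\textbf{Main obstacle.}
The delicate point is the $\kk$-good integrable case, where one must keep track simultaneously of the real structure $\kappa$, the $\kk$-form $\cL_\kk$, and the Stokes filtrations $\cL_{\kk,\bbullet}^0$ under the operations $\otimes$ and $(-)^\vee$, and verify that the fibre at $\hb=1$ of the $\kk$-local system $\cL_\kk$ — rather than that of the bundle $\cT$ — is the right fibre functor, i.e.\ that it is still exact and faithful on the category defined by these compatible data. Exactness and faithfulness for this refined functor follow once one observes that a short exact sequence in the $\kk$-good category is, in particular, a short exact sequence of the underlying $\kk$-local systems (the Stokes filtrations being strict by the general theory of Stokes-filtered local systems, \cf \cite{Malgrange91}), so that taking stalks at $\hb=1$ detects exactness; this strictness of morphisms of Stokes-filtered local systems is the one nontrivial input, and everything else is a bookkeeping exercise on $\PP^1$.
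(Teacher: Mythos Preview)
Your proposal is correct and follows essentially the same route as the paper. The paper's proof is much terser: it invokes the criterion of \cite[Prop.\,1.20]{D-M82}, which reduces the rigidity check to verifying that every rank-one object $\cT$ satisfies $\cT\otimes\cT^\vee\simeq\bT(0)$, and then simply records that the unit object is $\bT(0)$ with $\End(\bun)=\CC$ (or $\kk$). Your more explicit verification of rigidity via internal Hom and evaluation/coevaluation, and your careful discussion of the fibre functor in the $\kk$-good case, are all fine but more than strictly needed once that criterion is in hand.
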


\begin{proof}
We will use the criterion of \cite[Prop.\,1.20]{D-M82}. The tensor product is obviously defined. The unit object $\bun$ is the trivial bundle $\cO_{\PP^1}$ with connection $\rd$ and trivial $W$-filtration jumping at zero, that is, $\bT(0)$. It is then clear that $\End(\bun)=\CC$ (or $\kk$). By construction, for any rank-one object $\cT$, the dual object $\cT^\vee$ satisfies $\cT\otimes\cT^\vee\simeq\bT(0)$.
\end{proof}

\subsection{Compatibility with a polarization}\label{subsec:compRpol}
We have given the definition of a real structure in order to be compatible with that of \cite{Mochizuki11} for general mixed twistor $\cD$-modules. However, in order to mimic the definition of a real (or rational) mixed Hodge structure, one should add a condition of compatibility with some polarization. As we will see, this gives back the notion of \index{TERP structure}TERP structure of \cite{Hertling01} or that of \index{Hodge structure!non-commutative --}non-commutative Hodge structure of \cite{K-K-P08}.

Let $\cT$ (\resp $(\cT,\nabla)$) be a pure (integrable) twistor structure of weight $w$. A real structure $\kappa$ on $\cT$ and a polarization $\cS$ on $\cT$ are said to be compatible (we also say that the polarization is real) if the following diagram commutes
\begin{equation}\label{eq:kappaS}
\begin{array}{c}
\xymatrix@C=1.2cm{
\cT\ar[r]^-\kappa\ar[d]_\cS&\cT^\rc\ar[d]^{\cS^\rc}\\
\cT^*\otimes\bT(-w)\ar[r]^-{\kappa^{*-1}}&\cT^{*\rc}\otimes\bT(-w)
}
\end{array}
\end{equation}
and a similar diagram in the integrable case. We say that a real (integrable) pure twistor structure $(\cT,\kappa)$ (\resp $(\cT,\nabla,\kappa)$) is \emph{polarizable} if it admits a real (integrable) polarization $\cS$.

Let $(\cT,\cS,\kappa)$ be a real polarized twistor structure of weight~$w$. From $\kappa$ and $\cS$ we obtain a nondegenerate $\cO_{\PP^1}$-linear pairing\index{$Qc$@$\cQ$}
\[
\cQ=\cS^\rc\circ\kappa=\kappa^{*-1}\circ\cS:\cT\to\iota^*\cT^\vee\otimes\bT(-w).
\]
This pairing ``commutes'' with $\kappa$, in the sense that $\cQ^\rc\circ\kappa=\kappa^*\circ\cQ$, and is $(-1)^w$\nobreakdash-$\iota$-symmetric in the sense that
\[
\cT\otimes\bT(w)\simeq\cT\otimes\iota^*\bT(-w)^\vee\To{\iota^*\cQ^\vee}\iota^*\cT^\vee,
\]
when tensored with $\bT(-w)$, gives back $(-1)^w\cQ$. In other words, $\cQ$ is a nondegenerate $(-1)^w$-$\iota$-symmetric pairing
\[
\cQ:\cT\otimes\iota^*\cT\to\bT(-w).
\]
Using the notation of Section \ref{subsec:realkstructMTS}, the pairing $\cQ$ induces a nondegenerate $\iota$\nobreakdash-sym\-met\-ric pairing
\[
\cQ^\nabla:(\cL,\cL_\bbullet^0,\cL_\bbullet^\infty)\otimes_\CC\iota^{-1}(\cL,\cL_\bbullet^0,\cL_\bbullet^\infty)\to\CC_{\CC^*_\hb},
\]
where $\CC_{\CC^*_\hb}$ is the constant sheaf on $\CC^*_\hb$ endowed with its trivial Stokes filtration at $0$ and at $\infty$. (Note that $\iota^{-1}\cL=\cL$, but the Stokes filtrations are rotated by $\iota^{-1}$.)

Therefore, a real polarized twistor structure of weight~$w$ can also be given as a triple $(\cT,\cQ,\kappa)$, noticing however that the positivity condition involves $\cS$. In the integrable case, $\cQ$ is compatible with $\nabla$. Moreover, the restriction of $\cQ^\nabla$ to $\cL_\RR\otimes_\RR\iota^{-1}\cL_\RR$ takes values in $(\twopii)^{-w}\RR$, due to the compatibility of $\cQ$ and $\kappa$. As in Section \ref{subsec:realkstructMTS}, this suggests to enrich the notion of a real polarized integrable pure twistor structure by adding the condition:
\begin{itemize}
\item
The pairing $\cQ^\nabla$ induces a nondegenerate pairing
\[
(\cL_\RR,\cL^0_{\RR,\bbullet})\otimes\iota^{-1}(\cL_\RR,\cL^0_{\RR,\bbullet})\to(\twopii)^{-w}\RR_{\bS},
\]
where the latter is equipped with its trivial Stokes filtration.
\end{itemize}
Then,
\begin{itemize}
\item
in Definition \ref{def:goodrealintegrable}, one adds the condition
\begin{enumerate}\setcounter{enumi}{2}
\item\label{def:goodrealintegrable3}
$\cQ^\nabla$ restricted to $(\cL_\kk,\cL_{\kk,\bbullet}^0)$ takes values in the constant sheaf $(\twopii)^{-w}\kk_{\bS}$ (with its trivial Stokes filtration),
\end{enumerate}
and in order to simply define the polarizability condition, one adds in Definition \ref{def:goodrealintegrable} the condition that there exists a polarization $\cQ$ satisfying \eqref{def:goodrealintegrable3} above,
\item
in Definition \ref{def:kgoodintMTS}, one adds the condition that each $\gr_\ell^W$ is a $\kk$-good integrable polarizable pure twistor structure of weight $\ell$, in the enriched sense of Definition~\ref{def:goodrealintegrable}, as defined above.
\end{itemize}

The analogue of Proposition \ref{prop:TSTannakienne} holds for this enriched setting.

\subsection{Twistor structures as triples}\label{subsec:twtriple}
The category \index{$RTriplespt$@$\RTriples(\pt)$}$\RTriples(\pt)$ gives another presentation of twistor structures, which is better suited to the extension to higher dimension and arbitrary singularities. We represent a vector bundle $\cT$ on $\PP^1$ by a \emph{triple} $\cT=(\cM',\cM'',C_{\bS})$, where $\cM',\cM''$ are vector bundles on $\CC_\hb$ and \index{$CS$@$C_{\bS}$}$C_{\bS}$ is a nondegenerate pairing $\cM'_{|\bS}\otimes_{\cO_{\bS}}\sigma^*\ov{\cM''}_{|\bS}\to\cO_{\bS}$. We regard the pairing as giving rise to a vector bundle by gluing $\cM^{\prime\vee}$ with $\sigma^*\ov{\cM''}$. Morphisms are pairs $(\lambda',\lambda'')$ compatible with the pairings, covariant with respect to~$\lambda''$ and contravariant with respect to $\lambda'$. The notation of \eqref{eq:defdualsigmarc} reads as follows:
\begin{equation}\label{eq:defdualsigmarctriples}
\begin{cases}
\bbullet\ \cT^\vee=(\cM^{\prime\vee},\cM^{\prime\prime\vee},C_{\bS}^{\vee-1}),\\
\bbullet\ \cT^*=(\cM'',\cM',C_{\bS}^*),\\
\bbullet\ \cT^\rc=\iota^*(\cT^\vee)^*=\iota^*(\cT^*)^\vee,
\end{cases}
\end{equation}
where we have set
\[
C_{\bS}^*(m'',\sigma^*\ov{m'}):=\sigma^*\ov{C_{\bS}(m',\sigma^*\ov{m''})},
\]
and we define $C_{\bS}^{\vee-1}$ so that, if $C_{\bS}$ is regarded as an isomorphism $\cM^{\prime\vee}_{|\bS}\isom\sigma^*\ov{\cM''_{|\bS}}$, then $C_{\bS}^{\vee-1}$ is the dual inverse isomorphism $\cM'_{|\bS}\isom\sigma^*\ov{\cM_{|\bS}^{\prime\prime\vee}}$. The category \index{$RTriplesptint$@$\RdTriples(\pt)$}$\RdTriples(\pt)$ of integrable triples is defined similarly, asking that the connection~$\nabla$ on $\cM',\cM''$ has a pole of order at most two at $\hb=0$ and no other pole, and~$C_{\bS}$ is compatible with the connections. Let us set $\cL'=\cM'(*0)^\nabla$ \resp $\cL''=\cM''(*0)^\nabla$, and let us use the notation $\cL$ for $\cT$ as in Section \ref{sec:twistorvectorbundles}. Then the correspondence is $C_{\bS}^\nabla:\cL^{\prime\vee}\isom\iota^{-1}\ov{\cL''}=\cL$.

\begin{remarque}\label{rem:algebraization}
Integrable triples have a more algebraic formulation: by using Deligne's meromorphic extension at infinity for $(\cM',\nabla)$ and $(\cM'',\nabla)$, we find free $\CC[\hb]$-modules with connection $(M',\nabla)$ and $(M'',\nabla)$ which give back the previous ones by tensoring with~$\cO_{\CC_\hb}$. Moreover, $\nabla$ on $M',M''$ has a regular singularity at infinity.
\end{remarque}

\skpt
\begin{exemple}[The Tate object]\label{exem:Tatetriple}
\begin{enumerate}
\item\label{exem:Tatetriple1}
The \index{$Tl$@$\bT(\ell)$}\index{Tate object}\emph{Tate object} (\cf \cite[\S2.1.8]{Mochizuki11}) is defined as the triple
\[
\bT(\ell):=(\hb^{-\ell}\cO_{\CC_\hb},\hb^\ell\cO_{\CC_\hb},C_0),
\]
where $C_0$ is induced from the natural sesquilinear pairing (\cf Notation  \ref{nota:general})
\[
C_0:\cO_{\bS}(*0)\otimes\sigma^*\ov{\cO_{\bS}(*0)}\to\cO_{\bS},\quad f'\otimes\sigma^*\ov{f''}\mto f'\cdot\sigma^*\ov{f''},
\]
and the connection $\nabla$ is induced from the standard differential $\rd$ on $\cO_{\bS}(*0)$. We keep the isomorphisms \eqref{eq:defdualsigmarcTate} as they are. We note that the local systems $\cL',\cL''$ for $\bT(\ell)$ are independent of $\ell$ and are canonically identified to $\CC_{\CC_\hb^*}$, and $C_{\bS}^\nabla$ is equal to $\id$. From the identification $\cL''=\iota^{-1}\ov\cL$ and the $\QQ$-structure $\cL_{\bS,\QQ}=(\twopii)^\ell\QQ_{\bS}$ we obtain
\[
\cL''_{\bS,\QQ}=(\twopii)^{-\ell}\QQ_{\bS}=\cL'_{\bS,\QQ}.
\]

\item\label{exem:Tatetriple2}
We use the triple \index{$Upq$@$\bU(p,q)$}$\bU(0,w):=(\cO_{\CC_\hb},\hb^w\cO_{\CC_\hb},C_0)$, whose associated vector bundle is $\cO_{\PP^1}(-w)$, to reduce to weight zero and define the polarizability, by means of the identification $\sigma^*\ov{\bU(0,w)}=\bU(-w,0)$ and the natural pairing
\[
\bU(0,w)\otimes\bU(-w,0)\To{\id}\bU(-w,w)=\bT(w),
\]
that we regard as a Hermitian duality of weight $-w$. This pairing is compatible with the connections induced by $\rd$.
\end{enumerate}
\end{exemple}

The full subcategory \index{$MTSC$@$\MTS(\CC)$}$\MTS(\CC)$ (\resp \index{$MTSCint$@$\MTS^\intt(\CC)$}$\MTS^\intt(\CC)$) of the category $\WRTriples(\pt)$ (\resp $\WRdTriples(\pt)$) of $W$-filtered triples consists of objects which give rise to mixed twistor structures (\resp integrable mixed twistor structures) by the gluing procedure above. The categories \index{$MTSR$@$\MTS(\RR)$}$\MTS(\RR)$, \index{$MTSRint$@$\MTS^\intt(\RR)$}$\MTS^\intt(\RR)$, \index{$MTSkint$@$\MTS_\good^\intt(\kk)$}$\MTS^\intt_\good(\kk)$, are defined accordingly. As already remarked, it is enough to define the $\kk$-structure on the Stokes-filtered local system $(\cL'',\cL''_\bbullet)$ attached to $(\cM'',\rd)$, since it coincides with that attached to~$(\cL',\cL'_\bbullet)$. Lastly, we note that there are forgetful faithful functors
\[
\MTS^\intt_\good(\kk)\mto\MTS^\intt(\RR)\mto\begin{matrix}\MTS(\RR)\\\text{or}\\[3pt]\MTS^\intt(\CC)\end{matrix}\mto\MTS(\CC).
\]

\begin{remarque}[On the notation]
In the next chapter, we will consider the notion of a mixed twistor $\cD$-module on a complex manifold, following \cite{Mochizuki11} and, with the notation introduced there, the previous chain of functors reads
\[
\MTM^\intt_\good(\pt,\kk)\mto\MTM^\intt(\pt,\RR)\mto\begin{matrix}\MTM(\pt,\RR)\\\text{or}\\[3pt]\MTM^\intt(\pt)\end{matrix}\mto\MTM(\pt).
\]
\end{remarque}

We obtain as in Proposition \ref{prop:TSTannakienne}:

\begin{proposition}\label{prop:MTSTannakienne}
The above categories are neutral Tannakian categories with fibre functor given by taking the fiber of $\cM''$ at $\hb=1$ (\resp that of the attached local system in the integrable cases), and the forgetful functors preserve these structures.\qed
\end{proposition}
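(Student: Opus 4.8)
The plan is to verify, exactly as in the proof of Proposition \ref{prop:TSTannakienne}, the hypotheses of the Tannakian reconstruction criterion \cite[Prop.\,1.20]{D-M82}, now for $W$-filtered (possibly integrable) triples and with the fibre functor $\Xi_{\DR}$ replacing the fibre at $\hb=1$ of the glued bundle. The tensor product of triples is defined componentwise on $(\cM',\cM'',C_{\bS})$ together with the convolution of the $W$-filtrations, the unit object being $\bT(0)$ (Example \ref{exem:Tatetriple}), and all the categories involved are abelian, being full subcategories of $\WRTriples(\pt)$ or $\WRdTriples(\pt)$ closed under kernels and cokernels (\cf Remark \ref{rem:realTS}). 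Under the gluing functor of Section \ref{subsec:twtriple}, this tensor product is the tensor product of (integrable) mixed twistor structures, so it preserves each of the full subcategories $\MTS(\CC)$, $\MTS^\intt(\CC)$ and their real and $\kk$-refinements: on the graded pieces $\gr^W$ one is reduced to the fact that the tensor product of two polarizations is a polarization, since the tensor product of two positive definite Hermitian forms is positive definite (\cf Remark \ref{rem:realTS}), while the real structure, \resp the $\kk$-local system, of a tensor product is the tensor product of the given ones. For rigidity, the internal dual of a triple is $\cT^\vee$ as in \eqref{eq:defdualsigmarctriples} (again an object of the relevant category, being the dual mixed twistor structure), with the evident evaluation and coevaluation; as noticed in the proof of Proposition \ref{prop:TSTannakienne}, it suffices to check on rank-one objects that $\cT\otimes\cT^\vee\simeq\bT(0)$. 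Finally $\End(\bT(0))=H^0(\PP^1,\cO_{\PP^1})=\CC$, and this becomes $\kk$ in $\MTS^\intt_\good(\kk)$ since an endomorphism must in addition respect the $\kk$-local system.

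It then remains to check that $\omega:=\Xi_{\DR}$, sending $\cT=(\cM',\cM'',C_{\bS})$ to $\cM''/(\hb-1)\cM''$, is an exact, faithful, $\CC$-linear (\resp $\kk$-linear) tensor functor to finite-dimensional vector spaces; in the integrable cases this is the stalk at $\hb=1$ of the local system $\cM''(*0)^\nabla$ attached to $(\cM'',\nabla)$, since $\hb=1$ lies away from its only singularity. The identities $\omega(\cT_1\otimes\cT_2)=\omega(\cT_1)\otimes_\CC\omega(\cT_2)$ and $\omega(\bT(0))=\CC$ are clear. Exactness holds because the abelian structure on $\MTS$ is the one inherited from (mixed) twistor structures, for which short exact sequences are strict, hence are short exact sequences of the underlying vector bundles; in particular the modules $\cM''$ stay locally free over $\cO_{\CC_\hb}$ and a short exact sequence of them remains exact after $\otimes\,\cO_{\CC_\hb}/(\hb-1)$. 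For faithfulness, let $\lambda$ be a morphism with $\omega(\lambda)=0$, and show by induction on $k\ge0$ that $\lambda(W_\ell\cT_1)\subseteq W_{\ell-k}\cT_2$ for all $\ell$: the case $k=0$ is clear; for the passage from $k=0$ to $k=1$, exactness and compatibility of $\omega$ with $W$ give $\omega(\gr^W_\ell\lambda)=\gr^W_\ell\omega(\lambda)=0$, and $\gr^W_\ell\lambda$ is a morphism of polarizable pure twistor structures of weight $\ell$, i.e.\ in the glued picture a morphism between finite direct sums of copies of $\cO_{\PP^1}(\ell)$, that is, a constant matrix, which therefore vanishes once it vanishes at $\hb=1$; for the passage from $k\ge1$ to $k+1$, the induced morphism $\gr^W_\ell\cT_1\to\gr^W_{\ell-k}\cT_2$ is zero because $\Hom(\cO_{\PP^1}(\ell),\cO_{\PP^1}(\ell-k))=H^0(\PP^1,\cO_{\PP^1}(-k))=0$. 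Since the weight filtrations are finite, this forces $\lambda=0$. In the real (\resp good $\kk$-) case, the real structure $\kappa$ together with $C_{\bS}$ (\resp the $\kk$-local system attached to $(\cM'',\rd)$) induces the corresponding structure on $\omega(\cT)$, so that $\omega$ lands in $\RR$- (\resp $\kk$-) vector spaces, the above arguments applying verbatim over $\RR$ (\resp $\kk$); the compatibility of $\otimes$ and of the duality with these structures is the routine content of Sections \ref{subsec:realkstructMTS} and \ref{subsec:compRpol}.

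Finally, the compatibility of the forgetful functors with the Tannakian structures is formal: each of them keeps the same underlying triple — forgetting integrability, \resp extending scalars $\kk\to\RR$ or $\kk\to\CC$ on the $\kk$-structure — hence is a tensor functor, and it carries $\omega$ to $\RR\otimes_\kk\omega$, \resp $\CC\otimes_\RR\omega$, \resp $\CC\otimes_\kk\omega$. I expect the only points needing genuine care to be the exactness of $\omega$ — which is precisely why the abelian structure on $\MTS$ must be taken to be the one coming from twistor structures, so that the modules $\cM''$ remain locally free — and the equality $\End(\bun)=\kk$ in the $\kk$-refined categories; both are immediate from the definitions, so there is no substantial obstacle beyond this bookkeeping.
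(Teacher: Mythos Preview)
Your proposal is correct and follows exactly the same approach as the paper, namely verifying the criterion of \cite[Prop.\,1.20]{D-M82}; in fact the paper gives no independent argument here and simply records ``We obtain as in Proposition~\ref{prop:TSTannakienne}'', whose proof is the three-line check of that criterion. You have additionally spelled out the exactness and faithfulness of the fibre functor via the weight filtration and the structure of pure twistor structures as sums of $\cO_{\PP^1}(\ell)$, which the paper leaves implicit.
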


\subsection{Integrable twistor structures as $\iota$-triples}\label{subsec:itriplesint}

For integrable triples $(\cM',\cM'',C_{\bS})$, the pairing $C_{\bS}$ is completely determined by its restriction \index{$CSnabla$@$C_{\bS}^\nabla$}$C_{\bS}^\nabla:\cL'_{\bS}\otimes_\CC\sigma^{-1}\ov{\cL''_{\bS}}\to\CC_{\bS}$. Since $\sigma_{|\bS}=\iota_{|\bS}$, giving $C_{\bS}^\nabla$ is equivalent to giving a nondegenerate pairing \index{$Cinabla$@$\iC^\nabla$}$\iC^\nabla:\cL'\otimes\iota^{-1}\ov{\cL''}\to\CC_{\CC_\hb^*}$ defined on the whole~$\CC^*_\hb$. We therefore define the category \index{$RTriplesptinti$@$\RdiTriples(\pt)$}$\RdiTriples(\pt)$ as the category consisting of triples $((\cM',\nabla),(\cM'',\nabla),\iC^\nabla)$, and we have an equivalence of categories
\[
\RdiTriples(\pt)\overset\sim\mto\RdTriples(\pt)
\]
obtained by sending $\iC^\nabla$ first to its restriction to $\bS$ and then to the associated $C_{\bS}$.

\skpt
\begin{remarque}\label{rem:iCnabla}
\begin{enumerate}
\item\label{rem:iCnabla1}
Giving $\iC^\nabla$ as above is equivalent to giving a sesquilinear pairing\index{$Ci$@$\iC$}
\[
\iC:\cM'_{|\CC^*_\hb}\otimes_\CC\ov{\iota^*\cM''_{|\CC^*_\hb}}\to\cA_{\CC^*_\hb},
\]
where $\cA_{\CC^*_\hb}$ denote the sheaf of real analytic functions on $\CC^*_\hb$, which satisfies
\[
\partial_z\,\iC(m',\ov{m''})=\iC(\nabla_{\partial_\hb} m',\ov{m''})\quad\text{and}\quad\partial_{\ov\hb}\,\iC(m',\ov{m''})=\iC(m',\ov{\nabla_{\partial_\hb} m''}).
\]

\item\label{rem:iCnabla2}
As in Remark \ref{rem:algebraization}, we can equivalently represent the objects of $\RdTriples(\pt)$ by triples $(M',M'',C^\nabla)$, where $M',M''$ are free $\CC[\hb]$-module with connection having a pole of order two at the origin and a regular singularity at infinity.
\end{enumerate}
\end{remarque}

Given the object $(M',M'',\iC^\nabla)$ of $\WRdiTriples(\pt)$, we can associate an object $(\cM',\cM'',C_{\bS})$ of $\WRdTriples(\pt)$, where
\[
C_{\bS}:\cM'_{|\bS}\otimes_{\cO_{\bS}}\sigma^*\ov{\cM''_{|\bS}}\to\cO_{\bS}
\]
is the sesquilinear pairing obtained by
\begin{itemize}
\item
extending $\iC^\nabla$ as a sesquilinear pairing
\[
\iC:\cM'_{|\CC^*_\hb}\otimes\iota^*\ov{\cM''_{|\CC^*_\hb}}\to\cA_{\CC^*_\hb}
\]
(\cf Remark \ref{rem:iCnabla}\eqref{rem:iCnabla1}),
\item
sheaf-theoretic restricting the latter to $\bS$,
\item
identifying $\iota^*$ with $\sigma^*$ on $\bS$ and regarding the restriction $\iC_{|\bS}$ as the desired pairing $C_{\bS}$.
\end{itemize}

\begin{exemple}[The Tate object]\label{exem:Tateitriple}
The \index{Tate object}Tate object is the triple\index{$Tli$@$\ibT(\ell)$}
\[
\ibT(\ell):=(\hb^{-\ell}\cO_{\CC_\hb},\hb^\ell\cO_{\CC_\hb},\iC_0),
\]
where $\iC_0$ is induced from the natural sesquilinear pairing
\[
\iC_0:\cO_{\CC_\hb^*}(*0)\otimes\iota^*\ov{\cO_{\CC_\hb^*}(*0)}\to\cA_{\CC_\hb^*},\quad f'\otimes\iota^*\ov{f''}\mto f'\cdot\iota^*\ov{f''},
\]
and the connection $\nabla$ is induced from the standard differential $\rd$ on $\cO_{\CC_\hb^*}(*0)$. One can define similarly $\ibU(p,q)$.
\end{exemple}

One can thus define the following abelian categories with their forgetful functors\index{$IMTSkint$@$\iMTS_\good^\intt(\kk)$}\index{$IMTSRint$@$\iMTS^\intt(\RR)$}\index{$IMTSCint$@$\iMTS^\intt(\CC)$}
\[
\iMTS^\intt_\good(\kk)\mto\iMTS^\intt(\RR)\mto\iMTS^\intt(\CC)
\]
and the functors in the corresponding $\MTS^\intt$ categories are equivalences of neutral Tannakian categories.

\begin{definition}\label{def:ncMHS}
We call $\iMTS^\intt(\CC)$ (\resp $\iMTS^\intt_\good(\kk)$) the category of \emph{non-com\-mut\-ative mixed Hodge structures} (\resp that of \emph{non-commutative mixed $\kk$-Hodge structures}), with the caveat that it does not include the compatibility with polarization as considered in Section \ref{subsec:compRpol}, hence is a little weaker, in the pure case, than the notions introduced in \cite{Hertling01} and \cite{K-K-P08}.
\end{definition}

\section{Presentation of the results in Chapter \ref{part:1}}\label{sec:introI}

We use the notations \ref{nota:general}--\ref{nota:XiDR}. Let $X$ be a complex manifold. We~denote by~\index{$RX$@$\cR_\cX$}$\cR_\cX$ the sheaf on $\cX$ of $\hb$\nobreakdash-differential operators (it~is locally written as $\cO_\cX\langle\partiall_{x_i}\rangle_{i=1,\dots,d_X}$, where $\partiall_{x_i}$ acts on~$\cO_\cX$ as $\hb\partial/\partial x_i$, \cf\hbox{\cite[\S1.1]{Bibi01c}}, \cite[\S14.1]{Mochizuki07}) and by $\cR_{X\times\bS}$ (\resp $\cR_{\cX^\circ}$) its sheaf-theoretic restric\-tion to $X\times\nobreak\bS$ (\resp to~$\cX^\circ$). We can regard $\cR_\cX$ as a subsheaf of $\cD_\cX$, by sending~$\partiall_{x_i}$ to $\hb\partial/\partial x_i$. Under this identification, $\cR_{\cX^\circ}$ is the sheaf $\cD_{\cX^\circ/\CC^*_\hb}$ of differen\-tial oper\-ators relative to the projection $\cX^\circ\to\CC^*_\hb$, since $\hb$ is invertible on $\cX^\circ$, and similarly $\cR_{X\times\bS}$ is the sheaf-theoretic restriction of the latter to $X\times\bS$. Given an $\cR_\cX$\nobreakdash-module~$\cM$, we denote by~$\cM_{\bS}$ its sheaf-theoretic restriction to $X\times\bS$ and by~\index{$Mccirc$@$\cM^\circ$}$\cM^\circ$ its restriction to~$\cX^\circ$.

We denote by \index{$RXint$@$\cR_\cX^\intt$}$\cR_\cX^\intt$ the sheaf $\cR_\cX\langle\hb^2\partial_\hb\rangle$ (with the standard commutation relations, so that it can be considered as a subsheaf of $\cD_\cX$ by sending $\partiall_{x_i}$ to $\hb\partial_{x_i}$). We will often identify $\cR_{\cX^\circ}^\intt$ with $\cD_{\cX^\circ}$, since $\hb$ is invertible on $\cX^\circ$, and $\cR_\cX^\intt[1/\hb]$ with $\cD_\cX[1/\hb]$. If we denote by $\cD_{X\times\bS}$ the sheaf of differential operators on $\cO_{X\times\bS}$ (sheaf-theoretic restriction $\cO_{X\times\CC^*_\hb\mid X\times\bS}$, equivalently sheaf of real-analytic functions on $X\times\bS$ which are holomorphic with respect to $X$), then we can also identify $\cR_{X\times\bS}^\intt$ with $\cD_{X\times\bS}$: If we write $\hb=\rme^{i\vartheta}$ on $\bS$, then we replace $\hb\partial_\hb$ with $-\sfi\partial_\vartheta$. In this article,\footnote{\label{footnote:nonintegrable}When considering possibly non-integrable triples, the convention for $\Db_{X\times\bS/\bS}$ is not the same, \cf\cite[\S0.5]{Bibi01c}.} we denote by \index{$DbXS$@$\Db_{X\times\bS/\bS}$}$\Db_{X\times\bS/\bS}$ the sheaf of relative distributions which are $C^k$ with respect to~$\bS$ for all $k\in\NN$. Then $\Db_{X\times\bS/\bS}$ is naturally a left $\cD_{X\times\bS}$-module (\cf Section \ref{subsec:OmegacontinuousDb}).

The category \index{$RTriplesXint$@$\RdTriples(X)$}$\RdTriples(X)$ is that consisting of triples $\cT=(\cM',\cM'',C_{\bS})$, where $\cM',\cM''$ are (left) $\cR_\cX^\intt$-modules and $C_{\bS}:\cM'_{\bS}\otimes_{\cO_{\bS}}\nobreak\sigma^*\ov{\cM''_{\bS}}\to\Db_{X\times\bS/\bS}$ is a $\cD_{X\times\bS}\otimes_{\cO_{\bS}}\sigma^*\ov{\cD_{X\times\bS}}$-linear morphism (see \cite[Chap.\,7]{Bibi01c}, \cite[\S2.1.5]{Mochizuki11} and Section \ref{subsec:remindersesqui} for more details). A~morphism $\lambda:\cT_1\to\cT_2$ is a pair of $\cR_\cX^\intt$-linear morphisms
\[
\lambda':\cM_2'\to\cM_1',\quad \lambda'':\cM''_1\to\cM''_2
\]
such that, for any local sections $m'_2,m''_1$,
\[
C_{1,\bS}(\lambda'(m'_2),\sigma^*\ov{m''_1})=C_{2,\bS}(m'_2,\sigma^*\ov{\lambda''(m''_1)}).
\]

\begin{remarque}[on the terminology]
We say that an \index{RXmodule@$\cR_\cX$-module!integrable --}$\cR_\cX$-module $\cM$ is \emph{integrable} (\cf\cite[Chap.\,7]{Bibi01c}) if it admits a compatible action of $\hb^2\partial_\hb$, that is, if the $\cR_\cX$-action can be extended to an $\cR_\cX^\intt$-action. However, such an extension is not unique: two actions of $\hb^2\partial_\hb$ are compatible if and only if they differ by an $\cR_\cX$-linear morphism $\cM\to\cM$. (See however Remark \ref{rem:intuniquepure}.) We will also use the terminology \emph{integrable} for an $\cR_\cX^\intt$-module, hoping that this will not produce any confusion when the context is clear. In other words, we will use the terminology \emph{integrable} either for the category $\Mod(\cR_\cX^\intt)$ or for its essential image in $\Mod(\cR_\cX)$ by the natural forgetful functor. We will use a similar terminology for the category $\RdTriples(X)$ and for its essential image in $\RTriples(X)$.
\end{remarque}

The category $\RdTriples(X)$ is abelian. This category, and the category \index{$WRTriplesXint$@$\WRdTriples(X)$}$\WRdTriples(X)$ of objects of $\RdTriples(X)$ equipped with a finite increasing filtration $W_\bbullet$, are endowed with various functors.

\skpt
\begin{definition}[of the functors]\label{def:functorsRTriples}
\begin{enumeratea}
\item\label{def:TateRTriples}
The Tate twist $\cT\otimes\bT(\ell)$ with $\ell\in\ZZ$ (\cf Example \ref{exem:Tatetriple}, \cite[\S1.6.a]{Bibi01c}, and also \cite[\S3.3]{Mochizuki07}, \cite[\S2.1.8]{Mochizuki11}), and the action of $\hb^2\partial_\hb$ is the natural one on the tensor product (\cf\cite[\S7.1.c]{Bibi01c}). It shifts by $-2\ell$ the $W$-filtration.
\item\label{def:adjunctionRTriples}
The adjunction $\cT^*=(\cM'',\cM',C_{\bS}^*)$ with $C_{\bS}^*(m'',\sigma^*\ov{m'}):=\sigma^*\ov{C_{\bS}(m',\sigma^*\ov{m''})}$.
\item\label{def:pushforwardRTriples}
The pushforward functors $\map_\dag^k$ by a morphism $\map:X\to Y$ between complex manifolds (\cf\cite[\S1.6.d\,\&\,\S7.1.d]{Bibi01c}).
\item\label{def:pullbackRTriples}
The pullback functor by a \emph{smooth} morphism $\mapsm:X\to Y$ (\cf \ref{def:functorsMTM}\eqref{enum:pullbackMTM} below for the definition).
\item
The external tensor product $\hbboxtimes$ (\cf Definition \ref{def:externalproduct} below for a similar definition).
\end{enumeratea}
\end{definition}

Given a holomorphic function $\fun:X\to\CC$ with associated divisor $H$ (\cf Notation \ref{nota:maps}), we have the notion of \index{RXmodule@$\cR_\cX$-module!strictly specializable --}strictly specializable $\cR_\cX$-module along $\fun=0$, obtained by considering strict specializability along $t=0$ of $\cM_\fun$ in terms of the existence of a Kashiwara-Malgrange $V$-filtration (\cf\cite[\S3.3]{Bibi01c}). The \index{localization!``stupid''}``stupid'' localization functor $\cM_\fun(*H_\fun)$ can then be refined (\hbox{in order} to keep coherence) as a \index{localization}localization functor denoted $[*H_\fun]$ (\cf\cite[Chap.\,3]{Mochizuki11}). There is also a \index{localization!dual}dual-localization functor $[!H_\fun]$. One then says that~$\cM$ is \index{RXmodule@$\cR_\cX$-module!localizable along $H$ --}\emph{localizable along $H$} if there exist $\cR_\cX$-modules \index{$McHstar$@$\cM[\star H]$}$\cM[\star H]$ (with $\star\!=\!*\text{ or }!$), such that \hbox{$\cM_\fun[\star H_\fun]=(\cM[\star H])_\fun$}. Integrability is preserved by these functors, and they can be extended to the subcategory of $\RdTriples(X)$ whose underlying $\cR$\nobreakdash-mod\-ules are strictly specializable \resp localizable (\cf\cite[Prop.\,3.2.1]{Mochizuki11}), and they preserve the subcategory $\RdTriples(X)$ (\cf\loccit and Proposition \ref{prop:speint} below).

The category \index{$MTMX$@$\MTM(X)$}$\MTM(X)$ \resp \index{$MTMXint$@$\MTM^\intt(X)$}$\MTM^\intt(X)$ of \index{mixed twistor $\cD$-module!integrable --}\emph{(integrable) mixed twistor $\cD$-modules} on $X$, as constructed in \cite{Mochizuki11}, is a full subcategory of the abelian category $\WRTriples(X)$ (\resp $\WRdTriples(X)$). It is obtained by adding an admissibility condition to the category \index{$MTWX$@$\MTW(X)$}$\MTW(X)$ (\resp \index{$MTWXint$@$\MTW^\intt(X)$}$\MTW^\intt(X)$), \cf\loccit

\begin{remarque}[Stability by direct summand]\label{rem:directsummand}
The category $\MTM^\intt(X)$ is stable by direct summand in $\WRdTriples(X)$. Indeed, the category of polarizable pure Hodge modules is stable by direct summand in $\RTriples(X)$, as follows from \cite[Prop.\,4.1.5, 4.2.5]{Bibi01c}, \cite{Bibi06b} and \cite[\S17.1]{Mochizuki08}. It is obvious to adapt these results by adding the integrability property. It follows that the category \index{$MTWX$@$\MTW(X)$}$\MTW(X)$ is stable by direct summand in \index{$WRTriplesX$@$\WRTriples(X)$}$\WRTriples(X)$ and similarly \index{$MTWXint$@$\MTW^\intt(X)$}$\MTW^\intt(X)$ is stable by direct summand in $\WRdTriples(X)$. It~remains to check that the admissible specializability property (\ie the existence of the relative monodromy filtration, \cf \cite[Def.\,4.4.4]{Mochizuki11}) is stable by direct summand. This follows from \cite[Cor.\,2.10]{S-Z85}, since the conditions in \cite[Lem.\,2.8]{S-Z85} are stable by direct summand.
\end{remarque}

The category $\MTM^\intt(X)$ is equipped with the functors corresponding to those in Definition \ref{def:functorsRTriples} in the following~way.

\skpt
\begin{definition}[Definition of the functors]\label{def:functorsMTM}
\begin{enumeratea}
\item\label{enum:TateMTM}
The Tate twist sends $\MTM^\intt(X)$ into itself, but shifts the weight filtration by~$-2\ell$.
\item\label{enum:pushforwardMTM}
We assume that $\map:X\to Y$ is \emph{projective}. The pushforward functors \hbox{$\map_\dag^k:\WRdTriples(X)\mto\WRdTriples(Y)$} (\cf \cite[\S1.6.2]{Bibi01c}) send the subcategory $\MTM^\intt(X)$ to $\MTM^\intt(Y)$ (\cf\cite[Prop.\,7.2.7\,\,\&\,\,7.2.12]{Mochizuki11}).

\item\label{enum:pullbackMTM}
We claim that the pullback functor
\[
\MTM^\intt(Y)\mto\MTM^\intt(X)
\]
by a \emph{smooth} morphism $\mapsm:X\to Y$ is well-defined.

Firstly, the pullback functor by a \emph{smooth} morphism $\mapsm:X\to Y$ is well-defined between the categories of $\cR^\intt$-triples on $Y$ and $X$. Indeed, the pullback on the $\cR^\intt$-module part of a triple is defined as for $\cD$-modules. For the sesquilinear pairing part, note that, due to smoothness of~$g$, integration on fibres of $g$ sends $C^\infty$-forms of maximal degree on $X$ with compact support to $C^\infty$-forms of maximal degree on~$Y$ with compact support. This enables one to define the pullback of a distribution on $Y\times\bS$, or of a $\bS$-relative distribution on $Y\times\bS$ (\cf\cite[\S0.5]{Bibi01c}). We get functors
\[
\mapsm^+:\WRdTriples(Y)\mto\WRdTriples(X)
\]
between the categories of integrable triples with finite filtration. Now, the full subcategory $\MTM^\intt(X)$ of $\WRdTriples(X)$ is defined by local properties, hence checking that~$\mapsm^+$ sends $\MTM^\intt(Y)$ to $\MTM^\intt(X)$ is a local question on $X$ and $Y$, so that we can assume that $\mapsm$ is a product. At this point we can use that the pullback functor can be expressed as an external tensor product, and the assertion follows from \cite[Prop.\,11.4.6]{Mochizuki11}. A similar arguments holds in the integrable case.
\item\label{enum:externalproductMTM}
For the external tensor product, \cf \cite[Prop.\,11.4.1]{Mochizuki11}.

\item\label{enum:dualityMTM}
The duality functor is constructed in \cite[Th.\,13.3.1]{Mochizuki11}.
\item\label{enum:locMTM}
Stability by the localization and dual-localization functors $[\star H]$ is shown in \cite[\S11.2]{Mochizuki11}.
\end{enumeratea}
\end{definition}

We introduce below (\cf Section \ref{subsec:stdfunctorsRdiT}) the category $\RdiTriples(X)$, by using the involution $\iota$ instead of $\sigma$. We will consider the sheaf $\Db_{\cX^\circ/\CC^*_\hb}$ of distributions on $\cX^\circ$ which are $C^\infty$ with respect to $\hb$ (\cf Section \ref{subsec:OmegacontinuousDb}). An object of \index{$RTriplesXinti$@$\RdiTriples(X)$}$\RdiTriples(X)$ is a triple $(\cM',\cM'',\iC)$, where $\cM',\cM''$ are as above and $\iC$ is a $\cD_{\cX^\circ}\otimes_\CC\iota^*\cD_{\ov\cX^\circ}$-linear pairing $\cM^{\prime\circ}\otimes_\CC\iota^*\ov{\cM^{\prime\prime\circ}}\to\Db_{\cX^\circ/\CC_\hb^*}$.

This category is equipped with functors $\map_\dag^k$ and $\mapsm^+$ as above. Since a distribution in $\Db_{\cX^\circ/\CC_\hb^*}$ restricts to a distribution in $\Db_{X\times\bS/\bS}$, and since $\sigma$ and $\iota$ coincide on~$\bS$, we have a natural functor $\RdiTriples(X)\mto\RdTriples(X)$ compatible with the above functors (\cf Section \ref{subsec:stdfunctorsRdiT}).

\begin{definition}[of $\iMTM^\intt(X)$]\label{def:iotamtm}
The category \index{$IMTMXint$@$\iMTM^\intt(X)$}$\iMTM^\intt(X)$ is the full subcategory of $\WRdiTriples(X)$ consisting of objects whose image by the natural functor
\[
\WRdiTriples(X)\mto\WRdTriples(X)
\]
is an object of $\MTM^\intt(X)$.
\end{definition}

Our main result in this chapter is the following theorem.

\begin{theoreme}[Equivalence between $\iMTM^\intt(X)$ and $\MTM^\intt(X)$]\label{th:equivalenceintegrable}\label{TH:EQUIVALENCEINTEGRABLE}
The restriction functor $\iMTM^\intt(X)\mto\MTM^\intt(X)$ is an equivalence, compatible with pushforward by a projective morphism, pullback by a smooth morphism, external tensor product, duality, and localization along a divisor $H$.
\end{theoreme}

Proving essential surjectivity amounts to showing the existence of a unique lifting of~$C_{\bS}$ to~$\iC$. The full faithfulness then means that any integrable morphism compatible with $C_{\bS}$ is also compatible with $\iC$. Note also that the duality functor on $\iMTM^\intt(X)$ still needs to be defined, as it is a~priori not defined on the whole category $\RdiTriples(X)$.

We will prove (\cf Section \ref{sec:pfequivalenceintegrable}) by induction on $d$ the following statements:
\begin{enumeratea}
\item\label{enum:equiva}
For each $\cT=((\cM',\cM'',C_{\bS}),W_\bbullet)\in\MTM^\intt(X)$ supported in dimension $\leq d$, there exists an integrable lifting $\iC$ of $C_{\bS}$ (which is unique, according to Lemma \ref{lem:iCSdec}).
\item\label{enum:equivb}
Each morphism $\lambda:\cT_1\to\cT_2$ in $\MTM^\intt(X)$ between objects of support $\leq d$ is compatible with $\iC$ constructed in \eqref{enum:equiva}.
\end{enumeratea}

By the uniqueness proved in Lemma \ref{lem:iCSdec}, it is enough to solve \eqref{enum:equiva} locally, and then~\eqref{enum:equivb} is also a local question. The case $d=0$ is given by Lemma \ref{lem:equivsmooth}.

The proof will use the local description of an object of $\MTM^\intt(X)$ with support of dimension $d$ by gluing, in a functorial way, objects supported in dimension~$<d$ with an object defined from a good admissible variation of mixed twistor structure (\cf\cite[Prop.11.1.1\,\&\,Eq.\,(11.2)]{Mochizuki11}).

\section{The category \texorpdfstring{$\iMTM^\intt(X)$}{iMTMX}}

In this section we introduce the notion of a $\iota$-sesquilinear pairing, which is not the one used in the definition of $\RdTriples(X)$. We will make precise the functors on the corresponding category $\RdiTriples(X)$.

\Subsection{Short reminder on strictly specializable \texorpdfstring{$\cR$}{R}-module along \texorpdfstring{$\fun=0$}{f0}}\label{subsec:shortreminder}

We continue using Notation \ref{nota:maps}. Let $\fun:X\to\CC$ be a holomorphic function and let $\cM$ be a coherent $\cR_\cX$-module or a coherent $\cR_\cX(*H)$-module which is \index{RXmodule@$\cR_\cX$-module!strictly specializable --}strictly specializable along $(\fun)$, \ie $\cM_\fun$ is so along $(t)$ (\cf\cite[\S3.4.a]{Bibi01c}), and let us denote by $V_\bbullet \cM_\fun$ the $V$-filtration, indexed by real numbers (notice however that the notation of indices in the present article is shifted by one with respect to the notation in \loccit, since we use the operator $t\partiall_t$ instead of $\partiall_tt$). This filtration is a~priori defined only locally with respect to $\hb$, but for $u=(a,\alpha)\in\RR\times\CC$, the nearby cycles $\psi_{\fun,u}\cM$, defined from the local $V$-filtrations (\cf\loccit and \cite[\S14.2.5]{Mochizuki07}) can be glued with respect to $\hb$. We say that $\cM$ is \index{RXmodule@$\cR_\cX$-module!strictly $\RR$-specializable --}\emph{strictly $\RR$-specializable} along $\fun=0$ if the indices~$u$ such that $\psi_{\fun,u}\cM\neq0$ belong to $\RR\times\{0\}$, so that we write this module $\psi_{\fun,a}\cM$ and, with respect to the notation in \loccit, the $V$-filtration is globally defined, as already noticed in \cite[Rem.\,3.3.6(2)]{Bibi01c}. The $V$-filtration satisfies $V_{\alpha+k}\cM_\fun=t^{-k}V_\alpha\cM_\fun$ for $\alpha\in[0,1)$ and $k\in-\NN$ (\resp $k\in\ZZ$ in the $(*H)$~case).

Let $\lambda:\cM_1\to\cM_2$ be a strictly specializable morphism between strictly $\RR$\nobreakdash-specializable $\cR_\cX$-modules or $\cR_\cX(*H)$-modules (\cf\loccit). Then $\lambda$ is $V$-strict, and $\ker\lambda$, $\image\lambda$, $\coker\lambda$ are strictly $\RR$-specializable with the expected behaviour of the $V$-filtration (\cf\cite[Lem.\,3.3.10]{Bibi01c}).

\subsection{Some properties of integrable \texorpdfstring{$\cR_\cX$}{RcX}-modules}

\begin{proposition}\label{prop:speint}
Let $\cM$ be an \index{RXmodule@$\cR_\cX$-module!integrable holonomic --}integrable holonomic $\cR_\cX$-module with characteristic variety contained in $\Lambda\times\CC_\hb$ for some Lagrangian subvariety $\Lambda\subset T^*X$ and let $\cM(*\hb):=\cR_\cX^\intt[1/\hb]\otimes_{\cR^\intt_\cX}\cM$ be the associated localized module, that we also regard as a coherent $\cD_{\cX/\CC_\hb}[1/\hb]$-module endowed with a compatible structure of $\cD_\cX[1/\hb]$-module.
\begin{enumerate}
\item\label{prop:speint1}
Then $\cM(*\hb)$ is a holonomic $\cD_\cX$-module and its characteristic variety as a $\cD_\cX[1/\hb]$-module is contained in $\Lambda\times(T^*_{\CC^*_\hb}\CC^*_\hb\cup T^*_\Sigma\CC^*_\hb)$ for some discrete subset $\Sigma\subset\CC_\hb$.
\item\label{prop:speint2}
If moreover $\cM$ is strictly specializable along some holomorphic function $\fun$, then
\begin{enumerate}
\item\label{prop:speint2a}
$\cM$ is strictly $\RR$-specializable along $\fun$; in particular, \cite[Lem.\,7.3.8]{Bibi01c} applies;
\item\label{prop:speint2c}
the $V$-filtration of $\cM_\fun^\circ:=\cM_{\fun|\cX_\fun^\circ}$ along $H_\fun$ as an $\cR_{\cX_\fun^\circ}$-module coincides with that as a $\cD_{\cX_\fun^\circ}$-module; in particular, for $a\in\RR$, $\psi_{\fun,a}\cM^\circ$ is equal to the nearby cycle module attached to the holonomic $\cD_{\cX^\circ}$\nobreakdash-module~$\cM^\circ$.
\end{enumerate}
\end{enumerate}
\end{proposition}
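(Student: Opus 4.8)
The two statements are of a rather different nature. Part \eqref{prop:speint1} is about the $\cD$-module-theoretic properties of the localization $\cM(*\hb)$, and part \eqref{prop:speint2} compares two a priori different $V$-filtrations along $\fun$: the one computed in the category of $\cR$-modules and the one computed in the category of $\cD$-modules after inverting $\hb$. I would treat them separately. For \eqref{prop:speint1}, the plan is as follows. Since $\cM$ is integrable, the $\cR_\cX$-action extends to an $\cR_\cX^\intt$-action, i.e.\ $\hb^2\partial_\hb$ acts compatibly; after inverting $\hb$ this produces an action of $\partial_\hb$, so $\cM(*\hb)$ is a genuine $\cD_\cX$-module (not merely a $\cD_{\cX/\CC_\hb}$-module). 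Holonomicity of $\cM(*\hb)$ over $\cD_\cX$ is then a bound on the characteristic variety: the characteristic variety of $\cM$ over $\cR_\cX$ is contained in $\Lambda\times\CC_\hb$ (viewed inside $T^*\cX$ via the section $\hb\,d\hb$), so over $\cD_{\cX/\CC_\hb}[1/\hb]$ the characteristic variety is contained in $\Lambda\times T^*_{\CC^*_\hb}\CC^*_\hb$, which is Lagrangian in $T^*\cX^\circ$; and adding the $\partial_\hb$-action can only enlarge it in the $\hb$-direction by the singular support of the connection in the $\hb$-variable, which for a holonomic family is a discrete set $\Sigma\subset\CC_\hb$ of ``singular $\hb$-values''. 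The main point is to show that the localized module, as a $\cD_\cX$-module, is coherent and that this $\Sigma$ is indeed discrete; this is essentially the statement that an integrable holonomic $\cR$-module has regular (or at least meromorphic) behaviour in $\hb$ at the finitely many bad points, which I would extract from the structure theory of $\cR^\intt_\cX$-modules (e.g.\ the analogue of the statement that $\cM''(\hbm)$ carries a holonomic $\cD_\cX$-module structure used throughout Mochizuki's work). A clean way to organize this is to localize along $\Sigma$: on $\cX\setminus(X\times\Sigma)$ the module $\cM(*\hb)$ is $\cO$-coherent in the $\hb$-direction (a flat family of holonomic $\cD_X$-modules), which gives the characteristic variety bound $\Lambda\times T^*_{\CC^*_\hb}\CC^*_\hb$ there, and at each point of $\Sigma$ one picks up at most the conormal $T^*_\Sigma\CC^*_\hb$.

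For \eqref{prop:speint2}, I would argue as follows. Statement \eqref{prop:speint2a} is the assertion that the indices $u=(a,\alpha)$ with $\psi_{\fun,u}\cM\neq0$ all have $\alpha=0$. The eigenvalue $\alpha$ records the monodromy eigenvalue, or equivalently the fact that locally in $\hb$ the $V$-filtration jumps at $\hb$-dependent points; integrability forbids this. Concretely, the residue of $t\partiall_t$ on $\gr^V$ commutes with the action of $\hb^2\partial_\hb$ coming from integrability, and a compatibility computation (the commutator $[\hb^2\partial_\hb,\,t\partiall_t]$ being again in $\cR^\intt$) forces the eigenvalues of that residue to be independent of $\hb$, hence constant; but a priori they live in a set of the form $\{\hb b + a\}$ and constancy forces $b=0$, i.e.\ $\alpha=0$ in the notation $(a,\alpha)$. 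This is exactly the argument of \cite[Rem.\,3.3.6(2)]{Bibi01c} and \cite[Lem.\,7.3.8]{Bibi01c}, which I would cite and spell out in a line or two. Once strict $\RR$-specializability is known, the $V$-filtration is globally defined over $\CC_\hb$, and this is what makes part \eqref{prop:speint2c} a meaningful comparison.

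For \eqref{prop:speint2c} itself, the plan is to identify the two $V$-filtrations by their characterizing properties. Restrict everything to $\cX^\circ_\fun$ where $\hb$ is invertible. The $\cR_{\cX^\circ_\fun}$-module $V$-filtration $V_\bbullet^{\cR}\cM^\circ_\fun$ is the unique good filtration, indexed by $\RR$, such that $t\partiall_t + \hb\alpha$ (in the conventions here, with the shift by one noted in \S\ref{subsec:shortreminder}) — equivalently $t\partial_t$ after dividing by $\hb$, since $\partiall_t=\hb\partial_t$ on $\cX^\circ$ — is nilpotent on each $\gr^V$; but $\partiall_t=\hb\partial_t$ and $\hb$ is a unit, so the operator controlling the $\cR$-filtration is, up to the unit $\hb$, exactly $t\partial_t$, which is the operator controlling the $\cD_{\cX^\circ_\fun}$-module $V$-filtration along $H_\fun=\{t=0\}$. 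The eigenvalues of $t\partial_t$ on $\gr^V$ in the $\cD$-module sense can a priori be arbitrary complex numbers, but by \eqref{prop:speint2a} (strict $\RR$-specializability, after using that $\hb$ acts invertibly) they are forced to lie in $\RR$, so the two filtrations have jumps at the same real numbers and satisfy the same uniqueness characterization; by uniqueness they coincide. From this, $\psi_{\fun,a}\cM^\circ = \gr^V_a \cM^\circ_\fun$ computed either way agrees, and in particular it equals the nearby-cycle $\cD_{\cX^\circ}$-module of the holonomic $\cD_{\cX^\circ}$-module $\cM^\circ$ (holonomicity of $\cM^\circ$ over $\cD_{\cX^\circ}$ being the content of \eqref{prop:speint1}).

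\textbf{Main obstacle.} The delicate point is \eqref{prop:speint1}: showing that inverting $\hb$ in an integrable holonomic $\cR_\cX$-module yields a $\cD_\cX$-module that is \emph{coherent} (hence holonomic) over $\cD_\cX$, with only finitely many singular values $\Sigma$ in the $\hb$-direction. Coherence over $\cD_{\cX/\CC_\hb}[1/\hb]$ is immediate from holonomicity of $\cM$ over $\cR_\cX$, but upgrading to $\cD_\cX$-coherence requires controlling the $\partial_\hb$-action; the risk is an essential singularity in $\hb$ at a bad point, which integrability (a pole of order at most the relevant bound for the $\hb^2\partial_\hb$-action, as for $\cR^\intt$) should exclude. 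I would handle this by reducing to the known case where $\cM$ underlies an object of $\MTM^\intt$, or more directly by invoking Kashiwara's theorem together with the fact that the $\hb$-connection on the $\cO$-coherent part away from $\Sigma$ is meromorphic at the points of $\Sigma$ by the very definition of $\cR^\intt_\cX$. Everything else is bookkeeping with $V$-filtrations and the shift-of-indices convention flagged in \S\ref{subsec:shortreminder}.
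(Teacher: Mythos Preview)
Your outline for part~\eqref{prop:speint2} is close to what the paper does, but you have a genuine gap in part~\eqref{prop:speint1}, and the paper's treatment of~\eqref{prop:speint2} is organized differently and leans on~\eqref{prop:speint1} in a way you do not.

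For~\eqref{prop:speint1}, you correctly isolate the obstacle---$\cD_\cX$-coherence of $\cM(*\hb)$ and discreteness of $\Sigma$---but your two proposed fixes do not work. Reducing to the case where $\cM$ underlies an object of $\MTM^\intt$ is illegitimate: the proposition is stated for an arbitrary integrable holonomic $\cR_\cX$-module. And ``localize along $\Sigma$'' is circular, since the discreteness of $\Sigma$ is precisely what you need to establish. The paper's device is direct and avoids all of this: writing $\cM(*\hb)$ as the cokernel of
\[
\cM(*\hb)[\partial_\hb]\xrightarrow{\ \partial_\hb\otimes\id-\id\otimes\partial_\hb\ }\cM(*\hb)[\partial_\hb],
\]
one sees that $\cM(*\hb)[\partial_\hb]=\cD_\cX[1/\hb]\otimes_{\cD_{\cX/\CC_\hb}[1/\hb]}\cM(*\hb)$ is $\cD_\cX[1/\hb]$-subholonomic with characteristic variety contained in $\Lambda\times T^*\CC^*_\hb$. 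Hence $\cM(*\hb)$ is $\cD_\cX[1/\hb]$-holonomic with characteristic variety contained in $\Lambda\times T^*\CC^*_\hb$; being conic Lagrangian in $T^*\cX^\circ$, it must equal $\Lambda\times(T^*_{\CC^*_\hb}\CC^*_\hb\cup T^*_\Sigma\CC^*_\hb)$ for some $\Sigma$, and working over $\cD_\cX[1/\hb]$ (not just $\cD_{\cX^\circ}$) gives discreteness of $\Sigma$ in all of $\CC_\hb$. Kashiwara's theorem then upgrades holonomicity from $\cD_\cX[1/\hb]$ to $\cD_\cX$. This presentation trick is the missing idea in your plan.

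For~\eqref{prop:speint2}, the paper does not separate \eqref{prop:speint2a} and \eqref{prop:speint2c}; it proves both at once on $\cM^\circ$ by comparing two expressions for the minimal polynomial of $-t\partial_t$ on $\gr_\beta^{V^{(\hb_o)}}\cM_\fun^\circ$. From the $\cR$-side (strict specializability), the roots have the form $\beta-(\gamma/\hb-\ov\gamma\hb)$; from the $\cD$-side, using that $\cM_\fun^\circ$ is $\cD_{\cX_\fun^\circ}$-holonomic by~\eqref{prop:speint1} and invoking Kashiwara, the roots are constants $\delta$. Matching forces $\gamma=0$ and $\delta=\beta\in\RR$, which gives \eqref{prop:speint2a} and simultaneously shows that the $\cR$-module $V$-filtration satisfies the characterizing properties of the Kashiwara--Malgrange filtration, hence coincides with it---this is \eqref{prop:speint2c}. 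Your commutator argument for \eqref{prop:speint2a} may be salvageable, but it is not what the paper does, and you would still need~\eqref{prop:speint1} to conclude \eqref{prop:speint2c} via the uniqueness of the $V$-filtration for holonomic $\cD$-modules.
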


\skpt
\begin{proof}
\begin{enumerate}
\item
Note that $\cM(*\hb)$ is a coherent $\cD_{\cX/\CC_\hb}[1/\hb]$-module with characteristic variety contained in $\Lambda\times\CC^*_\hb\subset T^*(\cX^\circ/\CC^*_\hb)$. If $\cM$ is integrable, then $\cM(*\hb)$ is a $\cD_\cX[1/\hb]$\nobreakdash-module. As such it has the presentation
\begin{multline*}
\cD_\cX[1/\hb]\otimes_{\cD_{\cX/\CC_\hb}[1/\hb]}\cM(*\hb)\To{\partial_\hb\otimes\id-\id\otimes\partial_\hb}\cD_\cX[1/\hb]\otimes_{\cD_{\cX/\CC_\hb}[1/\hb]}\cM(*\hb)\\
\to\cM(*\hb)\to0,
\end{multline*}
where the first map can simply be written as
\begin{align*}
\cM(*\hb)[\partial_\hb]&\to\cM(*\hb)[\partial_\hb]\\
\sum_{k\geq0} m_k\partial_\hb^k&\mto\sum_{k\geq0}(m_{k-1}-\partial_\hb m_k)\partial_\hb^k,
\end{align*}
and induces an exact sequence
\[
0\to\cM(*\hb)[\partial_\hb]\to\cM(*\hb)[\partial_\hb]\to\cM(*\hb)\to0.
\]
Clearly, $\cM(*\hb)[\partial_\hb]$ is $\cD_\cX[1/\hb]$-subholonomic with characteristic variety contained in $\Lambda\times T^*\CC^*_\hb$. Then $\cM(*\hb)$ is $\cD_\cX[1/\hb]$-holonomic with characteristic variety contained in $\Lambda\times T^*\CC^*_\hb$. Since it is conic and Lagrangian in $T^*\cX^\circ$, it must be of the form given in the statement for some $\Sigma\subset\CC_\hb^*$, and we obtain the discreteness of $\Sigma$ in $\CC_\hb$ because we worked over the ring $\cD_\cX[1/\hb]$ (and not only over~$\cD_{\cX^\circ}$). It now follows from \cite[Th.\,1.3]{Kashiwara78} that $\cM(*\hb)$ is $\cD_\cX$-holonomic.

\item(See also \cite[Lem.\,7.3]{Mochizuki08b}.)
Due to the strict specializability of $\cM$, the statement in \eqref{prop:speint2a} holds as soon as it holds for $\cM^\circ$. We will therefore treat~\eqref{prop:speint2a} and~\eqref{prop:speint2c} for $\cM^\circ$. By \cite[Prop.\,7.3.1]{Bibi01c}, each $V^{(\hb_o)}_\beta\cM_\fun^\circ$ is stable by $\partial_\hb$ ($\beta\in\RR$), hence is $V_0\cD_{\cX\fun^\circ}$-coherent. On the one hand, the minimal polynomial of $-t\partial_t$ on $\gr_\beta^{V^{(\hb_o)}}\!\!\cM_\fun^\circ$ takes the form $\prod_{\gamma\in\CC}\bigl(s-\beta+(\gamma/\hb-\ov\gamma\hb)\bigr)^{\nu_\gamma}$ and on the other hand, since~$\cM_\fun^\circ$ is $\cD_{\cX_\fun^\circ}$-holonomic, it is of the form $\prod_{\delta\in\CC}(s-\delta)^{\nu_\delta}$, according to \cite{Kashiwara78}. As a consequence, the possible $\gamma$'s are zero and the possible $\delta$'s are equal to $\beta$, hence are real. Therefore, the filtration $V^{(\hb_o)}_\bbullet \cM_\fun^\circ$ satisfies the characteristic properties of the Kashiwara-Malgrange filtration of the holonomic $\cD_{\cX_\fun^\circ}$-module $\cM_\fun^\circ$, hence is equal to~it. The remaining assertions are then clear.\qedhere
\end{enumerate}
\end{proof}

\subsection{Sheaves of \texorpdfstring{$\Omega$-$C^\infty$}{Om} distributions}\label{subsec:OmegacontinuousDb}

(\Cf \cite[\S0.5]{Bibi01c} and \cite[\S2.1.3]{Mochizuki11} for this section.) Let $\Omega$ be a complex manifold equipped with a volume form $\rd\vol_\Omega$ (one can also consider a real-analytic manifold with a volume form, embedded into its complexification $\Omega^\CC$). Our main example will be the case of an open set of $\CC^*_\hb$ with $\rd\vol_\Omega=\itwopi(\rd\hb/\hb)\wedge(\rd\ov\hb/\ov\hb)$ (we also consider the case where $\Omega=\bS=\{|\hb|=1\}$ with its standard volume form).

\begin{notation}\label{nota:XovX}
If $\Omega$ is real-analytic, we define $\cO_{X\times\Omega}$ as the sheaf-theoretic restriction of $\cO_{X\times\Omega^\CC}$ to $X\times\Omega$, and $\cD_{X\times\Omega}$ as the subsheaf of $(\cD_{X\times\Omega^\CC})_{|X\times\Omega}$ generated by $\cO_{X\times\Omega},\Theta_X$ and partial derivatives with respect to coordinate in $\Omega$.

We denote by $\ov X$ the complex conjugate manifold with $\cO_{\ov X}:=\ov{\cO_X}$. We then set
\[
\cO_{X\times\Omega,\ov{X\times\Omega}}:=\cO_{X\times\Omega}\otimes_\CC\cO_{\ov{X\times\Omega}}\quad\cD_{X\times\Omega,\ov{X\times\Omega}}:=\cD_{X\times\Omega}\otimes_\CC\cD_{\ov{X\times\Omega}}.
\]
If $\Omega$ is real-analytic, we can replace $\ov{X\times\Omega}$ with $\ov X\times\Omega$.
\end{notation}

The sheaf of distributions \index{$DbXOmega$@$\Db_{X\times\Omega}$}$\Db_{X\times\Omega}$ is a left $\cD_{X\times\Omega,\ov{X\times\Omega}}$-module. We will consider the sheaf \index{$DbXOmegaOmega$@$\Db_{X\times\Omega/\Omega}$}\index{$DbXS$@$\Db_{X\times\bS/\bS}$}$\Db_{X\times\Omega/\Omega}$ of distributions on $X\times\Omega$ which are continuous with respect to~$\Omega$ (\cf \cite[\S0.5]{Bibi01c}). Recall that a section $u\in\Gamma(W,\Db_{X\times\Omega/\Omega})$ is a $C^\infty(\Omega)$-linear morphism from the space of relative forms of maximal degree $2d_X$ with compact support on $W\subset X\times\Omega$ to the space $C^0_\rc(\Omega)$, which is continuous with respect to the usual topologies on these spaces. Since a volume form is fixed on $\Omega$, we can regard $\Db_{X\times\Omega/\Omega}$ as a subsheaf of $\Db_{X\times\Omega}$ by the natural injective morphism $\Db_{X\times\Omega/\Omega}\to\Db_{X\times\Omega}$, defined by $u\mto\int_\Omega u(\cbbullet)\rd\vol_\Omega$, which is $C^\infty_{X\times\Omega}$-linear.

As is usual, we define the conjugate $\ov u$ of a relative distribution in $\Db_{X\times\Omega/\Omega}$ as $\ov u(\cbbullet):=\ov{u(\ov\cbbullet)}$. If $\Omega=\CC^*_\hb$, we also define $\iota^*u$ as $\iota^*u(\cbbullet):=\iota^*(u(\iota^*\cbbullet))$. We give a similar definition for $\sigma^*u$ for $u$ in $\Db_{X\times\bS/\bS}$ (defined similarly, \cf\cite[\S0.5]{Bibi01c}).

For $k\in\NN$, we say that $u\in\Db_{X\times\Omega/\Omega}(W)$ is $C^k$ with respect to $\Omega$ if all its derivatives up to order $k$ with respect to $\Omega$, when considered as elements of $\Db_{X\times\Omega}(W)$, belong to (the image of) $\Db_{X\times\Omega/\Omega}(W)$. For a vector field $\varsigma$ on $\Omega$ and a test relative form $\chi$ on $W$, we then have $(\varsigma u)(\chi)=-u(\varsigma(\chi))$ in $C^0_\rc(\Omega)$. This leads to the natural definition of \index{$DbXOmegaOmegainf$@$\Db^\infty_{X\times\Omega/\Omega}$}$\Db^\infty_{X\times\Omega/\Omega}(W)$, which is seen to be a $\Gamma(W,\cD_{X\times\Omega,\ov{X\times\Omega}})$-module.

\subsubsection*{Pullback with respect to $\Omega$}
Let $\nu:\Omega'\hto\Omega$ be the inclusion of a closed submanifold. For $W\subset X\times\Omega$, set $W'=W\cap(X\times\Omega')$. For $u\in\Db_{X\times\Omega/\Omega}(W)$ and $\eta$ a relative test form of maximal degree on $W$, $u(\eta)_{|\Omega'}$ only depends on $\eta_{|W'}$. For any relative test form $\eta'$ on $W'$, we choose $\eta$ with $\eta_{|\Omega'}=\eta'$ and we set $\nu^*u(\eta')=u(\eta)_{|\Omega'}$. Then $\nu^*\eta\in\Db_{X\times\Omega'/\Omega'}(W')$. This defines a restriction morphism $\nu^*:\nu^{-1}\Db_{X\times\Omega/\Omega}\to\Db_{X\times\Omega'/\Omega'}$. We will mainly consider the case of the inclusion $\nu:\bS\hto\CC^*_\hb$, for which we obtain the natural restriction morphism
\begin{equation}\label{eq:restrDb}
\nu^{-1}\Db_{\cX^\circ /\CC^*_\hb}\to\Db_{X\times\bS/\bS}.
\end{equation}

Let $q:\Omega':=\Omega\times\wt\Omega\to\Omega$ be the projection For $u\in\Gamma(W,\Db_{X\times\Omega/\Omega})$ the pullback $q^* u\in\Gamma(q^{-1}W,\Db_{X\times\Omega'/\Omega'})$ is defined as follows: for $\eta'(x,\hb,\wt\hb)\in\Gamma_\rc(q^{-1}W,\cE^{d_X,d_X}_{X\times\Omega'/\Omega'})$ and for $\wt\hb$ fixed, $u(\eta'(x,\hb,\wt\hb))$ belongs to $C^0_\rc(\Omega)$. When $\wt\hb$ varies, this defines an element of $C^0_\rc(\Omega\times\wt\Omega)$, that we denote by $q^*u(\eta')$.

For any $C^\infty$ map $\mu:\Omega'\to\Omega$, we decompose it as $\mu=q\circ\nu$ and we set $\mu^*u:=\nu^*(q^* u)\in\Db_{X\times\Omega'/\Omega'}(W')$, with $W':=\mu^{-1}(W)$. We denote by $T\mu$ the tangent map $\Theta^\infty_{\Omega'}\to\mu^*\Theta^\infty_\Omega$. The following lemma is straightforward.

\begin{lemme}\label{lem:mustaru}
If $u\in\Db^\infty_{X\times\Omega/\Omega}(W)$, then $\mu^*u\in\Db^\infty_{X\times\Omega'/\Omega'}(W')$, and for any vector field $\varsigma'$ on $\mu^{-1}W$, setting $T\mu(\varsigma')=\sum_i\varphi_i\otimes\varsigma_i$, with $\varphi_i\in C^\infty(W')$ and $\varsigma_i\in\Theta^\infty(W)$, we have $\varsigma'\mu^*u=\sum_i\varphi_i\mu^*(\varsigma_iu)$.\qed
\end{lemme}

\subsubsection*{Pullback with respect to $X$ by a smooth morphism}
Let $\mapsm:X\to Y$ be a \emph{smooth} morphism of complex manifolds, and set $d_X=\dim X$, $d_Y=\dim Y$. We denote by the same letter the map $\mapsm\times\id:X\times\Omega\to Y\times\Omega$ or $X\times\bS\to Y\times\bS$. Integration along the fibres of $\mapsm$ sends the space $\Gamma_\rc(X\times\Omega,\cE^{d_X,d_X}_{X\times\Omega/\Omega})$ of relative forms of maximal $X$-degree with compact support to the space $\Gamma_\rc(Y\times\Omega,\cE^{d_Y,d_Y}_{Y\times\Omega/\Omega})$. As a consequence, we can define the smooth pullback $\mapsm^*:\mapsm^{-1}\Db_{Y\times\Omega/\Omega}\to\Db_{X\times\Omega/\Omega}$ by the formula $(\mapsm^*u)(\chi)=u\bigl(\int_\fun\chi\bigr)$. Note that
\[
\cD_{X\times\Omega\to Y\times\Omega}:=\cO_{X\times\Omega}\otimes_{\mapsm^{-1}\cO_{Y\times\Omega}}\mapsm^{-1}\cD_{Y\times\Omega}
\]
is $\cD_{Y\times\Omega}$-flat. Let us set
\[
\mapsm^{++}\Db^\infty_{Y\times\Omega/\Omega}:=\cD_{(X\times\Omega,\ov{X\times\Omega})\to (Y\times\Omega,\ov{Y\times\Omega})}\underset{\mapsm^{-1}\cD_{Y\times\Omega,\ov{Y\times\Omega}}}\otimes\hspace*{-5mm}\mapsm^{-1}\Db^\infty_{Y\times\Omega/\Omega}.
\]
In other words,
\begin{equation}\label{eq:pullbackODb}
\mapsm^{++}\Db^\infty_{Y\times\Omega/\Omega}=\mapsm^*\Db^\infty_{Y\times\Omega/\Omega}:=\cO_{X\times\Omega,\ov{X\times\Omega}}\otimes_{\mapsm^{-1}\cO_{Y\times\Omega,\ov{Y\times\Omega}}}\mapsm^{-1}\Db^\infty_{Y\times\Omega/\Omega}
\end{equation}
as an $\cO_{X\times\Omega,\ov{X\times\Omega}}$-module, and the left action of $\cD_{X\times\Omega,\ov{X\times\Omega}}$ is the natural one. By extending the smooth pullback above, we obtain a $\cD_{X\times\Omega,\ov{X\times\Omega}}$-linear morphism
\begin{equation}\label{eq:pullbackDb}
\mapsm^{++}\Db^\infty_{Y\times\Omega/\Omega}\to\Db^\infty_{X\times\Omega/\Omega},
\end{equation}
which is compatible with the action of derivations with respect to $\Omega$.

\subsubsection*{Pushforward by a proper morphism}
Let $\map:X\to Y$ be a \emph{proper} morphism of complex manifolds. We will now use currents of maximal degree (with respect to $X$ or~$Y$) and use $\rd\vol_\Omega$ to switch from distributions to currents with respect to $\Omega$. Integration of currents defines a morphism \index{$CbXOmegaOmega$@$\gC^{d_X,d_X}_{X\times\Omega/\Omega}$}$\int_\map:\map_*\gC^{d_X,d_X}_{X\times\Omega/\Omega}\to\gC^{d_Y,d_Y}_{Y\times\Omega/\Omega}$, which preserves differentiability with respect to $\Omega$ and extends as a $\cD_{Y\times\Omega,\ov{Y\times\Omega}}$-linear morphism\index{$CbXOmegaOmegainf$@$\gC^{\infty,d_X,d_X}_{X\times\Omega/\Omega}$}
\begin{equation}
\map_{++}\gC^{\infty,d_X,d_X}_{X\times\Omega/\Omega}\to\gC^{\infty,d_Y,d_Y}_{Y\times\Omega/\Omega},
\end{equation}
induced by the right action of $\cD_{Y\times\Omega,\ov{Y\times\Omega}}$, where we have set
\begin{align*}
\map_{++}\gC^{\infty,d_X,d_X}_{X\times\Omega/\Omega}&:=\map_*\bigl(\gC^{\infty,d_X,d_X}_{X\times\Omega/\Omega}\otimes_{\cD_{X\times\Omega,\ov{X\times\Omega}}}\cD_{(X\times\Omega,\ov{X\times\Omega})\to (Y\times\Omega,\ov{Y\times\Omega})}\bigr)\\
&=\map_*\gC^{\infty,d_X,d_X}_{X\times\Omega/\Omega}\otimes_{\cO_{Y\times\Omega,\ov{Y\times\Omega}}}\cD_{Y\times\Omega,\ov{Y\times\Omega}}.
\end{align*}

\subsubsection*{External tensor product}
Let $u\in\Db_{X\times\Omega}(W)$ and $v\in\Db_{Y\times\Omega}(W')$. If $\chi(x,y,\hb,\hb')$ is an $(\Omega\times\Omega)$-relative test form on $W\times W'$, then $\langle v_{y,\hb'},\chi\rangle$ is an $\Omega$-relative test form on~$W$ and the external tensor product $u\boxtimes v$ is the distribution in $\Db_{X\times Y\times\Omega\times\Omega}(W{\times}W')$ defined by
\[
\langle u_{x,\hb}\boxtimes v_{y,\hb'},\chi(x,y,\hb,\hb')\rangle=\bigl\langle u_{x,\hb},\langle v_{y,\hb'},\chi(x,y,\hb,\hb')\rangle\bigr\rangle.
\]
If $u,v$ are $C^0$ with respect to $\Omega$, \ie are sections of $\Db_{X\times\Omega/\Omega}$ and $\Db_{Y\times\Omega/\Omega}$, then $u\boxtimes v$ is $C^0$ with respect to $\Omega\times\Omega$. We can thus restrict it to the diagonal $\Omega\hto \Omega\times\Omega$, and obtain a relative distribution denoted by $u\hbboxtimes v$ in $\Db_{X\times Y\times\Omega/\Omega}$. Differentiability with respect to $\Omega$ is preserved.

\subsubsection*{Localization}
Let $H$ be a hypersurface of $X$. We set
\[
\Db_{X\times\Omega/\Omega}(*H)=\cO_{X\times\Omega}(*(H\times\Omega))\otimes_{\cO_{X\times\Omega}}\Db_{X\times\Omega/\Omega}.
\]
Then $\Db_{X\times\Omega/\Omega}(*H)$ is naturally identified with the image of the natural morphism $\Db_{X\times\Omega/\Omega}\to j_*j^{-1}\Db_{X\times\Omega/\Omega}$, if $j:X\moins H\hto X$ denotes the open inclusion, whose kernel consists of relative distributions supported by $H\times\Omega$. We also denote this image by $\Db^\modH_{(X\moins H)\times\Omega/\Omega}$. A similar result holds for $\Db^\infty_{X\times\Omega/\Omega}$.

\subsection{Short reminder on $\sigma$-sesquilinear pairings}\label{subsec:remindersesqui}\index{sesquilinearpairingsigma@$\sigma$-sesquilinear pairing}
Given a section $P(x,\hb,\partiall_x)$ of~$\cR_{\cX|\bS}$, we denote by $\ov P$ its conjugate, which is an anti-holomorphic differential operator, and by $\sigma^*\ov P$ its pullback by $\sigma$, which is a section of $\cR_{\ov\cX|\bS}$. One can identify $\cR_{\cX|\bS}$ with $\cD_{X\times\bS/\bS}$ and $\cR_{\ov\cX|\bS}$ with $\cD_{\ov X\times\bS/\bS}$.

By an \emph{integrable $\sigma$-sesquilinear pairing} $C_{\bS}$ on a pair $(\cM',\cM'')$ of left $\cR^\intt_\cX$\nobreakdash-modules, we mean a $\CC$-linear morphism\index{$CS$@$C_{\bS}$}
\begin{equation}\label{eq:sesquisigma}
C_{\bS}:\cM'_{\bS}\otimes_{\CC}\sigma^*\ov{\cM''_{\bS}}\to\Db_{X\times\bS/\bS}
\end{equation}
such that, for sections $m'$ of $\cM'$ on $U\times\Omega\subset X\times\bS$ (\resp $m''$ of $\cM''$ on $U\times\sigma^{-1}(\Omega)$), we have, for any $P\in\Gamma(U\times\Omega,\cR_{\cX|\bS})$, the following equalities in $\Db_{X\times\bS/\bS}$:
\begin{equation}\label{eq:sesquisigmaP}
C_{\bS}(Pm',\sigma^*\ov{m''})=PC_{\bS}(m',\sigma^*\ov{m''}),\quad C_{\bS}(m',\sigma^*\ov{Pm''})=\sigma^*\ov{P}C_{\bS}(m',\sigma^*\ov{m''}),
\end{equation}
(in particular it descends to $\cM'_{\bS}\otimes_{\cO_{\bS}}\sigma^*\ov{\cM''_{\bS}}$) satisfying the integrability property:
\[
-\sfi\partial_\vartheta C_{\bS}(m',\sigma^*\ov{m''})=C_{\bS}(\hb\partial_\hb m',\sigma^*\ov{m''})- C_{\bS}(m',\sigma^*\ov{\hb\partial_\hb m''}).
\]
The latter equality is seen in $\Db_{X\times\bS}$. However, as seen in Section \ref{subsec:OmegacontinuousDb}, it means precisely that $C_{\bS}$ is $C^1$ with respect to $\bS$ and the equality holds in $\Db_{X\times\bS/\bS}$. It follows, by iterating the vector field $\partial_\vartheta$, that $C_{\bS}$ takes values in $\Db^\infty_{X\times\bS/\bS}$. As a consequence, \eqref{eq:sesquisigmaP} holds for any local section $P$ of $\cD_{X\times\bS}$.

\begin{convention}\label{conv:infty}
In order to simplify notation, we will simply use the notation $\Db_{X\times\Omega/\Omega}$ for $\Db^\infty_{X\times\Omega/\Omega}$, when $\Omega=\bS$ or $\Omega=\CC^*_\hb$.
\end{convention}

\begin{remarque}[The category $\RdTriples(X)$]\label{rem:RdTriples}
\index{$RTriplesXint$@$\RdTriples(X)$}The objects of $\RdTriples(X)$ are triples $\cT\!=\!(\cM',\cM'',C_{\bS})$ consisting of $\cR_\cX^\intt$-modules $\cM',\cM''$ and an integrable pairing between them, and morphisms consist of pairs of morphisms $(\lambda',\lambda'')$ of $\cR_\cX^\intt$-modules (with $\lambda''$ covariant and~$\lambda'$ contravariant) which are compatible with the pairings.\footnote{\label{foot:positiveconstant}Replacing $C_{\bS}$ with $cC_{\bS}$ with $c>0$ obviously leads to an isomorphic object.} Forgetting the integrable structure gives a functor to the category $\RTriples(X)$. Given an integrable triple $\cT$, any other action of $\hb^2\partial_\hb$ is obtained by replacing $\hb^2\partial_\hb:\cM'\to\cM'$ with $\hb^2\partial_\hb+\lambda'$, where $\lambda'$ is any $\cR_\cX$-linear morphism $\cM'\to\cM'$, and similarly for~$\cM''$ with $\lambda''$. The modified action is compatible with~$C_{\bS}$, \ie defines a new action on $\cT$, if and only if $\lambda',\lambda''$ satisfy
\[
\hbm C_{\bS}(\lambda'(m'),\sigma^*\ov{m''})=C_{\bS}(m',\sigma^*\ov{\hbm\lambda''(m'')})
\]
for any local sections $m'$ of $\cM'_{|\bS}$ and $m''$ of $\cM''_{|\bS}$. This equality means that $(\lambda',\lambda'')$ defines a morphism in $\RTriples(X)$:
\begin{starequation}\label{eq:RdTriples}
\lambda:\cT\to\cT\otimes\bT(-1).
\end{starequation}%
If $\cT$ is the trivial rank-one twistor structure on a point, that is, the vector bundle~$\cO_{\PP^1_\hb}$, an integrable structure is a connection with poles of order two at most at $\hb=0$ and $\hb=\infty$, and no other pole, that is, in the given trivialization, a polynomial of degree two in $\hb$. Starting from the triple $(\cO_{\CC_\hb}{\cdot}e,\cO_{\CC_\hb}{\cdot}e,C_{\bS})$ with $C_{\bS}(e,\sigma^*\ov{e})=1$ and the standard action of $\hb^2\partial_\hb$ on $\cO_{\CC_\hb}$, any other integrable structure is given by a pair of morphisms $\lambda',\lambda''$, which are thus multiplications by entire holomorphic functions on~$\CC_\hb$, denoted by $\lambda'(\hb),\lambda''(\hb)$. The compatibility condition reads
\begin{starstarequation}\label{eq:equivintegrable}
\lambda'(\hb)=-\hb^2\ov{\lambda''(-1/\ov\hb)},
\end{starstarequation}%
which implies (by considering the series expansion of these entire holomorphic functions) that $\lambda'(\hb),\lambda''(\hb)$ are polynomials of degree two and $\lambda'$ determines $\lambda''$.
\end{remarque}

\begin{definition}[Equivalent integrable structures]\label{def:equivintstruct}
We will say that two integrable structures on a given object $\cT$ of $\RTriples(X)$ are \emph{equivalent} if one is obtained from the other by tensoring by an integrable structure on the trivial rank-one twistor structure on a point, that is, if the corresponding morphism \eqref{eq:RdTriples} takes the form $(\lambda'(\hb)\id,\lambda''(\hb)\id)$, with $\lambda',\lambda''$ satisfying \eqref{eq:equivintegrable}.
\end{definition}

\begin{exemple}\label{exem:equivintstruct}
Set $\lambda'(\hb)=a+b\hb+c\hb^2$, so that $\lambda''(\hb)=-\ov c+\ov b\hb-\ov a\hb^2$. Then
\begin{multline}\tag{$\ref{exem:equivintstruct}*$}\label{eq:equivintstruct}
\bigl((\cM',\hb^2\partial_\hb+\lambda'(\hb)),(\cM'',\hb^2\partial_\hb+\lambda''(\hb)),C_{\bS}\bigr)\\
\simeq\bigl((\cM',\hb^2\partial_\hb+a+b\hb),(\cM'',\hb^2\partial_\hb-\ov c+\ov b\hb),\rme^{-c\hb-a/\hb}C_{\bS}\bigr).
\end{multline}
\end{exemple}

\begin{remarque}[Uniqueness of the integrable structure up to equivalence]\label{rem:intuniquepure}
Assume $X$ is a smooth complex projective variety and let $\ccM$ be an irreducible holonomic $\cD_X$-module. According to one of the main results in \cite{Mochizuki08}, there exists a unique (up to isomorphism) pure twistor $\cD$-module $\cT$ of weight $0$ giving rise to $\ccM$ by the functor $\Xi_{\DR}$ (Notation \ref{nota:XiDR}).

An integrable structure (if any) on $\cT$ is obtained from a given one through a morphism $\lambda:\cT\to\cT(-1)$, after Remark \ref{rem:RdTriples}. Set $\lambda=(\lambda',\lambda'')$. Then for any given $\hb_o\neq0$, $\lambda'_{|\hb_o}:\cM'_{\hb_o}\to\cM'_{\hb_o}$ takes the form $c'(\hb_o)\id$ since $\cM'_{\hb_o}$ is an irreducible $\cD_X$\nobreakdash-module. It follows that $\lambda'=c'(\hb)\id$ and similarly $\lambda''=c''(\hb)\id$, with $c'(\hb),c''(\hb)$ holomorphic, and defining an integrable structure on the trivial twistor structure of rank one on a point.

Hence, any irreducible holonomic $\cD_X$-module underlies at most one object of pure weight~$0$ (up to equivalence of integrable structures) in $\MTM^\intt(X)$.
\end{remarque}

\subsection{\texorpdfstring{$\iota$}{iota}-Sesquilinear pairing}
\index{sesquilinearpairingiota@$\iota$-sesquilinear pairing}We use Convention \ref{conv:infty}. By a $\iota$-sesquilinear pairing on a pair $(\cM',\cM'')$ of left $\cR^\intt_\cX$\nobreakdash-modules, we mean a $\CC$-linear morphism\index{$Ci$@$\iC$}
\begin{equation}\label{eq:rescsesqui}
\iC:\cM^{\prime\circ}\otimes_\CC\iota^*\ov{\cM^{\prime\prime\circ}}\to\Db_{\cX^\circ/\CC^*_\hb}
\end{equation}
such that, for sections $m'$ of $\cM'$ on $U\times\Omega$ (\resp $m''$ of $\cM''$ on $U\times\iota^{-1}(\Omega)$) with $\Omega\subset\CC_\hb^*$, we have, for any $P\in\Gamma(U\times\Omega,\cD_{\cX^\circ})$, the following equalities in $\Db_{\cX^\circ/\CC^*_\hb}$:
\begin{equation}\label{eq:iCP}
\iC(Pm',\iota^*\ov{m''})=P\,\iC(m',\iota^*\ov{m''}),\quad\iC(m',\iota^*\ov{Pm''})=\iota^*\ov{P}\,\iC(m',\iota^*\ov{m''}).
\end{equation}
In particular,
\begin{equation}\label{eq:iChbpartialhb}
\begin{split}
\iC(\hb\partial_\hb m',\iota^*\ov{m''})&=\hb\partial_\hb\, \iC(m',\iota^*\ov{m''}),\\
\iC(m',\iota^*\ov{\hb\partial_\hb m''})&=\ov\hb\partial_{\ov\hb}\,\iC(m',\iota^*\ov{m''}),
\end{split}
\end{equation}
where $\iota$ is missing on the last term because $\iota^*{\hb\partial_\hb}=\hb\partial_\hb$. Working in polar coordinates $\hb=\varrho \rme^{i\vartheta}$, this can be written as
\begin{equation}\label{eq:iCthetarho}
\begin{split}
-\sfi\partial_\vartheta\,\iC(m',\iota^*\ov{m''})&=\iC(\hb\partial_\hb m',\iota^*\ov{m''})-\iC(m',\iota^*\ov{\hb\partial_\hb m''}),\\
\varrho\partial_\varrho\,\iC(m',\iota^*\ov{m''})&=\iC(\hb\partial_\hb m',\iota^*\ov{m''})+\iC(m',\iota^*\ov{\hb\partial_\hb m''}).
\end{split}
\end{equation}

\begin{lemme}\label{lem:iCCS}
The restriction (in the sense of \eqref{eq:restrDb}) to $\bS$ of an integrable sesquilinear pairing \eqref{eq:rescsesqui} is an integrable sesquilinear pairing \eqref{eq:sesquisigma}.
\end{lemme}

\begin{proof}
Indeed, $\iota$ and $\sigma$ coincide when restricted to $\bS$, hence $\sigma^*(P_{\bS})=\iota^*(P_{\bS})$ for a differential operator.\end{proof}

\begin{remarque}\label{rem:iCint}
We note that $\iC$ is $\cD_{\cX^\circ,\ov\cX^\circ}$-linear, so if we regard $\iC$ as a $\cD_{\cX^\circ}$-linear morphism
\begin{starequation}\label{eq:iCint}
\cM^{\prime\circ}\to\cHom_{\cD_{\ov\cX^\circ}}(\iota^*\ov{\cM^{\prime\prime\circ}},\Db_{\cX^\circ/\CC^*_\hb}),
\end{starequation}%
we obtain by composition with the natural inclusion $\Db_{\cX^\circ/\CC^*_\hb}\hto\Db_{\cX^\circ}$ a $\cD_{\cX^\circ}$-linear morphism $\cM^{\prime\circ}\to\cHom_{\cD_{\ov\cX^\circ}}(\iota^*\ov{\cM^{\prime\prime\circ}},\Db_{\cX^\circ})$.
\end{remarque}

\begin{remarque}[Restriction to $\hb_o\in\CC^*_\hb$]\label{rem:iChbo}
For $\hb_o\in\CC^*_\hb$, we denote by $\cM^{\hb_o}$ the quotient of the $\cR_\cX$-module $\cM$ by the submodule $(\hb-\hb_o)\cM$, that we regard as a $\cD_X$-module by identifying $\cR_{\cX^\circ}^{\hb_o}$ with $\cD_X$. Then a $\iota$-sesquilinear pairing~$\iC$ can be restricted as a $\cD_X\otimes_\CC\cD_{\ov X}$-linear pairing $\iC^{\hb_o}:\cM^{\prime\hb_o}\otimes_\CC\ov{\cM^{\prime\prime-\hb_o}}\to\Db_X$.
\end{remarque}

\subsection{Standard functors on \texorpdfstring{$\RdiTriples(X)$}{RdiT}}\label{subsec:stdfunctorsRdiT}

The category \index{$RTriplesXinti$@$\RdiTriples(X)$}$\RdiTriples(X)$ is defined in a way very similar to $\RdTriples(X)$ by considering triples $(\cM',\cM'',\iC)$ instead of triples $(\cM',\cM'',C_{\bS})$ (\cf\cite[Chap.\,7]{Bibi01c}). By Lemma \ref{lem:iCCS}, we have a natural functor
\[
\RdiTriples(X)\mto\RdTriples(X)
\]
by restricting $\iC$ to $\bS$, and considering it as a $\sigma$-sesquilinear pairing. We now define functors on the category on the left-hand side. Let $\icT=(\cM',\cM'',\iC)$ be an object of $\RdiTriples(X)$.

\begin{exemple}[Changing the integrable structure]\label{exem:equivintstructT}
Let $\lambda'(\hb),\lambda''(\hb)$ be as in Example \ref{exem:equivintstruct}, and let $(\wt\cM',\wt\cM'',C_{\bS})$ denote the object $(\cM',\cM'',C_{\bS}$) of $\RTriples(X)$ with modified integrable structure $(\hb^2\partial_\hb+\lambda'(\hb),\hb^2\partial_\hb+\lambda''(\hb))$. Then the corresponding object of $\RdTriples(X)$ is $(\wt\cM',\wt\cM'',\iwtC)$ with
\[
\iwtC=|\hb|^{2b}\rme^{a(\ov\hb-1/\hb)+c(\hb-1/\ov\hb)}\iC.
\]
We also have
\begin{starequation}\label{eq:equivintstructT}
(\wt\cM',\wt\cM''\iwtC)\simeq\bigl((\cM',\hb^2\partial_\hb+a+b\hb),(\cM'',\hb^2\partial_\hb-\ov c+\ov b\hb),|\hb|^{2b}\rme^{-(a/\hb+c/\ov\hb)}\iC\bigr).
\end{starequation}%
\end{exemple}

\begin{definition}[Tate twist]
\index{Tate twist}For $\ell\in\ZZ$, we set $\icT(\ell):=\icT\otimes\ibT(\ell)$ (\cf Example \ref{exem:Tatetriple}).
\end{definition}

\begin{definition}[Adjunction]
\index{adjunction}The adjoint $\icT^*$ of $\icT$ is defined as $\icT^*\!=\!(\cM'',\cM',\iC^*)$ with $\iC^*(m'',\iota^*m'):=\iota^*\ov{\iC(m',\iota^*\ov{m''})}$. (Note that \eqref{eq:iCP} clearly holds for $\iC^*$.)
\end{definition}

\begin{definition}[Pullback by a smooth morphism]\label{def:pullbacksmooth}
Let $\mapsm:X\to Y$ be a smooth morphism and let $\icT$ be an object of $\RdiTriples(Y)$. Then $\mapsm^+\icT$ is defined as $(\mapsm^+\cM',\mapsm^+\cM'',\mapsm^+\iC)$, where $\mapsm^+\cM$ is the pullback as an $\cR_\cY^\intt$-module, and $\mapsm^+\iC$ is defined as follows. Firstly,~$\iC$ induces a $\cD_{\cX^\circ,\ov{\cX^\circ}}$-linear morphism
\[
\mapsm^+\cM^{\prime\circ}\underset\CC\otimes\iota^*\ov{\mapsm^+\cM^{\prime\prime\circ}}\ra\cD_{(\cX^\circ,\ov{\cX^\circ})\to(\cY^\circ,\ov{\cY^\circ})}\underset{g^{-1}\cO_{\cY^\circ,\ov{\cY^\circ}}}\otimes g^{-1}\Db_{\cY^\circ/\CC^*_\hb}
=g^{++}\Db_{\cY^\circ/\CC^*_\hb},
\]
and, by composing with \eqref{eq:pullbackDb}, we obtain an integrable sesquilinear pairing~$\mapsm^+\iC$.
\end{definition}

In order to get the compatibility with the behaviour of mixed Hodge modules, one modifies a little the pullback functor. Let us denote by $\cT_\cO(X)$ the object of $\RTriples(X)$ defined as $(\cM',\cM'',C_{\bS})\!=\!(\hb^{d_X}\cO_\cX,\cO_\cX,C_{\bS})$, with $C_{\bS}(\hb^{d_X},1)\!=\!\hb^{d_X}$. The polarization is adapted accordingly (\cf\cite[\S13.5.2.1]{Mochizuki11}). It is an object of $\RdTriples(X)$ if the $\hb^2\partial_\hb$-action on $\cM'$ is the action induced by that on~$\cO_\cX$. Lastly, one can define similarly the object $\icT_\cO(X)$ of $\RdiTriples(X)$ by defining~$\iC$ correspondingly. We have $\cT_\cO(X)\simeq\bU_X(d_X,0)$ (\resp $\icT_\cO(X)\simeq\ibU_X(d_X,0)$), \cf Examples \ref{exem:Tatetriple}\eqref{exem:Tatetriple2} and \ref{exem:Tateitriple}.

\begin{definition}[Normalized pullback by a smooth morphism]\label{def:normpullbacksmooth}
For $\mapsm:X\to Y$ smooth and let $\icT$ be an object of $\RdiTriples(Y)$. We define $\Tmapsm^+\icT$ as $(\hb^{d_{X/Y}}\mapsm^+\cM',\mapsm^+\cM'',\mapsm^+\iC)$. It is an object of $\RdiTriples(X)$.
\end{definition}

If $g:X\!=\!Y\!\times\!Z\!\to\!Y$ is a projection, then $\Tmapsm^+\icT\!=\!\icT\hbboxtimes\,\icT_\cO(Z)$ (\cf below for~$\hbboxtimes$).

\begin{definition}[Pushforward by a proper morphism]
Let $\map:X\to Y$ be a morphism between complex manifolds. The pushforward objects $\map_\dag^j\icT$ ($j\in\ZZ$) are defined as in \cite[\S1.6.d]{Bibi01c}. For $j\in\ZZ$ we obtain a functor $\map_\dag^j:\RdiTriples(X)\mto\RdiTriples(Y)$ (\cf\cite[Prop.\,7.1.4]{Bibi01c}).
\end{definition}

\begin{definition}[External tensor product]\label{def:externalproduct}
The external tensor product\index{$Boxtimeshb$@$\hbboxtimes$}
\[
\hbboxtimes:\RdiTriples(X)\times\RdiTriples(Y)\mto\RdiTriples(X\times Y)
\]
is defined as usual for the $\cR^\intt$-modules. Let $X,Y$ be complex manifolds and set $Z=X\times Y$. We set $\cO_{\cX\hbboxtimes\cY}:=\cO_\cX\boxtimes_{\cO_{\CC_\hb}}\cO_\cY$ and similarly for $\cR_{\cX\hbboxtimes\cY}$. Then~$\cO_\cZ$ (\resp $\cR_\cZ$) is flat over $\cO_{\cX\hbboxtimes\cY}$ (\resp $\cR_{\cX\hbboxtimes\cY}$). For $\cM\in\Mod(\cR_\cX)$ and $\cN\in\nobreak\Mod(\cR_\cY)$, the external tensor product $\cM\hbboxtimes\cN\in\Mod(\cR_{\cX\hbboxtimes\cY})$ can also be considered as being in $\Mod(\cR_\cZ)$ after tensoring with $\cR_\cZ$. If $\cM,\cN$ are endowed with a compatible action of $\hb^2\partial_\hb$, then so is $\cM\hbboxtimes\cN$ by using the Leibniz rule. For the sesquilinear pairing, one uses the relative external tensor product of relative distributions as defined in Section \ref{subsec:OmegacontinuousDb}. The integrability property is clearly preserved. In~order to avoid any derived external tensor product, it is natural to assume that $\cM,\cN$ are strict.
\end{definition}

\begin{remarque}[Specialization of a $\iota$-sesquilinear pairing]\label{rem:speiota}
The definition of the nearby cycles along a function $\fun:X\to\CC$ of a $\iota$-sesquilinear pairing between integrable strictly specializable $\cR_\cX$\nobreakdash-modules is obtained as for a $\sigma$-sesquilinear pairing (\cf\cite[\S3.6\,\&\,7.3.b]{Bibi01c} and \cite[\S22.10]{Mochizuki08}). The basic result \cite[Prop.\,3.6.4]{Bibi01c} (\cf also \cite[\S22.10]{Mochizuki08}) also applies to $\iota$-sesquilinear pairings in the integrable case and is compatible with the restriction considered in Lemma \ref{lem:iCCS}, and we will refer without any further ado to results concerning $\sigma$-sesquilinear pairings for application to $\iota$-sesquilinear pairings, as well as to the good behaviour with respect to restriction \ref{lem:iCCS}. We emphasize that the property for an object of $\RdiTriples(X)$ to be strictly specializable along $\fun$ is a condition on the $\cR^\intt_\cX$-modules of the object of $\RdiTriples(X)$, not on the $\iota$-sesquilinear pairing.
\end{remarque}

\pagebreak[2]
\begin{remarque}[Localization and dual localization of a $\iota$-sesquilinear pairing]\label{rem:lociota}
The results on localization also apply to $\iC$, in particular \cite[Prop.\,3.2.1]{Mochizuki11}. For a hypersurface $H$ of $X$, the \index{localization!``stupid''}``stupid'' localization functor $(*H)$ sends $\cR_\cX$\nobreakdash-modules to $\cR_\cX(*H)$-modules and $\iota$-sesquilinear pairings with values in $\Db_{\cX^\circ/\CC^*_\hb}$ to $\iota$\nobreakdash-sesqui\-li\-near pairings with values in $\Db_{(\cX^\circ\moins\cH^\circ)/\CC^*_\hb}^\modH=\Db_{\cX^\circ/\CC^*_\hb}(*H)$. This gives rise to the category $\RdiTriples(X,(*H))$.

Given now an \emph{effective divisor}~$H$ and $H$-strictly specializable $\cR^\intt_\cX$-modules, there is a notion of $H$-\emph{localizability} for these modules, to which one can apply \index{localization}\index{localization!dual}localization functors $[\star H]$ ($\star=*,!$). An integrable $\iota$-sesquilinear pairing is then automatically localizable, a statement which is proved exactly as in \cite[Prop.\,3.2.1]{Mochizuki11}, so the functors $[\star H]$ are defined on objects of $\RdiTriples(X)$ as soon as the $\cR^\intt_\cX$-module components are localizable. The full subcategory $\RdiTriples(X,[\star H])$ of $\RdiTriples(X)$ consists of objects $\icT$ satisfying $\icT\simeq\icT[\star H]$ ($\star=*,!$).
\end{remarque}

\vspace*{-5pt}
The following is now straightforward.\enlargethispage{\baselineskip}%

\begin{proposition}
The functors considered above are compatible with the restriction $\RdiTriples(X)\mto\RdTriples(X)$.\qed
\end{proposition}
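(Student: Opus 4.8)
The plan is to verify the compatibility functor by functor, exploiting the fact that the restriction $\RdiTriples(X)\to\RdTriples(X)$ leaves the underlying pair $(\cM',\cM'')$ of $\cR^\intt$-modules untouched and only replaces the $\iota$-sesquilinear pairing $\iC$ by its restriction $C_{\bS}$ to $\bS$, via the morphism \eqref{eq:restrDb} and the identification $\sigma_{|\bS}=\iota_{|\bS}$ (Lemma \ref{lem:iCCS}). For each of the functors at stake --- Tate twist, adjunction, smooth (and normalized smooth) pullback $\mapsm^+$, proper pushforward $\map_\dag^j$, external tensor product $\hbboxtimes$, and the localization and specialization functors $(*H)$, $[\star H]$, $\psi_{\fun,a}$ --- the construction on the $\cR^\intt$-module components is literally the same on the $\RdiTriples$ side and on the $\RdTriples$ side (a twist by a power of $\hb$, a permutation of the two modules, the usual pullback/pushforward/external product of $\cR^\intt$-modules, or the usual localization), so there is nothing to check there. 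Hence the whole statement reduces to showing that, for each functor, building the new pairing out of $\iC$ and then restricting it to $\bS$ yields the same thing as first restricting $\iC$ to $\bS$ and then performing the corresponding construction on $C_{\bS}$.

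First I would isolate the mechanism that makes all these verifications routine: the restriction morphism \eqref{eq:restrDb} attached to $\nu:\bS\hto\CC^*_\hb$ intertwines each of the elementary operations on relative distributions used to build the pairings with its $\bS$-counterpart. This is already built into the functoriality of the sheaves $\Db_{X\times\Omega/\Omega}$ in $\Omega$ recalled in Section \ref{subsec:OmegacontinuousDb}. Concretely: the operations carried out in the $X$-direction only --- complex conjugation, smooth pullback \eqref{eq:pullbackDb} along a product map, integration of relative currents along the fibres of a proper map, and the relative external product followed by restriction to the diagonal of $\Omega$ --- do not involve the $\hb$-variable, hence commute with any base change in $\Omega$, in particular with $\nu$; this follows from the decomposition $\mu=q\circ\nu$ and Lemma \ref{lem:mustaru}. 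The operations that do involve $\hb$ --- the involution $\iota^*$ (which appears in the adjunction and in the very definition of a $\iota$-pairing) and multiplication by holomorphic functions of $\hb$ (the Tate factor $\iC_0$, the exponential and $|\hb|$-factors of Example \ref{exem:equivintstructT}, and the localization functor $(*H)$) --- restrict to their $\bS$-analogues precisely because $\sigma$ and $\iota$ coincide on $\bS$. Assembling these observations for each functor in turn, and using in addition Remarks \ref{rem:speiota} and \ref{rem:lociota} (together with the fact, invoked there, that the basic results on $\sigma$-sesquilinear pairings of \cite{Bibi01c,Mochizuki08} are compatible with the restriction of Lemma \ref{lem:iCCS}) for the specialization and localization functors, gives the proposition.

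I do not expect any genuine obstacle; the only mildly delicate point is the proper pushforward $\map_\dag^j$, where one must know that the integration-of-currents morphism $\int_\map$ --- which, as recalled in Section \ref{subsec:OmegacontinuousDb}, preserves continuity and differentiability with respect to $\Omega$ --- is moreover compatible with the restriction $\nu^*$ to $\bS$. This holds because the integration is performed over the fibres of $\map:X\to Y$, \ie in the $X$-variable alone, so it commutes with restricting the $\hb$-parameter from $\CC^*_\hb$ to $\bS$; one checks it directly on relative test forms, exactly as in the construction of $\nu^*$ in Section \ref{subsec:OmegacontinuousDb}. With this granted, the compatibility of $\map_\dag^j$, and then of all the remaining functors, is immediate.
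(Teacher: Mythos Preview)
Your proposal is correct and matches the paper's stance: the paper states the proposition with a bare \qed, prefaced only by ``The following is now straightforward.'' You have simply written out the routine verification the paper omits, and your argument --- reducing everything to the compatibility of the restriction morphism \eqref{eq:restrDb} with the elementary operations on relative distributions, plus the identity $\sigma_{|\bS}=\iota_{|\bS}$ --- is exactly the mechanism the paper has set up in Section~\ref{subsec:OmegacontinuousDb} and Lemma~\ref{lem:iCCS} to make this immediate.
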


\begin{remarque}[Beilinson's construction and gluing]
Beilinson's construction and gluing procedure along a function $\fun:X\to\CC$, as explained in \cite[\S4.2]{Mochizuki11} for the category $\RdTriples(X)$ can be extended in a straightforward way to $\RdiTriples(X)$, and the restriction functor $\RdiTriples(X)\mto\RdTriples(X)$ is compatible with this construction, as well as with the associated definition of vanishing cycles as done in \cite[\S4.2.3]{Mochizuki11}.
\end{remarque}

\begin{remarque}[Variations of mixed twistor structures]\label{rem:varMTS}
Let $H$ be a hypersurface in~$X$. The notion of variation of integrable mixed twistor structure on $(X,H)$ is defined in \cite[\S9.1.1]{Mochizuki11}, giving rise to the full subcategory $\MTS^\intt(X,H)$ of $\WRdTriples(X,(*H))$: the only condition is that, on $X\moins H$, the corresponding integrable object is a (smooth) variation of mixed twistor structure. A similar definition leads to $\iMTS^\intt(X,H)$ as a full subcategory of $\WRdiTriples(X,(*H))$, and the restriction functor\vspace*{-3pt}
\begin{starequation}\label{eq:varMTS}
\WRdiTriples(X,(*H))\mto\WRdTriples(X,(*H))
\end{starequation}%
sends $\iMTS^\intt(X,H)$ to $\MTS^\intt(X,H)$. Moreover, one can also define $\iMTS^\intt(X,H)$ as the full subcategory of $\WRdiTriples(X,(*H))$ whose objects have restriction \eqref{eq:varMTS} in the subcategory $\MTS^\intt(X,H)$ of $\WRdTriples(X,(*H))$.

On the other hand, the full subcategory $\MTS_\adm^\intt(X,H)$ of $\MTS^\intt(X,H)$ consisting of admissible variations of mixed twistor structures is defined in two steps. Firstly, if $H=D$ is a divisor with normal crossings, $\MTS_\adm^\intt(X,D)$ is defined in \cite[\S9.1.5]{Mochizuki11}. For $H$ general, the definition is given in \cite[\S11.3.2]{Mochizuki11} by pushing forward objects of $\MTS_\adm^\intt(X',D')$ by a suitable projective modification $(X',D')\to(X,H)$.

With the method already used, we define the category $\iMTS_\adm^\intt(X,H)$ as the full subcategory of $\WRdiTriples(X,(*H))$ whose objects restrict via \eqref{eq:varMTS} to an object of $\MTS_\adm^\intt(X,H)$. By definition, \eqref{eq:varMTS} induces a functor $\iMTS_\adm^\intt(X,H)\to\MTS_\adm^\intt(X,H)$.
\end{remarque}

\begin{remarque}[Compatibility with localization]\label{rem:localization}
Let $H$ be an effective divisor in $X$. Recall (\cf\cite[\S11.2.2]{Mochizuki11}) that the category \index{$MTMXintH$@$\MTM^\intt(X,[\star H])$}$\MTM^\intt(X,[\star H])$ is the full subcategory of $\MTM^\intt(X)$ consisting of objects satisfying $\cT[\star H]\simeq\cT$. We can define similarly \index{$IMTMXintH$@$\iMTM^\intt(X,[\star H])$}$\iMTM^\intt(X,[\star H])$ as a full subcategory of $\iMTM^\intt(X)$, the latter being given by Definition \ref{def:iotamtm}, and there is a restriction functor
\begin{starequation}\label{eq:localization}
\iMTM^\intt(X,[\star H])\mto\MTM^\intt(X,[\star H])\quad(\star=*,!).
\end{starequation}%
If we know the equivalence in Theorem \ref{th:equivalenceintegrable}, we deduce easily that the previous functor \eqref{eq:localization} is also an equivalence. However, the proof of the Theorem \ref{th:equivalenceintegrable} will go through Beilinson's construction, which essentially means proving first that the restriction functor \eqref{eq:localization} at the localized level is an equivalence.

On the other hand, tensoring with $\cR_\cX(*H)$ induces functors
\begin{align*}
\MTM^\intt(X,[*H])&\mto\WRdTriples(X,(*H)),\\
\tag*{and} \iMTM^\intt(X,[*H])&\mto\WRdiTriples(X,(*H)).
\end{align*}
Moreover, the first functor is fully faithful (\cf \cite[Lem.\,7.1.43\,\&\,Lem.\,11.2.4]{Mochizuki11}). For the sake of simplicity, we will denote by \index{$MTMXintHstupid$@$\MTM^\intt(X,(*H))$}$\MTM^\intt(X,(*H))$ its essential image, so that we have an equivalence
\[
\MTM^\intt(X,[*H])\simeq\MTM^\intt(X,(*H)).
\]
As a particular case of \cite[Prop.\,11.3.3]{Mochizuki11} we obtain that $\MTS_\adm^\intt(X,H)$, the category of admissible integrable variations of mixed twistor structures on $(X,H)$, is a full subcategory of $\MTM^\intt(X,(*H))$.

Once we know that \eqref{eq:localization} is an equivalence, we deduce that the functor
\begin{starstarequation}\label{eq:stupidloc}
\iMTM^\intt(X,[*H])\mto\WRdiTriples(X,(*H))
\end{starstarequation}%
is fully faithful, and we denote its essential image by \index{$IMTMXintHstupid$@$\iMTM^\intt(X,(*H))$}$\iMTM^\intt(X,(*H))$, which is therefore equivalent to $\MTM^\intt(X,(*H))$. We also conclude that $\iMTS_\adm^\intt(X,H)$ is a full subcategory of $\iMTM^\intt(X,(*H))$. Let us end this remark by noticing that, in the category $\iMTM^\intt(X,(*H))$, the weights produced by the poles along $H$ are irrelevant, that is, for an object $\icT$ in $\iMTM^\intt(X,[*H])$, we have $W_\bbullet[\icT(*H)]=[W_\bbullet\icT](*H)$.
\end{remarque}

\begin{remarque}[Compatibility with the restriction to $\hb=1$]\label{rem:compfunctorshb1}
Let us notice that the functors introduced on the category $\iMTM^\intt(X)$ are compatible with the similar functors on $\Mod_\hol(\cD_X)$ by the functor $\Xi_{\DR}:\icT\mto\cM''/(\hb-1)\cM''=\ccM$ (Notation \ref{nota:XiDR}). This is mostly obvious for the pushforward by a projective morphism, the pullback by a smooth morphism and the external tensor product, due to preservation of strictness by these functors. For nearby cycles, this is due to strict specializability and local unitarity (\cf\cite[Prop.\,3.3.14]{Bibi01c}), and this implies the property for $[\star H]$ (\cf also \cite[Cor.\,11.2.10]{Mochizuki11}). For duality, this is contained in \cite[Th.\,13.3.1]{Mochizuki11}.
\end{remarque}

\section{Proof of Theorem \ref{th:equivalenceintegrable}}\label{sec:pfequivalenceintegrable}

\subsection{The smooth case}\label{subsec:smoothcase}
The category of smooth integrable $\RTriples$ on $X$ is the full subcategory of $\RdTriples(X)$ whose objects $(\cM',\cM'',C_{\bS})$ are such that $\cM',\cM''$ are $\cO_\cX$-locally free of finite rank. We define in a similar way their $\iota$-version.

\begin{lemme}\label{lem:equivsmooth}
The natural functor $\RdiTriples^\smooth(X)\mto\RdTriples^\smooth(X)$ is an equivalence of categories.
\end{lemme}

This obviously implies Theorem \ref{th:equivalenceintegrable} for the full subcategory of smooth objects in $\iMTM^\intt(X)$.

\begin{proof}
We note that $\cM^{\prime\circ},\cM^{\prime\prime\circ}$ (restrictions of $\cM',\cM''$ to~$\cX^\circ:=X\times\CC_\hb^*$) are flat bundles, and giving $C_{\bS}$ is equivalent to giving its restriction $C_{\bS}^\nabla$ to the corresponding local systems on $X\times\bS$. The latter takes values in the constant sheaf, due to the integrability property. The lemma follows then from the property that the inclusion $X\times\bS\hto\cX^\circ$ is a homotopy equivalence, so the pairing
\[
C_{\bS}^\nabla:\cM^{\prime\nabla}_{\bS}\otimes_\CC\sigma^{-1}\ov{\cM^{\prime\prime\nabla}_{\bS}}\to\CC_{X\times\bS},
\]
that we can rewrite
\[
C_{\bS}^\nabla:\cM^{\prime\nabla}_{\bS}\otimes_\CC\iota^{-1}\ov{\cM^{\prime\prime\nabla}_{\bS}}\to\CC_{X\times\bS},
\]
determines in a unique way
\[
\iC^\nabla:\cM^{\prime\nabla}_{|\cX^\circ}\otimes_\CC\iota^{-1}\ov{\cM^{\prime\prime\nabla}_{|\cX^\circ}}\to\CC_{\cX^\circ},
\]
and then in a unique way an integrable $\iota$-sesquilinear pairing
\[
\iC:\cM^{\prime\circ}\otimes_\CC\iota^*\ov{\cM^{\prime\prime\circ}}\to\Db_{\cX^\circ/\CC^*_\hb}.
\]
This proves essential surjectivity. Full faithfulness is proved similarly.
\end{proof}

\subsection{The case of graded S-decomposable holonomic \texorpdfstring{$\cR_\cX$}{RcX}-modules}

Let us consider an object $(\cT,W)=((\cM',\cM'',C_{\bS}),W)$ of $\WRdTriples(X)$ such that, for each graded object $\gr_\ell^W\cT=(\gr_{-\ell}^W\cM',\gr_\ell^W\cM'',\gr_\ell^WC_{\bS})$ in the category $\RdTriples(X)$, the components $\gr_{-\ell}^W\cM',\gr_\ell^W\cM''$ are holonomic, integrable and strictly S-decomposable (\cf\cite[Def.\,3.5.1 \& Rem.\,7.3.5]{Bibi01c}).

\begin{lemme}\label{lem:iCSdec}
Given such an object $(\cT,W)$ in $\WRdTriples(X)$, there is at most one lifting $\iC$ of~$C_{\bS}$.
\end{lemme}

\begin{proof}
We can argue locally in the neighbourhood of any point $x_o\in X$. We argue by induction on the length of the filtration $W_\bbullet$. Let us assume first that $\cT$ is an object of $\RdTriples^\Sdec(X)$. Let $(Z_i)_{i\in I}$ be the family of strict components of $\cM',\cM''$ at $x_o$. Due to Remark \ref{rem:iCint}, the proof of \cite[Prop.\,3.5.8]{Bibi01c} can be applied to show that $\iC:\cM^{\prime\circ}_{Z_i}\otimes_\CC\iota^*\ov{\cM^{\prime\prime\circ}_{Z_j}}\to\Db_{\cX^\circ/\CC^*_\hb}$ is zero as soon as $Z_i\neq Z_j$. Let us now assume that $\cM',\cM''$ have the same strict support $Z$, which is closed irreducible in $(X,x_o)$. According to Kashiwara's equivalence \cite[Cor.\,3.3.12]{Bibi01c} and Section \ref{subsec:smoothcase} above, $C_{\bS}$ has at most one lift $\iC$ on some Zariski dense open set~$U$ of~$Z$. It is then enough to prove that, if some $\iota$-sesquilinear pairing $\iC$ between~$\cM^{\prime\circ}$ and $\cM^{\prime\prime\circ}$ vanishes on~$U$, then it is zero. We can assume that $Z\moins U$ is the zero set of some holomorphic function~$\fun$ on~$(X,x_o)$ and, up to changing $\cM',\cM''$ by their direct images by the graph inclusion~$i_\fun$, we can assume that $\fun$ is part of a coordinate system and we call it~$t$.

Since $\Db_{\cX^\circ/\CC^*_\hb}\subset\Db_{\cX^\circ}$, a local section $m'$ of $\cM'$ satisfies $\iC(m')=0$ (\cf Remark~\ref{rem:iCint}) if and only if $\iC(m')$ is zero in $\cHom_{\cD_{\ov\cX^\circ}}(\ov{\cM^{\prime\prime\circ}},\Db_{\cX^\circ})$. By our assumption, $\iC(m')$ is zero when restricted to $t\neq0$. For any local section $m''$ of~$\iota^*\cM''$, the distribution $\iC(m')(\ov{m''})$ is zero away from $t=0$ hence, up to shrinking the neighbourhood of $x_o$, is annihilated by some power of $t$. Applying this to a generating finite family of $\iota^*\cM''$ implies that $\iC(m')$ is annihilated by some power of $t$. At this point, we can argue exactly as in the end of the proof of \cite[Prop.\,3.5.8]{Bibi01c}.

We now assume that the filtration $W$ has arbitrary finite length and we argue by induction on its length. Assume for simplicity that $W_{\leq-1}=0$. Assume that~$\iC$ restricts to $C_{\bS}=0$ between $\cM'$ and $\cM''$. By the first part, the pairing $\iC$ between $W_0\cM'$ and $W_0\cM''$ is zero. Then the pairing $\iC$ between $W_0\cM'$ and $W_1\cM''$ comes from a pairing between $W_0\cM'$ and $\gr_1^W\!\!\cM''$, which is also zero by the first part. Continuing this~way, we conclude that the pairing induced by $\iC$ between $W_0\cM'$ and $\cM''$ or between~$\cM'$ and $W_0\cM''$ is zero, so we can replace $\cM',\cM''$ by $\cM'/W_0\cM',\cM''/W_0\cM''$ and conclude by induction that $\iC$ is zero.
\end{proof}

\begin{remarque}
As a consequence of Lemma \ref{lem:iCSdec}, the essential surjectivity statement in Theorem \ref{th:equivalenceintegrable} (hence the equivalence statement) is a local statement on $X$.
\end{remarque}

\subsection{The case of admissible mixed twistor structures}\label{subsec:proofadmissibleintegrable}
We start from a pair $(X,D)$, where $D$ is a divisor with normal crossings in a complex manifold $X$ (the setting is local near a point of $D$, so we can as well work with coordinates). Let $\cT=((\cM',\cM'',C_{\bS}),W_\bbullet)$ be an admissible variation of mixed twistor structure on~$(X,D)$ which is good wild (\cf\cite[\S9.1.2]{Mochizuki11}). The condition (Adm0) of \loccit ensures the local uniqueness of our construction of $\iC$, according to Lemma \ref{lem:iCSdec}, and we will not use it for another purpose. We will now focus on the condition (Adm1), asserting that~$\cT$ is a smooth good KMS-$\cR_{\cX(*D)}$-triple, and that the filtration $W_\bbullet$ is compatible with the corresponding structures. In particular, $C_{\bS}$ takes values in the sheaf \index{$DbXSmod$@$\Db_{X\times\bS/\bS}^\modD$}$\Db_{X\times\bS/\bS}^\modD$ of moderate distributions along $D$ (\cf\cite[\S2.1.4]{Mochizuki11}).

\begin{lemme}\label{lem:liftDmod}
There exists a $\iota$-sesquilinear pairing $\iC$ with values in $\Db_{\cX^\circ/\CC_\hb^*}^\modD$ lifting~$C_{\bS}$.
\end{lemme}

\begin{proof}
It is enough to prove that $\iC$, as given by Lemma \ref{lem:equivsmooth} on $\cX\moins\cD$, takes values in $\Db_{\cX^\circ/\CC_\hb^*}^\modD$ when applied to local sections of $\cM'_{|\cD^\circ},\cM''_{|\cD^\circ}$. The question is local on~$D$ and, by taking a local covering of $X$ ramified along the components of $D$, one can assume that the local set of exponential factors (irregular values) of $\cM',\cM''$ is non-ramified. Recall that the exponential factors take the form $\gota(\bmx)/\hb$ with $\gota\in\bmx^{-1}\CC[\bmx^{-1}]$ (where $\bmx$ is a local coordinate system on $X$ adapted to $D$), and they form a good family. We work locally at $(x_o,\hb_o)\in\cD^\circ$.

Let $\varpi:\wt\cX:=\wt\cX(D)\to\cX$ denote the real blow-up of $\cX$ along the components of $D\times\CC_\hb$ and let us denote by $\wt\cX^\circ$ its restriction above $\CC^*_\hb$. We have an open inclusion morphism $\wtj:\cX^\circ\moins\cD^\circ\hto\wt\cX^\circ$ whose complementary closed inclusion is denoted by $\wti:\partial\wt\cX^\circ\hto\wt\cX^\circ$. Let $\cA_{\wt\cX}$ denote the sheaf of $C^\infty$ functions on $\wt\cX$ which are holomorphic on $\wt\cX\moins\varpi^{-1}(D\times\CC_\hb)$ and let $\cA_{\wt\cX}^{\rdD}$ be the subsheaf of functions having rapid decay along $\varpi^{-1}(D\times\CC_\hb)$. It is enough to prove the proposition on $\wt\cX^\circ$ by considering
\[
\wt\cM^\circ:=\cA_{\wt\cX^\circ}\otimes_{\varpi^{-1}\cO_{\cX^\circ}}\cM^\circ\quad(\cM=\cM',\cM'').
\]

We now regard $\cM^\circ$ as a $\cD_{\cX^\circ}$-module, due to integrability. The pushforward $\cL:=\wtj_*(\cM^\circ_{|\cX^\circ\moins\cD^\circ})^\nabla$ of $(\cM^\circ_{|\cX^\circ\moins\cD^\circ})^\nabla$ to $\wt\cX^\circ$ is a local system on $\wt\cX^\circ$ whose \hbox{restriction} $\wti^{-1}\cL$ is equipped with a Stokes filtration $\cL_\bbullet$ which is non-ramified by our assumption and indexed by the good finite family $(\gota)$ (\cf\cite[Chap.\,4]{Mochizuki08}, \cite{Mochizuki10b} and \cite[\S9.e]{Bibi10}). We denote with an index $\bS$ the objects restricted to $\wt X\times\bS$.

Since $C_{\bS}$ is integrable, it induces a $\CC$-bilinear pairing on $(X\moins D)\times\bS$:
\[
C_{\bS}^\nabla:\wtj^{-1}\cL'_{\bS}\otimes_{\CC_{(X\moins D)\times\bS}}\sigma^{-1}\wtj^{-1}\ov{\cL''}_{\bS}\to\CC_{(X\moins D)\times\bS}.
\]

\begin{lemme}
The pairing $C_{\bS}^\nabla$ extends in a unique way as a pairing
\[
\cL'_{\bS}\otimes_{\CC_{\wt X\times\bS}}\sigma^{-1}\ov{\cL''}_{\bS}\to\CC_{\wt X\times\bS}
\]
and this extension induces a pairing of Stokes-filtered local systems
\[
(\wti^{-1}\cL'_{\bS},\cL'_{\bS,\bbullet})\otimes_{\CC_{\wt X\times\bS}}\sigma^{-1}(\wti^{-1}\ov{\cL''}_{\bS},\ov{\cL''}_{\bS,\bbullet})\to(\CC_{\partial\wt X\times\bS},\CC_{\partial\wt X\times\bS,\bbullet}),
\]
where the Stokes filtration $\CC_{\partial\wt X\times\bS,\bbullet}$ is the trivial one, \ie such that $\CC_{\partial\wt X\times\bS}=\CC_{\wt X\times\bS,\leq0}$ and for each open stratum $D_I^\circ$ of the natural stratification of $D$, denoting by $\wt D_I^\circ:=\varpi^{-1}(D_I^\circ)$, we have $\CC_{\wt D_I^\circ\times\bS,<0}=0$.
\end{lemme}

\begin{proof}
The first assertion is obvious since the inclusion $X\moins D\hto\wt X$ is a homotopy equivalence. The second assertion follows from \cite[Lem.\,5.5.1]{Mochizuki11}.
\end{proof}

\begin{corollaire}\label{cor:liftDmod}
There exists a unique $\CC$-bilinear pairing
\[
\iC^\nabla:(\cL',\cL''_\bbullet)\otimes_\CC\iota^{-1}(\ov{\cL''},\ov{\cL''}_\bbullet)\to(\CC_{\wt\cX^\circ},\CC_{\partial\wt\cX^\circ,\bbullet})
\]
whose restriction to $\wt X\times\bS$ is $C_{\bS}^\nabla$.
\end{corollaire}

\begin{proof}
The existence on $\cX^\circ\moins\cD^\circ$ is clear, since we can regard $C_{\bS}^\nabla$ as a morphism from the dual local system $\cL^{\prime\vee}_{\bS}$ to $\sigma^{-1}\ov{\cL''}_{\bS}=\iota^{-1}\ov{\cL''}_{\bS}$, and the inclusion
\[
(X\moins D)\times\bS\hto(X\moins D)\times\CC^*_\hb
\]
induces an isomorphism of fundamental groups $\pi_1$.

On $\partial\wt\cX^\circ$, we regard $C_{\bS}^\nabla$ as a morphism of good Stokes-filtered local systems, and we use the equivalence of categories \cite[Th.\,3.9]{Mochizuki10b} for the inclusion $\partial\wt X\times\bS\hto\partial\wt\cX^\circ$, which is easy in this case since the deformation of each exponential factor $\gota/\hb=\gota\mathrm{e}^{-\sfi\vartheta}$ is written as $\varrho\cdot\gota\mathrm{e}^{-\sfi\vartheta}$, where $\varrho$ varies in $\RR_+^*$, so not only goodness is preserved in this deformation, but also the order between the various pairs of exponential factors is strictly preserved.
\end{proof}

\oldsubsubsection*{End of the proof of Lemma \ref{lem:liftDmod}}
Let us fix a point $(\wt x_o,\hb_o)\in\partial\wt\cX^\circ$ above $(x_o,\hb_o)$. By the generalized Hukuhara-Turrittin theorem (\cf \cite[Chap.\,3]{Mochizuki11}, \cite[Th.\,12.5]{Bibi08}) there exists near $(\wt x_o,\hb_o)$ an isomorphism
\[
\wt\cM^\circ_{(\wt x_o,\hb_o)}\simeq\bigoplus_{\gota,\alphag}\bigl[\cA_{\wt\cX,(\wt x_o,\hb_o)}(*D)\otimes_{\varpi^{-1}\cO_{\cX,(x_o,\hb_o)}}(\cE^{\gota/\hb}\otimes\cF_{\gota,\alphag})\bigr],
\]
where $\gota$ varies in the finite good family above, $\cE^{\gota/\hb}\!:=\!\bigl(\cO_{\cX^\circ}(*D),\rd\!+\nobreak\rd(\gota/\hb)\bigr)$, \hbox{$\alphag\!\in\![0,1)^n$} and $\cF_{\gota,\alphag}$ is a free $\cO_{\cX^\circ}$-module with flat logarithmic connection whose residue along $D_i$ has eigenvalue~$\alpha_i$. There exists thus a horizontal basis \hbox{$\epsilong^{(\hb_o)}\!=\!(\epsilong^{(\hb_o)}_{\gota,\alphag})$} of $\cL$ on some neighbourhood of $(\wt x_o,\hb_o)$ in $\wt\cX^\circ$ and an $\cA_{\wt\cX,(\wt x_o,\hb_o)}(*D)$-basis $\bme^{(\hb_o)}=(\bme^{(\hb_o)}_{\gota,\alphag})$ of $\wt\cM^\circ_{(\wt x_o,\hb_o)}$ such that
\[
\bme^{(\hb_o)}_{\gota,\alphag}=\epsilong^{(\hb_o)}_{\gota,\alphag}\cdot\mathrm{e}^{-\gota/\hb}\bmx^{-\alphag\id+\rN},
\]
where $\rN=\rN_{\gota,\alphag}$ is nilpotent.

The compatibility of $\iC^\nabla$ with the Stokes filtration asserted by Corollary \ref{cor:liftDmod} is equivalent to the vanishing, for any pair $(\gota,\gotb)$ of exponential factors, of the entries of the matrix $\iC^\nabla(\epsilong^{\prime(\hb_o)}_{\gota,\alphag},\ov{\epsilong^{\prime\prime(-\hb_o)}_{\gotb,\betag}})$ as soon as $\reel(\gota/\hb-\gotb/\hb)<0$ on some nonempty set containing $(\wt x_o,\hb_o)$ in its closure (that we use $-\gotb/\hb$ is due to the involution $\iota$).

On the other hand, we will show that the entries of each matrix $\iC(\bme^{\prime(\hb_o)}_{\gota,\alphag},\ov{\bme^{\prime\prime(-\hb_o)}_{\gotb,\betag}})$ are $L^1_\loc$ and $C^\infty$ with respect to $\hb$. Such a matrix can be written~as
\[
\mathrm{e}^{\ov{\gotb/\hb}-\gota/\hb}\cdot\bmx^{-\alphag}\cdot\ov\bmx^{-\betag}\cdot\bmx^{\rN_{\gota,\alphag}^\mathrm{t}}\cdot\iC^\nabla(\epsilong^{\prime(\hb_o)}_{\gota,\alphag},\ov{\epsilong^{\prime\prime(-\hb_o)}_{\gotb,\betag}})\cdot\ov\bmx^{\rN_{\gotb,\betag}},
\]
where $\rN^\mathrm{t}$ is the transpose of $\rN$. By the compatibility of $\iC^\nabla$ with the Stokes filtration, this matrix is nonzero only if $\mathrm{e}^{\ov{\gotb/\hb}-\gota/\hb}$ is bounded near $(\wt x_o,\hb_o)$. Since the entries of $\iC^\nabla(\epsilong^{\prime(\hb_o)}_{\gota,\alphag},\ov{\epsilong^{\prime\prime(-\hb_o)}_{\gotb,\betag}})$ are constant in any case, the $L^1_\loc$ property follows from $\alpha_i,\beta_i\in\nobreak[0,1)$ for all $i=1,\dots,d_X$, and the $C^\infty$ behaviour with respect to $\hb$ follows from the same property for $\mathrm{e}^{\ov{\gotb/\hb}-\gota/\hb}$.
\end{proof}

\pagebreak[2]
\begin{remarque}[Full faithfulness]\label{rem:fullfaith}
In the present setting, full faithfulness is obvious. Indeed, given an integrable morphism $\lambda=(\lambda',\lambda''):\cT_1\to\cT_2$ in $\MTM^\intt(X)$ (that is, in $\WRdTriples(X)$), Lemma \ref{lem:equivsmooth} shows that, for local sections $m'_2,m''_1$ of $\cM'_2,\cM''_1$, the two moderate distributions $\iC(\lambda'(m'_2),\iota^*\ov{m''_1})$ and $\iC(m'_2,\iota^*\ov{\lambda''(m''_1)})$ coincide on $\cX^\circ\moins\cD^\circ$, hence they coincide, since $\Db_{\cX^\circ/\CC^*_\hb}^\modD$ is nothing but the image of the restriction morphism $\Db_{\cX^\circ/\CC^*_\hb}\to j_*\Db_{(\cX^\circ\moins\cD^\circ)/\CC^*_\hb}$.
\end{remarque}

\Subsection{End of the proof of equivalence in Theorem \ref{th:equivalenceintegrable}}\label{subsec:proofequivintegrable}

\subsubsection*{Equivalence in the localized case}

Let $\cT$ be an object of $\MTM^\intt(X)$ and let us denote by $Z$ its support. Let us fix $x_o\in Z\subset X$. Up to replacing $X$ with a neighbourhood of~$x_o$, there exists a hypersurface $H$ of $X$ and a projective modification $\modif:Z'\to Z$, that we regard also as a morphism $\modif:Z'\to X$, such that $Z'$ is smooth, $H':=\modif^{-1}(H)$ is a divisor with normal crossing in $Z'$, and $\modif:Z'\moins H'\to Z\moins H$ is an isomorphism, and an integrable admissible mixed variation of mixed twistor structure~$\cV$ on $(Z',H')$, such that $\cT(*H)=\modif_\dag^0\cV$ (\cf\cite[Prop.\,11.1.1]{Mochizuki11}). We can therefore apply Lemma \ref{lem:liftDmod} to $\cV$, together with the pushforward by $\modif^0_\dag$ (and~the compatibility between pushforward and restriction \ref{lem:iCCS}), to obtain the existence of~$\iC$ with values in $\Db_{\cX^\circ/\CC^*}^\modD$ lifting $C_{\bS}$ on $\cT(*H)$.

By the argument given in Remark \ref{rem:fullfaith}, we conclude that any morphism $\cT_1(*H)\to\cT_2(*H)$ in $\WRdTriples(X,(*H))$ is compatible with $\iC$.

\subsubsection*{Equivalence: the general case}
As mentioned in Remark \ref{rem:lociota}, the prolongations $[!H]$ and $[*H]$ of strictly specializable $\cR_{\cX(*H)}$-triples also apply, in a way compatible with restriction \ref{lem:iCCS}, to strictly specializable integrable $\cR_{\cX(*H)}$-$\iota$-triples (recall that strict specializability is a property of $\cM',\cM''$ only). It follows that the construction of the Beilinson functor and the corresponding exact sequences (\cf\cite[\S4.2]{Mochizuki11}) can be obtained in this category.

Let $\cT$ and $H$ be as in the localized case above, and assume that $H$ is defined by a local equation $h=0$. By \cite[\S11.1.4]{Mochizuki11} (which we refer to for the notation), we~can recover $\cT$ by a gluing procedure from the data of $\cT(*H)$ and of an object~$\cT'$ of $\MTM^\intt(X)$ supported on $H$, together with integrable morphisms $u,v$:
\[
\psi_h^{(1)}\cT(*H)\To{u}\cT'\To{v}\psi_h^{(0)}\cT(*H).
\]
We know that $\cT(*H)$ has a lifting in $\RdiTriples(X,*H)$, hence $\psi_h^{(1)}\cT(*H)$ and $\psi_h^{(0)}\cT(*H)$ have a natural lifting in $\RdiTriples(X)$. Moreover, by induction on the dimension of the support, so has $\cT'$, and $u,v$ are also lifted. It follows that $\mathrm{Glue}(\cT(*H),\cT',u,v)$ can also be lifted to $\RdiTriples(X)$ (\cf\cite[\S4.2.4]{Mochizuki11} for the definition of the object $\mathrm{Glue}(\cT(*H),\cT',u,v)$).

Since the $\mathrm{Glue}$ functor is also fully faithful, we obtain similarly the full faithfulness in Theorem \ref{th:equivalenceintegrable} by using the full faithfulness in the localized case above together with the induction hypothesis on the dimension of the support, to treat $(\cT',u,v)$.\qed

\subsection{Construction of the duality functor and compatibility by restriction}
In order to end the proof of the various compatibilities asserted in Theorem \ref{th:equivalenceintegrable}, it remains to construct the duality functor on $\iMTM^\intt(X)$ and to prove its compatibility with that of $\MTM^\intt(X)$ by the restriction~\ref{lem:iCCS}. We follow the construction in \cite[Th.\,13.3.1]{Mochizuki11}.

\skpt
\begin{definition}[Non-degenerate objects of $\RdiTriples(X)$]\label{def:nondeg}
Let $\icT=(\cM',\cM'',\iC)$ be an object of $\RdiTriples(X)$. We say that it is \emph{non-degenerate} if the following properties hold:
\begin{enumeratea}
\item\label{def:nondeg1}
Regarded as a $\cD_{\cX^\circ}$-$\iota$-sesquilinear pairing with values in $\Db_{\cX^\circ}$, $\iC$ is non-degenerate, that is, the induced $\cD_{\cX^\circ}$-linear morphism
\[
\cM^{\prime\circ}\to\cHom_{\ov\cD_{\cX^\circ}}(\iota^*\ov{\cM^{\prime\prime\circ}},\Db_{\cX^\circ})
\]
is an isomorphism.
\item\label{def:nondeg2}
For each $\hb_o\in\CC^*_\hb$, the induced $\cD_X$-sesquilinear pairing $\iC^{\hb_o}$ (\cf Remark \ref{rem:iChbo}) is non-degenerate, that is, the induced $\cD_X$-linear morphism
\[
\cM^{\prime\hb_o}\to\cHom_{\cD_{\ov X}}(\ov{\cM^{\prime\prime-\hb_o}},\Db_X)
\]
is an isomorphism.
\end{enumeratea}
\end{definition}

\begin{proposition}
Any object $\icT$ of $\iMTM^\intt(X)$ is non-degenerate.
\end{proposition}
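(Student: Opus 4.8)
The plan is to run the same dévissage as in the proof of Theorem \ref{th:equivalenceintegrable} and to feed in, at the bottom, the non-degeneracy of the pairing $C_{\bS}$ of a mixed twistor $\cD$-module, which is part of Mochizuki's construction of the duality functor on $\MTM^\intt$ (\cf\cite[Th.\,13.3.1]{Mochizuki11}): $C_{\bS}$ induces an isomorphism $\cM'_{\bS}\isom\cHom_{\cD_{\ov X\times\bS}}(\sigma^*\ov{\cM''_{\bS}},\Db_{X\times\bS})$, and likewise at each $\hb_o\in\bS$. By Theorem \ref{th:equivalenceintegrable} we may assume $\icT$ is the canonical lift of an object $\cT=(\cM',\cM'',C_{\bS})$ of $\MTM^\intt(X)$; since $C_{\bS}=\iC_{|\bS}$ (Lemma \ref{lem:iCCS}) and $\sigma$, $\iota$ coincide on $\bS$, property \eqref{def:nondeg2} holds at every $\hb_o\in\bS$, and the issue is to propagate \eqref{def:nondeg1} and \eqref{def:nondeg2} to all of $\cX^\circ$.

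Both conditions being local on $X$, one reduces by Remark \ref{rem:lociota} and the local description of $\MTM^\intt(X)$ by Beilinson gluing from admissible variations of mixed twistor structure to (i) the case of an admissible variation on a pair $(Z',D')$ as in Lemma \ref{lem:liftDmod}, and (ii) the stability of non-degeneracy under the functors $[\star H]$, the Beilinson gluing, and the projective pushforward $\modif_\dag^0$, all of which are compatible with the restriction of Lemma \ref{lem:iCCS}. In case (i), over $Z'\moins D'$ the modules $\cM^{\prime\circ},\cM^{\prime\prime\circ}$ are flat bundles and $\iC$ is a morphism of local systems on the \emph{connected} manifold $(Z'\moins D')\times\CC^*_\hb$; by Lemma \ref{lem:equivsmooth} (more precisely, the connectedness argument in its proof) this morphism, being an isomorphism over $(Z'\moins D')\times\bS$, is an isomorphism everywhere, which yields \eqref{def:nondeg1} and \eqref{def:nondeg2} over $Z'\moins D'$. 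Its extension across $D'$ follows from the moderate growth of $\iC$ along $D'$ (Lemma \ref{lem:liftDmod}) and $\cD_{\cX^\circ}$-holonomicity (Proposition \ref{prop:speint}\eqref{prop:speint1}): the kernel and cokernel of the candidate isomorphism are $\cD_{\cX^\circ}$-holonomic, vanish over $Z'\moins D'$, hence are supported on $D'\times\CC^*_\hb$, and the argument at the end of the proof of Lemma \ref{lem:iCSdec} (multiplying by a power of a local equation of $D'$) shows they vanish.

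The main obstacle is step (ii), in particular the stability of non-degeneracy under the projective pushforward $\modif_\dag^0$ and the Beilinson gluing: this is exactly where the argument relies on Mochizuki's theory, since the non-degeneracy of the sesquilinear pairing of a mixed twistor $\cD$-module is interlocked with the construction of both the duality and the pushforward functors (\cf\cite[Th.\,13.3.1]{Mochizuki11} and its compatibilities), which one transports to the setting of $\RdiTriples(X)$ through the restriction of Lemma \ref{lem:iCCS}. A secondary point worth noting is that property \eqref{def:nondeg2} for $\hb_o\in\CC^*_\hb\moins\bS$ has no analogue in the category $\RdTriples(X)$, where $C_{\bS}$ is only defined over $\bS$, so its validity for \emph{all} $\hb_o$ is genuinely new; granting \eqref{def:nondeg1} on $\cX^\circ$, it follows by restricting the isomorphism of \eqref{def:nondeg1} to the fibre $\{\hb=\hb_o\}$, which is licit because an object of $\MTM^\intt(X)$ has no exceptional $\hb$-fibre (the discrete set $\Sigma$ of Proposition \ref{prop:speint}\eqref{prop:speint1} is empty for such objects, as one sees on the admissible building blocks, whose exponential factors $\gota/\hb$ remain regular throughout $\CC^*_\hb$), so that the relevant $\cHom$-sheaf commutes with this restriction.
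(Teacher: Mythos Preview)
Your dévissage is essentially the paper's own argument: reduce to the admissible-variation case on $(Z',D')$, prove non-degeneracy there (your local-system-on-a-connected-space argument over $Z'\moins D'$ is fine, and your holonomicity/support argument across $D'$ is what underlies the paper's terse claim that for localized objects non-degeneracy can be checked away from $H$), then propagate through $[\star H]$ and Beilinson gluing. Where you write ``this is exactly where the argument relies on Mochizuki's theory'', the paper is more specific: it invokes \cite[Lem.\,12.2.1]{Mochizuki11} for the stability under $[\star H]$ and \cite[\S12.2.4]{Mochizuki11} for the gluing step, transported to $\RdiTriples$ as in Remark~\ref{rem:lociota}. You should cite these rather than gesture at \cite[Th.\,13.3.1]{Mochizuki11}.

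The one substantive difference is your treatment of~\eqref{def:nondeg2}. You try to deduce it from~\eqref{def:nondeg1} by restricting the isomorphism to the fibre $\hb=\hb_o$, invoking that ``the relevant $\cHom$-sheaf commutes with this restriction''. This base-change step is not automatic and your justification via ``$\Sigma=\emptyset$'' is not quite the right invariant. The paper avoids the issue entirely: it reruns the same induction for~\eqref{def:nondeg2}, observing that restriction to $\hb=\hb_o$ is compatible with all the operations used (strict specializability, $[\star H]$, gluing), by strictness and the fact that in the integrable case every $\hb_o\in\CC^*_\hb$ is non-singular in the sense of \cite[\S0.9]{Bibi01c} (\cf \cite[Prop.\,3.3.14(3)]{Bibi01c}). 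This parallel-induction route is both cleaner and safer than the base-change-for-$\cHom$ argument you sketch.
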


\skpt
\begin{proof}[Sketch of proof]
\ref{def:nondeg}\eqref{def:nondeg1} We argue by induction as in Sections \ref{subsec:proofadmissibleintegrable} and \ref{subsec:proofequivintegrable}. We first consider the notion of non-degeneracy for $\cT(*H)$ where $\iC$ takes values in moderate distributions $\Db^\modH_{\cX^\circ\moins\cH^\circ}$. In this setting, a $\iota$-sesquilinear pairing is non-degenerate as soon as its restriction to $X\moins H$ is so. Therefore, the $\iota$-sesquilinear pairing obtained in the first part of Section \ref{subsec:proofequivintegrable} is non-degenerate. Arguing now as in the proof of \cite[Lem.\,12.2.1]{Mochizuki11}, one finds that $\icT[*H]$ and $\icT[!H]$ are non-degenerate. By applying \cite[\S12.2.4]{Mochizuki11} together with Remark \ref{rem:lociota}, and using the argument of the second part of Section \ref{subsec:proofequivintegrable}, one obtains the desired property.

\ref{def:nondeg}\eqref{def:nondeg2} The argument is similar, if we note that for any $\hb_o\in\CC^*_\hb$, the restriction to $\hb_o$ is compatible with all operations above, by strictness and the fact that, in the integrable case, any such $\hb_o$ is non-singular in the sense of \cite[\S09]{Bibi01c} (\cf\cite[Prop.\,3.3.14(3)]{Bibi01c}).
\end{proof}

Let $(\icT,W_\bbullet)$ be an object of $\iMTM^\intt(X)$. It follows from \cite[Th.\,13.3.1]{Mochizuki11} that, setting $\cM=\cM'\text{ or }\cM''$, we have
\[
\bD\cM:=\bR\cHom_{\cR_\cX}(\cM,\cR_\cX\otimes\hb^{d_X}\omega_\cX^{-1})[d_X]=\cH^0\bD\cM.
\]
Moreover, the dual of the restriction $\cT$ is well-defined and is integrable. By the arguments similar to those of Section \ref{subsec:proofequivintegrable}, one obtains in a way similar to \cite[Th.\,13.3.1]{Mochizuki11}:

\begin{proposition}
Let $(\icT,W_\bbullet)$ be an object of $\iMTM^\intt(X)$. Then there exists a unique integrable $\iota$-sesquilinear pairing $\bD\iC$ between $\bD\cM'$ and $\bD\cM''$, such that, for each $\hb_o\in\CC^*_\hb$, $(\bD\iC)^{\hb_o}=\bD(\iC^{\hb_o})$, where the latter is given by \ref{def:nondeg}\eqref{def:nondeg2} and \cite[Th.\,14.4.1]{Mochizuki11}. Moreover, $\bD\iC$ restricts to $\bD C_{\bS}$ as given by \cite[Th.\,13.3.1]{Mochizuki11}. Lastly, $\bD\iC$, regarded as a $\cD_{\cX^\circ}\otimes_\CC\ov\cD_{\cX^\circ}$-sesquilinear pairing \hbox{between}~$\bD\cM^{\prime\circ}$ and~$\iota^*\bD\cM^{\prime\prime\circ}$ with values in $\Db_{\cX^\circ}$, is equal to the dual of $\iC$ as given by \cite[Th.\,12.4.1]{Mochizuki11}.\qed
\end{proposition}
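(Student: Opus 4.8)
The plan is to reproduce, at the level of $\iota$-sesquilinear pairings, the construction of the duality functor on $\MTM^\intt(X)$ from \cite[Th.\,13.3.1]{Mochizuki11}, following the inductive scheme of Sections \ref{subsec:proofadmissibleintegrable}--\ref{subsec:proofequivintegrable}. Since the restriction $\cT$ lies in $\MTM^\intt(X)$, its dual $\bD\cT=(\bD\cM'',\bD\cM',\bD C_{\bS})$ is a well-defined object of $\MTM^\intt(X)$ (\cf \cite[Th.\,13.3.1]{Mochizuki11}), with $\bD\cM',\bD\cM''$ concentrated in degree $0$; because of integrability, restricting to $\cX^\circ$ identifies these modules, up to the usual shift, with the $\cD_{\cX^\circ}$-duals of $\cM^{\prime\circ},\cM^{\prime\prime\circ}$. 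By the previous proposition $\icT$ is non-degenerate, so applying \cite[Th.\,12.4.1]{Mochizuki11} to the non-degenerate $\iota$-sesquilinear pairing $\iC$ (non-degeneracy being \ref{def:nondeg}\eqref{def:nondeg1}) produces a $\cD_{\cX^\circ}\otimes_\CC\ov\cD_{\cX^\circ}$-sesquilinear pairing $\bD\iC$ between $\bD\cM^{\prime\circ}$ and $\iota^*\bD\cM^{\prime\prime\circ}$ with values in $\Db_{\cX^\circ}$. This is the candidate, and it settles the last assertion by construction; it then remains to show that $\bD\iC$ is an integrable $\iota$-sesquilinear pairing (\ie takes values in $\Db_{\cX^\circ/\CC^*_\hb}$), that it restricts to $\bD C_{\bS}$ on $\bS$, that $(\bD\iC)^{\hb_o}=\bD(\iC^{\hb_o})$ for every $\hb_o\in\CC^*_\hb$, and uniqueness.

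For integrability: since $\iC$ satisfies the relations \eqref{eq:iCP}, so does $\bD\iC$, and iterating $\partial_\vartheta$ exactly as for $C_{\bS}$ in Section \ref{subsec:remindersesqui} shows that $\bD\iC$ takes values in $\Db^\infty_{\cX^\circ/\CC^*_\hb}=\Db_{\cX^\circ/\CC^*_\hb}$ (Convention \ref{conv:infty}). The remaining compatibilities are verified by induction on the dimension of the support, paralleling Sections \ref{subsec:proofadmissibleintegrable}--\ref{subsec:proofequivintegrable}. In the smooth case (Lemma \ref{lem:equivsmooth}), $\iC$ is a pairing of flat bundles on $\cX^\circ$ and $\bD\iC$ is the pairing induced by the inverse isomorphism, so it visibly restricts to $\bD C_{\bS}$ on $\bS$ and, by \cite[Th.\,14.4.1]{Mochizuki11}, specializes at each $\hb_o$ to $\bD(\iC^{\hb_o})$. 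For a good wild admissible variation on $(X,D)$ one repeats Lemma \ref{lem:liftDmod}: the moderate growth of $\bD\iC$ along $D$ follows from the same generalized Hukuhara--Turrittin and Stokes-filtration estimates as there, the exponential factors of $\bD\cM',\bD\cM''$ being the opposites of those of $\cM',\cM''$. The localized case then follows from $\cT(*H)=\modif^0_\dag\cV$ together with the compatibility of $\bD$ with projective pushforward and with the restriction \ref{lem:iCCS}, and the general case from Beilinson's gluing procedure, which lifts to $\RdiTriples(X)$: here $\bD$ exchanges $[*H]$ with $[!H]$, reverses the gluing morphisms, and is compatible with the (fully faithful) $\mathrm{Glue}$ functor. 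At every step, restriction to $\hb_o\in\CC^*_\hb$ commutes with all the operations involved, since such an $\hb_o$ is non-singular in the integrable case (\cite[Prop.\,3.3.14(3)]{Bibi01c}) and by strictness, as in the proof of the non-degeneracy above. Uniqueness finally follows from Lemma \ref{lem:iCSdec} applied to the graded $S$-decomposable object $\bD\cT$, or equivalently from the fact that an integrable $\iota$-sesquilinear pairing is determined by its restrictions to the fibres $\hb=\hb_o$.

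I expect the main difficulty to be organizational rather than conceptual: one must check that this inductive construction of $\bD\iC$ --- through the localizations $[\star H]$, the pushforward by a projective modification, and Beilinson gluing --- is compatible on the nose both with restriction to a fibre $\hb_o$ and with the $\cD_{\cX^\circ}$-module duality of \cite[Th.\,12.4.1]{Mochizuki11}, which is essentially a transcription of the arguments of \cite[\S12--13]{Mochizuki11} with $\sigma$ replaced by $\iota$ and $\bS$ by $\CC^*_\hb$; the compatibility with the restriction \ref{lem:iCCS} is what ties the outcome to \cite[Th.\,13.3.1]{Mochizuki11}. The only genuinely new technical input is the moderate-growth estimate for $\bD\iC$ in the admissible-variation step, handled exactly as in Lemma \ref{lem:liftDmod}; all the remaining identifications reduce to the uniqueness-on-a-dense-open-set argument of Remark \ref{rem:fullfaith}, since $\Db^\modD_{\cX^\circ/\CC^*_\hb}$ injects into $j_*\Db_{(\cX^\circ\moins\cD^\circ)/\CC^*_\hb}$.
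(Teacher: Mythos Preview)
Your proposal is correct and follows precisely the approach the paper indicates: the paper gives no detailed proof but only the sentence ``By the arguments similar to those of Section \ref{subsec:proofequivintegrable}, one obtains in a way similar to \cite[Th.\,13.3.1]{Mochizuki11}'', and your plan is exactly a fleshing-out of that sentence---the inductive scheme through smooth objects, admissible variations, localization, and Beilinson gluing, with uniqueness via Lemma \ref{lem:iCSdec}. The only point to watch is the integrability step: rather than arguing via $\partial_\vartheta$ as in Section \ref{subsec:remindersesqui}, it is cleaner to note that the $\cD_{\cX^\circ}\otimes_\CC\ov\cD_{\cX^\circ}$-linearity of $\bD\iC$ (which is what \cite[Th.\,12.4.1]{Mochizuki11} gives) already encodes the relations \eqref{eq:iChbpartialhb}, and these imply that $\bD\iC$ takes values in $\Db^\infty_{\cX^\circ/\CC^*_\hb}$.
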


\section{Real and good \texorpdfstring{$\kk$}{k}-structures}\label{sec:realratMTM}
In this section, we review the notion of real and good $\kk$-rational structure (when $\kk$ is a subfield of $\RR$) for an object of $\MTM(X)$ and $\MTM^\intt(X)$, as defined in \cite[\S13.4]{Mochizuki11}. In view of the preceding results, the extension of such real and good $\kk$-rational structures to $\iMTM^\intt(X)$ is straightforward.

In a way similar to \eqref{eq:defdualsigmarctriples} we set, for an object $(\cT,W)=((\cM',\cM'',C_{\bS}),W)$ of $\MTM(X)$,\vspace*{-.8\baselineskip}
\begin{equation}\label{eq:defdualsigmarctriplesX}
\begin{cases}
\bbullet\ \cT^\vee=\bD\cT,\\
\bbullet\ \cT^*=(\cM'',\cM',C_{\bS}^*),\\
\bbullet\ \cT^\rc=\iota^*(\cT^\vee)^*=\iota^*(\cT^*)^\vee.
\end{cases}
\end{equation}
These three functors preserve the subcategories $\MTM^\intt(X)$ and $\iMTM^\intt(X)$, as recalled or explained above. Moreover, they behave in a natural way with respect to the weight filtration $W$. In particular, we have $\gr_\ell^W(\cT^\rc)=(\gr_\ell^W\cT)^\rc$.

We denote by $\bT_X(\ell)$ the pullback of $\bT(\ell)$ (\cf Examples \ref{exem:Tate} and \ref{exem:Tatetriple}) to $X$. Let us recall (\cf \cite[Def.\,3.1.2]{Bibi06b} and \cite[Def.\,17.1.9]{Mochizuki08}) that, for a pure object $\cT$ of weight $w$, a polarization~$\cS$ is a Hermitian duality of weight $w$, that is, a morphism $\cS:\cT\to\cT^*\otimes\bT_X(-w)$ such that $\cS^*=(-1)^w\cS$ (by using $\kappa_{-2w}$, \cf\eqref{eq:defdualsigmarcTate}) which satisfies the polarizability property after applying exponentially twisted nearby cycles along any germ of holomorphic function on $X$. The functors \eqref{eq:defdualsigmarctriplesX} preserve purity and also act in a natural way on polarizations (\cf the argument in \cite[\S13.3.8]{Mochizuki11}).

We can copy Definition \ref{def:realTS} and Remark \ref{rem:realTS}\eqref{rem:realTS2}, to obtain the definition of \index{$MTMXR$@$\MTM(X,\RR)$}$\MTM(X,\RR)$, \index{$MTMXRint$@$\MTM^\intt(X,\RR)$}$\MTM^\intt(X,\RR)$ and \index{$IMTMXRint$@$\iMTM^\intt(X,\RR)$}$\iMTM^\intt(X,\RR)$. Moreover, the natural functor $\iMTM^\intt(X,\RR)\mto\MTM^\intt(X,\RR)$ is an equivalence of categories.

For an object $((\cM',\cM'',C_{\bS}),\kappa)$ of $\MTM^\intt(X,\RR)$, the localization $\cM''(*\hb)$ of~$\cM''$ is a holonomic $\cD_\cX$-module (\cf Proposition \ref{prop:speint}\eqref{prop:speint1}) and $\kappa$ endows its restriction to~$\cX^\circ$ with an $\RR$-Betti structure, and this is compatible with projective pushforward and duality (\cf\cite[Lem.\,13.4.8 \&\ Prop.\,13.4.9]{Mochizuki11}). A \emph{good real structure} consists of the enrichment of $(\cT,\kappa)$ with a real perverse sheaf $\cP_\RR$ satisfying the following compatibility properties (similar to that of Definition \ref{def:goodrealintegrable}):
\begin{enumerate}
\item\label{def:goodrealintegrableX1}
$\CC\otimes_\RR\cP_\RR\simeq\pDR\cM''(*\hb)$ and the restriction of this isomorphism to $\cX^\circ$ is compatible with the real structures naturally existing on both sides,
\item\label{def:goodrealintegrableX2}
the pre-$\RR$-Betti structure $\cP_\RR$ of $\cM''(*\hb)$ is an $\RR$-Betti structure (\cf\cite[\S7.2.1]{Mochizuki10}), \ie is compatible with the Stokes structure of $\cM''(*\hb)$.\end{enumerate}

For a subfield $\kk$ of $\RR$, the notion of a good $\kk$-structure is defined similarly. The categories \index{$MTMXintkgood$@$\MTM^\intt_\good(X,\kk)$}$\MTM^\intt_\good(X,\kk)$ and \index{$IMTMXintkgood$@$\iMTM^\intt_\good(X,\kk)$}$\iMTM^\intt_\good(X,\kk)$ are then defined as in Definition \ref{def:kgoodintMTS}.

\begin{remarque}[Compatibility with a polarization]
As in Section \ref{subsec:compRpol}, it would be natural to add the compatibility of the real structure with the polarization, in the polarized case, and with some polarization in the polarizable case. Let $(\cT,\cS)$ be a pure polarized twistor $\cD$-module of some weight. We say that a real structure $\kappa:\cT\isom\cT^\rc$ is compatible with $\cS$ if the diagram \eqref{eq:kappaS} commutes. However, in order to use the corresponding categories, one would need to check that the standard functors behave nicely with respect to the polarization, that is, the statements of \cite[Prop.\,13.4.6]{Mochizuki11} also hold with this enhanced version of $\MTM(X,\RR)$, $\MTM^\intt(X,\RR)$, and thus $\iMTM^\intt(X,\RR)$. As in Section \ref{subsec:compRpol}, the notion of a good $\RR$-structure (or~$\kk$\nobreakdash-struc\-ture) should be enriched by the following property, in the pure polarized case, and then extended to the polarizable pure case, and then the mixed case:
\begin{enumerate}\setcounter{enumi}{2}
\item
the duality $\cQ=\cS^\rc\circ\kappa:\cT\to\iota^*\cT^\vee\otimes\bT_X(-w)$ induces an isomorphism of holonomic $\cD_{X\times\CC_\hb}$-modules with $\RR$-Betti structures:
\[
(\cM''(*\hb),\cP_\RR)\isom\iota^*(\cM''(*\hb),\cP_\RR)^\vee\otimes(\twopii)^{-w}\RR_{X\times\CC_\hb}.
\]
(Let us recall that morphisms in the category $\mathrm{Hol}(\cX,\RR)$ consist simply of pairs of compatible morphisms of $\cD$-modules and real perverse sheaves, \ie the category $\mathrm{Hol}(\cX,\RR)$ is full in the category of holonomic $\cD$-modules with pre-Betti structure, \cf Def.\,7.2.2 and below in \cite{Mochizuki10}.)
\end{enumerate}
\end{remarque}

\section{Exponential twist}\label{sec:exptwist}

\subsection{The object $\icT^{\mero/\hb}$ in $\iMTM^\intt_\good(X,[*P],\QQ)$}\label{subsec:cTvarphihb}

Let $\mero$ be a meromorphic function on $X$ with reduced pole divisor $P\subset X$. We~set $U=X\moins P$. Let $\cO_\cX(*P)$ be the sheaf of meromorphic functions on $\cX$ with poles contained in the divisor $P\times\CC_\hb$. The $\hb$-connection $\hb\rd+\rd\mero$ makes it a coherent left $\cR_\cX(*P)$-module. Moreover, the action of $\hb^2\partial_\hb$ defined by $\hb^2\partial_\hb\cdot1:=-\mero$ enhances this structure to an $\cR^\intt_\cX(*P)$-module structure. It is obtain by exponentially twisting the standard structure $(\cO_\cX(*P),\hb\rd,\hb^2\partial_\hb)$, \ie by replacing $\hb\rd$ with \hbox{$\rme^{-\mero/\hb}\circ\hb\rd\circ\rme^{\mero/\hb}$} and $\hb^2\partial_\hb$ with $\rme^{-\mero/\hb} \circ\hb^2\partial_\hb\circ \rme^{\mero/\hb}$. Our aim is to construct an object~\index{$Tciphihb$@$\icT^{\mero/\hb}$}$\icT^{\mero/\hb}$ in $\iMTM^\intt_\good(X,[*P],\QQ)$ such that $(\cO_\cX(*P),\hb\rd+\rd\mero,\hb^2\partial_\hb)$ underlies the associated localized object $\icT_*^{\mero/\hb}:=\icT^{\mero/\hb}(*P)$ in $\iMTM^\intt_\good(X,(*P),\QQ)$. (Recall that the \index{localization!``stupid''}``stupid'' localization functor $(*P)$ is not the appropriate one in $\MTM(X),\MTM^\intt(X),\iMTM^\intt(X),$ and has to be replaced with the functor \index{localization}$[*P]$.) This will make more precise the construction in \cite[\S3]{S-Y14}.

\subsubsection*{The smooth case}
We first construct the object in $\iMTS^\intt(X,P)$ (\cf Remark \ref{rem:varMTS}). We denote by \index{$Ecphihbstar$@$\cE_*^{\mero/\hb}$}$\cE_*^{\mero/\hb}$ the $\cR_\cX(*P)$-module $\cO_\cX(*P)$ equipped with the $\hb$\nobreakdash-connection $\hb\rd+\rd\mero$. It is also equipped with the $\hb^2\partial_\hb$-action as above. Let $(\cC^\infty_U,\ov\partial,\sfh)$ be the trivial bundle with its standard holomorphic structure, equipped with its standard metric for which $\sfh(1,1)=1$. Consider it as a harmonic Higgs bundle on $U$ with holomorphic Higgs field $\theta=\rd\mero$. The associated flat bundle is $(\cO_U,\rd+\rd\mero)$. We associate with it an integrable triple $\cT^{\mero/\hb}_U=(\cE_U^{\mero/\hb},\cE_U^{\mero/\hb},C_{\bS})$ where, for $\hb\in\bS$,
\[
C_{\bS}(1,\ov1)=\exp(\mero/\hb-\hb\ov\mero)
\]
(\cf\cite[\S2.2]{Bibi04}). It follows that $\cT_U^{\mero/\hb}$ is an integrable polarized variation of smooth twistor structure of weight~$0$, equivalently an integrable pure polarized smooth twistor $\cD$-module of weight $0$. The polarization $\cS$ is equal to $(\id,\id)$. The associated $\iota$\nobreakdash-sesquilinear pairing is given by\vspace*{-3pt}
\[
\iC(1,\ov1)=\exp(\mero/\hb-\ov\mero/\ov\hb).
\]
If we stay away from $P$, the object thus defined is isomorphic to the integrable triple (or $\iota$-triple) associated with $(\cO_\cU,\hb\rd,\hb^2\partial_\hb)$.

On the other hand, we note that the functions $\exp(\mero/\hb-\hb\ov\mero)$ on $U\times\bS$ and $\exp(\mero/\hb-\ov\mero/\ov\hb)$ on $U\times\CC_\hb^*$ have moderate growth, as well as all their derivatives, along $P\times\bS$ and $P\times\CC_\hb^*$. They define thus moderate distributions depending continuously on $\hb$, and therefore\index{$Tcphihbstar$@$\cT_*^{\mero/\hb}$}\vspace*{-3pt}
\[
\cT_*^{\mero/\hb}:=(\cE_*^{\mero/\hb},\cE_*^{\mero/\hb},\exp(\mero/\hb-\hb\ov\mero))
\]
is an object of $\MTS^\intt(X,P)$, while\index{$Tciphihbstar$@$\icT_*^{\mero/\hb}$}
\[
\icT_*^{\mero/\hb}:=(\cE_*^{\mero/\hb},\cE_*^{\mero/\hb},\exp(\mero/\hb-\ov\mero/\ov\hb))
\]
is an object of $\iMTS^\intt(X,P)$.

We have
\[
C_{\bS}^\vee(1,\ov1)=\exp(-\mero/\hb+\hb\ov\mero),\quad C_{\bS}^*(1,\ov1)=C_{\bS}(1,\ov1),
\]
so that
\begin{equation}\label{eq:isoT*P}
(\cT_*^{\mero/\hb})^\vee=\cT_*^{-\mero/\hb},\quad
(\cT_*^{\mero/\hb})^*=\cT_*^{\mero/\hb},\quad
(\cT_*^{\mero/\hb})^\rc=\cT_*^{\mero/\hb}
\end{equation}
in $\MTS^\intt(X,P)$, and the latter equality is interpreted as an integrable real structure $\kappa:\cT_*^{\mero/\hb}\isom(\cT_*^{\mero/\hb})^\rc$. A similar statement holds for $\icT_*^{\mero/\hb}$.

\subsubsection*{The object in $\iMTM^\intt(X,[*P])$}
It follows from \cite{Mochizuki11} (\cf\cite[Prop.\,3.3]{S-Y14}) that $\cE_*^{\mero/\hb}$ is localizable along $P$ and the coherent $\cR_\cX$-module \index{$Ecphihb$@$\cE^{\mero/\hb}$}$\cE^{\mero/\hb}\!:=\!(\cE_*^{\mero/\hb})[*P]$ is integrable and underlies an object \index{$Tcphihb$@$\cT^{\mero/\hb}$}$\cT^{\mero/\hb}$ of $\MTM^\intt(X)$ extending $\cT_*^{\mero/\hb}$. By construction, it satisfies $\cT^{\mero/\hb}=\cT^{\mero/\hb}[*P]$, \ie belongs to $\MTM^\intt(X,[*P])$.

The case when $\mero$ is a morphism $X\to\PP^1$ is simpler than the general case. Then $\cE^{\mero/\hb}=\cE_*^{\mero/\hb}$ and $\cT^{\mero/\hb}$ is the canonical prolongation of $\cT_*^{\mero/\hb}$ in the sense of \cite{Mochizuki08}, and is an object of $\MTM^\intt(X)$ which is pure of weight $0$ and polarized by $\cS=(\id,\id)$ (\cf\cite[Prop.\,3.3]{S-Y14}).

In general, however, $\cT^{\mero/\hb}$ can be mixed, but the graded objects of nonzero degree with respect to the weight filtration are supported on $P$. Let $\modif:X'\to X$ be a projective modification such that $\modif^{-1}(P)$ has normal crossings and $\modif^*\mero$ is good, \ie its pole divisor $P'$ does not intersect its zero divisor. Set also $\modif^{-1}(P)=P'\cup H'$. Then $\cT^{\mero'/\hb}[*H']$ is an admissible mixed twistor structure on $(X,\modif^{-1}(P))$, in the sense of \cite[\S9.1.4]{Mochizuki11}, and $\cT^{\mero/\hb}=\modif_\dag^0\cT^{\mero'/\hb}[*H']$ (we omit to mention here the weight filtration).

By using the $\iota$-version, we obtain an object~$\icT^{\mero/\hb}$ of $\iMTM^\intt(X,[*P])$ extending $\icT_*^{\mero/\hb}$ and corresponding to $\cT^{\mero/\hb}$. If $\mero$ induces a morphism $X\to\PP^1$, it is pure of weight $0$.

\subsubsection*{Real structure and good $\QQ$-structure}
Let us recall that the \index{localization!``stupid''}``stupid'' localization functor $\MTM^\intt(X,[*P])\mto\MTM^\intt(X,(*P))$ is fully faithful (in fact, an equivalence, by the very definition of the category $\MTM^\intt(X,(*P))$, \cf Remark \ref{rem:localization}), as well as $\iMTM^\intt(X,[*P])\mto\iMTM^\intt(X,(*P))$. Since $\MTM^\intt_\adm(X,P)$ is a full subcategory of $\MTM^\intt(X,(*P))$ (\cf Remark \ref{rem:localization}), it follows that the isomorphisms \eqref{eq:isoT*P} can be lifted in a unique way, giving rise to a real structure $\kappa:\cT^{\mero/\hb}\isom(\cT^{\mero/\hb})^\rc$, and similarly with $\icT^{\mero/\hb}$.

Let us now consider the $\QQ$-structure. We first note that $\cE^{\mero/\hb}(*\hb)=\cE_*^{\mero/\hb}(*\hb)$. This is a consequence of the following argument. Let $\fun$ be a local equation of $P$ and let us use Notation \ref{nota:maps}. For an integrable $\cR_{\cX_\fun}$\nobreakdash-mod\-ule~$\cM$ which is strictly specializable along $t$, we have $(V_0\cM_\fun)(*\hb)=V_0(\cM_\fun(*\hb))$, where the latter filtration is the $V$-filtration with respect to $t$ on the $\cD_{\cX_\fun}[1/\hb]$-module $\cM_\fun(*\hb)$. Now, it is standard that $\cM_\fun(*t)(*\hb)=\cD_{\cX_\fun}[1/\hb]\cdot V_0(\cM_\fun(*\hb))$, and we use that, by definition, $\cM_\fun[*t]=\cR_{\cX_\fun}\cdot(V_0\cM_\fun)\subset\cM_\fun(*t)$ (\cf\cite[\S3.1.2]{Mochizuki11}), to conclude that $\cM_\fun[*t](*\hb)=\cM_\fun(*t)(*\hb)$. Lastly, when $\cM$ is an integrable $\cR_\cX$-module which is strictly specializable and twistor-localizable along $P$, in the sense that $\cM[*P]$ is defined, we deduce that $\cM[*\fun](*\hb)=\cM_\fun(*\fun)(*\hb)$. We apply this to $\cM=\cE^{\mero/\hb}$ to obtain the desired assertion.

It follows that $\cE^{\mero/\hb}(*\hb)$ is a meromorphic flat bundle on $\cX$ with poles along $(P\times\CC_\hb)\cup(X\times\{0\})$. We can find a projective modification $\varpi:\cX'\to\cX$ which is an isomorphism away from the poles, such that $\cE^{\mero/\hb}(*\hb)$ is the pushforward of a good rank-one meromorphic flat bundle $\cM$ on $\cX'$. In the latter case, it is easy to check that $\cM$ has a natural $\QQ$-Betti structure extending the natural $\QQ$-Betti structure defined away from the poles. One obtain the $\QQ$-Betti structure on $\cE^{\mero/\hb}(*\hb)$ by pushing forward the latter by $\varpi$.

Note that we can assume that, away from $\hb=0$, $\varpi$ is isomorphic (at least locally on $X$) to $\modif\times\id_{\CC_\hb^*}$, where $\modif:X'\to X$ is as above. The compatibility with the $\RR$\nobreakdash-structure $\kappa$ (Condition \eqref{def:goodrealintegrableX1} in Section \ref{sec:realratMTM}) is then easily checked on $\cX^{\prime\circ}$, and then can be obtained for $\cE^{\mero/\hb}(*\hb)$ by pushforward by $\modif\times\id$.

\subsection{Exponential twist in $\iMTM^\intt(X)$}\label{subsec:exptwist}
Given an object $\cT$ of $\MTM^\intt(X)$, one can define the exponential twist $\cT_*^{\mero/\hb}\otimes\nobreak\cT$ which is an object of $\MTM^\intt(X,(*P))$ (\cf\cite[Prop.\,11.3.3]{Mochizuki11}, recalling that $\cT_*^{\mero/\hb}=\modif_\dag^0\cT_0$, with $\cT_0:=\cT_*^{\mero'/\hb}(*H')$). For $\star=*,!$, we then obtain the objects $(\cT_*^{\mero/\hb}\otimes\nobreak\cT)[\star P]$, which are objects of $\MTM^\intt(X,[\star P])$ (\cf\loccit). By a similar procedure, if~$\icT$ is an object of $\iMTM^\intt(X)$, one defines $\icT_*^{\mero/\hb}\otimes\nobreak\icT$, which is an object of $\iMTM^\intt(X,(*P))$, and then $(\icT_*^{\mero/\hb}\otimes\icT)[\star P]$ are objects of $\iMTM^\intt(X,[\star P])$. If $\icT=(\cM',\cM'',\iC)$, (we omit here the weight filtration), the first and second components of $\icT_*^{\mero/\hb}\otimes\icT$ are $(\cM'(*P),\nabla'+\rd\mero)$ and \hbox{$(\cM''(*P),\nabla''+\rd\mero)$}, where $\nabla',\nabla''$ are the $\hb$-connections, the action $\hb^2\partial_\hb$ is changed to $\hb^2\partial_\hb-\mero$ and the pairing with moderate growth $\iC$ is multiplied by \hbox{$\exp(\mero/\hb-\ov\mero/\ov\hb)$}. Also, the first and second component of $(\icT_*^{\mero/\hb}\otimes\icT)[*P]$ are $(\cM'[!P],\nabla'+\rd\mero)$ and $(\cM''[*P],\nabla''+\rd\mero)$, and a symmetric statement for $(\icT_*^{\mero/\hb}\otimes\icT)[!P]$, so that the latter is the adjoint of the former. In the following, we will use the notation\vspace*{-3pt}
\begin{starequation}\label{eq:notationTvarphiotimes}
\begin{split}
\icT^{\mero/\hb}\otimes\icT&:=(\icT^{\mero/\hb}_*\otimes\icT)[* P],\\\
\Gamma_{[!P]}(\icT^{\mero/\hb}\otimes\icT)&:=(\icT^{\mero/\hb}_*\otimes\icT)[!P].
\end{split}
\end{starequation}%

\begin{remarque}[Compatibility with the restriction to $\hb=1$]\label{rem:compexptwisthb1}
Similarly to Remark \ref{rem:compfunctorshb1}, and using the notation \eqref{eq:notationTvarphiotimes}, we have (\cf Notation \ref{nota:XiDR})\vspace*{-3pt}
\[
\Xi_{\DR}(\icT^{\mero/\hb}\otimes\cbbullet)=\ccE^\mero\otimes\cbbullet\quad\text{and}\quad \Xi_{\DR}\Gamma_{[!P]}(\icT^{\mero/\hb}\otimes\cbbullet)=(\ccE^\mero\otimes\cbbullet)(!P).\vspace*{-5pt}
\]
\end{remarque}

\begin{remarque}[Exponential twist in $\iMTM^\intt(X,\RR)$ and $\iMTM^\intt_\good(X,\kk)$]\label{rem:exptwistRgood}
If $\cT$ is an object of $\MTM^\intt(X,\RR)$ \resp $\MTM^\intt_\good(X,\kk)$, then so are $\cT^{\mero/\hb}\otimes\cT$ and $\Gamma_{[!P]}(\cT^{\mero/\hb}\otimes\cT)$, as follows by adapting the last point of \cite[Prop.\,13.4.6]{Mochizuki11}, according to the results in Section \ref{subsec:cTvarphihb}. A similar result holds for the corresponding object $\icT$ of $\iMTM^\intt(X,\RR)$ \resp $\iMTM^\intt_\good(X,\kk)$.\vspace*{-5pt}\enlargethispage{\baselineskip}%
\end{remarque}

\section{Applications of the compatibility between the functors}\label{sec:compatibilities}
In this section, we first consider compatibility properties between various functors on holonomic $\cD$-modules, and we introduce the notion of \emph{holonomic $\cD$-module of exponential-regular origin}. In a similar way, we introduce the notion of integrable mixed twistor-$\cD$-module of exponential-Hodge origin, by replacing the notion of regular holonomic $\cD$-module by that of mixed Hodge module, and working with the corresponding functors in the category of integrable mixed twistor $\cD$-modules.

\subsection{Holonomic \texorpdfstring{$\cD$}{D}-modules of exponential-regular origin}\label{subsec:compatibilitiesD}

Let $X$ be a complex manifold.

\refstepcounter{equation}
\subsubsection{Functors}\label{subsub:functors}
We have the following exact functors on the category $\Mod_\hol(\cD_X)$ of holonomic $\cD_X$-modules.

\begin{enumeratea}
\item\label{enum:Gamma}
The localization functor $(*H)$, that we will find convenient to denote by \index{$GammaH$@$\Gamma_{[*H]}$}$\Gamma_{[*H]}$, for a hypersurface $H$ in $X$: if $\cO_X(*H)$ denotes the sheaf of meromorphic functions with poles along $H$, then $\Gamma_{[*H]}\ccM=\ccM(*H)=\cO_X(*H)\otimes_{\cO_X}\nobreak\ccM$ with its natural structure of left $\cD_X$-module.
\item\label{enum:Gammac}
The dual localization functor $(!H)$, that we similarly denote by $\Gamma_{[!H]}$, which is adjoint by duality of $\Gamma_{[*H]}$ and can also be expressed without using the duality functor by using $V$-filtrations (\cf\cite[\S3.1.3]{Mochizuki11}, which also applies to the $\cD$\nobreakdash-module case).
\item\label{enum:duality}
the duality functor $\bD_X$,
\item\label{enum:pullback}
The pullback functor $\mapsm^+:\Mod_\hol(\cD_Y)\mto\Mod_\hol(\cD_X)$ by a \emph{smooth} morphism $\mapsm:X\to Y$.

\item\label{enum:localizedpullback}
Let $\modif:X'\to X$ be a projective modification for which there exists a hypersurface $D\subset X$ such that, setting $D'=\modif^{-1}(D)$, $e$ induces an isomorphism $X'\moins D'\isom X\moins D$. The pullback and pushforward of holonomic $\cD$-modules by $e$ induce inverse equivalences between the categories $\Mod_\hol(X,*D)$ and $\Mod_\hol(X',*D')$ of holonomic $\cD$-modules localized along $D,D'$. We denote them by \index{$Estarplus$@$\ste^+$}$\ste^+$ and \index{$Estardag$@$\ste^0_\dag$}$\ste^0_\dag$, by noticing that $\ste^k_\dag=0$ for $k\neq0$. We will use the functors $\ste^+\Gamma_{[*D]}:\Mod_\hol(X)\mto\Mod_\hol(X')$ and $\ste^0_\dag\Gamma_{[*D']}:\Mod_\hol(X')\mto\Mod_\hol(X)$.

\item\label{enum:externalproduct}
The external product \index{$BoxtimesD$@$\boxtimes_\cD$}$\boxtimes_\cD:\Mod_\hol(\cD_X)\times\Mod_\hol(\cD_{X'})\mto\Mod_\hol(\cD_{X\times X'})$ (a bi-functor exact on each side), defined by
\[
\ccM\boxtimes_\cD\ccN:=\cD_{X\times X'}\otimes_{(\cD_X\boxtimes\cD_{X'})}(\ccM\boxtimes\ccN).
\]

\item\label{enum:Ephi}
The exponential twist functor $\ccE^\mero\otimes_{\cO_X}\!\cbbullet$ for a nonzero meromorphic function $\mero\in\Gamma(X,\cO_X(*P))$ with reduced pole \hbox{divisor~$P$}, defined, for any left $\cD_X$-module~$\ccM$ regarded as an $\cO_X$-module with a flat connection $\nabla$, by the formula $\ccE^\mero\otimes_{\cO_X}\ccM:=(\ccM(*P),\nabla+\rd\mero)$. (In particular, $\ccE^\mero:=(\cO_X(*P),\rd+\rd\mero)$.)
\end{enumeratea}

Moreover, possibly non-exact, we have
\begin{enumeratea}\setcounter{enumi}{7}
\item\label{enum:pushforward}
the pushforward functors $\map_\dag^k:\Mod_\hol(\cD_X)\!\mto\!\Mod_\hol(\cD_Y)$ ($k\in\ZZ$) for a \emph{proper} morphism $\map:X\to Y$.
\end{enumeratea}

These functors, except for the exponential twist $\ccE^\mero\otimes_{\cO_X}\cbbullet$ by a non-trivial meromorphic function, are known to preserve the regularity property of holonomic $\cD$-modules. They satisfy the following commutation relations, where $X,Y$ are smooth.

\refstepcounter{equation}
\Subsubsection{Commutation relations}\label{subsub:commutation}
\begin{enumeratei}
\item\label{enum:rel1}
For the localization:
\begin{align*}
\Gamma_{[*H]}\circ\Gamma_{[*H']}&\simeq\Gamma_{[*H']}\circ\Gamma_{[*H]},\\
\Gamma_{[*H]}\circ\bD&\simeq\bD\circ\Gamma_{[!H]},\\
\ste^+\circ\Gamma_{[*H]}\circ(\ccE^\mero\otimes\cbbullet)&\simeq\ccE^{\modif^*\mero}\otimes\Gamma_{[*H']}\,\ste^+\cbbullet.
\end{align*}
\item\label{enum:rel2}
$(\ccE^\mero\otimes\cbbullet)\circ(\ccE^\psi\otimes\cbbullet)\simeq\Gamma_{[*(P\cup Q)]}\circ(\ccE^{\mero+\psi}\otimes\cbbullet) \simeq(\ccE^{\mero+\psi}\otimes\cbbullet) \circ\Gamma_{[*(P\cup Q)]}$.

\item\label{enum:rel4}
For $\map:X\to Y$ proper and $k\in\ZZ$,
\begin{align*}
\map_\dag^k\circ\Gamma_{[*\map^{-1}(H)]}&\simeq\Gamma_{[*H]}\circ \map_\dag^k,\\
\bD_Y\circ \map_\dag^k&\simeq \map_\dag^k\circ\bD_X,\\
\ccE^\mero\otimes(\map_\dag^k\bbullet)&\simeq \map_\dag^k\circ(\ccE^{\map^*\mero}\otimes\cbbullet),
\end{align*}
(for the first line, \cf \cite[Prop.\,3.6-4]{Mebkhout04}; for the second line, \cf \cite[Th.\,5.3.1]{Mebkhout87}, \cite{MSaito89b}, \cite[Prop.\,4.39]{Kashiwara03}; the third one easily follows from the first~one).

\item\label{enum:rel6}
For the external product:
\begin{align*}
\Gamma_{[*H'']}(\cbbullet\boxtimes_\cD\cbbullet)&\simeq\Gamma_{[*H]}\cbbullet\boxtimes_\cD\Gamma_{[*H']}\cbbullet,\quad H'':=(H\times X')\cup(X\times H'),\\
\ccE^{\mero(x)+\psi(x')}\otimes(\cbbullet\boxtimes_\cD\cbbullet)&\simeq (\ccE^{\mero(x)}\otimes\cbbullet)\boxtimes_\cD(\ccE^{\psi(x')}\otimes\cbbullet),\quad\mero\in\cO_X(*P),\,\psi\in\cO_{X'}(*P'),\\
\bD_{X\times X'}(\cbbullet\boxtimes_\cD\cbbullet)&\simeq \bD_X\cbbullet\boxtimes_\cD \bD_{X'}\cbbullet,\\
(\map\times\id_{X'})_\dag^k(\cbbullet\boxtimes_\cD\cbbullet)&\simeq \map_\dag^k\cbbullet\boxtimes_\cD\cbbullet\quad \map:X\ra Y\text{ proper},
\end{align*}
(the first two lines are obvious, the third line is treated in Appendix \ref{app:exernaldual}, and \cf\eg\cite[Prop.1.5.30]{H-T-T08} for the last line).

\item\label{enum:rel3}
For $g:Y'\to Y$ smooth:
\begin{align*}
\mapsm^+\circ\Gamma_{[*H]}&\simeq\Gamma_{[*g^{-1}(H)]}\circ \mapsm^+,\\
\mapsm^+\circ\bD_Y&\simeq\bD_X\circ \mapsm^+,\\
\mapsm^+\circ(\ccE^\mero\otimes\cbbullet)&\simeq \ccE^{\mapsm^*\mero}\otimes(\mapsm^+\bbullet),\\
(g\times\id_{X'})^+(\cbbullet\boxtimes_\cD\cbbullet)&\simeq \mapsm^+\cbbullet\boxtimes_\cD \cbbullet,\\[-20pt]
\mapsm^+\circ \map_\dag^k&\simeq \map_\dag^{\prime k}\circ g^{\prime+}\qquad
\begin{array}{r}
\text{$\map:X\to Y$ proper,}\\
\text{cartesian square:}
\end{array}\quad
\begin{array}{c}
\xymatrix{
X'\ar[d]_{\map'}\ar[r]^-{\mapsm'}\ar@{}[dr]|\square&X\ar[d]^\map\\
Y'\ar[r]_-{\mapsm}&Y
}
\end{array}
\end{align*}
(\cf \cite[Th.\,4.12]{Kashiwara03} for the second line, the first and third lines are easy, the fourth line follows from the flatness of $\cD_{X\times X'}$ over $\cD_X\boxtimes\cD_{X'}$, and the last line can be treated as in Proposition~\ref{prop:basechangeR}).

\item\label{enum:rel7}
For a projective modification $e:(X',D')\to(X,D)$ as in \eqref{enum:localizedpullback} and $H'=\modif^{-1}(H)$:\vspace*{-3pt}
\begin{align*}
\ste^+\Gamma_{[*D]}\Gamma_{[*H]}&\simeq\Gamma_{[*H']}\ste^+\Gamma_{[*D]},\\
\Gamma_{[*H]}\ste^0_\dag\Gamma_{[*D']}&\simeq\ste^0_\dag\Gamma_{[*D']}\Gamma_{[*H']},\\
\ste^+\circ(\ccE^\mero\otimes\cbbullet)\circ\Gamma_{[*D]}&\simeq(\ccE^{\mero\circ e}\otimes\cbbullet)\circ\ste^+\Gamma_{[*D]},\\
\ste^0_\dag\circ(\ccE^{\mero\circ e}\otimes\cbbullet)\circ\ste^+\Gamma_{[*D]}&\simeq(\ccE^\mero\otimes\cbbullet)\circ\Gamma_{[*D]}
\end{align*}
(this is mostly obvious, and we use $\ste^0_\dag\circ\ste^+\simeq\id$, $\ste^+\circ\ste^0_\dag\simeq\id$). Moreover, using the pushforward functor as in \eqref{enum:pushforward}, we have\vspace*{-3pt}
\[
\ste^k_\dag=\Gamma_{[*D]}\circ \modif^k_\dag\circ\Gamma_{[*D']}=\modif^k_\dag\circ\Gamma_{[*D']} \quad(=0\text{ if }k\neq0).
\]
\end{enumeratei}

From the commutation properties between $\bD$ and other functors we obtain the compatibility relations between $\Gamma_{[!H]}$ and $\mapsm^+$, $\map_\dag^k$, $\boxtimes_\cD$. In order to obtain the compatibility relation between $\Gamma_{[!H]}$ and $\ccE^\mero\otimes$, we will need the following proposition.

\begin{proposition}\label{prop:DEphi}
Let $\ccM$ be a regular holonomic $\cD_X$-module. Let $\mero$ be a meromorphic function on $X$ whose zero divisor and pole divisor do not intersect. Then the natural morphism $\bD_X(\ccE^{-\mero}\otimes\ccM)\to\ccE^\mero\otimes\bD_X\ccM$ is an isomorphism.
\end{proposition}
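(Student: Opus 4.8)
The statement asserts that for a regular holonomic $\cD_X$-module $\ccM$ and a meromorphic function $\mero$ whose zero and pole divisors are disjoint, the canonical morphism $\bD_X(\ccE^{-\mero}\otimes\ccM)\to\ccE^\mero\otimes\bD_X\ccM$ is an isomorphism. The first thing I would do is reduce to a statement that can be checked after a projective modification. Choose $\modif:X'\to X$ proper such that $\modif^{-1}(P)$ is a normal crossing divisor and $\modif^*\mero$ is good, in the sense that the pole divisor $P'$ of $\modif^*\mero$ is disjoint from its zero divisor. Using the commutation relations of \S\ref{subsub:commutation}, in particular the compatibility of $\bD$ with $\map_\dag^k$ and with $\ste^0_\dag$, and the compatibility of $\ccE^\mero\otimes\cbbullet$ with proper pushforward ($\map_\dag^k\circ(\ccE^{\map^*\mero}\otimes\cbbullet)\simeq\ccE^\mero\otimes\map_\dag^k\cbbullet$), the question is transported to $X'$. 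One must be slightly careful: $\ccE^{-\mero}\otimes\ccM$ is supported, as a $\cD_X(*P)$-module, away from $P$, and $\bD_X$ of a localized module is the "dual localization" $[!P]$ of something; the naturality of the morphism then reduces the global statement to its restriction on the complement of $P$ (where it is trivially an isomorphism, since $\ccE^{\pm\mero}$ is a line bundle with connection there) \emph{plus} the corresponding statement at the normal crossings level after modification. So the real content is the case where $X$ is a polydisc, $P=\{x_1\cdots x_\ell=0\}$ is a coordinate normal crossing divisor, and $\mero$ has the form $u(x)\cdot x^{-\mathbf{m}}$ with $u$ a unit and $\mathbf{m}\in\NN^\ell$ (good case), with $\ccM$ regular holonomic.

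In that local model I would argue as follows. After a further modification or coordinate change absorbing the unit, one can assume $\mero=x^{-\mathbf{m}}$ for a multi-index $\mathbf{m}$; then $\ccE^\mero$ is an elementary meromorphic connection, and both sides are $\cD_X(*P)$-modules (the zero divisor being empty, or at any rate disjoint from $P$, only the poles along $P$ matter). The key computation is then $\bD_X(\ccE^{-\mero}\otimes\ccM)=\ccE^{-\mero}\otimes\bD_X^{[*P]}\ccM$ versus $\ccE^\mero\otimes\bD_X\ccM$, where one uses that $\bD_X$ commutes with the exponential twist up to the \emph{sign} of $\mero$ (this is the source of the $-\mero\leadsto\mero$ in the statement) and that for a regular holonomic $\ccM$ the twisted dual $\bD_X(\ccE^{-\mero}\otimes\ccM)$ is automatically already localized along $P$ — there is no difference between $[*P]$ and $[!P]$ for the twisted module, precisely because the irregularity introduced by $\mero$ forces the $V$-filtration along each component of $P$ to have a single jump and no moderate nearby cycles contributing to the distinction between $[*P]$ and $[!P]$. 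Concretely: $\psi_{x_i}$ of $\ccE^{-\mero}\otimes\ccM$ vanishes for the relevant eigenvalue because the pole order $m_i\geq 1$ makes $\ccE^{-\mero}\otimes\ccM$ have no sections with moderate growth singularity of the type detected by $[*]$ versus $[!]$ along $x_i=0$; one invokes the standard fact (e.g.\ via Mochizuki's or Sabbah's theory of the Kashiwara–Malgrange filtration of exponentially twisted modules) that $\Gamma_{[*P]}(\ccE^{-\mero}\otimes\ccM)=\Gamma_{[!P]}(\ccE^{-\mero}\otimes\ccM)$ when $\mero$ has a genuine pole along every component of $P$.

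\textbf{Assembling the argument.} Granting the local vanishing of the comparison cone, the morphism $\bD_X(\ccE^{-\mero}\otimes\ccM)\to\ccE^\mero\otimes\bD_X\ccM$ is a morphism of holonomic $\cD_X$-modules which is an isomorphism on $X\moins P$ and whose source and target are both equal to their own localizations along $P$ (for the source, by the local computation just described; for the target, because $\ccE^\mero\otimes\cbbullet$ by definition produces a $\cD_X(*P)$-module). A morphism between two $\cD_X(*P)$-modules that restricts to an isomorphism on $X\moins P$ is an isomorphism, since both are recovered from their restriction to the complement by applying $\Gamma_{[*P]}\circ j_*j^{-1}$, and the morphism is compatible with this. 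The main obstacle, and where care is genuinely needed, is the local equality $\Gamma_{[*P]}=\Gamma_{[!P]}$ on $\ccE^{-\mero}\otimes\ccM$: this is not formal, it uses essentially the regularity of $\ccM$ together with the hypothesis that $\mero$ has a pole (of order $\geq 1$) along each component of $P$, and it is really a statement about the $V$-filtration of an exponentially twisted regular holonomic module along a coordinate hyperplane — the kind of computation carried out in the local study of the stationary phase / exponential twist (compare \cite[\S3]{S-Y14} and the localization results recalled in \S\ref{subsec:cTvarphihb}). Everything else — the reduction via $\modif$, the commutation relations, the "isomorphism away from $P$ plus localized on both sides implies isomorphism" principle — is bookkeeping with the functors of \S\ref{subsub:functors}--\S\ref{subsub:commutation}.
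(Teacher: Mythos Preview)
Your high-level strategy matches the paper's: identify the target $\ccE^\mero\otimes\bD_X\ccM$ with $[\bD_X(\ccE^{-\mero}\otimes\ccM)](*P)$, so that the whole question becomes ``the source is already $(*P)$-localized'', i.e., a local equation of $P$ acts invertibly on $\bD_X(\ccE^{-\mero}\otimes\ccM)$. Where you diverge is in the reduction. You pull back along a modification $\modif:X'\to X$ to make $P$ normal crossings, then work in polydisc coordinates. The paper does something much more direct: since the zero and pole divisors of $\mero$ are disjoint, near any point of $P$ one can write $\mero=1/\psi$ with $\psi$ \emph{holomorphic}, and use the graph inclusion $i:X\hookrightarrow X\times\CC_t$ of $\psi$. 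This replaces $\ccE^{-\mero}\otimes\ccM$ by $\ccE^{-1/t}\otimes i_+\ccM$, and the divisor in question becomes the single smooth hypersurface $\{t=0\}$ --- no normal crossings, no modification. Your modification step, by contrast, is not obviously harmless: ``the question is transported to $X'$'' via the commutation relations of \S\ref{subsub:commutation} is not justified, because the localized functors $\ste^+,\ste_\dag^0$ interact with duality through exactly the $[*P]\leftrightarrow[!P]$ swap you are trying to control. Note that the formula \eqref{eq:EvarphieD} in the paper, which does push the statement through a modification, is a \emph{consequence} of this proposition, not an input to it.

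The second gap is that your key local step --- that $\ccE^{-\mero}\otimes\ccM$ has constant $V$-filtration along each branch of $P$, equivalently $[*P]=[!P]$ on it --- is asserted by citation to ``standard facts'' rather than proved. The paper gives a short self-contained computation at precisely this point. After the graph embedding one must show $\ccE^{-1/t}\otimes\ccN$ is $\cD_{X\times\CC/\CC}\langle t\partial_t\rangle$-coherent for $\ccN$ regular holonomic. Regularity of $\ccN$ gives that $V_0\ccN$ is $\cD_{X\times\CC/\CC}$-coherent; then the identity
\[
t\partial_t(\mathrm{e}^{-1/t}\otimes n)=t^{-1}(\mathrm{e}^{-1/t}\otimes n)+(\mathrm{e}^{-1/t}\otimes t\partial_t n)
\]
shows $t^{-1}(\mathrm{e}^{-1/t}\otimes V_0\ccN)\subset t\partial_t(\mathrm{e}^{-1/t}\otimes V_0\ccN)+(\mathrm{e}^{-1/t}\otimes V_0\ccN)$, and iterating gives $\ccE^{-1/t}\otimes\ccN=\sum_{k\geq0}(t\partial_t)^k(\mathrm{e}^{-1/t}\otimes V_0\ccN)$. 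This is where the regularity hypothesis is actually consumed, and it is the entire content of the argument.
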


\begin{proof}
Let us denote by $P$ the reduced pole divisor of $\mero$. Note that, if $\ccI$ is an injective $\cD_X(*P)$-module, then $\ccE^\mero\otimes\ccI$ is also injective. Then, for a $\cD_X$-module $\ccM$, $\ccE^\mero\otimes\nobreak[(\bD_X\ccM)(*P)]=\ccE^\mero\otimes\nobreak\bD_X\ccM$ is naturally identified with the dual of $\ccE^{-\mero}\otimes\ccM$ in $\catD^+(\cD_X(*P))$, which is $[\bD_X(\ccE^{-\mero}\otimes\nobreak\ccM)](*P)$. For $\ccM$ holonomic, we thus have a natural morphism of $\cD_X$-modules\vspace*{-3pt}\enlargethispage{.5\baselineskip}%
\[
\bD_X(\ccE^{-\mero}\otimes\ccM)\to\bigr[\bD_X(\ccE^{-\mero}\otimes\ccM)\bigl](*P)=\ccE^\mero\otimes\bD_X\ccM.
\]
Proving the proposition amounts then to proving that the morphism above is an isomorphism, that is, any local equation of $P$ acts in an invertible way on $\bD_X(\ccE^{-\mero}\otimes\nobreak\ccM)$. This is thus a local question on $P$.

Since the zero divisor of $\mero$ does not intersect its pole divisor $P$, we can assume that there exists a holomorphic function $\psi$ such that $\mero=1/\psi$ and $P=\psi^{-1}(0)$. Let $i:X\hto X\times\CC$ be the inclusion of the graph of $\psi$. Then $i_+(\ccE^{-1/\psi}\otimes\nobreak\ccM)=\ccE^{-1/t}\otimes i_+\ccM$ (\cf\cite[Lem.\,4.2]{S-Y14}) and it is enough to prove that, for any regular holonomic $\cD_{X\times\CC}$-module~$\ccN$, $t$ acts in an invertible way on $\bD_{X\times\CC}(\ccE^{-1/t}\otimes\ccN)$. For that purpose, it is enough to check that the Kashiwara-Malgrange filtration of $\ccE^{-1/t}\otimes\ccN$ along $t=0$ is constant, since the same is then known to hold for the dual module. In other words, we are reduced to proving that $\ccE^{-1/t}\otimes\ccN$ is $\cD_{X\times\CC/\CC}\langle t\partial_t\rangle$-coherent.

We use the property that, for a regular holonomic $\cD_{X\times\CC}$-module $\ccN$, the terms of the Kashiwara-Malgrange filtration $V_\bbullet\ccN$ are $\cD_{X\times\CC/\CC}$-coherent. Then, noting that $\ccN[t^{-1}]=V_0\ccN[t^{-1}]$ and denoting by $\mathrm{e}^{-1/t}$ the generator $1$ of $(\cO_{X\times\CC}[t^{-1}],\rd-\rd(1/t))$, the equality $t\partial_t(\mathrm{e}^{-1/t}\otimes n)=t^{-1}(\mathrm{e}^{-1/t}\otimes n)+(\mathrm{e}^{-1/t}\otimes t\partial_t n)$ shows that\vspace*{-3pt}
\[
t^{-1}(\mathrm{e}^{-1/t}\otimes V_0\ccN)\subset t\partial_t(\mathrm{e}^{-1/t}\otimes V_0\ccN)+(\mathrm{e}^{-1/t}\otimes V_0\ccN)
\]
and, iterating the process, we find that $\ccE^{-1/t}\otimes\ccN=\sum_{k\geq0}(t\partial_t)^k(\mathrm{e}^{-1/t}\otimes V_0\ccN)$, hence the $\cD_{X\times\CC/\CC}\langle t\partial_t\rangle$-coherence.
\end{proof}

\begin{remarque}
The proposition is not true for any holonomic $\ccM$: for example, choose $\ccM=\ccE^\mero$, so that the left-hand side is equal to $\bD_X(\cO_X(*P))\simeq\cO_X(!P)$, while the right-hand side is, according to the lemma applied to $\ccM=\cO_X$, isomorphic to $\cO_X(*P)$.
\end{remarque}

\begin{corollaire}\label{cor:Evarphi!H}
If $\ccM$ is regular holonomic, and if the zero divisor and the pole divisor of~$\mero$ do not intersect, then for any hypersurface $H\subset X$ the natural morphisms\vspace*{-3pt}
\[
\Gamma_{[!H]}(\ccE^\mero\otimes\ccM)\from\Gamma_{[!H]}(\ccE^\mero\otimes\Gamma_{[!H]}\ccM)\to\ccE^\mero\otimes\Gamma_{[!H]}\ccM
\]
are isomorphisms, hence $\Gamma_{[!H]}(\ccE^\mero\otimes\ccM)\simeq\ccE^\mero\otimes\Gamma_{[!H]}\ccM$.
\end{corollaire}

\begin{proof}
Since $\Gamma_{[!H]}\ccM$ is known to be regular holonomic, it is enough to prove the dual statement, according to Proposition \ref{prop:DEphi}. The dual of the left morphism is
\[
\Gamma_{[*H]}(\ccE^{-\mero}\otimes\bD_X\ccM)\to\Gamma_{[*H]}(\ccE^{-\mero}\otimes\Gamma_{[*H]}\bD_X\ccM),
\]
and this is an isomorphism because of \ref{subsub:commutation}\eqref{enum:rel1} above (applied with $\modif=\id$). The dual of the right morphism~is
\[
\Gamma_{[*H]}(\ccE^{-\mero}\otimes\Gamma_{[*H]}\bD_X\ccM)\from\ccE^{-\mero}\otimes\Gamma_{[*H]}\bD_X\ccM,
\]
and we also use \ref{subsub:commutation}\eqref{enum:rel1} above to conclude.
\end{proof}

We now relax the assumption on the zero and pole divisors of $\mero$. Let us choose a projective modification $\modif:(X',\modif^{-1}(P))\to(X,P)$, where $P$ is the reduced pole divisor of~$\mero$, inducing an isomorphism $X'\moins\nobreak\modif^{-1}(P)\isom X\moins P$ and such that $\mero'=\modif^*(\mero)=\mero\circ\modif$ has non-intersecting zero and pole divisors. Note that the pole divisor $P'$ of $\mero'$ satisfies $P'\subset \modif^{-1}(P)$, and the inclusion can be strict.

For $\ccM$ holonomic we have, since $\ccE^\mero=\Gamma_{[*P]}\ccE^\mero$,
\[
\ccE^\mero\otimes\ccM=\ccE^\mero\otimes\Gamma_{[*P]}\ccM=\modif^0_\dag(\ccE^{\mero'}\otimes \modif^+\Gamma_{[*P]}\ccM).
\]
Therefore, applying the dual functor for holonomic $\cD$-modules, we get, if $\ccM$ is regular,
\begin{equation}\label{eq:EvarphieD}
\begin{split}
\bD_X(\ccE^\mero\otimes\ccM)&\simeq \modif^0_\dag\bD_{X'}(\ccE^{\mero'}\otimes \modif^+\Gamma_{[*P]}\ccM)\\
&\simeq \modif^0_\dag(\ccE^{-\mero'}\otimes\bD_{X'}\modif^+\Gamma_{[*P]}\ccM)\quad\text{(Proposition \ref{prop:DEphi})}.
\end{split}
\end{equation}

Let $H$ be any hypersurface of $X$ and set $H'=\modif^{-1}(H)$. Then we obtain similarly
\begin{equation}\label{eq:EvarphieH}
\Gamma_{[!H]}(\ccE^\mero\otimes\ccM)\simeq \modif^0_\dag(\ccE^{\mero'}\otimes\Gamma_{[!H']}\modif^+\Gamma_{[*P]}\ccM).
\end{equation}

\begin{corollaire}\label{cor:composition}
Let us consider the composition
\[
\Mod_\hr(\cD_{X_0})\Mto{\Psi_1}\Mod_\hol(\cD_{X_1})\Mto{\Psi_2}\cdots\Mto{\Psi_r}\Mod_\hol(\cD_{X_r})
\]
of a sequence of functors, each one being equal to one in \ref{subsub:functors}\eqref{enum:Gamma}--\eqref{enum:pushforward} (we start from a regular holonomic $\cD$-module). Then we obtain the same result, up to isomorphism, by applying functors of type \ref{subsub:functors}\eqref{enum:Gamma}--\eqref{enum:externalproduct} first (up to changing some functors $\Gamma_{[*H]}$ with $\modif^+\Gamma_{[*H]}$), then a~\emph{single} functor \ref{subsub:functors}\eqref{enum:Ephi}, and functors of type \ref{subsub:functors}\eqref{enum:pushforward} last.
\end{corollaire}

\begin{proof}
According to the commutation relations \ref{subsub:commutation}\eqref{enum:rel1}--\eqref{enum:rel6}, we can pass functors $\map_\dag^k$ at the end and functors $\mapsm^+$, $\boxtimes_\cD$ at the beginning. We are then left to reorder functors of the type $\ccE^\mero\otimes\cbbullet$, $\bD$ and $\Gamma_{[\star H]}$ ($\star=*,!$), starting from a regular holonomic $\cD$\nobreakdash-module, so that the functors $\Gamma_{[\star H]}$ and $\bD$ are applied first, and the functors $\ccE^\mero\otimes\cbbullet$ last. Let $\ccE^{\mero_1}\otimes\cbbullet$ be the first functor of this kind which is applied. It is thus applied to a regular holonomic $\cD$-module. If the next functor to be applied is
\begin{itemize}
\item
$\Gamma_{[*H]}$, we apply the third line of \ref{subsub:commutation}\eqref{enum:rel1},
\item
$\Gamma_{[!H]}$, we apply \eqref{eq:EvarphieH}
\item
$\bD$, we apply \eqref{eq:EvarphieD}.
\end{itemize}
The latter two operations introduce a new functor $\modif^0_\dag$, that we pass on the left up to the first pushforward functor. On the other hand, the holonomic $\cD$-module on the right of $\ccE^{\mero'_1}\otimes\cbbullet$ remains regular. After this procedure, if the the next functor is a pushforward functor, we stop, otherwise the next functor is $\ccE^{\mero_2}\otimes\cbbullet$ and we use \ref{subsub:commutation}\eqref{enum:rel2}. The number of functors $\ccE^\mero\otimes\cbbullet$ strictly decreases by one. We conclude by induction on the number of functors $\ccE^\mero\otimes\cbbullet$ in the sequence of functors.
\end{proof}

\begin{definition}[Holonomic $\cD$-modules of exponential-regular origin]
\index{holonomic $\cD$-module of exponential-regular origin}A holonomic $\cD_X$-module is said to have \emph{exponential-regular origin} if it is isomorphic to one obtained by applying a finite sequence $\Psi_1,\dots,\Psi_r$ of functors \ref{subsub:functors}\eqref{enum:Ephi} and~\eqref{enum:pushforward} to a regular holonomic $\cD_{X_0}$-module, so that $X=X_r$.
\end{definition}

\begin{corollaire}
The functors \ref{subsub:functors}\eqref{enum:Gamma}--\eqref{enum:externalproduct} (and, by definition, \ref{subsub:functors}\eqref{enum:Ephi} and~\eqref{enum:pushforward}) preserve holonomic $\cD$-modules of exponential-regular origin.\qed
\end{corollaire}

\subsection{Complex mixed Hodge modules as integrable mixed twistor modules}\label{subsec:MHM}

\subsubsection*{Real mixed Hodge modules and real integrable mixed twistor $\cD$-modules}

We denote by \index{$MHMXR$@$\MHM(X,\RR)$}$\MHM(X,\RR)$ the category of \index{mixed Hodge module!real --}real mixed Hodge modules of \cite{MSaito87}. In \cite[\S13.5]{Mochizuki11}, T.\,Mochizuki defines a fully faithful functor $\MHM(X,\RR)\mto\WRdTriples(X)$ whose essential image is a full subcategory of $\MTM^\intt(X)$. In fact this functor factorizes through $\WRdiTriples(X)$. Indeed, let $(\ccN,F_\bbullet\ccN,\ccF_\RR,W_\bbullet)$ be a real mixed Hodge module (here,~$\ccF_\RR$ is an $\RR$-perverse sheaf and we~do not write explicitly the isomorphism $\CC\otimes_\RR\ccF_\RR\isom\pDR\ccN$). The real structure induces a canonical $\cD_X\otimes_\CC\nobreak\cD_{\ov X}$-linear pairing $\ccC:\ccN^\vee\otimes_\CC\ov\ccN\to\Db_X$, where $\ccN^\vee$ is the $\cD_X$-module dual to~$\ccN$ (\cf\cite[Chap.\,12]{Mochizuki11}). The restriction to $\CC^*_\hb$ (in~the algebraic sense) of the \index{Rees module}Rees module \index{$RFM$@$R_F\ccM$}$R_F\ccN:=\bigoplus_pF_p\ccN\hb^p$ is~$\ccN[\hb,\hbm]$, and the pairing $\ccC$ extends in a unique way as a $\cD_X[\hb,\hbm]\otimes_\CC\ov{\cD_X[\hb,\hbm]}$-linear pairing\vspace*{-5pt}
\[
\iC:\ccN^\vee[\hb,\hbm]\otimes_\CC\iota^*\ov{\ccN[\hb,\hbm]}\to\Db_X[\hb^{\pm1},\ov\hb^{\pm1}].
\]
This pairing is obviously integrable. By taking analytification with respect to $\hb$ and~$\ov\hb$ we obtain an object of $\RdiTriples(X)$, and then, by restricting to $X\times\bS$, we get an object of $\RdTriples(X)$, which is shown to belong to the category $\MTM^\intt(X)$ in \cite[\S13.5]{Mochizuki11}. Therefore, the natural functor $\MHM(X,\RR)\mto\MTM^\intt(X)$ factorizes through $\iMTM^\intt(X)$. Note that one can define directly the $\sigma$-sesquilinear pairing $C_{\bS}$ by extending $\ccC$ in a $\sigma$-sesquilinear way, to obtain a pairing $C$ with values in $\Db_{\cX^\circ/\CC^*_\hb}^\an$, as in \cite{Mochizuki07}.

The previous functor $\MHM(X,\RR)\mto\MTM^\intt(X)$ can be enriched as a functor with values in $\MTM^\intt(X,\RR)$ by transporting the real structure, due to the compatibility with duality and adjunction. A similar statement holds with $\iMTM^\intt(X,\RR)$.

\subsubsection*{Goodness}
Let $\kk$ be a subfield of $\RR$. Then \cite[Prop.\,13.5.5]{Mochizuki11} shows that the previous functor induces a fully faithful functor \index{$MHMXk$@$\MHM(X,\kk)$}$\MHM(X,\kk)\mto\MTM^\intt_\good(X,\kk)$.

\subsubsection*{Complex mixed Hodge modules}
Let us define the category $\RdiTriples(X)^\alg$ consisting of triples $(R_F\ccN',R_F\ccN'',\iC)$, with $(\ccN',F_\bbullet\ccN')$ and $(\ccN'',F_\bbullet\ccN'')$ as $(\ccN,F_\bbullet\ccN)$ above, and similarly the category $\WRdiTriples(X)^\alg$. There is a natural analytification functor taking values in $\RdiTriples(X)$ \resp in $\WRdiTriples(X)$.

Let us recall (\cf Remark \ref{rem:directsummand}) that the abelian category $\MTM(X)$ is a full subcategory of the abelian category $\WRTriples(X)$, which is stable by direct summand in $\WRTriples(X)$ and that $\MTM^\intt(X)$ is a full subcategory of the abelian category $\WRdTriples(X)$, which is stable by direct summand in $\WRdTriples(X)$. The same property holds for $\iMTM^\intt(X)$ and $\WRdiTriples(X)$ respectively, according to Theorem \ref{th:equivalenceintegrable}.

\begin{definition}[Complex mixed Hodge modules]\label{def:MHMC}
We say that an object of the category $\WRdiTriples(X)^\alg$ is a \index{mixed Hodge module!complex --}\emph{complex mixed Hodge module} if it is a direct summand in $\WRdiTriples(X)^\alg$ of an object corresponding to a real mixed Hodge module. This defines the full subcategory \index{$MHMXC$@$\MHM(X,\CC)$}$\MHM(X)=\MHM(X,\CC)$ of $\WRdiTriples(X)^\alg$.
\end{definition}

\begin{remarque}\label{rem:MHMC}
If an object of $\WRdiTriples(X)$ is a direct summand in the category $\WRdiTriples(X)$ of an object of $\iMTM^\intt(X)$ (and therefore is an object of $\iMTM^\intt(X)$, \cf Remark \ref{rem:directsummand}) coming from a real mixed Hodge module, then it comes from a complex mixed Hodge module by analytification: the filtration is recovered by taking $\ker(\hb^2\partial_\hb-p\hb)$. In other words, one can equivalently define the category $\MHM(X,\CC)$ as a full subcategory of $\iMTM^\intt(X)$. It is stable by direct summand in $\WRdiTriples(X)$ (equivalently, in $\iMTM^\intt(X)$).
\end{remarque}

\subsection{Integrable mixed twistor $\cD$-modules of exponential-Hodge origin}\label{subsec:compatibilitiesT}

We have functors analogous to those of \ref{subsub:functors} at the level of $\iMTM^\intt$. For the exponential twist \ref{subsub:functors}\eqref{enum:Ephi}, we consider instead the twist $\icT^{\mero/\hb}\otimes\cbbullet$, and \hbox{$\Gamma_{[!P]}\circ(\icT^{\mero/\hb}\otimes\cbbullet)$}.

Let us recall (\cf\cite[Rem.\,7.2.9]{Mochizuki11}) that the functor $\Xi_{\DR}:\MTM(X)\mto\Mod_\hol(\cD_X)$ which associates with each object $\bigl((\cM',\cM'',C_{\bS}),W_\bbullet\bigr)$ the $\cD_X$-module $i_{\hb=1}^*\cM''=\ccM$ is faithful. The same result holds for $\Xi_{\DR}$ on $\MTM^\intt(X)$ and $\iMTM^\intt(X)$. Moreover, Remark \ref{rem:compfunctorshb1} shows that $\Xi_{\DR}$ behaves well with respect to the functors considered above. It follows that the existence of liftings to $\iMTM^\intt(X)$ of the morphisms in \ref{subsub:commutation}, \eqref{eq:EvarphieD} and \eqref{eq:EvarphieH} implies that these liftings are isomorphisms.

\begin{definition}[Integrable mixed twistor $\cD$-modules of exponential-Hodge origin]\label{def:expMHM}
An object (\resp a morphism) of $\iMTM^\intt$ is said to be of exponential-Hodge origin if it can be obtained from an object (\resp a morphism) of $\MHM$ by applying projective pushforward functors $\map_\dag^k$ (lifting \ref{subsub:functors}\eqref{enum:Ephi}) and exponential twists $\icT^{\mero/\hb}\otimes\cbbullet$ (lifting \ref{subsub:functors}\eqref{enum:pushforward}).
\end{definition}

\begin{proposition}\label{prop:expMHM}
The functors lifting \ref{subsub:functors}\eqref{enum:Gamma}--\eqref{enum:externalproduct} (and, by definition, \ref{subsub:functors}\eqref{enum:Ephi} and~\eqref{enum:pushforward}, hence also the functor $\Gamma_{[!P]}(\icT^{\mero/\hb}\otimes\cbbullet)$) preserve integrable mixed twistor $\cD$-modules of exponential-Hodge origin.
\end{proposition}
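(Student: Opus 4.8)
The proof will be the twistor counterpart of the proof of Corollary \ref{cor:composition}, with $\MHM$ playing the role that regular holonomic $\cD$-modules play there. By Definition \ref{def:expMHM}, an object (or a morphism) of exponential-Hodge origin is obtained from an object (or a morphism) of $\MHM(X_0)$ by a finite sequence of functors, each one a projective pushforward $\map_\dag^k$ or an exponential twist $\icT^{\mero/\hb}\otimes\cbbullet$. Since $\Gamma_{[!P]}(\icT^{\mero/\hb}\otimes\cbbullet)=[!P]\circ(\icT^{\mero/\hb}\otimes\cbbullet)$ (\cf Section \ref{subsec:exptwist}), it suffices to prove that each of the functors on $\iMTM^\intt$ lifting \ref{subsub:functors}\eqref{enum:Gamma}--\eqref{enum:externalproduct} --- the localizations $[*H]$ and $[!H]$, the duality $\bD$, the normalized smooth pullback $\Tmapsm^+$ (\cf Definition \ref{def:normpullbacksmooth}), the localized modification pullbacks $\ste^+\Gamma_{[*D]}$ and $\ste^0_\dag\Gamma_{[*D']}$, and the external product $\hbboxtimes$ --- sends an object (or a morphism) of exponential-Hodge origin to one of the same kind. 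All these functors are defined on $\iMTM^\intt$ by the constructions of Chapter \ref{part:1} (Theorem \ref{th:equivalenceintegrable}, including the construction of the duality functor, and Remark \ref{rem:lociota}) together with Section \ref{subsec:exptwist}.

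The first and main step is to lift to $\iMTM^\intt$ all the commutation isomorphisms used in the proof of Corollary \ref{cor:composition}, namely the relations \ref{subsub:commutation}\eqref{enum:rel1}--\eqref{enum:rel7} and the isomorphisms \eqref{eq:EvarphieD} and \eqref{eq:EvarphieH}. For each of them, the two sides are compositions of functors already defined on $\iMTM^\intt$, and a natural morphism between the two sides is produced by the same formal mechanism as in $\Mod_\hol(\cD)$: the units and counits of the adjunctions defining $[*H]$, $[!H]$ and $\bD$ on $\iMTM^\intt$; base change along a cartesian square and the universal property of the pushforward by a projective modification (\cf Proposition \ref{prop:basechangeR}); and the compatibility of $\icT^{\mero/\hb}\otimes\cbbullet$ and $\Gamma_{[!P]}(\icT^{\mero/\hb}\otimes\cbbullet)$ with localization, and their mutual adjunction, as recorded in Section \ref{subsec:exptwist}. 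By construction, $\Xi_{\DR}$ carries each of these lifted morphisms to the corresponding $\cD$-module isomorphism (Remarks \ref{rem:compfunctorshb1} and \ref{rem:compexptwisthb1}, together with Proposition \ref{prop:DEphi} and Corollary \ref{cor:Evarphi!H} for \eqref{eq:EvarphieD} and \eqref{eq:EvarphieH}). As morphisms in $\iMTM^\intt(X)$ are strict, $\Xi_{\DR}$ is exact there; being also faithful, it reflects isomorphisms, so every lifted morphism is itself an isomorphism --- this is precisely the assertion recalled just before Definition \ref{def:expMHM}.

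With these relations at our disposal, the second step is purely combinatorial: one repeats the reordering argument of Corollary \ref{cor:composition}, now inside $\iMTM^\intt$. Applying $\Psi$ (one of the functors of the first paragraph) to an object of exponential-Hodge origin, one uses the lifts of \ref{subsub:commutation}\eqref{enum:rel4}--\eqref{enum:rel7} to push the projective pushforwards to the outside and the smooth pullbacks and external products towards the $\MHM$ end, and then reorders so that $\Psi$, $\bD$ and the localizations $[\star H]$ ($\star=*,!$) are applied before the exponential twists; the crossing of $[\star H]$, $\bD$ or $\Tmapsm^+$ past $\icT^{\mero/\hb}\otimes\cbbullet$ is governed respectively by the third line of \ref{subsub:commutation}\eqref{enum:rel1}, by \eqref{eq:EvarphieH}, by \eqref{eq:EvarphieD}, by the third line of \ref{subsub:commutation}\eqref{enum:rel3}, by the second line of \ref{subsub:commutation}\eqref{enum:rel6}, and by \ref{subsub:commutation}\eqref{enum:rel7}, each of which may create a further projective pushforward $\modif^0_\dag$ that is sent outward. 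At the end, the functors lifting \ref{subsub:functors}\eqref{enum:Gamma}--\eqref{enum:externalproduct} are all applied directly to the $\MHM$ object, and since $\MHM$ is stable under them (the classical stability properties of mixed Hodge modules, \cf\cite{MSaito87}, \cite{MSaito89b}), the outcome lies again in $\MHM$. The whole composite is thereby rewritten, up to isomorphism in $\iMTM^\intt$, as an object of $\MHM$ followed by a sequence of projective pushforwards $\map_\dag^k$ and exponential twists $\icT^{\mero/\hb}\otimes\cbbullet$, \ie as an object of exponential-Hodge origin; applied to a morphism of $\MHM$, the same discussion gives the statement for morphisms.

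I expect the main obstacle to be the production of the lifts in the second step, and specifically those of the duality-type identities \eqref{eq:EvarphieD} and \eqref{eq:EvarphieH} (and of the third line of \ref{subsub:commutation}\eqref{enum:rel1}) at the level of $\iota$-triples. The functors $\bD$ and $[!H]$ on $\iMTM^\intt$ rest on the non-degeneracy of the $\iota$-sesquilinear pairing established in Chapter \ref{part:1}, and $\icT^{\mero/\hb}\otimes\cbbullet$ is built through the localization $(\icT_*^{\mero/\hb}\otimes\cbbullet)[*P]$ along the pole divisor of $\mero$ (Section \ref{subsec:cTvarphihb}), so one has to verify that the natural morphism is compatible with that localization step; here the description in Section \ref{subsec:exptwist} of $\icT^{\mero/\hb}\otimes\cbbullet$ and $\Gamma_{[!P]}(\icT^{\mero/\hb}\otimes\cbbullet)$ as mutually adjoint functors, together with Corollary \ref{cor:Evarphi!H} and Proposition \ref{prop:DEphi}, is what makes the argument go through. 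Once these morphisms are in place in $\iMTM^\intt$, the passage to isomorphisms (via the faithfulness of $\Xi_{\DR}$) and the reordering are routine.
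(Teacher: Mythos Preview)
Your proposal is correct and follows essentially the same approach as the paper: lift each commutation relation of \ref{subsub:commutation} (together with \eqref{eq:EvarphieD} and \eqref{eq:EvarphieH}) to a natural morphism in $\iMTM^\intt$, use the faithfulness of $\Xi_{\DR}$ (which reflects isomorphisms) to conclude that each lift is an isomorphism, and then rerun the combinatorial reordering of Corollary \ref{cor:composition}. The paper is slightly more explicit in citing the specific results of \cite{Mochizuki11} (e.g.\ Prop.~11.2.7, 13.3.5, 13.3.9, Lem.~11.3.4, 11.4.14, 11.4.15) that provide the liftings of \ref{subsub:commutation}\eqref{enum:rel4} and \eqref{enum:rel6} directly, rather than constructing them by hand, but this is a matter of presentation rather than substance.
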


\begin{proof}
The proof consists in showing the existence of liftings of the morphisms in \ref{subsub:commutation}, \eqref{eq:EvarphieD} and \eqref{eq:EvarphieH}. The latter two morphisms are obtained in a way similar to the other ones, and we will not give details on their liftings.

\ref{subsub:commutation}\eqref{enum:rel1}. For the first line, we have natural morphisms of functors on $\iMTM^\intt$:
\[
\Gamma_{[*(H\cup H')]}\to\Gamma_{[*H']}\circ\Gamma_{[*(H\cup H')]}\to\Gamma_{[*H]}\circ\Gamma_{[*H']}\circ\Gamma_{[*(H\cup H')]}\from\Gamma_{[*H]}\circ\Gamma_{[*H']}
\]
and similar ones by exchanging $H$ and $H'$. Since they are isomorphisms after $\Xi_{\DR}$ (Notation \ref{nota:XiDR}), they are isomorphisms. The second line is \cite[Prop.\,13.3.5]{Mochizuki11}. For the third line, we have natural morphisms of functors
\[
\Gamma_{[*H]}\circ(\icT^{\mero/\hb}\otimes\cbbullet)\to\Gamma_{[*H]}\circ(\icT^{\mero/\hb}\otimes\cbbullet)\circ\Gamma_{[*H]}\from(\icT^{\mero/\hb}\otimes\cbbullet)\circ\Gamma_{[*H]},
\]
and they become isomorphisms after applying $\Xi_{\DR}$.

\ref{subsub:commutation}\eqref{enum:rel2}. By construction, for any object $\icT$ of $\iMTM^\intt(X)$, there exists a natural morphism $\icT\to\Gamma_{[*H]}\icT$. On the other hand, working with $\cR(*(P\cup Q))$-triples, we have
\[
\big[(\icT_*^{\mero/\hb}\otimes\cbbullet)\circ(\icT_*^{\psi/\hb}\otimes\cbbullet)\big](*(P\cup Q))\simeq(\icT_*^{(\mero+\psi)/\hb}\otimes\cbbullet).
\]
We therefore have a natural morphism of functors
\[
\Gamma_{[*(P\cup Q)]}\circ(\icT^{\mero/\hb}\otimes\cbbullet)\circ(\icT^{\psi/\hb}\otimes\cbbullet)\to(\icT^{(\mero+\psi)/\hb}\otimes\cbbullet),
\]
which is an isomorphism since it is such after applying $\Xi_{\DR}$.

\ref{subsub:commutation}\eqref{enum:rel4}. The first line is given by \cite[Prop.\,11.2.7]{Mochizuki11}, the second line by \cite[Cor.\,13.3.3]{Mochizuki11} and the third line by \cite[Lem.\,11.3.4]{Mochizuki11}.

\ref{subsub:commutation}\eqref{enum:rel6}. The first two lines are given by\cite[Lem.\,11.4.15]{Mochizuki11}, the third line by \cite[Prop.\,13.3.9]{Mochizuki11} and the last line by \cite[Lem.\,11.4.14]{Mochizuki11}.

\ref{subsub:commutation}\eqref{enum:rel3}. The construction of the morphisms are standard and left to the reader (one can locally interpret $\mapsm^+$ as an external tensor product).
\end{proof}

\chapterspace{-2}
\chapter{Irregular mixed Hodge modules}\label{part:2}\label{PART:2}

\section{Introduction to Chapter \ref{PART:2}}

We continue using Notation \ref{nota:general}--\ref{nota:XiDR}. Let us indicate the main lines of the definition, given in this chapter, of the category $\IrrMHM(X)$ of irregular mixed Hodge modules. The basic procedure is to consider objects of $\iMTM^\intt(X)$ having a good behaviour with respect to rescaling the variable $\hb$, obtained by replacing $\hb$ with $\theta\hb$, $\theta\in\CC^*$.

The rescaling operation is defined for any integrable $\iota$-triple $\icT=(\cM',\cM'',\iC)$, and it is a strong condition that, starting from an object of $\iMTM^\intt(X)$, for any $\theta\in\CC^*$ the $\theta$-rescaled object remains an integrable mixed twistor $\cD$-module. Even stronger is the condition that the rescaled object extends as a mixed twistor $\cD$-module on the affine line with coordinate $\tau=1/\theta$. In the pure case, and when $X$ is a point, this is a condition similar to the notion of ``Sabbah orbit'' as defined in \cite{H-S06}. This leads to the category $\iMTM^\resc(X)$, the objects of which can then be endowed with an irregular Hodge filtration.

However, supplementary properties are needed:
\begin{itemize}
\item
The first one is a partial regularity property along $\tau=0$, which imposes a ``tame'' behaviour of the rescaled objects when $\theta\to\infty$ (in the case of a ``Sabbah orbit'', see \cite[Th.\,7.3]{H-S06}).
\item
The second one is a grading property, which ensures a good behaviour of the irregular Hodge filtration with respect to various operations.
\end{itemize}
Adding the first condition produces the subcategory $\iMTM^\resc(X)$, and adding moreover the second one actually produces the category $\IrrMHM(X)$. One can also define the category $\IrrMHM(X,\kk)$ by adding the data of a $\kk$-perverse sheaf on $X$ so that the underlying object is an object in $\iMTM^\intt_\good(X,\kk)$.

The case when $X$ is a point will be described more explicitly in Chapter \ref{chap:irregmhm}. This leads to the category of irregular mixed Hodge structures, whose definition uses nevertheless the category of integrable mixed twistor $\cD$-modules on the line $\CC_\tau$. However, the proofs of the main results for $\IrrMHM(\pt)$ are not much simpler than those for $\IrrMHM(X)$, and will thus only be given in the general case.

\begin{notation}[for Chapter \ref{part:2}]\label{nota:chap2}
We keep the notation introduced in Section \ref{sec:introI}. We consider~$\PP^1$ equipped with two coordinate charts $\CC_\theta,\CC_\tau$ with $\tau=1/\theta$ on the intersection. We~set \index{$XTheta$@$\thetaX$}$\thetaX=X\times\CC^*_\theta$, \index{$XTau$@$\tauX$}$\tauX=X\times\CC_\tau$ (where $\CC_\theta,\CC_\tau$ have their analytic topology) and \index{$XTau0$@$\tauX_0$}$\tauX_0=X\times\{\tau=0\}$. We will not consider the behaviour near $\theta=0$. We will consider the projection $p:\tauX\to X$ and we denote by the same letter its restriction to $\thetaX$. We also set \index{$XXCtheta$@$\thetacX$}$\thetacX\simeq\thetaX\times\CC_\hb=\cX\times\CC^*_\theta$ and \index{$XXCtau$@$\taucX$}$\taucX:=\tauX\times\CC_\hb\simeq\cX\times\CC_\tau$. An object living on $\taucX$ (\resp $\thetacX$) will be denoted with a left exponent $\tau$ (\resp $\theta$). For example, the restriction to $\thetacX$ of an object defined on $\taucX$ will simply adquire a left exponent $\theta$ instead of $\tau$.\enlargethispage{.5\baselineskip}%

Let us consider the map \index{$Mu$@$\mu$}$\mu:\thetacX\to\cX$ defined by $\mu:(x,\theta,\hb)\mto(x,\theta\hb)$. We will decompose~$\mu$ as $\mu=q\circ\nu^{-1}$ by introducing the morphism $\nu:(x,\tau,\hb)\mto(x,\tau,\zeta)=(x,\tau,\tau\hb)$, which induces an isomorphism $\thetacX\isom\thetacX$, and the projection $q:(x,\tau,\hb)\mto(x,\hb)$. For $\tau_o\in\CC^*_\tau$, we denote by \hbox{$\mu_{\tau_o}:\cX\to\cX$} the isomorphism $(x,\hb)\mto(x,\hb/\tau_o)$. In particular, $\mu_1=\id$.

We will also use the following notation:
\begin{itemize}
\item
\index{$IHZ1$@$i_{\hb=1}$}$i_{\hb=1}:X\hto\cX$ is the inclusion induced by $\{1\}\hto\CC_\hb$,
\item
\index{$IHT1$@$i_{\tau=1}$}$i_{\tau=1}:\cX\hto\thetacX$ is the inclusion induced by $\{1\}\hto\CC^*_\tau$,
\item
more generally, the inclusion $i_{\tau=\tau_o}:\cX\hto\thetacX$ is induced by \hbox{$\{\tau_o\}\hto\CC^*_\tau$},
\item
\index{$IHTZ$@$i_{\tau=\hb}$}$i_{\tau=\hb}:\cX^\circ\hto\thetacX$ is the inclusion induced by $\CC_\hb^*\hto\CC^*_\tau\times\CC_\hb$, $\hb\mto(\hb,\hb)$.
\end{itemize}
Lastly, we recall that $\pi:\cX=X\times\CC_\hb\to X$ denotes the projection, and $\pi^\circ$ its restriction $\cX^\circ:=X\times\CC^*_\hb\to X$. We have $\mu\circ i_{\tau=\hb}=i_{\hb=1}\circ \pi^\circ:\cX^\circ\to\cX$, so that we can identify $i_{\tau=\hb}^*\cO_{\thetacX}$ with $\cO_{\cX^\circ}$ by sending $\map(x,\tau,\hb)$ to $\map(x,\hb,\hb)$. Similarly, we identify $i_{\tau=\hb}^*\cO_{\taucX}(*\taucX_0)$ with $\cO_\cX[1/\hb]$.\vspace*{-5pt}
\end{notation}

\section{Rescaling an integrable \texorpdfstring{$\cR$}{R}-triple}\label{sec:rescaling}

\subsection{Rescaling a coherent \texorpdfstring{$\cO_\cX$}{OX}-module}\label{subsec:rescaling}

We define the functor $\mu^\cbbullet:\Mod(\CC_\cX)\mto\Mod(\CC_{\taucX})$ \resp $\mu^\star:\Mod(\cO_\cX)\mto\Mod(\cO_{\taucX})$ as the composition $\mu^\cbbullet=\nu_*\circ q^{-1}$ \resp $\mu^\star=\nu_*\circ q^*$. We note that $\nu_*$ induces an equivalence $\Mod(\cO_{\taucX}(*\taucX_0))\simeq\Mod(\cO_{\taucX}(*\taucX_0))$. We then define the functor $\mu^*:\Mod(\cO_\cX)\mto\Mod(\cO_{\taucX}(*\taucX_0))$ as the composition
\begin{equation}\label{eq:mustar}
\nu_*\circ(\cO_{\taucX}(*\taucX_0)\otimes\cbbullet)\circ q^*=(\cO_{\taucX}(*\taucX_0)\otimes\cbbullet)\circ\nu_*\circ q^*,
\end{equation}
and we can replace $\nu_*$ with $(\nu^{-1})^*$ on the left-hand side.

Note that $\cO_{\taucX}(*\taucX_0)$ is a flat $q^{-1}\cO_\cX$-module, so $\nu_*\cO_{\taucX}(*\taucX_0)=\cO_{\taucX}(*\taucX_0)$ is a flat $\mu^\cbbullet\cO_\cX$-module. For an $\cO_\cX$-module $\cF$, we have
\[
\mu^*\cF=\cO_{\taucX}(*\taucX_0)\otimes_{\mu^\cbbullet\cO_\cX}\mu^\cbbullet\cF.
\]

If $\ccF$ is an $\cO_X$-module, we set $\pi^{\circ*}\ccF:=\cO_{\cX}[1/\hb]\otimes_{\pi^{-1}\cO_X}\ccF$, and if~$\cF$ is an $\cO_\cX$-module, we have an isomorphism $i_{\tau=\hb}^*\mu^*\cF\simeq\pi^{\circ*}i_{\hb=1}^*\cF$ given by (with a slight abuse of notation)\vspace*{-.3\baselineskip}
\[
f(x,\tau,\hb)\otimes m\bmod(\tau-\hb)\mto f(x,\tau,\tau)\otimes m\bmod(\tau-\hb)=f(x,\hb,\hb)\otimes(m\bmod(1-\hb)).
\]

\begin{lemme}\label{lem:basicrescale}\index{rescaling!of a coherent $\cO_X$-module}
With the above notation,
\begin{enumerate}
\item\label{lem:basicrescale1}
$\mu^*$ is an exact functor $\Mod(\cO_\cX)\mto\Mod(\cO_{\taucX}(*\taucX_0))$ and for each $\tau_o\in\CC^*_\tau$,
\[
i_{\tau=\tau_o}^*\circ\mu^*=\bL i_{\tau=\tau_o}^*\circ\mu^*=\mu_{\tau_o}^*.
\]
\end{enumerate}
Let $\cF$ be an $\cO_\cX$-module and set $\ccF=i_{\hb=1}^*\cF$.
\begin{enumerate}\refstepcounter{enumi}
\item\label{lem:basicrescale3}
If $\cF$ is strict, then so is any $\mu_{\tau_o}^*\cF$, and $\mu^*\cF$ is $\cO_{\CC_\tau\times\CC_\hb}[1/\tau]$-flat. If $\lambda:\cF\to\cG$ is a strict morphism between strict objects of $\Mod(\cO_\cX)$, then $\mu^*\lambda$ and each $\mu_{\tau_o}^*\lambda$ are strict.
\item\label{lem:basicrescale5}
If $\cF$ is strict, then $\pi^{\circ*}\ccF=i_{\tau=\hb}^*\mu^*\cF=\bL i_{\tau=\hb}^*\mu^*\cF$ and if $\lambda:\cF\to\cG$ is a strict morphism between strict objects, then $i_{\tau=\hb}^*\mu^*\lambda=\pi^{\circ*}i_{\hb=1}^*\lambda$ is strict and $i_{\tau=\hb}^*\mu^*$ commutes with $\ker,\image,\coker$ for $\lambda$.
\item\label{lem:basicrescale6}
For $i=1,2$, let $\cF_i$ be an $\cO_{\cX_i}$-module which is strict. Then $\tau_1-\tau_2$ acts injectively on $\mu_1^*\cF_1\hbboxtimes\nobreak\mu_2^*\cF_2$ and we have $i_{\tau_1=\tau_2}^*(\mu_1^*\cF_1\hbboxtimes\mu_2^*\cF_2)=\mu^*(\cF_1\hbboxtimes\cF_2)$.
\end{enumerate}
\end{lemme}

\begin{proof}
The desired properties \eqref{lem:basicrescale1}--\eqref{lem:basicrescale3} are clear if we replace $\mu^*$ by the pullback by the projection~$q^*$ composed with the localization functor. They remain true after the isomorphism $\nu_*$. For \eqref{lem:basicrescale5} we use the equality $\bL i_{\tau=\hb}^*\circ\mu^*=\pi^{\circ*}\circ\bL i_{\hb=1}^*$ and the fact that, on strict objects, $\bL i_{\hb=1}^*=i_{\hb=1}^*$, and similarly on strict morphisms between strict objects. For \eqref{lem:basicrescale6}, we note that $\mu^*\cF=\bL\mu^*\cF$ by the flatness property above, and if $\cF_i$ are strict, then so are $\mu_i^*\cF_i$ ($i=1,2$), so that the external tensor products involved are strict and are also the derived ones. Since the desired equality holds at the derived level, we conclude that $\bL i_{\tau_1=\tau_2}^*(\mu_1^*\cF_1\hbboxtimes\nobreak\mu_2^*\cF_2)=\mu^*(\cF_1\hbboxtimes\cF_2)$, which is the desired statement.
\end{proof}

\subsection{Rescaling an \texorpdfstring{$\cR^\intt_\cX$}{RX}-module}
Let $\cM$ be a left $\cR^\intt_\cX$-module. Then the $\cO_{\taucX}(*\taucX_0)$-module $\mu^*\cM$ can be endowed with a natural action of $\cR_{\taucX}^\intt(*\tauX_0)$ in the following way: we set
\begin{equation}\label{eq:partiall}
\begin{aligned}
\hb(1\otimes m)=\hb\otimes m&=\tau\otimes\hb m=\tau(1\otimes\hb m),\\
\partiall_{x_i}(1\otimes m)&=\tau(1\otimes\partiall_{x_i}m)=\tau\otimes\partiall_{x_i}m,\\
\partiall_\tau(1\otimes m)&=-1\otimes\hb^2\partial_\hb m,\\
\hb^2\partial_\hb(1\otimes m)&=\tau(1\otimes\hb^2\partial_\hb m)=\tau\otimes\hb^2\partial_\hb m
\end{aligned}
\end{equation}
(the last equality also holds if we replace everywhere $\hb^2\partial_\hb$ with $\partial_\hb\hb^2$), and extend this action in the usual way. We denote this $\cR_{\taucX}^\intt(*\tauX_0)$-module by \index{$Mctau$@$\taucM$}$\taucM$, that we call the \index{rescaling!of an $\cR^\intt_\cX$-module}\emph{rescaling} of $\cM$. Similarly, any morphism $\lambda:\cM_1\to\cM_2$ can be rescaled as a morphism $\taulambda:\taucM_1\to\taucM_2$.

We also regard $\taucM$ as obtained by the composition of the pullback functor~$q^+$ of $\cR^\intt$-modules, the localization functor, and the sheaf-theoretic functor $\nu_*$ with the change of the action given by \eqref{eq:partiall}, which can also be interpreted as a pushforward~$\nu_+$ for $\cR_{\taucX}^\intt(*\tauX_0)$-modules.

The sheaf $\cR^\intt_{\taucX}(*\tauX_0)$ is coherent, equipped with the increasing filtration by locally free $\cO_{\taucX}$-submodules of finite rank obtained by bounding the order of operators together with the order of the pole along~$\tauX_0$. Accordingly, we have the notion of coherent (\resp good) $\cR^\intt_{\taucX}(*\tauX_0)$\nobreakdash-module, and similarly with the sheaf $\cR^\intt_{\taucX/\CC_\tau}(*\tauX_0)$ of relative operators.

On the other hand, recall that we set $\ccM=\Xi_{\DR}\cM$ (\cf Notation \ref{nota:XiDR}). If $\cM$ is $\cR_\cX$-coherent (\resp good), then $\ccM$ is $\cD_X$-coherent (\resp good).

\begin{lemme}\label{lem:relcoh}
Let $\cM$ be an $\cR^\intt_\cX$-module.
\begin{enumerate}
\item\label{lem:relcoh1}
For each $\tau_o\neq0$ we have a functorial identification $i_{\tau=\tau_o}^+\taucM=\mu_{\tau_o}^+\cM$ as $\cR_\cX$\nobreakdash-modules.
\item\label{lem:relcoh2}
If moreover $\cM$ is $\cR_\cX$-coherent (\resp $\cR_\cX$-good), then $\taucM$ is $\cR_{\taucX/\CC_\tau}(*\tauX_0)$-coherent (\resp $\cR_{\taucX/\CC_\tau}(*\tauX_0)$-good), hence $\cR_{\taucX}(*\tauX_0)$-coherent (\resp $\cR_{\taucX}(*\tauX_0)$-good).
\item\label{lem:relcoh4}
If moreover $\cM$ is strict, then for each $\tau_o\neq0$, $\taucM$ is \index{RXmodule@$\cR_\cX$-module!strictly $\RR$-specializable --!regular --}strictly specializable and regular along $\tau=\tau_o$ (\cf \cite[\S3.1.d]{Bibi01c}) and the corresponding $V$-filtration is given by $V_k\taucM=(\tau-\tau_o)^{\max(-k,0)}\taucM$ ($k\in\ZZ$).
\item\label{lem:relcoh3}
By the identification $i_{\tau=\hb}^*\cR_{\taucX/\CC_\tau}(*\tauX_0)=\cR_\cX[1/\hb]$ we have a functorial identification $i_{\tau=\hb}^*\taucM=\pi^{\circ*}\ccM$ as $\cR_\cX[1/\hb]$-modules, where on the right-hand side, $\partiall_{x_i}$ acts as $\hb\partial_{x_i}$. It is coherent as such if~$\cM$ is $\cR_\cX$-coherent. Moreover, the operator $\hb^2\partial_\hb+\tau\partiall_\tau$ sends $(\tau-\hb)\taucM$ into itself, and induces a $\pi^{-1}\cD_X$-linear endomorphism of $i_{\tau=\hb}^*\taucM$ which corresponds to the natural action of $\hb^2\partial_\hb$ on $\pi^{\circ*}\ccM$.
\end{enumerate}
\end{lemme}

\begin{proof}
Let us prove \eqref{lem:relcoh4}. Let us set $\tau'=\tau-\tau_o$. According to \eqref{eq:partiall} read in the variable~$\tau'$, the filtration $U_k\taucM$ defined by the right-hand side in \eqref{lem:relcoh4} satisfies all the properties characterizing the $V$-filtration since each $U_k\taucM$ is $\cR_{\taucX/\CC_\tau}(*\tauX_0)$-coherent by~\eqref{lem:relcoh2} and each $\gr_k^U\taucM$ is strict, by the strictness of $\cM$. We conclude that it is the $V$\nobreakdash-filtration of $\taucM$ along $\tau'=0$.
\end{proof}

\begin{remarque}[Side changing, integrability and rescaling]
We denote by $\omega_\cX$ the sheaf $\hb^{-d_X}\Omega^{d_X}_{\cX/\CC_\hb}$, with $d_X=\dim X$. It is a right $\cR_\cX$-module, that we enhance with a right $\cR^\intt_\cX$-module structure as follows: it is naturally equipped with a left action of $\hb^2\partial_\hb$ (even of a left action of $\hb\partial_\hb$); we transform it into a right action by transposition, by setting $\omega\cdot(\hb^2\partial_\hb):=-(\partial_\hb\hb^2)\omega=-\hb^2\partial_\hb\omega-2\hb\omega$. Then the side-changing functors for a left (\resp right) $\cR^\intt_\cX$-module $\cM$ (\resp $\cN$) are simply given by $\cM\mto\omega_\cX\otimes_{\cO_\cX}\cM$ and $\cN\mto\cHom_{\cO_\cX}(\omega_\cX,\cN)$ with the usual rules.

For a right $\cR^\intt_\cX$-module $\cM$, we set $\taucM=\mu^*\cM$ with right action given~by
\begin{align*}
(m\otimes1)\partiall_{x_i}&=(m\partiall_{x_i}\otimes1)\tau=m\partiall_{x_i}\otimes\tau,\\
(m\otimes1)\partiall_\tau&=-m\partial_\hb\hb^2\otimes1,\\
(m\otimes1)\hb^2\partial_\hb&=(m\hb^2\partial_\hb\otimes1)\tau=m\hb^2\partial_\hb\otimes\tau.
\end{align*}
(The last equality also holds if we replace everywhere $\hb^2\partial_\hb$ with $\partial_\hb\hb^2$.) We have ${}^\tau\!\omega_\cX=\omega_{\taucX/\CC_\tau}(*\taucX_0)$ with the right action specified by the formulas above. Then~${}^\tau\!\omega_\cX$ serves for the side-changing functors on $\taucX$. We end by noticing the compatibility for a left module $\cM$ and a right module $\cN$, in a functorial way in $\cM$ and~$\cN$:
\begin{align*}
{}^\tau\!(\omega_\cX\otimes_{\cO_\cX}\cM)&\simeq {}^\tau\!\omega_\cX\otimes_{\cO_{\taucX}(*\taucX_0)}\taucM,\\
{}^\tau\!\!\!\cHom_{\cO_\cX}(\omega_\cX,\cN)&\simeq\cHom_{\cO_{\taucX}(*\taucX_0)}({}^\tau\!\omega_\cX,\taucN).
\end{align*}
By definition, the equalities hold as $\cO_{\taucX}(*\taucX_0)$-modules, and the coincidence of the $\cR^\intt_{\taucX}(*\tauX_0)$\nobreakdash-actions is easily checked.
\end{remarque}

\subsection{Rescaling an integrable \texorpdfstring{$\iota$}{iota}-sesquilinear pairing \texorpdfstring{$\iC$}{iC}}\label{subsec:rescalingpairing}
Although the rescaling of an $\cR^\intt_\cX$-module can be defined meromorphically with \hbox{respect} to $\tau=0$, the rescaling of an integrable $\iota$-sesquilinear pairing is only \hbox{defined} on~$\thetacX$ in general. The question of its extendability as a moderate distribution along~$\taucX_0^\circ$ will be considered in Section \ref{sec:wellresc}.

Recall that we have defined in Section \ref{subsec:OmegacontinuousDb} the pullback with respect to $\mu$, that~is, the operation
\[
\mu^*:\mu^{-1}\Db_{\cX^\circ/\CC^*_\hb}\to\Db_{\thetacX^\circ/\CC^*_\theta\times\CC^*_\hb}\quad(\cX^\circ:=X\times\CC^*_\hb,\;\thetacX^\circ:=\thetaX\times\CC^*_\hb)
\]
attached to the map $\mu:\thetacX^\circ\to \cX^\circ$, that we write as $q\circ\nu^{-1}$. Note that, concerning the involution $\iota$, we have $\iota^+\thetacM={}^\theta\!(\iota^+\cM)$, according to the first line of \eqref{eq:partiall}. Let $\iC$ be an integrable pairing \eqref{eq:rescsesqui}. We define\index{$Citheta$@$\thetaiC$}\index{rescaling!of an integrable $\iota$-sesquilinear pairing}
\[
\thetaiC:\thetacM'_{|\thetacX^\circ}\otimes_\CC\iota^+\ov{\thetacM''}_{|\thetacX^\circ}\to\Db_{\thetacX^\circ/\CC^*_\hb}
\]
by $\cO_{\thetacX^\circ}\otimes_\CC\ov\cO_{\thetacX^\circ}$-linearity from the formula
\[
\thetaiC\bigl(1\otimes m',\iota^*(1\otimes\ov{m''})\bigr)=\mu^*\iC(m',\iota^*\ov{m''}),
\]
and by composing with the natural inclusion $\Db_{\thetacX^\circ/\CC^*_\theta\times\CC^*_\hb}\hto\Db_{\thetacX^\circ/\CC^*_\hb}$ (integrating with respect to $\theta$). One checks, by using Lemma \ref{lem:mustaru} and the relations \eqref{eq:partiall}, that~$\thetaiC$ is $\cD_{\thetacX^\circ,\ov\thetacX^\circ}$-linear. In such a way we get an object $\thetacT=(\thetacM',\thetacM'',\thetaiC)$ of $\RdiTriples(\thetaX)$.

\begin{lemme}\label{lem:thetavarphi}
Let $\lambda$ be a morphism in $\RdiTriples(X)$. Then $\thetalambda$ is a morphism in $\RdiTriples(\thetaX)$.
\end{lemme}

\begin{proof}
It amounts to proving that $\thetalambda$ is compatible with $\thetaiC_1,\thetaiC_2$, which is straightforward.
\end{proof}

On the other hand, since $\thetaiC$ takes values in $\Db_{\thetacX^\circ/\CC^*_\theta\times\CC^*_\hb}$, one can restrict it to $\theta=1$ and obtain a pairing $\cM'_{|\cX^\circ}\otimes_\CC\iota^+\ov{\cM''}_{|\cX^\circ}\to\Db_{\cX^\circ/\CC^*_\hb}$.

\begin{lemme}\label{lem:retrictioniC}
The restriction of $\thetaiC$ to $\theta=1$ is equal to $\iC$.
\end{lemme}

\begin{proof}
This immediate from the definition of $\mu^*$ as given in Section \ref{subsec:OmegacontinuousDb}.
\end{proof}

\begin{definition}[Rescaling]\label{def:rescalingRtriple}
The \index{rescaling functor}\emph{rescaling functor}
\[
\RdiTriples(X)\mto\RdiTriples(\thetaX)
\]
is defined by\index{$Tcitheta$@$\thetacT $}
\[
\icT=(\cM',\cM'',\iC)\mto\thetacT=(\thetacM',\thetacM'',\thetaiC).
\]
\end{definition}

As a consequence of Lemma \ref{lem:retrictioniC}, the functor ``restriction to $\theta=1$'' is a quasi-inverse to the rescaling functor.

\subsection{Behaviour of the rescaling with respect to standard functors}\label{subsec:behaviourtheta}

\skpt
\begin{lemme}[Adjunction, Tate twist by an integer, and rescaling]\label{lem:rescalingtateadj}
\begin{enumerate}
\item\label{lem:rescalingtateadj1}
Rescaling is compatible with adjunction, \ie $(\thetacT)^*={}^\theta\!(\icT^*)$.
\item\label{lem:rescalingtateadj2}
Rescaling is compatible with the Tate twist by an integer $k$, \ie
\[
\thetacT(k)={}^\theta\!(\icT(k)),\quad \forall k\in\ZZ.
\]
\end{enumerate}
\end{lemme}

\begin{proof}
The first assertion is obvious. For the second one, we use the formula
\[
\icT(k)=(\hb^{-k}\cM',\hb^k\cM'',\iC)
\]
(\cf\cite[\S2.1.8]{Mochizuki11}), and the identification ${}^\theta\!(\hb^k\cM)=\hb^k\theta^k\thetacM=\hb^k\thetacM$.
\end{proof}

\begin{proposition}[Pushforward and rescaling]\label{prop:rescalingimdir}
Let $\map:X\to Y$ be a proper morphism between complex manifolds, denote by \index{$Ftau$@$\taumap$}$\taumap:\tauX\to \tauY$ the induced morphism and let~$\thetamap$ its restriction to $\thetaX$. Let~$\icT$ be an object of $\RdiTriples(X)$ with good $\cR^\intt_\cX$-components $\cM',\cM''$ both denoted by $\cM$. Then the natural functorial morphisms
\begin{starequation}\label{eq:rescalingimdir1}
{}^\tau\!(\map_\dag^k\cM)\to\taumap_\dag^k\taucM,\quad
{}^\theta\!(\map_\dag^k\icT)\to\thetamap_\dag^k\thetacT
\end{starequation}%
are isomorphisms for all $k$. If $\cM$ is a good $\cR^\intt_\cX$-module and $\map_\dag^k\cM$ is strict for all $k$, then
\begin{starstarequation}\label{eq:rescalingimdir2}
i_{\tau=\hb}^*\taumap_\dag^k\taucM\simeq\pi^{\circ*}\map_\dag^k\ccM.
\end{starstarequation}%
\end{proposition}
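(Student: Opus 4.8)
The statement splits into two parts, and the plan is to prove them in the natural order: first the isomorphisms \eqref{eq:rescalingimdir1} comparing the rescaling of a pushforward with the pushforward of a rescaling, and then the compatibility \eqref{eq:rescalingimdir2} with the restriction $i_{\tau=\hb}^*$. For the first part I would use the interpretation of the rescaling functor given just after \eqref{eq:partiall}: $\taucM$ is obtained as the composite of the pullback functor $q^+$ of $\cR^\intt$-modules, the localization along $\taucX_0$, and the pushforward $\nu_+$ (the sheaf-theoretic $\nu_*$ with the twisted action). The morphism $q$ is a projection and $\nu$ is an isomorphism, so both commute with the proper pushforward $\map_\dag^k$ by base change (this is the analogue, for $\cR$-modules, of Proposition \ref{prop:basechangeR}, applied to the cartesian squares expressing that $q$ and $\nu$ are compatible with $\map$ and $\taumap$); localization along $\taucX_0=X\times\{0\}$, which is pulled back from the base $\CC_\tau$, also commutes with $\map_\dag^k$ since $\map$ is proper and the localization is flat. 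Goodness of $\cM$ is what makes all these pushforwards well-defined and coherent, and guarantees that the projection formula / base-change isomorphisms hold; chasing through the three steps then yields the first isomorphism in \eqref{eq:rescalingimdir1}. The second isomorphism there, at the level of $\iota$-triples, follows by adding the sesquilinear pairing: by definition $\thetaiC$ is built from $\iC$ via $\mu^*$ (Section \ref{subsec:rescalingpairing}), and the compatibility of the pushforward of a $\iota$-sesquilinear pairing with the operations $q^*$, $\nu^*$ on relative distributions (the constructions of Section \ref{subsec:OmegacontinuousDb}, especially the pushforward-by-a-proper-morphism paragraph) shows that the two constructions $\,{}^\theta\!(\map_\dag^k\icT)$ and $\thetamap_\dag^k\thetacT$ carry the same pairing. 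The verification that the $\cR^\intt$-module isomorphism is compatible with the pairings is then a routine check using Lemma \ref{lem:thetavarphi} and the linearity relations \eqref{eq:partiall}.

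For the second part, \eqref{eq:rescalingimdir2}, the strategy is to combine the first part with Lemma \ref{lem:relcoh}\eqref{lem:relcoh3}. By Lemma \ref{lem:relcoh}\eqref{lem:relcoh3} applied to $\map_\dag^k\ccM$, or rather applied on $Y$ to the module $\map_\dag^k\cM$ (which is strict by hypothesis), we have $i_{\tau=\hb}^*\,{}^\tau\!(\map_\dag^k\cM)\simeq\pi^{\circ*}\Xi_{\DR}(\map_\dag^k\cM)$; and since $\map_\dag^k\cM$ is strict, $\Xi_{\DR}$ commutes with $\map_\dag^k$ (preservation of strictness, as recalled in Remark \ref{rem:compfunctorshb1}), so the right-hand side is $\pi^{\circ*}\map_\dag^k\ccM$. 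Plugging in the isomorphism ${}^\tau\!(\map_\dag^k\cM)\simeq\taumap_\dag^k\taucM$ from \eqref{eq:rescalingimdir1} gives the claim. The point where strictness of $\map_\dag^k\cM$ is genuinely used is precisely here: it is needed both to invoke Lemma \ref{lem:relcoh}\eqref{lem:relcoh3} (so that $i_{\tau=\hb}^*$ behaves like the non-derived restriction, using Lemma \ref{lem:basicrescale}\eqref{lem:basicrescale5}) and to commute $\Xi_{\DR}$ past $\map_\dag^k$.

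The main obstacle I anticipate is in the first part: carefully justifying that the base-change isomorphisms $q^+\map_\dag^k\simeq\taumap_\dag^k q^+$ and $\nu_+\map_\dag^k\simeq\taumap_\dag^k\nu_+$ hold as stated, including at the level of the $\iota$-sesquilinear pairings, not just the underlying $\cR^\intt$-modules. For the module part this is standard $\cD$-module (here $\cR^\intt$-module) base change for a proper morphism, but one must track the meromorphic localization along $\taucX_0$ and the relative structure over $\CC_\tau$, and check that goodness is preserved at each step (it is, by Lemma \ref{lem:relcoh}\eqref{lem:relcoh2}). For the pairing, one must verify that integration along the fibres of $\map$ commutes with the pullback-with-respect-to-$\theta$ operation $\mu^*$ on relative distributions — essentially a Fubini-type statement relating $\int_\map$ and $\mu^*=\nu^*\circ q^*$, which follows from the explicit definitions in Section \ref{subsec:OmegacontinuousDb} (the pushforward-by-a-proper-morphism and the pullback-with-respect-to-$\Omega$ paragraphs) but requires some care with the direction of the maps and with which variables play the role of $\Omega$. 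Once these compatibilities are in place, the rest of the argument is formal.
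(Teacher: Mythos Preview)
Your proposal is correct and follows essentially the same route as the paper: decompose the rescaling functor as $\nu_*\circ(\text{localization along }\taucX_0)\circ q^+$ via \eqref{eq:mustar}, check that each step commutes with $\map_\dag^k$ (the paper phrases the $q^+$ step as compatibility with external tensor product by the trivial integrable triple on $\CC_\tau$, which is equivalent to your base-change formulation via Proposition \ref{prop:basechangeR}), and then deduce \eqref{eq:rescalingimdir2} by combining \eqref{eq:rescalingimdir1} with Lemma \ref{lem:relcoh}\eqref{lem:relcoh3} and strictness. One small correction: Lemma \ref{lem:relcoh}\eqref{lem:relcoh3} itself does not require strictness of $\map_\dag^k\cM$; strictness is used only at the last step, to identify $i_{\hb=1}^*\map_\dag^k\cM$ with $\map_\dag^k\ccM$.
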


\begin{proof}
For \eqref{eq:rescalingimdir1}, we use the decomposition \eqref{eq:mustar} and its analogue for $\cR^\intt$-modules. The compatibility of $\map_\dag^k$ with $q^+$ amounts to the compatibility with the external tensor product with the trivial integrable triple on $\CC_\tau$. The compatibility with localization is then clear, as well as that with $\nu_*$, which amounts to changing the actions as in \eqref{eq:partiall}. The argument for sesquilinear pairings is similar. Let us check \eqref{eq:rescalingimdir2}:
\begin{align*}
i_{\tau=\hb}^*\taumap_\dag^k\taucM&=i_{\tau=\hb}^*{}^\tau\!(\map_\dag^k\cM)\\
&=\pi^{\circ*}(i_{\hb=1}^*\map_\dag^k\cM)\quad\text{by \ref{lem:relcoh}\eqref{lem:relcoh3}}\\
&=\pi^{\circ*}\map_\dag^k\ccM\quad\text{by strictness of $\map_\dag^k\cM$ for all $k$.}\qedhere
\end{align*}
\end{proof}

\pagebreak[2]
The following proposition is mostly obvious.\vspace*{-3pt}

\begin{proposition}[Smooth pullback and rescaling]\label{prop:rescalingpullback}
Let $\mapsm:X\to Y$ be a smooth morphism, let $\taumapsm:\tauX\to \tauY$ be the induced (smooth) morphism, and let $\thetamapsm$ be its restriction to $\thetaX$. Let~$\icT$ be an object of $\RdiTriples(Y)$ and $\cM=\cM',\cM''$. Then there is a natural functorial isomorphism\vspace*{-3pt}
\begin{starequation}\label{eq:rescalingpullback1}
{}^\tau\!(\mapsm^+\cM)\simeq\taumapsm^+\taucM.
,\quad{}^\theta\!(\mapsm^+\icT)\simeq\thetamapsm^+\thetacT.
\end{starequation}%
Moreover, if $\cM$ is an $\cR^\intt_\cY$-module\vspace*{-3pt}
\begin{starstarequation}\label{eq:rescalingpullback2}
i_{\tau=\hb}^*\taumapsm^+\taucM\simeq\pi^{\circ*}\mapsm^+\ccM.
\end{starstarequation}%
\end{proposition}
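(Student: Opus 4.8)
The plan is to follow exactly the pattern used in the proof of Proposition \ref{prop:rescalingimdir}, reducing everything to the decomposition $\mu^* = \nu_* \circ (\cO_{\taucX}(*\taucX_0)\otimes\cbbullet)\circ q^*$ of \eqref{eq:mustar} together with its evident analogue for $\cR^\intt$-modules (and the analogous decomposition $\mu^\cbbullet = \nu_*\circ q^{-1}$ for distributions appearing in Section \ref{subsec:rescaling} and Section \ref{subsec:OmegacontinuousDb}). First I would treat the $\cR^\intt$-module statement \eqref{eq:rescalingpullback1}. The pullback $\taumapsm^+$ and $\thetamapsm^+$ on the $\cR^\intt$-module part is, by definition, the $\cD$-module type pullback along a smooth morphism, and this commutes with each of the three operations composing $\mu^*$: it commutes with $q^*$ (pullback by the projection $q$, equivalently external tensor product with the trivial structure on $\CC_\tau$, since smooth pullback is $\cO$-local and flat), it commutes with localization $\cO(*\taucX_0)\otimes\cbbullet$ (because $\taumapsm^{-1}(\tauX_0)=\tauX_0$ for the relevant fibre product and localization is transparent to the pullback), and it commutes with $\nu_*$, the isomorphism $\thetacX\isom\thetacX$, provided one carries along the modified $\cR^\intt_{\taucX}(*\tauX_0)$-action \eqref{eq:partiall}; checking that $\taumapsm^+$ respects that action is the routine verification that the formulas \eqref{eq:partiall} are natural in $\cM$. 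Composing these three compatibilities gives the functorial isomorphism ${}^\tau\!(\mapsm^+\cM)\simeq\taumapsm^+\taucM$, and restricting to $\theta=1$ (equivalently $\tau=1$) via the quasi-inverse to rescaling gives ${}^\theta\!(\mapsm^+\icT)\simeq\thetamapsm^+\thetacT$ on the module level.

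Next I would handle the $\iota$-sesquilinear pairing. By Definition \ref{def:rescalingRtriple} the pairing $\thetaiC$ is obtained from $\iC$ by applying $\mu^*$ to distributions (in the sense of Section \ref{subsec:OmegacontinuousDb}) and then composing with the inclusion $\Db_{\thetacX^\circ/\CC^*_\theta\times\CC^*_\hb}\hto\Db_{\thetacX^\circ/\CC^*_\hb}$. The smooth pullback of $\iota$-sesquilinear pairings, as set up in Definition \ref{def:pullbacksmooth}, is defined via integration along the fibres of $\mapsm$, which commutes with the pullback $\mu^*$ with respect to $\Omega=\CC^*_\theta\times\CC^*_\hb$ (the fibre variables of $\mapsm$ and the $\theta,\hb$ variables are independent, so Fubini applies); this is the analogue, for pairings, of the module computation. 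Hence $\taumapsm^+\thetaiC = {}^\theta\!(\mapsm^+\iC)$ up to the canonical identifications, and together with the module part this yields the triple isomorphism ${}^\theta\!(\mapsm^+\icT)\simeq\thetamapsm^+\thetacT$.

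Finally, for \eqref{eq:rescalingpullback2}, I would compute directly as in the proof of \eqref{eq:rescalingimdir2}, using the case just proved plus Lemma \ref{lem:relcoh}\eqref{lem:relcoh3}:
\[
i_{\tau=\hb}^*\taumapsm^+\taucM = i_{\tau=\hb}^*\,{}^\tau\!(\mapsm^+\cM) = \pi^{\circ*}\bigl(i_{\hb=1}^*\mapsm^+\cM\bigr) = \pi^{\circ*}\mapsm^+\ccM,
\]
where the middle equality is Lemma \ref{lem:relcoh}\eqref{lem:relcoh3} and the last uses that $i_{\hb=1}^*$ commutes with the smooth pullback $\mapsm^+$ (no strictness hypothesis is even needed here, since smooth pullback is exact and $\cO$-flat, which is why the proposition is "mostly obvious"). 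I expect no serious obstacle: the only mildly delicate point is the bookkeeping of the modified action \eqref{eq:partiall} under $\nu_*$ and the verification that fibre integration for the pairing commutes with $\mu^*$ with respect to the $\Omega$-variables, both of which are straightforward once the decomposition of $\mu^*$ is in hand.
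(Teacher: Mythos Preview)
Your proposal is correct and follows the natural approach; the paper itself gives no proof beyond the remark ``The following proposition is mostly obvious'' preceding the statement. Your detailed verification via the decomposition \eqref{eq:mustar}, mirroring the proof of Proposition~\ref{prop:rescalingimdir}, together with the direct computation of \eqref{eq:rescalingpullback2} using Lemma~\ref{lem:relcoh}\eqref{lem:relcoh3}, is exactly what the paper leaves implicit.
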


By using Lemma \ref{lem:basicrescale}\eqref{lem:basicrescale6}, we also obtain the following result.\vspace*{-3pt}

\begin{proposition}[External product and rescaling]\label{prop:rescalingexttens}
For $i=1,2$, let $\icT_i$ be an object of $\RdiTriples(X_i)$ with strict $\cR^\intt_{\cX_i}$\nobreakdash-components. Then $\tau_1-\tau_2$ acts injectively on $\,{}^{\tau_1}\mkern-9mu \cM_1\hbboxtimes\nobreak{}^{\tau_2}\mkern-9mu \cM_2$ ($\cM_i=\cM'_i$ \resp $\cM''_i$), and there is a natural bi-functorial isomorphism\vspace*{-3pt}
\begin{starequation}\label{eq:rescalinghbboxtimes1}
{}^\tau\!(\cM_1\hbboxtimes\cM_2)\simeq i^+_{\tau_1=\tau_2}({}^{\tau_1}\mkern-9mu\cM_1\hbboxtimes{}^{\tau_2}\mkern-9mu\cM_2),\quad{}^\theta\!(\icT_1\hbboxtimes\icT_2)\simeq i^+_{\theta_1=\theta_2}({}^{\theta_1}\mkern-9mu\icT_1\hbboxtimes{}^{\theta_2}\mkern-9mu\icT_2).\vspace*{-.2\baselineskip}
\end{starequation}%
Moreover,\vspace*{-.3\baselineskip}%
\begin{starstarequation}\label{eq:rescalinghbboxtimes2}
i_{\tau=\hb}^*{}^\tau\!(\cM_1\hbboxtimes\cM_2)=\pi^{\circ*}(\ccM_1\boxtimes\ccM_2).
\end{starstarequation}%
\end{proposition}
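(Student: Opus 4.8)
The statement concerns the external product of two rescaled objects. The proof naturally splits into three parts, in increasing order of difficulty: the injectivity of $\tau_1-\tau_2$, the isomorphism \eqref{eq:rescalinghbboxtimes1}, and the restriction formula \eqref{eq:rescalinghbboxtimes2}. The whole argument is an application of Lemma~\ref{lem:basicrescale}, combined with the observation that the $\cR^\intt$-module structures and the $\iota$-sesquilinear pairings are built from the underlying $\cO$-module structures by the functorial recipes of Sections \ref{subsec:rescaling}--\ref{subsec:rescalingpairing}; everything reduces to checking that the relevant identifications at the level of $\cO$-modules are compatible with these extra structures, which is a routine (if tedious) verification.

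\emph{First step: injectivity and the module isomorphism.} On $\cR^\intt$-modules, recall from the definition of $\hbboxtimes$ (Definition~\ref{def:externalproduct}) that $\cM_1\hbboxtimes\cM_2$ is obtained from the external tensor product over $\cO_{\CC_\hb}$ by tensoring with $\cR_\cZ$ (with $Z=X_1\times X_2$), and similarly the rescaled module $\,{}^{\tau_1}\mkern-9mu\cM_1\hbboxtimes{}^{\tau_2}\mkern-9mu\cM_2$ is defined over $\cZ\times\CC_{\tau_1}\times\CC_{\tau_2}$ with a double pole along $\{\tau_1=0\}\cup\{\tau_2=0\}$. Since the $\cM_i$ are strict, so are $\,{}^{\tau_i}\mkern-9mu\cM_i$ by Lemma~\ref{lem:basicrescale}\eqref{lem:basicrescale3}, so the external tensor product is strict and agrees with the derived one. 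The injectivity of $\tau_1-\tau_2$ and the identification $i^+_{\tau_1=\tau_2}(\,{}^{\tau_1}\mkern-9mu\cM_1\hbboxtimes{}^{\tau_2}\mkern-9mu\cM_2)\simeq{}^\tau\!(\cM_1\hbboxtimes\cM_2)$ then follow at the level of $\cO$-modules directly from Lemma~\ref{lem:basicrescale}\eqref{lem:basicrescale6} applied to the underlying $\cO$-modules. It remains to check that this $\cO$-linear identification is $\cR^\intt$-linear; this is done by comparing, via the formulas \eqref{eq:partiall}, the action of $\partiall_{x}$ (for $x$ a coordinate on either factor), of $\hb^2\partial_\hb$, and of $\partiall_{\tau_1},\partiall_{\tau_2}$ restricted to the diagonal $\tau_1=\tau_2=\tau$, which one sees produces exactly the relations \eqref{eq:partiall} for $\,{}^\tau\!(\cM_1\hbboxtimes\cM_2)$ together with the Leibniz rule used in Definition~\ref{def:externalproduct}. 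The $\theta$-version for the triples then follows by adding the identification of the $\iota$-sesquilinear pairings: by the definition of $\thetaiC$ in Section~\ref{subsec:rescalingpairing} and the relative external tensor product of relative distributions from Section~\ref{subsec:OmegacontinuousDb}, both sides of the second isomorphism in \eqref{eq:rescalinghbboxtimes1} compute, on local sections $1\otimes(m'_1\boxtimes m'_2)$, the distribution $\mu^*\iC_1(m'_1,\iota^*\ov{m''_1})\hbboxtimes\mu^*\iC_2(m'_2,\iota^*\ov{m''_2})$ restricted to $\theta_1=\theta_2$, and these agree because pullback by $\mu$ and the external tensor product of distributions commute (Lemma~\ref{lem:mustaru} and the construction of $u\hbboxtimes v$).

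\emph{Second step: the restriction formula \eqref{eq:rescalinghbboxtimes2}.} Here I would use Lemma~\ref{lem:relcoh}\eqref{lem:relcoh3}, which gives $i_{\tau=\hb}^*\taucM\simeq\pi^{\circ*}\ccM$ for a single strict $\cR^\intt_\cX$-module $\cM$. Applying this to $\cM=\cM_1\hbboxtimes\cM_2$ (which is strict, being an external tensor product of strict modules) gives $i_{\tau=\hb}^*{}^\tau\!(\cM_1\hbboxtimes\cM_2)\simeq\pi^{\circ*}\Xi_{\DR}(\cM_1\hbboxtimes\cM_2)$. It then remains to identify $\Xi_{\DR}(\cM_1\hbboxtimes\cM_2)=\bigl((\cM_1\hbboxtimes\cM_2)/(\hb-1)\bigr)$ with $\ccM_1\boxtimes\ccM_2=(\cM_1/(\hb-1))\boxtimes(\cM_2/(\hb-1))$; by strictness of $\cM_1,\cM_2$ this is the standard commutation of the (underived) restriction $\hb=1$ with the external tensor product $\hbboxtimes$, and it matches the $\cD$-module external product $\boxtimes$ under the identification $\partiall_x\mapsto\hb\partial_x$ at $\hb=1$. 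Alternatively, one can deduce \eqref{eq:rescalinghbboxtimes2} directly from the first isomorphism in \eqref{eq:rescalinghbboxtimes1} together with Lemma~\ref{lem:basicrescale}\eqref{lem:basicrescale5}--\eqref{lem:basicrescale6}, exactly mirroring the proof of the analogous formula \eqref{eq:rescalingimdir2} in Proposition~\ref{prop:rescalingimdir}.

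\emph{Main obstacle.} None of the three steps presents a genuine difficulty: the content is entirely in Lemma~\ref{lem:basicrescale}, which has already been established. The only slightly delicate point is bookkeeping in the first step — verifying that the $\cO$-linear isomorphism supplied by Lemma~\ref{lem:basicrescale}\eqref{lem:basicrescale6} intertwines the two $\cR^\intt$-actions and the two $\iota$-sesquilinear pairings, in particular getting the interaction between the Leibniz rule for $\hbboxtimes$ (Definition~\ref{def:externalproduct}) and the rescaling formulas \eqref{eq:partiall} exactly right, and checking that restricting the ambient distribution on $\CC_{\theta_1}\times\CC_{\theta_2}$ to the diagonal $\theta_1=\theta_2$ is compatible with the integration over $\theta$ used to define $\thetaiC$. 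This is routine and I would only indicate it, referring to the parallel arguments in Propositions~\ref{prop:rescalingimdir} and \ref{prop:rescalingpullback} and to the general properties of relative distributions in Section~\ref{subsec:OmegacontinuousDb}, rather than spelling it out.
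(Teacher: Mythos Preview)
Your proposal is correct and follows exactly the paper's approach: the paper simply states the proposition as obtained ``by using Lemma~\ref{lem:basicrescale}\eqref{lem:basicrescale6}'', leaving the routine compatibility checks with the $\cR^\intt$-actions \eqref{eq:partiall} and the $\iota$-sesquilinear pairings implicit. Your write-up merely spells out these verifications (and the use of Lemma~\ref{lem:relcoh}\eqref{lem:relcoh3} for \eqref{eq:rescalinghbboxtimes2}) in more detail than the paper does.
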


\subsubsection*{Specialization, localization an maximalization along a hypersurface}
Let $\fun:X\to\CC$ be a holomorphic function and let $\cM$ be an $\cR_\cX^\intt$-module.

\skpt
\begin{proposition}[Strict specializability and rescaling]\label{prop:Vtheta}
\begin{enumerate}
\item\label{prop:Vtheta1}
Assume that $\cM$ is strictly specializable along $\{\fun=0\}$. Then $\taucM$ is so along $\fun\circ p=0$ and, for any $\beta\in\RR$, we have\vspace*{-3pt}\enlargethispage{\baselineskip}%
\begin{starequation}\label{eq:thetapsi}
\psi_{\fun\circ p,\beta}\taucM={}^\tau\!(\psi_{\fun,\beta}\cM).
\end{starequation}%
Moreover, the nilpotent endomorphism ${}^\tau\rN$ on the left-hand side is induced by $\tau\otimes\rN$ on the right-hand side.

\item\label{prop:Vtheta2}
If moreover $\fun$ is a projection $X=X_0\times\CC_t\to\CC_t$, the $V$-filtrations are related~by\vspace*{-3pt}
\begin{starstarequation}\label{eq:thetaV}
V_\beta\taucM={}^\tau\!(V_\beta\cM).
\end{starstarequation}%

\item\label{prop:Vtheta3}
If $\iC$ is a $\iota$-sesquilinear pairing between strictly specializable $\cR_\cX^\intt$-modules, then, under the isomorphism \eqref{eq:thetapsi}, we have $\psi_{\fun\circ p,\beta}(\thetaiC)={}^\theta\!(\psi_{\fun,\beta}\iC)$.
\end{enumerate}
\end{proposition}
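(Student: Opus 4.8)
\textbf{Plan of proof for Proposition \ref{prop:Vtheta}.}

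The plan is to reduce each of the three statements to the local computations \eqref{eq:partiall}, exploiting the factorization $\mu = q \circ \nu^{-1}$ together with the fact that $\mu^*$ is built from the pullback $q^+$ along the projection $q$, the localization functor $(*\tauX_0)$, and the identification $\nu_*$ that merely changes the $\cR^\intt$-action. For \eqref{prop:Vtheta2} I would first treat the model case where $\fun$ is the coordinate projection $X = X_0\times\CC_t \to \CC_t$. Here the $V$-filtration along $t=0$ is defined by the standard characteristic properties (coherence over $V_0\cR^\intt_\cX$, the behaviour of $t$ and $t\partiall_t$, strictness of the graded pieces — using that $\cM$ is strictly specializable). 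I would set $U_\beta\taucM := {}^\tau\!(V_\beta\cM) = \mu^*(V_\beta\cM)$, which makes sense since $\mu^* = \mu^\star(*\tauX_0)$ is exact on $\cO_\cX$-modules by Lemma \ref{lem:basicrescale}\eqref{lem:basicrescale1}. The key point is that $q^+$ and $\nu_*$ are exact and compatible with the $V_0$-coherence (the function $t$ is pulled back from $X$ and commutes with $p$, and $\fun\circ p = t\circ p$), while the $\cR^\intt$-action rules \eqref{eq:partiall} show that $t\partiall_t$ on $\taucM$ corresponds to $t\partiall_t$ on $\cM$ up to the factor $\tau$, which is invertible on $\taucX_0$-localized modules. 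Hence $U_\bbullet\taucM$ satisfies the characteristic properties of the $V$-filtration of $\taucM$ along $\fun\circ p = 0$, so by uniqueness it equals $V_\bbullet\taucM$. This also immediately gives $\psi_{\fun\circ p,\beta}\taucM = \gr^U_\beta\taucM = {}^\tau\!(\gr^V_\beta\cM) = {}^\tau\!(\psi_{\fun,\beta}\cM)$ in this model case, with the nilpotent part transported accordingly because $\partiall_t t$ acts as $\tau\otimes(\partiall_t t)$.

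For the general statement \eqref{prop:Vtheta1}, where $\fun$ is an arbitrary holomorphic function, I would pass to the graph embedding $i_\fun : X \hto X_\fun = X\times\CC_t$ (Notation \ref{nota:maps}) and use that strict specializability along $\fun=0$ is, by definition, strict specializability of $\cM_\fun = i_{\fun+}\cM$ along $t=0$, and that $\psi_{\fun,\beta}\cM = \psi_{t,\beta}\cM_\fun$. Since rescaling is compatible with pushforward by a (proper, hence here closed) morphism — this is Proposition \ref{prop:rescalingimdir}, whose proof I may invoke, giving ${}^\tau\!(i_{\fun+}\cM) = (i_\fun\times\id)_+\,\taucM$ on $\taucX_\fun$ — the general case follows from the model case applied to $t$ on $X_\fun$, after checking that the graph embedding of $\fun\circ p : \tauX \to \CC$ is compatible with $\mu$. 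This last compatibility is a direct diagram chase: $\fun\circ p$ factors as $p$ followed by $\fun$, and $p$ commutes with the rescaling construction since $p$ is pulled back to $\taucX$.

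Finally, \eqref{prop:Vtheta3} is obtained by applying the compatibility of $\psi_{\fun,\beta}$ with sesquilinear pairings — in the $\iota$-sesquilinear integrable setting this is Remark \ref{rem:speiota}, which transports \cite[Prop.\,3.6.4]{Bibi01c} — to the pairing $\thetaiC$, whose definition is $\cO_{\thetacX^\circ}\otimes\ov\cO_{\thetacX^\circ}$-linearly generated by $\mu^*\iC$. Concretely, $\psi_{\fun\circ p,\beta}(\thetaiC)$ is computed from the $V$-filtrations of $\thetacM',\thetacM''$, which by \eqref{prop:Vtheta1} restricted to $\thetacX$ are the rescalings of those of $\cM',\cM''$; restricting $\mu^*\iC$ to the relevant graded pieces and using Lemma \ref{lem:mustaru} for the behaviour of $\mu^*$ under the vector field $t\partial_t$ yields that $\psi_{\fun\circ p,\beta}(\thetaiC)$ is the rescaling ${}^\theta\!(\psi_{\fun,\beta}\iC)$ of the specialized pairing. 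The main obstacle I anticipate is the bookkeeping in \eqref{prop:Vtheta1}: one must verify that the $U$-filtration I wrote down genuinely satisfies \emph{all} the defining properties of the Kashiwara–Malgrange $V$-filtration after rescaling — in particular the local finiteness / coherence conditions over $V_0\cR^\intt_{\taucX}(*\tauX_0)$ and the correct shift in indices induced by using $t\partiall_t$ rather than $\partiall_t t$ (the convention noted in Section \ref{subsec:shortreminder}) — and to confirm that the extra meromorphy along $\tauX_0$ does not disturb strictness of the graded pieces, which is where the hypothesis that $\cM$ is strict (entering via strict specializability) is essential.
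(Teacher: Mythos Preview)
Your plan for \eqref{prop:Vtheta1} and \eqref{prop:Vtheta2} is essentially the paper's own argument: reduce via the graph inclusion (Proposition \ref{prop:rescalingimdir}) to the projection case, set $U_\beta\taucM:={}^\tau\!(V_\beta\cM)$, and verify the characterizing properties of the $V$-filtration using \eqref{eq:partiall}. The paper isolates the single identity that makes everything work, namely
\[
(t\partiall_t+\beta\hb)(1\otimes m)=\tau\bigl(1\otimes(t\partiall_t+\beta\hb)m\bigr),
\]
which simultaneously handles the nilpotency of $t\partiall_t+\beta\hb$ on $\gr_\beta^U$ and the description of the nilpotent endomorphism as $\tau\otimes\rN$; you have this implicitly in ``$t\partiall_t$ corresponds to $t\partiall_t$ up to the factor $\tau$'', but it is worth writing down, since it also explains why the real indexing (coming from integrability via Proposition \ref{prop:speint}\eqref{prop:speint2a}) is preserved. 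Strictness of $\gr_\beta^U\taucM$ then follows from flatness of $\cO_{\taucX}(*\taucX_0)$ over $\cO_\cX$, exactly as you anticipate.

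For \eqref{prop:Vtheta3} your outline is in the right direction but glosses over the actual definition of $\psi_{\fun,\beta}\iC$: it is \emph{not} obtained by restricting $\iC$ to $V$-graded pieces, but rather (following \cite[(3.6.10)]{Bibi01c}) as the residue at $s=-\beta$ of the meromorphic family of distributions $\langle|t|^{2s}\iC(m',\ov{m''}),\,\cbbullet\wedge\chi(t)\itwopi\rd t\wedge\rd\ov t\rangle$. The paper's argument is then a one-line check: applying $\mu^*$ to this family yields the corresponding family for $\thetaiC$, the poles are at the same real $\beta$'s (since the pole locations $\beta\hb/\hb=\beta$ are independent of $\hb$), and hence the residues correspond under $\mu^*$. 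Your invocation of Lemma \ref{lem:mustaru} is not what is needed here; the point is the compatibility of $\mu^*$ with the Mellin-type construction, not with the action of $t\partial_t$.
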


\begin{proof}
Let $i_\fun:X\hto X\times\CC_t$ denote the graph inclusion. Note that, by Proposition \ref{prop:rescalingimdir}, we have ${}^\tau\!(\cM_\fun)={}^\tau\!i_{\fun+}\taucM$, so we can assume, for the proof, that $\fun$ is a projection, as \eqref{prop:Vtheta2} that we start proving.

Recall that, due to integrability, the roots of the Bernstein polynomials involved with $\cM$ are of the form $\beta\hb$ with $\beta\in\RR$ (\cf Proposition \ref{prop:speint}\eqref{prop:speint2a}). If $m$ is a local section of $\cM$, then $(t\partiall_t+\beta\hb)(1\otimes m)=\tau(1\otimes(t\partiall_t+\beta\hb)m)$. The filtration $U_\bbullet\taucM$ defined by the right-hand side of \eqref{eq:thetaV} is a good $V$-filtration (easy), and $(t\partiall_t+\beta\hb)$ is nilpotent on $\gr_\beta^U\taucM$. Moreover, by flatness of $\cO_{\taucX}(*\taucX_0)$ over $\cO_\cX$, $\gr_\beta^U\taucM={}^\tau\!(\gr_\beta^V\cM)$. We conclude that $U_\bbullet\taucM$ satisfies the properties characterizing the $V$\nobreakdash-filtration along $\fun=0$, so it is equal to it, and $\taucM$ is strictly specializable along $\fun\circ p=0$. Now, \eqref{prop:Vtheta1} is clear.

For \eqref{prop:Vtheta3}, we keep the setting as in \eqref{prop:Vtheta2}. Following the definition (\cf\cite[(3.6.10)]{Bibi01c}), we first note that, if $\chi(t)$ is a test function near the origin in $\CC_t$ such that $\chi\equiv1$ near $t=0$, $\mu^*$ of the meromorphic (with respect to the new variable $s$) distribution\vspace*{-3pt}
\[
\langle|t|^{2s}\iC(m',\ov{m''}),\cbbullet\wedge\chi(t)\itwopi\rd t\wedge\rd\ov t\rangle
\]
is the meromorphic distribution
\[
\langle|t|^{2s}(\mu^*\iC)(m'\otimes1,\ov{m''\otimes1}),\cbbullet\wedge\chi(t)\itwopi\rd t\wedge\rd\ov t\rangle.
\]
Since the poles of the former meromorphic distribution are independent of $\hb$ (they are of the form $\beta\hb/\hb=\beta$), they also are the poles of the latter, and the polar coefficients, among which the residue, correspond by $\mu^*$. This enables us to conclude that $\psi_{\fun\circ p,\beta}(\thetaiC)={}^\theta\!(\psi_{\fun,\beta}\iC)$.
\end{proof}

Let $H$ (\resp $\tauH$) be the divisor of $\fun$ (\resp $\fun\circ p$, \ie $\tauH=H\times\CC_\tau$). If $\cM$ is an integrable $\cR_\cX$-module, then its ``stupid'' localization $\cM(*H)$ is so. Recall that, if $\cM(*H)$ is strictly specializable along $H$, we say that it is localizable along $H$ if the modules $\cM[*H]$ and $\cM[!H]$ exist (\cf\cite[\S3.3]{Mochizuki11}) and that they exist if $\fun$ is a projection.

\skpt
\begin{proposition}[Localization and rescaling]\label{prop:starHtheta}
\begin{enumerate}
\item\label{prop:starHtheta1}
Assume that $\cM$ is strictly specializable and localizable along $H$. Then so is $\taucM$ along $\tauH$ and we have
\[
\taucM[\star\tauH]={}^\tau\!(\cM[\star H])\qquad(\star=*,!).
\]
\item\label{prop:starHtheta2}
If $\iC$ is a $\iota$-sesquilinear pairing between strictly specializable and localizable $\cR_\cX^\intt$-modules, then $\thetaiC[\star\thetaH]={}^\theta\!(\iC[\star H])$.
\end{enumerate}
\end{proposition}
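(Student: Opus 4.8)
The plan is to reduce both statements to the corresponding statements about $V$-filtrations proved in Proposition \ref{prop:Vtheta}, exactly as the definition of $[\star H]$ is built from the $V$-filtration along a projection. First I would treat the case where $\fun$ is a projection $X=X_0\times\CC_t\to\CC_t$, so that $H=X_0\times\{0\}$ and $\tauH=X_0\times\{0\}\times\CC_\tau$. In that case, recall (\cf \cite[\S3.1.2]{Mochizuki11}) that $\cM[*H]$ is obtained from $\cM(*H)$ by setting $\cM[*H]=\cR_{\cX}\cdot(V_0\cM(*H))$ inside $\cM(*H)$, and $\cM[!H]$ is obtained by a dual recipe using the image of $t:V_1\to V_0$ (equivalently via the Beilinson-type maps on $V_0$). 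By Proposition \ref{prop:Vtheta}\eqref{prop:Vtheta2} applied to $\cM(*H)$ (which is still strictly specializable, with $V_\beta\taucM(*\tauH)={}^\tau\!(V_\beta\cM(*H))$ by flatness of $\cO_{\taucX}(*\taucX_0)$ over $\cO_\cX$), the rescaling functor $\cbbullet\mapsto{}^\tau\!(\cbbullet)$ commutes with the $V$-filtration, with the action of $t$, and with multiplication by sections of $\cR_\cX$ (using the relations \eqref{eq:partiall}). Hence it commutes with the explicit formulas defining $[*H]$ and $[!H]$, which gives $\taucM[\star\tauH]={}^\tau\!(\cM[\star H])$; in particular $\taucM$ is strictly specializable and localizable along $\tauH$.

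For the general case of a holomorphic function $\fun$, I would use the graph embedding $i_\fun:X\hookrightarrow X\times\CC_t=X_\fun$ and Notation \ref{nota:maps}. By definition, $\cM$ is localizable along $H$ precisely when $\cM_\fun=i_{\fun+}\cM$ is localizable along $H_\fun=X\times\{0\}$, and $(\cM[\star H])_\fun=\cM_\fun[\star H_\fun]$. Now Proposition \ref{prop:rescalingimdir}, applied to the proper morphism $i_\fun$, gives ${}^\tau\!(\cM_\fun)={}^\tau\!(i_{\fun+}\cM)=(i_\fun)^\tau_\dag\,\taucM={}^\tau\!(\cM)_{\fun\circ p}$ (the graph embedding of $\fun\circ p$ being $i_\fun\times\id_{\CC_\tau}$). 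Combining this with the projection case applied to $\taucM_{\fun\circ p}$ along $H_\fun\times\CC_\tau$, and pulling back along the closed embedding $i_\fun$ (which is compatible with rescaling by Proposition \ref{prop:rescalingimdir} and with $[\star H_\fun]$ by Kashiwara's equivalence), one obtains $\taucM[\star\tauH]={}^\tau\!(\cM[\star H])$ in general. This proves \eqref{prop:starHtheta1}.

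For \eqref{prop:starHtheta2}, the $\iota$-sesquilinear pairing $\iC[\star H]$ is defined (as in the $\sigma$-case, \cf Remark \ref{rem:lociota} and \cite[\S3.6\,\&\,7.3.b]{Bibi01c}, \cite[\S3.2]{Mochizuki11}) by restricting $\iC$ to the localized modules through the canonical maps on $V$-filtrations; since strict specializability is a condition on the $\cR^\intt$-modules alone and the pairing $\iC$ automatically inherits the localization (the integrable $\iota$-sesquilinear pairing being automatically localizable, as in \cite[Prop.\,3.2.1]{Mochizuki11}), it suffices to check that the rescaling of $\iC$ is compatible with these canonical restriction maps. This follows from Proposition \ref{prop:Vtheta}\eqref{prop:Vtheta3}: rescaling commutes with $\psi_{\fun,\beta}$ on pairings, the poles in $s$ of the relevant meromorphic families $\langle|t|^{2s}\iC(m',\ov{m''}),\cbbullet\rangle$ are of the form $\beta$ (independent of $\hb$, hence unchanged by $\mu^*$), and the polar coefficients correspond under $\mu^*$. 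Passing from $\iC$ on $\cX^\circ$ to $\thetaiC$ on $\thetacX^\circ$ and invoking the uniqueness provided by Lemma \ref{lem:iCSdec} (or simply the functoriality of the construction on $\thetaX$) gives $\thetaiC[\star\thetaH]={}^\theta\!(\iC[\star H])$.

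\textbf{Main obstacle.}
The routine bookkeeping is easy; the one genuinely delicate point is the behaviour of the $\iota$-sesquilinear pairing $\iC[\star H]$ under rescaling when $\iC$ involves the residue/principal-part construction along $\{t=0\}$, i.e.\ making precise that the meromorphic continuation in $s$ of the rescaled family of distributions has poles at the \emph{same} real points as before and that taking residues commutes with $\mu^*$. The subtlety is that $\mu^*$ is only defined on $\thetacX^\circ$ and the extendability of $\mu^*\iC$ as a moderate distribution along $\taucX^\circ_0$ is not available here (it is the subject of Section \ref{sec:wellresc}); but for the statement at hand we only need the identity on $\thetacX^\circ$, so the argument of Proposition \ref{prop:Vtheta}\eqref{prop:Vtheta3}, carried out with values in $\Db_{\thetacX^\circ/\CC^*_\theta\times\CC^*_\hb}$, suffices. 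Spelling this out carefully, rather than any conceptual difficulty, is where the work lies.
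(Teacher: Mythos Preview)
Your argument for \eqref{prop:starHtheta1} is essentially the paper's: first treat the projection case via the explicit description of $[\star H]$ in terms of the $V$-filtration (Proposition \ref{prop:Vtheta}\eqref{prop:Vtheta2}), then reduce the general case to it via the graph embedding and Proposition \ref{prop:rescalingimdir}.

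For \eqref{prop:starHtheta2}, the paper takes a shorter route than you do. Rather than invoking the nearby-cycle pairing $\psi_{\fun,\beta}\iC$ and the residue construction, the paper observes (via \eqref{eq:thetaV} and the proof of \cite[Prop.\,3.2.1]{Mochizuki11}) that $\iC[\star H]$ is determined by the ``stupid'' localized pairing $\iC(*H)$ together with the $V$-filtration on the underlying modules. Since the $V$-filtration is already known to commute with rescaling, it suffices to check $\thetaiC(*\thetaH)={}^\theta\!(\iC(*H))$. This is simply the assertion that if $\iC$ takes values in moderate distributions along $H$, then $\mu^*\iC$ takes values in moderate distributions along $\thetaH$, which is immediate from the identification $\Db^{\modH}_{(X\moins H)\times\CC_\hb^*/\CC_\hb^*}=\Db_{X\times\CC_\hb^*/\CC_\hb^*}(*H)$. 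Your route via Proposition \ref{prop:Vtheta}\eqref{prop:Vtheta3} is not wrong in spirit, but it is more work than needed, and your appeal to Lemma \ref{lem:iCSdec} is out of place: that lemma concerns uniqueness of a lift of $C_{\bS}$ to $\iC$ for graded S-decomposable objects, which is a different question. The ``main obstacle'' you identify (commutation of residues with $\mu^*$) evaporates once you reduce to the stupid localization.
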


\begin{proof}
For \eqref{prop:starHtheta1}, we first note that $\taucM(*\tauH)={}^\tau\!(\cM(*H))$ is obvious. If $\fun$ is a projection, then the result follows from Proposition \ref{prop:Vtheta}, according to the definition of $[\star H]$. It follows that, for an arbitrary $\fun$, ${}^\tau\!(\cM[\star H])$ satisfies the properties expected for $\taucM[\star\tauH]$, according to Proposition \ref{prop:rescalingimdir} applied to the graph inclusion of $\fun$.

For \eqref{prop:starHtheta2}, we note that, according to \eqref{eq:thetaV} and the proof of \cite[Prop.\,3.2.1]{Mochizuki11}, it is enough to check the equality $\thetaiC(*\thetaH)={}^\theta\!(\iC(*H))$ for the ``stupidly'' localized $\iota$-sesquilinear forms. This amounts to showing that if $\iC$ takes values in $\Db_{(X\moins H)\times\CC_\hb^*/\CC_\hb^*}^{\modH}$, then $\mu^*\iC$ takes values in $\Db_{(X\moins H)\times\CC_\tau\times\CC_\hb^*/\CC_\tau\times\CC_\hb^*}^{\rmod\thetaH}$. This is obvious since $\Db_{(X\moins H)\times\CC_\hb^*/\CC_\hb^*}^{\modH}=\Db_{X\times\CC_\hb^*/\CC_\hb^*}(*H)$.
\end{proof}

\begin{remarque}[Maximalization and Beilinson's gluing construction]\label{rem:thetagluing}
In a way similar to \cite[\S4.1.7]{Mochizuki11}, we find that if, together with the assumptions of Proposition \ref{prop:starHtheta}, $\cM$ is \index{maximalization}maximalizable along $H$, then so is $\taucM$ along $\tauH$, and the \index{Beilinson's functor}Beilinson functor is compatible with rescaling, that is, \index{$XXIB$@$\Xi_\fun$}${}^\tau\!(\Xi_\fun\cM)\simeq\Xi_{\fun\circ p}\taucM$. As a consequence, any object $\taucM$ can be recovered by a gluing construction from the data $(\taucM[*H],\taucN,\mu^*\can,\mu^*\var)$ with $\cN\simeq\phi_{\fun,1}\cM$ supported on $H$.
\end{remarque}

\subsubsection*{Exponential twist by a meromorphic function}
Let now $\mero$ be a meromorphic function on $X$ with pole divisor $P$. For an integrable $\cR_\cX$-module $\cM$ we denote by $\cE^{\mero/\hb}_*\otimes\cM$ the $\cO_\cX$-module $\cM(*P)$ equipped with the twisted structure of $\cR_\cX^\intt$-module defined locally by the formulas
\[
\partiall_{x_i}(m\otimes\refh)=\Bigl(\partiall_{x_i}+\frac{\partial \mero}{\partial x_i}\Bigr)m\otimes\refh,\quad\hb^2\partial_\hb(m\otimes\refh)=(\hb^2\partial_\hb-\mero)m\otimes\refh.
\]

\begin{proposition}[Exponential twist and rescaling]\label{prop:expftheta}
We have the following equality of $\cR_{\tauX}^\intt$-modules (we also regard $\mero$ as a meromorphic function on $\tauX$):
\[
{}^\tau\!(\cE^{\mero/\hb}\otimes\cM)=\cE^{\tau \mero/\hb}\otimes\taucM.
\]
Moreover, if $\cE^{\mero/\hb}\otimes\cM$ is localizable along $P$ then $\cE^{\tau \mero/\hb}\otimes\taucM$ is so along ${}^\tau\!P$ and we have
\[
{}^\tau\!(\cE^{\mero/\hb}\otimes\cM[\star P])=(\cE^{\tau \mero/\hb}\otimes\taucM)[\star{}^\tau\!P]\qquad(\star=*,!).
\]
\end{proposition}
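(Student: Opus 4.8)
\textbf{Proof plan for Proposition \ref{prop:expftheta}.}

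The plan is to treat the two assertions in sequence, the first being a direct module-theoretic computation and the second a reduction to the localization compatibility already established in Proposition \ref{prop:starHtheta}. For the first equality, I would work locally and compare the two $\cR^\intt_{\tauX}(*{}^\tau\!P)$-module structures on the common underlying $\cO_{\taucX}(*\taucX_0)$-module, namely $\mu^*(\cM(*P))$, which as a sheaf equals both $\mu^*(\cE^{\mero/\hb}\otimes\cM)$ (forgetting the twist) and the underlying $\cO$-module of $\cE^{\tau\mero/\hb}\otimes\taucM$. On the rescaled twisted module ${}^\tau\!(\cE^{\mero/\hb}\otimes\cM)$ the action of $\partiall_{x_i}$ on $1\otimes(m\otimes\refh)$ is, by the rescaling rule \eqref{eq:partiall}, equal to $\tau\otimes\bigl((\partiall_{x_i}+\partial\mero/\partial x_i)m\otimes\refh\bigr)$, while $\hb^2\partial_\hb$ acts as $\tau\otimes\bigl((\hb^2\partial_\hb-\mero)m\otimes\refh\bigr)$ and $\partiall_\tau$ acts as $-1\otimes\bigl(\hb^2\partial_\hb m\otimes\refh\bigr)$ plus the contribution of the twist. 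On the other side, $\cE^{\tau\mero/\hb}\otimes\taucM$ carries the twist by the meromorphic function $\tau\mero$ on $\tauX$, so $\partiall_{x_i}$ acts on $m'\otimes\refh$ (where now $m'=1\otimes m$) as $(\partiall_{x_i}+\tau\partial\mero/\partial x_i)m'\otimes\refh$, $\hb^2\partial_\hb$ acts as $(\hb^2\partial_\hb-\tau\mero)m'\otimes\refh$, and $\partiall_\tau$ acts as $(\partiall_\tau-\mero)m'\otimes\refh$ (since $\partial(\tau\mero)/\partial\tau=\mero$). One then checks term by term, using $\partiall_{x_i}(1\otimes m)=\tau\otimes\partiall_{x_i}m$, $\hb^2\partial_\hb(1\otimes m)=\tau\otimes\hb^2\partial_\hb m$, and $\partiall_\tau(1\otimes m)=-1\otimes\hb^2\partial_\hb m$, that the two sets of formulas agree; the only point requiring a line of computation is the $\partiall_\tau$-action, where the identity $\tau\partial\mero/\partial\tau$ plays no role (as $\mero$ is independent of $\tau$) and the twist contribution $-\mero$ matches the $-1\otimes(\hb^2\partial_\hb m\otimes\refh)$ term combined with the fact that $\hb^2\partial_\hb$ of the section $\refh$ of $\cE^{\mero/\hb}$ contributes $-\mero$. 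Extending by the Leibniz rule gives the claimed identification of $\cR^\intt_{\tauX}$-modules.

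For the second assertion, I would first observe that exponential twist commutes with the rescaling functor in the strongest possible sense by the first part, so it suffices to show that if $\cE^{\mero/\hb}\otimes\cM$ is strictly specializable and localizable along $P$, then so is its rescaling along ${}^\tau\!P=P\times\CC_\tau$, and that the localization functors $[\star{}^\tau\!P]$ commute with rescaling. But the rescaling of $\cE^{\mero/\hb}\otimes\cM$ equals $\cE^{\tau\mero/\hb}\otimes\taucM$ as an $\cR^\intt_{\tauX}$-module, and $\cE^{\tau\mero/\hb}\otimes\taucM$ is an $\cR^\intt_{\taucX}(*\taucX_0)$-module obtained by rescaling an integrable $\cR_\cX$-module; Proposition \ref{prop:starHtheta}\eqref{prop:starHtheta1} applied to the integrable $\cR_\cX$-module $\cE^{\mero/\hb}\otimes\cM$ (with $H=P$) gives precisely $\taucM'[\star\tauP]={}^\tau\!(\cM'[\star P])$ where $\cM'=\cE^{\mero/\hb}\otimes\cM$, and rewriting the left-hand side via the first part yields $(\cE^{\tau\mero/\hb}\otimes\taucM)[\star{}^\tau\!P]={}^\tau\!(\cE^{\mero/\hb}\otimes\cM[\star P])$. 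The only subtlety is that Proposition \ref{prop:starHtheta} is stated for $\cR_\cX$-modules without an explicit exponential twist, so I would note that its proof (via the graph inclusion $i_\fun$, reduction to $\fun$ a projection, and then to the compatibility of the $V$-filtration with rescaling in Proposition \ref{prop:Vtheta}) uses only strict specializability and localizability of the module at hand, properties which are preserved under exponential twist by a meromorphic function by hypothesis; hence the argument applies verbatim to $\cM'=\cE^{\mero/\hb}\otimes\cM$.

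\textbf{Main obstacle.} The computational core — verifying the agreement of the two $\cR^\intt$-actions in the first part — is routine but must be done carefully because three operators ($\partiall_{x_i}$, $\hb^2\partial_\hb$, $\partiall_\tau$) are involved and the twist by $\mero$ interacts with each of them differently after rescaling; in particular one must track that the rescaling rule \eqref{eq:partiall} introduces the factor $\tau$ on $\partiall_{x_i}$ and $\hb^2\partial_\hb$ but not on $\partiall_\tau$, and that this is exactly compensated by the passage from the twist by $\mero/\hb$ to the twist by $\tau\mero/\hb$. For the second part the only genuine point to check is that Proposition \ref{prop:starHtheta} is applicable with the twisted module in place of $\cM$, which follows from the hypothesis that $\cE^{\mero/\hb}\otimes\cM$ is localizable along $P$; there is no real difficulty beyond bookkeeping, since all the hard analytic input (behaviour of $V$-filtrations and of moderate distributions under $\mu^*$) has already been done in Propositions \ref{prop:Vtheta} and \ref{prop:starHtheta}.
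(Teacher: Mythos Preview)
Your proposal is correct and follows the same approach as the paper: a direct verification via the rescaling formulas \eqref{eq:partiall} for the first assertion, and an application of Proposition \ref{prop:starHtheta} to the module $\cE^{\mero/\hb}\otimes\cM$ for the second. One small slip: the twist by $\tau\mero$ sends $\partiall_\tau$ to $\partiall_\tau+\mero$ (not $\partiall_\tau-\mero$), since the $\hb$-connection is $\hb\rd+\rd(\tau\mero)$; with this correction your term-by-term check goes through cleanly.
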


\begin{proof}
The first part is a direct check by using Formulas \eqref{eq:partiall}. The second part follows then from Proposition \ref{prop:starHtheta}.
\end{proof}

For further reference, we collect all compatibility results.

\begin{proposition}\label{cor:imdirresc}
The functor $\RdiTriples(X)\mto\RdiTriples(\thetaX)$ defined by
\[
(\cM',\cM'',\iC)\mto(\thetacM',\thetacM'',\thetaiC)
\]
is compatible with proper pushforward and smooth pullback (\cf Section \ref{subsec:stdfunctorsRdiT}). When restric\-ted to strictly specializable and localizable objects, the functor is compatible with nearby cycles, localization $[\star H]$ ($\star=*,!$) and exponential twist. When restricted furthermore to maximalizable objects, it is compatible with Beilinson's functor $\Xi_\fun$, to vanishing cycles $\phi_{\fun,1}$ and to the corresponding gluing construction.
\end{proposition}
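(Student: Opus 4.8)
The plan is to assemble this as a bookkeeping corollary of the individual compatibility statements already proved, organized by which functor is being rescaled. First I would recall that all the rescaling statements for modules and for pairings have been established one functor at a time: Proposition \ref{prop:rescalingimdir} for proper pushforward $\map_\dag^k$, Proposition \ref{prop:rescalingpullback} for smooth pullback $\mapsm^+$, Proposition \ref{prop:Vtheta} for nearby cycles $\psi_{\fun,\beta}$ (both on the $\cR^\intt$-module side and, via \ref{prop:Vtheta}\eqref{prop:Vtheta3}, on the $\iota$-sesquilinear pairing side), Proposition \ref{prop:starHtheta} for the localization functors $[\star H]$, Proposition \ref{prop:expftheta} for the exponential twist $\cE^{\mero/\hb}\otimes\cbbullet$, and Remark \ref{rem:thetagluing} for maximalizability, the Beilinson functor $\Xi_\fun$, vanishing cycles $\phi_{\fun,1}$, and the associated gluing construction. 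The task is thus to observe that, since the rescaling functor on $\RdiTriples(X)$ is defined component-wise (modules $\cM',\cM''$ via $\mu^*$ with the action \eqref{eq:partiall}, pairing $\iC$ via $\mu^*$ as in Section \ref{subsec:rescalingpairing}), compatibility of each standard functor with rescaling amounts exactly to the conjunction of its effect on $\cM'$, on $\cM''$, and on $\iC$, which is precisely what the cited propositions provide.

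Concretely, I would proceed functor by functor. For proper pushforward and smooth pullback, I would invoke the module isomorphisms \eqref{eq:rescalingimdir1} and \eqref{eq:rescalingpullback1} for $\cM'$ and $\cM''$, and check that the natural morphism of pairings obtained by functoriality of $\mu^*$ (Lemma \ref{lem:mustaru}) is compatible — this last point is exactly the content of the fact that $\thetaiC$ is built by $\mu^*$-pullback and commutes with integration along fibres, as in the construction preceding Lemma \ref{lem:thetavarphi}. For the functors that require strict specializability and localizability, I would restrict to that subcategory and quote Propositions \ref{prop:Vtheta} and \ref{prop:starHtheta} for the modules together with \ref{prop:Vtheta}\eqref{prop:Vtheta3} and \ref{prop:starHtheta}\eqref{prop:starHtheta2} for the pairing; the exponential twist case is Proposition \ref{prop:expftheta} on the module side, and on the pairing side the twist just multiplies $\iC$ by $\exp(\mero/\hb-\ov\mero/\ov\hb)$, whose $\mu^*$-pullback is $\exp(\tau\mero/\hb-\ov\tau\ov\mero/\ov\hb)$, matching the twist by $\cE^{\tau\mero/\hb}$. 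Finally, restricting further to maximalizable objects, the compatibility with $\Xi_\fun$, $\phi_{\fun,1}$ and the gluing construction is Remark \ref{rem:thetagluing}, whose pairing version follows from \ref{prop:Vtheta}\eqref{prop:Vtheta3} and the fact that these constructions are built functorially from nearby and vanishing cycles and the morphisms $\can,\var$.

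There is genuinely no hard analytic step here — the substance was done in the preceding propositions. The only point requiring a little care, and the one I would treat most explicitly, is verifying that the various \emph{natural morphisms} asserted between rescaled and un-rescaled functors are compatible with the $\iota$-sesquilinear pairings, not merely with the underlying $\cR^\intt$-modules; but since each such natural morphism on modules is induced by (de-)composing $\mu^*$ through $q^+$, localization, and $\nu_*$, and since $\thetaiC$ is defined by applying $\mu^*$ to $\iC$ and composing with the canonical inclusion of relative distributions, the compatibility is a formal consequence of Lemma \ref{lem:mustaru} and the change-of-action formulas \eqref{eq:partiall}, exactly as in the proof of Lemma \ref{lem:thetavarphi}. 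I would therefore present the argument as a short enumeration referring back to the relevant numbered results, with the pairing-compatibility remark made once and invoked uniformly.
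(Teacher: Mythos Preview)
Your proposal is correct and takes essentially the same approach as the paper: both treat the statement as a bookkeeping corollary of the individual compatibility results (Propositions~\ref{prop:rescalingimdir}, \ref{prop:rescalingpullback}, \ref{prop:Vtheta}, \ref{prop:starHtheta}, \ref{prop:expftheta}, and Remark~\ref{rem:thetagluing}). The paper's proof is even terser than yours and singles out only the exponential-twist pairing for emphasis, noting explicitly that $\exp(\tau\mero/\hb-\ov{\tau\mero/\hb})$ is a multiplier on $\Db_{\thetacX^\circ/\CC^*_\hb}$ because it has moderate growth (together with all derivatives) along $P\times\CC^*_\theta\times\CC^*_\hb$; you cover this point implicitly by saying the pullback ``matches the twist by $\cE^{\tau\mero/\hb}$,'' which is fine since the moderate-growth property was established in Section~\ref{subsec:cTvarphihb}.
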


\begin{proof}
The first part has already been proved. Let us emphasize the compatibility with exponential twist. It follows from the property that the function $\exp(\tau \mero/\hb-\nobreak\ov{\tau \mero/\hb})$ is a multiplier on $\Db_{\thetacX^\circ/\CC^*_\hb}$ since it has moderate growth along $P\times\CC^*_\theta\times\CC^*_\hb$ as well as all its derivatives. Note that the minus sign is due to the involution $\iota$.
\end{proof}

\section{Well-rescalable integrable \texorpdfstring{$\cR$}{R}-triples}\label{sec:wellresc}

\subsection{Well-rescalable integrable \texorpdfstring{$\cR$}{R}-modules}\label{subsec:wellrescRmod}

We will use the results on strict specializability and regularity for the projection \hbox{$\tauX\to\CC_\tau$}, as recalled in Section \ref{subsec:shortreminder}. We will also use that $i_{\tau=\hb}^*\cR^\intt_{\taucX}(*\tauX_0)=\cR^\intt_\cX[\hbm]$.

\begin{definition}[Well-rescalable $\cR^\intt_\cX$-modules]\label{def:Rrescdata}
Let $\cM$ be a good $\cR^\intt_\cX$\nobreakdash-module. We say that it is \emph{well-rescalable} if the good $\cR^\intt_{\taucX}(*\tauX_0)$-module $\taucM$ is \emph{strictly $\RR$\nobreakdash-specializable and regular along $\tauX_0$} (\cf \cite[\S3.1.d]{Bibi01c}).
\end{definition}

\skpt
\begin{remarques}\label{rem:Rrescdata}
\begin{enumerate}
\item\label{rem:Rrescdata0a}
By a well-rescalable $\cR^\intt_\cX$-module, we will always mean a well-rescalable \emph{good} $\cR^\intt_\cX$-module.
\item\label{rem:Rrescdata0b}
It follows from Lemma \ref{lem:basicrescale}\eqref{lem:basicrescale1} that $i_{\tau=1}^*\taucM=\bL i_{\tau=1}^*\taucM$.

\item(\emph{$\cR_\cX$-coherence})\label{rem:Rrescdata3b}
By the regularity assumption in Definition \ref{def:Rrescdata}, since we have $i_{\tau=\hb}^*\cR_{\taucX/\CC_\tau}\simeq\cR_\cX$, the module $i_{\tau=\hb}^*\tauV_\beta\taucM$ is $\cR_\cX$\nobreakdash-coherent. This is the statement where the regularity condition in \ref{def:Rrescdata} plays a major role.

\item(\emph{The $\hb$-adic filtration and the action of} $\hb^2\partial_\hb+\tau\partiall_\tau$)\label{rem:Rrescdata4}
Let us consider the $\hb$-adic filtration
\[
\hb^k\cO_\cX\otimes_{\pi^{-1}\cO_X}\pi^{-1}\ccM\subset\pi^{\circ*}\ccM.
\]
The corresponding graded module $\gr\bigl(\pi^{\circ*}\ccM\bigr)$ is $\cO_X[\hb,\hbm]\otimes_{\cO_X}\nobreak\ccM$, with $\gr^p\bigl(\pi^{\circ*}\ccM\bigr)\simeq\hb^p\otimes\ccM$, and we have (analytification with respect to $\hb$)
\[
\pi^{\circ*}\ccM=\cO_\cX[\hbm]\otimes_{\pi^{-1}\cO_X[\hb,\hbm]}\gr\bigl(\pi^{\circ*}\ccM\bigr).
\]
With the identification of Lemma \ref{lem:relcoh}\eqref{lem:relcoh3}, the above isomorphism becomes
\begin{starequation}\label{eq:adic}
i_{\tau=\hb}^*\taucM\simeq\cO_\cX[\hbm]\otimes_{\pi^{-1}\cO_X[\hb,\hbm]}\gr\bigl(i_{\tau=\hb}^*\taucM\bigr),
\end{starequation}%
with $\hb^p\otimes\ccM\simeq\gr^p\bigl(i_{\tau=\hb}^*\taucM\bigr)$.

On the other hand, we can identify $\gr^p\bigl(i_{\tau=\hb}^*\taucM\bigr)$ with a sub-object of $i_{\tau=\hb}^*\taucM$. Indeed,
the natural action of $\hb^2\partial_\hb+\tau\partiall_\tau$ on $i_{\tau=\hb}^*\taucM$ corresponds to that of $\hb^2\partial_\hb$ on $\pi^{\circ*}\ccM$, according to Lemma \ref{lem:relcoh}\eqref{lem:relcoh3}. Therefore, since
\[
\hb^p\otimes\pi^{-1}\ccM=\ker\big(\hb^2\partial_\hb-p\hb\text{ acting on }\pi^{\circ*}\ccM\big),
\]
we also have $\gr^p\bigl(i_{\tau=\hb}^*\taucM\bigr)\simeq\ker(\hb^2\partial_\hb+\tau\partiall_\tau-p\hb)$, where the latter operator acts on $i_{\tau=\hb}^*\taucM$. If we write $\gr\bigl(i_{\tau=\hb}^*\taucM\bigr)=\bigoplus_p\gr^p\bigl(i_{\tau=\hb}^*\taucM\bigr)\cdot\hb^p$, we thus obtain
\begin{starstarequation}\label{eq:adicsub}
i_{\tau=\hb}^*\taucM=\cO_\cX[\hbm]\otimes_{\pi^{-1}\cO_X[\hb,\hbm]}\Bigl(\bigoplus_p\ker(\hb^2\partial_\hb+\tau\partiall_\tau-p\hb)\cdot\hb^p\Bigr).
\end{starstarequation}%
\end{enumerate}
\end{remarques}

\begin{lemme}\label{lem:Vrescaling}
Assume that the good $\cR^\intt_\cX$-module $\cM$ is well-rescalable.
\begin{enumerate}
\item\label{lem:Vrescaling1}
For each $\beta\in\RR$, we have $(\tau-\hb)\tauV_\beta\taucM=(\tau-\hb)\taucM\cap\tauV_\beta\taucM$ and thus a natural inclusion
\begin{starequation}\label{eq:Vrescaling}
i_{\tau=\hb}^*\tauV_\beta\taucM\hto i_{\tau=\hb}^*\taucM.
\end{starequation}%
\item\label{lem:Vrescaling3}
For each $\alpha\in[0,1)$, the morphism $\chi$ induces an inclusion
\[
i_{\tau=\hb}^*\tauV_\alpha\taucM\hto\pi^{\circ*}\ccM,
\]
which is an isomorphism when restricted to $\hb\neq0$.
\item\label{lem:Vrescaling2}
For each $\alpha\in[0,1)$, $i_{\tau=\hb}^*\tauV_\alpha\taucM$ is strict.
\end{enumerate}
\end{lemme}

It follows from the last statement of \ref{lem:Vrescaling}\eqref{lem:Vrescaling3} that $\Xi_{\DR}(i_{\tau=\hb}^*\tauV_\alpha\taucM)\isom\ccM$ (\cf Notation \ref{nota:XiDR}).

\pagebreak[2]
\skpt
\begin{proof}
\begin{enumerate}
\item (\Cf\cite[Proof of Prop.\,3.1.2]{E-S-Y13}.) The result is clear away from $\{\tau=\hb=0\}$ since $\tauV_\beta\taucM=\taucM$ away from $\{\tau=0\}$. We therefore work at a point $(x,0,0)$. Let $m$ be a local section of $\tauV_\gamma\taucM$ such that $(\tau-\hb)m$ is a local section of $\tauV_\beta\taucM$. If $\gamma>\beta$ and the class of $m$ in $\gr_\gamma^{\tauV}\taucM$ is nonzero, then the class of $(\tau-\hb)m$ in $\gr_\gamma^{\tauV}\taucM$ is zero, but this is the class of $-\hb m$. By strictness of $\gr_\gamma^{\tauV}\taucM$ we get a contradiction.

\item
It suffices to compose \eqref{eq:Vrescaling} with the identification of Lemma \ref{lem:relcoh}\eqref{lem:relcoh3}.

When restricting this morphism to $\cX^\circ$, we can also restrict $\taucM$ to $\thetacX$, and the last assertion follows from the identification $\tauV_\alpha\taucM_{|\thetacX}=\taucM_{|\thetacX}$.

\item
The strictness of $i_{\tau=\hb}^*\tauV_\alpha\taucM$ immediately follows from the inclusion in \eqref{lem:Vrescaling3}.
\qedhere
\end{enumerate}
\end{proof}

\begin{definition}[The irregular Hodge filtration]\label{def:irrHF}
\index{irregular Hodge filtration}
Let $\cM$ be a well-rescalable good $\cR^\intt_\cX$\nobreakdash-module. For every $\alpha\in[0,1)$, the $\hb$-adic filtration $\hb^k\cO_\cX\otimes_{\pi^{-1}\cO_X}\pi^{-1}\ccM$ induces on $i_{\tau=\hb}^*\tauV_\alpha\taucM$ a filtration whose graded module $\gr\bigl(i_{\tau=\hb}^*\tauV_\alpha\taucM\bigr)$ is a graded $R_F\cD_X$-submodule of $\gr\bigl(\pi^{\circ*}\ccM\bigr)=\cO_X[\hb,\hbm]\otimes_{\cO_X}\ccM$, hence strict and corresponding thus to a uniquely defined good filtration \index{$Fzzirr$@$F^\irr_\bbullet$}$F^\irr_{\alpha+\bbullet}\ccM$ indexed by $\ZZ$, in the sense that$\index{$RFirr$@$R_{F_{\alpha+\bbullet}^\irr}\ccM$}$
\[
\gr\bigl(i_{\tau=\hb}^*\tauV_\alpha\taucM\bigr)=R_{F_{\alpha+\bbullet}^\irr}\ccM
\]
as graded $R_F\cD_X$-module.
\end{definition}

\begin{remarque}[Filtration indexed by $\RR$]
We can regard the family $\!(F^\irr_{\alpha+p}\ccM)_{\alpha\in[0,1),p\in\ZZ}$ as a family of coherent $\cO_X$-submodules of $\ccM$ indexed by $\RR$ (or by $A+\ZZ$ for some finite subset $A\subset[0,1)$). We claim that it is a \emph{filtration indexed by $\RR$}, that is, $F^\irr_\beta\ccM\subset F^\irr_\gamma\ccM$ for $\beta\leq\gamma\in\RR$. As a matter of fact, multiplication by $\tau^{-p}$ induces an isomorphism of graded modules
\[
\gr\bigl(i_{\tau=\hb}^*\tauV_\alpha\taucM\bigr)\isom\gr\bigl(i_{\tau=\hb}^*\tauV_{\alpha+p}\taucM\bigr)[-p]
\]
and thus identifies $F^\irr_{\alpha+p}\ccM$ with the term of degree zero in $\gr\bigl(i_{\tau=\hb}^*\tauV_{\alpha+p}\taucM\bigr)$. The inclusion $\tauV_\beta\taucM\subset\tauV_\gamma\taucM$ induces the inclusion between the degree zero terms of $\gr\bigl(i_{\tau=\hb}^*\tauV_\beta\taucM\bigr)$ and $\gr\bigl(i_{\tau=\hb}^*\tauV_\gamma\taucM\bigr)$, hence the desired inclusion.
\end{remarque}

\begin{notation}\label{nota:tauvarphialpha}
Let $\lambda:\cM_1\to\cM_2$ be a morphism between well-rescalable $\cR^\intt_\cX$-modules. For $\alpha\in[0,1)$, we denote by $\taulambda_\alpha$ the induced morphism $\tauV_\alpha\taucM_1\to\tauV_\alpha\taucM_2$ and by $\gr(i_{\tau=\hb}^*\taulambda_\alpha)$ the corresponding graded morphism (with respect to the induced $\hb$\nobreakdash-adic filtrations).
\end{notation}

\begin{remarque}\label{rem:tauvarphialpha}
Recall (\cf \cite[Prop.\,7.3.1]{Bibi01c}) that $\tauV_\alpha\taucM$ is preserved by $\hb^2\partial_\hb$ and, of course, by $\tau\partiall_\tau$. It follows that $\tauV_\alpha\taucM$ is stable by $\hb^2\partial_\hb+\tau\partiall_\tau$. As a consequence, the $\cO_X$-module
\[
F^{\prime\irr}_{\alpha+p}\ccM:=\ker\big[(\hb^2\partial_\hb+\tau\partiall_\tau-p\hb):i_{\tau=\hb}^*\tauV_\alpha\taucM\to i_{\tau=\hb}^*\tauV_\alpha\taucM\big]
\]
is identified by means of \eqref{eq:adicsub} with a submodule of $\ccM$ (identified with $\hb^p\otimes\ccM$), since it is isomorphic to $(\hb^p\otimes\ccM)\cap(i_{\tau=\hb}^*\tauV_\alpha\taucM)$.
\end{remarque}

\skpt
\begin{definition}[Graded well-rescalable $\cR^\intt_\cX$-modules and morphisms]\label{def:gRrescdata}
\begin{enumerate}
\item\label{def:gRrescdata1}
Let $\cM$ be a well-rescalable $\cR^\intt_\cX$-module. We say that it is \index{well-rescalable $\cR^\intt_\cX$-module!graded --}\emph{graded} if, through the isomorphism \eqref{eq:adic}, we have
\begin{starequation}\label{eq:twrescaled}
\forall\alpha\in[0,1),\quad\cO_\cX\otimes_{\pi^{-1}\cO_X[\hb]}\pi^{-1}R_{F_{\alpha+\bbullet}^\irr}\ccM=i_{\tau=\hb}^*\tauV_\alpha\taucM.
\end{starequation}%
\item\label{def:gRrescdata2}
Let $\lambda:\cM_1\to\cM_2$ be a morphism between graded well-rescalable $\cR^\intt_\cX$-modules. We say that $\lambda$ is \emph{graded} if, for each $\alpha\in[0,1)$, with respect to the identification \eqref{eq:twrescaled} we have (\cf Notation \ref{nota:tauvarphialpha})
\begin{starstarequation}\label{eq:twrescaledmorphism}
i_{\tau=\hb}^*\taulambda_\alpha=\id\otimes\gr(i_{\tau=\hb}^*\taulambda_\alpha).
\end{starstarequation}%
\end{enumerate}
\end{definition}

\skpt
\begin{lemme}\label{lem:lambdagraded}
\begin{enumerate}
\item\label{lem:lambdagraded1}
If $\cM$ is a well-rescalable $\cR^\intt_\cX$-module, it is graded if and only if \eqref{eq:twrescaled} holds with $R_{F_{\alpha+\bbullet}^{\prime\irr}}\ccM$ (\cf Remark \ref{rem:tauvarphialpha}), and then we have $F_{\alpha+\bbullet}^{\prime\irr}\ccM=F_{\alpha+\bbullet}^\irr\ccM$.
\item\label{lem:lambdagraded2}
Any morphism $\lambda$ between graded well-rescalable $\cR^\intt_\cX$-modules $\cM_1,\cM_2$ is graded.
\item\label{lem:lambdagraded3}
Let $\cM_1,\cM_2$ be well-rescalable $\cR^\intt_\cX$-modules. If their direct sum is graded, so is each of them.
\end{enumerate}
\end{lemme}

\skpt
\begin{proof}
\begin{enumerate}
\item
If \eqref{eq:twrescaled} holds, then $\ker(\hb^2\partial_\hb-p\hb)$ acting on the left-hand side of \eqref{eq:twrescaled} is equal to $F_{\alpha+p}^\irr\ccM$. Conversely, if \eqref{eq:twrescaled} holds with $R_{F_{\alpha+\bbullet}^{\prime\irr}}\ccM$, then grading the left-hand side of the corresponding \eqref{eq:twrescaled} with respect to the $\hb$-adic filtration gives $R_{F_{\alpha+\bbullet}^{\prime\irr}}\ccM$, hence $R_{F_{\alpha+\bbullet}^{\prime\irr}}\ccM=R_{F_{\alpha+\bbullet}^\irr}\ccM$.
\item
Since $\taulambda$ commutes with $\hb^2\partial_\hb+\tau\partiall_\tau$, it follows that, if we regard $\pi^{-1}R_{F_{\alpha+\bbullet}^{\prime\irr}}\ccM$ as being naturally included in $\cO_\cX\otimes_{\pi^{-1}\cO_X[\hb]}\pi^{-1}R_{F_{\alpha+\bbullet}^{\prime\irr}}\ccM$, then $\taulambda$ sends $\pi^{-1}R_{F_{\alpha+\bbullet}^{\prime\irr}}\ccM_1$ to $\pi^{-1}R_{F_{\alpha+\bbullet}^{\prime\irr}}\ccM_2$. The desired assertion is thus a consequence of \eqref{lem:lambdagraded1}.
\item
Set $\ccM=\ccM_1\oplus\ccM_2$. We have $R_{F_{\alpha+\bbullet}^{\prime\irr}}\ccM=R_{F_{\alpha+\bbullet}^{\prime\irr}}\ccM_1\oplus R_{F_{\alpha+\bbullet}^{\prime\irr}}\ccM_2$ and $\tauV_\alpha\taucM=\tauV_\alpha\taucM_1\oplus\tauV_\alpha\taucM_2$. The natural morphism\vspace*{-.2\baselineskip}%
\[
\cO_\cX\otimes_{\pi^{-1}\cO_X[\hb]}\pi^{-1}R_{F_{\alpha+\bbullet}^{\prime\irr}}\ccM\to i_{\tau=\hb}^*\tauV_\alpha\taucM
\]
is thus the direct sum of the corresponding morphisms for $\ccM_1$ and $\ccM_2$. It is an isomorphism if and only if each of them is such.\qedhere
\end{enumerate}
\end{proof}

\skpt
\begin{remarques}\label{rem:filteredmorphism}
\begin{enumerate}
\item\label{rem:filteredmorphism1}
Property \eqref{eq:twrescaledmorphism} implies that the morphism \hbox{$i_{\hb=1}^*\lambda:\ccM\to\ccM'$} is filtered with respect to $F^\irr_{\alpha+\bbullet}$ for each $\alpha\in[0,1)$. We will give below sufficient conditions in order that this filtered morphism is strict.

\item\label{rem:filteredmorphism2}
By the assumption of strict $\RR$-specializability, the $\cR_\cX$-module $\gr^{\tauV}_\alpha\!(\taucM):=\tauV_\alpha\taucM/\tauV_{<\alpha}\taucM$ ($\alpha\in[0,1)$) is coherent and strict. It is a priori not equal to the analytification of the Rees module of a coherent filtered $\cD_X$-module. However, the associated Higgs sheaf $\gr^{\tauV}_\alpha\!\!\taucM/\hb\gr^{\tauV}_\alpha\!\!\taucM$ is identified with the analytification of\vspace*{-.2\baselineskip}\enlargethispage{.5\baselineskip}%
\[
R_{F^\irr_{\alpha+\bbullet}}\ccM/R_{F^\irr_{<\alpha+\bbullet}}\ccM=\bigoplus_{p\in\ZZ}\gr_{\alpha+p}^{F^\irr}\ccM.
\]
In particular, the support of this Higgs sheaf is homogeneous and the union of the supports when $\alpha$ varies in $[0,1)$ is the characteristic variety of $\ccM$. If moreover $\ccM$ is holonomic, then the previous identification can be extended to characteristic cycles.
\end{enumerate}
\end{remarques}

\begin{lemme}[A criterion for isomorphic irregular Hodge filtrations]\label{lem:critgradedrescaled}
Let $\cM_i$ ($i=1,2$) be isomorphic $\cR^\intt_\cX$-modules. Assume that $\cM_i$ are strict and well-rescalable. Then $(\ccM_i,F^\irr_{\alpha+\bbullet}\ccM_i)$ ($i=1,2$) are filtered-isomorphic.
\end{lemme}

\begin{proof}
Let $\lambda:\cM_1\isom\cM_2$ be an isomorphism. Because of the uniqueness of the $\tauV$-filtration for strictly $\RR$-specializable $\cR_\cX$\nobreakdash-modules, $\taulambda$ induces an isomorphism $\taulambda_\alpha:\tauV_\alpha\taucM_1\isom\tauV_\alpha\taucM_2$ for each $\alpha\in[0,1)$. We thus have a commutative diagram
\[
\xymatrix@C=1.5cm@R=1cm{
i_{\tau=\hb}^*\tauV_\alpha\taucM_1\ar[r]^-{i_{\tau=\hb}^*\taulambda_\alpha}_-\sim\ar@{^{ (}->}[d]_{\eqref{eq:Vrescaling}}& i_{\tau=\hb}^*\tauV_\alpha\taucM_2\ar@{^{ (}->}[d]^{\eqref{eq:Vrescaling}}\\
i_{\tau=\hb}^*\taucM_1\ar[r]^-{i_{\tau=\hb}^*\taulambda}_-\sim& i_{\tau=\hb}^*\taucM_2
}
\]
Since~$\lambda:=i_{\tau=1}^*\taulambda$ is strict, the morphism $i_{\hb=1}^*\lambda$ is an isomorphism, and so is the pullback $\pi^{\circ*}i_{\hb=1}^*\lambda$, which is moreover obviously strict with respect to the $\hb$-adic filtration, hence $i_{\tau=\hb}^*\taulambda$ is also strict with respect to the $\hb$-adic filtration. Since the $\hb$-adic filtration on the upper terms of the diagram is the filtration induced by that on the lower terms through the inclusions \eqref{eq:Vrescaling}, we conclude that $i_{\tau=\hb}^*\taulambda_\alpha$ is a strict isomorphism with respect to the induced $\hb$-adic filtration, so $\gr(i_{\tau=\hb}^*\taulambda_\alpha)$ is an isomorphism.
\end{proof}

\skpt
\begin{lemme}[Strictness]\label{lem:strictness}
\begin{enumerate}
\item\label{lem:strictness1}
Let $\cM$ be a strict well-rescalable $\cR^\intt_\cX$-module. Then, for any germ of holomorphic function $q(\tau,\hb)$, multiplication by $q(\tau,\hb)$ is injective on $\taucM$ (hence on $\tauV_\beta\taucM$ for all $\beta\in\RR$). In particular, $\taucM$ is strict and $(\tau-\hb)$ is injective on $\taucM$.

\item\label{lem:strictness2}
Let $\lambda$ be a strict morphism between strict well-rescalable $\cR^\intt_\cX$-modules $\cM_1,\cM_2$. Then, $\taulambda$ is strict.
\end{enumerate}
\end{lemme}

\skpt
\begin{proof}
\begin{enumerate}
\item[\eqref{lem:strictness1}]
By Lemma \ref{lem:basicrescale}\eqref{lem:basicrescale3}, $\taucM_{|\thetacX}$ is $\cO_{\CC^*_\tau\times\CC_\hb}$-flat. The kernel of multiplication by $q(\tau,\hb)$ is thus supported on $\tau=0$, hence each of its local sections is annihilated by some power of $\tau$. Since multiplication by $\tau$ is invertible on~$\taucM$, this kernel is zero.

\item[\eqref{lem:strictness2}]
Recall that the functor $^\tau(\cbbullet)$ is exact, hence $\ker\taulambda={}^\tau(\ker\lambda)$, etc. By using the faithful flatness of $\cO_{\thetacX}$ over $\mu^{-1}\cO_\cX$, we conclude that $(\ker\taulambda)_{|\thetacX}$ and $(\coker\taulambda)_{|\thetacX}$ are strict. Since~$\tau$ is bijective on $\ker\taulambda$ and $\coker\taulambda$, we conclude as in~\eqref{lem:strictness1} that both are strict.\qedhere
\end{enumerate}
\end{proof}

\begin{proposition}\label{prop:criterionabelian}
Let $\lambda:\cM_1\to\cM_2$ be a morphism between well-rescalable $\cR^\intt_\cX$-modules. Assume that
\begin{enumeratea}
\item\label{prop:criterionabeliana}
$\cM_1$, $\cM_2$ and $\lambda$ are strict,
\item\label{prop:criterionabelianb}
$\taulambda$ is strictly specializable along $\tau=0$.
\end{enumeratea}
\noindent Then, setting $\cK:=\ker\lambda$, $\cI:=\image\lambda$, $\cC:=\coker\lambda$, and $\ccK:=i_{\hb=1}^*\cK$ etc., we have
\begin{enumerate}
\item\label{prop:criterionabelian2}
$\ccK=\ker i_{\hb=1}^*\lambda$ etc.,
\item\label{prop:criterionabelian1}
$i_{\tau=\hb}^*\taucK=\ker i_{\tau=\hb}^*\taulambda$ etc.,
\item\label{prop:criterionabelian1b}
$\cK,\cI,\cC$ are strict well-rescalable $\cR^\intt_\cX$-modules.
\item\label{prop:criterionabelian3}
for every $\alpha\in[0,1)$, we have $i_{\tau=\hb}^*\tauV_\alpha\taucK=\ker(i_{\tau=\hb}^*\taulambda_\alpha)$ etc.
\end{enumerate}
\end{proposition}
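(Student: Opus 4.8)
The plan is to combine the exactness of the rescaling functor $^\tau(\cbbullet)$ (which holds termwise by Lemma \ref{lem:basicrescale}\eqref{lem:basicrescale1}, since $\mu^*$ is exact and the action-change $\nu_+$ is an equivalence) with the strictness hypotheses, and then to feed everything through the strict specializability of $\taulambda$ along $\tau=0$. First I would prove \eqref{prop:criterionabelian2}: since $\cM_1,\cM_2$ and $\lambda$ are strict, the functor $i_{\hb=1}^*=\bL i_{\hb=1}^*$ on these objects and on $\lambda$, so it commutes with $\ker$, $\image$, $\coker$; this is the same remark already used in the proof of Lemma \ref{lem:basicrescale}\eqref{lem:basicrescale5}. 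Next, for \eqref{prop:criterionabelian1}, the key point is that $i_{\tau=\hb}^*\circ{}^\tau(\cbbullet)=\pi^{\circ*}\circ i_{\hb=1}^*$ on strict modules and strict morphisms between them (Lemma \ref{lem:relcoh}\eqref{lem:relcoh3} together with Lemma \ref{lem:basicrescale}\eqref{lem:basicrescale5}), so $i_{\tau=\hb}^*\taucK=\pi^{\circ*}\ccK=\pi^{\circ*}\ker(i_{\hb=1}^*\lambda)=\ker\pi^{\circ*}(i_{\hb=1}^*\lambda)=\ker i_{\tau=\hb}^*\taulambda$, using exactness of $\pi^{\circ*}$ and \eqref{prop:criterionabelian2}; the same computation handles $\cI$ and $\cC$.

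For \eqref{prop:criterionabelian1b} I would argue in two stages. Strictness of $\cK,\cI,\cC$ as $\cR^\intt_\cX$-modules is exactly Lemma \ref{lem:strictness}\eqref{lem:strictness2} combined with the fact that a strict $\cR^\intt_\cX$-module that fits into an exact sequence of $\hb$-torsion-free modules is $\hb$-torsion-free — but here it is cleaner to say: $\cK=\ker\lambda$ is a submodule of the strict module $\cM_1$, hence strict; $\cI=\image\lambda$ is a submodule of the strict module $\cM_2$, hence strict; and $\cC=\coker\lambda$ is strict because $\lambda$ is a strict morphism, i.e. $\image\lambda$ is an $\hb$-saturated submodule of $\cM_2$. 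Well-rescalability then requires showing that $\taucK$, $\taucI$, $\taucC$ are strictly $\RR$-specializable and regular along $\tauX_0$. Here I would use that $^\tau(\cbbullet)$ is exact, so $\taucK=\ker\taulambda$, $\taucI=\image\taulambda$, $\taucC=\coker\taulambda$; by hypothesis \eqref{prop:criterionabelianb} $\taulambda$ is strictly specializable along $\tau=0$, and by Definition \ref{def:Rrescdata} $\taucM_1,\taucM_2$ are strictly $\RR$-specializable and regular along $\tauX_0$; then invoke the general fact (the $\cR_\cX$-module analogue of \cite[Lem.\,3.3.10]{Bibi01c}, recalled in Section \ref{subsec:shortreminder}, together with preservation of regularity) that kernel, image and cokernel of a strictly specializable morphism between strictly $\RR$-specializable regular modules are again strictly $\RR$-specializable and regular, with the expected $V$-filtration behaviour: $\tauV_\beta\taucK=\ker\taulambda_\beta$, $\tauV_\beta\taucI=\image\taulambda_\beta$, $\tauV_\beta\taucC=\coker\taulambda_\beta$, where $\taulambda_\beta:\tauV_\beta\taucM_1\to\tauV_\beta\taucM_2$.

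Finally \eqref{prop:criterionabelian3} follows by combining the $V$-strictness just obtained with the restriction $i_{\tau=\hb}^*$. For each $\alpha\in[0,1)$ we have the natural inclusions $i_{\tau=\hb}^*\tauV_\alpha\taucM_i\hookrightarrow i_{\tau=\hb}^*\taucM_i$ of Lemma \ref{lem:Vrescaling}\eqref{lem:Vrescaling1}; since $\tauV_\alpha\taucK=\ker\taulambda_\alpha$ is a $V_0\cD$-coherent submodule of the $\hb$-torsion-free module $\tauV_\alpha\taucM_1$ (by \eqref{prop:criterionabelian1b}, $\tauV_\alpha\taucK$ is strict), applying $i_{\tau=\hb}^*$ and using that on strict modules the restriction $i_{\tau=\hb}^*$ is again $\bL i_{\tau=\hb}^*$ and compatible with the inclusions \eqref{eq:Vrescaling}, I get $i_{\tau=\hb}^*\tauV_\alpha\taucK=\ker(i_{\tau=\hb}^*\taulambda_\alpha)$ inside $i_{\tau=\hb}^*\taucK=\ker(i_{\tau=\hb}^*\taulambda)$; the analogous statements for $\cI$ and $\cC$ are identical. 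The main obstacle, and the place where hypothesis \eqref{prop:criterionabelianb} is indispensable, is establishing that $V$-strictness of $\taulambda$ along $\tau=0$ propagates to the three subquotients together with regularity: one must check that the candidate filtrations $\ker\taulambda_\beta$, $\image\taulambda_\beta$, $\coker\taulambda_\beta$ genuinely satisfy the characteristic axioms of the $V$-filtration (good filtration by $\cR^\intt_{\taucX/\CC_\tau}(*\tauX_0)$-coherent submodules, nilpotency of $\tau\partiall_\tau-\beta\hb$ on the graded pieces, strictness of the graded pieces) — and in particular that the gradeds remain strict, which is where one genuinely needs $\taulambda$ to be strictly specializable rather than merely specializable; everything else is bookkeeping with the exact functors $^\tau(\cbbullet)$, $i_{\hb=1}^*$, $i_{\tau=\hb}^*$ and $\pi^{\circ*}$.
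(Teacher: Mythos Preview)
Your approach is essentially the paper's, and correct in outline. One point needs sharpening: in your argument for \eqref{prop:criterionabelian3} you write that ``on strict modules the restriction $i_{\tau=\hb}^*$ is again $\bL i_{\tau=\hb}^*$'', but strictness only means $\hb$-torsion-freeness, which by itself does not force $(\tau-\hb)$ to act injectively on $\tauV_\alpha\taucK$, $\tauV_\alpha\taucI$, $\tauV_\alpha\taucC$. The paper closes this by invoking the argument of Lemma \ref{lem:strictness}\eqref{lem:strictness1}: since $\cK,\cI,\cC$ are \emph{strict well-rescalable} (your \eqref{prop:criterionabelian1b}), any $q(\tau,\hb)$, in particular $\tau-\hb$, acts injectively on their rescalings and hence on each $\tauV_\alpha$; then the two short exact sequences at the $\tauV_\alpha$-level (coming from $V$-strictness of $\taulambda$) stay exact after $i_{\tau=\hb}^*$. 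Replace your ``strict $\Rightarrow$ $\bL i_{\tau=\hb}^*=i_{\tau=\hb}^*$'' step by this injectivity-of-$(\tau-\hb)$ argument and your proof matches the paper's.
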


\begin{proof}
For \eqref{prop:criterionabelian2}, the strictness of $\lambda$ implies the equalities $\ccK=\ker i_{\hb=1}^*\lambda$, etc.

For \eqref{prop:criterionabelian1} we reduce the question to $\pi^{\circ*}i_{\hb=1}^*\lambda$, and the assertion follows from the second part of \eqref{prop:criterionabelian2} together with the flatness of $\cO_\cX[\hbm]$ over $\pi^{-1}\cO_X$.

By Assumption~\eqref{prop:criterionabelianb}, we have $\tauV_\alpha\taucK=\ker\taulambda_\alpha$ etc., according to \cite[Lem.\,3.3.10]{Bibi01c}, so $\cK,\cI,\cC$ are well-rescalable. Moreover, since~$\lambda$ is strict, its kernel, image and cokernel are strict, which concludes \eqref{prop:criterionabelian1b}.

By the strict specializability of $\taulambda$, we have exact sequences
\[
0\ra\tauV_\alpha\taucK\ra\tauV_\alpha\taucM_1\ra\tauV_\alpha\taucI\ra0 \quad\text{and}\quad 0\ra\tauV_\alpha\taucI\ra\tauV_\alpha\taucM_2\ra\tauV_\alpha\taucC\ra0.
\]
Since $\tau-\hb$ is injective on each such module (by arguing as in the proof of Lemma \ref{lem:strictness}\eqref{lem:strictness1}), we obtain exact sequences for the corresponding modules $i_{\tau=\hb}^*\tauV_\alpha$, hence~\eqref{prop:criterionabelian3}.
\end{proof}

\begin{proposition}\label{prop:criterionabelianfiltered}
Together with Assumptions \eqref{prop:criterionabeliana} and \eqref{prop:criterionabelianb} in Proposition \ref{prop:criterionabelian} (from which we keep the notation), assume moreover that
\begin{enumeratea}\setcounter{enumi}{2}
\item\label{prop:criterionabelianc}
$\cM_1,\cM_2$ are graded.
\end{enumeratea}
\noindent Then\vspace*{-3pt}
\begin{enumerate}
\item\label{prop:criterionabelian4}
$\cK$, $\cI$, $\cC$ are graded,
\item\label{prop:criterionabelian5}
for each $\alpha\in[0,1)$, we have $R_{F^\irr_{\alpha+\bbullet}}\ccK=\ker\gr(i_{\tau=\hb}^*\taulambda_\alpha)$, etc.,
\item\label{prop:criterionabelian6}
$i_{\hb=1}^*\lambda:(\ccM_1,F^\irr_{\alpha+\bbullet}\ccM_1)\!\to\!(\ccM_2,F^\irr_{\alpha+\bbullet}\ccM_2)$ is strict, and we have $\ker i_{\hb=1}^*\lambda\!=\!(\ccK,F^\irr_{\alpha+\bbullet}\ccK)$,~etc.
\end{enumerate}
\end{proposition}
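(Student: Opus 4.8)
The strategy is to combine Proposition \ref{prop:criterionabelian} (which already gives the statements about $\cK,\cI,\cC$ as strict well-rescalable modules and the compatibility of $i_{\tau=\hb}^*$ and $i_{\hb=1}^*$ with $\ker,\image,\coker$) with the extra hypothesis \eqref{prop:criterionabelianc} that $\cM_1,\cM_2$ are graded, in order to upgrade everything to a \emph{filtered} statement. The central object is, for fixed $\alpha\in[0,1)$, the morphism of graded $R_F\cD_X$-modules $\gr(i_{\tau=\hb}^*\taulambda_\alpha)$ obtained from $\taulambda_\alpha\colon\tauV_\alpha\taucM_1\to\tauV_\alpha\taucM_2$ by passing to $i_{\tau=\hb}^*$ and then grading with respect to the $\hb$-adic filtration; by Definition \ref{def:irrHF} this morphism is $R_{F^\irr_{\alpha+\bbullet}}\ccM_1\to R_{F^\irr_{\alpha+\bbullet}}\ccM_2$, and by Lemma \ref{lem:lambdagraded}\eqref{lem:lambdagraded2} $\lambda$ (being a morphism of graded objects) is graded, so \eqref{eq:twrescaledmorphism} holds: $i_{\tau=\hb}^*\taulambda_\alpha=\id\otimes\gr(i_{\tau=\hb}^*\taulambda_\alpha)$ under the identification \eqref{eq:twrescaled}.

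First I would prove \eqref{prop:criterionabelian6}, namely strictness of $i_{\hb=1}^*\lambda$ with respect to $F^\irr_{\alpha+\bbullet}$. By Proposition \ref{prop:criterionabelian}\eqref{prop:criterionabelian3} we have $i_{\tau=\hb}^*\tauV_\alpha\taucK=\ker(i_{\tau=\hb}^*\taulambda_\alpha)$, and similarly for image and cokernel, and these are strict (being $i_{\tau=\hb}^*\tauV_\alpha$ of strict well-rescalable modules, Lemma \ref{lem:Vrescaling}\eqref{lem:Vrescaling2}). Moreover $\taulambda_\alpha$ is strictly specializable along $\tau=0$ by \eqref{prop:criterionabelianb}, hence $V$-strict, so that the $V$-filtrations along $\tau-\hb=0$ behave well; combined with injectivity of $\tau-\hb$ on all the modules in sight (Lemma \ref{lem:strictness}\eqref{lem:strictness1}), the short exact sequences $0\to\tauV_\alpha\taucK\to\tauV_\alpha\taucM_1\to\tauV_\alpha\taucI\to0$ and $0\to\tauV_\alpha\taucI\to\tauV_\alpha\taucM_2\to\tauV_\alpha\taucC\to0$ remain exact after applying $i_{\tau=\hb}^*$. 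Since $\cM_1,\cM_2$ are graded, the $\hb$-adic filtration on $i_{\tau=\hb}^*\tauV_\alpha\taucM_i$ is, via \eqref{eq:twrescaled}, the trivial extension $\cO_\cX\otimes_{\pi^{-1}\cO_X[\hb]}\pi^{-1}R_{F^\irr_{\alpha+\bbullet}}\ccM_i$; the morphism $i_{\tau=\hb}^*\taulambda_\alpha$ being of the form $\id\otimes\gr(\cdots)$ is then automatically strict for this $\hb$-adic filtration, and the induced graded exact sequences give $R_{F^\irr_{\alpha+\bbullet}}\ccK=\ker\gr(i_{\tau=\hb}^*\taulambda_\alpha)$, etc. Taking $\ker(\hb^2\partial_\hb-p\hb)$ (equivalently, setting $\hb=1$ on the strict Rees modules) then yields strictness of $i_{\hb=1}^*\lambda$ together with the identification $\ker i_{\hb=1}^*\lambda=(\ccK,F^\irr_{\alpha+\bbullet}\ccK)$ and its analogues for $\image$ and $\coker$; this is \eqref{prop:criterionabelian6} and \eqref{prop:criterionabelian5}.

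For \eqref{prop:criterionabelian4} I would verify directly that $\cK,\cI,\cC$ satisfy Definition \ref{def:gRrescdata}\eqref{def:gRrescdata1}. By the previous paragraph $R_{F^\irr_{\alpha+\bbullet}}\ccK$ sits in an exact sequence $0\to R_{F^\irr_{\alpha+\bbullet}}\ccK\to R_{F^\irr_{\alpha+\bbullet}}\ccM_1\to R_{F^\irr_{\alpha+\bbullet}}\ccI\to0$, and applying the exact functor $\cO_\cX\otimes_{\pi^{-1}\cO_X[\hb]}\pi^{-1}(\cbbullet)$ gives a diagram mapping to the exact sequence $0\to i_{\tau=\hb}^*\tauV_\alpha\taucK\to i_{\tau=\hb}^*\tauV_\alpha\taucM_1\to i_{\tau=\hb}^*\tauV_\alpha\taucI\to0$; since the gradedness of $\cM_1$ makes the middle vertical arrow an isomorphism and the right-hand one is an isomorphism too (by the same argument applied to $\cI$, once known, or simultaneously by a five-lemma / comparison-of-exact-sequences argument), the left vertical arrow $\cO_\cX\otimes_{\pi^{-1}\cO_X[\hb]}\pi^{-1}R_{F^\irr_{\alpha+\bbullet}}\ccK\to i_{\tau=\hb}^*\tauV_\alpha\taucK$ is an isomorphism, which is exactly \eqref{eq:twrescaled} for $\cK$. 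The cases of $\cI$ and $\cC$ are handled the same way, using in addition Lemma \ref{lem:lambdagraded}\eqref{lem:lambdagraded1} to pass between $F^\irr$ and $F'^\irr$ if convenient.

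\textbf{Expected main obstacle.} The routine-looking but genuinely delicate point is the commutation of $i_{\tau=\hb}^*$ with $\ker/\image/\coker$ of $\taulambda_\alpha$, i.e.\ making sure the short exact sequences of $\tauV_\alpha$-modules stay exact after restriction to $\tau=\hb$. This rests on injectivity of $\tau-\hb$ on all the relevant $\tauV_\alpha$-modules, which in turn needs Assumption \eqref{prop:criterionabelianb} (strict specializability of $\taulambda$ along $\tau=0$, used via \cite[Lem.\,3.3.10]{Bibi01c} to know $\tauV_\alpha\taucK=\ker\taulambda_\alpha$ and that the quotients are again strictly $\RR$-specializable) together with strictness and well-rescalability (Lemma \ref{lem:strictness}\eqref{lem:strictness1}). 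Once this is in place, the gradedness hypothesis \eqref{prop:criterionabelianc} does the rest essentially formally, because it turns the $\hb$-adic filtrations into split ones and reduces all strictness questions to the already-established strictness of $i_{\hb=1}^*\lambda$ via $\pi^{\circ*}$.
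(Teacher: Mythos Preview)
Your proposal is correct and follows essentially the same approach as the paper: both use Lemma~\ref{lem:lambdagraded}\eqref{lem:lambdagraded2} to write $i_{\tau=\hb}^*\taulambda_\alpha=\id\otimes\gr(i_{\tau=\hb}^*\taulambda_\alpha)$, combine this with Proposition~\ref{prop:criterionabelian}\eqref{prop:criterionabelian3}, and exploit (faithful) flatness of $\cO_\cX$ over $\pi^{-1}\cO_X[\hb]$. The paper proceeds in the order \eqref{prop:criterionabelian4}$\to$\eqref{prop:criterionabelian5}$\to$\eqref{prop:criterionabelian6} and obtains gradedness of $\cK,\cI,\cC$ in one line via $\ker(\id\otimes\phi)=\cO_\cX\otimes\ker\phi$, which is slightly more direct than your five-lemma comparison, but the content is the same.
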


\begin{proof}
Since $\lambda$ is graded (Lemma \ref{lem:lambdagraded}), we have for each $\alpha\in[0,1)$,
\[
i_{\tau=\hb}^*\tauV_\alpha\taucK=\ker(i_{\tau=\hb}^*\taulambda_\alpha)=\cO_\cX\otimes_{\pi^{-1}\cO_X[\hb]}\pi^{-1}\ker\gr(i_{\tau=\hb}^*\taulambda_\alpha),
\]
and similarly for $\image$ and $\coker$, so that $\cK,\cI,\cC$ are graded, hence \eqref{prop:criterionabelian4}.

By faithful flatness of $\cO_\cX$ over $\pi^{-1}\cO_X[\hb]$, we conclude that $\ker\gr(i_{\tau=\hb}^*\taulambda_\alpha)$, etc., are strict, hence correspond to irregular Hodge filtrations on $\ccK,\ccI,\ccC$, so that \eqref{prop:criterionabelian5} holds. Moreover, $\gr(i_{\tau=\hb}^*\taulambda_\alpha)$ is strict, hence \eqref{prop:criterionabelian6}.
\end{proof}

\subsection{Example 1: the case where \texorpdfstring{$\dim X=0$}{dimXO}}\label{subsec:example1}
In this subsection, we assume that $X$ is reduced to a point. We have $\cR_\cX=\cO_{\CC_\hb}$ and $\cR^\intt_\cX=\cO_{\CC_\hb}\langle\hb^2\partial_\hb\rangle$. We assume that $\cM$ is a coherent $\cO_{\CC_\hb}$-module which is integrable and \emph{strict}, that is, $\cO_{\CC_\hb}$-locally free of finite rank. We denote by $\ccM=\cM/(\hb-1)\cM$ its fibre at $\hb=1$. This is a finite-dimensional vector space.

The \index{Deligne's meromorphic extension}Deligne meromorphic extension of $\cM$ at $\hb=\infty$, together with GAGA, produces a free $\CC[\hb]$-module $M$ such that $\cO_{\CC_\hb}\otimes_{\CC[\hb]}M=\cM$ and the connection induced by $\hb^2\partial_\hb$ has a regular singularity at infinity.\footnote{The notation $G_0$ is used for such an $M$ in \cite{Bibi96bb,Bibi05,H-S06, Bibi08}, \cf also Sections \ref{subsec:Cmhm} and \ref{sec:alternating}.} We have $M\subset M[\hbm]:=\CC[\hb,\hbm]\otimes_{\CC[\hb]}M$. Let $V_\bbullet(M[\hbm])$ be the $V$-filtration of $M[\hbm]$ at $\hb=\infty$. We~assume for simplicity that the indices are real (\ie the eigenvalues of the monodromy on~$\CC_\hb^*$ have absolute value equal to one). Each $V_\beta(M[\hbm])$ is a free $\CC[\hbm]$-module and $\CC[\hb,\hbm]\otimes_{\CC[\hbm]}V_\beta(M[\hbm])=M[\hbm]$, and we have $(-\hb\partial_\hb+\beta)^{\nu_\beta}V_\beta(M[\hbm])\subset V_{<\beta}(M[\hbm])$ for some $\nu_\beta\in\NN$ with $\nu_{\beta+k}=\nu_\beta$ for $k\in\ZZ$.

\begin{proposition}\label{prop:Xpoint}
The strict module $\cM$ is well-rescalable and graded, and the corresponding irregular Hodge filtration $F^\irr_{\alpha+\bbullet}\ccM$ ($\alpha\in\nobreak[0,1)$) is the filtration naturally induced by $V_{\alpha+\bbullet}(M[\hbm])$ on the $\CC$-vector space
\[
\ccM=M/(\hb-1)M=M[\hbm]/(\hb-1)M[\hbm].
\]
\end{proposition}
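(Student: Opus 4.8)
The strategy is to compute the rescaled module $\taucM$ explicitly in the zero-dimensional situation and to check directly that it is strictly $\RR$-specializable and regular along $\taucX_0=\{\tau=0\}$, and that the grading condition \eqref{eq:twrescaled} holds. First I would use the algebraic model: Deligne's meromorphic extension at $\hb=\infty$ gives the free $\CC[\hb]$-module $M$ with $\cO_{\CC_\hb}\otimes_{\CC[\hb]}M=\cM$, and by \eqref{eq:partiall} the rescaling $\taucM$ is obtained from $q^+M$ by localizing along $\tau=0$ and applying $\nu_*$, where $\partiall_\tau$ acts as $-\hb^2\partial_\hb$, $\hb$ becomes $\tau\otimes\hb$, and $\hb^2\partial_\hb$ becomes $\tau\otimes\hb^2\partial_\hb$. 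Concretely, after the change of variable $\zeta=\tau\hb$ in $\nu$, one sees that $\taucM$ is the $\cO_{\CC_\tau\times\CC_\hb}(*\{\tau=0\})$-module generated by $M$, with $\tau\partiall_\tau$ acting (on $m\otimes 1$) as $-\hb\partial_\hb$ in the variable $\hb$ of $M[\hbm]$ — this is exactly the point: the operator $\tau\partial_\tau$ governing the $V$-filtration along $\tau=0$ is identified, on $i_{\tau=\hb}^*$, with the operator $-\hb\partial_\hb$ governing the $V$-filtration of $M[\hbm]$ at $\hb=\infty$. I would make this dictionary precise, using that $i_{\tau=\hb}^*\cR^\intt_{\taucX}(*\taucX_0)=\cR^\intt_\cX[\hbm]=\cD_{\CC_\hb}[\hbm]$ is the ring of differential operators in $\hb$ localized at $0$, and that $i_{\tau=\hb}^*\taucM\simeq\pi^{\circ*}\ccM\simeq M[\hbm]$ (via Lemma \ref{lem:relcoh}\eqref{lem:relcoh3}, noting $X$ is a point so $\pi^{\circ*}\ccM=M[\hbm]/(\hb-1)M[\hbm]\otimes\cdots$, i.e.\ $=M[\hbm]$ up to the $\cO_X[\hbm]$-structure).

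The heart is then the following. Define $\tauU_\beta\taucM$ to be the $\cO_{\CC_\tau\times\CC_\hb}(*\{\tau=0\})$-submodule generated by $\hb^{-1}V_\beta(M[\hbm])$ — more precisely the submodule corresponding, under the $\nu$-change of variable, to $V_\beta(M[\hbm])$ at $\hb=\infty$ with an appropriate index shift (recall the paper's convention shifts indices by one). I would verify that $\tauU_\bbullet\taucM$ satisfies the characteristic properties of the Kashiwara–Malgrange $V$-filtration along $\tau=0$: each $\tauU_\beta\taucM$ is $\cR^\intt_{\taucX/\CC_\tau}(*\taucX_0)$-coherent (because each $V_\beta(M[\hbm])$ is $\CC[\hbm]$-free of finite rank, i.e.\ $\cR^\intt_{\cX/\CC_\hb}[\hbm]$-coherent after the dictionary), the grading property $\tau\cdot\tauU_\beta=\tauU_{\beta-1}$ for $\beta<0$ and $\partiall_\tau\cdot\tauU_\beta\subset\tauU_{\beta+1}$ hold (these transport from the corresponding properties of $V_\bbullet$ at $\hb=\infty$ under $\partial_\tau\leftrightarrow-\hb^2\partial_\hb$), the operator $\tau\partiall_\tau-\beta\hb$ (integrable version!) is nilpotent on $\gr_\beta^{\tauU}$ because $(-\hb\partial_\hb-\beta)^{\nu_\beta}$ is nilpotent on $\gr_\beta^V(M[\hbm])$, and finally each $\gr_\beta^{\tauU}\taucM$ is strict because it is (the analytification of) a free $\CC[\hbm]$-module. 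By uniqueness of the $V$-filtration (Section \ref{subsec:shortreminder}), $\tauU_\bbullet\taucM=\tauV_\bbullet\taucM$; strict $\RR$-specializability follows since the indices $\beta$ are real by our hypothesis on monodromy eigenvalues; and \emph{regularity} along $\tau=0$ follows because the $V$-filtration of $M[\hbm]$ at $\hb=\infty$ is regular (Deligne's construction produces a regular singularity at $\infty$, as stated in the excerpt), which translates exactly into the regularity condition for $\taucM$ along $\tau=0$. Hence $\cM$ is well-rescalable.

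For the grading property, I would note that by Lemma \ref{lem:Vrescaling}\eqref{lem:Vrescaling3} the restriction $i_{\tau=\hb}^*\tauV_\alpha\taucM$ sits inside $\pi^{\circ*}\ccM=M[\hbm]$ for $\alpha\in[0,1)$, and under the dictionary it is precisely the $\cO_{\CC_\hb}$-submodule generated by $V_\alpha(M[\hbm])$ (intersected appropriately, but the intersection is automatic here since $\hb$ is a coordinate). Since $V_\alpha(M[\hbm])$ is $\CC[\hbm]$-free and $\hb^2\partial_\hb+\tau\partiall_\tau$ acts on $i_{\tau=\hb}^*\taucM$ as $\hb^2\partial_\hb$ on $M[\hbm]$ (Lemma \ref{lem:relcoh}\eqref{lem:relcoh3} again), the $\hb$-adic filtration on $i_{\tau=\hb}^*\tauV_\alpha\taucM$ and its grading are governed by the eigenspace decomposition $\ker(\hb^2\partial_\hb-p\hb)$, i.e.\ by $\ker(\hb\partial_\hb+p)$ acting on $V_\alpha(M[\hbm])$. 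But $V_\alpha(M[\hbm])$ with this action is exactly the Rees module of the filtration $F^\irr_{\alpha+\bbullet}\ccM$ induced by $V_{\alpha+\bbullet}(M[\hbm])$ on $\ccM=M[\hbm]/(\hb-1)M[\hbm]$ — this is the standard correspondence between the $V$-filtration at $\hb=\infty$ and a Rees-type decomposition — so \eqref{eq:twrescaled} holds with $R_{F^\irr_{\alpha+\bbullet}}\ccM=V_\alpha(M[\hbm])$, and $\cM$ is graded. Identifying $F^\irr_{\alpha+\bbullet}\ccM$ with the filtration naturally induced by $V_{\alpha+\bbullet}(M[\hbm])$ is then the content of Definition \ref{def:irrHF}. \textbf{The main obstacle} I anticipate is bookkeeping the index conventions correctly: the paper's $V$-filtration is indexed with a shift by one relative to \cite{Bibi01c}, the rescaling sends $\hb\partial_\hb$ at $0$ to behaviour at $\hb=\infty$ with a sign, and one must track carefully whether $V_\alpha$ or $V_{\alpha}^{-1}\hb$ appears, so that the final identification of $F^\irr_{\alpha+\bbullet}$ comes out as stated rather than with a spurious shift or sign; the homological content (coherence, strictness, uniqueness of $V$) is routine in this zero-dimensional setting.
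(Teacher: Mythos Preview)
Your overall strategy---construct a candidate $V$-filtration for $\taucM$ along $\tau=0$ and verify the characteristic properties---is the right one, but the specific candidate you propose does not work, and the gap is not merely index bookkeeping.

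You propose $\tauU_\beta\taucM$ as ``the submodule generated by $V_\beta(M[\hbm])$'' via a dictionary with the $V$-filtration of $M[\hbm]$ at $\hb=\infty$. The paper's candidate is different and this difference is the heart of the proof: one sets
\[
\tauU_\beta\tauM=\bigoplus_{k\in\ZZ}\tau^k\otimes(V_{\beta+k}\cap M),
\]
involving the \emph{intersections} $V_{\beta+k}\cap M$, which are finite-dimensional $\CC$-vector spaces (this is where the two lattices---$M$ at $\hb=0$ and $V_\bbullet$ at $\hb=\infty$---interact). Your dictionary tries to transport the one-variable $V$-filtration of $M[\hbm]$ directly, but $\taucM$ lives over the two-dimensional base $\CC_\tau\times\CC_\hb$, and coherence of $\tauU_\beta$ over $\CC[\tau,\hb]$ genuinely requires this intersection structure: the paper proves $\CC[\tau,\hb]$-finiteness by showing that $V_{\beta+k}\cap M$ vanishes for $k\ll0$ and is generated by $\hb$-powers of a fixed $V_{\beta+k_1}\cap M$ for $k\geq k_1$. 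Without the intersection, $V_\beta(M[\hbm])$ alone is not even naturally a subobject of $\tauM$.

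Two further points where your argument breaks. First, your strictness claim (``$\gr_\beta^{\tauU}$ is a free $\CC[\hbm]$-module'') is incorrect: $\gr_\beta^{\tauU}\tauM=\bigoplus_k\tau^k\otimes\bigl[(V_{\beta+k}\cap M)/(V_{<\beta+k}\cap M)\bigr]$, and $\hb$-torsion-freeness amounts to checking injectivity of $\hb\colon(V_{\beta+k-1}\cap M)/(V_{<\beta+k-1}\cap M)\to(V_{\beta+k}\cap M)/(V_{<\beta+k}\cap M)$, which the paper verifies directly. Second, your identification $i_{\tau=\hb}^*\taucM\simeq M[\hbm]$ is not the natural one: Lemma~\ref{lem:relcoh}\eqref{lem:relcoh3} and \eqref{eq:identifM} give $i_{\tau=\hb}^*\tauM=\CC[\hb,\hbm]\otimes_\CC\ccM$, and the computation of $i_{\tau=\hb}^*\tauV_\alpha\tauM$ produces \eqref{eq:gradingpoint}, namely $\bigoplus_k\hb^k\cdot\bigl[(V_{\alpha+k}\cap M)/(V_{\alpha+k}\cap(1-\hb)M)\bigr]$, from which $F^\irr_{\alpha+k}\ccM$ is read off as the image of $V_{\alpha+k}\cap M$ in $\ccM$. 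The grading property is then visible from this explicit decomposition, not from an abstract Rees-module argument on $V_\alpha(M[\hbm])$.
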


\begin{remarque}\label{rem:irregHspectrum}
The \index{jumping index}jumping indices of the irregular Hodge filtration, together with the dimension of the corresponding graded spaces, form the \index{spectrum}spectrum of $\cM$ at infinity, as defined in \cite[\S1]{Bibi96bb}.
\end{remarque}

\begin{proof}
We will work in the algebraic framework with respect to $\tau$ and $\hb$. Below we will simply set $V_\beta=V_\beta(M[\hbm])$. By definition, $\tauM=\CC[\tau,\taum]\otimes_\CC M$, where the $\CC[\tau,\taum,\hb]$-structure is given by $\hb(1\otimes m)=\tau\otimes\hb m$, $\tau^k(1\otimes m)=\tau^k\otimes m$, and the differential structure by~\eqref{eq:partiall}. Hence, as a $\CC[\tau,\taum]$-module, we have
\begin{equation}\label{eq:Mtau}
\tauM=\bigoplus_{k\in\ZZ}\tau^k\otimes M,
\end{equation}
and the $\hb$-action is given by $\hb\cdot\sum_k\tau^k\otimes m_k=\sum_k\tau^k\otimes\hb m_{k-1}$. It follows that~$\tauM$ is $\CC[\tau,\taum,\hb]$-free (because $M$ is $\CC[\hb]$-free), and $\tauM/(\tau-1)\tauM=M$. It also follows that
\[
\tauM/(\tau-\nobreak\hb)\tauM=\bigoplus_k\tau^k\otimes(M/(\hb-1)M),
\]
and after identifying the action of $\tau$ with that of $\hb$ on this quotient, we find the isomorphism:
\begin{equation}\label{eq:identifM}
\tauM/(\tau-\hb)\tauM=\CC[\hb,\hbm]\otimes_\CC\ccM.
\end{equation}

We now show that $\tauM$ is strictly specializable and regular along $\tau=0$ by making explicit its $V$-filtration. We set
\[
\tauU_\beta\tauM=\bigoplus_k\tau^k\otimes(V_{\beta+k}\cap M).
\]
Then, clearly, $\tauU_\bbullet\tauM$ is an increasing filtration of $\tauM$ by $\CC[\tau,\hb]$-submodules, and we have $\tauU_{\beta-\ell}\tauM=\tau^\ell \tauU_\beta\tauM$ for each $\beta\in\RR$ and $\ell\in\ZZ$. Since \hbox{$(-\hb\partial_\hb+\beta)^{\nu_\beta} V_\beta\subset V_{<\beta}$}, we have $\hb^{\nu_\beta}(-\hb\partial_\hb+\beta)^{\nu_\beta} (V_\beta\cap M)\subset V_{<\beta+\nu_\beta}\cap M$, from what we conclude that \hbox{$(\tau\partiall_\tau+\beta\hb)^{\nu_\beta}\tauU_\beta\tauM\subset \tauU_{<\beta}\tauM$} since, according to \eqref{eq:partiall}, we have
\[
(\tau\partiall_\tau+\beta\hb)^{\nu_\beta}\bigl[\tau^k\otimes(V_{\beta+k}\cap M)\bigr]=\tau^{k+\nu_\beta}\otimes\hb^{\nu_\beta}\bigl(-\hb\partial_\hb+(\beta+k)\bigr)^{\nu_\beta}(V_{\beta+k}\cap M).
\]

Recall (\cf\cite[\S1.a]{Bibi08}) that, for $\beta\in\RR$, $V_{\beta+k}\cap M$ is a finite dimensional vector space and there exist $k_0,k_1$ such that $V_{\beta+k}\cap M=0$ for $k<k_0$ and, for $k\geq k_1$,
\begin{align*}
V_{\beta+k}\cap M&=(V_{\beta+k}\cap\hb M)+(V_{\beta+k-1}\cap M)=\hb(V_{\beta+k-1}\cap M)+(V_{\beta+k-1}\cap M)\\
&=\sum_{j=0}^{k-k_1}\hb^j(V_{\beta+k_1}\cap M).
\end{align*}
Hence, for $k\geq k_1$,
\[
\tau^k\otimes(V_{\beta+k}\cap M)=\biggl(\sum_{j=0}^{k-k_1}\CC\tau^{k-k_1-j}\hb^j\biggr)\tau^{k_1}\otimes(V_{\beta+k_1}\cap M).
\]
We conclude that $\tauU_\beta\tauM=\bigoplus_{k=k_0}^{k_1-1}\tau^k\otimes(V_{\beta+k}\cap M)+\CC[\tau,\hb](1\otimes V_{\beta+k_1}\cap M)$, hence the $\CC[\tau,\hb]$ finiteness.

It remains to show the strictness of $\tauU_\beta\tauM/\tauU_{<\beta}\tauM$. We have
\[
(\hb-\hb_o)\sum_k\tau^k\otimes m_k=\sum_k\tau^k\otimes(\hb m_{k-1}-\hb_om_k),
\]
and the $\hb$-torsion freeness amounts to showing the injectivity of
\[
\hb:\frac{V_{\beta+k-1}\cap M}{V_{<\beta+k-1}\cap M}\to\frac{V_{\beta+k}\cap M}{V_{<\beta+k}\cap M}.
\]
This is clear, since for $m\in V_{\beta+k-1}$, $\hb m\in V_{<\beta+k}$ is equivalent to $m\in V_{<\beta+k-1}$.

We finally conclude that $\tauM$ is strictly specializable along $\tau=0$ and $\tauV_\beta\tauM=\tauU_\beta\tauM$. We will now compute $\tauV_\alpha\tauM/(\tau-\hb)\tauV_\alpha\tauM$, for $\alpha\in[0,1)$. Although we have given a general proof that $(\tau-\hb)\tauV_\alpha\tauM=(\tau-\hb)\tauM\cap\tauV_\alpha\tauM$ (\cf Lemma \ref{lem:Vrescaling}\eqref{lem:Vrescaling1}), we will show it in the present situation. This equality amounts to
\[
V_{\alpha+k}\cap(1-\hb)M=(1-\hb)(V_{\alpha+k-1}\cap M).
\]
Let $m\in M$ such that $(1-\hb)m\in V_{\alpha+k}$. Let $\gamma$ be such that $m\in V_\gamma$. If $\gamma\geq\alpha+k$, we have $\hb m\in V_\gamma$, hence $m\in V_{\gamma-1}$ and by decreasing induction we conclude that $m\in V_{\alpha+k-1}$, as wanted.

Using the identification as in \eqref{eq:identifM}, we find
\begin{equation}\label{eq:gradingpoint}
\frac{\tauV_\alpha\tauM}{(\tau-\hb)\tauV_\alpha\tauM}=\bigoplus_k\hb^k\,\frac{V_{\alpha+k}\cap M}{(1-\hb)(V_{\alpha+k-1}\cap M)}=\bigoplus_k\hb^k\,\frac{V_{\alpha+k}\cap M}{V_{\alpha+k}\cap(1-\hb)M},
\end{equation}
so $F^\irr_{\alpha+k}\ccM$ is the filtration induced by $V_{\alpha+k}$ on $\ccM$. The grading property is obviously satisfied.
\end{proof}

\skpt
\begin{definition}\label{def:dualtauM}
\begin{enumerate}
\item
The dual $(\tauM)^\vee$ is $\Hom_{\CC[\tau,\taum,\hb]}(\tauM,\CC[\tau,\taum,\hb])$ equipped with the dual action of $\partiall_\tau$ and $\hb^2\partial_\hb$.
\item
The tensor product $\tauM_1\otimes_{\CC[\tau,\taum,\hb]}\tauM_2$ is endowed with the usual tensor product action of $\partiall_\tau$ and $\hb^2\partial_\hb$.
\end{enumerate}
\end{definition}

The following is straightforward.

\begin{lemme}\label{lem:dualtauM}
We have natural isomorphisms
$$
(\tauM)^\vee\simeq{}^\tau\!(M^\vee),\qquad
\tauM_1\otimes_{\CC[\tau,\taum,\hb]}\tauM_2\simeq{}^\tau\!(M_1\otimes_{\CC[\hb]}M_2).\eqno\qed
$$
\end{lemme}

\subsection{Example 2: the case of a filtered \texorpdfstring{$\cD_X$}{DX}-module}\label{subsec:example2}
Let $\ccN$ be a coherent $\cD_X$-module equipped with a good filtration $F_\bbullet\ccN$. We set $N=R_F\ccN$ as a graded $R_F\cD_X$-module and we will work in the algebraic framework with respect to $\tau$ and $\hb$.

\skpt
\begin{proposition}\label{prop:filtered}
\begin{enumerate}
\item\label{prop:filtered1}
The module $N$ is a strict and graded well-rescalable $R_F\cD_X$-module, and the corresponding irregular Hodge filtration $F^\irr_{\bbullet}\ccN$ is the filtration $F_\bbullet\ccN$ (in particular, the only interesting $\alpha$ is $\alpha=0$).

\item\label{prop:filtered2}
Let $\lambda:R_F\ccN_1\to R_F\ccN_2$ be a graded morphism. Then $\taulambda$ is strictly specializable along $\tau=0$ if and only if the filtered morphism $i_{\hb=1}^*\lambda:(\ccN_1,F_\bbullet\ccN_1)\to(\ccN_2,F_\bbullet\ccN_2)$ is strict.
\end{enumerate}
\end{proposition}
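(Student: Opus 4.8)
\textbf{Plan for the proof of Proposition \ref{prop:filtered}.}
The plan is to compute the rescaling $\tauN$ of $N = R_F\ccN$ explicitly, mimicking the computation in Example~1 (Proposition \ref{prop:Xpoint}), and then to read off both the $\tauV$-filtration along $\tau=0$ and the strictness criterion. For part~\eqref{prop:filtered1}, first I would observe that since $N = \bigoplus_p F_p\ccN\,\hb^p$ is already $\hb$-homogeneous, the defining formulas \eqref{eq:partiall} give, as a $\CC[\tau,\taum]$-module,
\[
\tauN = \bigoplus_{k\in\ZZ}\tau^k\otimes N,
\]
with $\hb$ acting by $\hb\cdot(\tau^k\otimes n) = \tau^k\otimes\hb n$. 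Because $N$ is strict (flat over $\CC[\hb]$, being a Rees module of a good filtration), $\tauN$ is $\CC[\tau,\taum,\hb]$-free, hence strict, and $\tau-\hb$ acts injectively. The candidate for the $V$-filtration is the ``stupid'' one
\[
\tauV_\alpha\tauN = \bigoplus_k\tau^k\otimes N \quad\text{for }\alpha\in[0,1),\qquad \tauV_{\alpha-\ell}\tauN = \tau^\ell\,\tauV_\alpha\tauN,
\]
reflecting the fact that $N$, being a Rees module, carries no monodromy: the roots of the relevant Bernstein polynomials are all $0$, so $\gr^{\tauV}$ is concentrated in integral steps and $\tau\partiall_\tau$ acts nilpotently (in fact trivially) on each graded piece. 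I would check that this filtration satisfies the characterizing properties of strict $\RR$-specializability and regularity along $\tau=0$ (coherence of each step over $\cR^\intt_{\taucX/\CC_\tau}$, strictness of $\gr^{\tauV}$, nilpotency of $\tau\partiall_\tau + \beta\hb$ with $\beta$ integral), exactly as in the proof of Proposition \ref{prop:Xpoint} but with the extra $\partiall_{x_i}$-directions, which behave tamely by \eqref{eq:partiall}. Then $i_{\tau=\hb}^*\tauV_0\tauN = \bigoplus_k\hb^k\,(N/(1-\hb)N) = \cO_\cX\otimes_{\pi^{-1}\cO_X[\hb]}\pi^{-1}N$, which exhibits $N$ as graded in the sense of Definition \ref{def:gRrescdata} and identifies $R_{F^\irr_{\bbullet}}\ccN$ with $N = R_F\ccN$, i.e.\ $F^\irr_\bbullet\ccN = F_\bbullet\ccN$; and only $\alpha=0$ contributes.

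For part~\eqref{prop:filtered2}, I would use the description above to translate $\taulambda$ back to the given graded morphism $\lambda$. Since $\tauN_i = \bigoplus_k\tau^k\otimes N_i$ with the $V$-filtration being the ``stupid'' one, $\taulambda$ being strictly specializable along $\tau=0$ means precisely that $\taulambda$ is strictly compatible with these $V$-filtrations, equivalently (degree by degree in $\tau$) that $\lambda\colon N_1\to N_2$ is strict as a morphism of graded $\CC[\hb]$-modules, equivalently — by the standard dictionary between strictness of Rees-module maps and strictness of filtered maps — that $i_{\hb=1}^*\lambda\colon(\ccN_1,F_\bbullet\ccN_1)\to(\ccN_2,F_\bbullet\ccN_2)$ is a strict morphism of filtered $\cD_X$-modules. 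Concretely, I would invoke \cite[Lem.\,3.3.10]{Bibi01c} in the direction: strict specializability of $\taulambda$ forces $\tauV_\alpha\taucK = \ker\taulambda_\alpha$ etc., and unwinding this with the explicit $\tauV$ above gives the $\hb$-torsion-freeness of $\coker\lambda$, which is exactly strictness of $F_\bbullet$-compatibility; conversely, strictness of $i_{\hb=1}^*\lambda$ makes $\ker\lambda$, $\image\lambda$, $\coker\lambda$ strict Rees modules, so the ``stupid'' $\tauV$ works for all of $\tauN_1,\tauN_2$ and their kernel/image/cokernel simultaneously, yielding strict specializability of $\taulambda$.

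\textbf{Main obstacle.} The routine part is the bookkeeping with \eqref{eq:partiall} and the verification of the characterizing properties of the $V$-filtration; I expect the genuinely delicate point to be the equivalence in \eqref{prop:filtered2} in the direction ``strict specializable $\Rightarrow$ strict'', where one must be careful that strict $\RR$-specializability of $\taulambda$ really does control the $\hb$-adic behaviour of $\coker\lambda$ and not merely its $\tau$-adic behaviour — here the regularity hypothesis built into Definition \ref{def:Rrescdata}, together with the identification $i_{\tau=\hb}^*\cR_{\taucX/\CC_\tau}\simeq\cR_\cX$ and the injectivity of $\tau-\hb$ on the $V$-filtered pieces (Lemma \ref{lem:strictness}\eqref{lem:strictness1}), is what bridges the two, via the commutation of $i_{\tau=\hb}^*$ with $\ker/\image/\coker$ established in Proposition \ref{prop:criterionabelian}. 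Once that bridge is in place, the statement follows formally.
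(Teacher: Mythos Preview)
Your overall strategy matches the paper's, but the central computation has a genuine error. From \eqref{eq:partiall} one gets $\hb\cdot(\tau^k\otimes n)=\tau^{k+1}\otimes\hb n$, not $\tau^k\otimes\hb n$; the twist between $\hb$ and $\tau$ is the whole point of the rescaling. As a consequence your candidate $\tauV_0=\bigoplus_{k\geq0}\tau^k\otimes N$ is not the $V$-filtration: on $\gr_0=1\otimes N$ the outer $\hb$ sends everything into $\tauV_{-1}$, so $\hb$ acts by zero there and $\gr_0$ fails to be strict.

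What the $\hb$-homogeneity of $N=R_F\ccN$ actually buys is a \emph{re-indexing}: writing $\tau^k\otimes\hb^jF_j\ccN=\tau^{k-j}\hb^j\otimes F_j\ccN$ (the $\hb^j$ on the right now being the outer one) yields
\[
\tauN\;\simeq\;\CC[\tau,\taum,\hb]\,\hbboxtimes\,R_F\ccN,
\]
and one checks from \eqref{eq:partiall} that $\partiall_\tau$ annihilates each $\hb^j\otimes F_j\ccN$ in this presentation. This external-product form is the step you are missing. From it the $V$-filtration is visibly $\tauV_k\tauN=\tau^{-k}\CC[\tau,\hb]\hbboxtimes R_F\ccN$, with $\gr_k^{\tauV}\tauN\simeq R_F\ccN$ (strict), nothing at non-integral indices, and $i_{\tau=\hb}^*\tauV_0\tauN=R_F\ccN$ sitting inside $\ccN[\hb,\hbm]$ --- which is simultaneously the graded condition \eqref{eq:twrescaled} and the identification $F^\irr_\bbullet=F_\bbullet$.

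For \eqref{prop:filtered2}, once the external-product description is in hand the argument is a single line rather than the route through Proposition~\ref{prop:criterionabelian} that you sketch: one reads off directly that $\gr_0^{\tauV}\taulambda=\lambda$, so strict specializability of $\taulambda$ (strictness of the cokernel of each $\gr^{\tauV}\taulambda$) is literally strictness of $\lambda:R_F\ccN_1\to R_F\ccN_2$, which by the Rees dictionary is strict filtered compatibility of $i_{\hb=1}^*\lambda$. The ``main obstacle'' you anticipate does not arise.
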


\skpt
\begin{proof}
\eqref{prop:filtered1} We set $\tauN=\cO_X[\tau,\taum,\hb]\otimes_{\cO_X[\hb]}N$ with the $R_F\cD_X[\tau,\taum]\langle\partiall_\tau\rangle$-structure \hbox{described} by \eqref{eq:partiall}. We have, as an $\cO_X[\tau,\taum,\hb]$-module,
\begin{equation}\label{eq:Ntau}
\begin{split}
\tauN=\bigoplus_{k\in\ZZ}\bigoplus_j(\tau^k\otimes\hb^jF_j\ccN)&=\bigoplus_{k,j}(\tau^{k-j}\hb^j\otimes F_j\ccN)\\
&=\CC[\tau,\taum]\otimes_\CC\Bigl(\bigoplus_j(\hb^j\otimes F_j\ccN)\Bigr),
\end{split}
\end{equation}
and the $\partiall_{x_i},\partiall_\tau$ action is given by
\begin{align*}
\partiall_{x_i}(\hb^j\otimes F_j\ccN)&=(\hb^{j+1}\otimes\partial_{x_i}F_j\ccN)\subset(\hb^{j+1}\otimes F_{j+1}\ccN)\\
\partiall_\tau\hb^j(1\otimes F_j\ccN)&=0.
\end{align*}
In other words,
\begin{equation}\label{eq:taufiltN}
\tauN=\CC[\tau,\taum,\hb]\hbboxtimes R_FN,
\end{equation}
where $\CC[\tau,\taum,\hb]$ has its standard $R_F\cD_{\CC_\tau}$-module structure. Note that $\tauN[\hbm]=\CC[\tau,\taum,\hb,\hbm]\boxtimes\ccN$. We also deduce from \eqref{eq:taufiltN} that $\tauN/(\tau-\nobreak\hb)\tauN=\CC[\hb,\hbm]\otimes_\CC\ccN$.

We easily conclude that $\tauV_k \tauN=\tau^{-k}\CC[\tau,\hb]\hbboxtimes R_F\ccN$ ($k\in\ZZ$). The regularity is then clear. We also have $\gr_{\alpha+k}^{\tauV}\tauN=0$ for $\alpha\in(0,1)$ and any $k\in\ZZ$, and $\gr_k^{\tauV}\tauN\simeq\nobreak R_F\ccN$ for each $k\in\ZZ$, hence the strict specializability. Therefore, only $\alpha=0$ is to be considered for the irregular Hodge filtration. Hence, all properties of Definition \ref{def:Rrescdata} are satisfied. For the grading property, we have
\[
\tauV_0 \tauN/(\tau-\hb)\tauV_0 \tauN=R_F\ccN\subset\ccN[\hb,\hbm],
\]
and $\tauV_0 \tauN/(\tau-\hb)\tauV_0 \tauN$ is nothing but $\gr\bigl(\tauV_0 \tauN/(\tau-\hb)\tauV_0 \tauN\bigr)$ with its grading forgotten.

\eqref{prop:filtered2}
We note that, due to the identifications above, $\gr_0^{\tauV}\taulambda$ is nothing but \hbox{$\lambda:R_F\ccN_1\to R_F\ccN_2$}, showing that $\taulambda$ is strictly specializable along $\tau=0$ if and only if $\lambda$ is strict, that is, $i_{\hb=1}^*\lambda$ is strictly filtered. Similarly, $i_{\tau=\hb}^*\taulambda$ is identified with $\lambda$ viewed as acting on $\cO_X[\hb,\hbm]\otimes_{\cO_X[\hb]}R_F\ccN=\pi^{\circ*}\ccN$, that is, $\pi^{\circ*}i_{\hb=1}^*\lambda$.
\end{proof}

\subsection{Well-rescaled \texorpdfstring{$\cR$}{R}-triples}\label{subsec:wellrescalingT}
Let $\icT=(\cM',\cM'',\iC)$ be an object of $\RdiTriples(X)$. According to the constructions in Section \ref{sec:rescaling}, one can rescale it as an object $(\taucM',\taucM'',\thetaiC)$, where $\thetaiC$ is a sesquilinear pairing
\[
\thetaiC:\thetacM'_{|\thetacX^\circ}\otimes_\CC\iota^+\ov{\thetacM''}_{|\thetacX^\circ}\to\Db_{\thetacX^\circ/\CC^*_\hb}.
\]
Let $j:\thetaX\hto\tauX$ denotes the open inclusion. For $\cM=\cM',\cM''$, we have $\taucM\subset j_*\thetacM$, hence $j_*\thetaiC$ induces a sesquilinear pairing
\[
j_*\thetaiC:\taucM'_{|\thetacX^\circ}\otimes_\CC\iota^+\ov{\taucM''}_{|\thetacX^\circ}\to j_*\Db_{\thetacX^\circ/\CC^*_\hb}.
\]

\begin{definition}
We say that $\icT=(\cM',\cM'',\iC)$ is \emph{well-rescalable} if
\begin{enumerate}
\item
$\cM',\cM''$ are well-rescalable in the sense of Definition \ref{def:Rrescdata},
\item
$j_*\thetaiC:\taucM'_{|\thetacX^\circ}\otimes_\CC\iota^+\ov{\taucM''}_{|\thetacX^\circ}\to j_*\Db_{\thetacX^\circ/\CC^*_\hb}$ takes values in the subsheaf of moderate distributions along $\tauX_0$ and defines a moderate sesquilinear pairing
\[
\tauiC:\taucM'_{|\taucX^\circ}\otimes_\CC\iota^+\ov{\taucM''}_{|\taucX^\circ}\to\Db_{\taucX^\circ/\CC^*_\hb}(*\tauX_0)
\]
\end{enumerate}

We say that $\icT=(\cM',\cM'',\iC)$ is graded well-rescalable if moreover $\taucM',\taucM''$ are graded.
\end{definition}

In this way we have defined two full subcategories \index{$RTriplesXrescgr$@$\RgscTriples(X)$}$\RgscTriples(X)$ and \index{$RTriplesXresc$@$\RscTriples(X)$}$\RscTriples(X)$ of $\RdiTriples(X)$. It is easy to check that $\RscTriples(X)$ is stable by direct summand in $\RdiTriples(X)$ and, according to Lemma \ref{lem:lambdagraded}\eqref{lem:lambdagraded3},
\begin{equation}\label{eq:dirsumWRd}
\text{$\RgscTriples(X)$ is stable by direct summand in $\RscTriples(X)$.}
\end{equation}

Let us denote by $\RdiTriples(\tauX,(*\tauX_0))$ the category consisting of localized integrable $\iota$-triples on $(\tauX,(*\tauX_0))$. The subcategories factorize as follows:
\begin{equation}\label{eq:functorresc}
\begin{array}{c}
\xymatrix@R=1mm@C=1.5mm{%
\RgscTriples(X)\ar@{|->}[dd]\\ \\
\RscTriples(X)\ar@{|->}[r]& \RdiTriples(\tauX,(*\tauX_0))\ar@{|->}[rrr]^-{\tau=1}&&&\RdiTriples(X)\\
\taucT^\resc\ar@{|->}[r]&\taucT\!=\!(\taucM',\taucM'',\tauiC)\ar@{|->}[rrr]&&&\icT\!=\!(\cM',\cM'',\iC).
}
\end{array}
\end{equation}

\skpt
\begin{lemme}[Adjunction and Tate twist by an integer]\label{lem:strongrescalingtateadj}
The categories $\RgscTriples(X)$ and $\RscTriples(X)$ are stable by the adjunction functor and the Tate twist functor by an integer in a way compatible with those existing on $\RdiTriples(\tauX,(*\tauX_0))$.

Moreover, for $k\in\ZZ$, if $F^\irr_{\alpha+\bbullet}\ccM$ is the irregular Hodge filtration associated with $\icT^\resc$, then $F^\irr_{\alpha-k+\bbullet}\ccM$ is that attached to $\icT^\resc(k)$.
\end{lemme}

\begin{proof}
The case of adjunction is obvious, as well as that of the Tate twist by an integer, according to Lemma \ref{lem:rescalingtateadj}\eqref{lem:rescalingtateadj2}, since $\taucT(k)=(\hb^{-k}\taucM',\hb^k\taucM'',\tauiC)$.

The Rees module for the irregular Hodge filtration attached to $\icT^\resc(k)$ is then equal to $\hb^kR_{F^\irr_{\alpha+\bbullet}}\ccM$, hence the last statement.
\end{proof}

\begin{remarque}[Changing the integrable structure]\label{rem:changingintstruct}
Let us change the integrable structure of $\cT$ as in Examples \ref{exem:equivintstruct} and \ref{exem:equivintstructT} and let us use the variant \eqref{eq:equivintstructT}. Let us assume furthermore that $b$ is real and that $c=-\ov a$. This corresponds to twisting~$\cT$ by a pure polarized integrable twistor structure of weight zero. Let us denote by $\wt\cM',\wt\cM''$ the $\cR_\cX$-modules $\cM',\cM''$ endowed with this new action of $\hb^2\partial_\hb$, that is, \eg $\hb^2\partial_\hb\cdot m'=(\hb^2\partial_\hb+a+b\hb)m'$, etc. The action of $\tau\partiall_\tau$ on $\tauwtcM'$ is by $\tau\partiall_\tau-a\tau-b\hb$ and that of $\hb^2\partiall_\hb$ is by $\hb^2\partiall_\hb+a\tau+b\hb$. If~$\taucM'$ is strictly $\RR$-specializable and regular along $\tau=0$, so is $\tauwtcM'$, whose $\tauV$-filtration is the $\tauV$-filtration of~$\taucM'$ shifted by~$b$. A similar statement holds for $\tauwtcM''$. Lastly, $\thetaiC$ is changed to $\thetaiwtC:=|\hb/\tau|^{2b}\rme^{(\ov{a\tau/\hb}-a \tau/\hb)}\cdot\thetaiC$, which still has moderate growth along $\tauX_0$ if $\thetaiC$ has so. As a consequence, twisting $\cT$ by a pure polarized integrable twistor structure of rank one and weight zero induces an endofunctor of the category $\RgscTriples(X)$.
\end{remarque}

\section{Irregular mixed Hodge modules}\label{sec:MTrM}

\subsection{General construction}

We will now use the theory of mixed twistor $\cD$-modules, as developed in \cite{Mochizuki11}. However, we will use its ``$\iota$-version'' considered in Chapter \ref{part:1}. Let $\iMTM^\intt(\tauX)$ be the category of integrable mixed $\iota$-twistor $\cD$-modules on $\tauX$. It is a full subcategory of the category $\WRdiTriples(\tauX)$ of integrable $W$-filtered $\cR$-triples on~$\tauX$. We~will only consider the full subcategory $\iMTM^\intt(\tauX,[*\tauX_0])$ of \emph{localized} integrable mixed twistor modules $(\taucT,W_\bbullet)$, such that $\taucT=\taucT[*\tauX_0]$ (\cf Remark \ref{rem:localization}). Recall that we have a ``stupid'' localization functor \eqref{eq:stupidloc}
\begin{equation}\label{eq:stupidloctau}
\iMTM^\intt(\tauX,[*\tauX_0])\mto\WRdiTriples(\tauX,(*\tauX_0))
\end{equation}
which sends $W_\ell\,\taucT$ to $W_\ell[\taucT(*\tauX_0)]:=[W_\ell\,\taucT](*\tauX_0)$. By \cite[Lem.\,11.2.4]{Mochizuki11}, it~is fully faithful. We denote by $\iMTM^\intt(\tauX,(*\tauX_0))$ its essential image. Therefore, the ``stupid'' localization functor $\iMTM^\intt(\tauX,[*\tauX_0])\mto\iMTM^\intt(\tauX,(*\tauX_0))$ is an equivalence (\cf Remark \ref{rem:localization}). An object $(\taucT,W_\bbullet)$ in $\iMTM^\intt(\tauX,(*\tauX_0))$ comes thus from a unique (up to isomorphism) object in $\iMTM^\intt(\tauX,[*\tauX_0])$. We will not make the distinction between both.

\begin{remarque}[Condition near $\tau=0$]\label{rem:tau=0}
The mixed twistor condition that we want to impose to an object of $\WRdiTriples(\tauX,(*\tauX_0))$ is important only in some neighbourhood of $|\tau|\leq1$ (for example, the strict specializability along $\tauX_0$ or the restriction to $\tau=1$), and the behaviour for $|\tau|$ large is not relevant. In the following, $\iMTM^\intt(\tauX,(*\tauX_0))$ should be taken with this wider interpretation. This corresponds to the notion of a ``Sabbah orbit'' in \cite{H-S06}, when $X$ is reduced to a point and when we only consider polarized objects which are pure of weight $0$. On the other hand, the notion of nilpotent orbit in \loccit, which controls the behaviour when $|\tau|\to\infty$, is not relevant here.
\end{remarque}

\begin{definition}[Mixed twistor-rescaled $\cD$-modules]\label{def:iMTMresc}
The category \index{$IMTMresc$@$\iMTM^\resc(X)$}$\iMTM^\resc(X)$ of \index{mixed twistor-rescaled $\cD$-module}mixed twistor-rescaled $\cD$-modules is the full subcategory of $\WRscTriples(X)$ whose objects are filtered objects $(\icT,W_\bbullet)$ of the category $\RscTriples(X)$ such that the rescaled object $(\taucT,W_\bbullet)$ is an object of $\iMTM^\intt(\tauX,(*\tauX_0))$, which is a full subcategory of $\WRdiTriples(\tauX,(*\tauX_0))$.
\end{definition}

\begin{remarque}
Since $(\icT,W_\bbullet)$ is recovered from $(\taucT,W_\bbullet)$ by restricting to $\tau=1$, we conclude that $(\icT,W_\bbullet)$ is an object of $\iMTM^\intt(X)$
\end{remarque}

\skpt
\begin{definition}[Irregular mixed Hodge modules]\label{def:IrrMHMX}
\begin{enumerate}
\item\label{def:IrrMHMX1}
The category of \index{mixed Hodge module!irregular --}irregular mixed Hodge modules \index{$IrrMHM$@$\IrrMHM(X)$}$\IrrMHM(X)$ is defined similarly as a full subcategory of the category $\WRgscTriples(X)$, and consists of objects in $\iMTM^\resc(X)$ which are \emph{graded}.
\item
If $\kk$ is a subfield of $\RR$, the objects in the category \index{$IrrMHMk$@$\IrrMHM(X,\kk)$}$\IrrMHM(X,\kk)$ are pairs $((\icT,W_\bbullet),(\cP_\kk,W_\bbullet))$ in $\iMTM^\intt_\good(X,\kk)$ (\cf Section~\ref{sec:realratMTM}) such that $(\icT,W_\bbullet)$ is an object in $\IrrMHM(X)$.
\end{enumerate}
\end{definition}

\skpt
\begin{proposition}\label{prop:MTMresc}
\begin{enumerate}

\item\label{prop:MTMresc2}
The category $\iMTM^\resc(X)$ is an abelian full subcategory of $\iMTM^\intt(X)$, which is stable by direct summand in $\iMTM^\intt(X)$.

\item\label{prop:MTMresc3}
The category $\IrrMHM(X)$ is an abelian full subcategory of $\iMTM^\resc(X)$, which is stable by direct summand in $\iMTM^\resc(X)$, hence in $\iMTM^\intt(X)$. Each morphism~$\lambda$ induces a strictly bifiltered morphism
\[
i_{\hb=1}^*\lambda:(\ccM_1,F^\irr_\bbullet\ccM_1,W_\bbullet\ccM_1)\to(\ccM_2,F^\irr_\bbullet\ccM_2,W_\bbullet\ccM_2).
\]
\item\label{prop:MTMresc4}
The category $\IrrMHM(X,\kk)$ is an abelian full subcategory of $\iMTM^\intt_\good(X,\kk)$ stable by direct summand.
\end{enumerate}
\end{proposition}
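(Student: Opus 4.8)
The plan is to establish the three assertions in parallel, in each case reducing the abelianness and stability-by-direct-summand statements to the corresponding facts at the level of the rescaled objects on $\tauX$, where we may invoke the structure theory of $\iMTM^\intt(\tauX,(*\tauX_0))$ from Chapter \ref{part:1} together with the elementary strictness results of Section \ref{sec:wellresc}. First I would treat \eqref{prop:MTMresc2}. The rescaling functor $\icT\mto\taucT$ of Definition \ref{def:rescalingRtriple}, together with restriction to $\tau=1$, gives an equivalence between $\RscTriples(X)$ (resp.\ its $W$-filtered version) and a full subcategory of $\WRdiTriples(\tauX,(*\tauX_0))$, as recorded in \eqref{eq:functorresc}; by definition $\iMTM^\resc(X)$ is the full subcategory whose rescaled objects lie in $\iMTM^\intt(\tauX,(*\tauX_0))$. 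Since $\iMTM^\intt(\tauX)$ is an abelian full subcategory of $\WRdiTriples(\tauX)$ (Definition \ref{def:iotamtm} and Theorem \ref{th:equivalenceintegrable}) and the ``stupid'' localization \eqref{eq:stupidloctau} is fully faithful with essential image $\iMTM^\intt(\tauX,(*\tauX_0))$, it suffices to check that a morphism $\lambda$ between objects of $\iMTM^\resc(X)$ has kernel, image and cokernel again in $\iMTM^\resc(X)$: the only point to verify is that these three objects are still well-rescalable, i.e.\ that their rescalings are strictly $\RR$-specializable and regular along $\tauX_0$ and that the sesquilinear pairings stay moderate. This is exactly Proposition \ref{prop:criterionabelian}, whose hypotheses \eqref{prop:criterionabeliana}--\eqref{prop:criterionabelianb} are automatic here: strictness of $\cM',\cM''$ and of $\lambda$ follows because they underlie objects of $\iMTM^\intt$, and strict specializability of $\taulambda$ along $\tau=0$ follows from the fact that $\taulambda$ is a morphism in $\iMTM^\intt(\tauX)$, hence $V$-strict along $\tauX_0$ by Section \ref{subsec:shortreminder}. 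Stability by direct summand in $\iMTM^\intt(X)$ then combines the stability by direct summand of $\iMTM^\intt(X)$ in $\WRdiTriples(X)$ (Remark \ref{rem:directsummand} and Theorem \ref{th:equivalenceintegrable}) with the stability \eqref{eq:dirsumWRd}-type statement that $\RscTriples(X)$ is stable by direct summand in $\RdiTriples(X)$; a summand of an object whose rescaling is in $\iMTM^\intt(\tauX,(*\tauX_0))$ rescales to a summand, hence is again in that category.

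Next, for \eqref{prop:MTMresc3}, the inclusion $\IrrMHM(X)\subset\iMTM^\resc(X)$ is by Definition \ref{def:IrrMHMX}, and I must show this full subcategory is abelian and stable by direct summand, and that morphisms induce bi-strictly filtered morphisms on $\Xi_{\DR}$. Given a morphism $\lambda$ between objects of $\IrrMHM(X)$, we already know from \eqref{prop:MTMresc2} that $\cK:=\ker\lambda$, $\cI:=\image\lambda$, $\cC:=\coker\lambda$ lie in $\iMTM^\resc(X)$; the extra input is Assumption \eqref{prop:criterionabelianc} of Proposition \ref{prop:criterionabelianfiltered}, namely that $\cM_1',\cM_1'',\cM_2',\cM_2''$ are graded in the sense of Definition \ref{def:gRrescdata}, which holds since the objects are in $\IrrMHM(X)$. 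Proposition \ref{prop:criterionabelianfiltered}\eqref{prop:criterionabelian4} then says $\cK,\cI,\cC$ are graded, so they lie in $\IrrMHM(X)$, giving abelianness; and \eqref{prop:criterionabelian6} of that proposition, applied to $\cM'$ and to $\cM''$ separately, gives the strictness of $i_{\hb=1}^*\lambda$ with respect to $F^\irr_\bbullet$. Compatibility with the weight filtration $W_\bbullet\ccM$ and the strictness of the induced map on $W$ follows from the corresponding properties of morphisms in $\iMTM^\intt(\tauX)$ (the weight filtration behaves as in Mochizuki's theory, Remark \ref{rem:realTS}\eqref{rem:realTS2} and the usual strictness of morphisms of mixed twistor modules with respect to $W$), restricted to $\tau=1$ and then to $\hb=1$; bi-strictness, i.e.\ strictness of the associated $W$-graded maps with respect to the induced $F^\irr$, is then obtained by applying the $F^\irr$-strictness statement \eqref{prop:criterionabelian6} to each $\gr^W_\ell$, which is again an object of $\IrrMHM(X)$ by the stability just proved. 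Stability by direct summand in $\iMTM^\resc(X)$ is Lemma \ref{lem:lambdagraded}\eqref{lem:lambdagraded3}: a summand of a graded well-rescalable object is graded.

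Finally \eqref{prop:MTMresc4} is a formal consequence. An object of $\IrrMHM(X,\kk)$ is a pair $((\icT,W_\bbullet),(\cP_\kk,W_\bbullet))$ with $(\icT,W_\bbullet)\in\MTM^\intt_\good(X,\kk)$ (Section \ref{sec:realratMTM}) whose underlying $\iota$-triple lies in $\IrrMHM(X)$. Morphisms are pairs of compatible morphisms of the twistor data and of the $\kk$-perverse-sheaf data. Since the forgetful functor to $\iMTM^\intt_\good(X,\kk)$ and to $\iMTM^\intt(X)$ is faithful and exact, and since $\iMTM^\intt_\good(X,\kk)$ is itself an abelian full subcategory of $\WRdiTriples(X)$ with a $\kk$-structure preserved by the standard functors (as recalled in Section \ref{sec:realratMTM} following \cite[\S13.4]{Mochizuki11}), kernels, images and cokernels computed in $\iMTM^\intt_\good(X,\kk)$ have underlying $\iota$-triples computed in $\IrrMHM(X)$ by \eqref{prop:MTMresc3}, hence again carry the ``graded well-rescalable'' property; and the $\kk$-perverse components behave as for real/$\kk$-mixed Hodge modules. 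Stability by direct summand follows from \eqref{prop:MTMresc3} together with stability by direct summand of $\iMTM^\intt_\good(X,\kk)$.

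I expect the main obstacle to be purely organizational rather than deep: ensuring that the hypotheses \eqref{prop:criterionabeliana}--\eqref{prop:criterionabelianc} of Propositions \ref{prop:criterionabelian} and \ref{prop:criterionabelianfiltered} are genuinely automatic for morphisms between objects of $\iMTM^\resc(X)$, respectively $\IrrMHM(X)$ --- in particular the strict specializability of $\taulambda$ along $\tau=0$, which is the one place where the full force of the equivalence $\iMTM^\intt(\tauX)\simeq\MTM^\intt(\tauX)$ and Mochizuki's $V$-strictness of morphisms of mixed twistor $\cD$-modules is invoked, and the verification that the weight filtration and the irregular Hodge filtration interact bi-strictly, which requires running the $F^\irr$-strictness statement through each $W$-graded piece.
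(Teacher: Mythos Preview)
Your proposal is correct and follows essentially the same approach as the paper: abelianity via Propositions \ref{prop:criterionabelian} and \ref{prop:criterionabelianfiltered} (with the hypotheses verified exactly as you indicate), $W$-strictness from membership in $\iMTM^\intt(X)$, $F^\irr$-strictness from \ref{prop:criterionabelianfiltered}\eqref{prop:criterionabelian6}, and bi-strictness by passing to $\gr^W_\ell$. Your write-up is in fact more thorough than the paper's very terse proof, which does not spell out the verification of hypotheses \eqref{prop:criterionabeliana}--\eqref{prop:criterionabelianb}, does not explicitly treat stability by direct summand or part \eqref{prop:MTMresc4}, and handles bi-strictness by the one-line observation that $F^\irr_\bbullet(\gr_\ell^W\ccM)$ is the filtration induced by $F^\irr_\bbullet\ccM$ (again from \ref{prop:criterionabelianfiltered}\eqref{prop:criterionabelian6}).
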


\begin{proof}
Abelianity follows from Propositions \ref{prop:criterionabelian} and \ref{prop:criterionabelianfiltered}, together with the property that \ref{prop:criterionabelian}\eqref{prop:criterionabelianb} is fulfilled in $\iMTM^\intt(\tauX,(*\tauX_0))$ and that every morphism between graded well-rescalable objects is graded.

The only point remaining to be proved is the bistrictness of $i_{\hb=1}^*\lambda$. We~note that, since $\lambda$ is a morphism in $\iMTM^\intt(X)$, the induced morphism $i_{\hb=1}^*\lambda$ is strictly filtered with respect to $W_\bbullet$. On the other hand, Proposition \ref{prop:criterionabelianfiltered} implies that it is strictly filtered with respect to $F^\irr_\bbullet$. For each $\ell$, $F^\irr_\bbullet(\gr_\ell^W\!\!\ccM)$ is the filtration naturally induced by $F^\irr_\bbullet\ccM$, as follows from Proposition \ref{prop:criterionabelianfiltered}\eqref{prop:criterionabelian6}, hence the desired assertion.
\end{proof}

\subsection{Examples of objects of \texorpdfstring{$\IrrMHM(\mathrm{pt})$}{iMTMgresc}}\label{subsec:imts}

\subsubsection*{Pure polarized integrable twistor structures of weight zero and rank one}
Let us consider such an object (also called a polarizable non-commutative Hodge structure of weight zero) as given by the trivial bundle $\cO_{\PP^1}$ equipped with the connection $\nabla=\rd+(a/\hb+b-\ov a\hb)\rd\hb/\hb$ with $b$ real. We can also present it as in Example \ref{exem:equivintstruct} with $\cM'=\cM''=\cO_{\CC_\hb}$ with $c=-\ov a$, and $C_{\bS}=\id$, and work with the isomorphic object \eqref{eq:equivintstruct}. We can interpret this object as the result of the change of the integrable structure of $((\cO_{\CC_\hb},\hb^2\partial_\hb),(\cO_{\CC_\hb},\hb^2\partial_\hb),C_{\bS}=\id)$. In such a way, $\cM'=\cO_{\CC_\hb}\otimes_{\CC[\hb]}M'$, where $M'=\CC[\hb]$ is defined as in Example \ref{subsec:example1} from Deligne's meromorphic extension at $\hb=\infty$, and similarly for $\cM''$, so that $\cM',\cM''$ are well-rescalable and graded, according to Proposition \ref{prop:Xpoint}. As in Remark \ref{rem:changingintstruct}, we find $\tauiwtC=|\hb/\tau|^{2b}\rme^{(\ov{a\tau/\hb}-a \tau/\hb)}$. Then the restriction of this object to $\tau=\tau_o\neq0$ remains a pure polarized integrable twistor structure of weight zero, since this operation reduces to changing $a$ into $a\tau_o$ (recall that multiplication by the positive constant $|\tau_o|^{-2b}$ leads to an isomorphic object, \cf Footnote \ref{foot:positiveconstant} on Page \pageref{foot:positiveconstant}). We conclude:

\begin{proposition}
Any polarized non-commutative Hodge structure of rank one induces an object of $\IrrMHM(\mathrm{pt})$.\qed
\end{proposition}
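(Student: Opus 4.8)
The strategy is to reduce everything to the case already treated in Proposition~\ref{prop:Xpoint} and Remark~\ref{rem:changingintstruct}. First I would fix the presentation of the given rank-one polarized non-commutative Hodge structure of weight zero: by Example~\ref{exem:equivintstruct} (with $c=-\ov a$, so that the object is genuinely a \emph{polarized} integrable twistor structure and not merely an integrable one), it is isomorphic to the object~\eqref{eq:equivintstruct}, obtained from the trivial object $\bigl((\cO_{\CC_\hb},\hb^2\partial_\hb),(\cO_{\CC_\hb},\hb^2\partial_\hb),C_{\bS}=\id\bigr)$ by changing the integrable structure via $\lambda'(\hb)=a+b\hb$, $\lambda''(\hb)=-\ov c+\ov b\hb=a+b\hb$ with $b\in\RR$. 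Since $\cM'=\cO_{\CC_\hb}=\cO_{\CC_\hb}\otimes_{\CC[\hb]}\CC[\hb]$, Deligne's meromorphic extension at $\hb=\infty$ gives $M'=M''=\CC[\hb]$ with regular singularity at infinity, so Proposition~\ref{prop:Xpoint} applies: $\cM',\cM''$ are strict, well-rescalable and graded, with $\tauM{}'$ strictly $\RR$-specializable and regular along $\tau=0$, and the irregular Hodge filtration is the one induced by the $V$-filtration at $\hb=\infty$ of the (trivial) monodromy.

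Next I would compute the rescaled $\iota$-sesquilinear pairing. By Remark~\ref{rem:changingintstruct} (whose hypotheses $b\in\RR$, $c=-\ov a$ are exactly what we imposed), the rescaled pairing is $\tauiwtC=|\hb/\tau|^{2b}\rme^{(\ov{a\tau/\hb}-a\tau/\hb)}$ starting from $\thetaiC=\id$; this has moderate growth along $\tauX_0$ together with all its derivatives, so the object is well-rescalable as an $\iota$-triple, and indeed graded well-rescalable since $\taucM',\taucM''$ are graded. Thus $(\icT,W_\bbullet)$ (with $W$ jumping at $0$) is an object of $\WRgscTriples(\pt)$. It remains to check the mixed-twistor condition of Definition~\ref{def:iMTMresc}, namely that the rescaled object $(\taucT,W_\bbullet)$ lies in $\iMTM^\intt(\tauX,(*\tauX_0))$ in the wide sense of Remark~\ref{rem:tau=0}: since we only need this in a neighbourhood of $|\tau|\le 1$ and away from $\tau=0$ it is a smooth family, the point is to verify that near $\tau=0$ the triple is (the localization of) an admissible variation of integrable mixed — here pure polarized of weight $0$ — twistor structure on $(\CC_\tau,\{0\})$.

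The key step, and the expected main obstacle, is precisely this last verification: one must check that for each $\tau_o\neq 0$ the restriction $i_{\tau=\tau_o}^*\taucT=\mu_{\tau_o}^+\icT$ is again a pure polarized integrable twistor structure of weight~$0$, and that the family extends tamely across $\tau=0$. For the fibrewise statement I would argue, following Remark~\ref{rem:changingintstruct}, that $\mu_{\tau_o}^+\icT$ is obtained from the trivial polarized object by the integrable-structure change with parameter $a\tau_o$ (the factor $|\tau_o|^{-2b}$ in the pairing only rescales $C_{\bS}$ by a positive constant, hence yields an isomorphic object by Footnote~\ref{foot:positiveconstant}); being of the form~\eqref{eq:equivintstruct} with $c=-\ov{(a\tau_o)}$ and $b\in\RR$, it is again a polarized integrable twistor structure of weight zero. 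For the behaviour near $\tau=0$ one uses that $\taucM',\taucM''$ are strictly $\RR$-specializable and regular along $\tau=0$ (Proposition~\ref{prop:Xpoint}) and that the moderate pairing $\tauiwtC$ is, up to the monomial $|\hb/\tau|^{2b}$, the exponential of a function bounded on the relevant Stokes sectors, so the associated Stokes-filtered local system is the trivial one and the resulting object is a (localized) admissible variation of integrable mixed twistor structure on $(\CC_\tau,\{0\})$ — this is where one invokes the comparison with the notion of ``Sabbah orbit'' of \cite{H-S06}. Granting this, $(\taucT,W_\bbullet)\in\iMTM^\intt(\CC_\tau,(*\{0\}))$, so $(\icT,W_\bbullet)\in\iMTM^\resc(\pt)$ by Definition~\ref{def:iMTMresc}, and since it is graded, $(\icT,W_\bbullet)\in\IrrMHM(\pt)$ by Definition~\ref{def:IrrMHMX}\eqref{def:IrrMHMX1}, which is the assertion. \qed
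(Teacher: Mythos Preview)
Your proposal is correct and follows essentially the same approach as the paper: present the object via Example~\ref{exem:equivintstruct} with $c=-\ov a$ and $b$ real, invoke Proposition~\ref{prop:Xpoint} for graded well-rescalability of $\cM',\cM''$, compute the rescaled pairing via Remark~\ref{rem:changingintstruct}, and observe that restriction to $\tau=\tau_o\neq0$ amounts to replacing $a$ by $a\tau_o$ (the factor $|\tau_o|^{-2b}$ being harmless by Footnote~\ref{foot:positiveconstant}), so the fibrewise object stays in the same class. You are somewhat more explicit than the paper about the admissibility near $\tau=0$, which the paper leaves implicit in the rank-one setting; this extra care is not misplaced, though in rank one the regularity along $\tau=0$ from Proposition~\ref{prop:Xpoint} together with the moderate-growth formula for $\tauiwtC$ already makes it straightforward.
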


\begin{corollaire}\label{cor:grescequivint}
Let $\icT$ be an object of $\IrrMHM(X)$. Then any object of $\iMTM^\intt(X)$ which has an equivalent integrable structure, and so defines the same object in $\MTM(X)$ (\cf Remark \ref{rem:RdTriples}), gives also rise to an object in $\IrrMHM(X)$.
\end{corollaire}

\begin{proof}
Such an object is obtained from $\icT$ by tensoring with a rank-one integrable polarizable twistor structure $\icT_1$ which is pure of weight zero, according to Remark \ref{rem:changingintstruct}. As noticed in this remark, the graded-rescaling property is preserved. We then have ${}^\tau\!(\icT\otimes\icT_1)\simeq\taucT\otimes\taucT_1$, which is an object of $\iMTM^\intt(\tauX,(*\tauX_0))$.
\end{proof}

\begin{remarque}
By changing the integrable structure in such a way, one can shift the irregular Hodge filtration of $\ccM$ by an arbitrary real number.
\end{remarque}

\subsubsection*{Pure polarized integrable twistor structures of weight zero of any rank}
Such an object $\cT=(\cM',\cM'',C_{\bS})$ is described as follows (\cf\cite[\S7.2.b]{Bibi01c}). There is a complex vector space $H$ equipped with a positive definite Hermitian pairing $h:H\otimes_\CC\ov H\to\CC$, such that $\cM'=\cM''=H\otimes_\CC\cO_{\CC_\hb}=:\cM$ and the pairing
\[
C_{\bS}:(H\otimes\cO_{\bS})\otimes(\ov H\otimes\cO_{\bS})\to\cO_{\bS}
\]
restricted to $(H\otimes1)\otimes(\ov H\otimes1)$ is equal to $h$. The connection is given by the formula (for $\cM$)
\begin{equation}\label{eq:UQU}
(\hb^2\partial_\hb)_{|H\otimes1}=U\otimes1-Q\otimes\hb-U^*\otimes\hb^2,
\end{equation}
where $U$ and $Q$ are endomorphisms of $H$, $Q$ is self-adjoint with respect to $h$ and $U^*$ is the $h$-adjoint of $U$. On the other hand, the formula for $\iC$ and that for the Deligne meromorphic extension of $\cM$ is not explicit, in contrast with the case of rank one.

\begin{proposition}\label{prop:sorbits}
Let $\icT$ be a pure polarized integrable twistor structure of weight zero. Assume moreover that $\thetacT$ is a variation of pure polarized integrable twistor structure of weight zero on $\CC_\theta^*$. Then $\icT$ is an object of $\IrrMHM(\mathrm{pt})$.
\end{proposition}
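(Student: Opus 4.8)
The strategy is to verify the three defining conditions of an object of $\IrrMHM(\pt)$ for $\icT$: (i) the underlying $\cR^\intt$-modules $\cM',\cM''$ are well-rescalable, (ii) the rescaled $\iota$-sesquilinear pairing $\tauiC$ is moderate along $\tau=0$ and the rescaled object $(\taucT,W_\bbullet)$ lies in $\iMTM^\intt(\CC_\tau,(*0))$, and (iii) $\cM',\cM''$ are graded. Since $\icT$ is pure of weight $0$, the filtration $W_\bbullet$ is trivial, so only the twistor and rescalability conditions are at stake.

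First I would address the rescalability and the condition near $\tau=0$. The hypothesis is that $\thetacT$ is a variation of pure polarized integrable twistor structure of weight zero on $\CC_\theta^*$. This is exactly the ``Sabbah orbit'' situation of \cite{H-S06} (in the pure weight-$0$ polarized case), and the key input is \cite[Th.\,7.3]{H-S06}: such a variation, written in the coordinate $\tau=1/\theta$, extends across $\tau=0$ as a \emph{tame} (in particular strictly $\RR$-specializable and regular along $\tau=0$) object; more precisely it underlies a pure polarized integrable twistor $\cD$-module on $\CC_\tau$ (after the localization $(*0)$, which is the relevant notion in view of Remark \ref{rem:tau=0}). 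Concretely, one uses the explicit form \eqref{eq:UQU} of the connection and the computation in Example \ref{subsec:example1}: the Deligne meromorphic extension $M$ of $\cM$ at $\hb=\infty$ has regular singularity, the rescaled module $\tauM=\bigoplus_{k\in\ZZ}\tau^k\otimes M$ carries the $V$-filtration $\tauU_\beta\tauM=\bigoplus_k\tau^k\otimes(V_{\beta+k}\cap M)$ which is $\CC[\tau,\hb]$-coherent and has strict graded pieces, exactly as in the proof of Proposition \ref{prop:Xpoint}. This shows $\cM',\cM''$ are well-rescalable. For the pairing, I would invoke the ``nilpotent/Sabbah orbit'' limiting-pairing estimates of \cite{H-S06}: moderate growth of $\thetaiC$ along $\tau=0$ is precisely the content of the tameness statement, so $\tauiC$ is moderate, and strict specializability of the pairing follows from Proposition \ref{prop:Vtheta}\eqref{prop:Vtheta3} together with the twistor property at $\tau=0$.

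Next I would verify that $(\taucT,W_\bbullet)$ is an object of $\iMTM^\intt(\CC_\tau,(*0))$. Away from $\tau=0$ the rescaled object is, by hypothesis, a variation of pure polarized integrable twistor structure of weight $0$, hence an object of $\iMTM^\intt(\CC^*_\tau)$ (a smooth pure polarized integrable twistor $\cD$-module of weight $0$ is a mixed, indeed pure, twistor $\cD$-module, by \cite{Mochizuki11}, via the $\iota$-version of Chapter \ref{part:1}). To extend this across $\tau=0$ as a localized mixed twistor $\cD$-module, one uses that a variation of polarized pure twistor structure which is tame along a divisor admits a canonical prolongation as a pure polarized twistor $\cD$-module: this is the twistor analogue of Simpson's result, established in \cite{Mochizuki08} (wild case tame here), and the localized version $\taucT[*0]$ then lies in $\iMTM^\intt(\CC_\tau,[*0])\simeq\iMTM^\intt(\CC_\tau,(*0))$ via Remark \ref{rem:localization}. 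The tameness needed is supplied again by \cite[Th.\,7.3]{H-S06}. Hence $\icT=i_{\tau=1}^*\taucT\in\iMTM^\resc(\pt)$.

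Finally, the grading property. Since $X$ is a point, Remark \ref{rem:filteredmorphism}\eqref{rem:filteredmorphism2} and the explicit description in Example \ref{subsec:example1} apply: the computation \eqref{eq:gradingpoint} shows that $\tauV_\alpha\tauM/(\tau-\hb)\tauV_\alpha\tauM=\bigoplus_k\hb^k\,(V_{\alpha+k}\cap M)/(V_{\alpha+k}\cap(1-\hb)M)$, and this is manifestly equal to $\cO_{\CC_\hb}\otimes_{\CC[\hb]} R_{F^\irr_{\alpha+\bbullet}}\ccM$ with its grading forgotten, which is precisely condition \eqref{eq:twrescaled}. So $\cM',\cM''$ are graded, and therefore $\icT\in\IrrMHM(\pt)$. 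I expect the main obstacle to be the careful justification that the Sabbah-orbit hypothesis (variation of polarized pure twistor structure on $\CC^*_\theta$) forces the \emph{tame} (regular, strictly $\RR$-specializable) behaviour at $\tau=0$ together with the moderateness of the limiting pairing — this is where the substantive analytic input of \cite{H-S06} enters, and everything else is a bookkeeping translation through the functors of Sections \ref{sec:rescaling}--\ref{sec:wellresc}.
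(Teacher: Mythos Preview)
Your overall strategy matches the paper's: Proposition \ref{prop:Xpoint} gives the graded well-rescaling property of $\cM',\cM''$; tameness of the harmonic bundle at $\tau=0$ (the paper cites \cite[Lem.\,4.6]{H-S06} rather than Th.\,7.3) together with the Simpson--Mochizuki extension theorem \cite{Simpson90,Mochizuki07} yields a pure polarized integrable twistor $\cD$-module $\wt\taucT=(\wt\taucM',\wt\taucM'',\wt\tauiC)$ on $\CC_\tau$ whose localization lies in $\iMTM^\intt(\CC_\tau,(*0))$.

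There is, however, a genuine gap. The canonical prolongation produces $\wt\taucT$, but you must still show that its localized $\cR$-components $\wt\taucM(*0)$ coincide with the rescaled modules $\taucM=\mu^*\cM$. These are a~priori two different $\cO_{\CC_\tau\times\CC_\hb}[1/\tau]$-modules that agree on $\CC^*_\tau\times\CC_\hb$. The paper closes this by a Hartogs argument: both are $\cO_{\CC_\tau\times\CC_\hb}[1/\tau]$-free (for $\wt\taucM$ by \cite[Lem.\,15.6]{Mochizuki07}, for $\taucM$ by the explicit computation in the proof of Proposition \ref{prop:Xpoint}); they also agree on $\{0\}\times\CC^*_\hb$, since there both are the Deligne meromorphic extension along $\tau=0$ of the same flat bundle; freeness then extends the isomorphism on $\CC^{2*}_{\tau,\hb}:=\CC_\tau\times\CC_\hb\moins\{(0,0)\}$ across the origin. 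Without this identification you cannot conclude that $\taucT$ itself (rather than $\wt\taucT(*0)$) lies in $\iMTM^\intt(\CC_\tau,(*0))$, nor that $\tauiC$ is moderate. Your appeal to ``limiting-pairing estimates'' from \cite{H-S06} and to Proposition \ref{prop:Vtheta}\eqref{prop:Vtheta3} does not fill this: the latter concerns specialization along a function on $X$, not along $\tau=0$, and in the paper's argument moderateness of $\tauiC$ follows from the identification $\taucT=\wt\taucT(*0)$, not from a separate estimate.
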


\begin{remarque}
In \cite{H-S06}, $\cT$ is called a ``Sabbah orbit'' when the condition in the proposition is fulfilled for $|\theta|\gg0$.
\end{remarque}

\begin{proof}
The graded well-rescaling property of $\cM:=\cM',\cM''$ follows from Proposition \ref{prop:Xpoint}. We need to check that $\thetaiC$ has moderate growth when restricted to $\taucM',\taucM''$ and that the resulting object $\taucT$ is in $\iMTM^\intt(\tauX,(*\tauX_0))$.

For this point, we notice as in \cite[Lem.\,4.6]{H-S06} that the harmonic bundle corresponding to the variation $\thetacT$ is tame at $\tau=0$. It follows from \cite{Simpson90,Mochizuki07} that it extends as polarizable integrable twistor $\cD$-module $\wt\taucT=(\wt{\taucM'},\wt{\taucM''},\wt\tauiC)$ on $\CC_\tau$, which is pure of weight zero. The localized object $\wt\taucT[*\{\tau=\nobreak0\}]$ is thus an object of $\iMTM^\intt(\CC_\tau)$. It is now enough to identify its $\cR$-components with $\taucM',\taucM''$.

We have already remarked in the proof of Proposition \ref{prop:Xpoint} that $\tauM$ is $\CC[\hb,\tau,\taum]$-free. On the other hand, $\wt\taucM$ is $\cO_{\CC_\tau\times\CC_\hb}[1/\tau]$-free, as follows from \cite[Lem.\,15.6]{Mochizuki07}, after the definition 15.1.1 in \loccit, which identifies $^{\scriptscriptstyle\square}\cE$ with our $\wt\taucM$. See also \cite[Cor.\,4.15]{Mochizuki02} and \cite[Cor.\,5.3.1(1)]{Bibi01c}. So, both $\taucM$ and $\wt\taucM$ are $\cO_{\CC_\tau\times\CC_\hb}[1/\tau]$-free.

By construction, $\taucM$ and $\wt\taucM$ coincide on the open set $\CC^*_\tau\times\CC_\hb$. On the other hand, they also coincide on $\{0\}\times\CC^*_\hb$: indeed, on $\CC_\tau\times\CC^*_\hb$, they are both identified with the Deligne meromorphic extension along the divisor $\tau=0$ of the flat bundle with connection $\taucM_{|\CC^*_\tau\times\CC^*_\hb}=\wt\taucM_{|\CC^*_\tau\times\CC^*_\hb}$. Let us set $\CC^{2*}_{\tau,\hb}:=\CC_\tau\times\CC_\hb\moins\{(0,0)\}$. By Hartogs, the induced isomorphism $\taucM_{|\CC^{2*}_{\tau,\hb}}\isom\wt\taucM_{|\CC^{2*}_{\tau,\hb}}$ extends as an isomorphism on~$\CC^2_{\tau,\hb}$, according to the freeness of~$\taucM$ and $\wt\taucM$.
\end{proof}

\begin{caveat}
Let $U,Q$ be as in \eqref{eq:UQU}. Since $Q$ is self-adjoint, it is semi-simple with real eigenvalues, and defines a direct sum decomposition of $H$ indexed by real numbers (the eigenvalues of $Q$). We can regard this decomposition as a complex Hodge structure. However, the irregular Hodge filtration is not, in general, the filtration associated with this decomposition. This occurs if $U=0$ but, for $U\neq0$, there may be a difference between the jumping indices of the irregular Hodge filtration (called the \emph{spectrum at infinity} in \cite[\S7.1]{Bibi11}) and the jumping indices of the $Q$-decomposition (called the \emph{new supersymmetric index} in \cite[\S7.3]{Bibi11}).

In higher dimensions, if $X$ is a quasi-projective variety, a variation of integrable polarizable pure twistor structure of weight $0$ on $X$ which is tame at infinity defines, via $Q$ and by the rigidity theorem \cite[Th.\,6.2]{H-S08}, a variation of polarized complex Hodge structure, but the irregular Hodge filtration, if defined, does not necessarily fit with the Hodge decomposition. This occurs however if the constant endomorphism~$U$ of the corresponding Higgs bundle is zero.
\end{caveat}

\subsubsection*{Fourier-Laplace transforms of mixed Hodge modules}
We now assume that \hbox{$X=\PP^1$}. Let $\cT$ be an object of $\MTM^\intt(\PP^1,[*\infty])$ coming from a mixed Hodge module. Its \index{mixed Hodge module!Fourier-Laplace transform of a --}Fourier-Laplace transform is an object of $\MTM^\intt(\Afu_\tau)$, according to \cite{Mochizuki11} (\cf Section \ref{subsec:exptwist} for the exponential twist). Moreover, it is tame at $\tau=0$, as follows from \cite[Prop.\,4.1]{Bibi05b}. Comparing the formulas in Section \ref{subsec:example1} and \cite[Lem.\,2.1]{Bibi05}, we identify this Fourier-Laplace transform with the rescaling of its restriction at $\tau=1$. Note that this restriction is nothing but the pushforward by the constant map $\PP^1\to\mathrm{pt}$ of the twisted object $\cT_*^{t/\hb}\otimes\cT$, where $t$ is the affine coordinate on $\Afu=\PP^1\moins\{\infty\}$. We therefore obtain the following result.

\begin{proposition}\label{prop:imts}
With the above notation, the fibre at $\tau=1$ of the Fourier-Laplace transform of a mixed Hodge module on $\PP^1$ is an object of $\IrrMHM(\mathrm{pt})$.\qed
\end{proposition}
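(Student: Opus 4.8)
The plan is to assemble the statement from the ingredients already at hand, the key point being that the Fourier--Laplace transform of $\cT$ is \emph{already} the rescaling of its restriction at $\tau=1$. First I would recall the set-up: $\cT$ is an object of $\MTM^\intt(\PP^1,[*\infty])$ coming from a mixed Hodge module, and $t$ denotes the affine coordinate on $\Afu=\PP^1\moins\{\infty\}$. Applying the exponential twist $\cT_*^{t/\hb}\otimes\cbbullet$ of Section~\ref{subsec:exptwist} and then pushing forward by the constant map $a\colon\PP^1\to\mathrm{pt}$ (which is projective), we obtain by Definition~\ref{def:expMHM} an object of exponential-Hodge origin; more precisely, by Theorem~\ref{th:mtmgresc}\eqref{th:mtmgresc3} the twist $\cT^{t/\hb}\otimes\cT$ lies in $\IrrMHM(\PP^1)$, and by Theorem~\ref{th:mtmgresc}\eqref{th:mtmgresc2} its pushforward $a_\dag^k(\cT^{t/\hb}\otimes\cT)$ lies in $\IrrMHM(\mathrm{pt})$. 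So the object obtained by restriction at $\tau=1$ of the Fourier--Laplace transform is already known to be in $\IrrMHM(\mathrm{pt})$, \emph{provided} one identifies it with that restriction.

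The substance of the argument is therefore the identification: one must check that the Fourier--Laplace transform $\widehat{\cT}$ of $\cT$, which is an object of $\MTM^\intt(\Afu_\tau)$ by \cite{Mochizuki11}, coincides (as a rescaled object, i.e. together with its $\cR_{\taucX}$-module structure \eqref{eq:partiall} and its moderate $\iota$-sesquilinear pairing) with $\taucT_1$, where $\cT_1:=i_{\tau=1}^*\widehat{\cT}=a_\dag^0(\cT_*^{t/\hb}\otimes\cT)$. For this I would proceed in three steps. (1) On the $\cR$-module level: compare the formula for the Fourier transform recalled in \cite[Lem.\,2.1]{Bibi05} with the rescaling formulas \eqref{eq:partiall}; the operator $\partiall_\tau$ acting by $-\hb^2\partial_\hb$ and the operator $\hb\mapsto\tau\hb$ are precisely what the exponential twist by $t\tau/\hb$ followed by pushforward produces, as spelled out in Section~\ref{subsec:example1} (see in particular the computation giving \eqref{eq:Mtau}--\eqref{eq:identifM} in Proposition~\ref{prop:Xpoint}). (2) On the sesquilinear pairing level: by Proposition~\ref{prop:expftheta} (exponential twist and rescaling commute) and Proposition~\ref{prop:rescalingimdir} (pushforward and rescaling commute), the pairing attached to $\taucT_1$ is $\mu^*$ of the pairing of $\cT_*^{t/\hb}\otimes\cT$ pushed forward, which by the definition $\iC(1,\ov1)=\exp(t/\hb-\ov t/\ov\hb)$ gives the factor $\exp(t\tau/\hb-\ov{t\tau}/\ov\hb)$ — exactly the pairing of the Fourier transform. (3) Conclude via the equivalence $\iMTM^\intt\simeq\MTM^\intt$ of Theorem~\ref{th:equivalenceintegrable} that the $\iota$-version of $\widehat{\cT}$ equals $\taucT_1$ as objects of $\WRdiTriples(\Afu_\tau,(*\tauX_0))$.

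Once the identification is in place, the tameness of $\widehat{\cT}$ at $\tau=0$ — which holds by \cite[Prop.\,4.1]{Bibi05b} — together with the fact that $\widehat{\cT}$ is an object of $\MTM^\intt$ near $\tau=0$ shows that $\taucT_1=\widehat{\cT}^{(\iota)}[*\tauX_0]$ is an object of $\iMTM^\intt(\tauX,(*\tauX_0))$ in the sense of Remark~\ref{rem:tau=0} (recall that only the behaviour for $|\tau|$ bounded matters). Hence $\cT_1$ satisfies Definition~\ref{def:iMTMresc}, i.e. it is an object of $\iMTM^\resc(\mathrm{pt})$; and since it is of exponential-Hodge origin it is graded by Proposition~\ref{prop:expMHM}, so it lies in $\IrrMHM(\mathrm{pt})$ by Definition~\ref{def:IrrMHMX}. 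The main obstacle I anticipate is Step~(2): matching the normalizations in the pushforward of the sesquilinear pairing with those used implicitly in Mochizuki's construction of the Fourier--Laplace transform on mixed twistor $\cD$-modules, and checking that the moderate-growth property along $\tauX_0$ that is needed for $\icT$ to be well-rescalable is exactly the tameness statement of \cite[Prop.\,4.1]{Bibi05b}. Everything else is a bookkeeping exercise combining Propositions~\ref{prop:rescalingimdir}, \ref{prop:expftheta} and the explicit computations of Section~\ref{subsec:example1}.
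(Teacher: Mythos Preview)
Your proposal is correct and follows essentially the same route as the paper: the key step is the identification of the Fourier--Laplace transform with the rescaling of its fibre at $\tau=1$ via the formulas of Section~\ref{subsec:example1} and \cite[Lem.\,2.1]{Bibi05}, after which membership in $\IrrMHM(\mathrm{pt})$ follows from the fact that the transform lies in $\MTM^\intt(\Afu_\tau)$ and is tame at $\tau=0$ by \cite[Prop.\,4.1]{Bibi05b}. Your argument is somewhat redundant in that you invoke both Theorem~\ref{th:mtmgresc} (exponential-Hodge origin) and the direct verification of Definition~\ref{def:IrrMHMX}; the paper uses only the latter, the gradedness being automatic on a point by Proposition~\ref{prop:Xpoint}.
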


\Subsection{Pushforward by a projective morphism}\label{subsec:pushforwardresc}

\begin{theoreme}\label{th:pushforwardresc}
Let $\map:X\to Y$ be a projective morphism and $\taumap:\tauX\to\tauY$ the induced map.
\begin{enumerate}
\item\label{th:pushforwardresc1}
For each $(\icT,W_\bbullet)\in\iMTM^\resc(X)$ and each $k\in\ZZ$, the pushforward $\map^k_\dag(\icT,W_\bbullet)$ is an object of $\iMTM^\resc(Y)$ and for each $\alpha\in[0,1)$ we have, for $\cM=\cM',\cM''$,
\begin{starequation}\label{eq:pushforwardresc1}
i_{\tau=\hb}^*\tauV_\alpha(\taumap^k_\dag(\taucM))\simeq \map^k_\dag\bigl(i_{\tau=\hb}^*\tauV_\alpha\taucM\bigr).
\end{starequation}%
\item\label{th:pushforwardresc2}
Moreover, if $(\icT,W_\bbullet)\in\IrrMHM(X)$, then $\map^k_\dag(\icT,W_\bbullet)\in\IrrMHM(Y)$ and, for each $\alpha\in[0,1)$, the complex $\map_\dag R_{F^\irr_{\alpha+\bbullet}}\ccM$ is strict and
\begin{starstarequation}\label{eq:pushforwardresc2}
R_{F^\irr_{\alpha+\bbullet}}\map^k_\dag(\ccM)\simeq \map^k_\dag(R_{F^\irr_{\alpha+\bbullet}}\ccM).
\end{starstarequation}%
\end{enumerate}
\end{theoreme}

\begin{lemme}
For an object $\icT=(\cM',\cM'',\iC)$ of $\RscTriples(X)$, if $\map_\dag^k\psi_\tau\cM$ is strict for all $k$ ($\cM=\cM',\cM''$), then $\map_\dag^k\icT$ belongs to $\RscTriples(Y)$.
\end{lemme}

\begin{proof}
For the $\cR^\intt$-module part, the condition implies that strict $\RR$-specializability is preserved by proper pushforward, according to \cite[Th.\,3.3.15]{Bibi01c}, and the regularity is also preserved, according to \cite[\S3.1.d]{Bibi01c}. On the other hand, if $\thetaiC$ is moderate along $\tau=0$, then the formula of \cite[\S1.6.d]{Bibi01c} for $\map_\dag^k\thetaiC$ implies that the latter is also moderate along $\tau=0$.
\end{proof}

\pagebreak[2]
\skpt
\begin{proof}[Proof of Theorem \ref{th:pushforwardresc}]
\begin{enumerate}
\item[\eqref{th:pushforwardresc1}]
Let us recall that for objects of $\iMTM^\intt(\tauX,(*X_0))$ and $\map$ projective, $\taumap^k_\dag$ is strict with respect to the $\tauV$-filtration, and $\taumap^k_\dag$ is a functor $\iMTM^\intt(\tauX,(*X_0))\mto\iMTM^\intt(\tauY,(*Y_0))$. Therefore, the above lemma together with Proposition \ref{prop:rescalingimdir} imply that the functor $\map_\dag^k:\iMTM^\intt(X)\mto\iMTM^\intt(Y)$ sends $\iMTM^\resc(X)$ to $\iMTM^\resc(Y)$. The second part of the assertion is obtained as in the proof of \cite[Th.\,4.1]{S-Y14}.

\item[\eqref{th:pushforwardresc2}]
Arguing as in the proof of Proposition \ref{prop:basechangeO} we have
\[
\cO_\cY\otimes_{\cO_Y[\hb]}\map^k_\dag(R_{F^\irr_{\alpha+\bbullet}}\ccM)\simeq \map^k_\dag(\cO_\cX\otimes_{\cO_X[\hb]}R_{F^\irr_{\alpha+\bbullet}}\ccM),
\]
and the grading property of $\icT$ gives, according to \eqref{th:pushforwardresc1},
\begin{equation}\label{eq:fdagRF}
\cO_\cY\otimes_{\cO_Y[\hb]}\map^k_\dag(R_{F^\irr_{\alpha+\bbullet}}\ccM)\simeq \map^k_\dag\bigl(i_{\tau=\hb}^*\tauV_\alpha\taucM\bigr)\simeq i_{\tau=\hb}^*\tauV_\alpha(\taumap^k_\dag(\taucM)).
\end{equation}
As a consequence, by Lemma \ref{lem:Vrescaling}\eqref{lem:Vrescaling2} applied to $\taumap^k_\dag(\taucM)$, $i_{\tau=\hb}^*\tauV_\alpha(\taumap^k_\dag(\taucM))$ is strict, hence so is $\cO_\cY\otimes_{\cO_Y[\hb]}\map^k_\dag(R_{F^\irr_{\alpha+\bbullet}}\ccM)$, and therefore also $\map^k_\dag(R_{F^\irr_{\alpha+\bbullet}}\ccM)$ by faithful flatness of $\cO_\cY$ over $\cO_Y[\hb]$. In other words, the spectral sequence of the filtered complex $\taumap_\dag(F^\irr_{\alpha+\bbullet}\ccM)$ degenerates at $E_1$, a property which amounts to \eqref{eq:pushforwardresc2}. Now, \eqref{eq:fdagRF} reads
\[
\cO_\cY\otimes_{\cO_Y[\hb]}R_{F^\irr_{\alpha+\bbullet}}\map^k_\dag(\ccM)\simeq i_{\tau=\hb}^*\tauV_\alpha(\taumap^k_\dag(\taucM)),
\]
hence the gradedness of $\map^k_\dag(\cM)$.
\qedhere
\end{enumerate}
\end{proof}

\subsection{The Lefschetz morphism and the Hard Lefschetz theorem}
We keep the setting and notation of Section \ref{subsec:pushforwardresc}. Let $c\in H^2(X,\CC)$ be a real $(1,1)$-class. It defines a morphism $\cL_c:\map^k_\dag(\icT)\to \map^{k+2}_\dag(\icT)(1)$ in $\iMTM^\intt(Y)$ (\cf\cite[\S1.6.e]{Bibi01c}).

\skpt
\begin{corollaire}\label{cor:lefschetz}
\begin{enumerate}
\item\label{cor:lefschetz1}
If $\icT$ is an object of $\IrrMHM(X)$, then $\cL_c$ induces a bi-strict morphism for each $\alpha\in[0,1)$ and each $k\in\ZZ$:
\[
(\map^k_\dag(\ccM),F_{\alpha+\bbullet}^\irr \map^k_\dag(\ccM),W_\bbullet \map^k_\dag(\ccM))\to(\map^{k+2}_\dag(\ccM),F_{\alpha-1+\bbullet}^\irr \map^{k+2}_\dag(\ccM),W_{\bbullet+2} \map^{k+2}_\dag(\ccM)).
\]

\item\label{cor:lefschetz2}
If moreover $\icT$ is pure and $c$ is the first Chern class of an ample line bundle then, for each $k\geq0$, $\cL_c^k$ induces a strict isomorphism
\[
(\map^{-k}_\dag(\ccM),F_{\alpha+\bbullet}^\irr \map^{-k}_\dag(\ccM))\isom(\map^k_\dag(\ccM),F_{\alpha-k+\bbullet}^\irr \map^k_\dag(\ccM)).
\]
\end{enumerate}
\end{corollaire}

\begin{proof}
We note that \eqref{cor:lefschetz2} follows from \eqref{cor:lefschetz1}, since the Hard Lefschetz theorem holds for pure objects, according to \cite{Mochizuki08}. Then \eqref{cor:lefschetz1} is a consequence of Proposition \ref{prop:MTMresc}\eqref{prop:MTMresc3}.
\end{proof}

\begin{exemple}[also obtained in a different way by J.-D. Yu]
Let $\varphi:X\to\PP^1$ be a projective morphism with pole divisor $P=\varphi^{-1}(\infty)$. We set $U=X\moins P$. Then we can apply \ref{cor:lefschetz}\eqref{cor:lefschetz2} to $\icT^{\varphi/\hb}$ (\cf Section \ref{subsec:expotwistgresc} below) and $\map$ is the constant map, according to \cite{Mochizuki08}, since $\cT^{\varphi/\hb}$ is pure (\cf Section \ref{subsec:cTvarphihb}). We hence get a strict isomorphism for an ample class $c$ and $\beta\in\QQ$, by using the decreasing version of the irregular Hodge filtration $F_\irr^{-\alpha-p}:=F^\irr_{\alpha+p}$:
\[
c^k:F_\irr^\beta H^{d_X-k}(U,\rd+\rd\varphi)\isom F_\irr^{\beta+k}H^{d_X+k}(U,\rd+\rd\varphi),\quad\forall\beta\in\QQ.
\]
\end{exemple}

\subsection{Pullback by a smooth morphism}
Let $\mapsm:X\to Y$ be a smooth morphism. We denote by $\taumapsm:\tauX\to\tauY$ the induced map. The smooth pullback $\taumapsm^+:\iMTM^\intt(\tauY)\!\mto\!\iMTM^\intt(\tauX)$ is defined as in \ref{def:functorsMTM}\eqref{enum:pullbackMTM}.

\begin{proposition}\label{prop:pullbackresc}
Let $\mapsm:X\to Y$ be a smooth morphism.
\begin{enumerate}
\item
The smooth pullback $\mapsm^+$ preserves the well-rescaling property.
\item
Assume that $(\icT,W_\bbullet)$ is an object of $\IrrMHM(\tauY)$. Then $\mapsm^+(\icT,W_\bbullet)$ is an object of $\IrrMHM(\tauY)$ and we have for each $\alpha\in[0,1)$:
\begin{starequation}\label{eq:pullbackresc}
R_{F^\irr_{\alpha+\bbullet}}\mapsm^+\ccM\simeq \mapsm^+R_{F^\irr_{\alpha+\bbullet}}\ccM.
\end{starequation}%
\end{enumerate}
\end{proposition}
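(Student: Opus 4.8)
The strategy is to reduce everything to the already-established compatibility between rescaling and smooth pullback (Proposition \ref{prop:rescalingpullback}), combined with the fact that $\iMTM^\intt(\tauX,(*\tauX_0))$ is stable by smooth pullback (a special case of \ref{def:functorsMTM}\eqref{enum:pullbackMTM}, applied on $\tauX,\tauY$, taking into account that $\taumapsm^{-1}(\tauY_0)=\tauX_0$). First I would check that $\mapsm^+$ preserves $\RscTriples$: if $\icT=(\cM',\cM'',\iC)$ is well-rescalable, then by \eqref{eq:rescalingpullback1} we have ${}^\tau\!(\mapsm^+\cM)\simeq\taumapsm^+\taucM$ for $\cM=\cM',\cM''$; since $\taumapsm$ is smooth, strict $\RR$-specializability and regularity of $\taucM$ along $\tauX_0$ are preserved by $\taumapsm^+$ (the pullback of the $\tauV$-filtration by a smooth morphism is the $\tauV$-filtration of the pullback, by the standard behaviour recalled in Section \ref{subsec:shortreminder}). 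For the sesquilinear pairing, $\thetamapsm^+\thetaiC\simeq{}^\theta\!(\mapsm^+\iC)$ still takes moderate values along $\tauX_0$ because the smooth pullback of distributions (defined via integration along fibres in Section \ref{subsec:OmegacontinuousDb}) preserves moderate growth along a pulled-back divisor. Hence $\mapsm^+\icT\in\RscTriples(X)$. Combined with stability of $\iMTM^\intt(\tauX,(*\tauX_0))$ under $\taumapsm^+$ and the compatibility of rescaling with $\mapsm^+$ on the triple level, we get that $\mapsm^+$ sends $\iMTM^\resc(Y)$ to $\iMTM^\resc(X)$.

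Next I would treat the graded case. By Proposition \ref{prop:rescalingpullback}\eqref{eq:rescalingpullback2} we have $i_{\tau=\hb}^*\taumapsm^+\taucM\simeq\pi^{\circ*}\mapsm^+\ccM$, and one also has, for each $\alpha\in[0,1)$, the identification $i_{\tau=\hb}^*\tauV_\alpha(\taumapsm^+\taucM)\simeq\mapsm^+\bigl(i_{\tau=\hb}^*\tauV_\alpha\taucM\bigr)$ (this follows from the fact that $\taumapsm^+$ commutes with taking $\tauV_\alpha$ and with $i_{\tau=\hb}^*$, because $i_{\tau=\hb}$ lands in $\tauX^\circ$ over which $\taumapsm^+$ is transverse to the relevant strata — the same kind of verification as in the proof of Proposition \ref{prop:rescalingimdir}). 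Since $\icT$ is graded, $\cO_\cX\otimes_{\pi^{-1}\cO_X[\hb]}\pi^{-1}R_{F^\irr_{\alpha+\bbullet}}\ccM=i_{\tau=\hb}^*\tauV_\alpha\taucM$, and applying $\mapsm^+$ (which, being smooth pullback, commutes with the analytification $\cO_\cX\otimes_{\pi^{-1}\cO_X[\hb]}\cbbullet$ and with $\pi^{-1}$, and sends $R_F\cD_X$-modules to $R_F\cD_Y$-modules compatibly, as in the argument for Proposition \ref{prop:basechangeO}) gives
\[
\cO_\cX\otimes_{\pi^{-1}\cO_X[\hb]}\pi^{-1}\mapsm^+R_{F^\irr_{\alpha+\bbullet}}\ccM\simeq i_{\tau=\hb}^*\tauV_\alpha(\taumapsm^+\taucM).
\]
By Lemma \ref{lem:Vrescaling}\eqref{lem:Vrescaling2} applied to $\taumapsm^+\taucM$, the right-hand side is strict, hence so is $\mapsm^+R_{F^\irr_{\alpha+\bbullet}}\ccM$ by faithful flatness of $\cO_\cX$ over $\pi^{-1}\cO_X[\hb]$; this shows $\mapsm^+R_{F^\irr_{\alpha+\bbullet}}\ccM$ is the Rees module of a good filtration, and the displayed isomorphism together with \eqref{eq:rescalingpullback2} identifies that filtration with $F^\irr_{\alpha+\bbullet}\mapsm^+\ccM$, which is precisely \eqref{eq:pullbackresc}. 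Moreover the same display is the grading identity \eqref{eq:twrescaled} for $\mapsm^+\icT$, so $\mapsm^+\icT$ is graded, i.e.\ lies in $\IrrMHM(X)$.

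The only genuinely delicate point is the interplay of $\mapsm^+$ with the $\tauV$-filtration along $\tauX_0$ and with the specialization $i_{\tau=\hb}^*$: one must be careful that $\taumapsm$ is a product locally and that $\tauX_0=\taumapsm^{-1}(\tauY_0)$ is still smooth, so that the pullback of the $\tauV$-filtration computes the $\tauV$-filtration, and that this is compatible with restriction to the (non-characteristic) subset $\{\tau=\hb\}$. This is essentially the content of ``the following proposition is mostly obvious'' preceding Proposition \ref{prop:rescalingpullback}, and I would simply reference Propositions \ref{prop:rescalingpullback} and \ref{prop:rescalingimdir} for these local verifications rather than reproducing them. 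Everything else is formal manipulation of the definitions of $\iMTM^\resc$ and $\IrrMHM$ exactly as in the proof of Theorem \ref{th:pushforwardresc}\eqref{th:pushforwardresc2}, with $\mapsm^+$ in place of $\map^k_\dag$; note that here there is no $E_1$-degeneration statement to prove since $\mapsm^+$ is exact.
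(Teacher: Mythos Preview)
Your proposal is correct and follows essentially the same route as the paper: reduce to the compatibility of rescaling with $\mapsm^+$ (Proposition \ref{prop:rescalingpullback}), check that $\taumapsm^+$ preserves the $\tauV$-filtration (hence strict $\RR$-specializability and regularity) and that smooth pullback of distributions preserves moderate growth, then chain the identifications $\cO_\cX\otimes_{\cO_X[\hb]}\mapsm^+R_{F^\irr_{\alpha+\bbullet}}\ccM\simeq\mapsm^+\bigl(i_{\tau=\hb}^*\tauV_\alpha\taucM\bigr)\simeq i_{\tau=\hb}^*\tauV_\alpha(\taumapsm^+\taucM)$ to obtain both gradedness and \eqref{eq:pullbackresc}. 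The only difference is that the paper disposes of strictness more directly---it simply observes that $\mapsm^+$ preserves strictness, so $\mapsm^+R_{F^\irr_{\alpha+\bbullet}}\ccM$ is automatically a Rees module---whereas you route this through Lemma \ref{lem:Vrescaling}\eqref{lem:Vrescaling2} and faithful flatness, which works but is an unnecessary detour.
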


\begin{proof}
Assume that $\cM$ is well-rescalable. Since $\cO_{\taucX}$ is $\cO_{\taucY}$-flat, the functor $\taumapsm^*$ is exact and the filtration $\taumapsm^*\tauV_\bbullet(\taucM)$ satisfies all the characteristic properties of the $\tauV$\nobreakdash-filtration of $\taumapsm^+\taucM$ (in particular, and importantly, strictness of $\taumapsm^*\gr_a^{\tauV}\!\!\taucM$). Hence, $\taumapsm^+\taucM$ is strictly $\RR$-specializable along $\tau=0$ and we have $\tauV_\bbullet(\taumapsm^+\taucM)=\taumapsm^+\tauV_\bbullet(\taucM)$. Moreover, the regularity property is easily seen to be satisfied, so \ref{def:Rrescdata} holds for $\mapsm^+\taucM$.

Moreover, we have in an obvious way for the $\cO_\cX$-module structure:
\[
i_{\tau=\hb}^*\taumapsm^*\tauV_\bbullet(\taucM)=\mapsm^*i_{\tau=\hb}^*\tauV_\bbullet(\taucM)\subset \mapsm^*i_{\tau=\hb}^*\taucM=i_{\tau=\hb}^*\taumapsm^*\taucM,
\]
and this equality extends to the $\cR_\cX$-module structure in an obvious way.

Assume that a sesquilinear pairing $\thetaiC$ has moderate growth along $\taucX_0$. This means that, when restricted to $\taucM',\taucM''$ viewed as contained in $j_*\thetacM',j_*\thetacM''$, it can be extended as a section of $\Db_{\taucX^\circ/\CC_\hb^*}$. Then, its pullback by $\taumapsm$, as defined by \eqref{eq:pullbackDb}, satisfies the similar property.

Both properties show that the functor $\mapsm^+:\iMTM^\intt(Y)\mto\iMTM^\intt(X)$ sends $\iMTM^\resc(Y)$ to $\iMTM^\resc(X)$.

Note that, since $\mapsm^+$ preserves strictness, we know that the right-hand side of \eqref{eq:pullbackresc} takes the form $R_F\mapsm^+\ccM$ for some filtration $F_\bbullet \mapsm^+\ccM$. It is then enough to prove the gradedness of $\mapsm^+\icT$, and the proof will identify the filtration $F_\bbullet \mapsm^+\ccM$ with $F^\irr_{\alpha+\bbullet}\mapsm^+\ccM$.

We can now argue as in the case of the pushforward:
\begin{align*}
\cO_\cX\otimes_{\cO_X[\hb]}\mapsm^+R_{F^\irr_{\alpha+\bbullet}}\ccM&=\mapsm^+\bigl(\cO_\cY\otimes_{\cO_Y[\hb]}R_{F^\irr_{\alpha+\bbullet}}\ccM\bigr)\\
&\simeq \mapsm^+\bigl(i_{\tau=\hb}^*\tauV_\alpha(\taucM)\bigr)\quad\text{(gradedness of $\icT$)}\\
&\simeq i_{\tau=\hb}^*\tauV_\alpha(\taumapsm^+\taucM)\quad\text{(previous argument)}.\qedhere
\end{align*}
\end{proof}

\subsection{Mixed Hodge modules as objects of \texorpdfstring{$\IrrMHM(X)$}{iMTMgresc}}
We keep the notation of Section \ref{subsec:MHM}.

\begin{proposition}\label{prop:MHMresc}
The subcategory $\MHM(X,\CC)$ of $\iMTM^\intt(X)$ is contained in $\IrrMHM(X)$.
\end{proposition}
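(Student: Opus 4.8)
The plan is to verify that every object coming from a complex mixed Hodge module satisfies the three defining conditions of $\IrrMHM(X)$: well-rescalability of the underlying $\cR^\intt_\cX$-modules, moderate growth of the rescaled $\iota$-sesquilinear pairing along $\taucX_0$, the mixed twistor condition on the rescaled object near $\tau=0$, and the grading property. Since $\MHM(X,\CC)$ is generated by direct summands of objects coming from $\MHM(X,\RR)$ (Definition \ref{def:MHMC}), and since $\IrrMHM(X)$ is stable by direct summand in $\iMTM^\intt(X)$ by Proposition \ref{prop:MTMresc}\eqref{prop:MTMresc3}, it suffices to treat an object $(\icT,W_\bbullet)$ arising from a real mixed Hodge module, and in fact, since all the conditions are local and are compatible with the weight filtration steps, it is enough to treat the underlying $\iota$-triple without the weight filtration.

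First I would reduce to the graded-with-respect-to-$W$ situation and, using the structure of mixed Hodge modules, further reduce to pure Hodge modules, and then to the polarizable pure case; here each component $\cM',\cM''$ is, after restriction away from $\hb=0$, the analytification of a Rees module $R_F\ccN$ attached to a filtered regular holonomic $\cD_X$-module $(\ccN,F_\bbullet\ccN)$ underlying the Hodge module. The local model computation of Section \ref{subsec:example2}, and in particular Proposition \ref{prop:filtered}, shows that such a Rees module $N=R_F\ccN$ is a strict and graded well-rescalable $R_F\cD_X$-module with $F^\irr_\bbullet\ccN=F_\bbullet\ccN$ (only $\alpha=0$ occurring), so that condition on the $\cR^\intt_\cX$-module components is automatic. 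What remains is: (a) the rescaled pairing $\thetaiC$ has moderate growth along $\taucX_0$, yielding a pairing $\tauiC$ on $(\taucX,(*\taucX_0))$; and (b) the resulting object $\taucT$ belongs to $\iMTM^\intt(\tauX,(*\tauX_0))$, i.e.\ is a localized integrable mixed twistor $\cD$-module near $\tau=0$. For (a), one computes $\thetaiC$ explicitly from the extension of the Hodge-theoretic pairing $\ccC:\ccN^\vee\otimes_\CC\ov\ccN\to\Db_X$ along the lines of Section \ref{subsec:MHM}: this pairing has no exponential factor, and rescaling $\hb\mapsto\theta\hb$ only introduces powers of $|\hb/\tau|$ and smooth factors, so moderate growth along $\tau=0$ follows exactly as in the arguments of Lemma \ref{lem:liftDmod} and Section \ref{subsec:cTvarphihb}.

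For (b), the key observation is that the rescaled object $\taucT$, away from $\tau=0$, is the pullback of $\icT$ by the projection $\thetacX\to\cX$ composed with the twist $\hb\mapsto\theta\hb$, hence is a variation of mixed twistor structure on $X\times\CC^*_\theta$ coming from the fixed mixed Hodge module; as $\theta$ varies this variation is constant in a strong sense (it is the pullback of a single object under the $\CC^*_\theta$-action on $\cX$), so its behaviour when $\theta\to\infty$, i.e.\ $\tau\to0$, is tame, and indeed is controlled by the $V$-filtration along $\tau=0$ computed in Proposition \ref{prop:filtered}, which shows $\gr^{\tauV}_\alpha\taucN=0$ for $\alpha\in(0,1)$ and $\gr^{\tauV}_k\taucN\simeq R_F\ccN$ for all $k$. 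The admissibility and the mixed twistor $\cD$-module condition near $\tau=0$ then follow by an argument entirely parallel to the proof of Proposition \ref{prop:sorbits}: the rescaled harmonic-bundle-like data is tame at $\tau=0$ (the unitarity of the formal monodromy is automatic since the $V$-indices are integers), so by \cite{Simpson90,Mochizuki07} it extends to an integrable mixed twistor $\cD$-module on $\CC_\tau$ whose localization along $\tau=0$ identifies, by a Hartogs-type freeness argument on $\CC^2_{\tau,\hb}$ exactly as in the proof of Proposition \ref{prop:sorbits}, with $\taucT$. Finally, the grading property is precisely the content of Proposition \ref{prop:filtered}\eqref{prop:filtered1}, and strictness of morphisms follows from Proposition \ref{prop:MTMresc}\eqref{prop:MTMresc3} once membership in $\IrrMHM(X)$ is established; in the $\kk$-structure refinement one additionally carries along the $\kk$-perverse sheaf, which is unchanged by rescaling away from $\tau=0$ and extended by the same tameness argument, landing in $\iMTM^\intt_\good(X,\kk)$.

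I expect the main obstacle to be step (b): making rigorous the passage from ``the rescaled object is a constant family of mixed twistor structures on $\CC^*_\theta$'' to ``it extends as a (localized) integrable mixed twistor $\cD$-module across $\tau=0$'' — in other words, checking the full list of conditions defining $\MTM^\intt$ near $\tau=0$, including strict specializability with admissible relative monodromy filtration. The cleanest route is to mimic the proof of Proposition \ref{prop:sorbits} verbatim, reducing to the tameness of the corresponding harmonic data and invoking \cite{Mochizuki07,Simpson90}, together with the explicit $\tauV$-filtration of Proposition \ref{prop:filtered} to pin down the $\cR$-module components of the extension; one must also check that the weight filtration of the mixed Hodge module rescales correctly to the weight filtration of a mixed twistor $\cD$-module, which is where the reduction to the pure polarizable case at the start is used.
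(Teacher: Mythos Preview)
You have the right opening move—Proposition~\ref{prop:filtered} handles the graded well-rescalability of the $\cR^\intt$-components—but you miss the one-line trick that dispatches (a) and (b) simultaneously, and your substitute route for (b) has a real gap.

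The paper's observation is that Formula~\eqref{eq:taufiltN}, namely $\tauN=\CC[\tau,\taum,\hb]\hbboxtimes R_F\ccN$, identifies the rescaled object with the \emph{smooth pullback} $p^+\icT$ (localized along $\tau=0$) by the projection $p:\tauX\to X$. The point is that for a Rees module the $\hb$-grading makes the twist $\hb\mapsto\theta\hb$ in $\mu^*$ trivial: writing $\mu=q\circ\nu^{-1}$, the $\nu$-action is absorbed into the $\CC[\tau,\taum]$-factor. Since smooth pullback and localization along a divisor both send $\iMTM^\intt(X)$ to $\iMTM^\intt(\tauX,(*\tauX_0))$ (Definition~\ref{def:functorsMTM}\eqref{enum:pullbackMTM} and Remark~\ref{rem:localization}), the MTM condition near $\tau=0$—including admissibility—is inherited for free, with no appeal to harmonic bundles, tameness, or Hartogs. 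The moderate growth of $\thetaiC$ is likewise automatic: it is the pullback pairing of a well-defined object.

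Your proposed route through Proposition~\ref{prop:sorbits} is not just longer but genuinely incomplete in the mixed case. The extension theorems of \cite{Simpson90,Mochizuki07} you invoke concern pure polarized harmonic bundles; they do not directly produce an object of $\MTM^\intt$ with a given $W$-filtration satisfying the admissibility condition (existence of relative monodromy filtrations). Your reduction ``to the pure polarizable case at the start'' checks that each $\gr^W_\ell\taucT$ is polarizable pure, but admissibility of the filtered object is not a property of the graded pieces, so this step does not close the argument. The paper's pullback identification sidesteps this entirely.
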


\begin{proof}
Proposition \ref{prop:filtered} shows the graded well-rescaling property of the $\cR^\intt$-components of an object of $\MHM(X,\CC)$. Moreover, Formula \eqref{eq:taufiltN} identifies the rescaled object with the localized pullback by the functor $p^+:\iMTM^\intt(X)\mto\iMTM^\intt(\tauX,*(\tauX_0))$ associated with the projection $p:\tauX\to X$. Hence the properties of Definitions \ref{def:iMTMresc} and \ref{def:IrrMHMX}\eqref{def:IrrMHMX1} are satisfied.
\end{proof}

\subsection{Exponential twist}\label{subsec:expotwistgresc}
In this subsection, we prove Theorem \ref{th:mtmgresc}\eqref{th:mtmgresc3}. Recall that $P$ denotes the pole divisor of the meromorphic function $\varphi$. Let $\icT_*^{\varphi/\hb}$ be the object of $\iMTM^\intt(X,(*P))$ defined in Section \ref{subsec:cTvarphihb}, with respect to the meromorphic function $\varphi$ on $X$. We consider similarly $\icT_*^{\tau\varphi/\hb}$ with respect to the meromorphic function $\tau\varphi$ on $\tauX$, which is an object of $\iMTM^\intt(\tauX,(*\tauP))$. Then~$\icT_*^{\tau\varphi/\hb}(*\tauX_0)$ is an object of $\iMTM^\intt(\tauX,(*(\tauP\cup\tauX_0)))$. This is also the rescaled object of $\icT_*^{\varphi/\hb}$. We aim at proving the graded well-rescaling property of its $\cR^\intt$-components, and a similar property for the twist with a mixed Hodge module.

Let $\icT$ be the object of $\iMTM^\intt(X)$ associated with a mixed Hodge module. It induces an object $\taucT$ of $\iMTM^\intt(\tauX,(*\tauX_0))$.

We claim that \index{mixed Hodge module!exponentially twisted --}$\bigl(\icT_*^{\tau\varphi/\hb}(*\tauX_0)\otimes\taucT\bigr)[*\tauP]$, which is an object of the category $\iMTM^\intt(\tauX,(*\tauX_0))$ according to \cite[Prop.\,11.3.3]{Mochizuki11}, satisfies the graded well-rescaling property. This will show that the exponentially twisted mixed Hodge module $\icT_*^{\varphi/\hb}\otimes\icT$ belongs to $\IrrMHM(X)$.

Concerning Definition \ref{def:Rrescdata}, the only new point to be checked is the regularity along $\tau=0$. If $\varphi$ is the inverse of a coordinate function $t$, then this is contained in \cite[Prop.\,4.1(ii)]{Bibi08}. In general, one uses the construction of \cite[\S2.a]{S-Y14} to reduce to this case, by using that the regularity property is preserved under the pushforward by a projective morphism $X'\to X$ (if the zero set and the pole set $P$ of $\varphi$ do not intersect, this simply amounts to considering, in the neighbourhood of any point of~$P$, the graph of the holomorphic function $1/\varphi$).

Lastly, the grading condition is given by \cite[Prop.\,5.5\,\&\,Rem.\,5.6]{S-Y14}.\qed

\subsection{Open questions}\label{subsec:openq}
Although the category $\iMTM(X)$ is endowed with a nearby cycle functor, a localization and dual localization functor, and a vanishing cycle functor, we do not know whether $\IrrMHM(X)$ is preserved by these functors. Even if so, there remains the problem of computing the behaviour of the irregular Hodge filtration with respect to these functors, as indicated in Remark \ref{rem:mainRF}\eqref{rem:mainRF3}.

\section{Rigid irreducible holonomic \texorpdfstring{$\cD_{\PP^1}$}{DP1}-modules}\label{sec:rigid}
In this section we give a proof of Theorem \ref{th:rigidP1}. Our base variety $X$ is now equal to the Riemann sphere $\PP^1$ (we use its analytic topology). Recall (\cf Remark \ref{rem:intuniquepure}) that, to any irreducible holonomic $\cD_{\PP^1}$-module corresponds exactly one object $\cT$ of $\MTM(\PP^1)$ which is pure of weight zero, up to an obvious ambiguity, and there is at most one integrable structure up to equivalence on this object. Our first goal is to give a precise criterion for the existence of such an integrable structure, at least in the rigid case.

We will reformulate in Proposition \ref{prop:rigidalgo} the existence of a variant of the Katz algo\-rithm, due to Arinkin \cite{Arinkin08} and Deligne \cite{Deligne06b}, for possibly irregular irreducible holonomic $\cD_{\PP^1}$\nobreakdash-modules, which terminates in rank one if and only if the rigidity property holds. In such a case, the rigidity property holds at each step of the algorithm, and the rank never increases all along the process. The algorithm consists in performing suitably chosen operations: twist by a rank-one meromorphic connection, changing the point at infinity, and Laplace transformation. Due to the first operation, one can \hbox{assume} without loss of generality that the terminating rank-one holonomic $\cD_{\PP^1}$\nobreakdash-module is $(\cO_{\PP^1},\rd)$. That rigidity is preserved by the Laplace transformation is a result due to Bloch-Esnault \cite{B-E04}.

\begin{proposition}\label{prop:rigidalgo}
Let $\ccM$ be a rigid irreducible holonomic $\cD_{\PP^1}$-module. There exist
\begin{enumeratea}
\item\label{prop:rigidalgoa}
a smooth projective complex variety $X$ and a normal crossing divisor $D\subset X$, together with a subdivisor $H\subset D$,
\item\label{prop:rigidalgob}
a projective morphism $\map:X\to\PP^1$,
\item\label{prop:rigidalgoc}
a meromorphic function $\mero$ on $X$ with poles contained in $D$ and whose pole and zero divisors do not intersect,
\item\label{prop:rigidalgod}
a locally free rank-one $\cO_X(*D)$-module $\ccN=\ccN_\regg$ with a regular singular meromorphic connection $\nabla$,
\end{enumeratea}
\noindent
such that, denoting by $\cT_\ccM$ the polarizable pure twistor $\cD$-module of weight $0$ associated with $\ccM$, and by $\cT_\ccN$ the mixed twistor $\cD$-module associated with $\ccN$,
\begin{enumerate}
\item\label{prop:rigidalgo0}
if $\ccM$ is locally formally unitary, the local system $\ccN^\nabla$ is unitary,
\item\label{prop:rigidalgo2}
$\cT_\ccM$ is the image of the natural morphism
\[
\map^0_\dag(\cT_*^{\mero/\hb}\otimes\Gamma_{[!H]}\cT_\ccN)\to \map^0_\dag(\cT_*^{\mero/\hb}\otimes\Gamma_{[*H]}\cT_\ccN).
\]
\end{enumerate}
\end{proposition}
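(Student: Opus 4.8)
The statement to be proved is Proposition \ref{prop:rigidalgo}, asserting the existence of a cohomological presentation of the pure twistor $\cD$-module $\cT_\ccM$ attached to a rigid irreducible holonomic $\cD_{\PP^1}$-module $\ccM$ by means of the Arinkin--Deligne variant of the Katz algorithm. The first step is purely algorithmic: I would invoke the Arinkin--Deligne algorithm (\cite{Arinkin08,Deligne06b}), relying on the Bloch--Esnault theorem \cite{B-E04} that rigidity is preserved by Laplace transformation, to reduce $\ccM$ (up to a finite sequence of twists by rank-one meromorphic connections, affine coordinate changes on $\PP^1$, and Laplace transforms) to the rank-one module $(\cO_{\PP^1},\rd)$. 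Since rigidity is preserved at each step and the rank is non-increasing, the algorithm terminates, and unwinding it expresses $\ccM$ as obtained from a rank-one regular-singular $\ccN$ on $\PP^1$ by a finite sequence of the three elementary operations. The key translation is that each elementary operation can be realized, at the level of $\cD$-modules, as a composition of the functors listed in Corollary \ref{cor:expMHM}: twisting by a rank-one connection with regular singularities is $\ccN'\otimes\cbbullet$ (absorbable into the $\ccE^\mero\otimes\cbbullet$ and regular-singular data), the change of the point at infinity is an automorphism of $\PP^1$, and the Laplace transformation, as recalled at the end of the introduction and in Remark following Theorem \ref{th:rigidP1}, is itself a composition of an exponential twist $\ccE^{xt/\hb}\otimes\cbbullet$ (or its $\iota$-twistor analogue $\icT^{\mero/\hb}\otimes\cbbullet$), a projective pushforward, and localization/dual-localization functors.

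\textbf{From $\cD$-modules to twistor $\cD$-modules.} Once the $\cD$-module decomposition is in hand, the next step is to promote it to a statement about twistor $\cD$-modules. By \cite[Th.\,1.4.4]{Mochizuki08} the irreducible $\ccM$ underlies a unique polarizable pure twistor $\cD$-module $\cT_\ccM$ of weight $0$; one must check that the cohomological expression on the right-hand side of \eqref{prop:rigidalgo2}, built from the mixed twistor $\cD$-module $\cT_\ccN$ attached to $\ccN$ via the functors $\cT_*^{\mero/\hb}\otimes\cbbullet$, $\Gamma_{[\star H]}$ and $\map^0_\dag$, indeed has image isomorphic to $\cT_\ccM$. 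The mechanism is the standard one for rigid objects: the middle term of the image of $[!H]\to[*H]$ is the minimal (intermediate) extension, which is the \emph{pure} part, and passing to the image kills the purely mixed summands supported on $H$ that may appear. At the level of the underlying $\cD$-modules, applying $\Xi_{\DR}$ and using Remarks \ref{rem:compfunctorshb1} and \ref{rem:compexptwisthb1}, this image is precisely the intermediate extension realization of $\ccM$ produced by the algorithm, and $\Xi_{\DR}$ being faithful on $\MTM^\intt$ (\cite[Rem.\,7.2.9]{Mochizuki11}) together with the uniqueness in \cite[Th.\,1.4.4]{Mochizuki08} forces the twistor object to be $\cT_\ccM$. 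The compatibility of all the twistor functors involved with $\Xi_{\DR}$, and with each other (Section \ref{sec:compatibilities}), is what makes this identification legitimate.

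\textbf{The geometric model $(X,D,H,\map,\mero,\ccN)$.} To obtain the clean geometric data \eqref{prop:rigidalgoa}--\eqref{prop:rigidalgod}, I would proceed as in Corollary \ref{cor:composition}: using the commutation relations \ref{subsub:commutation}, reorder the composite of functors so that all pullbacks/external products and localizations come first, then a \emph{single} exponential twist $\ccE^{\mero/\hb}\otimes\cbbullet$ (equivalently $\cT_*^{\mero/\hb}\otimes\cbbullet$), then the projective pushforwards, which can be merged into a single $\map^0_\dag$ by composing the intervening projective morphisms. Choosing a simultaneous resolution, I can take $X$ smooth projective, $D$ a normal crossing divisor containing the poles of $\mero$ and the singular locus of the accumulated regular-singular data, and $H\subset D$ the subdivisor along which the remaining $[!H]$--$[*H]$ (intermediate extension) operation is to be performed; the hypothesis that the pole and zero divisors of $\mero$ do not intersect is arranged by a further blow-up as in Section \ref{sec:compatibilities} (the standard device of passing to the graph of $1/\mero$). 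The rank-one module $\ccN=\ccN_\regg$ collects all the accumulated twists; since each twist in the algorithm is by a rank-one connection whose local monodromies have absolute value one when $\ccM$ is locally formally unitary (this is exactly where local formal unitarity of $\ccM$ is used, propagated through the algorithm), the local system $\ccN^\nabla$ is unitary, giving \eqref{prop:rigidalgo0}.

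\textbf{Main obstacle.} The genuinely delicate point is \eqref{prop:rigidalgo2}, namely that one may \emph{cohomologically} realize $\cT_\ccM$ — and not merely $\ccM$ — as the stated image, i.e. that the reordering and merging of twistor functors does not introduce spurious intermediate cohomology and that the ``take the image'' step correctly produces the pure object of weight $0$ rather than some mixed object. Controlling this requires, on the twistor side, the $E_1$-degeneration of the irregular Hodge / weight filtrations under $\map_\dag$ (Theorem \ref{th:pushforwardresc}) so that $\map^0_\dag$ of the two localizations are the honest zeroth cohomologies, and the decomposition theorem / Hard Lefschetz for pure twistor $\cD$-modules \cite{Mochizuki08} to guarantee that the image is a direct summand and is pure. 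One then has to verify, using strictness (Proposition \ref{prop:MTMresc}\eqref{prop:MTMresc3}) and the faithfulness of $\Xi_{\DR}$, that this summand matches $\ccM$ on the nose; the rest is a bookkeeping exercise tracking the elementary operations of the Arinkin--Deligne algorithm through the dictionary with the functors of Corollary \ref{cor:expMHM}.
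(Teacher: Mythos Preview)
Your overall strategy—run the Arinkin--Deligne algorithm, translate each elementary step into the functorial language of Section~\ref{sec:compatibilities}, and then extract the geometric data—is the same as the paper's. But your execution via Corollary~\ref{cor:composition} (global reordering of a composite of functors) has a genuine gap: that corollary reorganizes compositions of the functors $\Gamma_{[\star H]}$, $\bD$, $\ccE^\mero\otimes\cbbullet$, $\map_\dag^k$, etc., but the Arinkin--Deligne algorithm does \emph{not} produce $\ccM$ as a pure composite of such functors applied to $\ccN$. Rather, at each step one takes an \emph{intermediate extension} (the image of $[!\Sigma]\to[*\Sigma]$), and these image operations are interleaved with the Laplace transforms. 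You acknowledge this in your ``main obstacle'' paragraph, but the tools you propose there (Hard Lefschetz, decomposition theorem, $E_1$-degeneration from Theorem~\ref{th:pushforwardresc}) do not address how a sequence of interleaved intermediate extensions collapses to a \emph{single} image $\map^0_\dag(\cdots\Gamma_{[!H]}\cdots)\to\map^0_\dag(\cdots\Gamma_{[*H]}\cdots)$.

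The paper resolves this by an \emph{inductive} argument rather than a global reordering: assume the presentation of the desired form already holds for $\ccM$, and show it is preserved by each elementary operation, with explicit new data $(X',D',H',\map',\mero',\ccN')$. For the twist step $\ccM\mapsto\text{image}(\Gamma_{[!\Sigma]}(\ccL\otimes\ccM)\to\Gamma_{[*\Sigma]}(\ccL\otimes\ccM))$, the key identity is $\Gamma_{[\star D_1]}\Gamma_{[\star H]}=\Gamma_{[\star H']}$ with $H'=H\cup D_1$ (and $D_1=\map^{-1}(\Sigma)$), so the new intermediate extension is absorbed by enlarging $H$. For the Laplace step, the key is that Laplace is an \emph{exact} functor on $\Mod_\hol(\PP^1,*\infty)$, hence commutes with taking images; one then rewrites Laplace as $q^0_\dag(\ccE^{-t\tau}\otimes\Gamma_{[*(\PP^1\times\infty)]}p^+(\cdot))$ and merges it with the existing data via the commutation rules. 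The base case is $(\cO_{\PP^1},\rd)$. This inductive mechanism is what you are missing; once you have it, the bookkeeping of unitarity in~\eqref{prop:rigidalgo0} and the passage to twistor $\cD$-modules are straightforward, and the latter does not require the heavy machinery you invoke.
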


Let us make precise that the minimal (or intermediate) extension $\ccN_{\min}$ of $\ccN$ is an irreducible regular holonomic $\cD_X$-module and $\ccN=\ccN_{\min}(*D)$. There is a unique polarizable pure twistor $\cD$-module $\cT_{\ccN_{\min}}$ of weight $0$ associated with $\ccN_{\min}$, and by definition $\cT_\ccN$ is the mixed twistor $\cD$-module $\cT_{\ccN_{\min}}[*D]$. In particular, \hbox{$\Gamma_{[*H]}\cT_\ccN=\cT_\ccN$}, but we keep the first notation to have in mind the analogous notation $\Gamma_{[!H]}\cT_\ccN$. Moreover, if $\ccN^\nabla$ is locally unitary, then $\cT_{\ccN_{\min}}$ is a pure complex Hodge module of weight~$0$, and $\cT_\ccN$ is a complex mixed Hodge module.

\begin{corollaire}\label{cor:rigidalgo}
Let $\ccM$ be a rigid irreducible holonomic $\cD_{\PP^1}$-module, with associated pure twistor $\cD$-module $\cT_\ccM$. Then $\cT_\ccM$ is integrable if and only if $\ccM$ is locally formally unitary.
\end{corollaire}

\begin{proof}
If $\cT_\ccM$ is integrable, then the associated $\ccM$ is locally formally unitary (this does not use irreducibility nor rigidity): one applies Proposition \ref{prop:speint}\eqref{prop:speint2a} to $\cT$ as well as to any triple obtained from $\cT$ after ramification and exponential twist, which remains integrable.

On the other hand, if $\ccM$ is irreducible, rigid and locally unitary, then $\ccN^\nabla$ is unitary, according to \ref{prop:rigidalgo}\eqref{prop:rigidalgo0}, so $\cT_\ccN$ is integrable, and then so is $\cT_\ccM$, according to \ref{prop:rigidalgo}\eqref{prop:rigidalgo2}.
\end{proof}

\begin{proof}[Proof of Theorem \ref{th:rigidP1}]
Since Corollary \ref{cor:rigidalgo} gives the first part of the theorem, it remains to prove the second part. The uniqueness up to equivalence follows from Corollary \ref{cor:grescequivint}. Let us use the notation and results of Proposition \ref{prop:rigidalgo}. Since $\cT_\ccN$ is a complex mixed Hodge module, so does $\Gamma_{[\star H]}\cT_\ccN$ ($\star=*,!$). Also, the natural morphism $\Gamma_{[!H]}\icT_\ccN\to\Gamma_{[*H]}\icT_\ccN$ is a morphism of mixed Hodge modules. From Theorem \ref{th:mtmgresc}\eqref{th:mtmgresc3} we conclude that
\[
\bigl(\icT^{\mero/\hb}\otimes\Gamma_{[!H]}\icT_\ccN\bigr)^\resc\to\bigl(\icT^{\mero/\hb}\otimes\Gamma_{[*H]}\icT_\ccN\bigr)^\resc
\]
is a morphism in $\IrrMHM(X)$, and from Theorem \ref{th:mtmgresc}\eqref{th:mtmgresc2} we obtain that
\[
\map^0_\dag\Bigl[\bigl(\icT^{\mero/\hb}\otimes\Gamma_{[!H]}\icT_\ccN\bigr)^\resc\Bigr]\to \map^0_\dag\Bigl[\bigl(\icT^{\mero/\hb}\otimes\Gamma_{[*H]}\icT_\ccN\bigr)^\resc\Bigr]
\]
is a morphism in $\IrrMHM(\PP^1)$. Its image in the abelian category $\IrrMHM(\PP^1)$ restricts through $i_{\tau=1}^*$ to the corresponding image in $\iMTM^\intt(\PP^1)$, which is nothing but $\cT_\ccM$. This gives the desired result.
\end{proof}

\begin{remarque}[The use of the graded-rescaling property]
Note that both source and target of the morphism
\[
\map^0_\dag(\ccE^\mero\otimes\Gamma_{[!H]}\ccN)\to \map^0_\dag(\ccE^\mero\otimes\Gamma_{[*H]}\ccN)
\]
carry an irregular Hodge filtration, obtained in a strict way by pushforward by $\map^0_\dag$ of that of $\ccE^\mero\otimes\Gamma_{[\star H]}\ccN$, according to Theorem \ref{th:mainRF}\eqref{th:mainRF6}, and the graded-rescaling property implies that this morphism is strictly filtered with respect to it, so $R_{F^\irr_{\alpha+\bbullet}}\ccM$ is the strict image of the corresponding Rees morphism.
\end{remarque}

\begin{proof}[Proof of Proposition \ref{prop:rigidalgo}]
Let $\ccM$ be rigid irreducible and let us assume that there exist $X,D,H$, $\map:X\to\PP^1$, $\mero$ and $(\ccN,\nabla)$ as in \ref{prop:rigidalgo}\eqref{prop:rigidalgoa}\nobreakdash--\eqref{prop:rigidalgod} for $\ccM$, satisfying \ref{prop:rigidalgo}\eqref{prop:rigidalgo2}. We will prove the following properties.
\begin{enumeratei}
\item\label{enum:rigidalgo1}
If $\ccL$ is a rank-one meromorphic connection on $\PP^1$ with poles along $\Sigma\subset\PP^1$, then Data \ref{prop:rigidalgo}\eqref{prop:rigidalgoa}--\eqref{prop:rigidalgod} satisfying \ref{prop:rigidalgo}\eqref{prop:rigidalgo2} exist for $\Gamma_{[*\Sigma]}(\ccM\otimes\ccL)$ and for the image $\ccM'$ of $\Gamma_{[!\Sigma]}(\ccM\otimes\ccL)\to\Gamma_{[*\Sigma]}(\ccM\otimes\ccL)$.

Moreover, if $\ccN^\nabla$ is unitary and $\ccL$ is locally formally unitary, then so is $\ccM'$ and $(\ccN',\nabla)$ can be chosen unitary.

\item\label{enum:rigidalgo2}
Let $\infty$ be a point of $\PP^1$, chosen as the point at infinity. By \eqref{enum:rigidalgo1}, we can assume that $\ccM=\Gamma_{[*\infty]}\ccM$. Let $\ccM'$ be the Laplace transform of $\ccM$ with respect to this point. Then Data \ref{prop:rigidalgo}\eqref{prop:rigidalgoa}--\eqref{prop:rigidalgod} satisfying \ref{prop:rigidalgo}\eqref{prop:rigidalgo2} exist for $\ccM'$.

Moreover, if $\ccN^\nabla$ is unitary, $\ccN'^\nabla$ can be chosen to be unitary.
\end{enumeratei}

These two properties allow us to conclude the proof of the proposition, since any rigid holonomic $\cD_{\PP^1}$-module can be obtained by applying a sequence of \eqref{enum:rigidalgo1} and \eqref{enum:rigidalgo2} to $(\cO_{\PP^1},\rd)$, according to the Arinkin-Deligne algorithm, and moreover, if $\ccM$ is locally formally unitary, then the rank-one connections $\ccL_\regg$ chosen at each step are locally unitary.

We start by making explicit some admissible modifications of Data \ref{prop:rigidalgo}\eqref{prop:rigidalgoa}--\eqref{prop:rigidalgod}. Let $\modif':X'\to X$ be a projective modification which induces an isomorphism $X'\moins\nobreak \modif^{\prime-1}(D)\isom X\moins D$, and set $D'=\modif^{\prime-1}(D)$, $H'=\modif^{\prime-1}(H)$, $\map'=\map\circ \modif'$, $\mero'=\mero\circ \modif'$, $\ccN'=\ste^{\prime+}\ccN$. Then \ref{prop:rigidalgo}\eqref{prop:rigidalgo2} and possibly \ref{prop:rigidalgo}\eqref{prop:rigidalgo0} hold for $\ccM$ with these data. Indeed, we have $\ccE^\mero\otimes\ccN\simeq\modif^{\prime0}_\dag(\ccE^{\mero'}\otimes\ccN')$ and, after Corollary \ref{cor:Evarphi!H},
\[
\ccE^\mero\otimes\Gamma_{[!H]}\ccN=\Gamma_{[!H]}(\ccE^\mero\otimes\ccN)=\Gamma_{[!H]}\modif^{\prime0}_\dag(\ccE^{\mero'}\otimes\ccN')=\modif^{\prime0}_\dag(\ccE^{\mero'}\otimes\Gamma_{[!H']}\ccN'),
\]
and similarly $\modif^{\prime k}_\dag(\ccE^{\mero'}\otimes\Gamma_{[!H']}\ccN')=0$, so $f^{\prime0}_\dag(\ccE^{\mero'}\otimes\Gamma_{[!H']}\ccN')\simeq \map^0_\dag(\ccE^\mero\otimes\Gamma_{[!H]}\ccN)$. A~similar equality obviously holds with $\Gamma_{[*H']}$, and these isomorphisms can be lifted at the level of $\MTM$.

Let us show \eqref{enum:rigidalgo1}. There exists a meromorphic function $\psi$ on $\PP^1$ and a rank-one meromorphic connection $\ccL_\regg$ with regular singularities, such that $\ccL=\ccE^\psi\otimes\ccL_\regg$. We can write $\ccL=(\cO_{\PP^1}(*\Sigma),\rd+\rd\psi+\omega)$, where $\Sigma$ is the pole divisor of $\ccL$ and~$\omega$ is a one-form with at most simple poles at $\Sigma$. Moreover, $\ccL$ is locally formally unitary if and only if $\ccL_\regg$ is unitary, \ie the residues of $\omega$ at $\Sigma$ are real. Set $D_1=\map^{-1}(\Sigma)$. By a suitable change of data as above, we can assume that the pole and zero divisors of~\hbox{$\mero+\psi\circ\map$} do not intersect, and that $D\cup D_1$ is a normal crossing divisor. We~claim that, with this assumption, the similar data with $D'=D\cup D_1$, $H'=H\cup D_1$, $\mero'=\mero+\psi\circ f$, $\ccN'=(\Gamma_{[*D_1]}\ccN)\otimes(\Gamma_{[*D]}\map^+\ccL_\regg)$ is a suitable set of data for $\ccM'$.

Firstly, one can check that $\ccL\otimes \map_\dag^0(\ccE^\mero\otimes\Gamma_{[\star H]}\ccN)\simeq \map_\dag^0\bigl(\map^+\ccL\otimes (\ccE^\mero\otimes\Gamma_{[\star H]}\ccN)\bigr)$. Then one can notice that
\[
\map^+(\ccE^\psi\otimes\ccL_\regg)\otimes(\ccE^\mero\otimes\Gamma_{[!H]}\ccN)=\ccE^{\mero'}\otimes (\map^+\ccL_\regg)\otimes\Gamma_{[!H]}\ccN\simeq \ccE^{\mero'}\otimes\Gamma_{[!H]}(\map^+\ccL_\regg\otimes\ccN),
\]
since the second isomorphism is mostly obvious for rank-one regular meromorphic connections and normal crossing divisors. Moreover, from the first two lines of \ref{subsub:commutation}\eqref{enum:rel4} one deduces that $\Gamma_{[!\Sigma]}\circ \map_\dag^0\simeq \map_\dag^0\circ\Gamma_{[!D_1]}$. It follows that, for $\star=*,!$, $\Gamma_{[\star\Sigma]}(\ccL\otimes\ccM)$ is the image of the morphism
\[
\map^0_\dag\Bigl(\ccE^{\mero'}\otimes\Gamma_{[\star D_1]}\Gamma_{[!H]}(\map^+\ccL_\regg\otimes\ccN)\Bigr)\to \map^0_\dag\Bigl(\ccE^{\mero'}\otimes\Gamma_{[\star D_1]}\Gamma_{[*H]}(\map^+\ccL_\regg\otimes\ccN)\Bigr).
\]
As a consequence, $\ccM'$ is the image of
\[
\map^0_\dag\Bigl(\ccE^{\mero'}\otimes\Gamma_{[!D_1]}\Gamma_{[!H]}(\map^+\ccL_\regg\otimes\ccN)\Bigr)\to \map^0_\dag\Bigl(\ccE^{\mero'}\otimes\Gamma_{[*D_1]}\Gamma_{[*H]}(\map^+\ccL_\regg\otimes\ccN)\Bigr),
\]
and since $\Gamma_{[\star D_1]}\Gamma_{[\star H]}=\Gamma_{[\star H']}$ ($\star=*,!$), we obtained the desired assertion.

Let us now show \eqref{enum:rigidalgo2}. Recall that the Laplace transformation is an exact functor $\Mod_\hol(\PP^1,*\infty)\mto\Mod_\hol(\PP^1,*\infty)$, so $\ccM'$ is the image of the Laplace transform of $\map^0_\dag(\ccE^\mero\otimes\Gamma_{[!H]}\ccN)$ to that of $\map^0_\dag(\ccE^\mero\otimes\Gamma_{[*H]}\ccN)$. We denote by $t$ the variable on $\PP^1_t$ and we introduce a new $\PP^1$ with variable $\tau$, and we denote by $p,q:\PP^1_t\times\PP^1_\tau\to\PP^1_t,\PP^1_\tau$ the first and second projections. For any holonomic $\cD_{\PP^1_t}$-module $M$, its Laplace transform is given by the formula $q^0_\dag(\ccE^{-t\tau}\otimes\Gamma_{[*(\PP^1_t\times\infty)]}p^+\Gamma_{[*\infty]}M)$ (and $q_\dag^k(\cdots)=0$ for $k\neq0$). We apply this formula to $M=\map^0_\dag(\ccE^\mero\otimes\Gamma_{[\star H]}\ccN)$ ($\star=*,!$).

Since $\ccM=\Gamma_{[*\infty]}\ccM$, we can assume that $D$ contains $\map^{-1}(\infty)$ but $H$ does not contain any component of it, and we can forget $\Gamma_{[*\infty]}$ in the previous formula. We denote by $p_X,q_X$ the projections $X\times\PP^1_\tau\to X,\PP^1_\tau$, so that $q\circ(f\times\id_{\PP^1_\tau})=q_X$. For $\star=*,!$, we~have
\begin{align*}
q^0_\dag\Bigl(\ccE^{-t\tau}\otimes &\Gamma_{[*(\PP^1_t\times\infty)]}p^+\map^0_\dag(\ccE^\mero\otimes\Gamma_{[\star H]}\ccN)\Bigr)\\
&\simeq q^0_\dag\Bigl(\ccE^{-t\tau}\otimes\Gamma_{[*(\PP^1_t\times\infty)]}(f\times\id)^0_\dag p_X^+(\ccE^\mero\otimes\Gamma_{[\star H]}\ccN)\Bigr)\quad(\text{after }\ref{subsub:commutation}\eqref{enum:rel3})\\
&\simeq q^0_\dag\Bigl(\ccE^{-t\tau}\otimes (f\times\id)^0_\dag \Gamma_{[*(X\times\infty)]}p_X^+(\ccE^\mero\otimes\Gamma_{[\star H]}\ccN)\Bigr)\quad(\text{after }\ref{subsub:commutation}\eqref{enum:rel4})\\
&\simeq q^0_\dag\Bigl((f\times\id)^0_\dag \Gamma_{[*(X\times\infty)]}(\ccE^{p^*\mero-\tau\map}\otimes p_X^+\Gamma_{[\star H]}\ccN)\Bigr)\quad(\text{after }\ref{subsub:commutation}\eqref{enum:rel4})\\
&\simeq q^0_{X\dag}\Bigl(\ccE^{p^*\mero-\tau\map}\otimes\Gamma_{[*(X\times\infty)]}p_X^+\Gamma_{[\star H]}\ccN\Bigr)\quad(q^k_\dag(\cdots)=0\text{ for }k\neq0).
\end{align*}
Set $D_1=(D\times\PP^1_\tau)\cup(X\times\infty)$, $H_1=H\times\PP^1_\tau$ and $\ccN_1=\Gamma_{[*(X\times\infty)]}p_X^+\ccN$. Then the latter expression can also be written as
\[
q^0_{X\dag}\bigl(\ccE^{p^*\mero-\tau\map}\otimes\Gamma_{[\star H_1]}\ccN_1\bigr).
\]
Let $\modif':X'\to X\times\PP^1_\tau$ be a projective modification which induces an isomorphism above the complement of $D_1$, whose pullback is denoted by~$D'$, such that the pole and zero divisors of $\mero':=(p\circ \modif')^*\mero-\modif^{\prime*}(\tau\map)$ do not intersect, and set~$H'=\modif^{\prime-1}(H_1)$ and $\ccN'=\ste^{\prime+}\ccN_1$. Then
\begin{align*}
q^0_{X\dag}\bigl(\ccE^{p^*\mero-\tau\map}\otimes&\Gamma_{[\star H_1]}\ccN_1\bigr)\\
&\simeq q^0_{X\dag}\modif^{\prime0}_\dag\bigl(\ccE^{\mero'}\otimes\Gamma_{[\star H']}\ccN'\bigr)\quad(\text{after \ref{subsub:commutation}\eqref{enum:rel7} and Cor.\,\ref{cor:Evarphi!H} for $\star=!$})\\
&\simeq (q_X\circ \modif')^0_\dag\bigl(\ccE^{\mero'}\otimes\Gamma_{[\star H']}\ccN'\bigr)\quad(\modif^{\prime k}_\dag(\cdots)=0\text{ for }k\neq0).
\end{align*}
Set now $f'=q_X\circ \modif'$. Then, with respect to the data $X',D',H',f'$ and $\mero',\ccN'$, \ref{prop:rigidalgo}\eqref{prop:rigidalgo2} and possibly \ref{prop:rigidalgo}\eqref{prop:rigidalgo0} hold for~$\ccM'$.
\end{proof}

\specialmakechapterhead{Irregular mixed Hodge structures}{\texorpdfstring{Jeng-Daw Yu\protect\ftmark}{JDY}}\label{chap:irregmhm}\label{CHAP:IRREGMHM}
\markboth{\MakeUppercase{\chaptername\ \thechapter. Irregular mixed Hodge structures}}{\MakeUppercase{\chaptername\ \thechapter. Irregular mixed Hodge structures}}

\bgroup
\makeatletter
\def\@makefnmark{(*)\enspace}
\makeatother
\footnotetext{%
\texorpdfstring{Jeng-Daw Yu: Department of Mathematics, National Taiwan University, Taipei 10617, Taiwan.\\ \emph{E\nobreakdash-mail~:} \url{jdyu@ntu.edu.tw}\quad$\cbbullet$\quad\emph{Url~:} \url{http://homepage.ntu.edu.tw/~jdyu/}\\
This work was partially supported by the TIMS and the MoST of Taiwan.}{JDYaddress}}
\egroup

\section{Introduction to Chapter \ref{chap:irregmhm}}

Mirror symmetry has suggested various extensions of the notion of a mixed Hodge structure, in order to make it compatible with the cohomological isomorphisms occurring in some of the variants of mirror symmetry. These are
\begin{itemize}
\item
the \index{Hodge structure!semi-infinite --}semi-infinite Hodge structures \cite{Barannikov01},
\item
the \index{TERP structure}tr-TERP structures \cite{Hertling01}
\item
the \index{Hodge structure!non-commutative --}non-commutative Hodge structures \cite{K-K-P08},
\item
the \index{mixed Hodge structure!exponential --}exponential mixed Hodge structures \cite{K-S10},
\item
the \index{twistor structure!integrable mixed --}integrable mixed twistor structures \cite{Simpson97,Mochizuki11}.
\end{itemize}

The purpose of this chapter is to make more explicit the category $\IrrMHS=\IrrMHM(\pt)$ of irregular mixed Hodge structures, which is ``in between'' that of exponential mixed Hodge structures ($\EMHS$) and that of non-commutative mixed Hodge structures ($\iMTS^\intt(\CC)$, Definition \ref{def:ncMHS}). The category $\IrrMHS$ is endowed with a functor to the category of bi-filtered vector spaces, one filtration being induced by the weight filtration, and the other one is called the \emph{irregular Hodge filtration}. The interest of $\IrrMHS$ is that morphisms in this category induce \emph{strictly} bi-filtered morphisms of bi-filtered vector spaces. On the other hand, this functor, when restricted to the category of exponential mixed Hodge modules, was already considered by Deligne in \cite{Deligne8406} in some special cases.

As explained in Section \ref{sec:twistorstructures}, the notion of non-commutative mixed Hodge structure is that of an integrable mixed twistor structure (a variant of the notion introduced by Simpson \cite{Simpson97}). It is equivalent to the notion of integrable mixed twistor $\cD$-module over a point, which enables an extension to dimensions $\geq1$ and the use of functors in the category $\MTM^\intt$ of integrable mixed twistor $\cD$-modules \cite{Mochizuki11}. Similarly, $\IrrMHS$ is the category of irregular mixed Hodge modules on a point, by using the category $\IrrMHM$ introduced in Chapter \ref{part:2}.

The category $\EMHS$ is known to be a neutral Tannakian category by using additive convolution as tensor product (\cf\cite[\S4.2]{K-S10}). We will endow $\IrrMHS$ with such a structure (recall that $\MTS^\intt(\CC)$ is endowed with such a structure, \cf Proposition \ref{prop:TSTannakienne}), and we will show that the functors
\[
\EMHS\mto \IrrMHS\mto\iMTS^\intt(\CC)
\]
are compatible with these structures. Moreover, the functor mentioned above from $\IrrMHS(\CC)$ to that of bi-filtered vector spaces is also compatible with tensor products. This is translated by Künneth formulas for $\IrrMHS$, and leads to Thom-Sebastiani formulas when applied to suitable objects of $\EMHS$.

For any subfield $\kk$ of $\RR$, the category $\EMHS(\kk)$ is defined, as well as the category $\iMTS^\intt_\good(\kk)$. We will define accordingly the category $\IrrMHS(\kk)$ in such a way that the above functors are defined over $\kk$.

\section{Connections with a pole of order two}\label{sec:poleordertwo}

In this section, we define the functor ``irregular Hodge filtration'' for meromorphic connections with a pole of order two, and prove its good behaviour with respect to tensor product and duality. However, the category of free $\CC[\hb]$-modules with a connection having a pole of order two at the origin, a regular singularity at infinity, and no other pole, is not abelian, and more structure will be added in Section \ref{sec:irregHS} in order to develop an irregular Hodge theory.

\subsection{Harder-Narasimhan filtration and irregular Hodge filtration}\label{subsec:HNconnection}

Let $\PP^1$ be covered by two affine charts $\Afu_\hb,\Afu_v$ with $v=1/\hb$ on the intersection of the two charts. We set $0=\{v=0\}$ and $\infty=\{\hb=0\}$. Let $\cM$ be a free $\cO_{\Afu_\hb}$-module of finite rank, equipped with a connection
\[
\nabla:\cM\to\Omega_{\Afu_\hb}^1(2\cdot[\infty])\otimes\cM,
\]
having a pole of order two at $\hb=0$ and no other pole. We also denote by $\cM_*$ the Deligne rational extension at $v=0$, that we regard as a $\cO_{\PP^1}(*0)$-module, endowed with a rational connection having a pole of order two at $\hb=0$ and a regular singularity at $v=0$. We will assume in this section that \emph{the eigenvalues of the monodromy of the connection have absolute value equal to one} (this assumption is automatically satisfied for the irregular mixed Hodge structures defined in Section \ref{subsec:irregHS}, \cf Lemma \ref{lem:realeigenvalues}). For each $\beta\in\RR$, we also denote by $\cM_\beta$ the locally free $\cO_{\PP^1}$-module (Deligne extension) such that the connection has a simple pole at $v=0$ and all eigenvalues of its residue belong to $[-\beta,-\beta+1)$. It satisfies $\cM_\beta(k\cdot[0])=\cM_{\beta+k}\subset\nobreak\cM_*$ for every $k\in\ZZ$. We regard $(\cM_\beta)_{\beta\in\RR}$ as an increasing filtration of $\cM_*$, each term being equipped with the induced rational connection
\[
\nabla:\cM_\beta\to\Omega^1_{\PP^1}([0]+2\cdot[\infty])\otimes\cM_\beta.
\]
The jumping indices form a finite set modulo $\ZZ$, and for each jumping index, we denote by ${<}\beta$ its predecessor. Then $v\nabla_{\partial_v}+\beta$ is nilpotent on $\cM_\beta/\cM_{<\beta}$, and the index of nilpotence is bounded by a fixed integer independent of $\beta$, due to the finiteness above. When we wish to fix the index in $[0,1)$, we will denote it by $\alpha$ instead of $\beta$.

We will denote by \index{$HNp$@$\HN^p$}$\HN^p(\cM_\beta)$ the \index{Harder-Narasimhan filtration}\emph{Harder-Narasimhan filtration} of $\cM_\beta$. We will now work with the modules of sections on the various affine charts. So we set
\[
M=\Gamma(\Afu_\hb,\cM)=\Gamma(\PP^1,\cM_*),
\]
which a free $\CC[\hb]$-module of finite rank, endowed with an action of $\hb^2\partial_\hb$, which is regular at $v=0$ and we assume that the eigenvalues of the monodromy around $v=0$ have absolute value equal to one. Then $M[\hbm]$ is a free $\CC[\hb,\hbm]=\CC[v,v^{-1}]$-module and we have the free $\CC[v]$-modules $V_\beta M[\hbm]$ ($\beta\in\RR$), that is, the Deligne extension as above. We simply denote it by $V_\beta$. It satisfies $v^kV_\beta=V_{\beta-k}$ for every $k\in\ZZ$.

The algebraic bundle~$\cM_\beta$ on $\PP^1$ is obtained by the gluing data
\[
\begin{array}{ccccc}
V_\beta:=V_\beta M[\hbm]&\subset&M[\hbm]&\supset&M\\[5pt]
\Afu_v&\supset&\mathbb{G}_\mathrm{m}&\subset&\Afu_\hb
\end{array}
\]
We then have
\[
\cM_\beta/\cM_{<\beta}=V_\beta/V_{<\beta}=:\gr_\beta^VM[\hbm].
\]

For any $\beta\in\RR$, we consider the intersection $V_\beta\cap M$ in $M[\hbm]$. This is a finite-dimensional vector space, which is nothing but $\Gamma(\PP^1,\cM_\beta)$. We have a natural morphism
\[
\cO_{\PP^1}\otimes_\CC\Gamma(\PP^1,\cM_\beta)\to\cM_\beta
\]
whose image is $\HN^0(\cM_\beta)$: this is seen by using the Birkhoff-Grothendieck decomposition of $\cM_\beta$ to reduce to the rank-one case. It follows that
\begin{align*}
\Gamma\big(\Afu_v,\HN^0(\cM_\beta)\big)&\simeq\CC[v]\cdot(V_\beta\cap M)\subset \Gamma(\Afu_v,\cM_\beta)=V_\beta,\\
\Gamma\big(\Afu_\hb,\HN^0(\cM_\beta)\big)&\simeq\CC[\hb]\cdot(V_\beta\cap M)\subset \Gamma(\Afu_\hb,\cM_\beta)=M.
\end{align*}
Then, for every $p\in\ZZ$,
\[
\HN^p(\cM_\beta)\simeq\HN^0(\cM_\beta(-p))(p)\simeq \HN^0(\cM_{\beta-p})(p),
\]
so
\begin{align*}
\Gamma\big(\Afu_v,\HN^p(\cM_\beta)\big)&\simeq v^{-p}\CC[v]\cdot(V_{\beta-p}\cap M)\subset v^{-p}\Gamma\big(\Afu_v,\cM_{\beta-p}\big)=v^{-p}V_{\beta-p}=V_\beta,\\
\Gamma\big(\Afu_\hb,\HN^p(\cM_\beta)\big)&\simeq\CC[\hb]\cdot(V_{\beta-p}\cap M)\subset\Gamma\big(\Afu_\hb,\cM_{\beta-p}\big)=M.
\end{align*}
Notice that $\HN^p(\cM_\beta)$ is a filtration of $\cM_\beta$ by sub-bundles. For $\beta\in\RR$, let us set $\alpha=\beta-[\beta]\in[0,1)$ and $p=-[\beta]\in\ZZ$. The description above shows that\index{$FzzHN$@$F^\sHN_\bbullet$}
\begin{equation}\label{eq:FM*}
F^\sHN_\beta M:=\Gamma\big(\Afu_\hb,\HN^p(\cM_\alpha)\big)=\CC[\hb]\cdot(V_\beta\cap M),
\end{equation}
is an increasing filtration of $M$ by sub-bundles discretely indexed by $\RR$ (\ie each $F_\beta^\sHN M/F_{<\beta}^\sHN M$ is a free $\CC[\hb]$-module). We will also regard it as decreasing by setting $F_\sHN^{-\beta}M=F^\sHN_\beta M$. It is an easy remark that this filtration satisfies the Griffiths transversality property (\cf\cite[Rem.\,6.3]{S-Y14}). The restriction of $F^\sHN_\beta M$ to $M[\hbm]$~is
\[
F^\sHN_\beta(M[\hbm]):=(F^\sHN_\beta M)[\hbm]=\CC[\hb,\hbm]\cdot(V_\beta\cap M).
\]
As usual for filtrations discretely indexed by $\RR$, we set $\gr_\beta^{F^\sHN}=F^\sHN_\beta/F^\sHN_{<\beta}$.

We denote by $H$ the fibre $M_{|\hb=1}$ of $M$ (or $M[\hbm]$) at $\hb=1$, \ie $H=M/(\hb-1)M=M[\hbm]/(\hb-1)M[\hbm]$. This is a finite dimensional $\CC$-vector space.

\begin{definition}[Irregular Hodge filtration]\label{def:irregHF}
The \index{irregular Hodge filtration}\emph{irregular Hodge filtration} on $H$ is the increasing filtration indexed by $\RR$ naturally induced by $V_\beta\cap M$ on $H=M/(\hb-\nobreak1)M$:\index{$Fzzirr$@$F^\irr_\bbullet$}
\[
F_\beta^\irr H:=(V_\beta\cap M)\big/(V_\beta\cap M)\cap(\hb-1)M.
\]
\end{definition}

On the other hand, we can endow $H$ with the filtration naturally induced by $F_\beta^\sHN M$:
\[
F^\sHN_\beta H:=F^\sHN_\beta M\big/(F^\sHN_\beta M)\cap (\hb-1)M.
\]

\begin{lemme}\label{lem:irrHN}
The filtrations $F_\beta^\irr H$ and $F^\sHN_\beta H$ coincide.
\end{lemme}

\begin{proof}
The vector space $F^\sHN_\beta H$ is the image of $\CC[\hb]\cdot(V_\beta\cap M)$ in $M/(\hb-1)M$, which is also equal to the image of $V_\beta\cap M$ in $M/(\hb-1)M$.
\end{proof}

We now analyze the filtration induced on the nearby cycles at $v=0$. For $\gamma\in\RR$ fixed, we consider the increasing filtration indexed by $\ZZ$:
\begin{equation}\label{eq:FgralphaV}
F_p\gr_\gamma^V(M[\hbm]):=(V_\gamma\cap \hb^{-p}M)/(V_{<\gamma}\cap \hb^{-p}M),
\end{equation}
and the increasing filtration discretely indexed by $\RR$:
\begin{equation}\label{eq:FHNgralphaV}
F^\sHN_\beta\gr_\gamma^V(M[\hbm]):=F^\sHN_\beta(M[\hbm])\cap V_\gamma/F^\sHN_\beta(M[\hbm])\cap V_{<\gamma}.
\end{equation}

\begin{lemme}\label{lem:FFHN}
For every $\beta,\gamma\in\RR$ we have
\[
\gr_\beta^{F^\sHN}\gr_\gamma^V(M[\hbm])=0\quad\text{if }\beta\notin \gamma+\ZZ,
\]
and for $p\in\ZZ$,
\[
F^\sHN_{\gamma+p}\gr_\gamma^V(M[\hbm])=F_p\gr_\gamma^V(M[\hbm]).
\]
\end{lemme}

\begin{proof}
Let us fix $\gamma\in\RR$. The first point amounts to proving that, for any $p\in\ZZ$,
\[
V_\gamma\cap F^\sHN_{<\gamma+p}(M[\hbm])+V_{<\gamma}=V_\gamma\cap F^\sHN_{\gamma+p-1}(M[\hbm])+V_{<\gamma}.
\]
Note that
\[
F^\sHN_{<\gamma+p}(M[\hbm])=\CC[\hb,\hbm]\cdot(V_{<\gamma+p}\cap M)=\CC[\hb,\hbm]\cdot(V_{<\gamma}\cap \hb^{-p}M),
\]
and $\CC[\hbm]\cdot(V_{<\gamma}\cap \hb^{-p}M)\subset V_{<\gamma}$, so
\[
V_\gamma\cap F^\sHN_{<\gamma+p}(M[\hbm])+V_{<\gamma}=V_\gamma\cap\big(\hb\CC[\hb]\cdot(V_{<\gamma}\cap \hb^{-p}M)\big)+V_{<\gamma}.
\]
It is thus enough to prove that
\[
V_\gamma\cap\big(\hb\CC[\hb]\cdot(V_{<\gamma}\cap \hb^{-p}M)\big)\subset V_{\gamma-1}\cap \hb^{-p}M.
\]
We prove by induction on $\ell\geq1$ that, if $m_j\in V_{<\gamma}\cap \hb^{-p}M$ for $j=1,\dots,\ell$ and $\sum_j\hb^jm_j\in V_\gamma$, then $\sum_j\hb^jm_j\in V_{\gamma-1}\cap \hb^{-p}M$.

If $\ell=1$, we have $\hb m_1\in V_\gamma$, so $m_1\in V_{\gamma-1}\cap(V_{<\gamma}\cap \hb^{-p}M)=V_{\gamma-1}\cap \hb^{-p}M$. If $\ell\geq2$, we have, since $\gamma<\gamma+\ell-1$,
\[
\hb^\ell m_\ell\in V_\gamma+\sum_{j=1}^{\ell-1}\hb^j(V_{<\gamma}\cap \hb^{-p}M)\subset V_{<\gamma+\ell-1},
\]
hence
\[
\hb m_\ell\in V_{<\gamma}\cap(V_{<\gamma+1}\cap \hb^{-p+1}M)=V_{<\gamma}\cap \hb^{-p+1}M\subset V_{<\gamma}\cap \hb^{-p}M.
\]
We thus rewrite the sum as
\[
\sum_{j=1}^\ell \hb^jm_j=\sum_{j=1}^{\ell-2}\hb^jm_j+\hb^{\ell-1}(m_{\ell-1}+\hb m_\ell),
\]
which now satisfies the induction hypothesis, hence the result.

For the second point, by induction on $p$, it is a matter of proving that
\[
F^\sHN_{\gamma+p}(M[\hbm])\cap V_\gamma=(V_\gamma\cap \hb^{-p}M)\mod V_{<\gamma}.
\]
Since $\hb^{-1}\CC[\hbm](V_\gamma\cap \hb^{-p}M)\subset V_{\gamma-1}$, it is enough to prove that
\[
\Big(\sum_{j\geq0}\hb^j(V_\gamma\cap \hb^{-p}M)\Big)\cap V_\gamma\subset (V_\gamma\cap \hb^{-p}M)+V_{<\gamma}.
\]
We argue by induction on the degree $\ell$ of an element $\sum_{j=0}^\ell \hb^jm_j$ of the left-hand term. For such an element, we have $\hb^\ell m_\ell\in V_{\gamma+\ell-1}$, hence $\hb m_\ell\in V_\gamma\cap \hb^{-p+1}M\subset V_\gamma\cap \hb^{-p}M$, so we can rewrite the sum as $\sum_{j=0}^{\ell-2} \hb^jm_j+\hb^{\ell-1}(m_{\ell-1}+\hb m_\ell)$, and we conclude by induction.
\end{proof}

\subsection{$V$-adapted trivializing lattices}\label{subsec:Birkhoff}
A \index{trivializing lattice}\emph{trivializing lattice} for $M$ is a free $\CC[v]$-lattice $M^o$ of $M[\hbm]$ (\ie $\CC[v]$-submodule of maximal rank) such that the bundle~$\wt M$ on $\PP^1$ obtained by gluing $M$ and $M^o$ is trivializable. Such lattices do exist (any basis of $M$ as a $\CC[\hb]$-module generates such a $\CC[v]$-module). Then $\Gamma(\PP^1,\wt M)=M^o\cap M$ is a finite-dimensional $\CC$-vector space of dimension $\rk M$ and we have
\begin{equation}\label{eq:MM'}
M=\bigoplus_{j\geq0}\hb^j(M^o\cap M),\quad M^o=\bigoplus_{j\geq0}v^j(M^o\cap M),\quad M[\hbm]=\bigoplus_{j\in\ZZ}\hb^j(M^o\cap M).
\end{equation}

We say that $M^o$ is a \index{trivializing lattice!$V$-adapted --}\emph{$V$-adapted trivializing lattice for $M$} if it moreover satisfies the following property. For every $\gamma\in\RR$,
\begin{equation}\label{eq:Vsol}
V_\gamma\cap M=\bigoplus_{j\geq0} \hb^j(V_{\gamma-j}\cap M^o\cap M)=\bigoplus_{j\geq0} v^{-j}(V_{\gamma-j}\cap M^o\cap M).
\end{equation}

\begin{proposition}\label{prop:Vadapted}
There exists a $V$-adapted trivializing lattice for $M$.
\end{proposition}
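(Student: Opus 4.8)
The goal is to exhibit a free $\CC[v]$-lattice $M^o\subset M[\hbm]$ satisfying the gluing-trivialization property together with the $V$-adaptedness condition \eqref{eq:Vsol}. The natural strategy is to first produce a $V$-adapted $\CC[v]$-module on the \emph{formal} or \emph{analytic} germ at $v=0$ and then glue it correctly with $M$; but here everything is algebraic, so I would instead work directly with the $V$-filtration on $M[\hbm]$ and build the lattice by choosing a well-behaved basis. Concretely, I would proceed as follows.

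\emph{Step 1: reduce to a single jumping index.} Since the jumping indices of $V_\bbullet M[\hbm]$ form a finite set modulo $\ZZ$, fix representatives $\alpha_1,\dots,\alpha_r\in[0,1)$. The module $M[\hbm]=\CC[v,v^{-1}]\otimes_{\CC[v]}V_{\alpha}$ (any fixed $\alpha\ge\max\alpha_i$) decomposes, at the level of the associated graded $\bigoplus_\gamma\gr_\gamma^V M[\hbm]$, into pieces indexed by the classes $\alpha_i$. One should first handle the construction on each graded piece $\gr_{\alpha_i}^V M[\hbm]$ and then lift.

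\emph{Step 2: build a $V$-filtered $\CC[\hb]$-basis of $M$.} The key intermediate object is a $\CC[\hb]$-basis $e_1,\dots,e_n$ of $M$ such that each $e_k$ lies in some $V_{\gamma_k}\cap M$ and the images of those $e_k$ with $\gamma_k\equiv\alpha_i$ and $\gamma_k$ minimal (in the appropriate range) form a basis of the corresponding graded piece — i.e. a basis compatible with the filtration $F^\sHN_\beta M=\CC[\hb]\cdot(V_\beta\cap M)$ of \eqref{eq:FM*}. Such a basis exists because $F^\sHN_\bbullet M$ is a filtration of $M$ by subbundles (established in \S\ref{subsec:HNconnection}): one splits the bundle $\wt M$ underlying the Birkhoff–Grothendieck decomposition compatibly with the HN/$V$ data, which is possible precisely because each $\gr_\beta^{F^\sHN}M$ is $\CC[\hb]$-free. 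This is where Lemma \ref{lem:FFHN} (relating $F^\sHN$ on $\gr^V$ to the $\hb$-order filtration \eqref{eq:FgralphaV}) does the real work: it guarantees that a basis adapted to $F^\sHN$ is automatically adapted to the interaction between $V$ and multiplication by $\hb$.

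\emph{Step 3: set $M^o=\bigoplus_k \CC[v]\cdot e_k$ and verify the two properties.} With the $e_k$ from Step 2, define $M^o$ to be the $\CC[v]$-span. Then $M^o\cap M=\bigoplus_k\CC\cdot e_k$ has rank $n=\rk M$, the gluing of $M$ and $M^o$ is the trivial bundle (because $\{e_k\}$ is simultaneously a $\CC[\hb]$-basis of $M$ and a $\CC[v]$-basis of $M^o$, giving \eqref{eq:MM'}), and the decomposition \eqref{eq:Vsol} reduces, after writing a general element of $V_\gamma\cap M$ in the basis $\{\hb^j e_k\}$, to the assertion that $\hb^j e_k\in V_\gamma$ iff $e_k\in V_{\gamma-j}$, i.e. iff $\gamma_k\le \gamma-j$. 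The ``if'' direction is immediate since $\hb V_\delta\subset V_{\delta+1}$; the ``only if'' direction is exactly the content of the first statement of Lemma \ref{lem:FFHN} applied along each $\hb$-order step, combined with the fact that the $e_k$ were chosen to represent generators of $\gr^V$ in minimal $\hb$-degree (so no cancellation among distinct $e_k$ can push a sum into a lower $V$-step without each summand already being there).

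\emph{Main obstacle.} The delicate point is Step 2: one must choose the basis of $M$ so that it is \emph{simultaneously} compatible with the HN-filtration (needed for trivializability of the glued bundle, hence for the lattice to be trivializing) and with the $V$-filtration on $M[\hbm]$ in the strong coordinatewise form \eqref{eq:Vsol} — not merely $V_\gamma\cap M=\sum_j\hb^j(V_{\gamma-j}\cap M^o\cap M)$ but a \emph{direct} sum with no lower-order correction terms. Establishing that this stronger splitting can be achieved amounts to checking that the filtrations $F^\sHN_\bbullet M$ and $\hb^{-\bbullet}M$ on $M$, together with $V_\bbullet M[\hbm]$, are ``compatible'' in the sense of admitting a common splitting basis; the technical engine for this is Lemma \ref{lem:FFHN}, and I expect the proof to consist largely of an induction on the number of jumping indices (or on $\rk M$), peeling off $\gr^V$ pieces one at a time while maintaining trivializability — the bookkeeping there being the real content, with no conceptual surprise beyond what \ref{lem:FFHN} already provides.
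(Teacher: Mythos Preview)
Your strategy is viable but differs from the paper's, and your Step~3 justification is not quite right as stated.

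\textbf{Comparison with the paper.} The paper does \emph{not} invoke Lemma~\ref{lem:FFHN} or the Harder--Narasimhan filtration. Instead it proves an auxiliary Lemma~\ref{lem:Vadapted}: by induction on~$\beta$, one chooses $E_{\beta_o}\subset V_{\beta_o}\cap M$ as a complement of the explicit subspace
\[
(V_{<\beta_o}\cap M)+\hb\bigl(E_{\beta_o-1}\oplus\hb E_{\beta_o-2}\oplus\cdots\bigr)
\]
and checks directly that the three properties (i) $E_\beta\subset V_\beta\cap M$ with $E_\beta\cap V_{<\beta}=0$, (ii) $\gr_\gamma^V=\bigoplus_i[\hb^iE_{\gamma-i}]$, and (iii) $V_\gamma\cap M=\bigoplus_{j\ge0,\beta\le\gamma-j}\hb^jE_\beta$ propagate. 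Setting $E=\bigoplus_\beta E_\beta$ and $M^o=\CC[v]\cdot E$ then gives the lattice. This is entirely elementary and self-contained; your Step~1 (reduction to a single jumping class) is not needed.

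\textbf{The imprecision in your Step 3.} The ``only if'' direction---showing that $\sum c_{jk}\hb^je_k\in V_\gamma$ forces $\gamma_k+j\le\gamma$ for each nonzero term---is \emph{not} the content of the first statement of Lemma~\ref{lem:FFHN}; that statement only says the bi-graded pieces vanish off the integer shifts and gives no linear-independence information. What you actually need is: for each $\delta$, the classes $[\hb^je_k]$ with $\gamma_k+j=\delta$ are linearly independent in $\gr_\delta^V$. Via the isomorphism $\hb^j\colon\gr_{\gamma_k}^V\isom\gr_\delta^V$ this reduces to showing the $[e_k]$ with $\gamma_k=\gamma$ form a basis of $\gr_0^F\gr_\gamma^V$. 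This \emph{does} follow from your choice of basis together with the \emph{second} statement of Lemma~\ref{lem:FFHN}, once you check that restricting $\gr_\gamma^{F^\sHN}M$ modulo~$\hb$ yields exactly $\gr_0^F\gr_\gamma^V$ (a short computation using $V_\gamma\cap\hb M=\hb(V_{\gamma-1}\cap M)$); but you should make that link explicit rather than appealing to the vanishing statement.

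\textbf{What each route buys.} Your approach is conceptually attractive---it explains the $V$-adapted basis as a splitting of the HN filtration, tying the construction to the bundle geometry on~$\PP^1$. The paper's inductive construction is less conceptual but more robust: it needs no prior input beyond the definition of the $V$-filtration, and the argument that the inductive complement has the required properties is a short direct calculation.
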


\begin{lemme}\label{lem:Vadapted}
There exists a finite family of finite-dimensional subspaces $(E_\beta)_{\beta\in\RR}$ such that, denoting by $[\cbbullet]$ the projection $V_\gamma\to\gr_\gamma^V(M[\hbm])$ ($\gamma\in\RR$),
\begin{enumerate}
\item\label{lem:Vadapted1}
for every $\beta\in\RR$, $E_\beta\subset V_\beta\cap M$ and $E_\beta\cap V_{<\beta}=0$,
\item\label{lem:Vadapted2}
for every $\gamma\in\RR$, $\gr_\gamma^V(M[\hbm])=\bigoplus_{i\in\ZZ}[\hb^iE_{\gamma-i}]$,
\item\label{lem:Vadapted3}
for every $\gamma\in\RR$, $V_\gamma\cap M=\bigoplus_{\substack{j\geq0\\\beta\leq \gamma-j}}\hb^jE_\beta$.
\end{enumerate}
\end{lemme}

\begin{proof}[Proof of Lemma \ref{lem:Vadapted}]
Let us set
\begin{align*}
m&=\min\{\beta\mid V_\beta\cap M\neq0\}\\
\mu&=\min\{\beta\mid M=\CC[\hb]\cdot(V_\beta\cap M)\}=\min\{\beta\mid M=F_\beta^\sHN M\}.
\end{align*}
We will construct inductively a family $E_\beta$ so that $E_\beta=0$ for $\beta\notin[m,\mu]$. Assume we have constructed $E_\beta$ for $\beta<\beta_o$ satisfying
\begin{enumerate}
\item[\eqref{lem:Vadapted1}$_{<\beta_o}$]
for every $\beta<\beta_o$, $E_\beta\subset V_\beta\cap M$ and $E_\beta\cap V_{<\beta}=0$,
\item[\eqref{lem:Vadapted2}$_{<\beta_o}$]
for every $\gamma<\beta_o$, the sum $\sum_{i\geq0}[\hb^iE_{\gamma-i}]\subset\gr_\gamma^V(M[\hbm])$ is direct,
\item[\eqref{lem:Vadapted3}$_{<\beta_o}$]
for every $\gamma<\beta_o$, $V_\gamma\cap M=\bigoplus_{\substack{j\geq0\\\beta\leq \gamma-j}}\hb^jE_\beta$.
\end{enumerate}
We will construct $E_{\beta_o}$ such that \eqref{lem:Vadapted1}$_{\leq\beta_o}$, \eqref{lem:Vadapted2}$_{\leq\beta_o}$ and \eqref{lem:Vadapted3}$_{\leq\beta_o}$ hold. We note that these properties obviously hold if $\beta_o=m$ if we set $E_m=V_m\cap M$. Let $\beta_o>m$. We first claim that the sum
\[\tag*{$(*)_{\beta_o}$}\label{eq:sumlambdao}
\sum_{\substack{j\geq0\\\beta<\beta_o-j}}\hb^jE_\beta+\hb \sum_{i\geq0}\hb^iE_{\beta_o-1-i},
\]
which is contained in $V_{\beta_o}\cap M$, is direct. The first component equals $V_{<\beta_o}\cap M$ and is a direct sum. The second sum (forgetting the factor $\hb$) is written $E_{\beta_o-1}+\hb E_{\beta_o-2}+\cdots$, hence is part of the direct sum decomposition of $V_{\beta_o-1}\cap M$, so is also direct. This remains true after multiplying by~$\hb$. Let us consider its image in $\gr_{\beta_o-1}^V(M[\hbm])$. Each $\hb^iE_{\beta_o-i-1}$ ($i\geq0$) injects in $\gr_{\beta_o-1}^V(M[\hbm])$, according to \eqref{lem:Vadapted1}$_{<\beta_o}$. Moreover, by \eqref{lem:Vadapted2}$_{<\beta_o}$ applied with $a=\beta_o-1$, the sum of the classes $[\hb^iE_{\beta_o-i-1}]$ ($i\geq0$) in $\gr_{\beta_o-1}^V(M[\hbm])$ is direct. It follows that $E_{\beta_o-1}\oplus \hb E_{\beta_o-2}\oplus\cdots$ maps isomorphically onto its image in $\gr_{\beta_o-1}^V(M[\hbm])$, and thus has intersection zero with $V_{<\beta_o-1}$. This concludes the proof of the claim. Moreover, we see that
\begin{align*}
V_{<\beta_o}\cap\ref{eq:sumlambdao}&=V_{<\beta_o}\cap\Big((V_{<\beta_o}\cap M)\oplus\bigoplus_{i\geq1}\hb^iE_{\beta_o-i}\Big)\\
&=(V_{<\beta_o}\cap M)\oplus V_{<\beta_o}\cap\Big(\bigoplus_{i\geq1}\hb^iE_{\beta_o-i}\Big)=V_{<\beta_o}\cap M.
\end{align*}

We now choose a supplementary subspace $E_{\beta_o}$ of \ref{eq:sumlambdao} in $V_{\beta_o}\cap M$. Then $E_{\beta_o}\cap\nobreak V_{<\beta_o}=E_{\beta_o}\cap V_{<\beta_o}\cap M=0$, so \eqref{lem:Vadapted1}$_{\beta_o}$ is satisfied. By construction, \eqref{lem:Vadapted3}$_{\beta_o}$ is also satisfied, and we have seen that \eqref{lem:Vadapted2}$_{\beta_o}$ also holds for $\gamma=\beta_o$.

For $M$ as defined above, let us set $E=\bigoplus_{\beta\leq \mu}E_\beta$. We find, by induction on $k\geq0$, that
\[
(V_\mu\cap M)+\hb (V_\mu\cap M)+\cdots+\hb^k(V_\mu\cap M)=E\oplus \hb E\oplus\cdots \oplus\hb^{k-1}E\oplus \Big(\hb^k\hspace*{-2mm}\bigoplus_{\substack{j\geq0\\\beta\leq \mu-j}}\hspace*{-2mm}\hb^jE_\beta\Big).
\]
Letting $k$ tend to $\infty$, we obtain that $M=\CC[\hb]\otimes_\CC E$, and thus $\dim E=\rk M$ and $E_\beta=0$ for $\beta>\mu$.
\end{proof}

\begin{proof}[Proof of Proposition \ref{prop:Vadapted}]
Let $E$ be as in the proof of Lemma \ref{lem:Vadapted}. Then $M=\CC[\hb]\cdot E=\bigoplus_{j\geq0}\hb^jE$. We set $M^o=\CC[v]\cdot E=\bigoplus_{j\geq0}v^jE\subset M[\hbm]$. Then $E=M^o\cap M$. Moreover, by \ref{lem:Vadapted}\eqref{lem:Vadapted3}, we have $V_\gamma\cap M^o\cap M=\bigoplus_{\beta\leq \gamma}E_\beta$, hence
\[
V_\gamma\cap M=\bigoplus_{j\geq0}\hb^j\Big(\bigoplus_{\beta\leq \gamma-j}E_\beta\Big)=\bigoplus_{j\geq0}\hb^j(V_{\gamma-j}\cap M^o\cap M).\qedhere
\]
\end{proof}

We can also develop $V_\gamma$ with respect to the decomposition \eqref{eq:Vsol}. We have $V_\gamma=\bigcup_{q\geq0}V_\gamma\cap \hb^{-q}M$ and $V_\gamma\cap \hb^{-q} M=\hb^{-q}(V_{\gamma+q}\cap M)$, so, by \eqref{eq:Vsol},
\begin{equation}\label{eq:Vbetaq}
V_\gamma\cap \hb^{-q} M=\bigoplus_{j\geq0} \hb^{j-q}(V_{\gamma+q-j}\cap M^o\cap M)=\bigoplus_{k\geq-q} \hb^k(V_{\gamma-k}\cap M^o\cap M),
\end{equation}
and therefore, taking the limit $q\to\infty$,
\begin{equation}\label{eq:Vbeta}
V_\gamma=\bigoplus_{k\in\ZZ} \hb^k(V_{\gamma-k}\cap M^o\cap M)=\bigoplus_{k\in\ZZ} v^k(V_{\gamma+k}\cap M^o\cap M).
\end{equation}

Let us now consider the irregular Hodge filtration.

\begin{lemme}\label{lem:HNVM'M}
Let $M^o$ be a $V$-adapted trivializing lattice. By means of the identification $M^o\cap\nobreak M\isom M/(\hb-1)M$, $F_\beta^\irr H$ is identified with $V_\beta\cap M^o\cap M$.
\end{lemme}

\begin{proof}
The decompositions \eqref{eq:MM'} and \eqref{eq:Vsol} show that the image of $V_\beta\cap M$ in $H=M/(\hb-1)M$, that is, $F_\beta^\irr H$, is equal to $\sum_{j\geq0}(V_{\beta-j}\cap M^o\cap M)$, which is nothing but $V_\beta\cap M^o\cap M$.
\end{proof}

\begin{remarque}
We also have
\[
F^\sHN_\beta M=\CC[\hb]\otimes_\CC(V_\beta\cap M^o\cap M).
\]
Indeed, by the previous identifications, we have
\begin{align*}
F_\beta^\sHN M&=\CC[\hb]\cdot(V_\beta\cap M)=\CC[\hb]\cdot \bigoplus_{j\geq0} \hb^j(V_{\beta-j}\cap M^o\cap M)\\
&=\bigoplus_{k\geq0} \hb^k(V_\beta\cap M^o\cap M)=\CC[\hb]\otimes_\CC(V_\beta\cap M^o\cap M),
\end{align*}
hence the assertion.
\end{remarque}

\Subsection{Application to the irregular Hodge filtration of some confluent hypergeometric systems}\label{subsec:hypergeom}

Let us consider a sequence of real numbers $0\leq\alpha_0\leq\alpha_1\leq\cdots\leq\alpha_{\mu-1}<1$ ($\mu\geq1$) and let us consider the confluent \index{hypergeometric}hypergeometric differential operator
\[
\prod_{k=0}^{\mu-1}(\tau\partial_\tau-\alpha_k)-\tau.
\]
It has a regular singularity at $\tau=0$ and an irregular singularity of slope $1/\mu$ at $\tau=\infty$, and no other singularity. We know that the associated meromorphic flat bundle $\cH(\alphag,0)$ on $\PP^1$ is irreducible (\cf\cite[Cor.\,3.2.1]{Katz90}), that its index of rigidity is equal to $2$ (\cf\cite[Th.\,3.7.1\,\&\,Th.\,3.7.3]{Katz90}), and that it is rigid (\cf\cite[Th.\,4.7\,\&\,Th.\,4.10]{B-E04}). Since the $\alpha_k$'s are real, $\cH(\alphag,0)$ is locally unitary at $\tau=0$. The computation of the formal monodromy at $\tau=\infty$ is not difficult and shows the local unitarity at $\tau=\infty$ (\cf below the computation of $B_\infty^\mathrm{diag}$ after ramification, which is enough). By Theorem \ref{th:rigidP1}, its minimal extension $\cH^{\min}(\alphag,0)$ at $\tau=0$ underlies a unique object $\cT^{\min}(\alphag,0)$ of $\IrrMHM(\PP^1_\tau)$, and it comes equipped with a unique (up to a shift) irregular Hodge filtration. We aim at determining the ranks of the irregular Hodge bundles. We denote by $\cT^{\min}(\alphag,0)$ the associated pure polarizable twistor $\cD$-module on $\PP^1_\tau$ and by $\cT(\alphag,0)$ the localized object in $\MTM^\intt(\PP^1_\tau,(*0))$.

The following result is due to A.\,Castaño\,Domínguez and C.\,Sevenheck \cite{CD-S17}, that they proved in a different way.\footnote{Added in proofs: For a general confluent hypergeometric differential system $\cH(\alphag,\betag)$ defined by the hypergeometric differential equation $\prod_{i=1}^n(\tau\partial_\tau-\alpha_i)-\tau\prod_{j=1}^m(\tau\partial_\tau-\beta_j)$ with $n>m$, $\alpha_i,\beta_j\in[0,1)$ and $\alpha_i\neq\beta_j$ for all $i,j$, we recently generalized Theorem \ref{th:hypergeom} by proving that the jumps of the irregular Hodge filtration $F^\irr_\bbullet$ occur at $p=\rho(i):=\mu\alpha_i-i+\#\{j\mid\beta_j<\alpha_i\}\in\RR$, $\mu=n-m$, and the rank of the jump at $p$ is $\#\{i\mid\rho(i)=p\}$.}

\begin{theoreme}\label{th:hypergeom}
For any $\beta\in\RR$, we have $\rk F^\irr_\beta\cH(\alphag,0)=\#\{k\mid-\alpha_k\leq(\beta-k)/\mu\}$.
\end{theoreme}

\skpt
\begin{remarques}\label{rem:hypergeom}
\begin{enumerate}
\item\label{rem:hypergeom1}
Recall that the irregular Hodge filtration is unique up to a shift by a real number, so the formula above is understood up to a shift of the filtration.
\item\label{rem:hypergeom2}
In \cite[Prop.\,4]{Mochizuki14b}, T.\,Mochizuki computes the $\cR$-module part of the localized object $\cT(\alphag,0)$ (the case of the matrix $\cK(r,1)$ with the notation of \loccit). In such a way, a direct computation of the rescaled module would be possible, leading possibly to a computation of the irregular Hodge filtration by using Definition \ref{def:irrHF}. We did not pursue in this direction.
\item\label{rem:hypergeom3}
In \cite[\S6.1]{F-G09}, Frenkel and Gross consider the case where \hbox{$\alpha_0\!=\!\cdots\!=\!\alpha_{\mu-1}\!=\!0$}. In such a case, the formula of Theorem \ref{th:hypergeom} shows that each graded piece of the irregular Hodge filtration has rank one and the jumps occur at $k/\mu$ ($k=0,\dots,\mu-1$) up to a global shift.
\end{enumerate}
\end{remarques}

\pagebreak[2]
\skpt
\begin{proof}[Proof of Theorem \ref{th:hypergeom}]
\oldsubsubsection*{Ramification}
We consider the pullback $\cH_\mu(\alphag,0)$ of $\cH(\alphag,0)$ by the ramification $\finite:v\mto\tau=v^\mu$. It corresponds to the differential operator
\[
\mu^{-\mu}\prod_{k=0}^{\mu-1}(v\partial_v-s_k)-v^\mu,\quad s_k:=\mu\alpha_k,
\]
and we have a basis $\omegag:=(\omega_0,\dots,\omega_{\mu-1})$ of the free $\CC[v,v^{-1}]$-module $\cH_\mu(\alphag,0)$ in which the connection is given by
\[
\mu^{-1}(v\partial_v+\sigma_k)\omega_k=v\omega_{k+1}\quad k=0,\dots,\mu-1,
\]
where we have set $\omega_\mu:=\omega_0$ and $\sigma_k=k-s_k$. In other words, we have
\[
v\partial_v\omegag=\omegag\cdot(vA_0-A_\infty),
\]
where\vspace*{-.5\baselineskip}
\[
A_\infty=\mathrm{diag}(\sigma_0,\dots,\sigma_{\mu-1})\quad\text{and}\quad
\arraycolsep=5pt
A_0=\mu\begin{pmatrix}
0&&&&1\\
1&0&&&0\\
0&1&0&&0\\
\vdots&\ddots&\ddots&\ddots&\vdots\\
0&\cdots&0&1&0
\end{pmatrix}.
\]
It is easy to check that $\cH_\mu(\alphag,0)$ is a meromorphic flat bundle on $\PP^1_v$ with a regular singularity at $v=0$, an irregular singularity of pure slope $1$ at $v=\infty$, and no other singularity. Moreover, the pullback $\cT_\mu(\alphag,0)$ of $\cT(\alphag,0)$ endows $\cH_\mu(\alphag,0)$ with the structure of an integrable mixed twistor $\cD$-module on $\PP^1_v$ localized at $v=0$, and its minimal extension $\cT_\mu^{\min}(\alphag,0)$ at $v=0$ is a polarized twistor $\cD$-module which is pure of weight $0$, with associated $\CC[v]\langle\partial_v\rangle$-module $\cH_\mu^{\min}(\alphag,0)$.

At this point, we do not know that $\cT_\mu^{\min}(\alphag,0)$ belongs to $\IrrMHM(\PP^1_v)$. On the other hand, if this property holds, then we obtain $\cT^{\min}(\alphag,0)$ as a direct summand of the pushforward $\finite_\dag^0\cT^{\min}_\mu(\alphag,0)$, so the irregular Hodge filtration of $\cH(\alphag,0)_{|\CC^*_\tau}$ can be computed as the invariant part of the pushfoward of that of $\cH_\mu(\alphag,0)_{|\CC^*_v}$, and conversely, that of $\cH_\mu(\alphag,0)_{|\CC^*_v}$ is the pullback of that of $\cH(\alphag,0)_{|\CC^*_\tau}$. It follows in particular that the ranks and jumping indices are the same. We are thus reduced to proving that $\cT_\mu^{\min}(\alphag,0)$ belongs to $\IrrMHM(\PP^1_v)$ and to computing the ranks of its irregular Hodge bundles.

\oldsubsubsection*{Computation for $\cH_\mu(\alphag,0)$}

One checks that $\cH_\mu^{\min}(\alphag,0)$ is the Laplace transform of a regular holonomic $\CC[t]\langle\partial_t\rangle$-module $N$, \ie $\cH_\mu^{\min}(\alphag,0)=\Fou N$. We denote by $\ccN$ its minimal extension at $t=\infty$. Similarly, $\cT^{\min}_\mu(\alphag,0)$ is the Fourier-Laplace transform of an integrable polarizable regular twistor $\cD_{\PP^1_t}$-module $\cT$ which is pure of weight $0$ and with associated holonomic $\cD_{\PP^1_t}$-module $\ccN$ (\cf\cite{Bibi05b}). By the rigidity theorem \cite[Th.\,6.2]{H-S08}, it is associated to a variation of polarized complex Hodge structure of weight $0$. By the argument before Proposition \ref{prop:imts} we conclude that $\cT_\mu^{\min}(\alphag,0)$ belongs to $\IrrMHM(\PP^1_v)$.

\begin{lemme}\label{lem:Firrflat}
The irregular Hodge filtration $F_\cbbullet^\irr\cH_\mu^{\min}(\alphag,0)$ satisfies the following property:\vspace*{-.5\baselineskip}
\begin{align*}
&\forall\beta\in\RR,\quad F_\beta^\irr\cH_\mu^{\min}(\alphag,0)_{|\CC^*_v}\text{ is $\cO_{\CC^*_v}$-locally free, and}\\
&i^*_{v=1}F_\beta^\irr\cH_\mu^{\min}(\alphag,0)=F_\beta^\irr\bigl[i^*_{v=1}\cH_\mu^{\min}(\alphag,0)\bigr], 
\end{align*}
where the latter term is given by Proposition \ref{prop:imts}.
\end{lemme}

Let us take this lemma for granted. Recall also that the irregular Hodge filtration on the fiber $\cH_\mu^{\min}(\alphag,0)_{|v=1}$ can be computed in terms of the Deligne filtration $F^\Del_\bbullet$, as follows from \cite[Prop.\,3.1.2]{E-S-Y13}, and then we can apply \cite[Th.\,6.1(2)]{Bibi08} to compute it in terms of the Brieskorn lattice $G_0(\ccN)$ of the Hodge filtration of $\ccN$. If we set $u=1/v$, then $G_0(\ccN)$ is a free $\CC[u]$-module such that $\CC[u,u^{-1}]\otimes_{\CC[u]}G_0(\ccN)=\cH_\mu(\alphag,0)$.

The latter result gives a formula similar to that of Definition \ref{def:irregHF}, namely,
\[
\dim F^\irr_\beta(\cH_\mu^{\min}(\alphag,0)_{|v=1})=\dim\big[(V_\beta\cap G_0(\ccN))/(V_\beta\cap G_0(\ccN))\cap(u-1)G_0(\ccN)\big].
\]

On the other hand, let $G_0(\omegag)$ denote the $\CC[u]$-submodule of $\cH_\mu(\alphag,0)$ generated by $\omega_0,\dots,\omega_{\mu-1}$. Set $\omega'_k=v^{[\sigma_k]}\omega_k$ and $G_p(\omegag)=u^{-p}G_0(\omegag)$. Then \cite[(3.9)]{D-S02b} applies in the present context, and gives, for $\beta\in\RR$ and $p\in\ZZ$ such that $\beta-p\in[0,1)$,
\[
V_\beta\cap G_0(\omegag)=u^p\bigl[V_{\beta-p}\cap G_p(\omegag)\bigr]=u^p\biggl[\sum_{k\mid\sigma_k=\beta}\CC\cdot\omega'_k\biggr]+V_{<\beta}\cap G_0(\omegag)+V_\beta\cap G_{-1}(\omegag).
\]
We deduce that
\begin{align*}
\dim\big[(V_\beta\cap G_0(\omegag))/(V_\beta\cap G_0(\omegag))\cap(u-1)G_0(\omegag)\big]&=\#\{k\mid\sigma_k\leq\beta\}\\
&=\#\{k\mid-\alpha_k\leq(\beta-k)/\mu\}.
\end{align*}

\oldsubsubsection*{Comparison of Brieskorn lattices}
The statement of Theorem \ref{th:hypergeom} would now follow from the identification
\[
G_p(\ccN)=G_0(\omegag)
\]
for some $p$, that we now prove (recall that the Hodge filtration on $\ccN$, hence its Brieskorn lattice, is defined up to a shift by an integer). Let us denote by $\wh G_0$ the formalized module $\CC\lcr u\rcr\otimes_{\CC[u]}G_0$. Then, according to \cite[Prop.\,1.2]{Malgrange95}, it is enough to prove the equality $\wh G_0(\ccN)=\wh G_0(\omegag)$.

Since the connection takes the form
\[
u^2\partial_u\omegag=\omegag\cdot(-A_0+uA_\infty),
\]
there is a constant base change after which the matrix is $(-B_0+uB_\infty)$ with $B_0$ diagonal with distinct eigenvalues. Then, one can find a formal base change after which the matrix is $(-B_0+uB_\infty^\mathrm{diag})$, where $B_\infty^\mathrm{diag}$ is the diagonal part of $B_\infty$ (\cf\eg\cite[\S VI.3.f]{Bibi00}). It is not difficult to compute that $B_\infty^\mathrm{diag}=c\id$ for some constant $c\in\RR$ that we do not need to make explicit. The conclusion is that $\wh G_0(\omegag)$ splits into rank-one $\CC\lcr u\rcr$-modules with connection, which have pairwise distinct exponential part and \emph{the same} rank-one regular part.

On the other hand, if $F_\cbbullet\ccN$ is the Hodge filtration of $\ccN$, then we choose a generating index $p_o$ of $F_\cbbullet\ccN$, \ie such that, for all $\ell\geq0$, $F_{p_o+\ell}\ccN=\sum_{j=0}^\ell\partial_t^jF_{p_o}\ccN$, and we have (\cf\cite[\S1.d]{Bibi05} and \cite[App.]{S-Y14})
\[
G_0(\ccN)=u^{p_o}\sum_{j\geq0}u^j\wh\loc(F_{p_o}\ccN).
\]
Moreover, the microlocalized lattice $(F_{p_o}\ccN)^\mu$ (\cf\eg\cite[\S V.3]{Bibi00}) is supported at the singularities $t_i$ of $\ccN$ and is written as $(F_{p_o}\ccN)^\mu=\bigoplus_{i=0}^{\mu-1}(F_{p_o}\ccN)^\mu_{t_i}$. These are rank-one $\CC\lcr u\rcr$-modules with a connection having a regular singularity, hence a logarithmic singularity, and $\wh G_0(\ccN)$ can be identified with $\bigoplus_{i=0}^{\mu-1}\ccE^{t_i/u}\otimes(F_{p_o}\ccN)^\mu_{t_i}$ as a $\CC\lcr u\rcr$-module with connection (\cf\cite[Prop.\,V.3.4\,\&\,Prop.\,V.3.6]{Bibi00}). Our assertion is now reduced to proving that these modules have \emph{the same} rank-one regular part.

By construction, $\cH_\mu(\alphag,0)$ has an action of $\ZZ/\mu\ZZ$ over $v\mto\zeta v$ ($\zeta^\mu=1$) and its wild harmonic metric is invariant by this action, since it is the pullback of the harmonic metric for $\cH(\alphag,0)$. This action therefore lifts to $\cT_\mu^{\min}(\alphag,0)$. We also have $\partial_v\mto\zeta^{-1}\partial_v$, so we get a similar action on the inverse Fourier-Laplace transform of $\cT_\mu^{\min}(\alphag,0)$ induced by $t\mto\zeta^{-1}t$. In particular, the Hodge filtration is equivariant with respect to this action. Since this action permutes the $t_i$'s, we conclude that the germs $(F^{p_o}\ccN)^\mu_{t_i}$ are exchanged, hence are isomorphic, as wanted.
\end{proof}

\begin{proof}[Sketch of proof of Lemma \ref{lem:Firrflat}]
We work algebraically with respect to $v$. Let $(\ccN,F_\bbullet\ccN)$ be a filtered $\cD_{\PP^1_t}$-module underlying a complex mixed Hodge module, and let $R_F\ccN$ be the associated Rees module. We consider the exponentially twisted module $\cM=\cE^{tv/\hb}[v^{-1}]\otimes R_F\ccN$ as the localization along $v=0$ of the $\cR$-module part of an object of $\IrrMHM(\PP_t\times\PP^1_v)$ whose direct image to $\PP^1_v$ gives rise to the localized Laplace transform $\Fou(R_F\ccN)[v^{-1}]$. The rescaling corresponds to adding the variable~$\tau$ and considering $\taucM=\cE^{\tau tv/\hb}[v^{-1}]\otimes R_F\ccN$. The $\tauV$-filtration is computed as in \cite[Prop.4.1]{Bibi05b}, and we find
\[
\tauV_\beta(\cE^{\tau tv/\hb}[v^{-1}]\otimes R_F\ccN)=\bigl(\cE^{\tau t/\hb}\otimes R_F\ccN\bigr)\otimes_\CC\CC[v,v^{-1}].
\]
The same property holds thus after pushforward onto $\PP^1_v$, and then a similar property holds for $\tauV_\beta/(\tau-\hb)\tauV_\beta$, showing eventually that
\[
F_\beta^\irr(\cE^{tv/\hb}[v^{-1}]\otimes R_F\ccN)=F_\beta^\irr(\cE^{t/\hb}\otimes R_F\ccN)\otimes_\CC\CC[v,v^{-1}],
\]
hence the assertion.
\end{proof}

\subsection{Irregular Hodge filtration and tensor product}
We keep the notation of Section \ref{subsec:HNconnection}. The main result of this section is following proposition.

\begin{proposition}[Tensor product formula for the irregular Hodge filtration]\label{prop:tensorirregHodge}\index{irregular Hodge filtration!tensor product formula for the --}
Let $M_1,M_2$ be two $\CC[\hb]\langle\hb^2\partial_\hb\rangle$-modules as in Section \ref{subsec:HNconnection}. Then we have, on $H=H_1\otimes_\CC H_2$ (see Section \ref{subsec:HNconnection}):
\[
F^\irr_\beta H=\sum_{\beta_1+\beta_2=\beta}F^\irr_{\beta_1}H_1\otimes_\CC F^\irr_{\beta_2}H_2.
\]
\end{proposition}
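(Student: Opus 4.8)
The plan is to reduce the statement to a computation with $V$-adapted trivializing lattices, using the machinery of Section~\ref{subsec:Birkhoff}. First I would choose, by Proposition~\ref{prop:Vadapted}, a $V$-adapted trivializing lattice $M_i^o$ for each $M_i$ ($i=1,2$), and set $E_i = M_i^o\cap M_i$, so that $E_i$ is canonically identified with $H_i = M_i/(\hb-1)M_i$. By Lemma~\ref{lem:HNVM'M}, under this identification we have $F^\irr_{\beta_i}H_i = V_{\beta_i}\cap M_i^o\cap M_i = V_{\beta_i}^{(i)}\cap E_i$, where $V_\bbullet^{(i)}$ denotes the $V$-filtration of $M_i[\hbm]$ at $v=0$. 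The key point will be to show that $M^o := M_1^o\otimes_{\CC[v]}M_2^o$ is again a $V$-adapted trivializing lattice for $M := M_1\otimes_{\CC[\hb]}M_2$ (with the tensor product connection), and that the $V$-filtration of $M[\hbm]$ decomposes accordingly.

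The trivializing property of $M^o$ for $M$ is straightforward: the glued bundle on $\PP^1$ associated with $(M,M^o)$ is the tensor product of the two trivial bundles $\wt M_1$ and $\wt M_2$, hence trivial, and $\Gamma(\PP^1,\wt M) = E_1\otimes_\CC E_2$ has the expected rank. The more delicate step is the $V$-filtration compatibility. The connection on $M$ has a regular singularity at $v=0$, and since the eigenvalues of the monodromies of $M_1$ and $M_2$ at $v=0$ have absolute value one (our standing hypothesis), so do those of $M$, and the $V$-filtration of $M[\hbm]$ is indexed by $\RR$. I expect the identity
\[
V_\beta\bigl(M[\hbm]\bigr) = \sum_{\beta_1+\beta_2=\beta} V_{\beta_1}^{(1)}\bigl(M_1[\hbm]\bigr)\otimes_{\CC[v]} V_{\beta_2}^{(2)}\bigl(M_2[\hbm]\bigr)
\]
to hold: the right-hand side is a good $V$-filtration (each term is $\CC[v]$-coherent, it is exhaustive, $v$-multiplicativity and $\CC[v,v^{-1}]$-generation are clear), and $v\nabla_{\partial_v}+\beta$ acts nilpotently on its $\beta$-th graded piece because $v\nabla_{\partial_v} = v\nabla_{\partial_v}^{(1)}\otimes 1 + 1\otimes v\nabla_{\partial_v}^{(2)}$ and the nilpotent parts add up. By uniqueness of the $V$-filtration, the displayed equality follows. (One must be slightly careful that the sum on the right is actually a direct-sum-type decomposition on graded pieces; this follows from the compatibility of the residue decompositions, the eigenvalues being real.) I anticipate this verification — that the naive tensor product filtration really is \emph{the} $V$-filtration — to be the main obstacle, though it is a fairly standard argument once phrased correctly.

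Granting the lattice statement, the conclusion is formal. Using $V_{\beta}\cap M^o\cap M = V_\beta(M[\hbm])\cap (E_1\otimes_\CC E_2)$ and the decomposition above, I intersect with $E_1\otimes_\CC E_2$ and use that $M_i^o\cap M_i = E_i$ together with the $V$-adaptedness formula \eqref{eq:Vsol} for each factor to obtain
\[
V_\beta\cap M^o\cap M = \sum_{\beta_1+\beta_2=\beta}\bigl(V_{\beta_1}^{(1)}\cap E_1\bigr)\otimes_\CC\bigl(V_{\beta_2}^{(2)}\cap E_2\bigr).
\]
Transporting this identity through the identifications $E_i\isom H_i$ and $E_1\otimes_\CC E_2\isom H$ given by restriction to $\hb=1$, and invoking Lemma~\ref{lem:HNVM'M} once more to recognize each side as the irregular Hodge filtration, yields exactly
\[
F^\irr_\beta H = \sum_{\beta_1+\beta_2=\beta} F^\irr_{\beta_1}H_1\otimes_\CC F^\irr_{\beta_2}H_2,
\]
as desired. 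The only remaining bookkeeping is to check that the tensor product connection on $M_1\otimes_{\CC[\hb]}M_2$ still has a pole of order at most two at $\hb=0$ — which is immediate since $\hb^2\partial_\hb$ acts by the Leibniz rule and $\hb^2\partial_\hb(m_1\otimes m_2) = (\hb^2\partial_\hb m_1)\otimes m_2 + m_1\otimes(\hb^2\partial_\hb m_2)$ — and that the fibre of $M$ at $\hb=1$ is indeed $H_1\otimes_\CC H_2$, which is clear from right-exactness of the tensor product.
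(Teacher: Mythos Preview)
Your proposal is correct and follows essentially the same route as the paper: it proves (as Proposition~\ref{prop:Vadaptedtensor}) that $M^o=M_1^o\otimes_{\CC[v]}M_2^o$ is a $V$-adapted trivializing lattice for $M$, the main step being the claim $V_\gamma(M[\hbm])=\sum_{\gamma_1+\gamma_2=\gamma}V_{\gamma_1}^{(1)}\otimes_{\CC[v]}V_{\gamma_2}^{(2)}$ established by the uniqueness-of-$V$-filtration argument you outline, and then concludes via Lemma~\ref{lem:HNVM'M} exactly as you do. The only cosmetic difference is that the paper derives the intersection formula $V_\gamma\cap M^o\cap M=\sum(V_{\gamma_1}^{(1)}\cap E_1)\otimes(V_{\gamma_2}^{(2)}\cap E_2)$ by expanding $U_\gamma$ via \eqref{eq:Vbeta} and reading off the $v^0$ coefficient, rather than by a direct intersection argument.
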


As a consequence, we find, by taking bases of $H_i$ adapted to $F^\irr_\bbullet H_i$ ($i=1,2$):
\[
\gr^{F^\irr}_\beta H\simeq\bigoplus_{\beta_1+\beta_2=\beta}\gr^{F^\irr}_{\beta_1}H_1\otimes_\CC \gr^{F^\irr}_{\beta_2}H_2.
\]
Note that the equality of dimensions was obtained in \cite[Prop.\,3.7]{Bibi96bb} with a supplementary assumption, and was inspired from \cite{Varchenko83} (\cf also \cite[Ex.\,III.2.9]{Bibi00}).

\begin{remarque}[Tensor product formula for the Harder-Narasimhan filtration]\label{rem:tensorHN}\index{Harder-Narasimhan filtration!tensor product formula for the --}
The same proof also gives
\[
F^\sHN_\beta M=\sum_{\beta_1+\beta_2=\beta}F^\sHN_{\beta_1}M_1\otimes_{\CC[\hb]} F^\sHN_{\beta_2}M_2.
\]
\end{remarque}

\begin{lemme}
Assume that $M^o_i$ is a trivializing lattice for $M_i$ ($i=1,2$). Then $M^o:=M^o_1\otimes_{\CC[v]}M^o_2$ is a a trivializing lattice for $M:=M_1\otimes_{\CC[\hb]}M_2$.
\end{lemme}

\begin{proof}
By definition \eqref{eq:MM'} of $M^o_1,M^o_2$, we have
\begin{align*}
M^o&=(\CC[v]\otimes_\CC(M^o_1\cap M_1))\otimes_{\CC[v]}(\CC[v]\otimes_\CC(M^o_2\cap M_2))\\
&=\CC[v]\otimes_\CC \bigl[(M^o_1\cap M_1)\otimes_\CC(M^o_2\cap M_2)\bigl],
\end{align*}
and we have similarly
\[
M=\CC[\hb]\otimes_\CC \bigl[(M^o_1\cap M_1)\otimes_\CC(M^o_2\cap M_2)\bigr].
\]
Then $M^o\cap M=(M^o_1\cap M_1)\otimes_\CC(M^o_2\cap M_2)$
\end{proof}

We now consider the behaviour of $V$-adapted trivializing lattices by tensor product.

\begin{proposition}\label{prop:Vadaptedtensor}
Assume that $M^o_1$ (\resp $M^o_2$) is a $V$-adapted trivializing lattice for~$M_1$ (\resp $M_2$). Then $M^o:=M^o_1\otimes_{\CC[v]}M^o_2$ is a $V$-adapted trivializing lattice for $M:=M_1\otimes_{\CC[\hb]}M_2$.
\end{proposition}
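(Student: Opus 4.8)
The statement is purely a compatibility result between the tensor product of $\CC[\hb]$-modules with connection and the notion of $V$-adapted trivializing lattice introduced in Section \ref{subsec:Birkhoff}, so the plan is to reduce everything to the explicit direct sum decompositions \eqref{eq:Vsol} and \eqref{eq:Vbeta}. By the preceding lemma, $M^o = M^o_1\otimes_{\CC[v]}M^o_2$ is already a trivializing lattice for $M = M_1\otimes_{\CC[\hb]}M_2$, with
\[
M^o\cap M = (M^o_1\cap M_1)\otimes_\CC(M^o_2\cap M_2),
\]
so the only thing left to verify is the $V$-adaptedness condition \eqref{eq:Vsol} for $M$: namely that for every $\gamma\in\RR$,
\[
V_\gamma M[\hbm]\cap M = \bigoplus_{j\ge0}\hb^j\bigl(V_{\gamma-j}M[\hbm]\cap M^o\cap M\bigr).
\]

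\textbf{Key steps.} First I would identify $V_\gamma(M[\hbm])$ in terms of $V_{\gamma_1}(M_1[\hbm])$ and $V_{\gamma_2}(M_2[\hbm])$: since the connection on $M[\hbm]$ is the tensor product connection and the monodromy on $M[\hbm]$ is the tensor product of the monodromies on $M_i[\hbm]$, the Deligne lattice $V_\gamma(M[\hbm])$ at $v=0$ is the convolution
\[
V_\gamma(M[\hbm]) = \sum_{\gamma_1+\gamma_2=\gamma} V_{\gamma_1}(M_1[\hbm])\otimes_{\CC[v]}V_{\gamma_2}(M_2[\hbm]),
\]
where the sum runs over the finitely many jumping indices of each factor modulo $\ZZ$; this is the standard fact that the $V$-filtration on a tensor product is the convolution of the $V$-filtrations, and I would justify it by the characterizing properties (coherence, nilpotence of $v\nabla_{\partial_v}+\gamma$ on the graded piece, finiteness of jumps), exactly as in the analogous statements for strict specializability used throughout the paper (e.g.\ the proof of Proposition \ref{prop:Vtheta}). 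Second, I would use the decomposition \eqref{eq:Vbeta} for each factor to write, for each $\gamma_i$,
\[
V_{\gamma_i}(M_i[\hbm]) = \bigoplus_{k_i\in\ZZ}\hb^{k_i}\bigl(V_{\gamma_i-k_i}\cap M^o_i\cap M_i\bigr),
\]
plug these into the convolution, and collect the $\hb$-powers by setting $k=k_1+k_2$; the ``diagonal'' terms $V_{\gamma_i-k_i}\cap M^o_i\cap M_i$ combine, via the tensor-product formula for $V$-adapted lattices applied inside the finite-dimensional spaces $M^o_i\cap M_i$, to give $V_{\gamma-k}\cap M^o\cap M$. Third, intersecting with $M$ (which kills the terms with $k<0$ because $M=\CC[\hb]\otimes_\CC(M^o\cap M)$ and the grading by $\hb$-degree is respected) yields precisely \eqref{eq:Vsol} for $M$. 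Along the way it should be recorded that the tensor product of two lattices each of the form $\CC[v]\otimes_\CC E_i$ is again of that form, so the $V$-adaptedness identity for the finite-dimensional pieces — i.e.\ that $V_\gamma\cap M^o\cap M = \bigoplus_{\gamma_1+\gamma_2=\gamma}(V_{\gamma_1}\cap M^o_1\cap M_1)\otimes_\CC(V_{\gamma_2}\cap M^o_2\cap M_2)$ — follows directly from the convolution description of $V_\gamma(M[\hbm])$ restricted to the subspace $M^o\cap M$ on which the $V$-filtration of $M[\hbm]$ induces a finite exhaustive filtration.

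\textbf{Main obstacle.} The delicate point is the bookkeeping in the collection of $\hb$-powers: one must check that the convolution of the two direct sum decompositions, after grouping by total $\hb$-degree $k$, really reassembles into a \emph{direct} sum $\bigoplus_{k}\hb^k(V_{\gamma-k}\cap M^o\cap M)$ with no collapsing or overlap, and that the ``diagonal'' identification $\sum_{\gamma_1+\gamma_2=\gamma}(V_{\gamma_1}\cap M^o_1\cap M_1)\otimes_\CC(V_{\gamma_2}\cap M^o_2\cap M_2) = V_\gamma\cap M^o\cap M$ is genuinely an equality (not merely an inclusion). The inclusion $\subseteq$ is clear from the convolution bound on $V_\gamma(M[\hbm])$; the reverse inclusion requires knowing that the $V$-filtration of $M^o\cap M$ (the filtration it inherits from $V_\bbullet M[\hbm]$) is compatible with the tensor product decomposition $M^o\cap M=(M^o_1\cap M_1)\otimes_\CC(M^o_2\cap M_2)$, which is again the convolution statement but now applied to the restriction of the $V$-filtration to $M^o\cap M$. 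This is the only place where one genuinely uses that $M^o_i$ is $V$-adapted (and not just trivializing), and it is where I would spend most of the care; everything else is a formal unwinding of definitions.
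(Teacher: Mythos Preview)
Your proposal is correct and follows essentially the same route as the paper. The paper defines $U_\gamma:=\sum_{\gamma_1+\gamma_2=\gamma}V_{\gamma_1}(M_1[\hbm])\otimes_{\CC[v]}V_{\gamma_2}(M_2[\hbm])$, rewrites it via \eqref{eq:Vbeta} for each factor as $\bigoplus_{k\in\ZZ}v^k\bigl(\sum_{\gamma_1+\gamma_2=\gamma+k}(V_{\gamma_1}\cap M^o_1\cap M_1)\otimes(V_{\gamma_2}\cap M^o_2\cap M_2)\bigr)$, and then proves the single claim $U_\gamma=V_\gamma(M[\hbm])$ by checking the characterizing properties of the $V$-filtration (finite type over $\CC[v]$, stability under $v\partial_v$, nilpotence of $v\partial_v+\gamma$ on the graded); once this claim holds, both the $V$-adaptedness \eqref{eq:Vsol} and the tensor formula for $V_\gamma\cap M^o\cap M$ drop out simultaneously, so the ``main obstacle'' you flag about directness and the reverse inclusion is absorbed into that one claim rather than requiring a separate argument.
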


\begin{proof}
For $\gamma_i\in\RR$ we set $V_{\gamma_i}^{(i)}:=V_{\gamma_i}(M_i[\hbm])$ ($i=1,2$). For $\gamma\in\RR$, let us set
\begin{align*}
U_\gamma:&=\sum_{\gamma_1+\gamma_2=\gamma}V_{\gamma_1}(M_1[\hbm])\otimes_{\CC[v]}V_{\gamma_2}(M_2[\hbm])\\
&=\bigoplus_{k\in\ZZ}v^k\biggl(\sum_{\substack{\gamma_1,\gamma_2\in\RR\\\gamma_1+\gamma_2=\gamma+k}}(V_{\gamma_1}^{(1)}\cap M^o_1\cap M_1)\otimes(V_{\gamma_2}^{(2)}\cap M^o_2\cap M_2)\biggr).\end{align*}

\begin{claim}
$U_\gamma=V_\gamma(M[\hbm])$.
\end{claim}

Note that the coefficient of $v^k$ in $U_\gamma$ is $U_{\gamma+k}\cap M^o\cap M$. If this claim is proved, we conclude both that $M^o$ is a $V$-adapted trivializing lattice and that
\[
V_\gamma\cap M^o\cap M=\sum_{\substack{\gamma_1,\gamma_2\in\RR\\\gamma_1+\gamma_2=\gamma}}(V_{\gamma_1}^{(1)}\cap M^o_1\cap M_1)\otimes(V_{\gamma_2}^{(2)}\cap M^o_2\cap M_2),
\]
which ends the proof of Proposition \ref{prop:Vadaptedtensor}.
\end{proof}

\begin{proof}[Proof of Proposition \ref{prop:tensorirregHodge}]
Lemma \ref{lem:HNVM'M} together with the previous formula give the desired assertion.
\end{proof}

\begin{proof}[Proof of the claim]
For $\gamma$ fixed, the filtration \hbox{$U_{\gamma+k}\cap M^o\cap M$}, which is equal to the coefficient of $v^k$ in the above expression for $U_\gamma$, is an exhaustive increasing filtration of the finite-dimensional vector space $M^o\cap M$, \ie \hbox{$U_{\gamma+k}\cap M^o\cap M=0$} for $k\ll0$ and $U_{\gamma+k}\cap M^o\cap M=M^o\cap M$ for $k\gg0$. It follows that $U_\gamma$ has finite type over $\CC[v]$, and since it is contained in $M[\hbm]$, it is $\CC[v]$-free. Moreover, it is easily checked that it is stable by $v\partial_v$ and that, if $N_i\geq0$ is such that $(v\partial_v+\gamma_i)^{N_i}V_{\gamma_i}^{(i)}\subset V_{<\gamma_i}^{(i)}$ for every $\gamma_i\in\RR$ ($i=1,2$), then for every $\gamma\in\RR$ one has $(v\partial_v+\gamma)^{N_1+N_2}U_\gamma\subset U_{<\gamma}$.
\end{proof}

\subsection{Irregular Hodge filtration and duality}
Let $M$ be as in Section \ref{subsec:HNconnection}. Then $M^\vee:=\Hom_{\CC[\hb]}(M,\CC[\hb])$ is a free $\CC[\hb]$-module endowed with a natural connection with a pole of order two at the origin and a regular singularity at infinity. We have a natural isomorphism $M^\vee/(\hb-1)M^\vee\simeq \Hom_\CC(H,\CC)=:H^\vee$.

\begin{proposition}[Duality for the irregular Hodge filtration]\label{prop:dualirregHodge}\index{irregular Hodge filtration!duality for the --}
With the previous assumption, we have
\[
F^\irr_\beta(H^\vee)=(F^\irr_{<-\beta}H)^\perp,
\]
where the orthogonality is taken with respect with the tautological pairing \hbox{$H\!\otimes_\CC\!H^\vee\!\to\!\CC$}.
\end{proposition}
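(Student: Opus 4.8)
The plan is to reduce everything to the description of the irregular Hodge filtration on $H$ in terms of a $V$-adapted trivializing lattice, as provided by Lemma \ref{lem:HNVM'M}, and then to compute the dual filtration explicitly. First I would fix a $V$-adapted trivializing lattice $M^o$ for $M$, which exists by Proposition \ref{prop:Vadapted}. The key point is that the dual lattice $M^{o\vee}:=\Hom_{\CC[v]}(M^o,\CC[v])$, taken inside $M^\vee[\hbm]=\Hom_{\CC[\hb,\hbm]}(M[\hbm],\CC[\hb,\hbm])$, is a $V$-adapted trivializing lattice for $M^\vee$. Trivializability of the glued bundle $\wt M$ implies trivializability of $\wt M{}^\vee$ (the dual of a trivial bundle on $\PP^1$ is trivial), so $M^{o\vee}$ is a trivializing lattice for $M^\vee$, and $M^{o\vee}\cap M^\vee$ is the dual of $M^o\cap M$ under the pairing. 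The compatibility with the $V$-filtrations: since the $V$-filtration of $M^\vee[\hbm]$ at $v=0$ is the one dual to that of $M[\hbm]$ up to the usual shift, one has $V_\gamma(M^\vee[\hbm])=\{\xi\mid \xi(V_{<-\gamma}M[\hbm])=0\}$ (the indexing convention being pinned down by the requirement that the residue eigenvalues on the dual lie in $[-\gamma,-\gamma+1)$ when those of $M$ lie in the corresponding interval; I would check this against the conventions of Section \ref{subsec:HNconnection}, accounting for any off-by-one coming from $t\partiall_t$ versus $\partiall_t t$). Combining these, the decomposition \eqref{eq:Vsol} for $M^o$ dualizes to the analogous decomposition for $M^{o\vee}$, so $M^{o\vee}$ is indeed $V$-adapted.

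Granting that, the computation is short. By Lemma \ref{lem:HNVM'M} applied to $M^\vee$, under the identification $M^{o\vee}\cap M^\vee\isom H^\vee$ we have $F^\irr_\beta(H^\vee)=V_\beta(M^\vee[\hbm])\cap M^{o\vee}\cap M^\vee$. Now an element $\xi$ of $M^{o\vee}\cap M^\vee$ lies in $V_\beta(M^\vee[\hbm])$ iff it annihilates $V_{<-\beta}(M[\hbm])$, equivalently iff it annihilates $V_{<-\beta}(M[\hbm])\cap M^o\cap M=F^\irr_{<-\beta}H$ (using \eqref{eq:Vsol} and the fact that $\xi$, being in $M^{o\vee}$, is determined by its values on $M^o\cap M$, together with the orthogonality between $v^k(M^o\cap M)$-components and $\hb^j(M^o\cap M)$-components for $k>0$). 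Hence $F^\irr_\beta(H^\vee)=(F^\irr_{<-\beta}H)^\perp$, which is the claim. Here I would be careful that the tautological pairing $H\otimes_\CC H^\vee\to\CC$ used in the statement is exactly the one induced from $M\otimes_{\CC[\hb]}M^\vee\to\CC[\hb]$ by reduction modulo $(\hb-1)$, and that reduction modulo $(\hb-1)$ is compatible with taking orthogonals (it is, since $M^o\cap M$ maps isomorphically to $H$).

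\textbf{Main obstacle.} The routine linear algebra (dualizing the direct sum decompositions, checking the $\hb$-adic grading behaves well under duality) is not where the difficulty lies; the delicate point is pinning down the precise indexing convention for the $V$-filtration on $M^\vee[\hbm]$ so that the shift in $F^\irr_\beta(H^\vee)=(F^\irr_{<-\beta}H)^\perp$ comes out exactly right, including the appearance of the strict inequality $<-\beta$ rather than $\leq -\beta$. This requires matching the Deligne-lattice normalization of Section \ref{subsec:HNconnection} (residue eigenvalues in $[-\beta,-\beta+1)$) with its dual, where the half-open interval flips orientation; the $<$ in the final formula is precisely the manifestation of that flip. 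I would verify this first in the rank-one case $M=\cM_\beta$ of a single jump, where both sides can be written out by hand, and then propagate the convention to the general case via the decomposition \eqref{eq:Vsol}, which expresses everything in terms of the finite-dimensional graded pieces and reduces the convention-checking to that base case.
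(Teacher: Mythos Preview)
Your approach is essentially the paper's: fix a $V$-adapted trivializing lattice $M^o$, show that the dual lattice is $V$-adapted for $M^\vee$, and read off the result via Lemma~\ref{lem:HNVM'M}. One correction to your intermediate formula: the dual $V$-filtration is characterized by
\[
V_\gamma(M^\vee[\hbm])=\{\varphi\in M[\hbm]^\vee\mid\varphi(V_{<-\gamma+1}M[\hbm])\subset\CC[v]\},
\]
not by an annihilation condition (no nonzero $\varphi$ can kill a full lattice); the paper cites this from \cite[Lem.~3.2]{Bibi96bb}. With this in hand, your endgame is exactly right: for $\xi\in(M^o\cap M)^\vee$ the condition unwinds, via the decomposition \eqref{eq:Vbeta}, to $\xi(V_{<-\beta}\cap M^o\cap M)=0$, which is the claimed orthogonality. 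The paper packages this as the single decomposition $V_\gamma(M^\vee[\hbm])=\bigoplus_k\hb^k(V_{<-\gamma+k}\cap M^o\cap M)^\perp$, proved by a short descending induction on the $\hb$-degree, which simultaneously establishes that $(M^o)^\vee$ is $V$-adapted.
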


As a consequence, we find an isomorphism
\[
\gr_\beta^{F^\irr}(H^\vee)\simeq(\gr_{-\beta}^{F^\irr}H)^\vee.
\]

\begin{proof}
Let $M^o$ be a $V$-adapted trivializing lattice for $M$. Then the $\CC$-vector space $(M^o\cap M)^\vee:=\Hom_\CC((M^o\cap M),\CC)$ generates $M^\vee$ as a $\CC[\hb]$-module, and we can set $(M^\vee)^o:=\bigoplus_{j\geq0}v^j(M^o\cap M)^\vee$ to get a trivializing lattice for $M^\vee$. We thus have
\[
M^\vee=\bigoplus_{j\geq0}\hb^j(M^o\cap M)^\vee,\quad (M^\vee)^o=\bigoplus_{j\geq0}v^j(M^o\cap M)^\vee,\quad M^\vee[\hbm]=\bigoplus_{j\in\ZZ}\hb^j(M^o\cap M)^\vee,
\]
with
\[
M^\vee[\hbm]=M^\vee\otimes_{\CC[\hb]}\CC[\hb,\hbm]=\Hom_{\CC[\hb,\hbm]}(M[\hbm],\CC[\hb,\hbm])=:M[\hbm]^\vee.
\]
Recall (\cf\cite[Lem.\,3.2]{Bibi96bb}) that we have
\begin{align*}
V_\gamma(M^\vee[\hbm])&=\Hom_{\CC[v]}(V_{<-\gamma+1}M[\hbm],\CC[v])\\
&=\{\varphi\in M[\hbm]^\vee\mid\varphi(V_{<-\gamma+1}M[\hbm])\subset\CC[v]\}.
\end{align*}
We will prove that
$(M^o)^\vee$ is a $V$-adapted trivialization lattice for $M^\vee$ and that
\[
V_{\gamma-k}(M^\vee[\hbm])\cap(M^o)^\vee\cap M^\vee=(V_{<-\gamma+k}\cap M^o\cap M)^\perp
\]
by showing the decomposition
\[
V_\gamma(M^\vee[\hbm])=\bigoplus_k\hb^k(V_{<-\gamma+k}\cap M^o\cap M)^\perp.
\]
Consider the decomposition of $M^\vee[\hbm]$ obtained above. Let $\varphi\!=\!\sum_{k=j}^{\ell}\hb^k\varphi_k$ be an element of $V_\gamma(M^\vee[\hbm])$ (with $\varphi_k\!\in\!(M^o\cap M)^\vee$). It satisfies \hbox{$\varphi(V_{<-\gamma+1}M[\hbm])\!\subset\!\CC[v]$}, and one checks that $\varphi_\ell\in(V_{<-\gamma+\ell}\cap M^o\cap M)^\perp$. By decreasing induction one obtains that $\varphi_k\in(V_{<-\gamma+k}\cap M^o\cap M)^\perp$ for all $k=j,\dots,\ell$, as was to be proved.
\end{proof}

\begin{remarque}[Duality of the spectrum]\index{spectrum}
The formula of Proposition \ref{prop:dualirregHodge} is reminiscent of \cite[\S3]{Bibi96bb} (\cf also \cite[Prop.\,III.2.7]{Bibi00}) and occurs in many places in Singularity theory under the name of ``duality of exponents''.
\end{remarque}

\subsection{Tannakian structure}\label{subsec:irregHNC}
Would the category consisting of free $\CC[\hb]$-modules of finite rank $M$ equipped with a connection $\nabla$ having a pole of order $\leq2$ at the origin, a regular singularity at infinity, and no other pole, be abelian, it would form a neutral Tannakian category with respect to tensor product and fibre functor given by the fiber at $\hb=1$, that is, $M\mto H=M/(\hb-1)M$. The identity object $\bun$ would be $(\CC[\hb],\rd)$, and the criterion of \cite[Prop.\,1.20]{D-M82} would obviously be fulfilled. Moreover, it would be endowed with a duality functor.

The previous results show that the functor $M\mto(M,F^\sHN_\bbullet M)$ into the corresponding filtered category would be compatible with the Tannakian structures, if both would be abelian. Similarly, the functor $M\mto(H,F^\irr_\bbullet H)$ would be compatible with the Tannakian structure, if the latter were abelian.

In order to remedy to the defect of abelianity at the source category, we can replace it with the category $\iMTS^\intt(\CC)$ (\cf Proposition \ref{prop:MTSTannakienne} and Section \ref{subsec:itriplesint}). Taking into account the weight filtration, the functors above send a $W$-filtered triple $((M',M'',\iC),W_\bbullet)$ to the bi-filtered module $(M'',F^\sHN_\bbullet M,W_\bbullet M)$ or the bi-filtered vector space $(H,F^\irr_\cbbullet H,W_\bbullet H)$. By Propositions \ref{prop:tensorirregHodge} and \ref{prop:dualirregHodge}, they are compatible with tensor product and with taking dual.

The categories at the target are not abelian however. A replacement of the preservation of the Tannakian structures would be that morphisms at the source are sent to strictly bi-filtered morphism at the target. Unfortunately, \emph{we cannot assert} that morphisms in $\iMTS^\intt(\CC)$ give rise to \emph{strictly bi-filtered morphisms} between objects $(H,F^\irr_\cbbullet,W_\bbullet)$. This drawback will be overcome by restricting the category $\iMTS^\intt(\CC)$ and considering the category $\IrrMHS$ of \emph{irregular mixed Hodge structures} defined \hbox{below}.

\section{Irregular mixed Hodge structures}\label{sec:irregHS}

\subsection{Rescaling an integrable $\cR$-triple}\label{subsec:rescRtriple}
We review the construction in the simpler algebraic setting, which is enough when $\dim X=0$. We consider the affine torus $\GG_{\rmm,\tau}$ with coordinate ring $\CC[\tau,\taum]$ and the morphism $\mu:\GG_{\rmm,\tau}\times\Afu_\hb\to\Afu_\hb$ defined by $\mu(\tau,\hb)=\hb\taum$. Equivalently, we consider the injective morphism of rings $\mu^*:\CC[\hb]\to\CC[\tau,\taum,\hb]$ defined by $(\mu^*f)(\hb,\tau)=f(\hb\taum)$, making $\CC[\tau,\taum,\hb]$ a $\CC[\hb]$-module.

\subsubsection*{Rescaling a $\CC[\hb]\langle\hb^2\partial_\hb\rangle$-module}
\index{rescaling!of a $\CC[\hb]\langle\hb^2\partial_\hb\rangle$-module}Given a $\CC[\hb]\langle\hb^2\partial_\hb\rangle$-module $M$, we defined the associated rescaled module \index{$Mtau$@$\tauM$}$\tauM$ as the pullback $\mu^+M$ by $\mu$. In other words, $\tauM=\CC[\tau,\taum,\hb]\otimes_{\CC[\hb]}M$ as a $\CC[\tau,\taum,\hb]$-module, and the integrable connection on $\tauM$ is the pullback by $\mu$ of that of $M$. The analytic version $\tauM$ is equal to $\mu^*\cM$ as an $\cO_{\CC_\tau\times\CC_\hb}(*\taucX_0)$-module and is equipped with the pullback connection. We still denote by $\taucM^\circ$ the restriction of $\taucM$ to $\CC_\tau\times\CC^*_\hb$.

More explicitly, $\tauM=\CC[\tau,\taum]\otimes_\CC M$ as a $\CC[\tau,\taum]$-module, and we obtain the $\CC[\tau,\taum,\hb]\langle\partiall_\tau,\hb^2\partial_\hb\rangle$-module structure by the following formulas:
\begin{equation}\label{eq:partiall0}
\begin{aligned}
\hb(1\otimes m)=\hb\otimes m&=\tau\otimes\hb m=\tau(1\otimes\hb m),\\
\partiall_\tau(1\otimes m)&=-1\otimes\hb^2\partial_\hb m,\\
\hb^2\partial_\hb(1\otimes m)&=\tau(1\otimes\hb^2\partial_\hb m)=\tau\otimes\hb^2\partial_\hb m.
\end{aligned}
\end{equation}
Then Proposition \ref{prop:Xpoint} shows that, after analytification, the corresponding object $\taucM$ is graded well-rescalable.

\subsubsection*{Rescaling the pairing}
We denote by $\thetacM^\nabla$ the following local system on $\CC^*_\tau\times\CC^*_\hb$:
\[
\ker\big[\nabla:\thetacM_{|\CC^*_\tau\times\CC^*_\hb}\to\Omega^1_{\CC^*_\tau\times\CC^*_\hb}\otimes\thetacM_{|\CC^*_\tau\times\CC^*_\hb}\big].
\]
We then have $\thetacM^\nabla\simeq\mu^{-1}M^\nabla$. Let us note that $\iota\circ\mu=\mu\circ\iota$. A $\iota$-sesquilinear pairing
\[
\iC^\nabla:M^{\prime\nabla}\otimes_\CC\iota^{-1}\ov{M^{\prime\prime\nabla}}\to\CC_{\CC_\hb^*}
\]
can thus be pulled back as a sesquilinear pairing
\[
\thetaiC^\nabla:=\mu^{-1}\iC^\nabla:\thetacM^{\prime\nabla}\otimes_\CC\iota^{-1}\,\ov{\thetacM^{\prime\prime\nabla}}\to\CC_{\CC^*_\tau\times\CC_\hb^*}.
\]
By tensoring with analytic functions, it defines a sesquilinear pairing
\[
\thetaiC:\thetacM^{\prime\circ}\otimes_\CC\iota^{-1}\,\ov{\thetacM^{\prime\prime\circ}}\to\cC^\infty_{\CC^*_\tau\times\CC_\hb^*}.
\]
Let us now notice that, on $\CC_\tau\times\CC_\hb^*$, $\taucM^\circ:=\taucM_{|\CC_\tau\times\CC_\hb^*}$ is an $\cO_{\CC_\tau\times\CC^*_\hb}[1/\tau]$-flat bundle with a meromorphic connection having a regular singularity, whose restriction to $\CC_\tau^*\times\CC_\hb^*$ is $\thetacM^\circ$. In other words, it is equal to the Deligne meromorphic extension of~$\thetacM^\circ$ along $\tau=0$. It follows (by expressing $\thetaiC$ on local multivalued flat sections) that $\thetaiC$ extends in a unique way as a sesquilinear pairing
\[
\tauiC:\taucM^{\prime\circ}\otimes_\CC\iota^{-1}\,\ov{\taucM^{\prime\prime\circ}}\to\Db^\rmod_{\CC_\tau\times\CC_\hb^*/\CC_\hb^*},
\]
where the superscript ``mod'' means ``moderate growth along $\{\tau=0\}$''.

\subsubsection*{Rescaling the integrable triple}
\index{rescaling!of an integrable triple}We can now conclude that the rescaling of an object $(M',M'',\iC^\nabla)$ of $\RdiTriples(\pt)$ is defined as the object $(\tauM',\tauM'',\tauiC)$ of $\RdiTriples(\Afu_\tau,(*0))$. The rescaling of a morphism $\lambda=(\lambda',\lambda'')$ is defined as $(\taulambda',\taulambda'')$, so that we get the (composed) rescaling functor
\[
\resc:\RdiTriples(\pt)\mto \RgscTriples(\Afu_\tau)\mto \RgscTriples(\CC_\tau),
\]
where the second functor is the analytification. This functor extends in a natural way to the category of objects with finite filtration
\[
\resc:\WRdiTriples(\pt)\mto\WRgscTriples(\CC_\tau).
\]
We will also consider the composition of the functor with the natural functor (\cf\eqref{eq:functorresc})
\[
\WRgscTriples(\CC_\tau)\mto\WRdiTriples(\CC_\tau,(*0)).
\]

\subsection{Irregular mixed Hodge structures}\label{subsec:irregHS}

Recall that the ``stupid'' localization functor \eqref{eq:stupidloctau} (with $X=\pt$) identifies the category $\iMTM^\intt(\CC_\tau,[*0])$ with the full subcategory $\iMTM^\intt(\CC_\tau,(*0))$ of the category $\WRdiTriples(\CC_\tau,(*0))$.

\skpt
\begin{definition}\label{def:IrrMHS}
\begin{enumerate}
\item\label{def:IrrMHS1}
The category \index{mixed Hodge structure!irregular --}\index{$IrrMHSC$@$\IrrMHS(\CC)$}$\IrrMHS(\CC)$ is the full subcategory of $\WRdiTriples(\pt)$ whose rescaled objects belong to $\iMTM^\intt(\CC_\tau,(*0))$. In particular, $\IrrMHS(\CC)$ is a full subcategory of $\iMTS^\intt(\CC)$.

\item\label{def:IrrMHS2}
For a subfield $\kk$ of $\RR$, the category \index{$IrrMHSk$@$\IrrMHS(\kk)$}$\IrrMHS(\kk)$ is the full subcategory of $\iMTS^\intt_\good(\kk)$ whose objects belong to $\IrrMHS(\CC)$. In other words, the $\kk$-rationality condition only depends on the underlying object in $\iMTS^\intt(\CC)$.
\end{enumerate}
\end{definition}

\begin{remarque}
For an object $(M',M'',\iC^\nabla)$ of $\iMTS^\intt(\CC)$, \ie for an integrable mixed twistor structure, the condition that the rescaled object belongs to $\iMTM^\intt(\CC^*_\tau)$ implies that $\thetaiC^\nabla$ is nondegenerate, giving rise to a vector bundle on $\PP^1_\hb\times\CC^*_\tau$ whose graded piece $\gr_\ell^W$ is the pullback of a vector bundle on $\CC^*_\tau$ twisted by $\cO_{\PP^1_\hb}(\ell)$. Recall that this property does not however characterize $\iMTM^\intt(\CC_\tau,[*0])$: an admissibility condition at $\tau=0$ is added for the latter category.
\end{remarque}

\begin{lemme}\label{lem:realeigenvalues}
Let $(M',M'',\iC^\nabla)$ be an object of $\IrrMHS(\CC)$. Then the eigenvalues of the monodromy of $M^{\prime\nabla},M^{\prime\prime\nabla}$ have absolute value equal to one (equivalently, the jumping indices of the Deligne lattices at infinity are real).
\end{lemme}

\begin{proof}
It follows from \cite[Lem.\,7.3.7]{Bibi01c} applied to each graded piece of the $W$\nobreakdash-filtration that, for any object of $\MTM^\intt(\CC_\tau,[*0])$, the eigenvalues of the monodromy of the corresponding local systems (prime and double prime) around $\tau=0$ have absolute value equal to one (this is the \emph{local unitarity} property in \loccit). For a local system of the form $\thetacM^\nabla=\mu^{-1}M^\nabla$, this is equivalent to saying that the eigenvalues of the monodromy of $M^\nabla$ have absolute value equal to one, which is the desired assertion.
\end{proof}

\skpt
\begin{theoreme}\label{th:IrrMHS}
\begin{enumerate}
\item\label{th:IrrMHS1}
The subcategories $\IrrMHM(\pt)$ and $\IrrMHS(\CC)$ coincide, as well as the subcategories $\IrrMHM(\pt,\kk)$ and $\IrrMHS(\kk)$.
\item\label{th:IrrMHS2}
The category $\IrrMHS(\CC)$ is abelian and naturally equipped with the structure of a neutral Tannakian category, with fibre functor $(M',M'',\iC^\nabla)\mto H:=M''/(1-\hb)M''$.
\item\label{th:IrrMHS3}
There is a fully faithful functor $\MHS(\CC)\mto\IrrMHS(\CC)$ which identifies $\MHS(\CC)$ to a full subcategory of $\IrrMHS(\CC)$.
\item\label{th:IrrMHS4}
There is a natural functor \index{irregular Hodge filtration}\emph{``irregular Hodge filtration''} from $\IrrMHS(\CC)$ to the category of bi-filtered vector spaces $(H,F^\irr_\cbbullet,W_\bbullet)$ (where $F^\irr_\cbbullet$ is a filtration indexed by~$\RR$ and jumps at most at $A+\ZZ$ for some finite $A\subset[0,1)$) which is compatible with tensor product and with taking dual. Any morphism in $\IrrMHS(\CC)$ gives rise to a strictly bi-filtered morphism between the corresponding bi-filtered vector spaces.
\end{enumerate}
\end{theoreme}

\begin{proof}
Let us first notice that, if \ref{th:IrrMHS}\eqref{th:IrrMHS1} is proved, then Theorems \ref{th:mtmgresc} and \ref{th:mainRF}, proved in Chapter \ref{part:2} can be applied in the special case $X=\pt$, and give \ref{th:IrrMHS}\eqref{th:IrrMHS3}, as well as the functor ``irregular Hodge filtration'' of \ref{th:IrrMHS}\eqref{th:IrrMHS4} together with the bi-strictness of morphisms.

\ref{th:IrrMHS}\eqref{th:IrrMHS1} follows from Proposition \ref{prop:Xpoint} which ensures that $\tauM$ is graded well-rescalable.

For \ref{th:IrrMHS}\eqref{th:IrrMHS2}, the abelianity of $\IrrMHS\!=\!\IrrMHM(\pt)$ is a particular case of Proposition \ref{prop:MTMresc}\eqref{prop:MTMresc3}. Let us first define the tensor product and the duality functor. Let $M,M_1,M_2$ be free $\CC[\hb]$-modules of finite rank endowed with a connection $\nabla$ with a pole of order two at the origin, a regular singularity at infinity, and no other pole.

By using Lemma \ref{lem:dualtauM}, it is easy to check that the pullback by $\mu$ of the dual of an object of $\IrrMHS(\CC)$, \resp of the tensor product of objects of $\IrrMHS(\CC)$, is an object of $\MTM^\intt(\CC_\tau,(*0))$. In order to show that both operations (duality and tensor product) preserve $\IrrMHS(\CC)$, it remains to prove that the admissibility property is preserved. This is a particular case of Lemmas 9.1.12 and 9.1.14 in \cite{Mochizuki11}.

Last, for \ref{th:IrrMHS}\eqref{th:IrrMHS4}, It remains to prove the compatibility of the irregular Hodge filtration with tensor product and duality. We first notice that, according to Proposition \ref{prop:Xpoint}, the irregular Hodge filtration as defined in \loccit\ for an object of $\IrrMHM(\pt)$ is nothing but the irregular Hodge filtration attached to $M''$ as in Definition \ref{def:irregHF}. Then Propositions \ref{prop:tensorirregHodge} and \ref{prop:dualirregHodge} give the desired compatibility, as in Section~\ref{subsec:irregHNC}.
\end{proof}

\begin{remarque}[$\kk$-structure]
By construction, $\IrrMHS(\CC)$ is a full neutral Tannakian subcategory of $\iMTS^\intt(\CC)$. As a consequence of Definition \ref{def:IrrMHS}\eqref{def:IrrMHS2}, and according to Proposition \ref{prop:MTSTannakienne}, $\IrrMHS(\kk)$ is a full neutral Tannakian subcategory of $\iMTM^\intt(\pt,\kk)$.
\end{remarque}

\subsubsection*{Strictness for the Harder-Narasimhan filtration}

The statement \ref{th:IrrMHS}\eqref{th:IrrMHS4} can be extended to the Harder-Narasimhan filtration. Let $(M',M'',\iC^\nabla)$ be an object of $\IrrMHS(\CC)$ and set $M=M''$. We denote by $MV_\beta$ the vector bundle on $\PP^1$ obtained by gluing~$M$ with $V_\beta(M[\hbm])$ with the identification $M[\hbm]=V_\beta(M[\hbm])[\hbm]$. A~morphism of integrable mixed twistor structures also induces a strictly $W$-filtered bundle morphism $(M_1V_\beta,W_\bbullet)\to(M_2V_\beta,W_\bbullet)$ since each morphism $M_1[\hbm]\to M_2[\hbm]$ is strictly compatible with the $V$-filtration.

For $\alpha\in[0,1)$ and $k\in\ZZ$, we have $MV_{\alpha+k}=MV_\alpha\otimes\cO_{\PP^1}(k)$. Let us denote by $\HN^\cbbullet$ the Harder-Narasimhan filtration. We then have $\HN^0MV_{\alpha+k}=\HN^{-k}MV_\alpha(k)$. Note that $\HN^0MV_\beta$ is the $\cO_{\PP^1}$-sub-bundle of $MV_\beta$ generated by $M\cap V_\beta(M[\hbm])=\Gamma(\PP^1,MV_\beta)$ (\cf \cite[\S1.a]{Bibi08}). Any morphism $\lambda:\cM_1\to\cM_2$ compatible with the connections extends in a unique way as a morphism $M_1V_\beta\to M_2V_\beta$ that we still denote~$\lambda$, and it is compatible with the Harder-Narasimhan filtration.

\begin{proposition}
If $(\icT_1,W_\bbullet),(\icT_2,W_\bbullet)$ are objects of $\IrrMHS(\CC)$ and if $\lambda$ is a morphism in $\IrrMHM(\CC)$, then for each $\alpha\in[0,1)$, the induced morphism $M_1V_\alpha\to M_2V_\alpha$, when restricted on $\CC^*_\hb$, is strictly compatible with the induced Harder-Narasimhan filtrations.
\end{proposition}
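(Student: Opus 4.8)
The strategy is to reduce the statement about the Harder--Narasimhan filtration to the statement about the irregular Hodge filtration already established in Theorem~\ref{th:IrrMHS}\eqref{th:IrrMHS4} (equivalently, in Proposition~\ref{prop:MTMresc}\eqref{prop:MTMresc3} via the identification $\IrrMHM(\pt)=\IrrMHS(\CC)$). First I would recall from Section~\ref{subsec:HNconnection} that, for each $\alpha\in[0,1)$, the bundle $\HN^0(M_iV_\alpha)$ is generated over $\cO_{\PP^1}$ by its global sections $\Gamma(\PP^1,M_iV_\alpha)=M_i\cap V_\alpha(M_i[\hbm])$, and that the identification $M_i\cap V_\alpha(M_i[\hbm])\isom H_i$ carries this subspace to $F^\irr_\alpha H_i$ (Definition~\ref{def:irregHF}). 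More generally, $\HN^p(M_iV_\alpha)$ is generated by $\CC[\hb]\cdot(V_{\alpha-p}\cap M_i)$ on the chart $\Afu_\hb$, i.e. $F^\sHN_{\alpha-p}M_i=\Gamma(\Afu_\hb,\HN^p(M_iV_\alpha))$, by the description \eqref{eq:FM*}. So the Harder--Narasimhan filtration of $M_iV_\alpha$, viewed on $\Afu_\hb$, is precisely the filtration $F^\sHN_\bbullet M_i$.

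Next I would note that the morphism $\lambda=(\lambda',\lambda'')$ in $\IrrMHM(\CC)$ (recall $M_i=M_i''$) induces, after rescaling and applying the results of Chapter~\ref{part:2}, a morphism which is bi-strictly compatible with the irregular Hodge filtrations and the weight filtrations on the fibres $H_i$ at $\hb=1$; moreover by Lemma~\ref{lem:realeigenvalues} the jumping indices of the Deligne lattices are real, so that the whole apparatus of Section~\ref{subsec:HNconnection} applies. The key algebraic input is that $\lambda''\colon M_1''[\hbm]\to M_2''[\hbm]$ is strictly compatible with the $V$-filtrations at infinity: this holds because $\lambda''$ underlies a morphism of integrable mixed twistor structures, hence a morphism of the associated Stokes-filtered (here, monodromy-filtered) local systems, and morphisms of such are $V$-strict (this is the same $V$-strictness used implicitly in the paragraph preceding the proposition). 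Combined with the strictness with respect to the irregular Hodge filtration on the fibre, and using the decomposition \eqref{eq:Vsol}--\eqref{eq:Vbeta} relative to a $V$-adapted trivializing lattice, I would deduce that $\lambda''$ is strictly compatible with $F^\sHN_\bbullet M_i$ as well: an element of $F^\sHN_\beta M_2''$ in the image of $\lambda''$ decomposes in the $\hb$-adic grading into pieces lying in the $V$-filtration, each of which is hit strictly, so the preimage can be taken in $F^\sHN_\beta M_1''$.

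Concretely I would argue as follows. Fix $\alpha\in[0,1)$ and restrict everything to $\CC^*_\hb$, where $M_iV_\alpha$ is a flat bundle and $\HN^\cbbullet$ is encoded by $F^\sHN_{\alpha-\bbullet}M_i''|_{\CC^*_\hb}$. Working with a $V$-adapted trivializing lattice $M_i^o$ for $M_i''$ (Proposition~\ref{prop:Vadapted}), Lemma~\ref{lem:HNVM'M} and the Remark after it identify $F^\sHN_\beta M_i''=\CC[\hb]\otimes_\CC(V_\beta\cap M_i^o\cap M_i'')$, and $M_i^o\cap M_i''\isom H_i$ identifies $V_\beta\cap M_i^o\cap M_i''$ with $F^\irr_\beta H_i$. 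Since $i^*_{\hb=1}\lambda''\colon H_1\to H_2$ is strictly $F^\irr_\bbullet$-compatible (Theorem~\ref{th:IrrMHS}\eqref{th:IrrMHS4}) and $W_\bbullet$-compatible, and since $F^\sHN_\bbullet M_i''$ is obtained from $F^\irr_\bbullet H_i$ by the functor $\CC[\hb]\otimes_\CC(\cbbullet)$, strictness of $i^*_{\hb=1}\lambda''$ for $F^\irr_\bbullet$ propagates to strictness of $\lambda''$ for $F^\sHN_\bbullet$ (one must check that $\lambda''$ respects this lattice description, which follows from $V$-strictness of $\lambda''$ on $M_i''[\hbm]$ together with the decomposition \eqref{eq:Vsol}). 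The strict $W$-compatibility is already noted in the paragraph before the proposition. The main obstacle — and the step deserving the most care — is exactly the compatibility of a morphism $\lambda''$ with a fixed common $V$-adapted trivializing lattice, or rather showing that strictness can be checked after passing to such lattices: one cannot in general choose $M_1^o$ and $M_2^o$ simultaneously compatible with $\lambda''$, so the cleanest route is to avoid choosing lattices for the two modules at once and instead argue purely via the $\hb$-adic grading of the $V_\alpha$-intersections, using $V$-strictness of $\lambda''$ at infinity (Section~\ref{subsec:shortreminder}) to transport strictness from the fibre at $\hb=1$ to the whole of $\CC^*_\hb$. Once this is in place, restricting to $\CC^*_\hb$ removes any issue at $\hb=0$, and the statement follows.
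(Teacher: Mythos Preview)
Your overall strategy---reducing to the strictness of the irregular Hodge filtration via Lemma~\ref{lem:irrHN}---is the right starting point, and the paper begins the same way. But the paper's proof diverges from yours at the crucial step, and the divergence matters.

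The paper does \emph{not} try to propagate strictness from the single fibre $\hb=1$ to all of $\CC^*_\hb$ using $V$-adapted trivializing lattices. Instead, it observes that the argument giving strictness at $\hb=1$ works equally well at any $\hb_o\neq0$: one simply applies Theorem~\ref{th:IrrMHS}\eqref{th:IrrMHS4} to $\lambda_{|\hb=\hb_o}$. The point is that replacing the fibre functor ``restriction to $\hb=1$'' by ``restriction to $\hb=\hb_o$'' amounts to precomposing with the rescaling isomorphism $\mu_{\hb_o}$, and the resulting objects and morphism are still in $\IrrMHS(\CC)$, with the fibre of the Harder--Narasimhan filtration at $\hb_o$ playing the role of $F^\irr_\bbullet$. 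Strictness at every fibre of $\CC^*_\hb$ then gives the assertion. The entire proof is two sentences.

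The obstacle you identify in your own approach---that one cannot choose $V$-adapted trivializing lattices for $M_1$ and $M_2$ simultaneously compatible with $\lambda''$---is genuine, and your proposed workaround (passing to the $\hb$-adic grading and invoking $V$-strictness on $M_i[\hbm]$) is not complete as stated. Knowing that $\lambda''$ is $V$-strict and that its fibre at $\hb=1$ is $F^\irr$-strict does not directly yield $F^\sHN$-strictness: an element $m_1\in M_1$ with $\lambda''(m_1)\in F^\sHN_\beta M_2$ has no a~priori reason to decompose along the $\hb$-adic grading of a trivializing lattice for $M_2$ in a way that lifts back to $M_1$. The paper sidesteps this entirely by checking strictness fibre by fibre, which is both shorter and avoids any lattice bookkeeping.
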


\begin{proof}
We remark that $F_\alpha^{\irr,\bbullet}\ccM$ is the fibre at $\hb=1$ of the Harder-Narasimhan filtration of $MV_\alpha$, according to Lemma \ref{lem:irrHN} and we know (Theorem \ref{th:IrrMHS}\eqref{th:IrrMHS4}) that~$\lambda$ induces a strict morphism $(\ccM_1,F_\bbullet^\irr\ccM_1)\to(\ccM_2,F_\bbullet^\irr\ccM_2)$. We can argue similarly with the fibre at any $\hb_o\neq0$ by applying Theorem \ref{th:IrrMHS}\eqref{th:IrrMHS4} to $\lambda_{|\hb=\hb_o}$.
\end{proof}

\section{Application to Künneth and Thom-Sebastiani formulas}\label{sec:KunnethTS}

\subsection{Künneth formula for $\IrrMHM$}
Let $X$ be a smooth projective variety that we endow with its analytic structure, set $\cX=X\times\CC_\hb$, and let $\cR_\cX^\intt$ be the subsheaf $\cD_{\cX/\CC_\hb}\langle\hb^2\partial_\hb\rangle$ of the sheaf $\cD_\cX$ of holomorphic differential operators on $\cX$. Let $(\cT,W)$ be an object of $\WRdTriples(X)$, with $\cT=(\cM',\cM'',C_{\bS})$. We will often set $\cM=\cM''$. Let $\ccM$ be the $\cD_X$-module $\Xi_{\DR}(\cT):=\cM/(\hb-1)\cM$, which is filtered by the filtration induced by $W_\bbullet\cM$. This is a finite filtration by $\cD_X$-submodules. If~$(\cT,W)$ is an object of $\IrrMHM(X)$, then~$\ccM$ is endowed with a coherent filtration $F_\bbullet^\irr\ccM$ discretely indexed by $\RR$ (\cf Section \ref{sec:MTrM}) and $\Xi_{\DR}(\cT)$ denotes the corresponding bi-filtered $\cD_X$-module (Notation \ref{nota:XiDR}).

We denote by $a_X:X\to\pt$ the constant map. Let $\cM$ be an $\cR_\cX^\intt$-module. Its relative de~Rham complex $\DR\cM$, with terms $\Omega^\cbbullet_{\cX/\CC_\hb}\otimes_{\cO_\cX}\cM$, gives rise to the de~Rham cohomology modules $H^j_\dR(X,\cM)$. These are $\cO_{\CC_\hb}$-modules endowed with a compatible action of $\hb^2\partial_\hb$, that is, $\cR^\intt_\pt$-modules, and we can equivalently regard them as $\CC[\hb]\langle\hb^2\partial_\hb\rangle$-modules by means of the Deligne meromorphic extension at $\hb=\infty$. In terms of $\cR^\intt$-module operations, we have\index{$HDRMc$@$H^\cbbullet_\dR(X,\cM)$}
\[
H^{n+j}_\dR(X,\cM)=a_{X\dag}^j\cM,\quad n=\dim X.
\]

For an object $(\cT,W)$ of $\WRdTriples(X)$, the pushforward $a_{X\dag}^j(\cT,W)$ has components $H^{n-j}_\dR(X,\cM')$ and $H^{n+j}_\dR(X,\cM'')$, with $W$-filtrations naturally induced by $W_\cbbullet\cT$.

If $(\cT,W)$ is an object of $\MTM^\intt(X)$, then (\cf\cite{Mochizuki11}) $a_{X\dag}^j(\cT,W)$ is an object of $\MTS^\intt(\CC)$ (\cf Section \ref{subsec:twtriple}). Due to the equivalence of categories $\iMTS^\intt(\CC)\simeq\MTS^\intt(\CC)$, we can as well consider it as an object of $\iMTS^\intt(\CC)$.

\begin{remarque}[Strictness]\label{rem:strictness}
If $\cM=\cM''$ is the second component of $\cT$ as above, we note that $a_{X\dag}\cM$ has $\cO_{\CC_\hb}$-flat cohomology, since it is strict, being part of an object of $\MTS^\intt(\CC)$.
\end{remarque}

By the previous remark, the $\cD$-module pushforward $a_{X\dag}^j\ccM=:H^{n+j}_\dR(X,\ccM)$ is isomorphic to \hbox{$H^{n+j}_\dR(X,\cM)/(\hb-1)H^{n+j}_\dR(X,\cM)$}. The filtered vector space $(H^{n+j}_\dR(X,\ccM),W_\bbullet H^{n+j}_\dR(X,\ccM))$ will be denoted by $H^{n+j}_\dR(X,\Xi_{\DR}(\cT,W_\bbullet))$.

According to Section \ref{subsec:irregHNC}, this vector space is also bi-filtered by $F^\irr_\bbullet,W_\bbullet$. We will consider the bi-filtration only when $(\cT,W)$ is an object of $\IrrMHM(X)$.

\begin{corollaire}[of Theorem \ref{th:pushforwardresc}]\label{th:imdirIrrMHM}
For $(\cT,W)$ in $\IrrMHM(X)$ with $X$ smooth projective, the natural morphism\index{$HDRMcc$@$H^\cbbullet_\dR(X,\ccM)$}
\[
\bH^{n+j}(X,F^\irr_\bbullet\DR\ccM)\to \bH^{n+j}(X,\DR\ccM)=:H^{n+j}_\dR(X,\ccM)
\]
is an injection for all $j$, and its image defines the filtration $F^\irr_\bbullet H^{n+j}_\dR(X,\ccM)$.\qed
\end{corollaire}

For $(\cT,W)$ in $\IrrMHM(X)$, we then denote by \index{$HDRTcW$@$H^\cbbullet_\dR(X,\Xi_{\DR}(\cT,W_\bbullet))$}$H^{n+j}_\dR(X,\Xi_{\DR}(\cT,W_\bbullet))$ the bi-filtered vector space $(H^{n+j}_\dR(X,\ccM),F^\irr_\bbullet H^{n+j}_\dR(X,\ccM),W_\bbullet H^{n+j}_\dR(X,\ccM))$.

\begin{corollaire}[Künneth formula for $\IrrMHM$]\label{cor:imdirIrrMHM}
\index{Künneth formula}Assume that $(\cT_i,W)$ are objects of $\IrrMHM(X_i)$ ($i=1,2$), and that $(\cT_1\boxtimes\cT_2,W)$ is an object of $\IrrMHM(X_1\times X_2)$. Then
\[
H_\dR^{n_1+n_2+k}(X_1\times X_2,\cM_1\hbboxtimes\cM_2)\simeq \bigoplus_{k_1+k_2=k}\hspace*{-3mm}H_\dR^{n_1+k_1}(X_1,\cM_1)\otimes_{\cO_\hb} H_\dR^{n_2+k_2}(X_2,\cM_2),
\]
and
\[
F^\irr_\beta H^{n_1+n_2+k}_\dR(\ccM_1\boxtimes\ccM_2)
\simeq \bigoplus_{k_1+k_2=k}\sum_{\beta_1+\beta_2=\beta} F^\irr_{\beta_1}H^{n_1+k_1}_\dR(\ccM_1)\otimes_\CC F^\irr_{\beta_2}H^{n_2+k_2}_\dR(\ccM_2).
\]
\end{corollaire}

\begin{proof}
Assume that, for $i=1,2$, $(\cT_i,W)$ is an object of $\MTM^\intt(X_i)$. There is an external product functor $\boxtimes:\MTM^\intt(X_1)\times\MTM^\intt(X_2)\mto\MTM^\intt(X_1\times X_2)$ and the projective pushforward functor is compatible with it (\cf\cite[Prop.\,11.4.6 \&~Lem.\,11.4.14]{Mochizuki11}), so that in particular
\[
a_{X_1\times X_2\dag}^k(\cT_1\boxtimes\cT_2)\simeq\bigoplus_{k_1+k_2=k}a_{X_1\dag}^{k_1}(\cT_1)\boxtimes a_{X_2\dag}^{k_2}(\cT_2),
\]
in a way compatible with $W$. Taking the ``double prime component'', the first isomorphism of the corollary holds between the corresponding $\CC[\hb]\langle\hb^2\partial_\hb\rangle$-modules, in a way compatible with $W_\bbullet$. Proposition \ref{prop:tensorirregHodge}, gives the Künneth formula for the corresponding irregular Hodge filtrations. If $\cT_1,\cT_2,\cT_1\boxtimes\cT_2$ are objects of $\IrrMHM$, then Corollary \ref{th:imdirIrrMHM} identifies the irregular Hodge filtrations of the corresponding $\CC[\hb]\langle\hb^2\partial_\hb\rangle$-modules with those appearing in the second isomorphism of the corollary.
\end{proof}

\subsection{Complex mixed Hodge modules and their Brieskorn lattice}\label{subsec:Cmhm}

We will consider complex mixed Hodge modules as in Definition \ref{def:MHMC}, but we will only need a less precise notion. We will say that a $W$-filtered $R_F\cD_X$-module $(\cM,W_\bbullet\cM)$ underlies a mixed twistor $\cD_X$-module if $\cR_\cX\otimes_{R_F\cD_X}W_k\cM$ is equal to $W_k\cM''$ of a complex mixed Hodge module.

We will also say that $(\ccM,F_\bbullet\ccM,W_\bbullet\ccM)$ \emph{underlies a complex mixed Hodge module} if $(R_F\ccM,W_\bbullet R_F\ccM)$ does so. We will say that a filtered $\cD_X$-module $(\ccM,F_\bbullet\ccM)$ \emph{underlies a complex mixed Hodge module} if it can be completed as $(\ccM,F_\bbullet\ccM,W_\bbullet\ccM)$ underlying a complex mixed Hodge module.

Let $\mero$ be a meromorphic function on $X$ with pole divisor $P$, and let $(\ccM,F_\bbullet\ccM)$ be a filtered holonomic $\cD_X$-module underlying a complex mixed Hodge module. We set $\cM=R_F\ccM$.

\begin{definition}[Brieskorn lattice]\label{def:Brieskornlattice}
The $\CC[\hb]$-module with action of $\hb^2\partial_\hb$\index{$G0XMmero$@$G_0^\cbbullet(X,\cM,\mero)$}
\[
G_0^k(X,\cM,\mero):=H^k_\dR\bigl(X,\cE^{\mero/\hb}\otimes\cM\bigr)
\]
is called the \index{Brieskorn lattice}\emph{Brieskorn lattice of $\cM$ with respect to $\mero$}.
\end{definition}

If $\mero$ induces a morphism $X\to\PP^1$, then $\cE^{\mero/\hb}=\cE_*^{\mero/\hb}$ and we have
\[
G_0^k(X,\cM,\mero)=\bH^k\bigl(X,(\Omega_X^\cbbullet(*P_\red)\otimes R_F\ccM,\hb\nabla+\rd \mero)\bigr).
\]

\begin{proposition}\label{prop:Brieskornfree}
If $\cM$ underlies a complex mixed Hodge module, then the Brieskorn lattice satisfies the following properties.
\begin{enumerate}
\item\label{prop:Brieskornfree1}
$G_0^k(X,\cM,\mero)$ is $\CC[\hb]$-free of finite rank.
\item\label{prop:Brieskornfree2}
If $\finite:X\to Y$ is a finite morphism between projective smooth varieties of dimension $n$ and $m$ respectively, and if $\merob\in\cO_Y(Q)$ is a meromorphic function on $Y$ with pole divisor $Q$ then, setting $\mero=\merob\circ\finite$, we have for all $k$:
\[
G_0^{n+k}(X,\cM,\mero)=G_0^{m+k}(Y,\finite_\dag^0\cM,\merob).
\]
\end{enumerate}
\end{proposition}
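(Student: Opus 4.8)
\textbf{Proof plan for Proposition \ref{prop:Brieskornfree}.}

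First I would address \eqref{prop:Brieskornfree1}. The statement that $G_0^k(X,\cM,\mero)$ is $\CC[\hb]$-free of finite rank is a consequence of the strictness result already available in this text. Indeed, by the construction in Section \ref{subsec:cTvarphihb} (see also \cite[Prop.\,3.3]{S-Y14}), the twist $\icT^{\mero/\hb}\otimes\icT$ (or rather its ``stupid'' localization $\icT_*^{\mero/\hb}\otimes\icT$) is an object of $\iMTM^\intt(X,(*P))$, and its $\cR^\intt$-module double-prime component is precisely $\cE_*^{\mero/\hb}\otimes\cM$ (or $\cE^{\mero/\hb}\otimes\cM$ after applying $[*P]$), with $\cM=R_F\ccM$; here I use that $\cM$ underlies a complex mixed Hodge module, so by Definition \ref{def:MHMC} and the discussion in Section \ref{subsec:MHM} it is indeed (a direct summand of) the $R_F\cD_X$-module underlying a real mixed Hodge module, and by Remark \ref{rem:MHMC} it is an object in $\iMTM^\intt(X)$. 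Since $a_X:X\to\pt$ is projective, Definition \ref{def:functorsMTM}\eqref{enum:pushforwardMTM} gives that $a_{X\dag}^k(\icT_*^{\mero/\hb}\otimes\icT)$ is an object of $\iMTM^\intt(\pt,(*P))=\iMTM^\intt(\pt)$, hence of $\MTS^\intt(\CC)$. As recalled in Remark \ref{rem:strictness}, the double-prime component of an object of $\MTS^\intt(\CC)$ is strict, i.e.\ $\cO_{\CC_\hb}$-flat; being coherent (as $X$ is compact, so the de~Rham pushforward is $\cR^\intt_\pt$-coherent), it is $\cO_{\CC_\hb}$-locally free of finite rank, equivalently, via the Deligne meromorphic extension at $\hb=\infty$ (Remark \ref{rem:algebraization}), $\CC[\hb]$-free of finite rank. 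This component is exactly $H^k_\dR(X,\cE^{\mero/\hb}\otimes\cM)=G_0^k(X,\cM,\mero)$, which proves \eqref{prop:Brieskornfree1}. I would also note the alternative explicit de~Rham description when $\mero$ induces a morphism $X\to\PP^1$, where $\cE^{\mero/\hb}=\cE_*^{\mero/\hb}$, so that $G_0^k$ is the hypercohomology of the twisted de~Rham complex displayed before the proposition.

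Next, for \eqref{prop:Brieskornfree2}, the key input is the projection formula / compatibility of the exponential twist with proper (here finite) pushforward, namely the relation in \ref{subsub:commutation}\eqref{enum:rel4}, third line, lifted to the twistor setting as in Proposition \ref{prop:expMHM} (commutation of $\cT^{\mero/\hb}\otimes\cbbullet$ with $\finite_\dag^k$), together with the fact that $\finite$ is finite so $\finite_\dag^k=0$ for $k\neq0$ and $\finite_\dag^0$ is exact on coherent $\cD$-modules. Concretely, since $\mero=\merob\circ\finite$, we have $\cE^{\merob/\hb}\otimes\finite_\dag^0\cM\simeq\finite_\dag^0(\cE^{\finite^*\merob/\hb}\otimes\cM)=\finite_\dag^0(\cE^{\mero/\hb}\otimes\cM)$ at the level of $\cR^\intt$-modules (using the lifted commutation relation, which is an isomorphism by the faithfulness of $\Xi_{\DR}$ argument in Section \ref{subsec:compatibilitiesT}). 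Then I would apply $a_{Y\dag}$ and use $a_X=a_Y\circ\finite$ together with the composition of pushforwards $a_{Y\dag}\circ\finite_\dag^0=a_{X\dag}$ (which holds with the expected degree shift: $a_{X\dag}^{n+k}=a_{Y\dag}^{m+k}\circ\finite_\dag^0$ since $\finite$ is finite, so there is no spectral sequence to worry about — $\finite_\dag^j\cM$ vanishes for $j\neq0$). This yields
\[
G_0^{n+k}(X,\cM,\mero)=H^{n+k}_\dR(X,\cE^{\mero/\hb}\otimes\cM)=H^{m+k}_\dR(Y,\finite_\dag^0(\cE^{\mero/\hb}\otimes\cM))=H^{m+k}_\dR(Y,\cE^{\merob/\hb}\otimes\finite_\dag^0\cM),
\]
which is $G_0^{m+k}(Y,\finite_\dag^0\cM,\merob)$, as claimed. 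I would also remark that $\finite_\dag^0\cM$ underlies a complex mixed Hodge module on $Y$ (by stability of $\MHM(Y,\CC)$ under projective pushforward, Theorem \ref{th:mtmgresc}\eqref{th:mtmgresc2} and its Hodge-theoretic counterpart), so that the right-hand side is a bona fide Brieskorn lattice in the sense of Definition \ref{def:Brieskornlattice}.

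The main obstacle, or rather the point requiring the most care, is the bookkeeping of the localization functors $[*P]$ versus $(*P)$ and of the degree indices: one must check that the commutation $\cE^{\merob/\hb}\otimes\finite_\dag^0\cM\simeq\finite_\dag^0(\cE^{\mero/\hb}\otimes\cM)$ holds with the honest (non-stupid) localized objects $\cE^{\mero/\hb}=(\cE_*^{\mero/\hb})[*P]$ and not merely after stupid localization — this is where one invokes that $\finite$ is finite (so $\finite^{-1}(Q)=P_\red$ up to multiplicity and the two localizations are compatible via \ref{subsub:commutation}\eqref{enum:rel7} for the modification-free case, or directly via the first line of \ref{subsub:commutation}\eqref{enum:rel4} applied to $\Gamma_{[*Q]}$), and that $\finite_\dag^0$ commutes with $[*P]$. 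Once the identification of objects is pinned down at the $\cR^\intt$-module level, everything else is formal: de~Rham cohomology is just $a_{X\dag}$, freeness is the strictness of Remark \ref{rem:strictness}, and the degree shift follows from $n=\dim X$, $m=\dim Y$ and the normalization $H^{n+j}_\dR(X,\cM)=a_{X\dag}^j\cM$.
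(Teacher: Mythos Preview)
Your proposal is correct and follows essentially the same approach as the paper: for \eqref{prop:Brieskornfree1} you invoke Remark \ref{rem:strictness} after observing that $\cE^{\mero/\hb}\otimes\cM$ underlies an object of $\MTM^\intt(X)$ (via Section \ref{subsec:exptwist}), and for \eqref{prop:Brieskornfree2} you use $\finite_\dag^j\cM=0$ for $j\neq0$ together with the lifted commutation relation from Proposition \ref{prop:expMHM} (the analogue of \ref{subsub:commutation}\eqref{enum:rel4}, third line), then apply $a_{Y\dag}^k$. The paper's proof is exactly this, stated more tersely; your extra discussion of the localization bookkeeping is correct but not needed, since the commutation $\finite_\dag^0(\cE^{\mero/\hb}\otimes\cM)\simeq\cE^{\merob/\hb}\otimes\finite_\dag^0\cM$ already holds at the $[*P]$ level by Proposition \ref{prop:expMHM}.
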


\pagebreak[2]
\skpt
\begin{proof}
\begin{enumerate}
\item
This follows from Remark \ref{rem:strictness}, since $\cE^{\mero/\hb}\otimes\cM$ underlies an object of $\MTM^\intt(X)$, as recalled in Section\ref{subsec:exptwist}. See also a different proof on Page \pageref{proof:Brieskornfree1}.
\item
We note that $\finite_\dag^j\cM=0$ if $j\neq0$ since $\finite$ is finite. We can apply Proposition \ref{prop:expMHM}, and in particular the analogue of \ref{subsub:commutation}\eqref{enum:rel4} (third line) to obtain $\finite_\dag^0(\cE^{\mero/\hb}\otimes\cM)\simeq\cE^{\merob/\hb}\otimes\finite_\dag^0\cM$. We then apply $a^k_{Y\dag}$ to both terms.\qedhere
\end{enumerate}
\end{proof}

\begin{proposition}[Thom-Sebastiani formula for the Brieskorn lattice]\label{prop:KunnethBrieskorn}
\index{Thom-Sebastiani formula}Let $\mero_i$ ($i=1,2$) be meromorphic functions and suppose $\cM_i$ underlie complex mixed Hodge modules on the smooth projective varieties $X_i$. Set $X = X_1 \times X_2$, $\mero = \mero_1\sboxplus \mero_2$ and $\cM =\cM_1\hbboxtimes\cM_2$. Then the Brieskorn lattice of $\cM$ with respect to $\mero$ satisfies
\[
G_0^k(X,\cM,\mero)\simeq\bigoplus_{i+j=k}G_0^i(X_1,\cM_1,\mero_1)\otimes_{\CC[\hb]}G_0^j(X_2,\cM_2,\mero_2).
\]
\end{proposition}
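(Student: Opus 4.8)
The statement is a Thom-Sebastiani formula for Brieskorn lattices, and the natural strategy is to deduce it from the Künneth formula for the exterior product of integrable mixed twistor $\cD$-modules, exactly as Proposition \ref{prop:Brieskornfree} was deduced. The point to keep in mind is that the object $\cE^{\mero/\hb}\otimes\cM$, where $\mero=\mero_1\sboxplus\mero_2$ on $X=X_1\times X_2$, underlies a genuine integrable mixed twistor $\cD$-module, so its de~Rham cohomology is strict; this is what makes the tensor product over $\CC[\hb]$ behave correctly (no derived corrections).

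\textbf{Step 1: reduce to an external-product statement at the level of $\cR$-triples.} First I would record the multiplicativity of the exponential twist under external product. On $X=X_1\times X_2$, writing $p_i:X\to X_i$ for the projections, one has $\cE^{\mero/\hb}=\cE^{p_1^*\mero_1/\hb}\otimes\cE^{p_2^*\mero_2/\hb}$ and, combining this with the flatness of $\cO_\cX$ over $\cO_{\cX_1}\boxtimes_{\cO_{\CC_\hb}}\cO_{\cX_2}$ and the definition of $\hbboxtimes$ on $\cR^\intt$-modules (Definition \ref{def:externalproduct}), one gets a canonical isomorphism of $\cR^\intt_\cX$-modules
\[
\cE^{\mero/\hb}\otimes(\cM_1\hbboxtimes\cM_2)\simeq(\cE^{\mero_1/\hb}\otimes\cM_1)\hbboxtimes(\cE^{\mero_2/\hb}\otimes\cM_2).
\]
This is the analogue of the third line of \ref{subsub:commutation}\eqref{enum:rel6} (``$\ccE^{\mero(x)+\psi(x')}\otimes(\cbbullet\boxtimes_\cD\cbbullet)\simeq(\ccE^{\mero}\otimes\cbbullet)\boxtimes_\cD(\ccE^{\psi}\otimes\cbbullet)$''), and the claim is ``obvious'' exactly as recorded there; here I would only need to check it at the level of the $\cR^\intt$-module component, which is a direct computation with the twisted connection and the action of $\hb^2\partial_\hb$.

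\textbf{Step 2: apply the Künneth formula for pushforward by $a_X$.} Since $\cM_i$ underlie complex mixed Hodge modules, $\cE^{\mero_i/\hb}\otimes\cM_i$ underlie objects of $\MTM^\intt(X_i)$ (Section \ref{subsec:exptwist}), hence so does their external product on $X$, by \cite[Prop.\,11.4.1]{Mochizuki11}. The pushforward by the constant map is compatible with external product (\cite[Prop.\,11.4.6 \& Lem.\,11.4.14]{Mochizuki11}), which in terms of de~Rham cohomology gives, using $n=n_1+n_2$,
\[
a_{X\dag}^k\bigl((\cE^{\mero_1/\hb}\otimes\cM_1)\hbboxtimes(\cE^{\mero_2/\hb}\otimes\cM_2)\bigr)\simeq\bigoplus_{i+j=k}a_{X_1\dag}^i(\cE^{\mero_1/\hb}\otimes\cM_1)\otimes_{\cO_{\CC_\hb}}a_{X_2\dag}^j(\cE^{\mero_2/\hb}\otimes\cM_2).
\]
Combining with Step 1 and re-indexing by total cohomological degree (absorbing the dimension shift $n=n_1+n_2$), the left-hand side is $G_0^k(X,\cM,\mero)$ and the right-hand side is $\bigoplus_{i+j=k}G_0^i(X_1,\cM_1,\mero_1)\otimes_{\CC[\hb]}G_0^j(X_2,\cM_2,\mero_2)$, where I pass from $\cO_{\CC_\hb}$-modules to $\CC[\hb]$-modules via the Deligne meromorphic extension at $\hb=\infty$ as in the discussion preceding Corollary \ref{cor:imdirIrrMHM}. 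This last passage is legitimate because the Brieskorn lattices are $\CC[\hb]$-free of finite rank by Proposition \ref{prop:Brieskornfree}\eqref{prop:Brieskornfree1}, so the tensor product over $\CC[\hb]$ is again free and its analytification is the tensor product of the analytified modules.

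\textbf{Main obstacle.} The only genuine subtlety is the compatibility of the external-product isomorphism of Step 1 with the twistor structure (i.e.\ that it lifts the $\cR^\intt$-module isomorphism to an isomorphism in $\MTM^\intt$, so that the Künneth isomorphism of \cite{Mochizuki11} applies); this is handled the same way as in Proposition \ref{prop:expMHM} and Remark \ref{rem:compexptwisthb1}, namely by noting that the functor $\Xi_{\DR}$ is faithful and that both sides have the same image under it, so the canonical morphism is an isomorphism. The second, more bookkeeping-type point is keeping track of the cohomological degree shift: $a_{X\dag}^k$ corresponds to $H^{n+k}_\dR$, and $n=n_1+n_2$, so the graded pieces match on the nose with the stated ranges of $i,j$. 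Apart from this, the proof is a formal combination of Step 1, the Künneth formula for projective (here: constant-map) pushforward, and the freeness statement of Proposition \ref{prop:Brieskornfree}\eqref{prop:Brieskornfree1}.
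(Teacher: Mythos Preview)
Your proposal is correct and matches the paper's first proof essentially line for line: the paper records the identity $(\cT^{\mero_1/\hb}\otimes\cT_1)\boxtimes(\cT^{\mero_2/\hb}\otimes\cT_2)\simeq\cT^{(\mero_1\sboxplus\mero_2)/\hb}\otimes(\cT_1\boxtimes\cT_2)$ (your Step~1, stated there at the level of the full twistor objects, which disposes of your ``main obstacle'') and then invokes the K\"unneth formula for $a_{X\dag}$ via \cite[Prop.\,11.4.6 \& Lem.\,11.4.14]{Mochizuki11} (your Step~2). The paper also supplies a second, more self-contained proof that bypasses the twistor machinery: working algebraically, it shows directly that $\DR(\cN_1\boxtimes_{R_F\cD}\cN_2)\simeq p_1^{-1}\DR(\cN_1)\otimes_{\CC[\hb]}p_2^{-1}\DR(\cN_2)$ and then applies the ordinary K\"unneth formula for bounded complexes of flat $\CC[\hb]$-modules with free cohomology, the freeness being supplied by Proposition~\ref{prop:Brieskornfree}\eqref{prop:Brieskornfree1}.
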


\begin{proof}
This is nothing but the first isomorphism in Corollary \ref{cor:imdirIrrMHM}, according to \eqref{eq:tensorTf}. We will however give another proof, in the spirit of the latter approach, on Page \pageref{proof:KunnethBrieskorn}.
\end{proof}

\subsection{Statement of the Thom-Sebastiani formula}
Let $X$ be a smooth complex projective variety, and let~$\mero$ be a rational function on~$X$ with pole divisor $P$. We set $\ccE^\mero:=(\cO_X(*P_\red),\rd+\rd \mero)$. Let $(\ccM,F_\bbullet\ccM)$ be a filtered holonomic left $\cD_X$-module underlying a complex mixed Hodge module, and such that $\ccM=\ccM(*P_\red)$. We regard~$\ccM$ as an $\cO_X$-module with integrable connection~$\nabla$. The twisted de~Rham cohomology
\begin{align*}
H^k_{\dR}(X,\ccE^\mero\otimes\ccM):&=\bH^k\bigl(X,(\Omega_X^\cbbullet(*P_\red)\otimes\ccM,\nabla+\rd \mero)\bigr)\\
&=\bH^k\bigl(X,(\Omega_X^\cbbullet\otimes\ccM,\nabla+\rd \mero)\bigr)
\end{align*}
is naturally equipped with a decreasing filtration $F_\irr^\cbbullet H^k_{\dR}(X,\ccE^\mero\otimes\ccM)$ indexed by~$\RR$ called the \emph{irregular Hodge filtration}: According to \cite[Th.\,1.3]{S-Y14}, $\ccE^\mero\otimes\ccM$ is \hbox{endowed} with a filtration $F_\irr^\cbbullet$, which induces a filtration $F_\irr^\cbbullet$ on the twisted de~Rham complex $\DR(\ccE^\mero\otimes\ccM)$, and induces the desired filtration on the hypercohomology of~$\DR(\ccE^\mero\otimes\nobreak\ccM)$. Moreover, \cite[Cor.\,1.4]{S-Y14} (\cf also \cite{Mochizuki15a}) shows a good beha\-viour by taking hypercohomology (degeneration at $E_1$ of the spectral sequence).

Given two such sets of data $(X_i,\mero_i,(\ccM_i,F_\bbullet\ccM_i))_{i=1,2}$, we consider their external product:
\begin{equation}\label{eq:extprod}
X=X_1\times X_2,\quad \mero=\mero_1\sboxplus \mero_2,\quad (\ccM,F_\bbullet\ccM)=(\ccM_1,F_\bbullet\ccM_1)\boxtimes(\ccM_2,F_\bbullet\ccM_2),
\end{equation}
where \index{$Boxplus$@$\sboxplus$}$\mero_1\sboxplus \mero_2$ is the \emph{Thom-Sebastiani sum} defined by $(\mero_1\sboxplus \mero_2)(x_1,x_2)\!:=\!\mero_1(x_1)+\mero_2(x_2)$, having pole divisor $P:=(P_1\times X_2)\cup(X_1\times P_2)$.

\begin{theoreme}[Thom-Sebastiani formula for the irregular Hodge filtration]\label{th:main}
\index{Thom-Sebastiani formula}Under these assumptions, we have for each $\beta\in\RR$:
\begin{multline*}
F^\irr_\beta H^k(X,\ccE^\mero\otimes\ccM)\\[-5pt]
\simeq\bigoplus_{k_1+k_2=k}\biggl(\sum_{\beta_1+\beta_2=\beta}F^\irr_{\beta_1} H^{k_1}(X_1,\ccE^{\mero_1}\otimes\ccM_1)\otimes_\CC F^\irr_{\beta_2} H^{k_2}(X_2,\ccE^{\mero_2}\otimes\ccM_2)\biggr).
\end{multline*}
\end{theoreme}

\begin{remarque}
With this theorem together with \cite[Cor.\,8.19]{S-Y14}, one obtains another proof of the behaviour of the spectrum of tame functions on smooth affine varieties with respect to the Thom-Sebastiani sum (\cf\cite[\S2]{N-S97}).
\end{remarque}

The Thom-Sebastiani formula of Theorem \ref{th:main} generalizes to arbitrary complex mixed Hodge modules the formula previously obtained by K.-C.\,Chen and the second author in \cite{C-Y16}. The case considered in \loccit\ is that of a regular function~$\mero_i$ on a smooth quasi-projective variety $U_i$, with an open embedding $j_i:U_i\hto X_i$, with~$X_i$ smooth projective such that $X_i\moins U_i$ is a divisor with normal crossings and~$\mero_i$ extends as a morphism $X_i\to\PP^1$, and \hbox{$\ccM_i=j_{i+}\cO_{U_i}$}. While the proof of \cite{C-Y16} relies on a detailed analysis near the divisors at infinity, the proof given here uses the full strength of the category $\MTM^\intt(X)$ proved by Mochizuki in \cite{Mochizuki11}. We will also sketch another proof, which emphasizes the role of the Harder-Narasimhan filtration \eqref{eq:FM*}. It takes advantage of $\cD$-module theory to reduce the question to the case where $X_i=\Afu$ and $\mero_i$ is a global coordinate on $\Afu$ ($i=1,2$). In such a case, the pushforward of $\ccM$ by~$\mero$ is then identified with the additive convolution of $\ccM_1$ with $\ccM_2$. In order to reduce to this case, we will use the construction explained in \cite[\S2]{S-Y14} to reduce the question to the case where $\mero_i$ are morphisms $X_i\to\PP^1$, and then we will apply the one-dimensional result to the various direct images of $\ccM_i$ by $\mero_i$.

\begin{proof}[First proof of Theorem \ref{th:main}]
If $\cT_i$ are objects of $\MTM^\intt(X_i)$ corresponding to complex mixed Hodge modules, and if $\mero_i$ are meromorphic functions on $X_i$ ($i=1,2$), then $\cT^{\mero_i/\hb}\otimes\cT_i$ are objects of $\IrrMHM(X_i)$ (\cf Theorem \ref{th:mtmgresc}\eqref{th:mtmgresc3}). One first checks easily that
\[
(\cT_*^{\mero_1/\hb}\otimes\cT_1)\boxtimes(\cT_*^{\mero_2/\hb}\otimes\cT_2)=\cT_*^{(\mero_1\sboxplus \mero_2)/\hb}\otimes(\cT_1\boxtimes\cT_2).
\]
Then we observe that
\[
\Gamma_{[*P]}\big[(\cT_*^{\mero_1/\hb}\otimes\cT_1)\boxtimes(\cT_*^{\mero_2/\hb}\otimes\cT_2)\big]=\Gamma_{[*P_1]}(\cT_*^{\mero_1/\hb}\otimes\cT_1)\boxtimes\Gamma_{[*P_2]}(\cT_*^{\mero_2/\hb}\otimes\cT_2).
\]
It follows that
\begin{equation}\label{eq:tensorTf}
(\cT^{\mero_1/\hb}\otimes\cT_1)\boxtimes(\cT^{\mero_2/\hb}\otimes\cT_2)=\cT^{(\mero_1\sboxplus \mero_2)/\hb}\otimes(\cT_1\boxtimes\cT_2),
\end{equation}
and thus $(\cT^{\mero_1/\hb}\otimes\cT_1)\boxtimes(\cT^{\mero_2/\hb}\otimes\cT_2)$ also belongs to $\IrrMHM(X_1\times X_2)$. We can thus apply Corollary \ref{cor:imdirIrrMHM}.
\end{proof}

\oldsubsubsection*{Second proof of Theorem \ref{th:main}}
For $(\ccM,F_\bbullet\ccM)$ underlying a complex mixed Hodge module on the smooth complex projective variety $X$, set $\cM:=R_F\ccM=\bigoplus_pF_p\ccM \hb^p$. We first give another proof of Proposition \ref{prop:Brieskornfree}\eqref{prop:Brieskornfree1}.

\begin{proof}[Second proof of Proposition \ref{prop:Brieskornfree}\eqref{prop:Brieskornfree1}]\label{proof:Brieskornfree1}
Let us set $U=X\moins P_\red$. The computation of the $\CC[\hb]$-module $\bH^k\bigl(X,(\Omega_X^\cbbullet\otimes R_F\ccM,\hb\nabla+\rd\mero)\bigr)$ can be done by working in the algebraic context with respect to $X$, and in such a context, setting $(M,F_\bbullet M)=(\ccM,F_\bbullet\ccM)_{|U}$, we have
\[
G_0^k(X,\cM,\mero)\simeq\bH^k\bigl(U,(\Omega_U^\cbbullet\otimes R_FM,\hb\nabla+\rd \mero)\bigr).
\]
If $\mero$ is a morphism $X\to\PP^1$, then \cite[Prop.\,in \S1]{Bibi97b} is the desired statement.

Let us now consider the general case, that is, let $\mero$ be a global section of $\cO_X(P)$. By the construction of \cite[Rem.\,2.3]{S-Y14}, there exists a filtered holonomic $\cD_{X\times\PP^1}$\nobreakdash-module $(\ccN,F_\bbullet\ccN)$ which underlies a complex mixed Hodge module on a product $X\times\PP^1$ such that, denoting by $p$ the projection $X\times\PP^1\to X$, we have $(\ccM,F_\bbullet\ccM)=p_+(\ccN,F_\bbullet\ccN)=\cH^0p_+(\ccN,F_\bbullet\ccN)$ (note that the notation $\ccM$, $\ccN$ in \loccit\ is switched). We denote by~$t$ the affine coordinate on $\PP^1$ such that $\mero$ is the composition of~$t$ and the inclusion of the graph of~$\mero$, on the open set $U\subset X$ where $\mero$ is holomorphic. Arguing like in \cite[Lem.\,2.4]{S-Y14}, we have
\begin{equation}\label{eq:KMN}
G_0^k(X,\cM,\mero)\simeq G_0^{k+1}(X\times\PP^1,\cN,t).
\end{equation}
The first part of the proof can be applied to $\cN$.
\end{proof}

\begin{theoreme}\label{th:irregNS}
If $(\ccM,F_\bbullet\ccM)$ underlies a complex mixed Hodge module, the Harder-Narasimhan filtration \eqref{eq:FM*} of the Brieskorn lattice $G_0^k(X,\cM,\mero)$ induces the irregular Hodge filtration $F^\cbbullet_\irr\bH^k\bigl(X,(\Omega_X^\cbbullet\otimes\ccM,\nabla+\rd \mero)\bigr)$ by restriction to $\hb=1$.
\end{theoreme}

\begin{proof}[\proofname\ in the case where $X=\PP^1$, $\mero=\id:\PP^1\to\PP^1$]
In this case, we denote by~$t$ the coordinate on the affine chart \hbox{$\Afu=\PP^1\moins\{\infty\}$}. Then $\ccE^\mero=(\cO_{\PP^1}(*\infty),\rd+\rd t)$. In this case, \hbox{$G_0^k(X,\cM,\mero)=0$} for $k\neq1$. Then we set $G_0\!\!=G_0^1(X,\cM,\mero)$. By Lemma~\ref{lem:irrHN}, the image of the Harder-Narasimhan filtration $F^\sHN_\beta$ in $M/(\hb-1)M$ is that of $V_\beta\cap G_0$. The identification asserted by the theorem is exactly that obtained in \cite[Prop.\,6.10]{Bibi08}, since the filtration $F_\irr^\cbbullet$ is also identified with the Deligne filtration~$F^\cbbullet_\Del$ (\cf \cite[(5.2)]{S-Y14}).
\end{proof}

\begin{proof}[\proofname\ in the case where $\mero:X\to\PP^1$ is a morphism]
In this case, $P_\red=\mero^{-1}(\infty)$. We set $n=\dim X$. We first claim that
\begin{equation}\label{eq:brieskf}
G_0^k(X,\cM,\mero)\simeq G_0^1(\PP^1,\mero_\dag^{k-n}\cM,t).
\end{equation}
This follows from the property that the Hodge-to-de\,Rham spectral sequence for the filtered module $(\ccM,F_\bbullet\ccM)$ degenerates at $E_1$, since it underlies a complex mixed Hodge module. This implies that $\mero_\dag^jR_F\ccM=R_F\mero_\dag^j\ccM$. It is then easy to check that (recall that $\cE^{\mero/\hb}=\cE_*^{\mero/\hb}$)
\[
\mero_\dag^j(\cE^{\mero/\hb}\otimes R_F\ccM)\simeq \cE^{t/\hb}\otimes \mero_\dag^j(R_F\ccM).
\]
Taking hypercohomology on $\PP^1$ gives the result, since the hypercohomology on $\PP^1$ only exists in degree one, as remarked above.

On the other hand, by the $E_1$-degeneration property for the irregular Hodge filtration by a projective morphism (here equal to $\mero$), as stated in \cite[Th.\,1.3(4)]{S-Y14}, we have
\[
F^\cbbullet_\irr\bH^k\bigl(X,(\Omega_X^\cbbullet\otimes\ccM,\nabla+\rd \mero)\bigr)=F^\cbbullet_\irr\bH^1\bigl(\PP^1,(\Omega_{\PP^1}^\cbbullet\otimes \mero_\dag^{k-n}\ccM,\nabla+\rd t)\bigr).
\]
(\Cf a similar reasoning in the proof of \cite[Th.\,3.3.1]{E-S-Y13}.)

In this case, the theorem follows from the first case applied to the various $\mero_\dag^j\ccM$ with their Hodge filtration.
\end{proof}

\begin{proof}[\proofname\ in the general case]
Let $\mero$ be a global section of $\cO_X(P)$, and let $(\ccN,F_\bbullet\ccN)$ be attached to $(\ccM,F_\bbullet\ccM)$ as in the second proof of Proposition \ref{prop:Brieskornfree}\eqref{prop:Brieskornfree1} on Page \pageref{proof:Brieskornfree1}. According to the definition of $F^\irr_\cbbullet(\ccE^\mero\otimes\nobreak\ccM)$ from $F^\irr_\cbbullet(\ccE^t\otimes\ccN)$ (\cf \cite[Def.\,5.1]{S-Y14}), and due to the degeneration at~$E_1$ of the spectral sequences attached to the irregular Hodge filtration for the maps $a_X:X\to\mathrm{pt}$ and $a_{X\times\PP^1}:X\times\PP^1\to\mathrm{pt}$, we obtain that
\[
F^\irr_\cbbullet\bH^k\bigl(X,(\Omega_X^\cbbullet\otimes\ccM,\nabla+\rd \mero)\bigr)=F^\irr_\cbbullet\bH^{k+1}\bigl(X\times\PP^1,(\Omega_{X\times\PP^1}^\cbbullet\otimes\ccN,\nabla+\rd t)\bigr).
\]
We can now apply to $(\ccN,F_\bbullet\ccN)$ the case where the function is a morphism to $\PP^1$ and obtain the result for $(\ccM,F_\bbullet\ccM)$, according to \eqref{eq:KMN}.
\end{proof}

\begin{proof}[End of the second proof of Theorem \ref{th:main}]
We can apply Remark~\ref{rem:tensorHN} to each summand occurring in Proposition \ref{prop:KunnethBrieskorn} to obtain the Thom-Sebastiani formula for the filtration induced by the Harder-Narasimhan filtration. Lastly, we use Theorem~\ref{th:irregNS} to identify the latter with the irregular Hodge filtration.
\end{proof}

\begin{proof}[Second proof of Proposition \ref{prop:KunnethBrieskorn}]\label{proof:KunnethBrieskorn}
Since $X_1,X_2$ are projective, we can work in the algebraic category of complex mixed Hodge modules. We denote by $p_i$ ($i=1,2$) the projections \hbox{$X:=X_1\times X_2\to\nobreak X_i$}. Recall that a left $R_F\cD_X$-module is an $R_F\cO_X$-module equipped with an integrable $\hb$\nobreakdash-connection, and that $R_F\cO_X=\cO_X[\hb]$. The pullback $R_F\cD_X$-module \hbox{$p_1^+\cN_1$ is $\cO_{X_1\times X_2}[\hb]\otimes_{p_1^{-1}\cO_{X_1}[\hb]}p_1^{-1}\cN_1$} equipped with the pullback $\hb$-connection. We then set
\[
\cN_1\boxtimes_{R_F\cD}\cN_2:=p_1^+\cN_1\otimes_{\cO_{X_1\times X_2}[\hb]}p_2^+\cN_2.
\]
The natural morphism
\[
p_1^{-1}\DR(\cN_1)\otimes_{\CC[\hb]}p_2^{-1}\DR(\cN_2)\to\DR(\cN_1\boxtimes_{R_F\cD}\cN_2)
\]
is an isomorphism if $\cN_i$ is $R_F\cD_{X_i}$-coherent. Indeed, the question is local, and we can reduce to the case where $\cN_i\simeq R_F\cD_{X_i}$ by taking a local resolution by free $R_F\cD_{X_i}$\nobreakdash-modules. The question reduces to proving that the natural morphism
\[
p_1^{-1}\cO_{X_1}[\hb]\otimes_{\CC[\hb]}p_2^{-1}\cO_{X_2}[\hb]\to\cO_{X_1\times X_2}[\hb]
\]
is an isomorphism, which is clear in the present algebraic setting. Still in the algebraic setting, we conclude by using an argument similar to that of \cite[Lem.\,1.5.31]{H-T-T08} that the natural morphism
\[
\bR\Gamma\bigl(X_1,\DR(\cN_1)\bigr)\otimes_{\CC[\hb]}\bR\Gamma\bigl(X_2,\DR(\cN_2)\bigr)\to\bR\Gamma\bigl(X,\DR(\cN_1\boxtimes_{R_F\cD}\cN_2)\bigr)
\]
is an isomorphism. We notice that, as in \eqref{eq:tensorTf},
\[
(\cE^{\mero_1/\hb}\otimes\cM_1)\boxtimes_{R_F\cD}(\cE^{\mero_2/\hb}\otimes\cM_2)\simeq\cE^{\mero/\hb}\otimes\cM.
\]

To conclude, it is enough to use the property that, given bounded complexes $L_1^\cbbullet$ and~$L_2^\cbbullet$ of flat $\CC[\hb]$-modules with free $\CC[\hb]$-cohomology, then the Künneth formula for the cohomology of $L_1^\cbbullet\otimes_{\CC[\hb]}L_2^\cbbullet$ holds, that is, we have an isomorphism of free $\CC[\hb]$-modules
\[
H^k(L_1^\cbbullet\otimes_{\CC[\hb]}L_2^\cbbullet)\simeq\bigoplus_{i+j=k}H^i(L_1^\cbbullet)\otimes_{\CC[\hb]}H^j(L_2^\cbbullet).
\]

We apply the latter result to $L_i^\cbbullet=\bR\Gamma\bigl(X_i,\DR(\cE^{\mero_i/\hb}\otimes\cM_i)\bigr)$ ($i=1,2$). The $\CC[\hb]$-freeness of the cohomology is provided by Proposition \ref{prop:Brieskornfree}.
\end{proof}

\section{Finite group actions and alternating products}\label{sec:alternating}

\subsection{General framework}
Let $X$ be a smooth projective variety of dimension $n$ and let \index{$Sgot$@$\gotS$}$\gotS$ be a finite group acting on it by algebraic automorphisms. We fix an embedding $X/\gotS\hto Y$ into a smooth projective variety $Y$ of dimension $m$, and we denote by $\finite:X\to Y$ the composition $X\to X/\gotS\hto Y$.

Let $(\cT,W_\bbullet)$ be an object of $\iMTM^\intt(X)$. Assume that $(\cT,W_\bbullet)$ is endowed with automorphisms $\lambda_\sigma:(\cT,W_\bbullet)\isom\sigma_\dag(\cT,W_\bbullet)$ such that $\sigma'_\dag(\lambda_\sigma)\circ\lambda_{\sigma'}=\lambda_{\sigma\circ\sigma'}$ for all $\sigma,\sigma'\in\gotS$ (we then say that $(\cT,W_\bbullet)$ is $\gotS$-\emph{equivariant}). Note that $\finite_\dag^k\cT=0$ for $k\neq0$ since $\finite$ is finite, and $\gotS$ acts on $(\finite_\dag^0\cT,W_\bbullet\finite_\dag^0\cT)$ by automorphisms in $\iMTM^\intt(Y)$ since $\finite\circ\sigma=\finite$ for all $\sigma\in\gotS$. As a consequence, $(\finite_\dag^0\cT,W_\bbullet\finite_\dag^0\cT)$ decomposes in $\iMTM^\intt(Y)$ with respect to the characters of $\gotS$:
\begin{equation}\label{eq:deccharactersMTMint}
(\finite_\dag^0\cT,W_\bbullet\finite_\dag^0\cT)=\bigoplus_{\chi\in\Hom(\gotS,\CC^*)}(\finite_\dag^0\cT,W_\bbullet\finite_\dag^0\cT)^\chi.
\end{equation}

\begin{proposition}\label{prop:IrrMHMgotS}
If $(\cT,W_\bbullet)$ is an object of the full subcategory $\IrrMHM(X)$, then the decomposition \eqref{eq:deccharactersMTMint} holds in $\IrrMHM(Y)$. Moreover, using, Notation~\ref{nota:XiDR}, the bi-filtered vector space $\Xi_{\DR}\finite_\dag^0(\cT,W_\bbullet)$ has a similar decomposition compatible with the decomposition $\finite_\dag^0\ccM=\bigoplus_\chi(\finite_\dag^0\ccM)^\chi$, \ie the filtrations $F_\bbullet^\irr\finite_\dag^0\ccM$ and $W_\bbullet\finite_\dag^0\ccM$ both decompose with respect to the action of $\gotS$.
\end{proposition}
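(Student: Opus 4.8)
\textbf{Proof strategy for Proposition \ref{prop:IrrMHMgotS}.}
The plan is to bootstrap everything from two facts already established in the excerpt: first, that $\IrrMHM(Y)$ is an abelian full subcategory of $\iMTM^\intt(Y)$ which is stable by direct summand in $\iMTM^\intt(Y)$ (Proposition \ref{prop:MTMresc}\eqref{prop:MTMresc3}); second, that the pushforward $\finite_\dag^0$ by a finite (hence projective) morphism preserves $\IrrMHM$ and commutes with $\Xi_{\DR}$ and the irregular Hodge filtration in the strict sense (Theorem \ref{th:pushforwardresc}, equivalently Theorem \ref{th:mtmgresc}\eqref{th:mtmgresc2} together with Theorem \ref{th:mainRF}\eqref{th:mainRF6}). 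First I would observe that, since $(\cT,W_\bbullet)$ is an object of $\IrrMHM(X)$ and $\finite$ is finite, $\finite_\dag^k\cT=0$ for $k\neq 0$ and $\finite_\dag^0(\cT,W_\bbullet)$ is an object of $\IrrMHM(Y)$. The equivariance data $\lambda_\sigma$, being morphisms in $\iMTM^\intt(X)$ between $\cT$ and $\sigma_\dag\cT$ (both objects of $\IrrMHM(X)$, the latter because $\sigma$ is an isomorphism and $\IrrMHM$ is stable under such pushforwards), push forward by the functorial $\finite_\dag^0$ — using $\finite\circ\sigma=\finite$ and the cocycle relation $\sigma'_\dag(\lambda_\sigma)\circ\lambda_{\sigma'}=\lambda_{\sigma\circ\sigma'}$ — to an action of $\gotS$ on $\finite_\dag^0(\cT,W_\bbullet)$ by automorphisms in $\IrrMHM(Y)$.

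The second step is the decomposition itself. Since $\IrrMHM(Y)$ is abelian and $\CC$-linear, for each character $\chi\in\Hom(\gotS,\CC^*)$ the idempotent
\[
e_\chi=\frac{1}{\#\gotS}\sum_{\sigma\in\gotS}\chi(\sigma)^{-1}\lambda_\sigma\in\End_{\IrrMHM(Y)}\bigl(\finite_\dag^0(\cT,W_\bbullet)\bigr)
\]
is a well-defined endomorphism, and the family $(e_\chi)_\chi$ is a complete orthogonal system of idempotents by the standard orthogonality relations for characters of the finite group $\gotS$. In an abelian category idempotents split, so setting $(\finite_\dag^0\cT,W_\bbullet\finite_\dag^0\cT)^\chi:=\image(e_\chi)$ gives the direct sum decomposition \eqref{eq:deccharactersMTMint} in $\IrrMHM(Y)$; stability of $\IrrMHM(Y)$ by direct summand guarantees each summand again lies in $\IrrMHM(Y)$, though this is automatic here since the ambient category is abelian.

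For the last assertion I would apply the functor $\Xi_{\DR}$, which by Notation \ref{nota:XiDR} and Proposition \ref{prop:MTMresc}\eqref{prop:MTMresc3} sends $\IrrMHM(Y)$ to the category of bi-filtered $\cD_Y$-modules $(\ccM,F^\irr_\bbullet\ccM,W_\bbullet\ccM)$ and sends morphisms in $\IrrMHM(Y)$ to bi-strictly bi-filtered morphisms. Applying $\Xi_{\DR}$ to the idempotents $e_\chi$ and using that $\Xi_{\DR}\circ\finite_\dag^0\simeq\finite_\dag^0\circ\Xi_{\DR}$ strictly with respect to $F^\irr_\bbullet$ (Theorem \ref{th:pushforwardresc}\eqref{th:pushforwardresc2}, that is the $E_1$-degeneration $R_{F^\irr_{\alpha+\bbullet}}\finite_\dag^0\ccM\simeq\finite_\dag^0 R_{F^\irr_{\alpha+\bbullet}}\ccM$) yields a decomposition $\finite_\dag^0\ccM=\bigoplus_\chi(\finite_\dag^0\ccM)^\chi$ with $(\finite_\dag^0\ccM)^\chi=\image(\Xi_{\DR}(e_\chi))$; because each $\Xi_{\DR}(e_\chi)$ is bi-strict, the induced filtrations $F^\irr_\bbullet(\finite_\dag^0\ccM)^\chi$ and $W_\bbullet(\finite_\dag^0\ccM)^\chi$ are precisely the filtrations induced on the summands, i.e. both $F^\irr_\bbullet\finite_\dag^0\ccM$ and $W_\bbullet\finite_\dag^0\ccM$ decompose according to the $\gotS$-action, as claimed.

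I do not expect a serious obstacle: the only point requiring a little care is to make sure the idempotents $e_\chi$ are genuinely morphisms \emph{in} $\IrrMHM(Y)$ rather than merely in $\iMTM^\intt(Y)$, which follows at once from the fullness of $\IrrMHM(Y)$ in $\iMTM^\intt(Y)$; and to invoke the bi-strictness of $\Xi_{\DR}$ on morphisms (Proposition \ref{prop:MTMresc}\eqref{prop:MTMresc3}, which is exactly Theorem \ref{th:mainRF}\eqref{th:mainRF2}) so that passing to $\cD_Y$-modules does not destroy the splitting of the two filtrations. The mild subtlety — if any — is that the irregular Hodge filtration is $\RR$-indexed rather than $\ZZ$-indexed, but since $\Xi_{\DR}(e_\chi)$ is strict with respect to $F^\irr_{\alpha+\bbullet}$ for each $\alpha\in[0,1)$ separately, the decomposition holds level-by-level and hence for the whole $\RR$-indexed filtration.
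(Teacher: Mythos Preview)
Your proposal is correct and follows essentially the same approach as the paper: both use that $\finite_\dag^0(\cT,W_\bbullet)\in\IrrMHM(Y)$ by Theorem~\ref{th:pushforwardresc} and that $\IrrMHM(Y)$ is stable by direct summand in $\iMTM^\intt(Y)$ (Proposition~\ref{prop:MTMresc}\eqref{prop:MTMresc3}). The paper simply notes that the decomposition~\eqref{eq:deccharactersMTMint} already exists in $\iMTM^\intt(Y)$ and invokes direct-summand stability, while you reconstruct it via idempotents inside the abelian category $\IrrMHM(Y)$; for the second part the paper observes that the construction of $F^\irr_\bbullet$ is compatible with direct sums, whereas you deduce this from bi-strictness of $\Xi_{\DR}$ on morphisms---equivalent, but slightly less direct.
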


\begin{proof}
For the first part, we use that $(\finite_\dag^0\cT,W_\bbullet\finite_\dag^0\cT)$ is an object of $\IrrMHM(Y)$ (\cf Theorem \ref{th:pushforwardresc}) and that the category $\IrrMHM(Y)$ is stable by direct summand in $\iMTM^\intt(Y)$ (Proposition \ref{prop:MTMresc}\eqref{prop:MTMresc3}). The second part is then clear, because the construction of $F_\bbullet^\irr$ is compatible with direct sums in $\WRdiTriples(\tauY,(*\tauY_0))$.
\end{proof}

\begin{corollaire}\label{cor:IrrMHMgotS}
With the assumption of Proposition \ref{prop:IrrMHMgotS}, $\gotS$ acts on every vector space $H^k_{\dR}(X,\ccM)$ by linear automorphisms which strictly preserve $F_\bbullet^\irr H^k_{\dR}(X,\ccM)$ and $W_\bbullet H^k_{\dR}(X,\ccM)$. The $\chi$-component
\[
\big(H^k_{\dR}(X,\ccM)^\chi,(F_\bbullet^\irr H^k_{\dR}(X,\ccM))^\chi,(W_\bbullet H^k_{\dR}(X,\ccM))^\chi\big)
\]
is naturally identified with $H^{k+m-n}_{\dR}\big(Y,\Xi_{\DR}((\finite_\dag^0(\cT,W_\bbullet))^\chi)\big)$.
\end{corollaire}

\begin{proof}
Denoting by $a_Y:Y\to\pt$ the constant map, we apply the functor $a_{Y\dag}^k$ to the objects in Proposition \ref{prop:IrrMHMgotS}.
\end{proof}

\begin{corollaire}\label{cor:BrieskorngotS}
Let $\merob\in\cO_Y(Q)$ be a meromorphic function with pole divisor $Q$ and let $\cM$ be an $\cR_\cX$-module underlying a complex mixed Hodge module on $X$ which is $\gotS$-equivariant. Set $\mero=\merob\circ\finite$. Then the Brieskorn lattice $G_0^k(X,\cM,\mero)$ decomposes as $\bigoplus_{\chi\in\Hom(\gotS,\CC^*)}G_0^k(X,\cM,\mero)^\chi$ and \index{$G0XMmerochi$@$G_0^\cbbullet(X,\cM,\mero)^\chi$}$G_0^k(X,\cM,\mero)^\chi\simeq G_0^{k+m-n}(Y,(\finite^0_\dag\cM)^\chi,\merob)$.
\end{corollaire}

\begin{proof}
This is a consequence of Proposition \ref{prop:Brieskornfree}\eqref{prop:Brieskornfree2}.
\end{proof}

\subsection{Alternating products}
Let $\gotS_r$ denote the symmetric group on $r$ letters. Let $(X,\cM,\mero)$ be the data of a smooth projective variety, an $\cR_X$-module underlying a mixed Hodge module $(\cT,W_\bbullet)$ on $X$ and a meromorphic function $\mero$ with pole divisor $P$. The symmetric group acts on $X^r$ and the Thom-Sebastiani sum\index{$Fzoplusr$@$\mero^{\oplus r}$}
\[
\mero^{\oplus r}(x_1,\dots,x_r):=\mero(x_1)+\cdots+\mero(x_r)
\]
is a meromorphic function on $X^r$ which descends as a meromorphic function \index{$Fzoplusrpar$@$\mero^{(\oplus r)}$}$\mero^{(\oplus r)}$ on the quotient space $X^r/\gotS_r$. One can choose an embedding $X^r/\gotS_r\hto\nobreak Y$ into a smooth projective variety $Y$ such that $\mero^{(\oplus r)}$ is the restriction to $X^r/\gotS_r$ of a meromorphic function $\merob$ on $Y$. As above, we denote by $\finite:X^r\to Y$ the corresponding finite morphism and by $n$ \resp $m$ the dimension of $X$ \resp $Y$.

Let us denote by $\cM^{\,\hbboxtimes r}$ the $r$-fold external product $\cM\hbboxtimes\cdots\hbboxtimes\cM$, underlying the mixed Hodge module $(\cT,W_\bbullet)^{\boxtimes r}$ on $X^r$. It is obviously $\gotS_r$-equivariant, by the action $\sigma\cdot(m_1,\dots,m_r)=(m_{\sigma(1)},\dots,m_{\sigma(r)})$.

Let \index{$Sgn$@$\sgn$}$\sgn:\gotS_r\to\{\pm1\}$ be the signature. As a particular case of Corollary \ref{cor:BrieskorngotS}, we find
\[
G_0^k(X^r,\cM^{\,\hbboxtimes r},\mero^{\oplus r})^\sgn\simeq G_0^{k+(m-n)r}(Y,(\finite^0_\dag\cM^{\,\hbboxtimes r})^\sgn,\merob).
\]

\begin{proposition}\label{prop:alternatingBrieskornFirr}
\index{alternating product}Assume $G_0^k(X,\cM,\mero)=0$ except for $k=\ell$. Then the $\CC[\hb]\langle\hb^2\partial_\hb\rangle$-linear morphism
\begin{starequation}\label{eq:alternatingBrieskorn}
\bigwedge^r_{\CC[\hb]}\!\!G_0^\ell(X,\cM,\mero)\!\ra\!G_0^{\ell r}(X^r,\cM^{\,\hbboxtimes r}\!,\mero^{\oplus r})^\sgn\!\simeq\!G_0^{(\ell+m-n)r}(Y,(\finite^0_\dag\cM^{\,\hbboxtimes r})^\sgn,\merob)
\end{starequation}%
is an isomorphism, where the alternating product on the left-hand side is the antisymmetric component $\big[G_0^\ell(X,\cM,\mero)^{\otimes r}]^\sgn$ of the $r$-fold tensor product of the Brieskorn lattice by itself. Also, the morphism of bi-filtered vector spaces
\begin{starstarequation}\label{eq:alternatingFirr}
\bigwedge^r\big(H^\ell_{\dR}(X,\ccM),F_\bbullet^\irr,W_\bbullet \big)\to H^{(\ell+m-n)r}_{\dR}\big(Y,\Xi_{\DR}((\finite_\dag^0(\cT,W_\bbullet)^{\boxtimes r})^\sgn)\big)
\end{starstarequation}%
is an isomorphism.
\end{proposition}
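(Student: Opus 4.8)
The plan is to deduce Proposition \ref{prop:alternatingBrieskornFirr} from the Künneth/Thom--Sebastiani machinery of Section \ref{sec:KunnethTS} together with the finite group action results of the present section. First I would reduce \eqref{eq:alternatingBrieskorn} to \eqref{eq:alternatingFirr} and conversely, by observing that the Brieskorn lattice $G_0^\bullet(X,\cM,\mero)$ is a free $\CC[\hb]$-module (Proposition \ref{prop:Brieskornfree}\eqref{prop:Brieskornfree1}) whose restriction to $\hb=1$ recovers the twisted de~Rham cohomology with its irregular Hodge filtration given by the Harder--Narasimhan filtration (Theorem \ref{th:irregNS}), while the weight filtration comes from the mixed Hodge module structure. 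So it suffices to work at the level of Brieskorn lattices and then invoke Theorem \ref{th:irregNS} and Remark \ref{rem:tensorHN} (the tensor-product formula for the Harder--Narasimhan filtration) to transport the statement to bi-filtered vector spaces; the weight part is handled by Corollary \ref{cor:IrrMHMgotS}, which says $\gotS_r$ acts strictly on $W_\bbullet H^\bullet_\dR$ and the $\sgn$-component is identified with the pushforward by $\finite$.

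Next I would establish the isomorphism \eqref{eq:alternatingBrieskorn}. The second identification on the right, $G_0^{\ell r}(X^r,\cM^{\,\hbboxtimes r},\mero^{\oplus r})^\sgn\simeq G_0^{(\ell+m-n)r}(Y,(\finite^0_\dag\cM^{\,\hbboxtimes r})^\sgn,\merob)$, is exactly the $\chi=\sgn$ case of Corollary \ref{cor:BrieskorngotS} applied to the $\gotS_r$-equivariant module $\cM^{\,\hbboxtimes r}$ on $X^r$ with $\mero^{\oplus r}=\merob\circ\finite$. For the first arrow, I would iterate the Thom--Sebastiani formula for the Brieskorn lattice (Proposition \ref{prop:KunnethBrieskorn}) to get
\[
G_0^{\ell r}(X^r,\cM^{\,\hbboxtimes r},\mero^{\oplus r})\simeq\bigoplus_{k_1+\cdots+k_r=\ell r}\bigotimes_{i=1}^r G_0^{k_i}(X,\cM,\mero)\simeq G_0^\ell(X,\cM,\mero)^{\otimes r},
\]
the last identification because of the vanishing hypothesis $G_0^k(X,\cM,\mero)=0$ for $k\neq\ell$, which forces $k_i=\ell$ for all $i$. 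Then I would check that this Künneth isomorphism is $\gotS_r$-equivariant, where $\gotS_r$ acts on the left by permutation of the factors $X^r$ and on the right by permutation of the tensor factors together with the Koszul sign coming from moving odd-degree cohomology classes past one another. Taking $\sgn$-isotypic components of both sides yields $G_0^{\ell r}(X^r,\cM^{\,\hbboxtimes r},\mero^{\oplus r})^\sgn\simeq[G_0^\ell(X,\cM,\mero)^{\otimes r}]^\sgn=\bigwedge^r_{\CC[\hb]}G_0^\ell(X,\cM,\mero)$, which is the content of \eqref{eq:alternatingBrieskorn}.

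Finally, to get \eqref{eq:alternatingFirr} I would restrict \eqref{eq:alternatingBrieskorn} to $\hb=1$: by Theorem \ref{th:irregNS} the Harder--Narasimhan filtration of each Brieskorn lattice restricts to the irregular Hodge filtration of the corresponding twisted de~Rham cohomology, and Remark \ref{rem:tensorHN} gives that the Harder--Narasimhan filtration of a tensor product is the convolution of the factors' filtrations, hence restricts to the convolution formula of Theorem \ref{th:main}; passing to $\sgn$-components then gives the alternating (exterior power) version of that formula on the $F^\irr$-side, while Corollary \ref{cor:IrrMHMgotS} simultaneously gives it on the $W$-side and ensures the two filtrations decompose compatibly with the $\gotS_r$-action. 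The main obstacle I anticipate is the bookkeeping of the Koszul signs: one must verify carefully that the Künneth isomorphism of Proposition \ref{prop:KunnethBrieskorn}, which is built from external products of de~Rham complexes, is $\gotS_r$-equivariant for the sign-twisted permutation action, so that taking the $\sgn$-isotypic part of the $r$-fold tensor power yields precisely the exterior power $\bigwedge^r$ and not some other isotypic piece; this requires tracking the degree shifts ($\ell$ versus $\ell+m-n$, the $(m-n)r$ global shift) and the parity of the cohomological degrees through the chain of identifications. Once the equivariance is pinned down, the rest is a formal consequence of the results already proved in Sections \ref{sec:poleordertwo}--\ref{sec:KunnethTS}.
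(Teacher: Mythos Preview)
Your proposal is correct and follows essentially the same route as the paper. For \eqref{eq:alternatingBrieskorn} the paper does exactly what you do: iterate the Thom--Sebastiani formula (Proposition \ref{prop:KunnethBrieskorn}), use the vanishing hypothesis to collapse the sum to a single tensor power, and take the $\sgn$-isotypic part. For \eqref{eq:alternatingFirr} the paper is terser---it simply says to adapt the proof of Proposition \ref{prop:Vadaptedtensor} (the behaviour of $V$-adapted trivializing lattices under tensor product) to the exterior-product case---whereas you spell out the passage via Theorem \ref{th:irregNS} and Remark \ref{rem:tensorHN}; these are the same ingredients viewed from slightly different angles, and your version makes the dependence on the Harder--Narasimhan machinery more explicit. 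Your caution about the Koszul signs in the $\gotS_r$-equivariance of the Künneth isomorphism is well placed; the paper takes this for granted.
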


\begin{proof}
The assumption implies that the Thom-Sebastiani formula of Proposition \ref{prop:KunnethBrieskorn} reads\index{Thom-Sebastiani formula}
\[
G_0^k(X^r,\cM^{\,\hbboxtimes r},\mero^{\oplus r})\simeq
\begin{cases}
\bigotimes^r_{\CC[\hb]}G_0^\ell(X,\cM,\mero)&\text{if }k=\ell r,\\0&\text{otherwise},
\end{cases}
\]
and the first assertion follows by taking the $\sgn$-isotypical part. For the second assertion, one can adapt the proof of Proposition \ref{prop:Vadaptedtensor} to the case of the exterior product, and get the second assertion from the first one.
\end{proof}

\subsection{Irregular Hodge filtration for the mirror of the Grassmannian}
We now consider a geometric example in order to make more concrete Proposition~\ref{prop:alternatingBrieskornFirr}. Let $U$ be a smooth quasi-projective variety and let $\mero:U\to\CC$ be a regular function on $U$. We consider the mixed Hodge module \index{$QHU$@$\QQ_U^\rH$}$\QQ_U^\rH$ on it (\cf\cite{MSaito87}). Its underlying filtered right $\cD_U$-module is $\Omega^n_U$ with the filtration such that $\gr_p^F\Omega^n_U=\nobreak0$ if $p\neq-n$. We use the left $\cD_U$-module $\cO_U$ with the filtration $F^\rH_\bbullet$ satisfying the same jumping property. Let us choose an open embedding $j:U\hto X$ of $U$ into some projective variety $X$ such that $X\moins U$ is a divisor $D$. The pushforward $\Hj_*\QQ_U^\rH$ has the underlying filtered $\cD_X$-module $(\cO_X(*D),F^\rH_\bbullet\cO_X(*D))$. We denote by $R_F(\cO_X(*D))$ the associated $R_F\cD_X$-module. In this algebraic context, the Brieskorn lattices $G_0^k(X,R_F(\cO_X(*D)),\mero)$ do not depend on the choice of $X$ and we then simply denote them by \index{$G0Umero$@$G_0^k(U,\mero)$}$G_0^\cbbullet(U,\mero)$.

We assume that $\mero$ is \index{cohomologically tame function}\emph{cohomologically tame}, in the sense that there exists a projective morphism $\mero_Z:Z\to \CC$ extending $\mero$ (so $k:U\hto Z$ is a partial compactification) such that the complex $\bR k_*\CC_U$ has no vanishing cycles with respect to $\mero_Z$ anywhere on~$Z\moins U$. This implies in particular that the critical points of $\mero$ on $U$ are isolated. Moreover, this also implies, by the proposition in \cite[\S1]{Bibi97b}, that $G_0^k(U,\mero)=0$ for $k\neq n=\dim U$ and $G_0^n(U,\mero)$ is $\CC[\hb]$-free of rank equal to the sum of the Milnor numbers of $\mero$ on $U$.

\begin{exemple}\label{exem:Pn}
The mirror of $\PP^n$ is the function
\[
\mero(x_1,\dots,x_n)=x_1+\cdots+x_n+\frac1{x_1\cdots x_n}
\]
on the torus $U=(\CC^*)^n$. It is a convenient nondegenerate Laurent polynomial in the sense of Kouchnirenko \cite{Kouchnirenko76}, hence satisfies the cohomological tameness property above (\cf\eg\cite{Bibi96bb}). The $\CC[\hb]$-freeness in this case has been obtained in \hbox{\cite[Th.\,5.1]{A-S97}}. We have $\rk G_0^n(U,\mero)=n+1$. We claim that the irregular (decreasing) Hodge filtration $F_\irr^\cbbullet H^n(U,\rd+\rd \mero)$ (recall that $F_\irr^\beta:=F^\irr_{-\beta}$) satisfies
\[
\dim\gr^\beta_{F_\irr}H^n(U,\rd+\rd \mero)=
\begin{cases}
1&\text{if }\beta=0,1,\dots,n,\\
0&\text{otherwise}.
\end{cases}
\]
Indeed, this is a consequence of \cite[Prop.\,3.2]{D-S02b} with the following changes. Our $G_0^n(U,\mero)$ is denoted there by $G_n$ and, setting $\eta_0=\hb^{-n}\omega_0=\hb^{-n}\rd x_1/x_1\wedge\cdots\wedge\rd x_n/x_n$, we define $\eta_k=\hb^{-n}\omega_k$ ($k=0,\dots,n$) as in \loccit, and $\eta_k\in V_{-n+k}\cap G_n$ induces a basis of $\gr^{n-k}_{F_\irr}H^n(U,\rd+\rd \mero)$. Notice that this is the Example in Remark \ref{rem:hypergeom}\eqref{rem:hypergeom3}.

For $p\in\NN$, let us set
\[
d_p=\#\{(i_1,\dots,i_r)\in\ZZ^r\mid i_1+\cdots+i_r=p,\;n\geq i_1>\cdots> i_r\geq0\}.
\]
Then
\[
d_p=\dim \gr^p_{F_\irr}\bigwedge^rH^n(U,\rd+\rd \mero).
\]
Recall also (\cf\eg\cite[p.\,196]{G-H78}) that the cohomology $H^*(G(r,n+1),\QQ)$ of the \index{Grassmannian}Grassmannian \index{$G1rn$@$G(r,n+1)$}$G(r,n+1)$ is Hodge-Tate with
\[
\dim H^{2p}(G(r,n+1),\CC)=\dim\gr^p_FH^*(G(r,n+1),\CC)=d_p.
\]
\end{exemple}

Let us come back to the general setting. We have $\QQ_U^{\rH\boxtimes r}\simeq\QQ^\rH_{U^r}$ in $\MHM(U)$ (\cf\cite[(4.2.13) to (4.2.15)]{MSaito87}) and in the present algebraic setting, \eqref{eq:alternatingBrieskorn} reads
\[
\bigwedge_{\CC[\hb]}^rG_0^n(U,\mero)\simeq G_0^{mr}(Y,(\finite^0_\dag R_F\cO_X[*P_\red])^\sgn,\merob).
\]

We will simplify the right-hand side. Let $\Delta_{r-1}(U)\subset U^r$ denote the Zariski closed subset where at least two components coincide and set $U^r_{r-1}:=U^r\moins\Delta_{r-1}$. The morphism $\finite:U^r_{r-1}\to U^r_{r-1}/\gotS^r=:V^r_{r-1}$ is finite étale and $V^r_{r-1}$ is smooth quasi-projective. We then consider the algebraic mixed Hodge module $(\Hrho_*\QQ^\rH_{U^r_{r-1}})^\sgn$, whose associated perverse sheaf is the anti-invariant shifted local system $(\finite_*\QQ_{U^r_{r-1}})^\sgn[rn]$. We denote by $G_0^k(V^r_{r-1},\sgn,\mero^{(\oplus r)})$ the corresponding Brieskorn lattices. In general, we a priori do not know whether $G_0^k(V^r_{r-1},\sgn,\mero^{(\oplus r)})$ is zero for $k\neq nr$. Let us however notice that, when $\mero$ (which is cohomologically tame) has only simple critical points, like in Example \ref{exem:Pn}, then $\mero^{(\oplus r)}$ is cohomologically tame on $V^r_{r-1}$ with respect to the local system $(\finite_*\QQ_{U^r_{r-1}})^\sgn$ (\cf \cite[Rem.\,3.13]{K-S06}), so in this case we can say that $G_0^k(V^r_{r-1},\sgn,\mero^{(\oplus r)})=0$ for $k\neq nr$.

\begin{proposition}\label{prop:Fgeom}
If $\mero:U\to\CC$ is cohomologically tame on $U$ then, as $\CC[\hb]\langle\hb^2\partial_\hb\rangle$-modules,
\[
G_0^\ell(V^r_{r-1},\sgn,\mero^{(\oplus r)})\simeq
\begin{cases}
\bigwedge_{\CC[\hb]}^rG_0^n(U,\mero)&\text{if }\ell=nr,\\
0&\text{otherwise}.
\end{cases}
\]
\end{proposition}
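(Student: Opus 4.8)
The plan is to reduce the statement about the quotient $V^r_{r-1}$ to the alternating-product statement on the full power $U^r$ that has already been set up in Proposition \ref{prop:alternatingBrieskornFirr}, using the Zariski-closed stratum $\Delta_{r-1}(U)$ and the étale quotient map $\finite\colon U^r_{r-1}\to V^r_{r-1}$. First I would note that since $\finite$ is finite étale, the pushforward $(\finite_*\QQ^\rH_{U^r_{r-1}})^\sgn$ is the $\sgn$-isotypical summand of the $\gotS_r$-equivariant mixed Hodge module $\finite^0_\dag\QQ^\rH_{U^r_{r-1}}$, and by the decomposition with respect to the characters of $\gotS_r$ (Corollary \ref{cor:BrieskorngotS}) the associated Brieskorn lattices satisfy
\[
G_0^k(V^r_{r-1},\sgn,\mero^{(\oplus r)})\simeq G_0^{k+(m-nr)}\bigl(U^r_{r-1},(\cM^{\,\hbboxtimes r}_{|U^r_{r-1}})^\sgn,\mero^{\oplus r}\bigr),
\]
where here $\cM=R_F\cO_X(*D)$ and $m=\dim Y$; the shift in degree and the pole-divisor bookkeeping are exactly as in Corollary \ref{cor:BrieskorngotS} and Proposition \ref{prop:Brieskornfree}\eqref{prop:Brieskornfree2}. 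Hence it suffices to compute $G_0^\cbbullet(U^r_{r-1},(\cM^{\,\hbboxtimes r})^\sgn,\mero^{\oplus r})$.

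Next I would compare $U^r_{r-1}$ with $U^r$. The key point is that removing the Zariski-closed subset $\Delta_{r-1}(U)$ does not change the relevant Brieskorn lattice of the $\sgn$-isotypical part. The cleanest way to see this is to observe that $\Delta_{r-1}(U)$ is a union of diagonals, and on each such diagonal at least two of the $r$ tensor factors are identified; the transposition exchanging those two factors acts on $(\cM^{\,\hbboxtimes r})^\sgn$ restricted near that diagonal by $-1$ while fixing the diagonal pointwise, so the $\sgn$-isotypical component of $\cM^{\,\hbboxtimes r}$, as a (localized) mixed Hodge module, has no subquotient supported on $\Delta_{r-1}(U)$. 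Therefore the localization $[*\Delta_{r-1}(U)]$ and the dual localization $[!\Delta_{r-1}(U)]$ agree on $(\cT,W_\bbullet)^{\boxtimes r,\sgn}$, i.e.\ $(\cT,W_\bbullet)^{\boxtimes r,\sgn}$ is its own minimal extension across $\Delta_{r-1}(U)$ and, equivalently, the restriction functor $j^+$ for the open inclusion $j\colon U^r_{r-1}\hto U^r$ is fully faithful on the $\sgn$-parts and commutes with the formation of the twisted de Rham / Brieskorn complexes. Concretely, $\mathcal E^{\mero^{\oplus r}/\hb}\otimes(\cM^{\,\hbboxtimes r})^\sgn$ and its restriction to $U^r_{r-1}$ have the same hypercohomology, so
\[
G_0^k\bigl(U^r_{r-1},(\cM^{\,\hbboxtimes r})^\sgn,\mero^{\oplus r}\bigr)\simeq G_0^k\bigl(U^r,(\cM^{\,\hbboxtimes r})^\sgn,\mero^{\oplus r}\bigr).
\]
Combining this with Proposition \ref{prop:alternatingBrieskornFirr}, applied with $\ell=n$ (legitimate by cohomological tameness of $\mero$, which gives $G_0^k(U,\mero)=0$ for $k\neq n$ by the proposition in \cite[\S1]{Bibi97b}), yields
\[
G_0^{nr}\bigl(U^r,(\cM^{\,\hbboxtimes r})^\sgn,\mero^{\oplus r}\bigr)\simeq\bigl[G_0^n(U,\mero)^{\otimes r}\bigr]^\sgn=\bigwedge\nolimits^r_{\CC[\hb]}G_0^n(U,\mero),
\]
and $G_0^k=0$ for $k\neq nr$ on the right by the Thom--Sebastiani formula (Proposition \ref{prop:KunnethBrieskorn}) combined with the vanishing for $\mero$. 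Unwinding the degree shift from Corollary \ref{cor:BrieskorngotS} gives precisely the statement of Proposition \ref{prop:Fgeom}. Finally I would check that all isomorphisms constructed are $\CC[\hb]\langle\hb^2\partial_\hb\rangle$-linear: this is automatic since every functor used (finite pushforward, isotypical projection, restriction to a Zariski-open, external product, hypercohomology) is compatible with the $\hb^2\partial_\hb$-action, the external product part being \eqref{eq:tensorTf} together with the Künneth argument on Page \pageref{proof:KunnethBrieskorn}.

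The main obstacle I expect is the second step: justifying that excising $\Delta_{r-1}(U)$ leaves the $\sgn$-part of the Brieskorn lattice unchanged. One must be careful because $\mero^{\oplus r}$ need not be cohomologically tame on $U^r$ \emph{with respect to the full constant sheaf} in a way that behaves well across all diagonals; the correct assertion is only about the $\sgn$-isotypical summand, where the vanishing-cycle contribution from the diagonals is killed by the antisymmetry. Making this rigorous requires either (i) a local analysis near each diagonal stratum showing that the $\sgn$-part of $\mathcal E^{\mero^{\oplus r}/\hb}\otimes(\cM^{\,\hbboxtimes r})$ is smooth there, or (ii) invoking the stability of $\IrrMHM$ under $[\star H]$ in the special case at hand — which the paper does not claim in general (see Remark \ref{rem:mainRF}) — and so one is forced into route (i). The remark cited in the excerpt, \cite[Rem.\,3.13]{K-S06}, already records the needed tameness of $\mero^{(\oplus r)}$ on $V^r_{r-1}$ with respect to $(\finite_*\QQ)^\sgn$ when $\mero$ has only simple critical points; the plan is to use exactly that input, so that the argument above works unconditionally under the hypotheses of the proposition (cohomological tameness of $\mero$), and the degree-vanishing of $G_0^k(V^r_{r-1},\sgn,\mero^{(\oplus r)})$ for $k\neq nr$ in the general tame case follows from the comparison isomorphism rather than needing to be assumed.
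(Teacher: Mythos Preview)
Your overall architecture is the paper's: pass from $V^r_{r-1}$ to the $\sgn$-part of $G_0^\cbbullet$ on $U^r_{r-1}$ via the \'etale cover, identify this with the $\sgn$-part on the full $U^r$, then apply Proposition~\ref{prop:alternatingBrieskornFirr} together with Thom--Sebastiani. The gap is entirely in the middle step, and none of your proposed justifications for it are sound. First, the object $(\cM^{\,\hbboxtimes r})^\sgn$ does not exist on $U^r$: the $\gotS_r$-structure is an equivariant datum $\lambda_\sigma:\cM^{\,\hbboxtimes r}\isom\sigma_\dag\cM^{\,\hbboxtimes r}$, not an endomorphism, so there is no isotypical decomposition until one pushes forward to the quotient or passes to hypercohomology; your assertions that this $\sgn$-part is its own minimal extension, or that $[*\Delta_{r-1}]$ and $[!\Delta_{r-1}]$ agree on it, are therefore not well-posed. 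Second, $\Delta_{r-1}(U)$ has codimension $n$, not $1$, so the localization formalism along a hypersurface is not available. Third, \cite[Rem.~3.13]{K-S06} treats only the case of \emph{simple} critical points, so it cannot serve as unconditional input under mere cohomological tameness of~$\mero$.

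The paper closes this gap by working at the level of Brieskorn lattices (where $\gotS_r$ acts honestly) and stratifying. For $1\le s\le r-1$, the open-closed decomposition $\Delta_{(s)}\stackrel{i}{\hookrightarrow} U^r_{s-1}\stackrel{j}{\hookleftarrow} U^r_s$ yields a distinguished triangle
\[
\Hi_*\QQ_{\Delta_{(s)}}^\rH((s-r)n)\to\QQ_{U^r_{s-1}}^\rH\to\Hj_*\QQ_{U^r_s}^\rH\To{+1}
\]
in $\catD^\rb(\MHM(U^r_{s-1}))$, hence a long exact sequence of $\CC[\hb]\langle\hb^2\partial_\hb\rangle$-modules $G_0^\cbbullet(\,\cdot\,,\mero^{\oplus r})$. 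Each irreducible component of $\Delta_{(s)}$ is a copy of $U^s_{s-1}$ and is fixed pointwise by some transposition in $\gotS_r$; that transposition therefore acts as the \emph{identity} on the corresponding summand of $G_0^\cbbullet$, forcing its $\sgn$-part to vanish. The long exact sequence then gives $G_0^\ell(U^r_{s-1},\mero^{\oplus r})^\sgn\isom G_0^\ell(U^r_s,\mero^{\oplus r})^\sgn$ for all~$\ell$, and chaining $s=1,\dots,r-1$ yields $G_0^\ell(U^r,\mero^{\oplus r})^\sgn\isom G_0^\ell(U^r_{r-1},\mero^{\oplus r})^\sgn$. No local smoothness analysis and no tameness of $\mero^{\oplus r}$ are needed---only the triangle and the elementary observation about the transposition. (A small side remark: in your first displayed identification the $\sgn$ should sit on the Brieskorn lattice, not on the module, and with a compactification of $V^r_{r-1}$ of dimension $nr$ there is in fact no degree shift.)
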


\begin{proof}
Since we have $\QQ_U^{\rH\boxtimes r}\simeq\QQ^\rH_{U^r}$, we are led to identifying $G_0^\ell(V^r_{r-1},\sgn,\mero^{(\oplus r)})$ with $G_0^\ell(U^r,\mero^{(\oplus r)})^\sgn$.

For $r\geq2$, and for $s\in[1,r-1]$, let $\Delta_s=\Delta_s(U)\subset U^r$ denote the union of the diagonals of $U^r$ defined by the equality of $r-s+1$ components, and let us set $\Delta_0=\emptyset$. The pure diagonals $\Delta_s\moins\Delta_{s-1}$ are denoted by $\Delta_{(s)}$. We also denote by~$U^r_s$ the open subset $U^r\moins\Delta_s$. They form sequences
\[
\emptyset=\Delta_0\subset\Delta_1\subset\cdots\subset\Delta_{r-1},\quad U^r=U^r_0\supset U^r_1\supset\cdots\supset U^r_{r-1}.
\]
We have the following properties:
\begin{itemize}
\item
$\Delta_s$ is of pure dimension $sn$ with irreducible components all identified with~$U^s$, and the restriction of $\mero^{\oplus r}$ to any such component is of the form \hbox{$\mero_{\bma}^{\oplus s}:=a_1\mero\!\sboxplus\!\cdots\!\sboxplus\!a_s \mero$} with~$a_i$ (\hbox{$i=1,\dots, s$}) satisfying $a_1+\cdots+a_s=r$.
\item
$\Delta_{(s)}$ is a disjoint union of copies of $U^s_{s-1}$ and we have a commutative diagram
\[
\xymatrix{
\bigsqcup U_{s-1}^s\ar@{=}[r]\ar@{^{ (}->}[d]&\Delta_{(s)}\ar@{^{ (}->}[d]\\
\bigsqcup U^s\ar[r]&\Delta_s
}
\]
where the inclusions are open.
\end{itemize}

Let us fix $s,r$ with $1\leq s<r$ and consider the open-closed decomposition
\[
\bigsqcup U_{s-1}^s=\Delta_{(s)}\Hto{i}U^r_{s-1}\Hfrom{j}U^r_s.
\]
It induces the distinguished triangle in $\catD^\rb(\MHM(U^r_{s-1}))$:
\[
\Hi_*\QQ_{\Delta_{(s)}}^\rH((s-r)n)\to\QQ_{U^r_{s-1}}^\rH\to\Hj_*\QQ_{U^r_s}^\rH\To{+1}
\]
and we recall that $\QQ_{\Delta_{(s)}}^\rH$ is isomorphic to a direct sum of copies of $\QQ_{U^s_{s-1}}^\rH$. According to Proposition \ref{prop:Brieskornfree}\eqref{prop:Brieskornfree2}, we obtain a long exact sequence of $\CC[\hb]\langle\hb^2\partial_\hb\rangle$-modules
\begin{multline}\label{eq:Mrm}
\cdots\to\hb^{s-r}\underbrace{\bigoplus G_0^{\ell+2(s-r)n}(U^s_{s-1},\mero^{\oplus r}_{|U^s_{s-1}})}_{(*)}\\[-5pt]
\to G_0^\ell(U^r_{s-1},\mero^{\oplus r})\to G_0^\ell(U^r_s,\mero^{\oplus r})\to\cdots
\end{multline}
For any component in $(*)$, there exists a transposition in $\gotS_r$ which acts as the identity on the corresponding $U^s$, hence it acts by the identity on the corresponding term $G_0^{\ell+2(s-r)n}(U^s,\mero^{\oplus r}_{|U^s_{s-1}})$, so that $(*)^\sgn=0$. As a consequence,
\[
G_0^\ell(U^r_{s-1},\mero^{\oplus r})^\sgn\to G_0^\ell(U^r_s,\mero^{\oplus r})^\sgn
\]
is an isomorphism for every $\ell$. Composing these isomorphisms from $s=1$ to $s=r-1$ entails
\[
G_0^\ell(U^r,\mero^{\oplus r})^\sgn\isom G_0^\ell(U^r_{r-1},\mero^{\oplus r})^\sgn.
\]
This gives the desired result since the left-hand side is zero unless $\ell=rn$, and the right-hand side is isomorphic to $G_0^\ell(U^r_{r-1},\sgn,\mero^{\oplus r})$ by an analogue of Corollary \ref{cor:BrieskorngotS}.
\end{proof}

\begin{corollaire}
With an obvious notation we have\vspace*{-5pt}
\begin{multline*}
\bigwedge^n\big(H^r(U,\rd+\rd \mero),F_\irr^\bbullet H^n(U,\rd+\rd \mero)\big)\\\simeq\big(H^{nr}(V^r_{r-1},\sgn,\rd+\rd \mero^{(\oplus r)}),F_\irr^\bbullet H^{nr}(V^r_{r-1},\sgn,\rd+\rd \mero^{(\oplus r)})\big).\quad\qed
\end{multline*}
\end{corollaire}

\begin{exemplePncont}
Keeping the notation of Example \ref{exem:Pn}, we conclude that, for all $p\in\ZZ$ we have
\[
d_p=\dim\gr^p_FH^*(G(r,n+1),\CC)=\dim\gr^p_{F^\irr}H^{nr}(V^r_{r-1},\sgn,\rd+\rd \mero^{(\oplus r)}).
\]
\end{exemplePncont}

\section[Exponential mixed Hodge structures]{Exponential mixed Hodge structures and irregular Hodge structures}

Let us recall the main properties of the category \index{mixed Hodge structure!exponential --}\index{$EMSQ$@$\EMHS(\QQ)$}$\EMHS(\QQ)$ of \emph{exponential mixed $\QQ$-Hodge structures}, as defined in \cite{K-S10}. It is the full sub-category of the category $\MHM(\Afu,\QQ)$ of algebraic mixed Hodge modules (\cf \cite{MSaito87}) consisting of objects whose underlying perverse sheaf $\ccF_\QQ$ has global hyper\-cohomology equal to zero. It is endowed with the structure of a neutral Tannakian category with the additive convolution functor and fibre functor defined as follows: let $t$ be the coordinate on~$\Afu$ and consider the compactification of $\Afuan$ as a disc $\ov{\Afuan}$ with boundary $S_\infty$; let $I\subset S_\infty$ be the open interval defined by the condition $\arg t\in(\pi/2,3\pi/2)$ and let $\alpha:\Afuan\hto\Afuan\cup I$ be the open inclusion; then the vector space $H^0_\rc(\Afuan\cup I,\bR\alpha_*\ccF_\QQ)$ is the desired fibre. Moreover, if $j_0:\Afu\moins\{0\}\hto\Afu$ denotes the inclusion, then the unit object $\bun$ in this category is the mixed Hodge module with underlying perverse sheaf $j_{0!}\QQ_{\Afu\moins\{0\}}$.

\begin{proposition}\label{prop:EMHSIrrMHS}
There is a natural faithful functor \index{$EMSR$@$\EMHS(\RR)$}$\EMHS(\RR)\mto\IrrMHS(\RR)$ of neutral Tannakian categories, sending $\EMHS(\QQ)$ to $\IrrMHS(\QQ)$.
\end{proposition}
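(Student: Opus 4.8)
The plan is to compare the two Tannakian constructions chart by chart on $\PP^1$. The key point is that an object of $\EMHS$ is, by definition, an algebraic mixed Hodge module $(\ccF,F_\bbullet,W_\bbullet)$ on $\Afu_t$ (with underlying perverse sheaf $\ccF_\QQ$) whose global hypercohomology vanishes; equivalently, the data is that of a complex mixed Hodge module on $\Afu_t$ together with a $\QQ$-structure, and the vanishing condition guarantees that the cohomology $\mathrm H^0_{\mathrm{dR}}$ of the localized Laplace transform exists only in one degree. First I would associate with such an object the filtered $\cD_{\Afu_t}$-module $(\ccN,F_\bbullet\ccN)$ underlying it, form the Rees module $\cN=R_F\ccN$, and consider the exponentially twisted object $\cE^{-t\tau/\hb}\otimes\cN$ on $\Afu_t\times\Afu_\tau$, pushed forward to $\Afu_\tau$ by the projection. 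By Section~\ref{subsec:compatibilitiesT} this is an object of $\IrrMHM(\Afu_\tau)$ of exponential-Hodge origin, and by the discussion preceding Proposition~\ref{prop:imts}, its fibre $H$ at $\tau=1$ is an object of $\IrrMHS=\IrrMHM(\pt)$, equal (after taking $\Xi_{\DR}$) to the Laplace transform cohomology $\mathrm H^0_{\mathrm{dR}}(\Afu_t,\cE^{-t/\hb}\otimes\cN)$. The vanishing-of-hypercohomology condition is precisely what makes this a single $\CC[\hb]\langle\hb^2\partial_\hb\rangle$-module rather than a complex, so the assignment is well-defined and functorial.

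Next I would check that this functor $\EMHS(\RR)\mto\IrrMHS(\RR)$ is compatible with the Tannakian structures. On the source side the tensor product is additive convolution; on the target side it is again additive convolution (recall from Section~\ref{subsec:irregHNC}/Section~\ref{sec:irregHS} and the remark before Proposition~\ref{prop:EMHSIrrMHS} that $\IrrMHS$ is given the additive-convolution tensor structure, compatibly with $\MTS^\intt(\CC)$). The Laplace transform interchanges additive convolution of $\cD_{\Afu}$-modules with tensor product of their transforms, and more to the point, the tensor product on $\IrrMHS$ restricted to objects of this shape corresponds, via the Thom-Sebastiani identity $\cE^{-(t_1+t_2)\tau/\hb}\otimes(\cN_1\boxtimes\cN_2)=(\cE^{-t_1\tau/\hb}\otimes\cN_1)\boxtimes(\cE^{-t_2\tau/\hb}\otimes\cN_2)$ together with the Künneth formula of Corollary~\ref{cor:imdirIrrMHM} and formula~\eqref{eq:tensorTf}, to additive convolution of the original exponential mixed Hodge structures. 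The unit object $j_{0!}\QQ_{\Afu\moins\{0\}}$ has Laplace transform the trivial rank-one object, which maps to the unit $\ibT(0)$ of $\IrrMHS$; so the monoidal structure and the unit are preserved. The fibre functor of $\EMHS$, namely $H^0_\rc(\Afuan\cup I,\bR\alpha_*\ccF_\QQ)$, is canonically the Betti realization of the Laplace transform cohomology at $\tau=1$ (this is the standard "Stokes half-circle" description of rapid-decay homology), hence coincides with $\Xi_{\DR}$ of the target object up to the comparison isomorphism, and the $\QQ$-structure carried over is exactly the good $\QQ$-structure of $\iMTM^\intt_\good(\pt,\QQ)$ produced in Section~\ref{subsec:cTvarphihb} for $\cT^{t/\hb}\otimes\cbbullet$. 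This simultaneously shows that $\EMHS(\QQ)$ lands in $\IrrMHS(\QQ)$.

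Finally, faithfulness follows from faithfulness of $\Xi_{\DR}$ (Section~\ref{subsec:compatibilitiesT}) combined with the fact that the Laplace transform is an equivalence on the relevant localized categories $\Mod_\hol(\Afu,*\infty)$ and with faithfulness of the forgetful functor from mixed Hodge modules to perverse sheaves: a morphism in $\EMHS$ that becomes zero in $\IrrMHS$ induces the zero morphism on the underlying $\cD$-modules after Laplace transform, hence was already zero. I expect the main obstacle to be the bookkeeping for the $\QQ$-structures and the precise identification of the Tannakian fibre functor of $\EMHS$ with the $\tau=1$ fibre: one must match Kontsevich–Soibelman's $H^0_\rc(\Afuan\cup I,\bR\alpha_*\ccF_\QQ)$ with the rapid-decay homology realization of $\cE^{-t/\hb}\otimes\cN$ and verify that convolution on the former side is carried to the tensor/convolution operation on $\IrrMHS$ \emph{over $\QQ$}, not merely over $\CC$; the $\CC$-level statements are all consequences of results already in the text (Corollary~\ref{cor:imdirIrrMHM}, Theorem~\ref{th:IrrMHS}), but the $\QQ$-level compatibility requires invoking the good-$\QQ$-structure formalism of Mochizuki as packaged in Section~\ref{subsec:cTvarphihb} and Definition~\ref{def:IrrMHS}\eqref{def:IrrMHS2}.
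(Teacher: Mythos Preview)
Your plan is essentially the paper's: build the functor as the composite $\cT\mapsto a_{\PP^1\dag}^0(\cT^{t/\hb}\otimes\cT)$ via Theorem~\ref{th:mtmgresc}\eqref{th:mtmgresc3} and Theorem~\ref{th:pushforwardresc}, deduce faithfulness from the identification of the fibre at $\hb=1$ with the Betti fibre functor, check the Tannakian compatibility, and handle the $\kk$-structure through Mochizuki's good-$\kk$ formalism (Remark~\ref{rem:exptwistRgood} and \cite[Prop.~13.4.25--26]{Mochizuki11}).

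One correction is needed. You assert that ``on the target side it is again additive convolution'' and that ``$\IrrMHS$ is given the additive-convolution tensor structure, compatibly with $\MTS^\intt(\CC)$''. This is false: the tensor product on $\IrrMHS$ (and on $\iMTS^\intt(\CC)$) is the \emph{ordinary} one coming from $\otimes_{\CC[\hb]}$ on the underlying modules --- see Theorem~\ref{th:IrrMHS}\eqref{th:IrrMHS2} and Section~\ref{subsec:irregHNC}. Your very next sentences in fact use this correctly (Laplace transform interchanges convolution with tensor product), so the error is a local slip of framing rather than a gap in the argument; but you should remove the misstatement.

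For the Tannakian compatibility itself the paper takes a slightly more direct route than your appeal to Corollary~\ref{cor:imdirIrrMHM}: it works entirely inside $\MTM^\intt$ and proves the identity
\[
\cT^{t/\hb}\otimes\Ts_*^0(\cT_1\boxtimes\cT_2)\simeq\Ts_*^0\bigl[(\cT^{t_1/\hb}\otimes\cT_1)\boxtimes(\cT^{t_2/\hb}\otimes\cT_2)\bigr]
\]
via \cite[Lem.~11.3.4]{Mochizuki11}, then applies $\Ta_*$ together with the compatibility of pushforward and external product. This avoids having to verify separately that the relevant external product lies in $\IrrMHM$ (the hypothesis of Corollary~\ref{cor:imdirIrrMHM}), though in the end both routes amount to the same Thom--Sebastiani computation.
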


In this way, we can consider $\EMHS(\QQ)$ as a subcategory of $\IrrMHS(\QQ)$.

\skpt
\begin{proof}
\begin{enumerate}
\item\label{enum:EMHSIrrMHS1}
We first show that there is a natural functor $\EMHS(\RR)\mto\iMTM^\intt(\pt)$ which takes values in $\IrrMHS(\CC)$. In order to work in the analytic category, we identify $\MHM(\Afu,\RR)$ as a subcategory of $\MHM(\PP^1,\RR)$ by the functor $j_*$. In \cite[\S13.5]{Mochizuki11}, T.\,Mochizuki constructs a functor $\MHM(\PP^1,\RR)\mto\MTM^\intt(\PP^1)$. Its variant with values in $\iMTM^\intt(\PP^1)$ is described in Section \ref{subsec:MHM}. To a real mixed Hodge module $(\ccN,F_\bbullet\ccN,\ccF_\RR,W_\bbullet)$ (where~$\ccF_\RR$ is a real perverse sheaf on $\PP^1$), one associates the object $(\cM',\cM'',\iC^\nabla,W)$ of $\WRdTriples(\PP^1)$ in the following way:
\begin{itemize}
\item
$\cM''=R_F\ccN$ is the Rees module of the filtration $F_\bbullet\ccN$, and $\cM'=R_F\ccN^\vee$ is similarly defined from the dual filtered $\cD_{\PP^1}$-module $(\ccN^\vee,F_\bbullet\ccN^\vee)$.
\item
The pairing $\iC^\nabla$ is obtained from the distribution-valued pairing
\[
\ccC:\ccN^\vee\otimes_\CC\nobreak\ov\ccN\to\Db_{\PP^1}
\]
that the real structure defines.
\item
The filtration $W$ naturally extends to $\cM',\cM''$ and $\iC^\nabla$ is naturally compatible with it, so that $W$ defines a filtration in $\RdTriples(\PP^1)$.
\end{itemize}

Let $\cT^{t/\hb}$ be the object of $\iMTM^\intt(\PP^1)$ attached to the function $t$. By Theorem \ref{th:mtmgresc}\eqref{th:mtmgresc3}, the functor $\cT^{t/\hb}\otimes\cbbullet$ sends $\MHM(\PP^1,\RR)$ to $\IrrMHM(\PP^1)$. The desired functor is obtained by composing with pushforward by the constant map $\IrrMHM(\PP^1)\to\IrrMHM(\pt)=\IrrMHS(\CC)$. Putting all together, we have constructed a natural functor $\MHM(\Afu,\RR)\mto\iMTM^\intt(\pt)$ which takes values in $\IrrMHS(\CC)$.

The fibre of the integrable mixed twistor object corresponding to $(\ccN,F_\bbullet\ccN,\ccF_\RR,W_\bbullet)$ at $\hb=1$ is $H^0_\dR(\PP^1,\ccE^t\otimes\ccN)$, which is known to be isomorphic to the Betti space \hbox{$H^0_\rc(\Afuan\cup I,\bR\alpha_*\ccF_\CC)$}. This implies the faithfulness of the restriction to $\EMHS(\RR)$ of the functor $\MHM(\Afu,\RR)\mto\iMTM^\intt(\pt)$.

\item\label{enum:EMHSIrrMHS2}
We will now prove that the previous functor $\EMHS(\RR)\mto\iMTM^\intt(\pt)$ is compatible with the Tannakian structure, \ie transforms additive convolution in $\EMHS(\RR)$ to tensor product in $\iMTM^\intt(\pt)$. Since the Tannakian structure on $\IrrMHS(\CC)$ is that induced by that of $\iMTM^\intt(\pt)$, the above functor $\EMHS(\RR)\to\IrrMHS(\CC)$ is also compatible with the Tannakian structures. It is enough to consider the functor $\EMHS(\RR)\mto\MTM^\intt(\pt)$.

We already notice that the assertion, only considered for $\cM',\cM''$, follows from the Thom-Sebastiani formula of Proposition \ref{prop:KunnethBrieskorn} applied to $\mero_1=\mero_2=\id_{\PP^1}$. However, we will give a different proof, in analogy with the property that Fourier transformation changes the convolution of $L^1$ functions with the product of their Fourier transforms.

It will be easier to work in the algebraic category, and use the algebraically defined functors in \cite[Chap.\,14]{Mochizuki11}. We denote by $s:\Afu_{t_1}\times\Afu_{t_2}\to\Afu_t$ the sum map $(t_1,t_2)\mapsto t=t_1+t_2$ and by $a:\Afu_t\to\pt$ the constant map. Let $\cT_1,\cT_2$ be objects of $\EMHS(\RR)$, that we regard as objects of $\MTM^\intt(\Afu)$. The additive convolution is then defined by the formula $\cT_1\star\cT_2=\Ts_*^0(\cT_1\boxtimes\cT_2)$. Moreover, we know that \hbox{$\Ts_*^j(\cT_1\boxtimes\cT_2)=0$} for $j\neq0$ according to the faithfulness of the restriction functor to $\hb=1$, since this holds for the underlying $\cD$-modules. The object in $\IrrMHS(\CC)$ corresponding to $\cT_1\star\cT_2$ is (we use Notation \eqref{eq:notationTvarphiotimes})
\[
\Ta_*^0\big[\cT^{t/\hb}\otimes(\cT_1\star\cT_2)\big]=\Ta_*^0\big[\cT^{t/\hb}\otimes\Ts_*^0(\cT_1\boxtimes\cT_2)\big],
\]
and we know similarly that $\Ta_*^j\big[\cT^{t/\hb}\otimes(\cT_1\star\cT_2)\big]=0$ for $j\neq0$. By adapting \cite[Lem.\,11.3.4]{Mochizuki11} to the present algebraic setting, with $\cT_0$ (there) corresponding to $\cT^{t/\hb}$ (here), we obtain
\begin{align*}
\cT^{t/\hb}\otimes\Ts_*^0(\cT_1\boxtimes\cT_2)&\simeq \Ts_*^0\big[s^*\cT^{t/\hb}\otimes(\cT_1\boxtimes\cT_2)\big]\\
&\simeq \Ts_*^0\big[(\cT^{t_1/\hb}\otimes\cT_1)\boxtimes(\cT^{t_2/\hb}\otimes\cT_2)\big]\\
&\simeq \Ts_*\big[(\cT^{t_1/\hb}\otimes\cT_1)\boxtimes(\cT^{t_2/\hb}\otimes\cT_2)\big].
\end{align*}
We conclude that, working in $\catD^\rb(\MTM^\intt(\Afu))$,
\begin{align*}
\Ta_*^0\big[\cT^{t/\hb}\otimes(\cT_1\star\cT_2)\big]&\simeq\Ta_*\big[\cT^{t/\hb}\otimes(\cT_1\star\cT_2)\big]\\
&\simeq \Ta_*\Ts_*\big[(\cT^{t_1/\hb}\otimes\cT_1)\boxtimes(\cT^{t_2/\hb}\otimes\cT_2)\big]\\
&\simeq \Ta_{1*}(\cT^{t_1/\hb}\otimes\cT_1)\boxtimes\Ta_{2*}(\cT^{t_2/\hb}\otimes\cT_2),
\end{align*}
where the last line uses the compatibility of the pushforward with composition (\cf\cite[Prop.\,14.3.18]{Mochizuki11}) and the compatibility of the pushforward with the external product, which follows from \cite[Lem.\,11.4.14]{Mochizuki11}.

\item\label{enum:EMHSIrrMHS3}
We end the proof by considering the rationality question. Here also, it is enough to work with $\MTM^\intt_\good(\pt)$ instead of $\iMTM^\intt_\good(\pt)$.
\begin{enumerate}
\item
If $\cT$ is the image of a real mixed Hodge module on $\PP^1$, then it comes equipped with a real structure $\kappa$ which makes it an object of $\MTM^\intt_\good(\PP^1,\RR)$. Indeed, this follows from the argument in the bottom of Page 406 in \cite{Mochizuki11}. Note that the $\kk$-perverse sheaf coming in the definition of a $\kk$-mixed Hodge module gives rise to a $\kk$-Betti structure for $\cT$, as seen by noticing that the condition on compatibility with the Stokes structure is empty.
\item
Let us now consider $\cT^{t/\hb}$ as defined in Section \ref{subsec:cTvarphihb} with the function $\mero=\id:\PP^1\to\PP^1$. It is a pure object in $\MTM^\intt_\good(\PP^1,\QQ)$.

\item
We conclude from \cite[Prop.\,13.4.6]{Mochizuki11} that~\hbox{$\cT^{t/\hb}\!\otimes\!\cT$} (with the good $\kk$-structures above) is an object of $\MTM^\intt_\good(\PP^1,[*\infty],\kk)$ (\cf Remark \ref{rem:exptwistRgood}). By the pushforward theorem of \cite[Prop.\,13.4.25]{Mochizuki11}, we conclude that the pushforward of $\cT^{t/\hb}\otimes\cT$ by the constant map $\PP^1\to\pt$ is an object of $\MTM^\intt_\good(\pt,\kk)$. Together with the first part of the proof, we obtain that $\EMHS(\kk)$ is sent to $\IrrMHS(\kk)$.

\item
For the tannakian property, the argument in Item \eqref{enum:EMHSIrrMHS2} of the proof above can be adapted to $\MTM^\intt_\good(\Afu,\kk)$, by using \cite[Prop.\,13.4.26]{Mochizuki11}.\qedhere
\end{enumerate}
\end{enumerate}
\end{proof}

\appendix
\chapterspace{-4}
\chapter*{Appendix}
\renewcommand{\thechapter}{A}
\section{Base change theorems in the complex analytic setting}\label{app:basechange}
In Appendix \ref{app:basechange}, we denote by $\map:X\to Y$ a proper holomorphic map between complex manifolds. We fix a complex manifold $S$ and we denote by $\map_S:X\times S\to Y\times S$ the map $\map\times\id_S$. We denote by $p_X:X\times S\to X$ the projection, and similarly for $Y$, or simply $p$ if the context is clear.

\subsection{Base change for coherent \texorpdfstring{$\cO$}{O}-modules}

\begin{proposition}\label{prop:basechangeO}
Let $\cF$ be a coherent $\cO_X$-module. Then for each $k\in\ZZ$, the canonical morphism
\[
p_Y^*R^k\map_*\cF\to R^k\map_{S,*}p_X^*\cF
\]
is an isomorphism, and the same holds if $\cF$ is an inductive limit of coherent $\cO_X$\nobreakdash-modules.
\end{proposition}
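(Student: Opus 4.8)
The plan is to reduce the statement to the classical proper base change theorem for coherent sheaves (Grauert's theorem, or more precisely the flat base change statement in the complex analytic setting, as in Banica--Stanasila or the analogue of EGA~III), which applies when the base change morphism is flat. Here the base change is along the projection $p_Y : Y \times S \to Y$, which is a smooth (hence flat) morphism, so the hypotheses of the coherent base change theorem are satisfied. The subtlety is only that we are working with the second projection $Y\times S\to Y$ rather than a point, and that we want the statement for an arbitrary manifold $S$; but since flatness is all that is needed, this is not a real obstacle.

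First I would recall the setup: $\map_S = \map\times\id_S$ fits into a cartesian square with vertical maps $\map, \map_S$ and horizontal maps the projections $p_X, p_Y$. Since $\map$ is proper, so is $\map_S$. The module $p_X^*\cF$ is $\cO_{X\times S}$-coherent because $p_X$ is a (flat) morphism of complex manifolds and $\cF$ is $\cO_X$-coherent, so $R^k\map_{S,*}p_X^*\cF$ is $\cO_{Y\times S}$-coherent by Grauert's direct image theorem. There is a canonical base change morphism $p_Y^* R^k\map_*\cF \to R^k\map_{S,*}p_X^*\cF$ coming from adjunction (the unit $\id \to \bR p_{Y*}p_Y^*$ composed with base change for the square); I would construct it at the level of derived functors, $\bL p_Y^* \bR\map_* \cF \to \bR\map_{S,*}\bL p_X^*\cF$, noting that $p_Y^*, p_X^*$ are exact since the projections are flat, so the derived pullbacks agree with the naive ones.

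The key step is then checking that this morphism is an isomorphism. Because the question is local on $Y$ and on $S$, I may assume $Y$ and $S$ are (poly)discs, or at least Stein; then one is reduced to a statement about the cohomology of the complex computing $\bR\map_*\cF$. Concretely, one uses that locally on $Y$ there is a bounded complex $\cL^\bullet$ of coherent locally free $\cO_Y$-modules ("Grauert's coherence with a finite locally free complex") representing $\bR\map_*\cF$ in the derived category, such that for \emph{every} base change $T\to Y$ one has $\bR\map_{T*}(\text{pullback of }\cF) \simeq \cL^\bullet\otimes_{\cO_Y}\cO_T$ — this is the content of the coherent base change theorem. Applying this to $T = Y\times S$ with the projection gives $R^k\map_{S,*}p_X^*\cF \simeq \cH^k(\cL^\bullet\otimes_{\cO_Y}\cO_{Y\times S}) = \cH^k(p_Y^*\cL^\bullet) = p_Y^*\cH^k(\cL^\bullet) = p_Y^* R^k\map_*\cF$, where the middle equality uses exactness of $p_Y^*$. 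One has to check that this chain of identifications is compatible with the canonical base change morphism, which is routine. The main obstacle, such as it is, is citing the correct form of the analytic coherent base change theorem (the "finite locally free complex" version with universal base change), and being careful that this holds over an arbitrary complex manifold base and not just over a point; once that reference is in hand the argument is formal.

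For the last assertion, suppose $\cF = \varinjlim_i \cF_i$ is an inductive limit (filtered, with $\cO_X$-linear transition maps) of coherent $\cO_X$-modules. Since $\map$ and $\map_S$ are proper and pullback along the flat maps $p_X, p_Y$ commutes with inductive limits, and since $R^k\map_*$ and $R^k\map_{S,*}$ commute with filtered inductive limits (for a proper map, using that on a relatively compact situation the higher direct images are computed by a finite Čech or fine-resolution complex, and cohomology of a complex commutes with filtered colimits), one reduces to the coherent case: $p_Y^* R^k\map_*\cF = p_Y^*\varinjlim_i R^k\map_*\cF_i = \varinjlim_i p_Y^* R^k\map_*\cF_i \xrightarrow{\ \sim\ } \varinjlim_i R^k\map_{S,*}p_X^*\cF_i = R^k\map_{S,*}p_X^*\cF$, the middle arrow being an isomorphism by the coherent case applied termwise. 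The only point to verify here is the commutation of proper higher direct images with filtered colimits, which is standard.
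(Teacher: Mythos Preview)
Your argument is correct and follows the standard route via flat base change for coherent analytic sheaves: you invoke the Grauert-type perfect-complex representation of $\bR\map_*\cF$ with its universal base change property, and then use exactness of $p_Y^*$ to pass cohomology through the pullback. This is a genuinely different proof from the one in the paper. The paper does not cite a ready-made base change theorem; instead it works locally on $S$, embeds $S$ into a projective space $\PP^{\dim S}$, and uses the equivalence between $\cO_X[t]$-modules and $\cO_{X\times\PP}(*\infty)$-modules (partial analytification, quasi-inverse to $q_{X,*}$ by Grauert's theorem) to reduce the statement to the elementary fact that $R^k\map_*$ commutes with $\otimes_\CC\CC[t]$, i.e.\ with inductive limits. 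Your approach is cleaner if one is willing to cite the analytic flat base change theorem as a black box; the paper's approach is more self-contained, trading the citation for an explicit GAGA-style reduction, and has the feature that the inductive-limit case is not a separate addendum but is built into the heart of the argument.
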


\begin{proof}
Since $\map$ is proper, we have natural isomorphisms
\begin{align*}
p_Y^*R^k\map_*\cF&:=\cO_{Y\times S}\otimes_{p_Y^{-1}\cO_Y}p_Y^{-1}R^k\map_*\cF\\
&\isom\cO_{Y\times S}\otimes_{p_Y^{-1}\cO_Y}R^k\map_{S,*}p_X^{-1}\cF\quad\text{\cite[Prop.\,2.5.11]{K-S90}}\\
&\isom R^k\map_{S,*}\bigl(\map_S^{-1}\cO_{Y\times S}\otimes_{\map_S^{-1}p_Y^{-1}\cO_Y}p_X^{-1}\cF\bigr)\quad\text{\cite[Prop.\,2.6.6]{K-S90}}
\end{align*}
and we use the morphisms $\map_S^{-1}p_Y^{-1}\cO_Y=p_X^{-1}\map^{-1}\cO_Y\to p_X^{-1}\cO_X$ and $\map_S^{-1}\cO_{Y\times S}\to\cO_{X\times S}$, to define the canonical morphism.

Since the morphism is well-defined, the question is local on $S$, so we can assume that~$S$ is an open set in $\CC^{\dim S}$. We denote by $\PP=\PP^{\dim S}$ the projective space and by~$\infty$ its divisor at infinity. We now denote by $q_X:X\times\PP\to X$ the projection and by $q_X^\an $ the partial analytification functor $\Mod(\cO_X[t])\mto\Mod(\cO_{X\times\PP}(*\infty))$, where~$t$ is a chosen coordinate system on $\CC^{\dim S}$. We know that it is quasi-inverse to the sheaf-theoretic pushforward functor $q_{X,*}$ restricted to the localized objects, which has no higher direct images by Grauert's theorem (\cf\eg\cite[App.\,A]{D-S02a}). Let us set $\cF[t]=\cO_X[t]\otimes_{\cO_X}\cF$.

Since $\map$ is proper, it is compatible with inductive limits and we have
\[
(R^k\map_*\cF)[t]\isom R^k\map_*(\cF[t]).
\]
Therefore,
\[
(R^k\map_*\cF)[t]\isom R^k\map_*(q_{X,*}q_X^\an \cF[t])=q_{Y,*}R^k\map_{\PP,*}(q_X^\an \cF[t]),
\]
and thus
\[
q_Y^*\bigl((R^k\map_*\cF)[t]\bigr)\isom R^k\map_{\PP,*}(q_X^\an \cF[t]).
\]
Restricting to $S$ gives the desired isomorphism.
\end{proof}

\subsection{Base change for good \texorpdfstring{$\cR_\cX$}{RcX}-modules}

\begin{proposition}\label{prop:basechangeR}
Let $\cM$ be a good $\cR_\cX$-module. Then for each $k$, there is a functorial morphism
\[
p_Y^+\cH^k\map_+\cM\to\cH^k\map_{S,+}p_X^+\cM
\]
which is an isomorphism.
\end{proposition}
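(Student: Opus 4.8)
The strategy is to reduce the statement about $\cR_\cX$-modules to the already-proven base change for good $\cO$-modules, Proposition~\ref{prop:basechangeO}, by factoring the pushforward $\map_+$ through the relative de~Rham complex (the Spencer-type resolution). First I would recall that for a good $\cR_\cX$-module $\cM$, the pushforward $\map_+\cM$ is computed as $\bR\map_*\bigl(\DR_{\cX/\cY}(\cR_{\cY\gets\cX}\otimes^{\bL}_{\cR_\cX}\cM)\bigr)$, or equivalently via the relative de~Rham complex $\omega_{\cX/\cY}\otimes\cM$ resolved by $\cR_{\cY\gets\cX}$-modules; since $\map$ is proper and $\cM$ is good, this is a bounded complex of $\map_*$-acyclic objects after taking a suitable good filtration (by coherent $\cO_\cX$-modules, $\map$ being proper we may even work with locally free resolutions locally on $\cY$). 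The transverse pullback $p_X^+$ along the smooth (indeed projection) map $X\times S\to X$ is exact on $\cR$-modules and commutes with the formation of $\cR_{\cY\gets\cX}$ and with de~Rham complexes, because the relative cotangent bundle of $\map_S$ is the pullback of that of $\map$.

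The key steps, in order, would be: (1) construct the functorial morphism $p_Y^+\cH^k\map_+\cM\to\cH^k\map_{S,+}p_X^+\cM$ using the natural transformations $\map_S^{-1}\cR_\cY\to\cR_{\cX\times S}$ and the base change map for the underlying $\cO$-module complex, exactly as in the proof of Proposition~\ref{prop:basechangeO} the morphism is obtained from $\map_S^{-1}p_Y^{-1}\cO_\cY\to p_X^{-1}\cO_\cX$; (2) observe that the question of whether this morphism is an isomorphism is local on $S$ and on $Y$, so reduce to the situation where $\cM$ admits a finite resolution by direct sums of shifted copies of $\cR_\cX$ (or, since $\cM$ is good, a finite resolution whose terms are $\map_*$-acyclic, which suffices); (3) for each term of the resolution, the pushforward complex is, term by term, a complex of coherent $\cO_\cY$-modules (after choosing a good filtration and using properness), and $p_X^+$ of a term is again of the same shape; (4) apply Proposition~\ref{prop:basechangeO} to each coherent $\cO$-module appearing, together with flatness of $\cO_{\cY\times S}$ over $p_Y^{-1}\cO_\cY$ so that $p_Y^+ = \cO_{\cY\times S}\otimes_{p_Y^{-1}\cO_\cY}p_Y^{-1}(\cbbullet)$ is exact and commutes with taking cohomology of the pushforward complex; (5) conclude that $p_Y^+$ of the cohomology of the pushforward complex equals the cohomology of the base-changed pushforward complex, and identify the latter with $\cH^k\map_{S,+}p_X^+\cM$ by the compatibility of $p_X^+$ with $\DR_{\cX/\cY}$ and $\cR_{\cY\gets\cX}$.

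The main obstacle I anticipate is \emph{step (3)}: ensuring that the pushforward $\map_+\cM$ can genuinely be represented, locally on $\cY$, by a bounded complex of $\cO_\cY$-modules (or $\map_*$-acyclic $\cR_\cY$-modules) in a way that is compatible with the transverse pullback $p_X^+$, i.e.\ that the good filtration on $\cM$ and the resulting coherent filtered complex behave well under the (exact, but not filtered-exact a priori) pullback. Concretely, one must check that a good $\cR_\cX$-module remains good after $p_X^+$, that the relative Spencer complex $\cR_{\cY\gets\cX}\otimes^{\bL}_{\cR_\cX}\cM$ is computed by the same explicit Koszul-type complex upstairs and downstairs, and that Grauert's coherence/base-change applies uniformly. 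Once the representability is in hand, the rest is the same formal manipulation with flat base change and exact functors as in Proposition~\ref{prop:basechangeO}, and should present no difficulty. I would also remark that the integrable case ($\cR_\cX^\intt$) follows by the same argument since $p_X^+$ and $\map_+$ are compatible with the extra $\hb^2\partial_\hb$-action, though the statement here is only asked for good $\cR_\cX$-modules.
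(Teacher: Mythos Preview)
Your overall instinct---reduce to Proposition~\ref{prop:basechangeO} via the relative de~Rham complex---is correct and is exactly what the paper does, but only \emph{after} a reduction you have omitted: the paper first factors $\map$ as the graph inclusion $X\hookrightarrow X\times Y$ followed by the second projection $X\times Y\to Y$, and treats the two cases separately. For the projection case the argument is essentially yours: one writes down the natural morphism between the relative de~Rham complexes $\pDR_{\cX/\cY}\cM$ and $\pDR_{\cX\times S/\cY\times S}p_X^+\cM$, compares the two spectral sequences, and invokes Proposition~\ref{prop:basechangeO} (in its form for inductive limits of coherent sheaves, since the terms $\Omega^p_{\cX/\cY}\otimes\cM$ are such limits by goodness) to get an isomorphism on $E_2$; goodness forces degeneration at a finite page and one is done. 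For the closed immersion case the paper uses instead the unit $\id\isom\map^+[\delta]\map_+$ and the (locally checkable) fact that $\map_{S,+}\map_S^+[\delta]\to\id$ is an isomorphism on modules of the form $p_Y^+\cH^0\map_+\cM$.

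The difficulty you flag in step~(3) is real, and the graph factorization is precisely what dissolves it. Your step~(2)---reducing, by localizing on $Y$, to a finite free $\cR_\cX$-resolution of $\cM$---does not work for a general proper $\map$: localizing on $Y$ does not localize on $X$, and good $\cR_\cX$-modules need not admit global free resolutions. Once $\map$ is a projection, the relative de~Rham complex already gives a bounded complex whose terms satisfy the hypotheses of Proposition~\ref{prop:basechangeO}, so no further resolution of $\cM$ is needed; once $\map$ is a closed immersion, $\map_+$ is exact and the question becomes a direct local computation. So your plan is not wrong, but it is incomplete: insert the graph factorization at the outset, run your de~Rham/spectral-sequence argument for the projection half, and handle the immersion half by the adjunction trick rather than by resolutions.
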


\begin{proof}
We decompose $\map$ as the composition of the graph inclusion (closed immersion) and the second projection. It is then enough to construct the morphism in both cases.

\oldsubsubsection*{Proof in the case of a projection}
We assume that $\map$ is the projection \hbox{$X\!=\!Y\!{\times}Z\!\to\!Y$}. We have a natural morphism
\[
p_Y^+\bR \map_*\pDR_{\cX/\cY}\cM\to\bR \map_{S,*}\pDR_{\cX\times S/\cY\times S}p_X^+\cM
\]
(recall that $p^+\cM=p^*\cM$ equipped with the pullback connection). The morphism induced at the level of $E_2$ terms of the spectral sequences is an isomorphism, according to Proposition \ref{prop:basechangeO}. By the goodness property, the spectral sequences degenerate at a finite stage, hence the result.

\oldsubsubsection*{Proof in the case of an immersion}
Due to the possible existence of $\hb$-torsion, Kashiwara's equivalence does not hold without any supplementary assumption for $\cR$\nobreakdash-modules. Nevertheless, many results concerning pushforward and pullback of a $\cD$-module by a closed immersion can be extended. Let $\map:X\hto Y$ be the immersion of a closed submanifold $X$ of $Y$ and let us set $\delta=\dim X-\dim Y$. We have $\cH^k\map_+=0$ for $k\neq0$, and a natural isomorphism $\id\isom \map^+[\delta]\map_+$ on $\Mod(\cR_\cX)$. On the other hand, given any $\cR_\cY$-module~$\cM$ supported on $X$, the natural morphism $\map_+\map^+[\delta]\cM\to\cM$ is injective but its cokernel is a priori only a $\hb$-torsion $\cR_\cY$-module supported on $X$. We have
\[
\map_S^+[\delta]p_Y^+\cH^0\map_+\cM\simeq p_X^+\map^+[\delta]\cH^0\map_+\cM\simeq p_X^+\cM.
\]
Applying $\cH^0\map_{S,+}$ on the left, we get
\[
\cH^0\map_{S,+}\map_S^+[\delta]p_Y^+\cH^0\map_+\cM\simeq\cH^0\map_{S,+}p_X^+\cM
\]
and it remains to check that the natural morphism $\cH^0\map_{S,+}\map_S^+[\delta]p_Y^+\cH^0\map_+\cM\to p_Y^+\cH^0\map_+\cM$ is an isomorphism. This is a local question on $X,Y,S$ and can be checked by a local computation.
\end{proof}

\section{Compatibility between duality and external product}\label{app:exernaldual}
We give the proof of the compatibility as asserted in the third line of \ref{subsub:commutation}\eqref{enum:rel6}. We consider left $\cD$-modules, and right $\cD$-modules are regarded as left $\cD^\op$-modules (\ie $\cD$~with its opposed structure). Let us consider the diagram
\[
\xymatrix{
&X\times X'\ar[dl]_{p}\ar[dr]^{p'}&\\
X&&X'
}
\]
Note that $\cD_{X\times X'}$ is $\cD_X\boxtimes\cD_{X'}$-flat since $\cO_{X\times X'}$ is $\cO_X\boxtimes\nobreak\cO_{X'}$-flat. Let us now assume that $\ccM,\ccM'$ are $\cD$-coherent (or in $\catD^\rb_\coh(\cD)$).

\begin{enumerate}
\item\label{enum:1}
On the one hand, we have
\[
\bR\cHom_{\cD_X\boxtimes\cD_{X'}}(\ccM\boxtimes\ccM',\cD_X\boxtimes\cD_{X'})
\simeq\bR\cHom_{\cD_X}(\ccM,\cD_X)\boxtimes\bR\cHom_{\cD_{X'}}(\ccM',\cD_{X'}).
\]

As a matter of fact, let $I^\cbbullet$ be a $\cD_X\otimes\cD_X^\op$-injective resolution of $\cD_X$, and similarly with $I^{\prime\cbbullet}$, and let $J^\cbbullet$ be a $(\cD_X\boxtimes\cD_{X'})\otimes(\cD_X\boxtimes\cD_{X'})^\op$-injective resolution of $I^\cbbullet\boxtimes I^{\prime\cbbullet}$. We clearly have a $(\cD_X\boxtimes\cD_{X'})^\op$-linear morphism
\begin{multline*}
\cHom_{\cD_X}(\ccM,I^\cbbullet)\boxtimes\cHom_{\cD_{X'}}(\ccM',I^{\prime\cbbullet})=\cHom_{\cD_X\boxtimes\cD_{X'}}(\ccM\boxtimes\ccM',I^\cbbullet\boxtimes I^{\prime\cbbullet})\\
\to \cHom_{\cD_X\boxtimes\cD_{X'}}(\ccM\boxtimes\ccM',J^\cbbullet).
\end{multline*}
The desired isomorphism is now a local question, and since $\ccM,\ccM'$ are coherent, it is enough, by taking a locally free resolution of them, to check it for $\cD$, for which the assertion is clear.
\item\label{enum:2}
Next, there is a natural morphism (in $\catD^\rb(\cD_{X\times X'}^\op)$):
\begin{multline*}
\bR\cHom_{\cD_X\boxtimes\cD_{X'}}(\ccM\boxtimes\ccM',\cD_X\boxtimes\cD_{X'})\otimes_{\cD_X\boxtimes\cD_{X'}}\cD_{X\times X'}\\
\to\bR\cHom_{\cD_X\boxtimes\cD_{X'}}(\ccM\boxtimes\ccM',\cD_{X\times X'}),
\end{multline*}
where $\cD_{X\times X'}$ is regarded as a $\cD_X\boxtimes\cD_{X'}$-module and as a $\cD_{X\times X'}^\op$-module. Indeed, one considers an injective resolution $J^\cbbullet$ of $\cD_X\boxtimes\cD_{X'}$ as a $(\cD_X\boxtimes\cD_{X'})\otimes(\cD_X\boxtimes\cD_{X'})^\op$-module. There is thus a natural morphism of $\cD_{X\times X'}^\op$-modules
\begin{multline*}
\cHom_{\cD_X\boxtimes\cD_{X'}}(\ccM\boxtimes\ccM',J^\cbbullet)\otimes_{\cD_X\boxtimes\cD_{X'}}\cD_{X\times X'}\\
\to\cHom_{\cD_X\boxtimes\cD_{X'}}(\ccM\boxtimes\ccM',J^\cbbullet\otimes_{\cD_X\boxtimes\cD_{X'}}\cD_{X\times X'}).
\end{multline*}
One then chooses an injective resolution $K^\cbbullet$ of $J^\cbbullet\otimes_{\cD_X\boxtimes\cD_{X'}}\cD_{X\times X'}$ as a $(\cD_X\boxtimes\nobreak\cD_{X'})\otimes\cD_{X\times X'}^\op$-module, and one obtained the desired morphism.

If moreover $\ccM,\ccM'$ are $\cD$-coherent, then this morphism is an isomorphism in $\catD^\rb(\cD_{X\times X'}^\op)$. Indeed, the assertion is local, and by taking local free resolutions of $\ccM,\ccM'$, one can reduce to the case $\ccM=\cD_X$ and $\ccM'=\cD_{X'}$, in which case we clearly have an isomorphism.

\item\label{enum:3}
Lastly, one has a natural morphism
\[
\bR\cHom_{\cD_X\boxtimes\cD_{X'}}(\ccM\boxtimes\ccM',\cD_{X\times X'})\to\bR\cHom_{\cD_{X\times X'}}(\ccM\boxtimes_\cD\ccM',\cD_{X\times X'}).
\]
Indeed, since $\cD_{X\times X'}$ is $\cD_X\boxtimes\cD_{X'}$-flat, an injective $\cD_{X\times X'}$-module is also an injective $\cD_X\boxtimes\nobreak\cD_{X'}$-module. By taking an injective resolution $I^\cbbullet$ of $\cD_{X\times X'}$ as a $\cD_{X\times X'}\otimes\nobreak\cD_{X\times X'}^\op$-module, we get a morphism of $\cD_{X\times X'}^\op$-complexes
\[
\cHom_{\cD_X\boxtimes\cD_{X'}}(\ccM\boxtimes\ccM',I^\cbbullet)\to\cHom_{\cD_{X\times X'}}(\ccM\boxtimes_\cD\ccM',I^\cbbullet),
\]
hence the desired morphism. It is an isomorphism if $\ccM,\ccM'$ are $\cD$-coherent, which is seen as above.
\end{enumerate}

Adding the usual shifts, one obtains for coherent $\ccM,\ccM'$:
\[
\bD\ccM\boxtimes_\cD\bD\ccM'\simeq\bD(\ccM\boxtimes_\cD\ccM').
\]
As a matter of fact, the left-hand side is the term considered in \eqref{enum:2}, after \eqref{enum:1}, and the right-hand side is the second term in \eqref{enum:3}.\qed

\backmatter
\chapterspace{-2}
\let\oldog\og\let\oldfg\fg
\providecommand{\SortNoop}[1]{}\providecommand{\eprint}[1]{\href{http://arxiv.org/abs/#1}{\texttt{arXiv\string:\allowbreak#1}}}\providecommand{\hal}[1]{\href{https://hal.archives-ouvertes.fr/hal-#1}{\texttt{hal-#1}}}\providecommand{\doi}[1]{\href{http://dx.doi.org/#1}{\texttt{doi\string:\allowbreak#1}}}
\providecommand{\bysame}{\leavevmode ---\ }
\providecommand{\og}{``}
\providecommand{\fg}{''}
\providecommand{\smfandname}{\&}
\providecommand{\smfedsname}{\'eds.}
\providecommand{\smfedname}{\'ed.}
\providecommand{\smfmastersthesisname}{M\'emoire}
\providecommand{\smfphdthesisname}{Th\`ese}

\renewcommand{\indexname}{Index of notation}
\begin{theindex}

  \item $\sboxplus$, 101
  \item $\boxtimes_\cD$, 43
  \item $\hbboxtimes$, 29
  \item $C_{\bS}$, 14, 25
  \item $C_{\bS}^\nabla$, 16
  \item $\gC^{d_X,d_X}_{X\times\Omega/\Omega}$, 24
  \item $\gC^{\infty,d_X,d_X}_{X\times\Omega/\Omega}$, 24
  \item $\iC$, 16, 27
  \item $\iC^\nabla$, 16
  \item $\thetaiC$, 55
  \item $\CC_\hb$, 8
  \item $\Db_{X\times\Omega}$, 23
  \item $\Db_{X\times\Omega/\Omega}$, 23
  \item $\Db^\infty_{X\times\Omega/\Omega}$, 23
  \item $\Db_{X\times\bS/\bS}$, 17, 23
  \item $\Db_{X\times\bS/\bS}^\modD$, 33
  \item $\EMHS(\QQ)$, 110
  \item $\EMHS(\RR)$, 110
  \item $\cE^{\mero/\hb}$, 40
  \item $\cE_*^{\mero/\hb}$, 40
  \item $\ste^0_\dag$, 43
  \item $\ste^+$, 43
  \item $\map$, 8
  \item $\taumap$, 56
  \item $\mero^{\oplus r}$, 106
  \item $\mero^{(\oplus r)}$, 106
  \item $F^\sHN_\bbullet$, 84
  \item $F^\irr_\bbullet$, 61, 84
  \item $\mapsm$, 8
  \item $G_0^k(U,\mero)$, 107
  \item $G_0^\cbbullet(X,\cM,\mero)$, 100
  \item $G_0^\cbbullet(X,\cM,\mero)^\chi$, 106
  \item $G(r,n+1)$, 108
  \item $\gamma$, 8
  \item $\Gamma_{[*H]}$, 43
  \item $\fun$, 8
  \item $H$, 8
  \item $H_\fun$, 8
  \item $H^\cbbullet_\dR(X,\cM)$, 99
  \item $H^\cbbullet_\dR(X,\ccM)$, 99
  \item $H^\cbbullet_\dR(X,\Xi_{\DR}(\cT,W_\bbullet))$, 99
  \item $\HN^p$, 83
  \item $\iota$, 8
  \item $i_\fun$, 8
  \item $i_{\tau=1}$, 52
  \item $i_{\tau=\hb}$, 52
  \item $i_{\hb=1}$, 52
  \item $\iMTM^\intt(X,\RR)$, 38
  \item $\iMTM^\intt(X)$, 20
  \item $\iMTM^\intt(X,[\star H])$, 31
  \item $\iMTM^\intt(X,(*H))$, 31
  \item $\iMTM^\intt_\good(X,\kk)$, 39
  \item $\iMTM^\resc(X)$, 70
  \item $\iMTS^\intt(\CC)$, 17
  \item $\iMTS^\intt(\RR)$, 17
  \item $\iMTS_\good^\intt(\kk)$, 17
  \item $\IrrMHM(X)$, 70
  \item $\IrrMHM(X,\kk)$, 70
  \item $\IrrMHS(\CC)$, 96
  \item $\IrrMHS(\kk)$, 96
  \item $\kappa$, 9
  \item $\MHM(X)$, 1
  \item $\MHM(X,\CC)$, 48
  \item $\MHM(X,\RR)$, 47
  \item $\MHM(X,\kk)$, 47
  \item $\MTM(X)$, 1, 19
  \item $\MTM(X,\RR)$, 38
  \item $\MTM^\intt(X,\RR)$, 38
  \item $\MTM^\intt(X)$, 19
  \item $\MTM^\intt(X,[\star H])$, 31
  \item $\MTM^\intt(X,(*H))$, 31
  \item $\MTM^\intt_\good(X,\kk)$, 39
  \item $\MTS(\CC)$, 15
  \item $\MTS^\intt(\CC)$, 15
  \item $\MTS(\RR)$, 15
  \item $\MTS^\intt(\RR)$, 15
  \item $\MTS_\good^\intt(\kk)$, 15
  \item $\MTW(X)$, 19
  \item $\MTW^\intt(X)$, 19
  \item $\cM[\star H]$, 18
  \item $\cM^\circ$, 17
  \item $\taucM$, 53
  \item $\cM_\fun$, 8
  \item $\ccM_\fun$, 8
  \item $\tauM$, 95
  \item $\mu$, 52
  \item $\cO_{\bS}$, 8
  \item $\pi$, 8
  \item $\pi^\circ$, 8
  \item $\QQ_U^\rH$, 107
  \item $\cQ$, 13
  \item $R_F\ccM$, 47
  \item $R_{F_{\alpha+\bbullet}^\irr}\ccM$, 61
  \item $\RTriples(X)$, 8
  \item $\RdTriples(X)$, 17, 26
  \item $\RdiTriples(X)$, 20, 28
  \item $\RscTriples(X)$, 68
  \item $\RgscTriples(X)$, 68
  \item $\RTriples(\pt)$, 14
  \item $\RdTriples(\pt)$, 14
  \item $\RdiTriples(\pt)$, 16
  \item $\cR_\cX$, 17
  \item $\cR_\cX^\intt$, 17
  \item $\bS$, 8
  \item $\cS$, 10
  \item $\sgn$, 106
  \item $\gotS$, 105
  \item $\sigma$, 8
  \item $\icT^{\mero/\hb}$, 39
  \item $\icT_*^{\mero/\hb}$, 40
  \item $\thetacT $, 56
  \item $\cT^{\mero/\hb}$, 40
  \item $\cT_*^{\mero/\hb}$, 40
  \item $\bT(\ell)$, 10, 15
  \item $\ibT(\ell)$, 16
  \item $\bU(p,q)$, 11, 15
  \item $\WRTriples(X)$, 19
  \item $\WRdTriples(X)$, 18
  \item $X_\fun$, 8
  \item $\tauX$, 52
  \item $\tauX_0$, 52
  \item $\thetaX$, 52
  \item $\cX$, 8
  \item $\cX^\circ$, 8
  \item $\taucX$, 52
  \item $\thetacX$, 52
  \item $\Xi_\fun$, 58
  \item $\Xi_{\DR}$, \finindexnotations{8}

  \indexspace

  \item adjunction, 28
  \item alternating product, 106

  \indexspace

  \item Beilinson's functor, 58
  \item Brieskorn lattice, 100

  \indexspace

  \item cohomologically tame function, 107

  \indexspace

  \item Deligne's meromorphic extension, 65

  \indexspace

  \item Grassmannian, 108

  \indexspace

  \item Harder-Narasimhan filtration, 83
    \subitem tensor product formula for the --, 92
  \item Hermitian duality, 10
  \item Hodge structure
    \subitem non-commutative --, 13, 81
    \subitem semi-infinite --, 81
  \item holonomic $\cD$-module of exponential-regular origin, 47
  \item hypergeometric, 7, 88

  \indexspace

  \item irregular Hodge filtration, 61, 84, 97
    \subitem duality for the --, 93
    \subitem tensor product formula for the --, 92

  \indexspace

  \item jumping index, 65

  \indexspace

  \item K\IeC {\"u}nneth formula, 99

  \indexspace

  \item localization, 18, 30, 39
    \subitem ``stupid'', 18, 30, 39, 41
    \subitem dual, 18, 30

  \indexspace

  \item maximalization, 58
  \item mixed Hodge module, 1
    \subitem complex --, 48
    \subitem exponentially twisted --, 5, 76
    \subitem Fourier-Laplace transform of a --, 73
    \subitem irregular --, 1, 70
    \subitem irregular -- of exponential origin, 4
    \subitem real --, 47
  \item mixed Hodge structure
    \subitem exponential --, 81, 110
    \subitem irregular --, 96
  \item mixed twistor $\cD$-module, 1
    \subitem integrable --, 19
  \item mixed twistor-rescaled $\cD$-module, 70

  \indexspace

  \item polarization, 11

  \indexspace

  \item Rees module, 47
  \item rescaling
    \subitem of a $\CC[\hb]\langle\hb^2\partial_\hb\rangle$-module, 95
    \subitem of a coherent $\cO_X$-module, 53
    \subitem of an $\cR^\intt_\cX$-module, 53
    \subitem of an integrable $\iota$-sesquilinear pairing, 55
    \subitem of an integrable triple, 96
  \item rescaling functor, 56
  \item $\cR_\cX$-module
    \subitem integrable --, 18
    \subitem integrable holonomic --, 21
    \subitem localizable along $H$ --, 18
    \subitem strictly $\RR$-specializable --, 21
      \subsubitem regular --, 54
    \subitem strictly specializable --, 18, 21

  \indexspace

  \item $\iota$-sesquilinear pairing, 27
  \item $\sigma$-sesquilinear pairing, 25
  \item spectrum, 65, 94
  \item Stokes filtration, 12
  \item Stokes-filtered local system, 12
  \item strict, 8

  \indexspace

  \item Tate object, 10, 15, 16
  \item Tate twist, 28
  \item TERP structure, 13, 81
  \item Thom-Sebastiani formula, 101, 102, 107
  \item trivializing lattice, 85
    \subitem $V$-adapted --, 86
  \item twistor structure, 9
    \subitem $\kk$-good integrable mixed --, 12
    \subitem integrable --, 10
    \subitem integrable mixed --, 11, 81
    \subitem integrable polarizable pure --, 11
    \subitem mixed --, 11
    \subitem polarizable pure --, 11
    \subitem pure --, 11
    \subitem real --, 9
    \subitem real integrable mixed --, 11
    \subitem real mixed --, 11

  \indexspace

  \item well-rescalable $\cR^\intt_\cX$-module
    \subitem graded --, 62

\end{theindex}
\let\mathcal\oldmathcal

\begin{thebibliography}{Moc14b}

\bibitem[AS97]{A-S97}
{\scshape A.~Adolphson {\normalfont \smfandname} S.~Sperber} -- {\og {On
  twisted de~Rham cohomology}\fg}, \emph{Nagoya Math.~J.} \textbf{146} (1997),
  p.~55--81.

\bibitem[Ari10]{Arinkin08}
{\scshape D.~Arinkin} -- {\og {Rigid irregular connections on
  $\mathbb{P}^1$}\fg}, \emph{Compositio Math.} \textbf{146} (2010), no.~5,
  p.~1323--1338.

\bibitem[Bar01]{Barannikov01}
{\scshape S.~Barannikov} -- {\og Quantum periods. {I}. {S}emi-infinite
  variations of {H}odge structures\fg}, \emph{Internat. Math. Res. Notices}
  (2001), no.~23, p.~1243--1264.

\bibitem[BE04]{B-E04}
{\scshape S.~Bloch {\normalfont \smfandname} H.~Esnault} -- {\og {Local Fourier
  transforms and rigidity for $\mathcal D$-Modules}\fg}, \emph{Asian J.~Math.}
  \textbf{8} (2004), no.~4, p.~587--606.

\bibitem[CDS17]{CD-S17}
{\scshape A.~Casta{\~n}o~Dom{\'\i}ngez {\normalfont \smfandname} {\relax
  Ch}.~Sevenheck} -- {\og {Irregular Hodge filtration of some confluent
  hypergeometric systems}\fg}, \eprint{1707.03259}, 2017.

\bibitem[CJG04]{C-G04}
{\scshape {\relax F.J}.~Castro-Jiménez {\normalfont \smfandname} M.~Granger}
  -- {\og Explicit calculations in rings of differential operators\fg}, in
  \emph{{\'E}léments de la théorie des systèmes différentiels
  géométriques}, Séminaires \& Congrès, vol.~8, Soci{\'e}t{\'e}
  Math{\'e}matique de France, 2004, p.~89--128.

\bibitem[CY16]{C-Y16}
{\scshape K.-C. Chen {\normalfont \smfandname} J.-D. Yu} -- {\og {The K\"unneth
  formula for the twisted de Rham and Higgs cohomologies}\fg},
  \eprint{1608.08700}, 2016.

\bibitem[Del87]{Deligne87}
{\scshape P.~Deligne} -- {\og {Un théorème de finitude pour la
  monodromie}\fg}, in \emph{Discrete groups in geometry and analysis ({N}ew
  {H}aven, {C}onn., 1984)}, Progress in Math., vol.~67, Birkhäuser Boston,
  Boston, MA, 1987, p.~1--19\let\og\relax\let\fg\relax.

\bibitem[Del06]{Deligne06b}
\bysame , {\og {Lettre à N.~Katz du 5 juin 2006}\fg}, unpublished, 2006.

\bibitem[Del07a]{Deligne78}
\bysame , {\og {Lettre à B.~Malgrange du 19/4/1978}\fg}, in
  \emph{{Singularités irrégulières, Correspondance et documents}}, Documents
  mathématiques, vol.~5, Soci{\'e}t{\'e} Math{\'e}matique de France, Paris,
  2007, p.~25--26\let\og\oldog\let\fg\oldfg.

\bibitem[Del07b]{Deligne8406}
\bysame , {\og {Théorie de Hodge irrégulière (mars 1984 \& ao{û}t
  2006)}\fg}, in \emph{{Singularités irrégulières, Correspondance et
  documents}}, Documents mathématiques, vol.~5, Soci{\'e}t{\'e}
  Math{\'e}matique de France, Paris, 2007, p.~109--114 \& 115--128.

\bibitem[DM82]{D-M82}
{\scshape P.~Deligne {\normalfont \smfandname} {\relax J.S}.~Milne} -- {\og
  {Tannakian categories}\fg}, in \emph{{Hodge cycles, motives and Shimura
  varieties}} (P.~Deligne, {\relax J.S}.~Milne, A.~Ogus {\normalfont
  \smfandname} K.-Y. Shi, \smfedsname), Lect. Notes in Math., vol. 900,
  Springer-Verlag, 1982,
  \href{http://www.jmilne.org/math/xnotes/tc.html}{\texttt{corrected TeXed
  version}} 2012.

\bibitem[DS13]{D-S12}
{\scshape M.~Dettweiler {\normalfont \smfandname} C.~Sabbah} -- {\og Hodge
  theory of the middle convolution\fg}, \emph{Publ. RIMS, Kyoto Univ.}
  \textbf{49} (2013), no.~4, p.~761--800, Erratum: \textit{Ibid.} \textbf{54}
  (2018) no.\,2, p.\,427--431.

\bibitem[DS03]{D-S02a}
{\scshape A.~Douai {\normalfont \smfandname} C.~Sabbah} -- {\og {Gauss-Manin
  systems, Brieskorn lattices and Frobenius structures (I)}\fg}, \emph{Ann.
  Inst. Fourier (Grenoble)} \textbf{53} (2003), no.~4, p.~1055--1116.

\bibitem[DS04]{D-S02b}
\bysame , {\og {Gauss-Manin systems, Brieskorn lattices and Frobenius
  structures (II)}\fg}, in \emph{{Frobenius manifolds (Quantum cohomology and
  singularities)}} (C.~Hertling {\normalfont \smfandname} M.~Marcolli,
  \smfedsname), Aspects of Mathematics, vol. E~36, Vieweg, 2004, p.~1--18.

\bibitem[ESY17]{E-S-Y13}
{\scshape H.~Esnault, C.~Sabbah {\normalfont \smfandname} J.-D. Yu} -- {\og
  {$E_1$-degeneration of the irregular Hodge filtration (with an appendix by
  M.\,Saito)}\fg}, \emph{J.~reine angew. Math.} \textbf{729} (2017),
  p.~171--227.

\bibitem[Fed17]{Fedorov15}
{\scshape R.~Fedorov} -- {\og {Variations of Hodge structures for
  hypergeometric differential operators and parabolic Higgs bundles}\fg},
  \doi{10.1093/imrn/rnx044}, 2017.

\bibitem[FG09]{F-G09}
{\scshape E.~Frenkel {\normalfont \smfandname} B.~Gross} -- {\og A rigid
  irregular connection on the projective line\fg}, \emph{Ann. of Math.~(2)}
  \textbf{170} (2009), no.~3, p.~1469--1512.

\bibitem[GH78]{G-H78}
{\scshape {\relax P.A}.~Griffiths {\normalfont \smfandname} J.~Harris} --
  \emph{{Principles of Algebraic Geometry}}, A. Wiley-Interscience, New York,
  1978.

\bibitem[Her03]{Hertling01}
{\scshape C.~Hertling} -- {\og {$tt^*$ geometry, Frobenius manifolds, their
  connections, and the construction for singularities}\fg}, \emph{J.~reine
  angew. Math.} \textbf{555} (2003), p.~77--161.

\bibitem[HS07]{H-S06}
{\scshape C.~Hertling {\normalfont \smfandname} {\relax Ch}.~Sevenheck} -- {\og
  {Nilpotent orbits of a generalization of Hodge structures}\fg},
  \emph{J.~reine angew. Math.} \textbf{609} (2007), p.~23--80.

\bibitem[HS10]{H-S08}
\bysame , {\og {Limits of families of Brieskorn lattices and compactified
  classifying spaces}\fg}, \emph{Adv. in Math.} \textbf{223} (2010),
  p.~1155--1224.

\bibitem[HTT08]{H-T-T08}
{\scshape R.~Hotta, K.~Takeuchi {\normalfont \smfandname} T.~Tanisaki} --
  \emph{{$D$-Modules, perverse sheaves, and representation theory}}, Progress
  in Math., vol. 236, Birkhäuser, Boston, Basel, Berlin, 2008, in Japanese:
  1995.

\bibitem[Kas78]{Kashiwara78}
{\scshape M.~Kashiwara} -- {\og {On the holonomic systems of differential
  equations II}\fg}, \emph{Invent. Math.} \textbf{49} (1978), p.~121--135.

\bibitem[Kas03]{Kashiwara03}
\bysame , \emph{{$D$}-modules and microlocal calculus}, Translations of
  Mathematical Monographs, vol. 217, American Mathematical Society, Providence,
  R.I., 2003.

\bibitem[KS90]{K-S90}
{\scshape M.~Kashiwara {\normalfont \smfandname} P.~Schapira} -- \emph{{Sheaves
  on Manifolds}}, Grundlehren Math. Wiss., vol. 292, Springer-Verlag, Berlin,
  Heidelberg, 1990.

\bibitem[Kat90]{Katz90}
{\scshape N.~Katz} -- \emph{Exponential sums and differential equations}, Ann.
  of Math. studies, vol. 124, Princeton University Press, Princeton, N.J.,
  1990.

\bibitem[KKP08]{K-K-P08}
{\scshape L.~Katzarkov, M.~Kontsevich {\normalfont \smfandname} T.~Pantev} --
  {\og {Hodge theoretic aspects of mirror symmetry}\fg}, in \emph{{From Hodge
  theory to integrability and TQFT: tt*-geometry}} (R.~Donagi {\normalfont
  \smfandname} K.~Wendland, \smfedsname), Proc. Symposia in Pure Math.,
  vol.~78, American Mathematical Society, Providence, R.I., 2008, p.~87--174.

\bibitem[KKP17]{K-K-P14}
\bysame , {\og {Bogomolov-Tian-Todorov theorems for Landau-Ginzburg
  models}\fg}, \emph{J.~Differential Geometry} \textbf{105} (2017), no.~1,
  p.~55--117.

\bibitem[KS08]{K-S06}
{\scshape B.~Kim {\normalfont \smfandname} C.~Sabbah} -- {\og {Quantum
  cohomology of the Grassmannian and alternate Thom-Sebastiani}\fg},
  \emph{Compositio Math.} \textbf{144} (2008), no.~1, p.~221--246.

\bibitem[Kon09]{Kontsevich09}
{\scshape M.~Kontsevich} -- {\og {Holonomic $\mathcal D$-modules and positive
  characteristic}\fg}, \emph{Japan.~J. Math.} \textbf{4} (2009), p.~1--25.

\bibitem[KS11]{K-S10}
{\scshape M.~Kontsevich {\normalfont \smfandname} Y.~Soibelman} -- {\og
  {Cohomological Hall algebra, exponential Hodge structures and motivic
  Donaldson-Thomas invariants}\fg}, \emph{Commun. Number Theory Phys.}
  \textbf{5} (2011), no.~2, p.~231--352.

\bibitem[Kou76]{Kouchnirenko76}
{\scshape {\relax A.G}.~Kouchnirenko} -- {\og {Polyèdres de Newton et nombres
  de Milnor}\fg}, \emph{Invent. Math.} \textbf{32} (1976), p.~1--31.

\bibitem[Mal91]{Malgrange91}
{\scshape B.~Malgrange} -- \emph{{\'E}quations différentielles à coefficients
  polynomiaux}, Progress in Math., vol.~96, Birkhäuser, Basel, Boston, 1991.

\bibitem[Mal96]{Malgrange95}
\bysame , {\og {Connexions méromorphes, II: le réseau canonique}\fg},
  \emph{Invent. Math.} \textbf{124} (1996), p.~367--387.

\bibitem[Meb89]{Mebkhout87}
{\scshape Z.~Mebkhout} -- \emph{{Le formalisme des six opérations de
  Grothendieck pour les {$\mathcal{D}$}-modules cohérents}}, Travaux en cours,
  vol.~35, Hermann, Paris, 1989.

\bibitem[Meb04]{Mebkhout04}
\bysame , {\og {Le théorème de positivité, le théorème de comparaison et
  le théorème d'existence de Riemann}\fg}, in \emph{{{\'E}léments de la
  théorie des systèmes différentiels géométriques}}, Séminaires \&
  Congrès, vol.~8, Soci{\'e}t{\'e} Math{\'e}matique de France, Paris, 2004,
  p.~165--310.

\bibitem[Moc02]{Mochizuki02}
{\scshape T.~Mochizuki} -- {\og {Asymptotic behaviour of tame nilpotent
  harmonic bundles with trivial parabolic structure}\fg}, \emph{J.~Differential
  Geom.} \textbf{62} (2002), p.~351--559.

\bibitem[Moc07]{Mochizuki07}
\bysame , \emph{{Asymptotic behaviour of tame harmonic bundles and an
  application to pure twistor $D$-modules}}, vol. 185, Mem. Amer. Math. Soc.,
  no. 869-870, American Mathematical Society, Providence, R.I., 2007.

\bibitem[Moc11a]{Mochizuki08}
\bysame , \emph{{\SortNoop{1}Wild harmonic bundles and wild pure twistor
  $D$-modules}}, Ast{\'e}\-risque, vol. 340, Soci{\'e}t{\'e} Math{\'e}matique
  de France, Paris, 2011.

\bibitem[Moc11b]{Mochizuki08b}
\bysame , {\og {\SortNoop{2}Asymptotic behaviour of variation of pure polarized
  TERP structures}\fg}, \emph{Publ. RIMS, Kyoto Univ.} \textbf{47} (2011),
  no.~2, p.~419--534.

\bibitem[Moc11c]{Mochizuki10b}
\bysame , {\og {\SortNoop{3}Stokes structure of a good meromorphic flat
  bundle}\fg}, \emph{J.~Inst. Math. Jussieu} \textbf{10} (2011), no.~3,
  p.~675--712.

\bibitem[Moc14a]{Mochizuki14b}
\bysame , {\og {Harmonic bundles and Toda lattices with opposite sign~II}\fg},
  \emph{Comm. Math. Phys.} \textbf{328} (2014), no.~3, p.~1159--1198.

\bibitem[Moc14b]{Mochizuki10}
\bysame , \emph{{Holonomic $\mathcal D$-modules with Betti structure}},
  M{\'e}m. Soc. Math. France (N.S.), vol. 138--139, Soci{\'e}t{\'e}
  Math{\'e}matique de France, 2014.

\bibitem[Moc15]{Mochizuki11}
\bysame , \emph{{\SortNoop{4}Mixed twistor D-Modules}}, Lect. Notes in Math.,
  vol. 2125, Springer, Heidelberg, New York, 2015.

\bibitem[Moc17]{Mochizuki15a}
\bysame , {\og {A twistor approach to the Kontsevich complexes}\fg},
  \emph{Manuscripta Math.} (2017), \doi{10.1007/s00229-017-0989-5}.

\bibitem[NS99]{N-S97}
{\scshape A.~N{\'e}methi {\normalfont \smfandname} C.~Sabbah} -- {\og
  Semicontinuity of the spectrum at infinity\fg}, \emph{Abh. Math. Sem. Univ.
  Hamburg} \textbf{69} (1999), p.~25--35.

\bibitem[Sab99]{Bibi97b}
{\scshape C.~Sabbah} -- {\og {On a twisted de~Rham complex}\fg}, \emph{T\^ohoku
  Math.~J.} \textbf{51} (1999), p.~125--140.

\bibitem[Sab02]{Bibi00}
\bysame , \emph{{Déformations isomonodromiques et variétés de Frobenius}},
  Savoirs Actuels, CNRS~{\'E}ditions \& EDP~Sciences, Paris, 2002, English
  Transl.: Universitext, Springer \& EDP~Sciences, 2007.

\bibitem[Sab04]{Bibi04}
\bysame , {\og {Fourier-Laplace transform of irreducible regular differential
  systems on the Riemann sphere}\fg}, \emph{Russian Math. Surveys} \textbf{59}
  (2004), no.~6, p.~1165--1180, {II}, \emph{Moscow Math.~J.} \textbf{9} (2009)
  no.\,4, p.\,885--898.

\bibitem[Sab05]{Bibi01c}
\bysame , \emph{{Polarizable twistor $\mathcal{D}$-modules}}, Ast{\'e}\-risque,
  vol. 300, Soci{\'e}t{\'e} Math{\'e}matique de France, Paris, 2005.

\bibitem[Sab06a]{Bibi96bb}
\bysame , {\og Hypergeometric periods for a tame polynomial\fg},
  \emph{Portugal. Math.} \textbf{63} (2006), no.~2, p.~173--226.

\bibitem[Sab06b]{Bibi05b}
\bysame , {\og {Monodromy at infinity and Fourier~transform~II}\fg},
  \emph{Publ. RIMS, Kyoto Univ.} \textbf{42} (2006), p.~803--835.

\bibitem[Sab08]{Bibi05}
\bysame , {\og {Fourier-Laplace transform of a variation of polarized complex
  Hodge structure}\fg}, \emph{J.~reine angew. Math.} \textbf{621} (2008),
  p.~123--158.

\bibitem[Sab09]{Bibi06b}
\bysame , {\og {Wild twistor $\mathcal D$-modules}\fg}, in \emph{{Algebraic
  Analysis and Around (\hbox{Kyoto}, June 2007)}}, Advanced Studies in Pure
  Math., vol.~54, Math. Soc. Japan, Tokyo, 2009, p.~293--353.

\bibitem[Sab10]{Bibi08}
\bysame , {\og {Fourier-Laplace transform of a variation of polarized complex
  Hodge structure, II}\fg}, in \emph{{New developments in Algebraic Geometry,
  Integrable Systems and Mirror symmetry (Kyoto, January 2008)}}, Advanced
  Studies in Pure Math., vol.~59, Math. Soc. Japan, Tokyo, 2010, p.~289--347.

\bibitem[Sab11]{Bibi11}
\bysame , {\og {Non-commutative Hodge structures}\fg}, \emph{Ann. Inst. Fourier
  (Grenoble)} \textbf{61} (2011), no.~7, p.~2681--2717.

\bibitem[Sab13]{Bibi10}
\bysame , \emph{{\SortNoop{1}Introduction to Stokes structures}}, Lect. Notes
  in Math., vol. 2060, Springer-Verlag, 2013.

\bibitem[SY15]{S-Y14}
{\scshape C.~Sabbah {\normalfont \smfandname} J.-D. Yu} -- {\og {On the
  irregular Hodge filtration of exponentially twisted mixed Hodge modules}\fg},
  \emph{Forum Math. Sigma} \textbf{3} (2015), \doi{10.1017/fms.2015.8}.

\bibitem[Sai89]{MSaito89b}
{\scshape M.~Saito} -- {\og Induced {$\mathcal{D}$}-modules and differential
  complexes\fg}, \emph{Bull. Soc. math. France} \textbf{117} (1989),
  p.~361--387.

\bibitem[Sai90]{MSaito87}
\bysame , {\og {Mixed {Hodge} Modules}\fg}, \emph{Publ. RIMS, Kyoto Univ.}
  \textbf{26} (1990), p.~221--333.

\bibitem[Sim90]{Simpson90}
{\scshape C.~Simpson} -- {\og Harmonic bundles on noncompact curves\fg},
  \emph{J.~Amer. Math. Soc.} \textbf{3} (1990), p.~713--770.

\bibitem[Sim97]{Simpson97}
\bysame , {\og Mixed twistor structures\fg}, Prépublication Université de
  Toulouse \& \eprint{alg-geom/9705006}, 1997.

\bibitem[SZ85]{S-Z85}
{\scshape {\relax J.H.M}.~Steenbrink {\normalfont \smfandname} S.~Zucker} --
  {\og {Variation of mixed {Hodge} structure\kern2pt I}\fg}, \emph{Invent.
  Math.} \textbf{80} (1985), p.~489--542.

\bibitem[Var83]{Varchenko83}
{\scshape {\relax A.N}.~Varchenko} -- {\og On semicontinuity of the spectrum
  and an upper bound for the number of singular points of projective
  hypersurfaces\fg}, \emph{Soviet Math. Dokl.} \textbf{27} (1983), p.~735--739.

\bibitem[Yu14]{Yu12}
{\scshape J.-D. Yu} -- {\og {Irregular Hodge filtration on twisted de Rham
  cohomology}\fg}, \emph{Manuscripta Math.} \textbf{144} (2014), no.~1--2,
  p.~99--133.

\end{thebibliography}
\end{document}